\setlist[1]{itemsep=5pt}
\newcommand{\comment}[1]{}
\def\@setcopyright{}
\def\serieslogo@{}
\newcommand{\ol}{\overline}
\newcommand{\Td}{\widetilde}
\newcommand{\abs}[1]{\left\vert#1\right\vert}
\newcommand{\set}[1]{\left\{#1\right\}}
\newcommand{\To}{\rightarrow}
\theoremstyle{plain}
\newtheorem{thm}{Theorem}[section]
\newtheorem{cor}[thm]{Corollary}
\newtheorem{lem}[thm]{Lemma}
\newtheorem{slem}[thm]{Sublemma}
\newtheorem{prop}[thm]{Proposition}
\theoremstyle{definition}
\newtheorem{defn}[thm]{Definition}
\theoremstyle{remark}
\newtheorem{rem}[thm]{Remark}
\newtheorem{que}[thm]{Question}
\numberwithin{equation}{section}
\begin{document}
\title[Heat kernel asymptotics, local index theorem and trace integrals for CR manifolds with $S^1$ action]{Heat kernel asymptotics, local index theorem and trace integrals \\for CR manifolds with $S^1$ action}
\author{Jih-Hsin Cheng}
\address{Institute of Mathematics, Academia Sinica and National Center for
Theoretical Sciences, 6F, Astronomy-Mathematics Building, No.1, Sec.4,
Roosevelt Road, Taipei 10617, Taiwan}
\email{cheng@math.sinica.edu.tw}
\author{Chin-Yu Hsiao}
\address{Institute of Mathematics, Academia Sinica, 6F,
Astronomy-Mathematics Building, No.1, Sec.4, Roosevelt Road, Taipei 10617,
Taiwan}
\email{chsiao@math.sinica.edu.tw or chinyu.hsiao@gmail.com}
\author{I-Hsun Tsai}
\address{Department of Mathematics, National Taiwan University, Taipei 10617,
Taiwan}
\email{ihtsai@math.ntu.edu.tw}

\begin{abstract}
Among those transversally elliptic operators initiated by Atiyah and Singer,
Kohn's $\Box_b$ operator on CR manifolds with $S^1$ action is a natural one
of geometric significance for complex analysts. Our first main result
establishes an asymptotic expansion for the heat kernel of such an operator
with values in its Fourier components, which involves an unprecedented
contribution in terms of a distance function from lower dimensional strata
of the $S^1$-action. Our second main result computes a local index density,
in terms of \emph{tangential} characteristic forms, on such manifolds
including \emph{Sasakian manifolds} of interest in String Theory, by showing
that certain non-trivial contributions from strata in the heat kernel expansion
will eventually cancel out by applying Getzler's rescaling technique to
off-diagonal estimates. This leads to a local result which can be thought of
as a type of local index theorem on these CR
manifolds. As applications of our CR index theorem we can prove a CR version
of Grauert-Riemenschneider criterion, and produce many CR functions on a
weakly pseudoconvex CR manifold with transversal $S^1$ action and many CR
sections on some class of CR manifolds, answering (on this class of manifolds) some long-standing
questions in several complex variables and CR geometry. We give examples of these
CR manifolds, some of which arise from Brieskorn manifolds.
Moreover in some cases, without use of equivariant cohomology method nor
keeping contributions arising from lower dimensional strata as done
in previous works,
we can reinterpret Kawasaki's Hirzebruch-Riemann-Roch formula for a complex
orbifold with an orbifold holomorphic line bundle, as an index theorem
obtained by a single integral over a smooth CR manifold which is essentially the
circle bundle of this line bundle.  By contrast, if one computes the trace integral (instead of 
{\it supertrace} as in the case of index theorems) of our heat kernel, then the contributions arising 
from the stratification of the $S^1$ action necessarily occur in a nontrivial way.   Some explicit expressions
about these {\it corrections} are obtained in this paper.   

In short, besides certain applications our paper treats three 
major topics: i) an asymptotic expansion of a (transversal) heat kernel related to Kohn Laplacian (Theorem~\ref{t-gue160114});
ii) a formulation of a local CR index theorem for the case of locally free $S^1$ action (Corollary~\ref{c-gue150508I}); 
iii) study of this
heat kernel trace integral 
in terms of some explicit invariants as reflections upon the geometry of the $S^1$ stratification inside the CR manifold 
(Theorems~\ref{t-gue160416}, ~\ref{t-gue160416d} and \ref{t-gue160416dc}).  Among the three topics, 
the first topic is foundational.   The third topic focuses on 
the role of the {\it Gaussian} part of the heat kernel (which is boiled down to a
Dirac type delta function on the $S^1$ stratification) while the second topic does mainly on the {\it non-Gaussian} part. 
Jointly, the three topics explore and integrate the separate aspects of this class of CR manifolds in our study.  

\end{abstract}

\maketitle
\bigskip

\begin{center}
\large{Contents}
\end{center}

\begin{eqnarray*}
&& 1.\text{ Introduction and statement of the results} \\
&& 1.1.\text{ Introduction and Motivation} \\
&& 1.2.\text{ Main theorems} \\
&& 1.3.\text{ Applications} \\
&& 1.4.\text{ Kawasaki's Hirzebruch-Riemann-Roch and Grauert-Riemenschneider criterion} \\
&& \ \ \ \ \ \text{ for orbifold line bundles} \\
&& 1.5.\text{ Examples} \\
&& 1.6.\text{ Proof of Theorem 1.2} \\
&& 1.7.\text{ The idea of the proofs of Theorem 1.3, Theorem 1.10 and Corollary 1.13} 
\end{eqnarray*}

\bigskip

\begin{center}
\large{Part I: Preparatory foundations}
\end{center}

\begin{eqnarray*}
&& 2.\text{ Preliminaries} \\
&& 2.1.\text{ Some standard notations} \\
&& 2.2.\text{ Set up and terminology} \\
&& 2.3.\text{ Tangential de Rham cohomology group, Tangential Chern character and} \\ 
&& \ \ \ \ \ \text{ Tangential Todd class} \\
&& 2.4.\text{ BRT trivializations and rigid geometric objects} \\
&& 3.\text{ A Hodge theory for } \Box^{(q)}_{b,m} \\
&& 4.\text{ Modified Kohn Laplacian }(\mathrm{Spin}^c\text{ Kohn Laplacian}) \\
&& 5.\text{ Asymptotic expansions for the heat kernels of the modified Kohn Laplacians} \\
&& 5.1.\text{ Heat kernels of the modified Kodaira Laplacians on BRT trivializations} \\
&& 5.2.\text{ Heat kernels of the modified Kohn Laplacians }(\mathrm{Spin}^c\text{ Kohn Laplacians})
\end{eqnarray*}
\bigskip

\begin{center}
\large{Part II: Proofs of main theorems}
\end{center}

\begin{eqnarray*}
&& 6.\text{ Proofs of Theorems 1.3 and 1.10} \\
&& 7.\text{ Trace integrals and proof of Theorem 1.14}  \\
&& 7.1.\text{ A setup, including a comparison with recent developments}\\%
&& 7.2.\text{ Local angular integral}\\
&& 7.3.\text{ Global angular integral} \\
&& 7.4.\text{ Patching up angular integrals over X; proof for the simple type} \\
&& 7.5.\text{ Types for }S^1\text{ stratifications; proof for the general type}\\
&& \text{References}
\end{eqnarray*}

\bigskip

\section{Introduction and statement of the results}

\label{s-gue150507}

\subsection{Introduction and Motivation}

Let $(X,T^{1,0}X)$ be a compact (with no boundary) CR manifold of dimension $2n+1$ and let $%
\overline{\partial }_{b}:\Omega ^{0,q}(X)\rightarrow \Omega ^{0,q+1}(X)$ be
the tangential Cauchy-Riemann operator. For a smooth function $u$, we say $u$
is CR if $\overline{\partial }_{b}u=0$. In CR geometry, it is crucial to be
able to produce many CR functions. When $X$ is strongly pseudoconvex and the 
dimension of $X$ is greater than or equal to five, it is well-known that the
space of global smooth CR functions $H_{b}^{0}(X)$ is infinite dimensional.
When $X$ is weakly pseudoconvex or the Levi form of $X$ has negative
eigenvalues, the space of global CR functions could be trivial. In general,
it is very difficult to determine when the space $H_{b}^{0}(X)$ is large.

A clue to the above phenomenon arises from the following.  By $\overline{\partial }_{b}^{2}=0$, one has the 
 $\overline{\partial }_{b}$-complex: $\cdots \rightarrow \Omega ^{0,q-1}(X)\rightarrow \Omega
^{0,q}(X)\rightarrow \Omega ^{0,q+1}(X)\rightarrow \cdots $ and the Kohn-Rossi cohomology group: $H_{b}^{q}(X):=\frac{\mathrm{Ker\,}\overline{%
\partial }_{b}:\Omega ^{0,q}(X)\rightarrow \Omega ^{0,q+1}(X)}{\mathrm{Im\,}%
\overline{\partial }_{b}:\Omega ^{0,q-1}(X)\rightarrow \Omega ^{0,q}(X)}$.
As in complex geometry, to understand the space $H_{b}^{0}(X)$, one could try
to establish, in the CR case, a Hirzebruch-Riemann-Roch theorem for
$\sum\limits_{j=0}^{n}(-1)^{j}\mathrm{dim}H_{b}^{j}(X)$, an analogue of the Euler characteristic, and 
to prove vanishing
theorems for $H_{b}^{j}(X)$, $j\geq 1$.   The first difficulty with 
such an approach lies in the 
fact that $\mathrm{dim}H_{b}^{j}(X)$ could be infinite for some $j$.  Let's
say more about this in the following.  

If $X$ is strongly pseudoconvex and of dimension $\geq 5$%
, it is well-known that $\overline{\partial }_{b}:\Omega
^{0,0}(X)\rightarrow \Omega ^{0,1}(X)$ is not hypoelliptic and for $q\geq 1$, $\overline{%
\partial }_{b}:\Omega ^{0,q}(X)\rightarrow \Omega ^{0,q+1}(X)$ is
hypoelliptic so that $\mathrm{dim\,}H_{b}^{0}(X)=\infty $
and $\mathrm{dim\,}H_{b}^{q}(X)<\infty $ for $q\geq 1$.  In other cases if the Levi form of
$X$ has exactly one negative, $n-1$ positive eigenvalues on $X$ and $n>3$,
it is well-known that $\mathrm{dim\,}H_{b}^{1}(X)=\infty $, $\mathrm{dim\,}%
H_{b}^{n-1}(X)=\infty $ and $\mathrm{dim\,}H_{b}^{q}(X)<\infty $, for $%
q\notin \left\{ 1,n-1\right\} $.    With these possibly infinite dimensional spaces, we have the trouble with defining the
Euler characteristic $\sum\limits_{j=0}^{n}(-1)^{j}\mathrm{dim}H_{b}^{j}(X)$ properly%
.   

Another line of thought lies in the fact that the space $H_{b}^{q}(X)$ is related to the Kohn
Laplacian $\Box _{b}^{(q)}=\overline{\partial }_{b}^{\ast }\,\overline{%
\partial }_{b}+\overline{\partial }_{b}\,\overline{\partial }_{b}^{\ast
}:\Omega ^{0,q}(X)\rightarrow \Omega ^{0,q}(X)$. 
One can try to define the associated heat operator $e^{-t\Box _{b}^{(q)}}$ by
using spectral theory and then the small $t$ behavior of $e^{-t\Box _{b}^{(q)}}$
is closely related to the dimension of $H_{b}^{q}(X)$.   Unfortunately without any Levi
curvature assumption, $\Box _{b}^{(q)}$ is not hypoelliptic in general and
it is unclear how to determine the small $t$ behavior of $e^{-t\Box
_{b}^{(q)}}$.   Even if $\Box _{b}^{(q)}$ is hypoelliptic, it is
still difficult to calculate the local density. 

We are led to ask the following question.

\begin{que}
\label{q-gue150508} {\it Can we establish some kind of heat kernel asymptotic
expansions for Kohn Laplacian and obtain a CR Hirzebruch-Riemann-Roch theorem (not necessarily 
the usual ones) on some class of CR manifolds?}
\end{que}

It turns out that 
the class of CR manifolds with $S^{1}$
action is a natural choice for the above question. 
On this class of CR manifolds, the geometrical significance 
of Kohn's $\Box _{b}$ operator in connection with transversally
elliptic operators initiated by Atiyah and Singer ~\cite{A74}, 
has been mentioned in the seminal work of Folland and Kohn (\cite{FK72}, p.113).   Three
dimensional (strongly pseudoconvex) CR manifolds with $S^{1}$ action have
been intensively studied back to 1990s in relation to the CR embeddability
problem. We refer the reader to the works \cite{Ep92} and~\cite{Lem92} of
Charles Epstein and Laszlo Lempert respectively. (see more comments on this in
Section~\ref{s-gue150704}). Another related paper is about CR structure on
Seifert manifolds by Kamishima and Tsuboi \cite{KT91} (cf. Remark \ref{r-gue150516}).  
In our present paper the CR manifold
with $S^1$ action is not restricted to the three dimensional case.

To motivate our approach, let's first look at the case which can be reduced to complex geometry.
Consider a compact complex manifold $M$ of dimension $n$ and let $%
(L,h^{L})\rightarrow M$ be a holomorpic line bundle over $M$, where $h^{L}$
denotes a Hermitian fiber metric of $L$. Let $(L^{\ast },h^{L^{\ast
}})\rightarrow M$ be the dual bundle of $(L,h^{L})$ and put $X=\left\{ v\in
L^{\ast };\,\left\vert v\right\vert _{h^{L^{\ast }}}^{2}=1\right\} $. We
call $X$ the circle bundle of $(L^{\ast },h^{L^{\ast }})$. It is clear that $%
X$ is a compact CR manifold of dimension $2n+1$.   Clearly $X$ is 
equipped with a natural (globally free) $S^1$ action (by acting on the circular fiber). 

Let $T\in C^{\infty }(X,TX)$
be the real vector field induced by the $S^{1}$ action, that is, $Tu=\frac{%
\partial }{\partial \theta }(u(e^{-i\theta }\circ x))|_{\theta =0}$, $u\in
C^{\infty }(X)$.    This $S^{1}$ action is {\it CR and
transversal}, i.e. $[T,C^{\infty }(X,T^{1,0}X)]\subset
C^{\infty }(X,T^{1,0}X)$ and $\mathbb{C}T(x)\oplus T_{x}^{1,0}X\oplus
T_{x}^{0,1}X=\mathbb{C}T_{x}X$ respectively.  
For each $m\in \mathbb{Z}$ and $q=0,1,2,\ldots ,n$, put
\begin{equation*}
\begin{split}
\Omega _{m}^{0,q}(X):& =\left\{ u\in \Omega ^{0,q}(X);\,Tu=-imu\right\}  \\
& =\left\{ u\in \Omega ^{0,q}(X);\,u(e^{-i\theta }\circ x)=e^{-im\theta
}u(x),\forall \theta \in \lbrack 0,2\pi \lbrack \right\} .
\end{split}%
\end{equation*}%
Since $\overline{\partial }_{b}T=T\overline{\partial }_{b}$, we have $%
\overline{\partial }_{b,m}=\overline{\partial }_{b}:\Omega
_{m}^{0,q}(X)\rightarrow \Omega _{m}^{0,q+1}(X)$.  We consider the
cohomology group: $H_{b,m}^{q}(X):=\frac{\mathrm{Ker\,}\overline{\partial }%
_{b,m}:\Omega _{m}^{0,q}(X)\rightarrow \Omega _{m}^{0,q+1}(X)}{\mathrm{Im\,}%
\overline{\partial }_{b,m}:\Omega _{m}^{0,q-1}(X)\rightarrow \Omega
_{m}^{0,q}(X)}$, and call it the 
$m$-th $S^1$ Fourier component of the 
Kohn-Rossi cohomology group.  

The following result can be viewed as the starting point
of this paper.   Note $\Omega^{0,q}(M,L^m)$ denotes the space of smooth
sections of $(0,q)$ forms on $M$ with values in $L^m$ ($m$-th power of $L$)
and $H^q(M,L^m)$ the $q$-th $\overline\partial$-Dolbeault cohomology group with values in $%
L^m$.  

\begin{thm}
\label{t-gue150508} For every $q=0,1,2,\ldots,n$,
and every $m\in\mathbb{Z}$, there is a bijective map $A^{(q)}_m:%
\Omega^{0,q}_m(X)\rightarrow\Omega^{0,q}(M,L^m)$ such that $%
A^{(q+1)}_m\overline\partial_{b,m}=\overline\partial A^{(q)}_m$ on $%
\Omega^{0,q}_m(X)$. Hence, $\Omega^{0,q}_m(X)\cong\Omega^{0,q}(M,L^m)$ and $%
H^q_{b,m}(X)\cong H^q(M,L^m)$. In particular $\mathrm{dim\,}%
H^q_{b,m}(X)<\infty$ and $\sum%
\limits^n_{j=0}(-1)^j\mathrm{dim\,}H^j_{b,m}(X)=\sum\limits^n_{j=0}(-1)^j%
\mathrm{dim\,}H^j(M,L^m)$.
\end{thm}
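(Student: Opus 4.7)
The plan is to construct an explicit inverse to $A_m^{(q)}$, call it $B_m^{(q)}:\Omega^{0,q}(M,L^m)\to\Omega_m^{0,q}(X)$, via the classical correspondence between sections of $L^m$ and $m$-homogeneous objects on $L^*$. First I would fix a local holomorphic frame $e_L$ of $L$ over a chart $U\subset M$ and write $s\in\Omega^{0,q}(M,L^m)$ as $s = \alpha\otimes e_L^{\otimes m}$ with $\alpha\in\Omega^{0,q}(U)$. Let $p:L^*\to M$ denote the bundle projection and $\xi_m:p^{-1}(U)\to\mathbb{C}$ the holomorphic function $\xi_m(v) = \langle v, e_L(p(v))\rangle^m$, which is the $m$-homogeneous lift of $e_L^{\otimes m}$. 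I would set
\[
B_m^{(q)}(s) \;:=\; \bigl(p^*\alpha\cdot\xi_m\bigr)\big|_X,
\]
viewed as a tangential $(0,q)$-form on $X$. A change of frame $e_L\mapsto g\,e_L$ (holomorphic $g$) sends $\alpha\mapsto g^{-m}\alpha$ and $\xi_m\mapsto (p^*g)^m\xi_m$; the two factors cancel, so $B_m^{(q)}$ is globally well-defined. The sought map is then $A_m^{(q)}:=(B_m^{(q)})^{-1}$.

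Next I would establish the basic properties of $B_m^{(q)}$. Membership in $\Omega_m^{0,q}(X)$ follows from $\xi_m$ being $m$-homogeneous: under the $S^1$-action $v\mapsto e^{-i\theta}v$ one has $\xi_m(e^{-i\theta}v) = e^{-im\theta}\xi_m(v)$, while $p^*\alpha$ is $S^1$-invariant, giving $T\,B_m^{(q)}(s) = -im\,B_m^{(q)}(s)$. Injectivity follows from $\xi_m$ being nowhere zero on $X$ together with the fact that $p^*:\Omega^{0,q}(M)\to\Omega^{0,q}(X)$ is injective at the tangential level, since $p_*:T_x^{0,1}X\to T_{p(x)}^{0,1}M$ is a linear isomorphism at every $x\in X$. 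For surjectivity, given $u\in\Omega_m^{0,q}(X)$ I would extend it to an $m$-homogeneous, basic $(0,q)$-form $\tilde u$ on $L^*\setminus\{0\}$; the extension is unique because the eigenvalue condition $Tu = -imu$ is precisely the restriction to $X$ of complex $m$-homogeneity $\tilde u(\lambda v) = \lambda^m\tilde u(v)$. Locally the quotient $\tilde u/\xi_m$ is then basic and $0$-homogeneous, hence of the form $p^*\alpha$ for a unique $\alpha\in\Omega^{0,q}(U)$.

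The heart of the argument is the intertwining, which in terms of $B$ reads $B_m^{(q+1)}\overline\partial = \overline\partial_{b,m}\,B_m^{(q)}$. On $L^*$ itself, the Leibniz rule together with $\overline\partial\xi_m = 0$ (holomorphicity of $\xi_m$) and $\overline\partial(p^*\alpha) = p^*(\overline\partial\alpha)$ (holomorphicity of $p$) gives
\[
\overline\partial\bigl(p^*\alpha\cdot\xi_m\bigr) \;=\; p^*(\overline\partial\alpha)\cdot\xi_m,
\]
so the ambient $\overline\partial$ on $L^*$ sends the lift of $\alpha\otimes e_L^{\otimes m}$ to the lift of $\overline\partial\alpha\otimes e_L^{\otimes m}$. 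The desired identity on $X$ then follows by restricting to $X\subset L^*$, using that $\overline\partial_b$ coincides with the tangential part of the ambient $\overline\partial$ when applied to forms on $L^*$ whose restriction to $X$ is tangential; this last fact rests on $T^{0,1}X = T^{0,1}L^*|_X\cap\mathbb{C}TX$.

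I expect the main technical hurdle to be this last restriction step, namely verifying that the tangential projection onto $T^{*(0,q)}X$ commutes with both restriction to $X$ and the action of $\overline\partial$ for our particular $m$-homogeneous, basic $(0,q)$-forms on $L^*$, and that no contribution along the direction $T$ survives in the tangential output. Once the intertwining is in place, the induced cohomology isomorphism $H_{b,m}^q(X)\cong H^q(M,L^m)$ is automatic, whence the finite-dimensionality of $H_{b,m}^q(X)$ and the equality of Euler characteristics both follow from standard Dolbeault theory on the compact complex manifold $M$.
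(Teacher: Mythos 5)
Your proposal is correct, and it arrives at the same map as the paper's (one can check in the canonical coordinates $(z,\theta)$ that your $B_m^{(q)}(\alpha\otimes e_L^{\otimes m})$ equals $e^{-m\phi(z)-im\theta}\alpha(z)$, whose inverse is precisely the paper's $A_m^{(q)}u=s^m e^{m\phi}\hat u$) — but the route is genuinely different. The paper constructs $A_m^{(q)}$ directly by a local formula in trivialization charts and then verifies well-definedness and the intertwining identity by hand, checking the transformation rules for $\phi$, $\theta$ and $\hat u$ under a change of trivializing frame (their \eqref{e-gue150509a}--\eqref{e-gue150510II}). You instead build the inverse by the Grauert-type lift to $L^*\setminus\{0\}$: sections of $L^m$ correspond to $m$-homogeneous objects on the total space, and the intertwining reduces to $\overline\partial\xi_m=0$ plus the standard fact that for a CR hypersurface $X\subset N$ the tangential projection $\tau$ intertwines the ambient $\overline\partial$ with $\overline\partial_b$, i.e.\ $\tau(\overline\partial\omega|_X)=\overline\partial_b\bigl(\tau(\omega|_X)\bigr)$. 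Your approach is cleaner and coordinate-free; the paper's is more self-contained and sets up exactly the computational machinery (canonical coordinates, the $e^{\pm m\phi}$ conjugation) that is reused later for the local modified Kodaira/Kohn Laplacian comparison in Proposition~\ref{l-gue150606}. The one step you flag as a ``technical hurdle'' -- the commutativity of $\tau$ with $\overline\partial$ -- is not a gap: it is the standard extrinsic description of $\overline\partial_b$ for hypersurfaces and holds for all ambient $(0,q)$-forms, not only the homogeneous basic ones; since $T$ is transversal to $T^{1,0}X\oplus T^{0,1}X$, no $T$-component survives the projection to $T^{*0,q}X$. The surjectivity argument via homogeneous extension is a little compressed (the extension of a tangential form is only unique modulo the $\overline\partial\rho$-ideal, but that ambiguity is exactly killed by $\tau$, so your conclusion stands) and would profit from the local form-of-$B_m$ computation above, which immediately exhibits the preimage.
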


Theorem~\ref{t-gue150508} is probably known to the experts.   As a precise
reference is not easily available (see, however, Folland and Kohn \cite{FK72} p.113),
we will give a proof of Theorem~%
\ref{t-gue150508} in Section~\ref{s-gue150509} for the convenience of the
reader.

In this paper by {\it Kodaira Laplacian}
we mean the Laplacian $\Box^{(q)}_m$ on $L^m$-valued $(0,q)$ forms (on $M$) 
associated with the $\overline\partial$ operator,
a term we borrow from the work of Ma and Marinescu \cite{MM07}.
Let $e^{-t\Box^{(q)}_m}$ be the associated heat operator. It is
well-known that $e^{-t\Box^{(q)}_m}$ admits an asymptotic expansion as $%
t\rightarrow0^+$. Consider $B_m(t):=(A^{(q)}_m)^{-1}\circ
e^{-t\Box^{(q)}_m}\circ A^{(q)}_m$ ($A^{(q)}_m$ 
as in the theorem above). Let $\Box^{(q)}_{b,m}$ be the Kohn
Laplacian (on $X$) acting on (the $m$-th $S^1$ Fourier
component of) $(0,q)$ forms, with $e^{-t\Box^{(q)}_{b,m}}$ the associated heat operator.

A word of caution is in order.  We made no use of metrics for stating Theorem~\ref{t-gue150508}.
However, to define those Laplacians above an appropriate choice of metrics is needed 
(for {\it adjoint} of an operator) 
so that $A_m^{(q)}$ of Theorem~\ref{t-gue150508} also preserves the chosen metrics.  
With this set up it is fundamental that (cf. Proposition~\ref{l-gue150606}) 
\begin{equation}  \label{e-gue150923bi}
e^{-t\Box^{(q)}_{b,m}}=((A^{(q)}_m)^{-1}\circ
e^{-t\Box^{(q)}_m}\circ A^{(q)}_m)\circ Q_m=B_m(t)\circ Q_m=Q_m\circ B_m(t)\circ Q_m,
\end{equation}
where $Q_m:\Omega^{0,q}(X)\rightarrow\Omega^{0,q}_m(X)$ is the orthogonal
projection.  Hence the asymptotic expansion of $e^{-t\Box^{(q)}_m}$ and %
\eqref{e-gue150923bi} lead to an asymptotic expansion
\begin{equation}  \label{e-gue151108}
e^{-t\Box^{(q)}_{b,m}}(x,x)\sim
t^{-n}a^{(q)}_n(x)+t^{-n+1}a^{(q)}_{n-1}(x)+\cdots.
\end{equation}

One goal of this work is to establish a formula similar to \eqref{e-gue151108} (which is however not exactly 
of this form) on any CR manifold with $S^1$ action.   More precisely, 
due to the assumption that the $S^1$ action is only locally free, it turns out  
that $e^{-t\Box^{(q)}_{b,m}}(x,x)$ cannot have the standard asymptotic expansion as \eqref{e-gue151108}.  
Rather, our asymptotic expansion involves an
unprecedented contribution in terms of a {\it distance function} from lower
dimensional strata of the $S^{1}$ action. (See \eqref{e-gue160114} in Theorem %
\ref{t-gue160114} for details and for our first main result.)  It should be emphasized that no
pseudoconvexity condition is assumed.  

Roughly speaking, on the regular part of $X$ we have 
\begin{equation}
e^{-t\Box _{b,m}^{(q)}}(x,x)\sim
t^{-n}a_{n}^{(q)}(x)+t^{-n+1}a_{n-1}^{(q)}(x)+\cdots \mod O\Bigr(t^{-n}e^{-%
\frac{\varepsilon_0\hat{d}(x,X_{\mathrm{sing\,}})^{2}}{t}}\Bigr).  \label{e-gue151108I}
\end{equation}%

On the whole $X$ we have, however, 
\begin{equation}
e^{-t\Box _{b,m}^{(q)}}(x,x)\sim
t^{-n}A_{n}^{(q)}(t, x)+t^{-n+1}A_{n-1}^{(q)}(t, x)+\cdots.  \label{e-gue151108II}
\end{equation}

The difference between \eqref{e-gue151108II} and \eqref{e-gue151108I} lies in that 
$A^{(q)}_s(t,x)$ in \eqref{e-gue151108II} cannot be $t$-independent for all $s$ and are not 
canonically determined (by our method) while $a^{(q)}_s(x)$ in \eqref{e-gue151108I} are $t$-independent for all $s$ 
and are canonically determined.   This ($t$-dependence nature so introduced) presents a great distinction between 
our asymptotic expansion and those in the previous literature, and this distinct point of departure appears to have a big 
influence on the formulation and proof of the relevant index theorems and trace integrals.  
See Section~\ref{s-gue160416} for more comments.  

In addition to the introduction of a distance function
$\hat d$ in \eqref{e-gue151108I} our generalization has another
feature, which is pertinent to the third topic of this paper,  as follows.   
A heat kernel result for orbifolds obtained in 2008
by Dryden, Gordon, Greenwald and Webb for the case
of Laplacian on functions (see \eqref{e-gue160416} and \cite{DGGW08})
and independently by Richardson (\cite{Ri10}) 
seems to suggest that integrating \eqref{e-gue151108I} over $X$ is basically a 
power series in $t^{\frac{1}{2}}$.  See \eqref{e-gue160416} for more.  
To see such a possible connection, one considers $X$ as a fiber space over 
$X/S^1$ which is then an orbifold, and presumes boldly an analogy with ``\eqref{e-gue151108}
for the orbifold case".    Then by the above result  \cite{DGGW08}, 
integrating \eqref{e-gue151108I} over $X$ might give an 
asymptotic expansion which is a power series at most in the {\it fractional}
power $t^{\frac{1}{2}}$ of $t$ (cf. Theorem \ref{t-gue160416})
(while for the case where the $S^{1}$ action is globally free, such as
in the circle bundle above, the
asymptotic expansion is expressed in the {\it integral} power of $t$).
However, our further study shows that the coefficients of $t^j$ for $j$ being 
half-integral necessarily vanish in our present case (irrespective of the local or global freeness of the $S^1$ action).   
Despite that there is no nontrivial fractional power in the $t$-expansion, 
the {\it corrections}/contributions associated with the stratification of the locally free $S^1$ action do arise 
nontrivially in a proper sense.    Some explicit computations about these extra terms are worked out in the 
main result of the final section (Section~\ref{s-gue160416}) regarded as the third topic of this paper.  

As far as the asymptotic expansion is concerned, we remark that
the approach of using Kodaira Laplacian on $M$ (downstairs) as done above
is no longer applicable to the general CR case, as the contribution of a distance function on $X$
involved in our expansion cannot be easily forseen by use of objects in the space
downstairs  (an orbifold in general).   (However, for {\it trace integrals} on invariant functions, 
cf. Section~\ref{s-gue160416}, 
like $\sum_me^{-t\lambda_m}$ denoted by $I(t)$ in certain Riemannian cases, 
$I(t)$ has been studied asymptotically with the help of 
the underlying/quotient manifold/orbifold, cf. \cite[p. 2316-2317]{Ri10}.
See also Proposition~\ref{l-gue150606}, Remark~\ref{r-gue150606a}.)  
We must work on the entire $X$ from scratch with the operator being
only transversally elliptic (on $X$).  (See HRR theorem below for another
instance of this idea.)  Furthermore, as we make no assumption on (strong) pseudoconvexity of $X$,
this renders the techniques usually useful in this direction by previous work (e.g. \cite{BGS})
hardly adequate in our case.    Our current approach is essentially independent of
the previous methods.  This technicality partly accounts for the length of the present paper
(see Section~\ref{s-gue150920} for an outline of proof and Section~\ref{s-gue160416} for 
a comparison with the previous work).

We expect that the coefficients $a^{(q)}_s(x)$ in \eqref{e-gue151108I} 
are related to some geometric quantities. For $q$ $=$ $0$,
function case with strong pseudoconvexity, we refer the
reader to the paper of Beals, Greiner, and Stanton \cite{BGS}. In this
regard, Chern-Moser invariants (see \cite{CM}, and \cite{BFG} for a related question 
posed in the end of the paper) or Tanaka-Webster invariants
(see \cite{Ta} or \cite{We}) should be used to express these coefficients.
In our present situation (without assumptions on pseudoconvexity) it is however more natural to use geometric quantities
adapted to the $S^1$ invariance property, so that a notion of {\it tangential curvature}
arises (with the associated {\it tangential} characteristic forms, cf. Section
\ref{s-gue150508d}) and enters into the coefficients of our asymptotic expansion.  It 
essentially comes back to the Tanaka-Webster curvature
in the strongly pseudoconvex case  (cf. Remark \ref{r-gue160413a}).

The mathematics (existence, asymptotics etc.) of equivariant/transversal heat kernels in the Riemannian situation 
(including that of Riemannian foliations) have been studied in recent years and last decades.   
For a comparison between these developments and our results, we postpone the survey, 
together with that of trace integrals, until Section~\ref{s-gue160416}.  
  
Back to the special case of the circle bundle $X$ over a compact complex manifold $M$%
,  the Hirzebruch-Riemann-Roch Theorem or Atiyah-Singer index Theorem,
together with Theorem~\ref{t-gue150508},  tells us
that
\begin{equation}
\sum_{j=0}^{n}(-1)^{j}\mathrm{dim\,}H_{b,m}^{j}(X)=\sum_{j=0}^{n}(-1)^{j}%
\mathrm{dim\,}H^{j}(M,L^{m})=\int_{M}\mathrm{Td\,}(T^{1,0}M)\mathrm{ch\,}%
(L^{m}),  \label{e-gue150508}
\end{equation}%
in terms of standard characteristic classes on $M$.  
Let's reformulate \eqref{e-gue150508} in geometric terms
on $X$ rather than on $M$:   
\begin{equation}
\sum_{j=0}^{n}(-1)^{j}\mathrm{dim\,}H_{b,m}^{j}(X)=\frac{1}{2\pi }\int_{X}%
\mathrm{Td_{b}\,}(T^{1,0}X)\wedge e^{-m\frac{d\omega _{0}}{2\pi }}\wedge
\omega _{0} \label{e-gue150508I}
\end{equation}%
where $\mathrm{Td_{b}\,}(T^{1,0}X)$ denotes the {\it tangential} Todd class of $%
T^{1,0}X$ and $e^{-m\frac{d\omega _{0}}{2\pi }}$ denotes the Chern
polynomial of the Levi curvature and $\omega _{0}$ is the uniquely
determined global real $1$-form (see Section~\ref{s-gue150508bI} and Section~%
\ref{s-gue150508d} for the precise definitions).

Our second main result turns to any (abstract) CR manifold $X$ with (locally free) $S^{1}$ action (but with no
assumption on pseudoconvexity); we see that the above Euler characteristic has an index
interpretation related to $\overline{\partial }_{b}+\overline{\partial }%
_{b}^{\ast }$ on $X$ (see 
\eqref{e-gue150524f} and \eqref{e-gue150524fI}).   We 
are able to establish \eqref{e-gue150508I} (cf. Corollary~\ref{c-gue150508I})
on such $X$ based on our
asymptotic expansion for the heat kernel $e^{-t\Box _{b,m}^{(q)}}(x,x)$ and a type of 
\emph{McKean-Singer} formula on $X$ (see Corollaries \ref{t-gue150603} and \ref{t-gue150630w}).

As an application to complex (orbifold) geometry, it is worth noting a comparison between the present result and
a result of Kawasaki on Hirzebruch-Riemann-Roch
theorem (HRR theorem for short) on a complex orbifold $N$ (\cite{Ka79}) (which plays the role
as our $M$ above).  Our formula
simplifies in the sense that in the original formula of Kawasaki, his part involving the dependence on the
(lower dimensional) strata
of the orbifold $M$ entirely drops out here, at least for the class of orbifolds and orbifold line bundles that fit into
our assumption (see Theorem ~\ref{t-gue150802},  remarks following it and Subsection~\ref{s-gue150723sub1}
for examples).    In our view this simplification
does not appear obvious at all within the original approach of Kawasaki because by
his approach the contributions from the (lower dimensional) strata of the orbifold cannot be avoided 
(unless it is proved to be vanishing) even if
the total space of the (orbifold) circle bundle is smooth.   Conceptually speaking one may attribute such
a simplification to one's working on the entire (smooth) $X$ rather than on the downstairs $M$ (as Kawasaki), a
strategy already employed for the asymptotic expansion above and proving useful again in this 
context of (CR) index theorem.
We remark that the vanishing of the contribution of strata also occurs
in a related context studied by these works \cite{PV}, \cite{F} (see also discussions after Theorem ~\ref{t-gue150802}).

In short our second main result (Theorem \ref{t-gue160307}) computes a
local index density in terms of \emph{tangential} characteristic forms, which is to 
show that certain non-trivial contributions (cf. $t^{-n}e^{-%
\frac{\varepsilon_0\hat{d}(x,X_{\mathrm{sing\,}})^{2}}{t}}$ of \eqref{e-gue151108I}) in the heat kernel
expansion will eventually cancel out in the index density computation.  
We can do this by applying Getzler's
rescaling technique to the {\it off-diagonal estimate}
(not needed in the classical index theorems).   As, to the best of our knowledge, an appropriate term
for such a result about the local density 
hasn't appeared in the literature yet, we shall follow the classical
cases and call it a {\it local index
theorem} on these CR manifolds (Corollary~\ref{c-gue150508I}), including \emph{%
Sasakian manifolds} of interest in String Theory.

With reference to the questions in the beginning of this Introduction, 
for further application of our results to CR geometry 
it is important to produce many CR functions or CR sections.
Namely we hope to know when $H_{b}^{0}(X,E)$ or $H_{b,m}^{0}(X,E)$ is large
(see Questions~\ref{q-gue150704}, ~\ref{q-gue150704I} and ~%
\ref{q-gue150704II} in Section~\ref{s-gue150704}).
Progress towards this circle of questions seems limited (Section ~\ref{s-gue150704}).
We can now develop a tool for tackling some of these questions.
The idea here is to combine our version of CR
index theorem with a sort of vanishing theorem for higher cohomology groups, which
is intimately related to a version of Grauert-Riemenschneider criterion adapted to
the CR case.  This methodology turns out to be effective for those CR manifolds studied in this paper.

In Section~\ref{s-gue150704} we apply our CR index theorem to prove a CR
version of Grauert-Riemenschneider criterion, and produce many CR functions
on a weakly pseudoconvex CR manifold with transversal $S^1$ action and many
CR sections on some class of CR manifolds, which give answers to some long-standing
questions in several complex variables and CR geometry. In Section~\ref%
{s-gue150723II} we provide an abundance of examples of those CR manifolds studied in the
present paper, some of which arise
from Brieskorn manifolds (generalized Hopf manifolds).  

There is another index theory of geometric significance, developed by
Charles Epstein. He studied the so called relative index of a pair of embeddable
CR structures through their Szeg\"o projectors in a series of papers (see~%
\cite{Ep98I},~\cite{Ep98II}, ~\cite{Eps06},~\cite{Ep08} and~\cite{Eps12}).
On the other hand, Erik van Erp derived an index formula for subelliptic
operators on a contact manifold (see~\cite{Erp1},~\cite{Erp2}). Moreover,
recent work of Paradan and Vergne~\cite{PV} gave an expression for the index
of transversally elliptic operators which is an integral of compactly
supported equivariant form on the cotangent bundle; see also Fitzpatrick~%
\cite{F} for related directions. Br\"uning, Kamber and Richardson~\cite{BKR}, \cite{BKR08}
computed the index of an equivariant transversally elliptic operator as a
sum of integrals over blow ups of the strata of the group action.

Finally it is natural to ask for a generalization from the action of $S^1$ to that of other Lie groups
or even to foliations (cf. Subsection~\ref{s-gue160416s0} for references).  
As we will discuss in Section~\ref{s-gue160416}, the asymptotic expansion in 
the form \eqref{e-gue151108II} as indicated there a sort of remedy for \eqref{e-gue151108I}
by involving a ``distance function" $\hat d$ seems to be best illustrated in the $S^1$ case.
It appears also conceivable that these features shall be preserved for generalization in a certain 
(as yet unknown) way.    This paper may be presented or read in token of a prototype for 
further study into much more complicated, diversified situations.   

\bigskip

\subsection{Main theorems}

\label{s-gue150508a}

We shall now formulate the main results. We refer to Section~\ref{s-gue150508bI}
and Section~\ref{s-gue150508d} for some notations and terminologies used here.
After the background material, we will discuss in the sequel i) asymptotic expansions, ii) a local index theorem
and iii) trace integrals.  

\subsubsection{Background}
\label{s-gue150508s1}
Let $(X, T^{1,0}X)$ be a compact connected CR manifold with a transversal CR
locally free $S^1$ action $e^{-i\theta}$, where $T^{1,0}X$ is a CR structure
of $X$.  $X$ is of dimension $2n+1$ throughout this paper. 

Let $T\in C^\infty(X,TX)$ be the real vector field induced by the $S^1$
action and let $\omega_0\in C^\infty(X,T^*X)$ be the global real one form
determined by $\langle\,\omega_0\,,\,T\,\rangle=1$, $\langle\,\omega_0\,,\,u%
\,\rangle=0$, for every $u\in T^{1,0}X\oplus T^{0,1}X$.

Associated with the $S^1$ action of $X$ it is natural to consider various geometric objects 
admitting an $S^1$ action.   In the following,  to streamline the exposition we shall freely 
use the notion of {\it rigid} objects: 
``rigid bundles", ``rigid metrics" etc., and refer to Definitions~\ref{d-gue50508d}, ~\ref{d-gue150508dI}
and ~\ref%
{d-gue150514f} for the precise meanings.  (See also the work of 
Baouendi-Rothschild-Treves \cite[Definition II.2] {BRT85} for a similar use of 
this term.)  It suffices to say here that this notion of rigid objects 
is nothing but an equivalent way (by using metric)
to consider objects (originally defined without assumption on metric) 
which admit (compatible) $S^1$ actions (or $S^1$ invariance,
subject to the proper context)
provided one starts with a CR manifold with an $S^1$ action (cf. Theorem~\ref{t-gue150514fa}).

Henceforth let $E$ be a rigid
CR vector bundle over $X$, equipped with a
rigid Hermitian metric $\langle\,\cdot\,|\,\cdot\,\rangle_E$.
We note that $T^{1,0}X$ is known to be a rigid
complex vector bundle (see the work of Baouendi-Rothschild-Treves~\cite{BRT85})
with a rigid
Hermitian metric $\langle\,\cdot\,|\,\cdot\,\rangle$ satisfying 
extra properties (not specified here, cf. \eqref{e-gue22a1}).
Let $\langle\,\cdot\,|\,\cdot\,\rangle_E$
be the Hermitian metric on $T^{*0,\bullet}X\otimes E$ induced by those
on $E$ and $\mathbb{C }TX$.   Denoting by $dv_X=dv_X(x)$
the volume form on $X$ induced by the Hermitian metric $\langle\,\cdot\,|\,%
\cdot\,\rangle$ on $\mathbb{C }TX$ we get the natural global $L^2$ inner
product $(\,\cdot\,|\,\cdot\,)_{E}$ on $\Omega^{0,\bullet}(X,E)$.

As remarked in Introduction, for $u\in\Omega^{0,\bullet}(X,E)$, $Tu\in\Omega^{0,%
\bullet}(X,E)$ is defined and $T\overline\partial_b=\overline\partial_bT$. For $%
m\in\mathbb{Z}$, put
\begin{equation*}
\begin{split}
\Omega^{0,\bullet}_m(X,E):&=\left\{u\in\Omega^{0,\bullet}(X,E);\,
Tu=-imu\right\} \\
&=\left\{u\in\Omega^{0,\bullet}(X,E);\, (e^{-i\theta})^*u=e^{-im\theta}u,\ \
\forall\theta\in[0,2\pi[\right\},
\end{split}%
\end{equation*}
where $(e^{-i\theta})^*$ denotes the pull-back by 
the map $e^{-i\theta}: X\to X$ of $S^1$ action.  

Write $L^{2}(X,T^{*0,\bullet}X\otimes E)$ (resp.  $%
L^{2}_m(X,T^{*0,\bullet}X\otimes E)$)
for the $L_2$-completion of $%
\Omega^{0,\bullet}(X,E)$ (resp.  $\Omega^{0,%
\bullet}_m(X,E)$) with respect to $(\,\cdot\,|\,\cdot\,)_{E}$.

By $T\overline\partial_b=\overline\partial_bT$ one defines
$\overline\partial_{b,m}:\Omega^{0,\bullet}_m(X,E)%
\rightarrow\Omega^{0,\bullet}_m(X,E)$ as the restriction 
of $\overline\partial_b$ on $\Omega_m^{0,\bullet}$. 
Write
\begin{equation*}
\overline{\partial}^{*}_b:\Omega^{0,\bullet}(X,E)\rightarrow\Omega^{0,%
\bullet}(X,E),\quad \mbox{resp.\,\,\,$\overline\partial_{b,m}^*:\Omega^{0,\bullet}_m(X,E)%
\rightarrow\Omega^{0,\bullet}_m(X,E)$}
\end{equation*}
for the formal adjoint of $\overline\partial_b$ (under $%
(\,\cdot\,|\,\cdot\,)_E$), resp. $\overline\partial_{b,m}$. Since $\langle\,\cdot\,|\,\cdot\,\rangle_E$ and $%
\langle\,\cdot\,|\,\cdot\,\rangle$ are rigid, one sees 
\begin{equation}  \label{e-gue150517i}
\begin{split}
&T\overline\partial^{*}_b=\overline\partial^{*}_bT\ \ \mbox{on
$\Omega^{0,\bullet}(X,E)$}, \\
&\overline\partial^{*}_{b,m}=\overline\partial^{*}_b|_{\Omega^{0,\bullet}_m}:\Omega^{0,%
\bullet}_m(X,E)\rightarrow\Omega^{0,\bullet}_m(X,E),\ \ \forall m\in\mathbb{Z%
}.
\end{split}%
\end{equation}

Let $A_m:\Omega^{0,\bullet}_m(X,E)\rightarrow\Omega^{0,\bullet}_m(X,E)$ be a certain 
smooth zeroth order operator with $TA_m=A_mT$ and $A_m:\Omega^{0,\pm}_m(X,E)%
\rightarrow\Omega^{0,\mp}_m(X,E)$ (arising from a CR version of 
$\mathrm{Spin}^c$ Dirac operator, cf. Definition~\ref{d-gue150524}). Put
\begin{equation}  \label{e-gue151113}
\widetilde D_{b,m}:=\overline\partial_{b,m}+\overline{\partial}%
^*_{b,m}+A_m:\Omega^{0,\bullet}(X,E)\rightarrow\Omega^{0,\bullet}(X,E)
\end{equation}
and let
\begin{equation}  \label{e-suVIIIa}
\widetilde
D^{*}_{b,m}:\Omega^{0,\bullet}(X,E)\rightarrow\Omega^{0,\bullet}(X,E)
\end{equation}
be the formal adjoint of $\widetilde D_{b,m}$ (with respect to $%
(\,\cdot\,|\,\cdot\,)_E$). 

We have $\widetilde\Box_{b,m}$, given by
\begin{equation}  \label{e-gue151113y}
\widetilde\Box_{b,m}:=\widetilde D^*_{b,m}\widetilde
D_{b,m}:\Omega^{0,\bullet}(X,E)\rightarrow\Omega^{0,\bullet}(X,E)
\end{equation}
which denotes the $m$-th {\it modified} Kohn Laplacian, thought of as $\mathrm{Spin}^c$ Kohn Laplacian 
(cf. Definition~\ref{d-gue150524} and the paragraph below it).  
We extend $\widetilde\Box_{b,m}$ by
\begin{equation}  \label{e-gue151113yI}
\widetilde\Box_{b,m}:\mathrm{Dom\,}\widetilde\Box_{b,m}\,(\subset
L^{2}_m(X,T^{*0,\bullet}X\otimes E))\rightarrow
L^{2}_m(X,T^{*0,\bullet}X\otimes E)\,,
\end{equation}
with $\mathrm{Dom\,}\widetilde\Box_{b,m}:=\{u\in
L^{2}_m(X,T^{*0,\bullet}X\otimes E);\, \widetilde\Box_{b,m}u\in
L^{2}_m(X,T^{*0,\bullet}X\otimes E)\}$ in which $\widetilde\Box_{b,m}u$ is defined in
the sense of distribution.

We will show in Section~\ref{s-gue150517} that $\widetilde\Box_{b,m}$ is
self-adjoint, $\mathrm{Spec\,}\widetilde\Box_{b,m}$ is a discrete subset of
$[0,\infty[$ and for $\nu\in\mathrm{Spec\,}\widetilde\Box_{b,m}$, $\nu$
is an eigenvalue of $\widetilde\Box_{b,m}$ with finite multiplicities $d_\nu<\infty$.  
Let $%
\left\{f^\nu_1,\ldots,f^\nu_{d_{\nu}}\right\}$ be an orthonormal frame for
the eigenspace of $\widetilde\Box_{b,m}$ with eigenvalue $\nu$. The (smooth) heat
kernel $e^{-t\widetilde\Box_{b,m}}(x,y)$ can be given by
\begin{equation}  \label{e-gue151023a}
e^{-t\widetilde\Box_{b,m}}(x,y)=\sum_{\nu\in\mathrm{Spec\,}%
\widetilde\Box_{b,m}}\sum^{d_{\nu}}_{j=1}e^{-\nu
t}f^\nu_j(x)\wedge(f^\nu_j(y))^\dagger,
\end{equation}
where $f^\nu_j(x)\wedge(f^\nu_j(y))^\dagger$ denotes the linear map:
\begin{equation*}
\begin{split}
f^\nu_j(x)\wedge(f^\nu_j(y))^\dagger:T^{*0,\bullet}_yX\otimes
E_y&\rightarrow T^{*0,\bullet}_xX\otimes E_x, \\
u(y)\in T^{*0,\bullet}_yX\otimes E_y&\rightarrow
f^\nu_j(x)\langle\,u(y)\,|\,f^\nu_j(y)\,\rangle_E\in
T^{*0,\bullet}_xX\otimes E_x.
\end{split}%
\end{equation*}
Let $e^{-t\widetilde\Box_{b,m}}:L^2(X,T^{*0,\bullet}X\otimes E)\rightarrow
L^2_m(X,T^{*0,\bullet}X\otimes E)$ be the (continuous) operator associated with the 
distribution kernel $e^{-t\widetilde\Box_{b,m}}(x,y)$.

Let $e_1(x),\ldots,e_d(x)$
be an orthonormal frame of $T^{*0,q}_xX\otimes E_x$ ($q=0,1,\ldots,n$), and $A\in\mathrm{End\,}%
(T^{*0,\bullet}_xX\otimes E_x)$. Put $\mathrm{Tr^{(q)}\,}A:=\sum^d_{j=1}%
\langle\, Ae_j\,|\,e_j\,\rangle_E$ and set
\begin{equation}  \label{e-gue160114bI}
\begin{split}
&\mathrm{Tr\,}A:=\sum^n_{j=0}\mathrm{Tr^{(j)}\,}A, \\
&\mathrm{STr\,}A:=\sum^n_{j=0}(-1)^j\mathrm{Tr^{(j)}\,}A.
\end{split}%
\end{equation}

Let $\nabla^{TX}$ be the Levi-Civita connection on $TX$ (with respect to $%
\langle\,\cdot\,|\,\cdot\,\rangle$).    Then $T^{1,0}X$ is equipped with a 
connection  $%
\nabla^{T^{1,0}X}:=P_{T^{1,0}X}\nabla^{TX}$ where $P_{T^{1,0}X}$ be the 
projection from $\mathbb{C }TX$ onto $T^{1,0}X$.

Let $\nabla^E$ be the connection on $E$ induced by $\langle\,\cdot\,|\,\cdot%
\,\rangle_E$ (see Theorem~\ref{t-gue150515}).  Let $\mathrm{Td_b\,}(\nabla^{T^{1,0}X},T^{1,0}X)$ denote the
representative of the tangential Todd class of $T^{1,0}X$ and $\mathrm{ch_b\,}(\nabla^{E},E)$ the
representative of the tangential Chern character of $E$ (see Section~\ref{s-gue150508d} for {\it tangential} classes).


In what follows we aim to define a {\it distance function} $\hat d$
which plays an important role (for the asymptotic expansion) in this paper.
For $x\in X$, we say that the period of $x$ is $\frac{2\pi}{\ell}$, $\ell\in%
\mathbb{N}$ provided that $e^{-i\theta}\circ x\neq x$ for every $0<\theta<\frac{2\pi}{%
\ell}$ and $e^{-i\frac{2\pi}{\ell}}\circ x=x$.  Put, for each $\ell\in\mathbb{N}$,
\begin{equation}  \label{e-gue150802b}
X_\ell=\left\{x\in X;\, \mbox{the period of $x$ is $\frac{2\pi}{\ell}$}%
\right\}
\end{equation}
and let $p=\min\left\{\ell\in\mathbb{N};\, X_\ell\neq\emptyset\right\}$.
We call $X_p=X_{p_1}$ the {\it principal stratum}.   It
is well-known that if $X$ is connected, then $X_p$ is an open and dense
subset of $X$ (see Proposition 1.24 in Meinrenken~\cite{Me03} and
Duistermaat-Heckman~\cite{Du82}). Assume $X=X_{p_1}\bigcup
X_{p_2}\bigcup\cdots\bigcup X_{p_k}$, $p=:p_1<p_2<\cdots<p_k$. Put $X_{%
\mathrm{sing\,}}=X^1_{\mathrm{sing\,}}:=\bigcup^k_{j=2}X_{p_j}$, $X^{r}_{%
\mathrm{sing\,}}:=\bigcup^k_{j=r+1}X_{p_j}$, $k-1\geq r\geq 1$. Set $X^{k}_{%
\mathrm{sing\,}}:=\emptyset$.  Note $p_1|p_j$ for $1\le j\le k$ (cf. Remark~\ref{r-gue150804}).  

Let $d(\cdot, \cdot)$ denotes the
standard Riemannian distance with respect to the given
Hermitian metric. Take $\zeta$
\begin{equation}  \label{e-gue160327}
0<\zeta<\inf\left\{\frac{2\pi}{p_k},\left\vert \frac{2\pi}{p_r}-\frac{2\pi}{
p_{r+1}}\right\vert, r=1,\ldots, k-1\right\}.
\end{equation}
Set, for $x\in X$ and $r=1,2,\ldots,k$,
\begin{equation}
\label{e-gue160327aIe1}
\hat d_\zeta(x,X^r_{\mathrm{sing\,}}):=\inf\left\{d(x,e^{-i\theta}x);\,
\zeta\leq\theta\leq\frac{2\pi}{p_r}-\zeta\right\}.
\end{equation}

This notation reflects the fact that $\hat d_{\zeta}(x, X^r_{\mathrm{sing\,}})$
is equivalent to the ordinary distance $d(x, X^r_{\mathrm{sing\,}})$
(see below).  
Note by definition $\hat d_\zeta(x,X^k_{\mathrm{sing\,}})\,(=\hat d_\zeta(x,\emptyset))> 0$ for 
all $x\in X$.   We remark 
that for any $0<\zeta, \zeta_1$ satisfying \eqref{e-gue160327}, $\hat
d_\zeta(x,X^r_{\mathrm{sing\,}})$ and $\hat d_{\zeta_1}(x,X^r_{\mathrm{sing\,%
}})$ are equivalent (as far as the estimate in Theorem~\ref{t-gue160114} below
is concerned). We shall denote $\hat d(x,X^r_{\mathrm{sing\,}}):=\hat
d_\zeta(x,X^r_{\mathrm{sing\,}})$. 

Remark that, 
by examining the definition $\hat d(x,X^r_{\mathrm{sing\,}})=0
$ if and only if $x\in X^r_{\mathrm{sing\,}}$.  Further, for $\varepsilon>0$
there is a $\delta>0$ such that $\hat d(x,X^r_{\mathrm{sing\,}})\geq\delta$
provided $x\in X$ satisfies (the ordinary distance) $d(x,X^r_{\mathrm{sing\,}})\geq\varepsilon$.
It is thus convenient to think of $\hat d(x,X^r_{\mathrm{sing\,}})$ as a distance function 
from $x$ to $X^r_{\mathrm{sing\,}}$.   

Indeed in Theorem~\ref{t-gue160413} 
for a strongly pseudoconvex $X$ there is a constant $C\geq 1$ such that
\begin{equation*}
\frac{1}{C}d(x,X^r_{\mathrm{sing\,}})\leq\hat d(x,X^r_{\mathrm{sing\,}})\leq
Cd(x,X^r_{\mathrm{sing\,}}),\ \ \forall x\in X.  
\end{equation*}

\subsubsection{Asymptotic expansion of the heat kernel $e^{-t\widetilde\Box_{b,m}}(x,x)$}
\label{s-gue150508s2}

With the distance function $\hat d$, we state the first main result of this paper (see Section~\ref{s-gue150702} for a proof).

\begin{thm}
\label{t-gue160114} Suppose $(X, T^{1,0}X)$ is a compact, connected CR
manifold (of dimension $2n+1$) with a transversal CR locally free $S^1$ action. With the notations
above, there exist $a_s(t,x)\,(=a_{s,m}(t, x))\,\in C^\infty(\mathbb{R}_+\times X,\mathrm{End\,}%
(T^{*,\bullet}X\otimes E))$ with $\left\vert a_s(t,x)\right\vert\leq C$
on $\mathbb{R}_+\times X$ where $C>0$ is independent of $t$, $%
s=n, n-1,\ldots$, such that
\begin{equation}  \label{e-gue160306}
e^{-t\widetilde\Box_{b,m}}(x,x)\sim
t^{-n}a_n(t,x)+t^{-n+1}a_{n-1}(t,x)+\cdots\ \ \mbox{as $t\To0^+$}.
\end{equation}
(See Definition~\ref{d-gue150608} for ``$\sim$".)

Moreover, there exist $\alpha_s(x)\, (=\alpha_{s, m}(x))\,\in C^\infty(X,\mathrm{End\,}%
(T^{*,\bullet}X\otimes E))$, $s=n, n-1,\ldots$, satisfying the following 
property.   Given any differential operator $P_\ell:C^\infty(X,T^{*,\bullet}X%
\otimes E)\rightarrow C^\infty(X,T^{*,\bullet}X\otimes E)$ of order $\ell\in%
\mathbb{N}_0$, there exist $\varepsilon_0>0$ and $C_0>0
$ such that
\begin{equation}  \label{e-gue160114}
\left\vert P_\ell\Bigr(a_s(t,x)-\bigl(\sum\limits^{p_r}_{s=1}e^{\frac{2\pi(s-1)}{%
p_r}mi}\bigr)\alpha_{s}(x)\Bigr)\right\vert\leq C_0t^{-\frac{\ell}{2}}e^{-\frac{%
\varepsilon_0\hat d(x,X^{r}_{\mathrm{sing\,}})^2}{t}},\ \forall t\in\mathbb{R%
}_+,\ \ \forall x\in X_{p_r}
\end{equation}
$r=1,\ldots,k$. 

\end{thm}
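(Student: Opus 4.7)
The plan is to reduce the asymptotic expansion of $e^{-t\widetilde\Box_{b,m}}(x,x)$ on all of $X$ to a family of local computations on BRT trivializations, and then to separate the contributions from the (finite) isotropy group at $x$ from a Gaussian remainder controlled by $\hat d$. First I would use the BRT trivializations of Section~\ref{s-gue150508bI}: on a chart $U\simeq D\times(-\delta,\delta)\subset X$ with $T=\partial/\partial\eta$ and BRT potential $\varphi$, the restriction of $\widetilde\Box_{b,m}$ to the $m$-th Fourier sector is, via the gauge $u\mapsto e^{im\eta}\tilde u(z)$, unitarily conjugate to a modified Kodaira Laplacian $\widetilde\Box_m^{(q)}$ on $D$ valued in $E\otimes L^m$ (with Hermitian weight $e^{-2m\varphi}$). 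By Section~5.1 this Kodaira heat kernel admits a standard small-$t$ diagonal expansion together with an off-diagonal estimate
\[
\bigl|P_\ell\,e^{-t\widetilde\Box_m^{(q)}}(z,w)\bigr|\le C\,t^{-n-\ell/2}\,e^{-c|z-w|^2/t}
\]
for any differential operator $P_\ell$ of order $\ell$, uniformly on compact subsets of $D$. Combined with the $\mathrm{Spin}^c$ analogue of \eqref{e-gue150923bi} (Proposition~\ref{l-gue150606}), this yields a local Fourier representation
\[
e^{-t\widetilde\Box_{b,m}}(x,y)=\frac{1}{2\pi}\int_0^{2\pi}e^{im\theta}\,\tilde H(t,x,e^{-i\theta}\circ y)\,d\theta,
\]
where $\tilde H$ is an $S^1$-invariant heat kernel locally built from the BRT Kodaira heat kernels.

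Second, for $x\in X_{p_r}$ the isotropy group is $\{e^{-2\pi ij/p_r}:j=0,\ldots,p_r-1\}$, and these are the only $S^1$-elements that return $x$ to itself. I would introduce a smooth partition of unity on $[0,2\pi]$ subordinate to the $p_r$ ``isotropy intervals'' $|\theta-2\pi j/p_r|<\zeta$ and their complement. On each isotropy interval, the substitution $\theta=2\pi j/p_r+\phi$ produces the phase $e^{2\pi jmi/p_r}$ in front of the on-diagonal expansion of $\tilde H$ at $x$; summing over $j$ gives the character sum $\sum_{s=1}^{p_r}e^{2\pi(s-1)mi/p_r}$ multiplying canonical, $t$-independent coefficients $\alpha_s(x)$, namely the heat invariants of the BRT Kodaira Laplacian. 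On the complementary ``middle'' set the definition of $\hat d_\zeta$ in \eqref{e-gue160327aIe1} guarantees $d(x,e^{-i\theta}\circ x)\ge\hat d(x,X^r_{\mathrm{sing}})$, so termwise application of the off-diagonal Gaussian bound together with its $P_\ell$-derivatives (which produce the factor $t^{-\ell/2}$) yields a contribution dominated by $C_0\,t^{-n-\ell/2}\,e^{-\varepsilon_0\hat d(x,X^r_{\mathrm{sing}})^2/t}$. This is exactly the error term in~\eqref{e-gue160114}.

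To globalize, I would patch the $\alpha_s(x)$ across BRT charts using the fact that two BRT potentials on an overlap differ by a pluriharmonic function, under which the modified Kodaira heat invariants are invariant; the $\alpha_s$ thus define globally smooth sections of $\mathrm{End}(T^{*0,\bullet}X\otimes E)$, relying on rigidity of the CR structure, metric and connections of Sections~2.4 and~4. The coefficients $a_s(t,x)$ are then defined as the actual on-diagonal expansion coefficients of $e^{-t\widetilde\Box_{b,m}}(x,x)$; they become $t$-dependent because the cutoffs around the isotropy angles are non-canonical and because one must glue expansions across different strata, yet they are uniformly bounded, which yields \eqref{e-gue160306}.

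The principal obstacle I expect is establishing the off-diagonal Gaussian bound \emph{together with all derivatives} for the modified Kodaira heat kernel, with constants $\varepsilon_0,C_0$ independent of $t$, and verifying that the BRT Euclidean distance $|z-w|$ matches the Riemannian distance $d(x,e^{-i\theta}\circ x)$ up to a uniform constant as $\theta$ ranges over the middle set. Standard parametrix and Duhamel arguments handle a single chart, but the transversal (not full) ellipticity of $\widetilde\Box_{b,m}$ forces one to work through the Fourier sector on each chart; propagating the estimate globally and making the synthesis of the local Fourier integrals into the global spectral representation \eqref{e-gue151023a} precise is the main technical burden. A secondary subtlety, already visible in the statement, is that one cannot expect the $a_s$ to be $t$-independent on all of $X$ — only their restriction to each stratum $X_{p_r}$ coincides with $\bigl(\sum_{s=1}^{p_r}e^{2\pi(s-1)mi/p_r}\bigr)\alpha_s(x)$ up to the Gaussian correction, which is consistent with the distinction the authors draw between \eqref{e-gue151108I} and \eqref{e-gue151108II}.
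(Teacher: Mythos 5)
Your proposal follows the paper's overall architecture: BRT reduction of $\widetilde\Box_{b,m}$ to a modified Kodaira Laplacian, a Fourier-angular representation of the heat kernel via the projection $Q_m$, a split of the $\theta$-integral into isotropy intervals (producing the character sum $\sum_{s=1}^{p_r}e^{2\pi(s-1)mi/p_r}$ times $t$-independent heat invariants $\alpha_s$) and a ``middle'' set controlled by the off-diagonal Gaussian and $\hat d$, and globalization of the $\alpha_s$ by rigidity of BRT data. This matches the paper's Sections~5.1--5.2 and~6 in outline.

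However there is a genuine gap in your first step. You write that ``standard parametrix and Duhamel arguments handle a single chart'' and that the main remaining burden is synthesis; in fact the paper explicitly shows (Subsection~1.7.3, equations around \eqref{e-gue150924aII}--\eqref{e-gue151025aI}) that the naive patched parametrix $\mathcal{P}_m(t)=\sum_j\chi_j(x)(\cdots e^{-t\Box^+_{B_j,m}}\cdots)\widetilde\chi_j(y)\circ Q_m$ does \emph{not} satisfy the heat equation asymptotically. The Duhamel error contains terms $L_{\ell,j}(\chi_j)\,P_{\ell,j}(e^{-t\Box^+_{B_j,m}}(z,w))\,\widetilde\chi_j$ which are of order $t^{-n-1/2}e^{-c|z-w|^2/t}$ near the diagonal, and because the initial condition is the projection $Q_m$ rather than the identity, the cutoffs cannot be arranged so that $L_{\ell,j}(\chi_j)$ vanishes where $\widetilde\chi_j\ne 0$ and $|z-w|\lesssim\sqrt t$. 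The paper resolves this by passing to the \emph{adjoint} heat equation: the kernels $A_{B_j,+}$ of Theorem~\ref{t-gue150607} satisfy $A'(t)u+A(t)(\widetilde\Box^+_{B,m}u)=0$ rather than $A'(t)u+\widetilde\Box^+_{B,m}(A(t)u)=0$, so the derivatives in Duhamel land on a different factor, the error $R(t)$ in Lemma~\ref{l-gue150628} is $O(e^{-\varepsilon/t})$, successive approximation (Proposition~\ref{t-gue150630}) is carried out on the adjoint side, and one takes adjoints at the end using self-adjointness (Theorem~\ref{t-gue150630I}). Without this or an equivalent substitute, the ``local Fourier representation'' you invoke is not available, and your proof does not get off the ground. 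A secondary problem: you define $a_s(t,x)$ as ``the actual on-diagonal expansion coefficients of $e^{-t\widetilde\Box_{b,m}}(x,x)$,'' but the existence of such an expansion is precisely the content of \eqref{e-gue160306}; the paper instead defines $a_s$ explicitly as the angular integrals in \eqref{e-gue150630gI} built from the BRT heat invariants and then proves they give the asymptotics.
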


The following is immediate from the proof of Theorem~\ref{t-gue160114}.

\begin{cor}
\label{c-gue160306} Suppose $(X, T^{1,0}X)$ is a compact, connected CR
manifold with a transversal CR locally free $S^1$ action. With the notations
above, for any $r=1,\ldots,k$, any differential operator $%
P_\ell:C^\infty(X,T^{*,\bullet}X\otimes E)\rightarrow
C^\infty(X,T^{*,\bullet}X\otimes E)$ of order $\ell\in\mathbb{N}_0$, every $%
N_0\in\mathbb{N}$ with $N_0\ge N_0(n)$ for some $N_0(n)$, there are $\varepsilon_0>0$, $\delta>0$ and $C_{N_0}>0$
such that
\begin{equation}  \label{e-gue160306I}
\begin{split}
&\left\vert P_\ell\Bigr(e^{-t\widetilde\Box_{b,m}}(x,x)-\bigl(\sum%
\limits^{p_r}_{s=1}e^{\frac{2\pi(s-1)}{p_r}mi}\bigr)\sum^{N_0}_{j=0}t^{-n+j}%
\alpha_{n-j}(x)\Bigr)\right\vert \\
&\leq C_{N_0}\Bigr(t^{-n+N_0+1-\frac{\ell}{2}}+t^{-n-\frac{\ell}{2}}e^{-\frac{\varepsilon_0\hat
d(x,X^{r}_{\mathrm{sing\,}})^2}{t}}\Bigr),\ \forall x\in X_{p_r},\ \forall\,
0<t<\delta.
\end{split}%
\end{equation}
\end{cor}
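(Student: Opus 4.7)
The plan is to derive the corollary directly from Theorem~\ref{t-gue160114} by the splitting
\[
e^{-t\widetilde\Box_{b,m}}(x,x) - \gamma_r\sum_{j=0}^{N_0} t^{-n+j}\alpha_{n-j}(x) \;=\; R_1(t,x) + R_2(t,x),
\]
where $\gamma_r := \sum_{s=1}^{p_r} e^{2\pi(s-1)mi/p_r}$, $R_1(t,x) := e^{-t\widetilde\Box_{b,m}}(x,x) - \sum_{j=0}^{N_0} t^{-n+j}a_{n-j}(t,x)$ is the truncation error for the asymptotic expansion \eqref{e-gue160306} at order $N_0$, and $R_2(t,x) := \sum_{j=0}^{N_0} t^{-n+j}\bigl(a_{n-j}(t,x) - \gamma_r\alpha_{n-j}(x)\bigr)$ is the defect incurred by replacing each $t$-dependent coefficient $a_{n-j}(t,x)$ by the $t$-independent $\gamma_r\alpha_{n-j}(x)$. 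The two pieces will be matched respectively to the two summands on the right of \eqref{e-gue160306I}, and summed by the triangle inequality.

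For $R_1$, the definition of $\sim$ (Definition~\ref{d-gue150608}), which includes the standard $t^{-\ell/2}$ derivative loss for a differential operator of order $\ell$, yields
\[
\bigl|P_\ell R_1(t,x)\bigr| \;\le\; C'_{N_0}\, t^{-n+N_0+1-\ell/2}, \qquad 0<t<\delta,\ x\in X,
\]
provided $N_0$ exceeds a purely dimensional threshold $N_0(n)$ that guarantees the Seeley-type remainder indeed lies $N_0+1$ orders below the leading $t^{-n}$ term even after $P_\ell$ has been applied. For $R_2$, fix $x\in X_{p_r}$ and apply \eqref{e-gue160114} to each of the finitely many terms to obtain constants $\varepsilon_j,C_j>0$ with
\[
\bigl|P_\ell\bigl(a_{n-j}(t,x) - \gamma_r\alpha_{n-j}(x)\bigr)\bigr| \;\le\; C_j\, t^{-\ell/2}\, e^{-\varepsilon_j \hat d(x,X^r_{\mathrm{sing\,}})^2/t}.
\]
Setting $\varepsilon_0 := \min_{0\le j\le N_0}\varepsilon_j > 0$, multiplying by $t^{-n+j}$, and absorbing the positive powers $t^j$ into the constant on the range $0<t<\delta$ produces
\[
\bigl|P_\ell R_2(t,x)\bigr| \;\le\; C''_{N_0}\, t^{-n-\ell/2}\, e^{-\varepsilon_0 \hat d(x,X^r_{\mathrm{sing\,}})^2/t}.
\]
Combining the two bounds by the triangle inequality gives \eqref{e-gue160306I}.

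No fresh analytic input beyond Theorem~\ref{t-gue160114} is needed; this is why the corollary is declared ``immediate'' from its proof. The only point demanding care is the bookkeeping order: $N_0$ must be fixed first (large enough so that the first bound beats the leading singularity after the $P_\ell$-induced loss $t^{-\ell/2}$), and only then is $\varepsilon_0$ extracted as the minimum of the finitely many $\varepsilon_j$; one should not try to let $N_0\to\infty$ without letting $\varepsilon_0$ shrink, since \eqref{e-gue160114} furnishes the constants $\varepsilon_j$ only term by term. This is the main (and essentially the only) obstacle.
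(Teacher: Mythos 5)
Your decomposition of the error into $R_1$ (truncation of the asymptotic expansion~\eqref{e-gue160306}) and $R_2$ (defect from replacing $a_{n-j}(t,x)$ by $\gamma_r\alpha_{n-j}(x)$, controlled by~\eqref{e-gue160114}) is exactly the route the paper takes via the proof of Theorem~\ref{t-gue160123} together with Remark~\ref{r-gue160123} and Corollary~\ref{t-gue160224}, and the $R_2$ bound is fine. However, the $R_1$ bound does not follow directly from Definition~\ref{d-gue150608} with $M_0=N_0$ as you assert: in this setting the expansion is inherited from the Kodaira heat kernel on a $2n$-real-dimensional chart $U_j$, so (by Remark~\ref{r-gue150608}) $M_1(m,\ell)=n+\tfrac{\ell}{2}$, and the definition gives only $t^{N_0-M_1(m,\ell)}=t^{-n+N_0-\ell/2}$, which is one full power of $t$ worse than the advertised $t^{-n+N_0+1-\ell/2}$. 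Raising the threshold $N_0(n)$ cannot close that gap, since both exponents shift by the same amount when $N_0$ increases.

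The missing step (which the paper carries out explicitly at the end of the proof of Theorem~\ref{t-gue160123}) is to apply Definition~\ref{d-gue150608} with $M_0=N_0+1$ and then fold the $(N_0{+}1)$-th term $t^{-n+N_0+1}a_{n-N_0-1}(t,x)$ back into the remainder, using $|P_\ell a_{n-N_0-1}(t,x)|\lesssim t^{-\ell/2}$; the latter follows from~\eqref{e-gue160114} together with the smoothness of $\alpha_{n-N_0-1}$. With that insertion your argument is complete. A separate small point: the precaution about taking a minimum of finitely many $\varepsilon_j$ is unnecessary, because in Theorem~\ref{t-gue160114} the existential quantifier over $\varepsilon_0, C_0$ precedes the universal quantifier over $s$ in~\eqref{e-gue160114}, so a single $\varepsilon_0$ already works for all $s$.
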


In the following we supplement these results with a number of remarks before going further. 

\begin{rem}
\label{r-gue150508} $(%
\sum\limits^{p_r}_{s=1}e^{\frac{2\pi(s-1)}{p_r}mi})=p_r$ if 
$p_r|m$; $(\sum\limits^{p_r}_{s=1}e^{\frac{%
2\pi(s-1)}{p_r}mi})=0$ if $p_r\not\vert m$.  
\end{rem}

\begin{rem}
\label{r-gue150508I}
We shall now see that if one wants an asymptotic expansion of $e^{-t\widetilde\Box_{b,m}}(x,x)$ to be valid 
around each $x\in X$ (cf. Definition~\ref{d-gue150608}), then 
\eqref{e-gue160306} is basically optimal (i.e. in general, $a_s(t, x)$ cannot be $t$-independent
for all $s$).  
For $U$ open with $\overline U\subset X_{p_1}$, a tradition-like formula (assuming $p_1=1$ for simplicity)
\begin{equation}
\label{e-gue150508eI}
e^{-t\widetilde\Box_{b,m}}(x,x)\sim C\big(
t^{-n}\alpha_n(x)+t^{-n+1}\alpha_{n-1}(x)+\cdots\big)
\end{equation}
is valid for $x\in U$ and $C=1$ (as follows from \eqref{e-gue160114} for $l=0$) whereas for $x\in X_{p_r}$, $r\ge 2$, 
an asymptotic expansion (for $p_r|m$) with $C=p_r$
is valid around an open subset ($\ni x$) of the stratum $X_{p_r}$. 
Since $e^{-t\widetilde\Box_{b,m}}(x,y)$ is going to be a well defined smooth kernel, 
it is easily seen that those functions $\alpha_s(x)$ ($s=n, n-1, \cdots$) satisfying Theorem~\ref{t-gue160114} are unique
(if they exist).  (We notice that $a_s(t, x)$ in \eqref{e-gue160306} are not 
canonically defined by our method which is subject to choice of 
BRT trivializations, cf. \eqref{e-gue150626fIII} and Subsection~\ref{s-gue150514}.) 
In short, the above suggests that an asymptotic expansion of the form as \eqref{e-gue150508eI} 
can only be true {\it in the piecewise sense} with respect to strata.   See also Subsection~\ref{s-gue160416s0}.

To confirm this, one uses Theorem~\ref{t-gue160416} (see Theorems~\ref{t-gue160416d} 
and ~\ref{t-gue160416dc}
for a more precise version) by noting 
$\int_X \mathrm{Tr}\alpha^+_s(x)dv_X(x)=S_{1, s}^+$ in Theorem~\ref{t-gue160416d}
(which is taking the ``even" part of the Laplacian).
Hence one can interpret the trace integral result 
(obtained by integrating $\mathrm{Tr}e^{-t\widetilde\Box_{b,m}}(x,x)$ over $X$) 
as one that gives extra nonzero {\it correction terms}, cf. the second line in \eqref{e-gue160416II} or
the third line in \eqref{e-gue160417wII}.  



It follows that if there exists a global asymptotic expansion (not just in the piecewise sense) such as \eqref{e-gue160306},
then not all of $a_s(t, x)$ can be independent of $t$.   Otherwise, if all $a_s(t,x)$ are independent of $t$, 
it would be of the form \eqref{e-gue150508eI}
globally by assumption ($C=1$ if $p_1=1$), so 
by integrating the trace over $X$, there would be no correction terms as discussed above.  
To say more, $e^{-t\widetilde\Box_{b,m}}(x,x)$ cannot have any asymptotic expansion 
of the form $t^{m_1}\beta_{m_1}(x)+t^{m_2}\beta_{m_2}(x)+\cdots$ (globally)
$m_1<m_2<,\cdots\in \mathbb{R}, \beta_{m_1}(x), \beta_{m_2}(x),\cdots$ continuous functions on $X$. 
Otherwise by equating it to \eqref{e-gue160306}, 
each $a_s(t, x)$ would be rendered independent of $t$, absurd as just remarked
(see the next remark for argument independent of Theorems~\ref{t-gue160416}, \ref{t-gue160416d}).  

The next remark shows that $a_s(t, x)$ for the particular $s=n$ 
must be dependent on $t$ (nontrivially).    This part will not use 
Theorem~\ref{t-gue160416}.  
\end{rem}

\begin{rem}
\label{r-gue160414a}   In the above remark a certain discontinuity 
in the form \eqref{e-gue150508eI} for, say $x\in X_{p_1}$ and 
$x\in X_{p_2}$ seems to appear.   We shall now explore it.  If the (Gaussian-like) term 
to the right of \eqref{e-gue160114} is examined, it  
arises from a precise integral (see \eqref{e-gue160125V}).
To show that this integral is generally nontrivial, regardless of whether 
our estimate given by \eqref{e-gue160114} is a fine or crude one, 
we are actually going to show that the term for $s=n$ in \eqref{e-gue160114}
\begin{equation*}
a_n(t,x)-\sum\limits^{p_r}_{s=1}e^{\frac{2\pi(s-1)}{p_r}mi}\alpha_{n}(x)
\end{equation*}
is nontrivial. For the sake of illustration we assume that $X=X_1\bigcup X_2$, that is $%
p_1=1$ and $p_2=2$, and take $m$ to be an even number.  For $x\in X_1$, 
by \eqref{e-gue160114} (for $l$=0) and $p_1=1$, we see that
$a_n(t,x)=\alpha_n(x)+r_n(t,x)$ 
\begin{equation}
\label{e-gue160414c}
\left\vert
r_n(t,x)\right\vert\lesssim e^{-\frac{\varepsilon_0\hat d(x,X_{\mathrm{sing\,%
}})^2}{t}}.
\end{equation}  As our $\alpha_n(x)$ essentially arises from a 
local Kodaira Laplacian (see \eqref{e-gue160224I}, similar to discussion after
Theorem~\ref{t-gue150508}), it is well known that $\alpha_n(x)$, as the coefficient of the leading term 
(in the $t$-expansion of the heat kernel for Kodaira Laplacian), is constant in $x$ 
with $\mathrm{Tr}\,\alpha_n>0$ (cf. \cite[Lemma 4.1.4 and Section 4.4] {Gi95}).      
By continuity  ($a_s$ and $\alpha_s$ being globally continuous functions) 
\begin{equation}
\label{e-gue160414bb}
a_n(t,x)=\alpha_n(x)+r_n(t,x)
\end{equation}
remains true on $X_2$.    For $x_0\in X_2$, the estimate of \eqref{e-gue160114} is given by %
(with $p_2=2$ and discussion after \eqref{e-gue160327aIe1} for $\hat d(x_0, \emptyset)>0$)
\begin{equation}  \label{e-gue160414b}
a_n(t,x_0)=2\alpha_n(x_0)+O(t^{\infty}).
\end{equation}
By \eqref{e-gue160414bb} and \eqref{e-gue160414b} it follows 
$r_n(t,x_0)=\alpha_n(x_0)+O(t^{\infty})$ so $r_n(t,x)\approx \alpha_n(x)$
around $x_0$ as $t\to 0$, giving $|r_n(t, x)|\ge \epsilon>0$ nearby $x_0$
for some constant $\epsilon$ independent of $x$ and $t$.   But this would be absurd by \eqref{e-gue160414c}
if $r_n$ were independent of $t$ (taking $x\,(\in X_1,\ne x_0)$ near $x_0$ so that $|r_n(t, x)|\ge \epsilon$
and letting $t\to 0$ in \eqref{e-gue160414c}). 
Hence $a_n(t, x)$ cannot be independent of $t$ either, as desired.  
\end{rem}

\begin{rem}
\label{r-gue160414ar} To discuss the estimate \eqref{e-gue160306I},
let's take $\hat d$ in \eqref{e-gue160306I} to be $d$ for convenience (as remarked previously $\hat d$ is equivalent to the ordinary distance function $d$ at least in the strongly pseudoconvex case, cf. Theorem~\ref{t-gue160413}).  
Take $P_l=\mathrm{id}$ (so $l=0$).  The term to the rightmost of \eqref{e-gue160306I} appears as a Gaussianlike term. 
As $t\to 0$, this term tends to a sort of Dirac delta function supported along the strata $X^r_{\mathrm{sing}}$
(with an extra singular factor $t^{-\frac{a-1}{2}}$, $a=\mathrm{dim}X^r_{\mathrm{sing}}$).  
This may conceptually explain the piecewise continuity nature just discussed in Remarks~\ref{r-gue150508I} 
and \ref{r-gue160414a} if the asymptotic expansion is to be expressed in something, without $t$-dependence,
such as $\alpha_s(x)$.    Conversely, the estimate as \eqref{e-gue160306I} involving a type of Dirac delta function
is conceptually reasonable under the piecewise continuity phenomenon 
in terms of $\alpha_s(x)$.   For more about this, some quantitative information may be available by Theorems~\ref{t-gue160416}, ~\ref{t-gue160416d} and \ref{t-gue160416dc}.  
\end{rem}

\begin{rem}
\label{r-gue160413a} We make a short comment on the coefficients $a_s(t, x)$ or $\alpha_j(x)$
in \eqref{e-gue160114} (the difference between $a_s(t, x)$ and 
$p_1\alpha_s(x)$ (at a given $x\in X_{p_1}$) is $O(t^{\infty})$ by \eqref{e-gue160114}; this is partly explained 
conceptually right below).  
For the standard (elliptic) case (of Dirac type)  it is well-known that
the coefficients of a heat kernel along the diagonal (by taking trace) are expressible in terms of the curvature and
its covariant derivatives (e.g. \cite{Gi95}).    In our transversally elliptic case (without bundle
$E$ for simplicity)
if $S^1$ action is globally free, it follows from the standard case above (cf. \eqref{e-gue150923bi}-\eqref{e-gue151108})
that these coefficients of the (transversal) heat kernel
are expressible in terms of the {\it tangential} curvature (and its derivatives) (cf. Section ~\ref{s-gue150508d}).    
In the locally free case the same results can be
achieved in view of the proof of Theorem~\ref{t-gue160114}, which basically arises from
a procedure of patching and successive approximations based on the local (transversal) heat kernels that give
the asymptotic approximations of the final (transversal) heat kernel (see Section~\ref{s-gue150920}
for details of an outline).
Since the local kernels can be so expressed as just said (at least on
the principal stratum), it follows from the asymptotic approximation (e.g.  Theorems 2.23 and 2.30
of \cite{BGV92} or Theorem~\ref{t-gue150630I} in our case) that the same (expression in tangential curvature and its 
derivatives) can be said for
the global kernel (on the principal stratum then followed by continuous extension
of this global kernel on $X$).   It is also of interest to consider the integral version of 
these coefficients, which is the topic of Section~\ref{s-gue160416} of this paper.  

\end{rem}

\subsubsection{A local index theorem for CR manifolds with $S^1$ action} 
\label{s-gue150508s3}

Here we discuss issues related to the index theorem we will prove.  
We recall that the term to the left of the inequality in \eqref{e-gue160114}
is basically nontrivial by Remark~\ref{r-gue160414a}.  In our formulation of index theorems, 
the contribution arising from such a term is expected to be removed.  
This can be done when
$\widetilde\Box_{b,m}$ is the $\mathrm{Spin}^c$ Kohn Laplacian (cf. \eqref{e-gue150525aII}).  
In this case, we show that 
taking supertrace in \eqref{e-gue160306I} ($P_l=\mathrm{id}$) and 
applying Getzler's rescaling technique to the off-diagonal estimate (see Subsection~\ref{s-gue151025}
for more) yield that the singular part $t^{-n}$ to the
rightmost of \eqref{e-gue160306I} can be removed
(see Subsection~\ref{s-gue151025f} and Section~\ref{s-gue150702} for a proof).
More precisely

\begin{thm}
\label{t-gue160307} Suppose $(X, T^{1,0}X)$ is a compact, connected CR
manifold with a transversal CR locally free $S^1$ action. With the notations
above, if $\widetilde\Box_{b,m}$ is the $\mathrm{Spin}^c$ Kohn Laplacian
(see \eqref{e-gue150525aII}), then for $r=1,\ldots,k$ and every $N_0\in
\mathbb{N}$ with $N_0\ge N_0(n)$ for some $N_0(n)$, 
there are $\varepsilon_0>0$, $\delta>0$ and $C_{N_0}>0$ such
that ($\mathrm{STr}$ denoting supertrace, cf. \eqref{e-gue160114bI})
\begin{equation}  \label{e-gue160224}
\begin{split}
&\left\vert \mathrm{STr\,}e^{-t\widetilde\Box_{b,m}}(x,x)-\bigl(\sum%
\limits^{p_r}_{s=1}e^{\frac{2\pi(s-1)}{p_r}mi}\bigr)\sum^{N_0}_{j=0}t^{-n+j}%
\mathrm{STr\,}\alpha_{n-j}(x)\Bigr)\right\vert \\
&\leq C_{N_0}\Bigr(t^{-n+N_0+1}+e^{-\frac{\varepsilon_0\hat d(x,X^r_{\mathrm{%
sing\,}})^2}{t}}\Bigr),\ \ \forall \,0<t<\delta,\ \ \forall x\in X_{p_r},
\end{split}%
\end{equation}
and
\begin{equation}  \label{e-gue160114I}
\begin{split}
&\sum^n_{\ell=0}t^{-\ell}\mathrm{STr\,}\alpha_\ell(x)dv_X(x) \\
&=\frac{1}{2\pi}\left[\mathrm{Td_b\,}(\nabla^{T^{1,0}X},T^{1,0}X)\wedge%
\mathrm{ch_b\,}(\nabla^{E},E)\wedge e^{-m\frac{d\omega_0}{2\pi}%
}\wedge\omega_0\right]_{2n+1}(x)
\end{split}%
\end{equation}
where $[...]|_{2n+1}$ to the right denotes the part of $(2n+1)$-form.  
\end{thm}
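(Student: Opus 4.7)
The two assertions of the theorem---the improved error estimate \eqref{e-gue160224} and the local index density identity \eqref{e-gue160114I}---both stem from applying Getzler's rescaling technique to the expansion already established in Corollary~\ref{c-gue160306}. The plan is first to take the supertrace in \eqref{e-gue160306I} and then to show, by rescaling at each point $x\in X_{p_r}$, that the singular prefactor $t^{-n}$ multiplying $e^{-\varepsilon_0 \hat d(x, X^r_{\mathrm{sing}})^2/t}$ may be dropped once the Clifford cancellations intrinsic to the $\mathrm{Spin}^c$ supertrace are exploited. Concretely, the contribution responsible for the $t^{-n}$ prefactor in Corollary~\ref{c-gue160306} arises from the off-diagonal values of local heat kernels along $S^1$-orbits pulled back to a neighborhood of a non-principal orbit; applying Getzler's rescaling to these off-diagonal kernels reduces the leading operator to a generalized harmonic oscillator on $\mathrm{End}(T^{*0,\bullet}X\otimes E)$ whose Mehler kernel has supertrace of order $1$ rather than $t^{-n}$. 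This is precisely the place where the $\mathrm{Spin}^c$ hypothesis on $\widetilde\Box_{b,m}$ is essential: only for this specific Laplacian does the $\mathrm{STr}$ produce the Berezin-type cancellations that kill the $t^{-n}$ singularity.

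For the local index density formula \eqref{e-gue160114I}, the plan is to work on a BRT chart around any point $x_0\in X_{p_1}$, where by Theorem~\ref{t-gue150508} and the construction of the $\mathrm{Spin}^c$ Kohn Laplacian the restriction of $\widetilde\Box_{b,m}$ to the $m$-th Fourier mode is identified with a local $\mathrm{Spin}^c$ Kodaira-type Laplacian acting on $(0,\bullet)$-forms valued in $E\otimes L^m$ on a piece of complex manifold, where $L$ is the local line bundle carrying the Hermitian connection whose curvature is proportional to $d\omega_0$. Getzler's rescaling on the diagonal of this local Kodaira heat kernel is classical: the supertraces of its subleading coefficients all vanish, while the leading one equals the top-degree part of $\mathrm{Td}(R^{T^{1,0}})\wedge\mathrm{ch}(R^E)\wedge e^{-mR^L/(2\pi)}$ written in the BRT complex coordinates. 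Translating back to $X$ via the BRT identification converts the complex characteristic forms into the tangential ones $\mathrm{Td_b}(\nabla^{T^{1,0}X},T^{1,0}X)$ and $\mathrm{ch_b}(\nabla^E,E)$, the curvature $R^L$ into $d\omega_0$, and the complex volume form into $dv_X$; the latter conversion produces the additional wedge with $\omega_0$ and the prefactor $\frac{1}{2\pi}$. Uniqueness of the $\alpha_\ell$ in Theorem~\ref{t-gue160114} together with density of $X_{p_1}$ in $X$ then extends the identity to all of $X$ by continuity.

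The main obstacle will be executing the off-diagonal Getzler rescaling uniformly up to the stratum boundaries. Near $X^r_{\mathrm{sing}}$ the Gaussian-like term concentrates on a submanifold and the naive estimate inherited from Corollary~\ref{c-gue160306} carries the unwanted prefactor $t^{-n}$ coming from the Euclidean volume in rescaled coordinates; one must identify the rescaled limit operator as a generalized harmonic oscillator whose Mehler-type kernel is supertraceless at all orders except the one producing the index density, and verify that this identification holds uniformly in $x\in X_{p_r}$ as $x$ approaches a point of the smaller stratum $X^{r-1}_{\mathrm{sing}}\setminus X^r_{\mathrm{sing}}$. The multiplicity factor $\sum_{s=1}^{p_r}e^{2\pi(s-1)mi/p_r}$ that appears in \eqref{e-gue160224} is inherited directly from Corollary~\ref{c-gue160306} and requires no further work, so once the uniform rescaled bound is in hand the supertrace estimate and the characteristic form identity both follow.
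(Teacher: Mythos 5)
Your proposal correctly identifies the two inputs — Getzler rescaling applied off-diagonal, and a Berezin/supertrace cancellation — and the overall architecture (work locally in BRT charts, identify $\widetilde\Box_{b,m}$ with a local $\mathrm{Spin}^c$ Kodaira-type Laplacian on $(0,\bullet)$-forms valued in $E\otimes L^m$, rescale, convert to tangential characteristic forms) is the right one and matches the paper for the index-density formula \eqref{e-gue160114I}.

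There is however a genuine gap in your treatment of the error estimate \eqref{e-gue160224}, and it concerns the precise form of the cancellation that kills the $t^{-n}$ factor. You describe the mechanism as the rescaled limit operator being ``a generalized harmonic oscillator whose Mehler kernel has supertrace of order $1$ rather than $t^{-n}$.'' That statement, as phrased, is about the on-diagonal rescaled limit of the full heat kernel, and it is not the operative ingredient. What is actually required is an \emph{off-diagonal Taylor-vanishing} statement about the $t$-independent coefficients $b_s(z,w)$ appearing in the local expansion $K_{B,+}(t,z,w)\sim t^{-n}b_n(z,w)+t^{-n+1}b_{n-1}(z,w)+\cdots$, namely
\begin{equation*}
\mathrm{STr_z}\,b_s(z,w)=O(|z-w|^{2s}),\quad s=n,n-1,\dots,1,
\end{equation*}
locally uniformly in $(z,w)$; this is the paper's Theorem~\ref{t-gue150627g}. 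Its proof does use Getzler's rescaling and the Berezin integral identity, but the conclusion is about the low-order Taylor coefficients of the top Clifford component $[b_{s,\alpha}]_{2n}$ vanishing, not about a Mehler supertrace being $O(1)$. Once this estimate is in hand, the singular factor $t^{-n}$ in the Gaussian term is removed by the elementary observation $e^{-\varepsilon x^2/t}(x^2/t)^k\le C_{k,\varepsilon}e^{-\varepsilon x^2/(2t)}$ applied to $t^{-s}\,\mathrm{STr}\,b_s(z,z_u)=t^{-s}O(|z-z_u|^{2s})$ inside the $S^1$-angular integral defining $a_s(t,x,x)$; no additional ``uniformity up to the stratum boundary'' argument is needed, since the off-diagonal estimate is local-uniform on compact subsets of $U_j\times U_j$ by construction, and the dependence on $\hat d(x,X^r_{\mathrm{sing}})$ is already packaged into the angular-integral bound from the trace version (Theorem~\ref{t-gue160123}). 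Your stated ``main obstacle'' — uniformity as $x$ approaches the smaller stratum — is therefore not where the difficulty lies; the actual crux is the Taylor-vanishing estimate for $\mathrm{STr}\,b_s$, and as written your plan does not supply it.
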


As $\mathrm{Spin}^c$ objects can be simplified in the
K\"ahler case, so can the  $\mathrm{Spin}^c$ Kohn Laplacian in the
{\it CR K\"ahler case}, to which we turn now.

\begin{defn}
\label{d-gue160308} We say that $X$ is CR K\"ahler if there is a closed form
$\Theta\in C^\infty(X,T^{*1,1}X)$ such that $\Theta(Z,\overline Z)>0$, for
all $Z\in C^\infty(X,T^{1,0}X)$. We call $\Theta$ a CR K\"ahler form on $X$.
\end{defn}

When $X$ is a strongly pseudoconvex CR manifold with a transversal CR locally
free $S^1$ action, the closed form $d\omega_0$ satisfies $%
d\omega_0(Z,\overline Z)>0$, for all $Z\in C^\infty(X,T^{1,0}X)$.  Hence $X
$ is CR K\"ahler. 

A {\it quasi-regular Sasakian} manifold is also a CR
K\"ahler manifold.  We recall that for a compact smooth manifold $X$ of $%
\mathrm{dim} X=2n+1, n\geq1$, the triple $(X, g,\alpha)$ where $g$ is a
Riemannian metric and $\alpha$ is a real 1-form is called a Sasakian manifold if
the cone $\mathcal{C}(X)=\{(x,t)\in X\times \mathbb{R}_{>0}\}$ is a K\"ahler
manifold with complex struture $J$ and K\"ahler form $t^2d\alpha+2tdt\wedge%
\alpha$ compatible with the metric $t^2g +dt\otimes dt$ (see \cite%
{Bl76}, \cite{BG08}, \cite{OV07}). As a consequence, $X$ is a compact
strongly pseudoconvex CR manifold and the Reeb vector field $\xi$, defined
by $\alpha(\cdot)=g(\xi,\cdot)$, induces a transversal CR $\mathbb{R}$ action 
on $X$. If the orbits of this $\mathbb{R}$ action are compact, the
Sasakian structure is called quasi-regular. In this case, the Reeb vector
field generates a locally free transversal CR $S^1$ action on $X$. We 
can thus identify a compact quasi-regular Sasakian manifold with a compact
strongly pseudoconvex CR manifold $(X,T^{1,0}X)$ equipped with a transversal CR
locally free $S^1$ action such that the induced vector field of the $S^1$ action
coincides with the Reeb vector field on $X$ (see \cite{OV06}, \cite{OV07}).

Let $X$ be a CR K\"ahler manifold with a transversal CR locally free $S^1$
action. If $\langle\,\cdot\,|\,\cdot\,\rangle$ is induced by a CR K\"ahler
form on $X$, then $\Box_{b,m}$ {\it is equal to} the $\mathrm{Spin}^c$ Kohn
Laplacian.   By Theorem~\ref{t-gue160307}, we immediately obtain a 
version of {\it local index theorem} on CR K\"ahler manifolds with transversal CR locally
free $S^1$ action (which include the compact quasi-regular Sasakian manifolds
as a special case by above).   These results are discussed below. 

For a proof of the following, see the beginning of Subsection~\ref{s-gue151025f}
and the discussion leading to Proposition~\ref{t-gue150627}):

\begin{cor}
\label{t-gue160308} (CR K\"ahler case of Theorem~\ref{t-gue160307}) 
Suppose $(X, T^{1,0}X)$ is a compact, connected CR
K\"ahler manifold with a transversal CR locally free $S^1$ action and assume
that $\langle\,\cdot\,|\,\cdot\,\rangle$ is induced by a CR K\"ahler form on
$X$. With the notations above, for $r=1,\ldots,k$ and every $N_0\in%
\mathbb{N}$ with $N_0\ge N_0(n)$ for some $N_0(n)$, there are $\varepsilon_0>0$, $\delta>0$ and $C_{N_0}>0$ such
that
\begin{equation}  \label{e-gue160308}
\begin{split}
&\left\vert \mathrm{STr\,}e^{-t\Box_{b,m}}(x,x)-\sum\limits^{p_r}_{s=1}e^{%
\frac{2\pi(s-1)}{p_r}mi}\sum^{N_0}_{j=0}t^{-n+j}\mathrm{STr\,}\alpha_{n-j}(x)%
\Bigr)\right\vert \\
&\leq C_{N_0}\Bigr(t^{-n+N_0+1}+e^{-\frac{\varepsilon_0\hat d(x,X^r_{\mathrm{%
sing\,}})^2}{t}}\Bigr),\ \ \forall \,0<t<\delta,\ \ \forall x\in X_{p_r},
\end{split}%
\end{equation}
and
\begin{equation}  \label{e-gue160308I}
\begin{split}
&\sum^n_{\ell=0}t^{-\ell}\mathrm{STr\,}\alpha_\ell(x)dv_X(x) \\
&=\frac{1}{2\pi}\left[\mathrm{Td_b\,}(\nabla^{T^{1,0}X},T^{1,0}X)\wedge%
\mathrm{ch_b\,}(\nabla^{E},E)\wedge e^{-m\frac{d\omega_0}{2\pi}%
}\wedge\omega_0\right]_{2n+1}(x).
\end{split}%
\end{equation}
\end{cor}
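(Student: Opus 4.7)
The plan is to deduce this corollary immediately from Theorem~\ref{t-gue160307} by verifying the assertion made in the paragraph just before the statement: under the CR K\"ahler hypothesis with $\langle\,\cdot\,|\,\cdot\,\rangle$ induced by a CR K\"ahler form, the Spin$^c$ Kohn Laplacian $\widetilde\Box_{b,m}$ coincides with the ordinary Kohn Laplacian $\Box_{b,m}$ on $\Omega^{0,\bullet}_m(X,E)$. Once this is in hand, \eqref{e-gue160308} is obtained by substituting $\Box_{b,m}$ for $\widetilde\Box_{b,m}$ in \eqref{e-gue160224}, and \eqref{e-gue160308I} is then identical to \eqref{e-gue160114I}, since the right-hand side of the latter depends only on the tangential connections on $T^{1,0}X$ and $E$, which are intrinsic to $X$ and hence unaffected by the choice between $\Box_{b,m}$ and $\widetilde\Box_{b,m}$.

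To prove $\widetilde\Box_{b,m} = \Box_{b,m}$, recall from \eqref{e-gue151113} that the modification consists of adding a zeroth-order operator $A_m$ (from Definition~\ref{d-gue150524}) to $\overline\partial_{b,m} + \overline\partial^{*}_{b,m}$; in view of \eqref{e-gue151113y}, it therefore suffices to show $A_m = 0$ on $\Omega^{0,\bullet}_m(X,E)$ in the CR K\"ahler case with the compatible metric. I would verify this locally in a BRT trivialization (Section~\ref{s-gue150514}), where $X$ is identified with $U \times (-\delta,\delta)$ for an open $U \subset \mathbb{C}^n$, $T = \partial/\partial\theta$, and the CR structure is the pullback of the standard complex structure on $U$. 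Under the CR K\"ahler hypothesis the induced Hermitian metric on $U$ is a genuine K\"ahler metric. The construction of the CR Spin$^c$ Dirac operator in such a chart, after restriction to the Fourier component $\Omega^{0,\bullet}_m$, reduces to the classical Spin$^c$ Dirac operator on $U$ twisted by an appropriate power of the line bundle encoding weight $m$. The classical K\"ahler identity $D = \sqrt{2}(\overline\partial + \overline\partial^{\,*})$ then forces the zeroth-order contribution $A_m$ to vanish identically in the chart. By rigidity of all the objects involved (cf.\ Definitions~\ref{d-gue50508d}, \ref{d-gue150508dI}, \ref{d-gue150514f}) and the compatibility of BRT trivializations under transitions, the local vanishing patches to $A_m \equiv 0$ on $X$.

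The main (and essentially only) obstacle is this clean identification $A_m = 0$, which requires tracking the precise definition of the CR Spin$^c$ modification and matching it, chart by chart, against the classical K\"ahler identity via the BRT correspondence; this is the content of the discussion leading to Proposition~\ref{t-gue150627}. With $A_m = 0$ established, the equality
\begin{equation*}
\widetilde\Box_{b,m} \;=\; (\overline\partial_{b,m}+\overline\partial^{*}_{b,m})^{2} \;=\; \Box_{b,m}
\end{equation*}
is formal, and Theorem~\ref{t-gue160307} applied verbatim yields \eqref{e-gue160308} and \eqref{e-gue160308I} with exactly the same $\varepsilon_0, \delta, C_{N_0}$ and $\alpha_{n-j}(x)$. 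No further analytic work (heat kernel construction, Getzler rescaling, or off-diagonal estimates) is needed, since all of that has already been carried out at the level of $\widetilde\Box_{b,m}$ in Theorem~\ref{t-gue160307}.
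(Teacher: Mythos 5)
Your proposal is correct and follows the paper's own route: the paper explicitly states that under the CR K\"ahler hypothesis with the metric induced by a CR K\"ahler form, the $\mathrm{Spin}^c$ Kohn Laplacian $\widetilde\Box_{b,m}$ coincides with $\Box_{b,m}$ (because, in each BRT chart, the K\"ahler condition on $U$ makes the local Clifford connection a Chern connection so that $A_B=0$ in \eqref{e-gue150524faI}, hence $A_m=0$ globally by Definition~\ref{d-gue150524}), and the corollary then reads off directly from Theorem~\ref{t-gue160307}. The only cosmetic issue is that the identity $D=\sqrt{2}(\overline\partial+\overline\partial^*)$ you quote uses the Ma--Marinescu normalization, whereas the paper's \eqref{e-gue150524faI} already has the $\sqrt{2}$ absorbed, but this has no bearing on the argument.
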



We are in a position to state an index theorem
(including a local index theorem in the CR K\"ahler case).
Recall 
$\overline\partial_{b,m}:=\overline\partial_b:\Omega^{0,q}_m(X,E)\rightarrow%
\Omega^{0,q+1}_m(X,E), m\in\mathbb{Z}$, 
and a $\overline\partial_{b,m}$-complex:
\begin{equation*}
\overline\partial_{b,m}:\cdots\rightarrow\Omega^{0,q-1}_m(X,E)\rightarrow%
\Omega^{0,q}_m(X,E)\rightarrow\Omega^{0,q+1}_m(X,E)\rightarrow\cdots.
\end{equation*}
The $q$-th $\overline\partial_{b,m}$ Kohn-Rossi cohomology
group (regarded as the $m$-th Fourier compoment of the ordinary $q$-th Kohn-Rossi cohomology group)
is
\begin{equation*}
H^q_{b,m}(X,E):=\frac{\mathrm{Ker\,}\overline\partial_{b,m}:%
\Omega^{0,q}_m(X,E)\rightarrow\Omega^{0,q+1}_m(X,E)}{\mathrm{Im\,}%
\overline\partial_{b,m}:\Omega^{0,q-1}_m(X,E)\rightarrow\Omega^{0,q}_m(X,E)}.
\end{equation*}

We will prove in Theorem~\ref{t-gue150517II} that there holds $\mathrm{dim\,}H^q_{b,m}(X,E)<\infty$ (for each $m\in\mathbb{Z}$
and $q=0,1,2,\ldots,n$) without any Levi curvature
assumption.  

In Corollary~\ref{t-gue150603} (see
also Remark~\ref{r-gue151003}) we have a \emph{McKean-Singer type
formula} in our CR case: for every $t>0$,
\begin{equation}  \label{e-gue160114rq}
\sum^n_{j=0}(-1)^j\mathrm{dim\,}H^j_{b,m}(X,E)=\int_X\mathrm{STr\,}e^{-t%
\widetilde{\Box}_{b,m}}(x,x)dv_X.
\end{equation}



Combining  \eqref{e-gue160114rq}, \eqref{e-gue160224} and \eqref{e-gue160114I}
and noting $e^{-\frac{\varepsilon_0\hat d(x,X_{\mathrm{sing\,}})^2}{t}}$
is bounded by $1$ and rapidly decays to $0$ for $x$ in the principal stratum as $t\to 0$, 
we conclude the following form of an index theorem
on our CR manifolds (see  Section~\ref{s-gue150508d} for the precise meanings
of $\mathrm{Td_b\,}(T^{1,0}X)$ and $\mathrm{ch_b\,}(E)$ below):

\begin{cor}
\label{c-gue150508I}  (CR Index Theorem, cf. Corollary~\ref{t-gue160226a}) Suppose $(X, T^{1,0}X)$ is a compact, connected CR
manifold with a transversal CR locally free $S^1$ action. Then
\begin{equation}  \label{e-gue150508b}
\begin{split}
&\sum^n_{j=0}(-1)^j\mathrm{dim\,}H^j_{b,m}(X,E) \\
&=(\sum^{p(=p_1)}_{s=1}e^{\frac{2\pi(s-1)}{p}mi})\frac{1}{2\pi}\int_X\mathrm{Td_b\,}%
(T^{1,0}X)\wedge\mathrm{ch_b\,}(E)\wedge e^{-m\frac{d\omega_0}{2\pi}%
}\wedge\omega_0,
\end{split}%
\end{equation}
where $\mathrm{Td_b\,}(T^{1,0}X)$ denotes the tangential Todd class of $%
T^{1,0}X$ and $\mathrm{ch_b\,}(E)$ denotes the tangential Chern character of
$E$.
\end{cor}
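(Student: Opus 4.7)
The plan is to combine the McKean--Singer formula \eqref{e-gue160114rq}, the localized supertrace expansion \eqref{e-gue160224} on the principal stratum, and the local characteristic-form identity \eqref{e-gue160114I}, and then let $t\to 0^+$. The left-hand side of \eqref{e-gue150508b} does not depend on $t$, while the individual ingredients on the right carry explicit $t$-dependence, so all the $t$-dependence must cancel in the end.

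First I would use \eqref{e-gue160114rq} to rewrite the alternating sum of Betti numbers as $\int_X \mathrm{STr\,}e^{-t\widetilde{\Box}_{b,m}}(x,x)\,dv_X$, valid for every $t>0$. The singular set $X_{\mathrm{sing}}=\bigcup_{r\ge 2}X_{p_r}$ is a finite union of embedded submanifolds of real codimension at least two, hence has $dv_X$-measure zero, so the integration can be replaced by integration over the principal stratum $X_p$. On $X_p$, estimate \eqref{e-gue160224} with $r=1$ gives, for any $N_0\ge N_0(n)$ and sufficiently small $t$,
\begin{equation*}
\mathrm{STr\,}e^{-t\widetilde\Box_{b,m}}(x,x)=c_p\sum_{j=0}^{N_0}t^{-n+j}\,\mathrm{STr\,}\alpha_{n-j}(x)+R_{N_0}(t,x),
\end{equation*}
where $c_p:=\sum_{s=1}^{p}e^{2\pi(s-1)mi/p}$ and $|R_{N_0}(t,x)|\le C_{N_0}(t^{-n+N_0+1}+e^{-\varepsilon_0\hat d(x,X_{\mathrm{sing}})^2/t})$.

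Next I would integrate this identity over $X$ and let $t\to 0^+$. Choosing $N_0$ large enough, the polynomial error $t^{-n+N_0+1}$ integrates to a quantity vanishing in the limit. For the Gaussian term, $\hat d(x,X_{\mathrm{sing}})>0$ pointwise on $X_p$, so $e^{-\varepsilon_0\hat d^2/t}\to 0$ pointwise while being bounded by $1$; since $\mathrm{vol}(X)<\infty$, the dominated convergence theorem yields that its integral over $X$ tends to $0$. Therefore the contribution of $R_{N_0}$ disappears as $t\to 0^+$. At this point \eqref{e-gue160114I} identifies, pointwise on $X$, the $t$-dependent $(2n+1)$-form $\sum_{\ell=0}^n t^{-\ell}\,\mathrm{STr\,}\alpha_\ell(x)\,dv_X(x)$ with the $t$-independent top-degree characteristic form $\tfrac{1}{2\pi}[\mathrm{Td}_b(\nabla^{T^{1,0}X},T^{1,0}X)\wedge\mathrm{ch}_b(\nabla^E,E)\wedge e^{-md\omega_0/2\pi}\wedge\omega_0]_{2n+1}$; comparing powers of $t$ forces $\mathrm{STr\,}\alpha_\ell(x)\,dv_X(x)=0$ for $\ell\ge 1$, while the $\ell=0$ coefficient supplies precisely the claimed characteristic-form integrand. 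Multiplying by the character sum $c_p$ produces \eqref{e-gue150508b}.

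The main obstacle is to control the passage to $t\to 0^+$ across the stratification: the Gaussian tails $e^{-\varepsilon_0\hat d^2/t}$ concentrate near $X_{\mathrm{sing}}$ as $t\to 0$, and one must ensure they do not produce spurious contributions to the integral. This is handled by the positive codimension of $X_{\mathrm{sing}}$, the boundedness of the tail by $1$, and the fact (after \eqref{e-gue160327aIe1} and Theorem~\ref{t-gue160413}) that $\hat d$ is equivalent to the ordinary Riemannian distance to $X_{\mathrm{sing}}$. The deeper analytic input---that all singular negative powers of $t$ actually cancel in the limit---has been absorbed in advance into the pointwise identity \eqref{e-gue160114I}, itself proved via Getzler rescaling applied to the off-diagonal estimates. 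Granted these two ingredients, the remainder of the argument is a careful but otherwise routine integration-and-limit computation, with the local freeness of the $S^1$ action entering solely through the combinatorial constant $c_p=\sum_{s=1}^{p}e^{2\pi(s-1)mi/p}$.
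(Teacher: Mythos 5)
Your argument is correct and follows essentially the same route as the paper: apply the McKean--Singer formula \eqref{e-gue160114rq}, expand the supertrace on the principal stratum via \eqref{e-gue160224}, kill the Gaussian error by dominated convergence (using that $\hat d(x,X_{\mathrm{sing}})>0$ on $X_p$ and the tail is bounded by $1$), and identify the surviving $t^0$-coefficient with the characteristic form via \eqref{e-gue160114I}. One small observation: you invoke Theorem~\ref{t-gue160413} (equivalence of $\hat d$ with the Riemannian distance), but that result requires strong pseudoconvexity, which is not assumed here; as you also note, however, the dominated convergence step needs only pointwise positivity of $\hat d$ on $X_p$ and boundedness of the exponential, so the equivalence is not actually used and there is no gap. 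Your remark that $\mathrm{STr\,}\alpha_\ell\,dv_X=0$ for $\ell\geq 1$ is a clean way to phrase what the paper derives from the Getzler-rescaling computation \eqref{e-gue150627}, and it accounts for why the $t\to 0^+$ limit exists despite the negative powers of $t$.
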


For a connection with other works on index theorems by different formulations and methods, 
we refer to comments that come after Theorem~\ref{t-gue150802} and to the sixth paragraph in 
Subsection~\ref{s-gue160416s0}.  
 
\subsubsection{Trace integrals in terms of geometry of the $S^1$ stratification}
\label{s-gue150508s4}
This is the third and the last topic of this paper.  
For some applications (e.g. see a natural connection 
with Remark~\ref{r-gue150508I}), one studies the asymptotic behavior of $%
\int_X\mathrm{Tr}\,e^{-t\widetilde\Box_{b,m}}(x,x)dv_X(x)$. 
More comments on the historical respect come in the beginning of Section~\ref{s-gue160416}. 
Suppose $M$ is an orbifold of
(real) dimension $k$ and $H(t,x,x)$ is the associated heat kernel on the
diagonal for the standard Laplacian on $M$. It is known in 2008 ~\cite{DGGW08} (for Laplacian 
on functions; see also Richardson~\cite[Theorem 3.5]{Ri10}) that
\begin{equation}  \label{e-gue160416}
\int_MH(t,x,x)dv_X(x)\sim t^{-\frac{k}{2}}a_k+t^{-\frac{k}{2}+\frac{1}{2}%
}a_{k-\frac{1}{2}}+t^{-\frac{k}{2}+1}a_{k-1}+t^{-\frac{k}{2}+\frac{3}{2}%
}a_{k-\frac{3}{2}}+\cdots,
\end{equation}
where $a_s\in\mathbb{R}$ is independent of $t$, $s=k,k-\frac{1}{2},k-1,\ldots
$.    A novelty is that apart from the overall $t^{-\frac{k}{2}}$ the expansion is a power series 
in $t^{\frac{1}{2}}$.  

By a strategy partly in connection with the proof of Theorem~\ref{t-gue160114}, 
we obtain an expansion of the trace integral similar to 
\eqref{e-gue160416} in spirit.  We find that in our case, the expansion 
is a power series still in {\it integral} power of $t$.   However, 
there appear various {\it corrections} (depending on $m$) supported on each stratum (cf. \eqref{e-gue160416cde13}
and \eqref{e-gue160416r2e2}) in contrast to the expansion in the globally free case (of $S^1$ action).

More precisely, we have (see 
Theorems~\ref{t-gue160416d} and ~\ref{t-gue160416dc} for more 
information and proof):

\begin{thm} (cf. Theorems~\ref{t-gue160416d}, ~\ref{t-gue160416dc})
\label{t-gue160416} With notations in Theorem~\ref{t-gue160114} and assumption that 
the $S^1$ action is locally free but not globally free,
let $e$ be the number (which is even) defined to be 
the minimum of the (real) codimensions of connected components $M$ of $X_{p_\ell}$ for all $\ell\ge 2$.  
For $s=n,n-1,\ldots$, we have 
\begin{equation}  \label{e-gue160416I}
\int_X\mathrm{Tr}\,a_{s,m}(t,x)dv_X(x)\sim q_{s,0}
+tq_{s,1}
+t^2q_{s, 2}\ldots\ \ \mbox{as $t\To0^+$},
\end{equation}
where $a_{s,m}(t,x)\,(=a_{s}(t, x))\,$ is as in \eqref{e-gue160306} and $q_{s, j}\in\mathbb{R}$ is
independent of $t$ (dependent on $m$ though), $j=0,1, 2,\cdots$.   
Similarly, as $t\To0^+$, 
\begin{equation}  \label{e-gue160416II}\begin{split}
\int_X\mathrm{Tr}\,e^{-t\widetilde\Box_{b,m}}(x,x)dv_X(x)\sim 
(\sum\limits^{p_1}_{s=1}e^{\frac{i2\pi(s-1)}{p_1}m})\big(t^{-n}c_{n}
&+t^{-n+1}c_{n-1}
+t^{-n+2}c_{n-2}+\cdots\big)\\
&\,\, +t^{-n+\frac{e}{2}}\tilde c_{n-\frac{e}{2}}+O(t^{-n+\frac{e}{2}+1}). 
\end{split}
\end{equation}
These coefficients satisfy the following.  For an $\ell\ge 2$, write $\{M_{\ell, \gamma_\ell}\}_{\gamma_\ell}$ 
(possibly empty for some $\ell$) for those connected 
components $M_{\ell,\gamma_\ell}$ of $X_{p_\ell}$ with the codimension 
$\mathrm{codim}\,M_{\ell, \gamma_\ell}=e$.
Set $S_{\ell, \gamma_\ell, s, m}=\int_{M_{\ell, \gamma_\ell}}\mathrm{Tr}\,\alpha_{s,m}dv_{M_{\ell, \gamma_\ell}}$ where
$\alpha_{s,m}\,(=\alpha_{s})$ is as in \eqref{e-gue160114} and the numerical factor 
\begin{equation}\label{e-gue160416III}D_{\ell, m}=(\sqrt{\pi})^{e}
\sum_{\scriptstyle c, h\in \mathbb{N}, (h, c)=1\atop\scriptstyle c>1, c|p_\ell,  c\not\\ | p_1}
\frac{e^{-i\frac{2\pi h}{c}m}}{\left\vert e^{i\frac{2\pi h}{c}p_1}-1\right\vert^e}\quad\mbox{\,\,$(>0$ \, if \, $p_\ell |m)$}.
\end{equation}

i) $q_{s, 1}=q_{s, 2}=\cdots=q_{s, \frac{e}{2}-1}=0$, 
$q_{s, 0}=(\sum\limits^{p_1}_{s=1}e^{\frac{i2\pi(s-1)}{p_1}m})\int_X\mathrm{Tr}\,\alpha_{s, m}dv_X$ $(s=n,,n-1,n-2,\ldots)$.

ii) $q_{s, \frac{e}{2}}$ 
is (a finite sum) of the form $\sum_{\ell,\gamma_\ell}D_{\ell,m}S_{\ell,\gamma_\ell,s, m}$ $(s=n,,n-1,n-2,\ldots)$.  

iii) $c_{s}=\int_X\mathrm{Tr}\,\alpha_{s,m}dv_X$ $(s=n,,n-1,n-2,\ldots)$.  

iv) $\tilde c_{n-\frac{e}{2}}=(2\pi)^{-(n+1)}\sum_{\ell,\gamma_\ell}D_{\ell,m}\mathrm{vol}\,(M_{\ell,\gamma_\ell})$,
($\mathrm{vol}=\mathrm{volume}$), which is $>0$ if $p_\ell |m$ for each $\ell$ here.  
\end{thm}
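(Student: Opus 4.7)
The plan is to obtain the trace integral expansions \eqref{e-gue160416I}--\eqref{e-gue160416II} by integrating the pointwise asymptotic from Theorem~\ref{t-gue160114} stratum by stratum and quantifying precisely how the Gaussian-like remainder $e^{-\varepsilon_0\hat d(x,X^r_{\mathrm{sing}})^2/t}$ contributes upon integration near each singular component. First I would decompose $X=X_{p_1}\sqcup\bigsqcup_{\ell\ge 2}X_{p_\ell}$ and, for each connected component $M_{\ell,\gamma_\ell}$ of a singular stratum, introduce Fermi/normal coordinates $(z,y)$ on a tubular neighborhood, where $z$ parametrizes $M_{\ell,\gamma_\ell}$ and $y$ runs in the normal fibre. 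Since the $S^1$-isotropy $\mathbb{Z}/p_\ell$ at a point of $X_{p_\ell}$ acts complex-linearly on $T^{1,0}X$, the normal bundle inherits a complex structure, so $\mathrm{codim}\,M_{\ell,\gamma_\ell}$ is even and hence $e$ is even as asserted.

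Next, for the principal-stratum contribution I apply \eqref{e-gue160114} with $r=1$ and $P_\ell=\mathrm{id}$ to write $a_{s,m}(t,x)=p_1\alpha_{s,m}(x)+r_s(t,x)$ on $X_{p_1}$ with $|r_s(t,x)|\le C\,e^{-\varepsilon_0\hat d(x,X_{\mathrm{sing}})^2/t}$. Integration of the main term over $X_{p_1}$ (equivalently over $X$, since $\bigcup_{\ell\ge 2}X_{p_\ell}$ has measure zero) yields the leading coefficient $q_{s,0}$ asserted in (i). The remainder $r_s$ is restricted further to tubular neighborhoods of the $M_{\ell,\gamma_\ell}$, outside of which its integral is $O(t^\infty)$. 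In normal coordinates, the infimum defining $\hat d(x,X^1_{\mathrm{sing}})$ in \eqref{e-gue160327aIe1} is realized by isotropy elements of $M_{\ell,\gamma_\ell}$ acting on the nearby point $x$, and one verifies $\hat d(x,X^1_{\mathrm{sing}})^2\sim c_\ell(z)|y|^2$ to leading order with $c_\ell(z)>0$ explicit. Scaling $y=\sqrt{t}\,u$, Taylor expanding the smooth coefficients in $y$, and applying Gaussian integration $\int_{\mathbb{R}^e} e^{-\varepsilon_0 c_\ell|u|^2}u^\alpha\,du$ produces an asymptotic series in integer powers of $t$ starting at $t^{e/2}$; odd normal moments vanish, so no half-integer powers appear and the intermediate coefficients $q_{s,1}=\cdots=q_{s,e/2-1}$ all vanish, yielding $q_{s,e/2}$ as a sum over codimension-$e$ components as in (ii).

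The main technical hurdle is deriving the explicit form of the phase factor $D_{\ell,m}$. For this I would combine the representation \eqref{e-gue150923bi} of $e^{-t\widetilde\Box_{b,m}}$ as $Q_m\circ B_m(t)\circ Q_m$ with a BRT trivialization near $M_{\ell,\gamma_\ell}$: on the slice, $Q_m$ is an average over the isotropy group of a point in $X_{p_\ell}$, weighted by the characters $e^{-im\theta}$. Each nontrivial isotropy element $e^{-i2\pi h/c}$ with $c\mid p_\ell$, $\gcd(h,c)=1$, and $c\nmid p_1$ (the condition that this element does not fix the nearby principal stratum) acts on the normal fibre by a rotation whose induced Jacobian yields the factor $|e^{i2\pi h p_1/c}-1|^{-e}$ after Gaussian integration in the transverse direction, while its character contributes $e^{-i2\pi hm/c}$; summation over such $(h,c)$ then reproduces \eqref{e-gue160416III}. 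The asymptotic \eqref{e-gue160416II} follows by applying the same scheme to Corollary~\ref{c-gue160306}, reading off (iii) directly from the principal-stratum contribution and (iv) from the leading Gaussian correction, with $\mathrm{Tr}\,\alpha_n$ identified, via Remark~\ref{r-gue160413a}, as a universal multiple $(2\pi)^{-(n+1)}$ of the volume density so that integration over $M_{\ell,\gamma_\ell}$ yields $\mathrm{vol}(M_{\ell,\gamma_\ell})$. The most delicate step, in my view, is to rigorously identify $c_\ell(z)$ and the Jacobian factors in terms of the $S^1$-action on the normal bundle and to verify that the infimum \eqref{e-gue160327aIe1}, taken over a continuous range of angles, reduces near each stratum to contributions from precisely the finite set $\{e^{-i2\pi h/c}\}$ specified above, without producing spurious cross terms or half-integer powers.
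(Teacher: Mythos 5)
The central difficulty your proposal does not overcome is that Theorem~\ref{t-gue160114} only gives an \emph{upper bound} on the discrepancy $r_s(t,x):=a_{s,m}(t,x)-p_1\alpha_{s,m}(x)$, namely $|r_s(t,x)|\le Ce^{-\varepsilon_0\hat d(x,X_{\mathrm{sing}})^2/t}$. An upper bound on $|r_s|$ can tell you the integral of $r_s$ over a tubular neighborhood is $O(t^{e/2})$ after your normal rescaling, but it cannot produce the \emph{exact} coefficients $q_{s,e/2}$, $\tilde c_{n-e/2}$ and $D_{\ell,m}$, which is precisely what parts~(ii) and~(iv) of the theorem assert. Your step ``Taylor expanding the smooth coefficients in $y$, and applying Gaussian integration'' implicitly treats $r_s(t,x)$ as if it \emph{were} an explicit Gaussian profile with known amplitude, but Theorem~\ref{t-gue160114} never asserts this; the paper's own Remarks~\ref{r-gue150508I} and~\ref{r-gue160414ar}, and the discussion in Subsection~\ref{s-gue160416s0} citing Richardson's work, emphasize exactly this point — one cannot just integrate the pointwise diagonal asymptotic.

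What the paper actually does is bypass the bound entirely and return to the \emph{explicit integral formula} for $a^{\pm}_s$ in \eqref{e-gue150630gI}, namely the angular integral $\frac{1}{2\pi}\sum_j\int_{-\pi}^{\pi}e^{-\hat h_{j,+}(x,e^{-iu}\circ x)/t}\hat b^{\pm}_{j,s}(x,e^{-iu}\circ x)e^{-imu}\,du$. It decomposes $[0,2\pi]$ into $\varepsilon$-neighborhoods of the rational angles $\frac{2\pi h}{c}$ (the sets $\Lambda_1,\Lambda_2$) and a complement $N$ whose contribution is made arbitrarily small (Proposition~\ref{t-gue160416c}(iii)); and to identify the Gaussian exponent exactly it establishes the sharp limit $\lim_{t\to 0^+}\hat h_{j,+}((\sqrt{t}\hat y,0),e^{-i\omega}(\sqrt{t}\hat y,0))/t=|e^{i\omega p_{\ell-q}}-1|^2\|\hat y\|^2$, for which it proves Lemma~\ref{l-gue160417lem2} comparing $d_X$ and $d_U$ along orbits (a sharpened version of the qualitative claim \eqref{e-gue160327I}). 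The combinatorics of $D_{\ell,m}$ (the sum over $(h,c)$ with $c\mid p_\ell$, $c\nmid p_1$) and the bookkeeping of $S^1$ types and classes over a partition of unity on BRT charts (Propositions~\ref{t-gue160416b}--\ref{t-gue160416cc}, Definitions~\ref{d-gue160416t1} and~\ref{d-gue160416d2}) are additional structural content your sketch gestures at but does not carry out. So while your heuristic picture — evenness of the codimension, normal rescaling, isotropy characters, Gaussian moments killing odd powers — is the right intuition, the argument as written would not close, because it starts from an estimate that is too coarse to determine the constants.
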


The Laplacian in the work~\cite{DGGW08} is limited to the Laplacian acting on
functions while ours above is not.   We remark that in \cite[Theorem 3.5]{Ri10} the 
nontrivial fractional power in $t^{\frac{1}{2}}$ does occur.   This is however due partly to 
a fixed point set of codimension $1$ under a reflection isometry ({\it loc. cit.}, p. 2315).   In our CR case, 
all of the various fixed point submanifolds are of even (real) codimension, cf. i) of Remark~\ref{r-gue160416r2}
or \cite[p. 2324]{Ri10}.  See Section~\ref{s-gue160416} for a comparison of these methods and results.  


It will be of interest to study the geometrical significance of the various coefficients in 
\eqref{e-gue160416I} and \eqref{e-gue160416II} as usually studied in the standard heat kernel case.   
Explicit expressions for more in this regard are available by our treatment, e.g. \eqref{e-gue160416cde13}, 
\eqref{e-gue160416r2e2} and Theorems~\ref{t-gue160416d}, ~\ref{t-gue160416dc}.  

Remark that the above results essentially deal with the Gaussian part of the heat kernel, 
which behaves as a Dirac type delta function supported on (each) stratum.  By contrast, the CR local 
index theorem as Corollary~\ref{c-gue150508I} is derived by exploring the non-Gaussian parts of the heat kernel
such as the off-diagonal estimate in Theorem~\ref{t-gue150627g}.    In spirit, the two approaches 
are complementary to each other in the present paper, and jointly enhance the understanding 
of heat kernels for this special class of CR manifolds.  

Two more remarks go as follows.  

\begin{rem}
\label{r-gue150516} We note that the topological obstruction exists for a CR
manifold to admit a transversal CR $S^1$ action. For instance, a compact
strictly pseudoconvex CR $3$-manifold must have even first Betti number if
admitting a transversal CR $S^1$ action. The reason is that such a manifold
must be pseudohermitian torsion free (see~\cite{Lee}), and
this vanishing pseudohermitian torsion implies even
first Betti number as shown by Alan Weinstein (see the Appendix in~\cite{CH}). In this paper, we only
consider the $S^1$ action that is transversal and locally free.   Here are two examples: \newline
Example I: $X=\left\{(z_1,z_2,z_3)\in\mathbb{C}^3;\, \left\vert
z_1\right\vert^2+\left\vert z_2\right\vert^2+\left\vert
z_3\right\vert^2+\left\vert z^2_1+z_2\right\vert^4+\left\vert
z^3_2+z_3\right\vert^6=1\right\}$. Then $X$ admits a transversal CR locally
free $S^1$ action: $e^{-i\theta}\circ
(z_1,z_2,z_3)=(e^{-i\theta}z_1,e^{-2i\theta}z_2,e^{-6i\theta}z_3)$. It is
clear that this $S^1$ action is not globally free. \newline
Example II: Let $X$ be a compact orientable Seifert $3$-manifold. Kamishima
and Tsuboi~\cite{KT91} proved that $X$ is a compact CR manifold with a
transversal CR locally free $S^1$ action.  $X$ is $S^1$-fibered over 
a possibly singular base (an orbifold).  
\end{rem}

In Section~\ref{s-gue150723II}, we collect more examples.


\begin{rem}
\label{r-gue150804} The $S^1$ action might admit a reduction
to a simpler one as ${\rm Hom}(S^1, S^1)\ne {\rm id}$.
Recall $p_1=p<p_2<p_3<\cdots<p_k$, associated with periods of $X$
under the given $S^1$ action $(e^{-i\theta},x)\rightarrow e^{-i\theta}\circ x$.  
Then $p_1=p$ divides each $p_j$, $j>1$.   For,
the isotropy subgroup $\mathbb{Z}_p$ ($={\mathbb{Z}}/ p{\mathbb{Z}}$) $\subset S^1$
acts trivially on the principal stratum, which is
dense and open, hence on the whole $X$ by continuity.   The isotropy subgroups $\mathbb{%
Z}_{p_j}$, $j=2,\ldots,k$, on any other stratum must contain $\mathbb{Z}%
_p$, giving $\frac{p_j}{%
p}\in\mathbb{N}$.    

One renormalizes the given $S^1$ action
by the new $S^1$ action satisfying $p_1=1$. More precisely, define
\begin{equation*}
\begin{split}
S^1\times X&\rightarrow X, \\
(e^{-i\theta},x)&\rightarrow e^{-i\theta}\diamond x:=e^{-i\frac{\theta}{p}%
}\circ x.
\end{split}%
\end{equation*}
The new $S^1$-action $(e^{-i\theta},\diamond)$ has $p_1=1$. Let $%
\widetilde\omega_0$ be the global real one form with respect to $%
(e^{-i\theta},\diamond)$ and let $\widetilde
H^q_{b,m}(X,E)$ be the corrsponding cohomology group with respect to $(e^{-i\theta},\diamond)$. 
One sees 
\begin{equation}  \label{e-gue150804}
\begin{split}
&\widetilde\omega_0=p\omega_0, \\
&\widetilde H^q_{b,m}(X,E)=H^q_{b,pm}(X,E),\ \ \forall m\in\mathbb{Z},\ \
\forall q=0,1,2,\ldots,n.
\end{split}%
\end{equation}
Examining \eqref{e-gue150804} and Corollary~\ref{c-gue150508I} yields that the
index formulas in both cases can be transformed to each other.
\end{rem}

\subsection{Applications}

\label{s-gue150704}

\subsubsection{Applications in CR geometry}

\label{s-gue150723}

In CR geometry, it has been an important issue to produce many CR functions or CR
sections. Put
\begin{equation*}
H^0_b(X,E)=\left\{u\in C^\infty(X,E);\, \overline\partial_bu=0\right\}.
\end{equation*}
The following belongs to one of the standard questions in this respect.

\begin{que}
\label{q-gue150704} Let $X$ be a compact weakly pseudoconvex CR manifold.
When is the space $H^0_b(X,E)$ large?  (Pseudoconvex CR manifolds will be
briefly reviewed following Definition~\ref{d-gue150508f}.)
\end{que}

In~\cite{Lem92} Lempert proved that a three dimensional compact strongly
pseudoconvex CR manifold $X$ with a transversal CR locally free $S^1 $action
can be CR embedded into $\mathbb{C}^N$. In~\cite{Ep92} Epstein proved that
a three dimensional compact strongly pseudoconvex CR manifold $X$ with a 
transversal CR globally free $S^1$ action can be embedded into $\mathbb{C}^N$
by the positive Fourier components.

The embeddability of $X$ by positive
Fourier coefficients is related to the behavior of the $S^1$ action on $X$.
For example, suppose for $f_1,\ldots,f_{d_m}\in H^0_{b,m}(X)$ 
and $g_1\ldots,g_{h_{l}}\in H^0_{b,l}(X)$
the map
\begin{equation*}
\Phi_{m, l}:x\in X\to
(f_1(x),\ldots,f_{d_m}(x),g_1(x),\ldots,g_{h_{l}}(x))\in\mathbb{C}%
^{d_m+h_{l}}
\end{equation*}
is a CR embedding. Then, the $S^1$ action on $X$ naturally induces an $S^1$ action
on $\Phi_{m, l}(X)$, 
given by the following:
\begin{equation}
\label{e-gue150704e1} 
e^{-i\theta}\circ(z_1,\ldots,z_{d_m},z_{d_m+1},%
\ldots,z_{d_m+h_{l}})=(e^{-im\theta}z_1,\ldots,e^{-im%
\theta}z_{d_m},e^{-il\theta}z_{d_m+1},\ldots,e^{-il%
\theta}z_{d_m+h_{l}}).
\end{equation}

In short, under a CR embedding by positive Fourier components, one
can describe the $S^1$ action explicitly.   Conversely, to study the embedding theorem of
those CR manifolds by positive Fourier components, it becomes important
to know

\begin{que}
\label{q-gue150704I} When is $\mathrm{dim}H^0_{b, m}(X,E)\approx m^{n}$ for $%
m$ large?
\end{que}

We shall answer, combining our index theorems with some vanishing theorems (see below),
Question~\ref{q-gue150704} and Question~\ref{q-gue150704I} for CR manifolds
with transversal CR locally free $S^1$ action.

Firstly it follows from Corollary~\ref%
{c-gue150508I} (by extracting the leading coefficient
of the term $m^n$)

\begin{cor}
\label{c-gue150704} In the same assumption as in Corollary~\ref%
{c-gue150508I}, one has
\begin{equation}  \label{e-gue150417I}
\begin{split}
&\sum^n_{j=0}(-1)^j\mathrm{dim\,}H^j_{b,m}(X,E) \\
&=r(\sum^{p}_{s=1}e^{\frac{2\pi(s-1)}{p}mi})\frac{m^{n}}{n!(2\pi)^{n+1}}%
\int_X(-d\omega_0)^n\wedge\omega_0+O(m^{n-1}),
\end{split}%
\end{equation}
where $r$ denotes the complex rank of the vector bundle $E$.
\end{cor}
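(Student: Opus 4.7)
The plan is to extract the leading $m^n$ coefficient directly from the CR index formula of Corollary~\ref{c-gue150508I}. Since $\omega_0$ is a $1$-form on the $(2n+1)$-dimensional manifold $X$, the integral
\[
\int_X\mathrm{Td_b\,}(T^{1,0}X)\wedge\mathrm{ch_b\,}(E)\wedge e^{-m\frac{d\omega_0}{2\pi}}\wedge\omega_0
\]
only picks up the $2n$-form part of $\mathrm{Td_b}(T^{1,0}X)\wedge\mathrm{ch_b}(E)\wedge e^{-m\, d\omega_0/(2\pi)}$. In particular, the exponential truncates to the polynomial $\sum_{k=0}^{n}\frac{(-m)^k}{k!(2\pi)^k}(d\omega_0)^k$, so the whole integral is a polynomial in $m$ of degree at most $n$.

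First I would isolate the $k=n$ contribution. The coefficient of $m^n$ comes only from the product of the $0$-degree components of $\mathrm{Td_b}(T^{1,0}X)$ and $\mathrm{ch_b}(E)$ with $(d\omega_0)^n$; any other choice (either $k<n$, or $k=n$ paired with a positive-degree piece of $\mathrm{Td_b}$ or $\mathrm{ch_b}$) produces a form of total degree strictly greater than $2n$, which vanishes when wedged with $\omega_0$ and integrated over $X$. By the definitions in Section~\ref{s-gue150508d}, the $0$-degree component of $\mathrm{Td_b}(T^{1,0}X)$ is $1$, while that of $\mathrm{ch_b}(E)$ is the complex rank $r$ of $E$. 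Thus the $m^n$ part of the integral equals
\[
r\cdot\frac{(-m)^n}{n!(2\pi)^n}\int_X (d\omega_0)^n\wedge\omega_0 \;=\; \frac{r\,m^n}{n!(2\pi)^n}\int_X(-d\omega_0)^n\wedge\omega_0.
\]

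Next I would collect the remaining terms. These consist of $k<n$ combined with any tangential characteristic form of complementary degree, and all of them contribute powers of $m$ at most $m^{n-1}$. Since the prefactor $\sum_{s=1}^{p}e^{\frac{2\pi(s-1)}{p}mi}$ has modulus bounded by $p$ uniformly in $m$, the total effect of these leftover terms on the index is $O(m^{n-1})$. Multiplying the leading piece by the factor $\tfrac{1}{2\pi}$ from Corollary~\ref{c-gue150508I} together with the prefactor $\sum_{s=1}^{p}e^{\frac{2\pi(s-1)}{p}mi}$ yields the asserted formula.

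There is no real obstacle: given the index theorem of Corollary~\ref{c-gue150508I}, the argument is a form-degree count on $X$ combined with the fact that the tangential Todd and Chern character classes begin with $1$ and $r$ respectively. The mild point to verify is that the tangential characteristic classes are indeed polynomial expressions in the tangential curvature (so their $0$-degree components are genuinely the scalar $1$ and $r$), which is part of the preparatory material in Sections~\ref{s-gue150508bI}--\ref{s-gue150508d}.
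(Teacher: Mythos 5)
Your argument is correct and takes the same route the paper indicates: the paper's commentary on this corollary is simply that it ``follows from Corollary~\ref{c-gue150508I} (by extracting the leading coefficient of the term $m^n$),'' which is exactly the form-degree count you carry out, using that the degree-$0$ parts of $\mathrm{Td_b}(T^{1,0}X)$ and $\mathrm{ch_b}(E)$ are $1$ and $r$.
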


For a vanishing theorem we can repeat the proof of Theorem 2.1 in~\cite{HsiaoLi15} with minor change
and get

\begin{prop}
\label{t-gue150704h} In the same assumption as in Corollary~\ref%
{c-gue150508I}, suppose further that $X$ is weakly pseudoconvex. Then, 
for $m\gg 1$ $\mathrm{%
dim\,}H^j_{b,m}(X,E)=o(m^n)$, for every $j=1,2,\ldots,n$.
\end{prop}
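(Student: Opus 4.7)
The plan is to imitate the argument of Theorem 2.1 in \cite{HsiaoLi15}, where the analogous bound is obtained in a related setting; only minor modifications are needed to accommodate the locally free $S^1$ action and the coefficient bundle $E$. By the Hodge decomposition for $\Box^{(j)}_{b,m}$ established in Section~3, one has $\dim H^j_{b,m}(X,E)=\dim\ker\Box^{(j)}_{b,m}$, so it suffices to bound this kernel dimension. The natural tool is the heat-kernel Morse-type inequality
\[
\dim\ker\Box^{(j)}_{b,m}\;\le\;\int_X\mathrm{Tr}\,e^{-t_m\Box^{(j)}_{b,m}}(x,x)\,dv_X(x),
\]
valid for every $t_m>0$, which reduces the problem to estimating the right-hand side along a sequence of times $t_m\to 0^+$ chosen in balance with $m\to\infty$ (the natural choice being $t_m=s/m$ for fixed small $s>0$).

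On any BRT patch one may invoke Theorem~\ref{t-gue150508} to intertwine $\Box^{(j)}_{b,m}$ with the Kodaira Laplacian $\Box^{(j)}_m$ on $L^m$-valued $(0,j)$-forms over the local complex base, reducing the model to classical complex geometry. A Bismut--Demailly semiclassical rescaling of $e^{-(s/m)\Box^{(j)}_m}$ yields a pointwise asymptotic of the form
\[
m^{-n}\,\mathrm{Tr}\,e^{-(s/m)\Box^{(j)}_{b,m}}(x,x)\longrightarrow f_{s,j}(\mathcal L_x),
\]
with $f_{s,j}$ an explicit function of the Levi eigenvalues satisfying $f_{s,j}\equiv 0$ at every point at which the Levi form is strictly positive and $j\ge 1$. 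Weak pseudoconvexity forces all Levi eigenvalues to be non-negative everywhere, so $f_{s,j}\equiv 0$ on the open set $\{\det\mathcal L\neq 0\}$ for $j\ge 1$, while on the complementary (Levi-degenerate) set the rescaled trace remains uniformly $O(1)$. Dominated convergence then gives $\int_X\mathrm{Tr}\,e^{-(s/m)\Box^{(j)}_{b,m}}(x,x)\,dv_X(x)=o(m^n)$, hence $\dim H^j_{b,m}(X,E)=o(m^n)$. Contributions from the lower-dimensional strata $X^r_{\mathrm{sing}}$, which a priori are not covered by a single BRT trivialization, are absorbed by the Gaussian decay in $\hat d(x,X^r_{\mathrm{sing}})$ supplied by Theorem~\ref{t-gue160114}: since $\dim X^r_{\mathrm{sing}}<\dim X$ for $r\ge 2$, the stratum contribution is automatically of order $O(m^{n-1})$.

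The main obstacle, precisely as in \cite{HsiaoLi15}, is to justify the dominated convergence step: one must establish the uniform pointwise bound $m^{-n}\,\mathrm{Tr}\,e^{-(s/m)\Box^{(j)}_{b,m}}(x,x)=O(1)$ and the pointwise convergence to the Demailly-type density all the way up to the Levi-degeneracy locus. This is exactly the uniform Bergman/heat kernel asymptotic required in Demailly's integrated holomorphic Morse inequality, and is available on each BRT patch by standard techniques; the only additional work here is a partition of unity that respects the stratification $X=X_{p_1}\cup\cdots\cup X_{p_k}$ and the uniformity of the BRT model as one approaches a singular stratum, both of which are already controlled by the machinery developed in Sections~2--5 of the present paper.
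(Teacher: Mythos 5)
The paper's ``proof'' of Proposition~\ref{t-gue150704h} is a one-line citation to Theorem~2.1 of \cite{HsiaoLi15}, which is a Morse-inequality-type statement for the $m$-th Fourier components of Kohn--Rossi cohomology on CR manifolds with $S^1$ action. Your sketch is a heat-kernel reconstruction of how such a Morse inequality might be proved (Demailly's scheme adapted to CR geometry via BRT trivializations), and structurally it is in the right spirit. However, there is a concrete gap in the middle of the argument.

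You assert that the rescaled density $f_{s,j}(\mathcal{L}_x)$ obtained from the semiclassical limit of $m^{-n}\,\mathrm{Tr}\,e^{-(s/m)\Box^{(j)}_{b,m}}(x,x)$ satisfies $f_{s,j}\equiv 0$ at every Levi-positive point when $j\ge 1$, and then invoke dominated convergence at a fixed $s$ to conclude $o(m^n)$. This is not correct: for a \emph{fixed} finite $s>0$ the Demailly-type density $f_{s,j}$ is generically strictly positive even at Levi-positive points when $j\ge 1$ (schematically it contains a factor $\prod_{i\in J}e^{-2s\lambda_i}$ over $|J|=j$, which decays as $s\to\infty$ but does not vanish for finite $s$). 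Consequently dominated convergence with $s$ fixed yields $\int_X \mathrm{Tr}\,e^{-(s/m)\Box^{(j)}_{b,m}}(x,x)\,dv_X \sim m^n\int_X f_{s,j}\,dv_X$, and the right-hand side is \emph{not} $o(m^n)$. The missing step is the second limit: one must first send $m\to\infty$ with $s$ fixed, obtaining $\limsup_m m^{-n}\dim H^j_{b,m}\le \int_X f_{s,j}\,dv_X$, and then send $s\to\infty$, using weak pseudoconvexity (all Levi eigenvalues $\ge 0$, so $f_{s,j}\to 0$ pointwise for $j\ge 1$) together with a uniform dominating bound to get $\int_X f_{s,j}\,dv_X\to 0$. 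Without this double-limit structure the argument does not close.

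A smaller issue: you cite Theorem~\ref{t-gue150508} to intertwine $\Box^{(j)}_{b,m}$ with a Kodaira Laplacian. That theorem is the globally free circle-bundle statement; in the general locally free setting the appropriate local intertwining is Proposition~\ref{l-gue150606} on a BRT chart, which is what actually licenses the pointwise rescaling estimate patch by patch. Since you do talk about BRT patches this reads as a citation slip rather than a logical gap, but it is worth stating accurately. Similarly, your remark that stratum contributions are ``automatically $O(m^{n-1})$'' is heuristic rather than proved; in this heat-kernel route the uniformity of the density bound across strata has to be extracted from the estimates of Theorem~\ref{t-gue160114} (or, in the paper's actual route, is handled internally by \cite{HsiaoLi15}), and is not a free consequence of the codimension count alone.
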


Combining Corollary~\ref{c-gue150704} and Proposition~\ref{t-gue150704h} one has 

\begin{cor}
\label{t-gue150704hI} In the same assumption as in Proposition~\ref%
{t-gue150704h} (with $X$ being weakly pseudoconvex).  One has, for $m\gg1$,
\begin{equation*}
\begin{split}
&\mathrm{dim\,}H^0_{b,m}(X,E) \\
&=r(\sum^p_{s=1}e^{\frac{2\pi(s-1)}{p}mi})\frac{m^{n}}{n!(2\pi)^{n+1}}%
\int_X(-d\omega_0)^n\wedge\omega_0+o(m^{n}),
\end{split}%
\end{equation*}
where $r$ denotes the complex rank of the vector bundle $E$.
In particular, if the Levi form is strongly pseudoconvex at some point of $X$%
, then $\mathrm{dim\,}H_{b,pm}^{0}(X)\approx m^{n}$ for $m\gg 1$, and hence 
$\mathrm{dim\,}H_{b}^{0}(X,E)=\infty $.
\end{cor}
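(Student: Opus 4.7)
\medskip

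\noindent\textbf{Proof plan.} The plan is to combine the CR index formula of Corollary~\ref{c-gue150704} with the vanishing estimate of Proposition~\ref{t-gue150704h} and then read off the leading term. First, from Corollary~\ref{c-gue150704} we have, for every $m\in \mathbb{Z}$,
\begin{equation*}
\sum_{j=0}^{n}(-1)^{j}\,\mathrm{dim}\,H^{j}_{b,m}(X,E)=r\Bigl(\sum_{s=1}^{p}e^{\frac{2\pi(s-1)}{p}mi}\Bigr)\frac{m^{n}}{n!(2\pi)^{n+1}}\int_{X}(-d\omega_{0})^{n}\wedge\omega_{0}+O(m^{n-1}).
\end{equation*}
Since $X$ is assumed weakly pseudoconvex, Proposition~\ref{t-gue150704h} gives $\mathrm{dim}\,H^{j}_{b,m}(X,E)=o(m^{n})$ for every $j=1,\ldots,n$ and $m\gg 1$. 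Isolating $\mathrm{dim}\,H^{0}_{b,m}(X,E)$ on one side of the index identity and moving the higher cohomology terms to the right, each bounded by $o(m^{n})$, one obtains exactly the asserted leading order formula, since the error $O(m^{n-1})$ from the index formula is absorbed into $o(m^{n})$.

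Next I would address the ``in particular'' assertion. Here take $E$ trivial so that $r=1$. Recall from Remark~\ref{r-gue150508} that $\sum_{s=1}^{p}e^{\frac{2\pi(s-1)}{p}mi}=p$ exactly when $p\mid m$, and $0$ otherwise; substituting $m=pm'$ into the leading formula therefore yields
\begin{equation*}
\mathrm{dim}\,H^{0}_{b,pm'}(X)=\frac{p\,(pm')^{n}}{n!(2\pi)^{n+1}}\int_{X}(-d\omega_{0})^{n}\wedge\omega_{0}+o(m'^{\,n}).
\end{equation*}
It remains to show that the integral on the right is strictly positive. The weak pseudoconvexity of $X$ says that the Levi form, i.e.\ the restriction of $-d\omega_{0}$ to $T^{1,0}X$ paired with its conjugate, is positive semi-definite everywhere, which forces $(-d\omega_{0})^{n}\wedge\omega_{0}$ to be a nonnegative multiple of the volume form pointwise. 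Strong pseudoconvexity at some point $x_{0}\in X$ upgrades this to strict positivity in a neighborhood of $x_{0}$, so the integral is strictly positive. This gives $\mathrm{dim}\,H^{0}_{b,pm'}(X)\asymp m'^{\,n}$ for $m'\gg 1$; summing over $m'=1,2,\dots$ forces $\mathrm{dim}\,H^{0}_{b}(X,E)=\infty$.

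The only delicate point I foresee is the sign/positivity argument for $(-d\omega_{0})^{n}\wedge\omega_{0}$ under weak pseudoconvexity: one has to be careful about the orientation convention on $X$ induced by the Hermitian metric on $\mathbb{C}TX$ and the sign convention in the definition of the Levi form, so that both the Chern form $e^{-m\,d\omega_{0}/2\pi}$ that entered Corollary~\ref{c-gue150508I} and the volume form $\omega_{0}\wedge(d\omega_{0})^{n}$ have compatible signs. Once this is checked (using that $\omega_{0}$ is the contact form, $T$ the Reeb vector field, and that the Hermitian metric is of the form prescribed in Section~\ref{s-gue150508s1}), the remainder is essentially the combination of the two prior results. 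No further input about the heat kernel asymptotics is needed beyond what is already encoded in Corollary~\ref{c-gue150704}.
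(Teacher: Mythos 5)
Your argument is correct and coincides with the paper's (which reads simply ``Combining Corollary~\ref{c-gue150704} and Proposition~\ref{t-gue150704h} one has'' the corollary): subtract the $o(m^n)$ higher-cohomology contributions from the index, absorb the $O(m^{n-1})$ error, and for the ``in particular'' clause use that weak pseudoconvexity makes $(-d\omega_0)^n\wedge\omega_0$ pointwise nonnegative, strict at a strongly pseudoconvex point, so the integral is positive. Your worry about sign conventions is resolved exactly by the orientation induced by the rigid metric with $\langle T\,|\,T\rangle=1$ and $T\perp (T^{1,0}X\oplus T^{0,1}X)$, under which $\omega_0\wedge(d\omega_0)^n$ is a positive multiple of $dv_X$ precisely when the Levi form has the stated sign, so nothing further is needed.
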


These results have provided answers to Question~\ref{q-gue150704}
and Question~\ref{q-gue150704I} (for our class of CR manifolds).

For another application, it is of great interest in CR geometry to study whether and when
a CR manifold $X$ can be CR embedded into a complex space.
It is a classical theorem of L. Boutet de Monvel~\cite%
{BdM1:74b} which asserts that $X$ can be globally CR embedded into $\mathbb{C}^N$
for some $N\in\mathbb{N}$ provided that $X$ is compact (with no boundary),
strongly pseudoconvex, and of dimension greater
than or equal to five.

When $X$ is not strongly pseudoconvex, the space
of global CR functions could even be trivial.  As many interesting
examples live in the projective space (e.\,g.\ the quadric $\{[z]\in\mathbb{C%
}\mathbb{P}^{N-1};\, |z_1|^2+\ldots+|z_q|^2-|z_{q+1}|^2-\ldots-|z_N|^2=0\}$%
), it is natural to consider a setting analogous to the Kodaira
embedding theorem and ask if $X$ can be embedded into the projective space
by means of CR sections of a CR line bundle $L\rightarrow X$ or its $k$-th power $L^k$.

For a study into the above question it is natural to seek the case where 
the dimension of the space $H^0_b(X,L^k)$ of CR sections of $L^k$ is large as
$k\rightarrow\infty$ (so one may hopefully find many CR sections to 
carry out the embedding).   In this regard the following question is asked by Henkin and Marinescu~%
\cite[p.47-48]{Ma96}.

\begin{que}
\label{q-gue150704II} When is $\mathrm{dim\,}H_{b}^{0}(X,L^{k})\approx
k^{n+1}$ for $k$ large?
\end{que}


Assume that $L$ is a {\it rigid} CR line bundle with a {\it rigid} Hermitian
fiber metric $h^L$ (i.e. $L$ a CR line bundle admitting a compatible $S^1$ action, 
cf. the beginning of Section~\ref{s-gue150508a}).   Let $\mathcal{R}^L\in \Omega^2_{0}(X)$ be the
curvature of $L$ associated to $h^L$.   For a local trivializing ($S^1$-invariant) section $s$ of
$L$, $\left\vert
s(x)\right\vert^2_{h^L}=e^{-2\phi(x)}$ with $T\phi=0$. Then $\mathcal{R}%
^L=2\partial_b\overline\partial_b\phi\in\Omega^2_{0}(X)$.   $%
(L^k,h^{L^k})$ denotes the $k$-th power of $(L,h^L)$.

With Corollary~\ref{c-gue150508I} one can show

\begin{prop}
\label{t-gue150704fr} With the notations above, for $k$ large we have
\begin{equation}  \label{e-gue150704fr}
\begin{split}
&\sum_{m\in\mathbb{Z},\left\vert m\right\vert\leq
k\delta}\sum^{n}_{j=0}(-1)^{j}\mathrm{dim\,}H^j_{b,m}(X,L^k\otimes E) \\
&=r(2\pi)^{-n-1}\frac{1}{n!}k^{n+1}\int_X\int_{[-\delta,\delta]}(i\mathcal{R}%
^L_x-sd\omega_0(x))^{n}\wedge\omega_0(x)ds+o(k^{n+1}),
\end{split}%
\end{equation}
where $\delta>0$ and $r$ denotes the complex rank of the vector bundle $E$.
\end{prop}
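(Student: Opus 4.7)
The plan is to apply the CR index formula of Corollary~\ref{c-gue150508I} with the bundle $E$ there replaced by the rigid CR vector bundle $L^{k}\otimes E$ (equipped with its natural rigid Hermitian metric), and then to interpret the resulting $m$-sum as a Riemann sum in the rescaled variable $s=m/k$.

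By multiplicativity of the tangential Chern character, $\mathrm{ch_b}(L^{k}\otimes E)=e^{k\frac{i\mathcal{R}^{L}}{2\pi}}\wedge\mathrm{ch_b}(E)$, so Corollary~\ref{c-gue150508I} gives
\begin{equation*}
\sum^{n}_{j=0}(-1)^{j}\mathrm{dim\,}H^{j}_{b,m}(X,L^{k}\otimes E)=\Bigl(\sum^{p}_{\sigma=1}e^{\frac{2\pi(\sigma-1)}{p}mi}\Bigr)\frac{1}{2\pi}\int_{X}\mathrm{Td_b}(T^{1,0}X)\wedge e^{\frac{k i\mathcal{R}^{L}-m\,d\omega_{0}}{2\pi}}\wedge\mathrm{ch_b}(E)\wedge\omega_{0}.
\end{equation*}
Substituting $s=m/k$, the exponent equals $\frac{k}{2\pi}(i\mathcal{R}^{L}-s\,d\omega_{0})$, and extracting the degree-$2n$ part of the integrand yields as leading order in $k$
\begin{equation*}
\frac{k^{n}}{n!(2\pi)^{n}}(i\mathcal{R}^{L}-s\,d\omega_{0})^{n}\cdot r\wedge\omega_{0},
\end{equation*}
since only the degree-$0$ parts of $\mathrm{Td_b}(T^{1,0}X)=1+\ldots$ and $\mathrm{ch_b}(E)=r+\ldots$ contribute at the top power $k^{n}$; all other pairings of degrees give $O(k^{n-1})$ uniformly in $|m|\le k\delta$.

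By Remark~\ref{r-gue150508} the phase factor $\sum^{p}_{\sigma=1}e^{\frac{2\pi(\sigma-1)}{p}mi}$ equals $p$ when $p\mid m$ and vanishes otherwise. Writing $m=p\ell$ with $|\ell|\le k\delta/p$, the outer sum $\sum_{|m|\le k\delta}$ reduces to $p\sum_{|\ell|\le k\delta/p}$. Treating this inner sum, multiplied by the mesh $\Delta s=p/k$, as a Riemann sum for the smooth function $s\mapsto\int_{X}(i\mathcal{R}^{L}-s\,d\omega_{0})^{n}\wedge\omega_{0}$ on $[-\delta,\delta]$, the discretization error is $O(1/k)$ relative to the integral; after restoring the $k^{n}$ prefactor this contributes $O(k^{n})$. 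The factor $p$ cancels with the $1/p$ from the mesh, producing precisely the claimed leading term
\begin{equation*}
\frac{r\,k^{n+1}}{n!(2\pi)^{n+1}}\int_{-\delta}^{\delta}\int_{X}(i\mathcal{R}^{L}-s\,d\omega_{0})^{n}\wedge\omega_{0}\,ds.
\end{equation*}

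The main technical point is to control all error terms uniformly in $|m|\le k\delta$: the lower-order-in-$k$ contributions from $\mathrm{Td_b}$ and $\mathrm{ch_b}(E)$ give $O(k^{n-1})$ per $m$, summing to $O(k^{n})$ over the $\sim 2k\delta$ admissible values of $m$, while the Riemann-sum discretization contributes a further $O(k^{n})$ via the $C^{1}$-bound of the integrand in $s$ on the compact interval $[-\delta,\delta]$. Both errors are $o(k^{n+1})$, yielding Proposition~\ref{t-gue150704fr}.
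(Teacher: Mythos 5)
Your proposal is correct and follows essentially the same route as the paper's proof: apply Corollary~\ref{c-gue150508I} with $E$ replaced by $L^{k}\otimes E$, extract the top power of $k$ from the degree-$2n$ part of $\mathrm{ch_b}(L^k)\wedge e^{-m\frac{d\omega_0}{2\pi}}$, use Remark~\ref{r-gue150508} to reduce to $p\mid m$, and pass to the integral over $[-\delta,\delta]$ as a Riemann sum. Your error-tracking (per-$m$ contributions of $O(k^{n-1})$ and Riemann-sum error) is in fact a bit more explicit than the paper's, but the argument is the same.
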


\begin{proof}
By Corollary~\ref{c-gue150508I} one can check
\begin{equation}  \label{e-gue150704w}
\begin{split}
&\sum_{m\in\mathbb{Z},\left\vert m\right\vert\leq
k\delta}\sum^{n}_{j=0}(-1)^{j}\mathrm{dim\,}H^j_{b,m}(X,L^k\otimes E) \\
&=r(2\pi)^{-n-1}\sum_{m\in\mathbb{Z},\left\vert m\right\vert\leq
k\delta}(\sum^p_{s=1}e^{\frac{2\pi(s-1)}{p}mi})\frac{1}{n!}\int_X(ik\mathcal{R}%
^L_x-md\omega_0(x))^{n}\wedge\omega_0(x)+o(k^{n}) \\
&=r(2\pi)^{-n-1}\sum_{m\in\mathbb{Z},\left\vert m\right\vert\leq
k\delta}(\sum^p_{s=1}e^{\frac{2\pi(s-1)}{p}mi})\frac{k^n}{n!}\int_X(i\mathcal{R%
}^L_x-\frac{m}{k}d\omega_0(x))^{n}\wedge\omega_0(x)+o(k^{n}).
\end{split}%
\end{equation}
Note $\sum^p_{s=1}e^{\frac{2\pi(s-1)}{p}mi}=p$ if $p|m$, 
and $0$ otherwise.
By this and \eqref{e-gue150704w} we get
\begin{equation}  \label{e-gue150704wI}
\begin{split}
&\sum_{m\in\mathbb{Z},\left\vert m\right\vert\leq
k\delta}\sum^{n}_{j=0}(-1)^{j}\mathrm{dim\,}H^j_{b,m}(X,L^k\otimes E) \\
&=r(2\pi)^{-n-1}\sum_{\ell\in\mathbb{Z},\left\vert p\ell\right\vert\leq
k\delta}\frac{p}{n!}k^{n}\int_X(i\mathcal{R}^L_x-\frac{p\ell}{k}%
d\omega_0(x))^{n}\wedge\omega_0(x)+o(k^{n}).
\end{split}%
\end{equation}
It is clear that the (Riemann) sum $\sum_{\ell\in\mathbb{Z},\left\vert
p\ell\right\vert\leq k\delta}\frac{p}{k}\int_X(i\mathcal{R}^L_x-\frac{p\ell}{%
k}d\omega_0(x))^{n}\wedge\omega_0(x)$ converges to
\begin{equation*}
\int_X\int_{[-\delta,\delta]}(i\mathcal{R}^L_x-sd\omega_0(x))^{n}\wedge%
\omega_0(x)ds
\end{equation*}
as $k\rightarrow\infty$.  Hence 
\begin{equation}  \label{e-gue150704wII}
\begin{split}
&\sum_{\ell\in\mathbb{Z},\left\vert p\ell\right\vert\leq k\delta}\lbrace\frac{p}{n!}%
k^{n}\int_X(i\mathcal{R}^L_x-\frac{p\ell}{k}d\omega_0(x))^{n}\wedge%
\omega_0(x) \rbrace+o(k^{n})\\
&=\lbrace\frac{1}{n!}k^{n+1}\int_X\int_{[-\delta,\delta]}(i\mathcal{R}%
^L_x-sd\omega_0(x))^{n}\wedge\omega_0(x)ds\rbrace+o(k^{n+1}).
\end{split}%
\end{equation}
Combining \eqref{e-gue150704wII} with \eqref{e-gue150704wI} we have %
\eqref{e-gue150704fr}.
\end{proof}

The following two results may be viewed as
a companion of the Grauert-Riemenschneider criterion
in the CR case (with $S^1$ action).  To start with

\begin{defn}
\label{d-gue150705} We say that $(L,h^L)$ is positive at $p\in X$ if the curvature $%
\mathcal{R}_p^L$ is a positive Hermitian quadratic form over $T_p^{1,0}X$.
We say that $(L,h^L)$ is semipositive if for any $x\in X$ there exists a
constant $\delta>0$ such that $\mathcal{R}_x^L-sid\omega_0(x)$ is a semipositive Hermitian quadratic form over $T_x^{1,0}X$ for any $|s|<\delta.$
\end{defn}

We can repeat the proof of Theorem 1.24 in~\cite{HsiaoLi15a} with minor
change and get

\begin{prop}
\label{t-gue150705}  {\rm (Asymptotical vanishing)} Assume that $(L,h^L)$ is a semi-positive CR line bundle
over $X$. Then, for $\delta>0$, $\delta$ small, we have
\begin{equation*}
\sum_{m\in\mathbb{Z},\left\vert m\right\vert\leq k\delta}\mathrm{dim\,}%
H^j_{b,m}(X,L^k\otimes E)=o(k^{n+1}),\ \ j=1,2,\ldots,n.
\end{equation*}
\end{prop}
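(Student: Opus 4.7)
The plan is to adapt the argument of \cite[Theorem 1.24]{HsiaoLi15a} to our $S^1$-equivariant setting. Write $\Box^{(j)}_{b,m,k}$ for the Kohn Laplacian associated with the bundle $L^k\otimes E$ (in place of $E$), acting on the $m$-th Fourier component of $(0,j)$-forms. By the Hodge theorem (Theorem~\ref{t-gue150517II}), one has $H^j_{b,m}(X,L^k\otimes E)\cong \ker \Box^{(j)}_{b,m,k}$, and since $e^{-t\cdot 0}=1$, for any $t_k>0$
\begin{equation*}
\dim H^j_{b,m}(X,L^k\otimes E)\le \int_X\mathrm{Tr^{(j)}\,}e^{-t_k\Box^{(j)}_{b,m,k}}(x,x)\,dv_X(x).
\end{equation*}
I will take the semiclassical time $t_k=1/k$, which matches the natural $k$-scale introduced by the twisting by $L^k$.

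Next I analyze the local structure of $\Box^{(j)}_{b,m,k}/k$ via BRT trivializations. On each BRT chart $D\times (-\eta,\eta)$, this operator is approximated, modulo error terms that are $O(k^{-1})$ relative to the leading part, by a Kodaira Laplacian on $L^k$-valued $(0,j)$-forms over the complex leaf $D$, whose effective Chern curvature at $x$ is $i\mathcal{R}^L_x - is\,d\omega_0(x)$ with $s=m/k\in[-\delta,\delta]$. Under the semi-positivity hypothesis (Definition~\ref{d-gue150705}), this form is semi-positive for all such $s$, so a semi-classical Bochner-Kodaira-Nakano / Demailly type argument (in the spirit of \cite{MM07}) shows that the model heat-kernel trace in the $(0,j)$-sector, $j\ge 1$, is $o(1)$ as $k\to\infty$, pointwise and uniformly for $s\in[-\delta,\delta]$. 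Combining this with the asymptotic expansion of Theorem~\ref{t-gue160114} (applied with coefficient bundle $L^k\otimes E$) and dominated convergence, I obtain
\begin{equation*}
\int_X\mathrm{Tr^{(j)}\,}e^{-\Box^{(j)}_{b,m,k}/k}(x,x)\,dv_X(x)=o(k^n)\quad\mbox{uniformly in $|m|\le k\delta$.}
\end{equation*}

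Summing over the $O(k)$ integer values of $m$ with $|m|\le k\delta$ then yields the desired $o(k^{n+1})$ bound. The hard part will be the contribution of the Gaussian-like correction terms from Theorem~\ref{t-gue160114} concentrated near the singular strata $X_{\mathrm{sing}}$: at $t=1/k$ they behave like $O(k^n e^{-\varepsilon_0 k\hat d(x,X_{\mathrm{sing}})^2})$. After integration these are supported effectively in a $k^{-1/2}$-tube of $X_{\mathrm{sing}}$ and contribute $O(k^{n-e/2})$, where $e$ is the minimal codimension of a singular stratum (as in Theorem~\ref{t-gue160416}). Summed over $|m|\le k\delta$ this yields $O(k^{n+1-e/2})=o(k^{n+1})$; verifying this uniformly in $s=m/k\in[-\delta,\delta]$, and gluing the local BRT model statements into a global estimate on $X$ without losing the $o(k^n)$ gain coming from semi-positivity, is the delicate point I expect to require the most care.
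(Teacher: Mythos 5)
Your general framework — bounding $\dim H^j_{b,m}$ by the $(0,j)$ heat trace at $t\sim 1/k$, localizing via BRT charts to a Kodaira Laplacian with effective curvature $i\mathcal{R}^L_x - s\,d\omega_0(x)$ ($s=m/k$), and using semi-positivity to kill the $j\ge1$ sector — is the right semiclassical skeleton, and it matches the spirit of the source the paper cites (namely \cite[Theorem 1.24]{HsiaoLi15a}, whose proof the paper simply ``repeats with minor change''). The arithmetic at the end ($e$ even and $\ge 2$, so $k^{n+1-e/2}=o(k^{n+1})$) is also fine.

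The concrete gap is the step ``combining this with the asymptotic expansion of Theorem~\ref{t-gue160114} (applied with coefficient bundle $L^k\otimes E$) and dominated convergence.'' Theorem~\ref{t-gue160114} is a small-$t$ expansion for a \emph{fixed} rigid bundle and fixed $m$: the coefficients $\alpha_{s,m}$, the constants $C_{N_0}$, $\varepsilon_0$, and the range $0<t<\delta$ all depend on the bundle data, and no uniformity in the twisting power $k$ is asserted or available. In the regime you need — bundle $L^k\otimes E$ with $k\to\infty$, time $t=1/k\to 0$, and Fourier index $|m|\le k\delta$ drifting to infinity — you are simultaneously varying $t$, the bundle, and $m$, which is a genuinely semiclassical limit, not a small-$t$ limit. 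In particular the leading coefficient $\alpha_{n,m}(x)$ for $L^k\otimes E$ itself scales with $k$, and the ``valid window'' $0<t<\delta$ guaranteed by Theorem~\ref{t-gue160114} could shrink with $k$, so there is no a priori reason $t=1/k$ remains inside it. The proof one actually needs is a rescaled/large-$k$ heat-kernel analysis (the on- and off-diagonal control in $k$, with the $\varepsilon_0\hat d(x,X_{\mathrm{sing}})^2/t$ factor replaced by a $k$-uniform estimate), which is precisely what \cite[Theorem 1.24]{HsiaoLi15a} supplies and what you flag at the end as ``the delicate point.'' That delicate point is not an afterthought but essentially the entire content of the vanishing statement; appealing to the fixed-bundle expansion cannot substitute for it.
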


Combining Proposition~\ref{t-gue150704fr} and Proposition~\ref{t-gue150705}, we get

\begin{cor}
\label{t-gue150705I}  {\rm (Bigness)} Assume that $(L,h^L)$ is semi-positive. Then, for $%
\delta>0$, $\delta$ small, we have
\begin{equation}  \label{e-gue150705}
\begin{split}
&\sum_{m\in\mathbb{Z},\left\vert m\right\vert\leq k\delta}\mathrm{dim\,}%
H^0_{b,m}(X,L^k\otimes E) \\
&=r(2\pi)^{-n-1}\frac{1}{n!}k^{n+1}\int_X\int_{[-\delta,\delta]}(i\mathcal{R}%
^L_x-sd\omega_0(x))^{n}\wedge\omega_0(x)ds+o(k^{n+1}),
\end{split}%
\end{equation}
where $r$ denotes the complex rank of the vector bundle $E$.
In particular, if $(L,h^{L})$ is positive at some point of $X$, then 
\begin{equation*}
\mathrm{dim\,}H_{b,m}^{0}(X,L^{k}\otimes E)\approx k^{n+1}.
\end{equation*}
\end{cor}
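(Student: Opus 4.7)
The strategy is essentially algebraic bookkeeping: subtract, from the signed Euler-characteristic asymptotic of Proposition~\ref{t-gue150704fr}, the contributions of the positive-degree Kohn--Rossi cohomologies, which are controlled by the asymptotic vanishing of Proposition~\ref{t-gue150705}; then verify that the resulting top-order coefficient is strictly positive under the pointwise positivity hypothesis.

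First I would isolate the $j=0$ term in the alternating sum:
\[
\sum_{|m|\le k\delta}\dim H^0_{b,m}(X,L^k\otimes E)
= \sum_{|m|\le k\delta}\sum_{j=0}^{n}(-1)^{j}\dim H^j_{b,m}(X,L^k\otimes E)
- \sum_{j=1}^{n}(-1)^{j}\sum_{|m|\le k\delta}\dim H^j_{b,m}(X,L^k\otimes E).
\]
By Proposition~\ref{t-gue150704fr} the first sum on the right equals the leading term of \eqref{e-gue150705} plus an $o(k^{n+1})$ remainder. Because $(L,h^L)$ is semi-positive, Proposition~\ref{t-gue150705} applies to each $j\in\{1,\ldots,n\}$, giving $\sum_{|m|\le k\delta}\dim H^j_{b,m}(X,L^k\otimes E)=o(k^{n+1})$. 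Summing these finitely many alternating-sign bounds keeps the error at $o(k^{n+1})$, which yields \eqref{e-gue150705}.

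For the final assertion, I would verify that the leading integral is strictly positive whenever $(L,h^L)$ is positive at some $p\in X$. Under semi-positivity, for each $x\in X$ and $|s|<\delta$ the $(1,1)$-form $i\mathcal R^L_x - s\,d\omega_0(x)$ is a nonnegative Hermitian form on $T^{1,0}_xX$, so that $(i\mathcal R^L_x - s\,d\omega_0(x))^n\wedge\omega_0(x)$ is a nonnegative multiple of $dv_X$; thus the double integral in \eqref{e-gue150705} is at least nonnegative. At the distinguished point $p$, strict positivity of $i\mathcal R^L_p$ on $T^{1,0}_pX$, together with continuity, produces a neighborhood $U$ of $p$ and a $\delta'>0$ such that $i\mathcal R^L_x - s\,d\omega_0(x)$ is strictly positive for $(x,s)\in U\times(-\delta',\delta')$; on that set the integrand is a strictly positive multiple of $dv_X\,ds$. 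The double integral is therefore positive, and substituting into \eqref{e-gue150705} delivers the claimed growth of order $k^{n+1}$.

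The heavy lifting is all outsourced to Propositions~\ref{t-gue150704fr} and \ref{t-gue150705}, which I invoke as black boxes. The only delicate point, and the main place to pay attention, is that the single parameter $\delta$ must be taken small enough to simultaneously guarantee (a) global semi-positivity of $i\mathcal R^L_x - s\,d\omega_0(x)$ on $X$ for $|s|<\delta$ (so that Proposition~\ref{t-gue150705} is genuinely applicable), and (b) preservation of strict positivity on a neighborhood of $p$ (so the leading integral is nonzero). Compactness of $X$ makes (a) a routine uniform choice, while (b) uses only the openness of the positivity condition; together they let the same $\delta$ serve both roles.
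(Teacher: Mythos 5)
Your proof is correct and follows essentially the same approach as the paper, which simply states that the corollary results from combining Proposition~\ref{t-gue150704fr} with Proposition~\ref{t-gue150705}. You spell out the algebraic subtraction of higher-degree cohomologies and the positivity/openness argument for the leading integral, both of which the paper leaves implicit; these are exactly the routine verifications the paper intends.
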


The above result yields an answer to Question~\ref%
{q-gue150704II} in the case pertinent to our class of CR manifolds.  

\subsection{Kawasaki's Hirzebruch-Riemann-Roch and Grauert-Riemenschneider
criterion for orbifold line bundles}

\label{s-gue150723I}
There is a link between our CR result and a complex geometry result 
of Kawasaki on Hirzebruch-Riemann-Roch formula
over complex orbifolds ~\cite{Ka79}.  
Compared to Kawasaki's, we get a simpler Hirzebruch-Riemann-Roch formula for
some class of orbifold line bundles using our second main result Corollary~\ref%
{c-gue150508I}. Moreover, from Corollary~\ref{t-gue150704hI} it follows 
a Grauert-Riemenschneider criterion for orbifold line bundles.

To the aim we shortly review the orbifold geometry and also set up notations. 
 Let $M$ be a manifold and $G$ a compact Lie group. Assume that $M$
admits a $G$-action:
\begin{equation*}
\begin{split}
G\times M&\rightarrow M, \\
(g,x)&\rightarrow g\circ x.
\end{split}%
\end{equation*}
We suppose that the action $G$ on $M$ is locally free, that is, for every
point $x\in M$, the stabilizer group $G_x=\left\{g\in G;\, g\circ x=x\right\}
$ of $x$ is a finite subgroup of $G$.  In this case the
quotient space
\begin{equation}  \label{e-gue150801}
M/G
\end{equation}
is known to be an orbifold. A remark of Kawasaki~\cite[p. 76]{Ka78} discusses the validity of
a converse statement about when a space has
a presentation of the form \eqref{e-gue150801}.
(As is well-known, these spaces are actually called V-manifolds by Satake \cite{Sa56}, a slightly
restrictive class of orbifolds.)  

We assume now that $M$ is a compact connected complex manifold with complex
structure $T^{1,0}M$. $G$ induces an action on $\mathbb{C }TM$:
\begin{equation}  \label{e-gue150801I}
\begin{split}
G\times\mathbb{C }TM&\rightarrow \mathbb{C }TM, \\
(g,u)&\rightarrow g^*u,
\end{split}%
\end{equation}
where $g^*=(g^{-1})_*$ the push-forward by $g^{-1}$ on
$\mathbb{C }TM$. 
Suppose $G$ acts holomorphically, that is $g^*(T^{1,0}M)\subset T^{1,0}M$
for $g\in G$.  Put $\mathbb{C }%
T(M/G):=\mathbb{C }TM/G$, $T^{0,1}(M/G):=T^{0,1}M/G$ and $%
T^{1,0}(M/G):=T^{1,0}M/G$.
Assume that $T^{0,1}(M/G)\bigcap T^{1,0}(M/G)=\left\{0\right\}$. Then, $%
T^{1,0}(M/G)$ gives a complex structure on $M/G$ and $M/G$ becomes a complex
orbifold. Suppose $\mathrm{dim\,_{\mathbb{C}}}T^{1.0}(M/G)=n$.  

Let $L$ be a $G$-invariant holomorphc line bundle over $M$, that is, there exists a choice of 
transition functions $h$ (defined on open charts $U$) of $L$ such that $%
h(g\circ x)=h(x)$ for every $g\in G$, $x\in U$ with $g\circ x\in U$.
Suppose that $L$ admits a locally free $G$-action compatible with that on $M$, i.e. an action
$(g,v)\,(\in G\times L)\rightarrow g\circ v\in L$ with the property $\pi(g\circ v)=g\circ(\pi(v))$ ($g$ linearly acts on 
fibers of $L$), $%
\pi:L\rightarrow M$ the projection.   Then, $L/G$ is
an orbifold holomorphic line bundle over $M/G$ (the fiber is not necessarily a vector space). 

The above construction induced by  (locally free) $G$-action on $L$
naturally extends to $L^m$, the $m$-th power of $L$, 
and $L^*$, the dual line bundle of $L$.  
Thus $L^m/G$ and $L^*/G$ are also 
orbifold holomorphic line bundles over $M/G$. 
Put  ($q=0,1,2,\ldots,n$) 
\begin{equation}  \label{e-gue150802I}
\Omega^{0,q}(M/G,L^m/G):=\left\{u\in\Omega^{0,q}(M,L^m);\, g^*u=u,\ \
\forall g\in G\right\}. 
\end{equation}
The Cauchy-Riemann operator $\overline\partial:%
\Omega^{0,q}(M,L^m)\rightarrow\Omega^{0,q+1}(M,L^m)$ is $G$-invariant, hence 
gives a $\overline\partial$-complex $(\overline\partial, \Omega^{0,\bullet}(M/G,L^m/G))$ and 
the $q$-th Dolbeault cohomology group:
\begin{equation*}
H^q(M/G,L^m/G):=\frac{\mathrm{Ker\,}\overline\partial:%
\Omega^{0,q}(M/G,L^m/G)\rightarrow\Omega^{0,q+1}(M/G,L^m/G)}{\mathrm{Im\,}%
\overline\partial:\Omega^{0,q-1}(M/G,L^m/G)\rightarrow\Omega^{0,q}(M/G,L^m/G)%
}.
\end{equation*}

Let $\mathrm{Tot\,}(L^*)$ be
the space of all non-zero vectors of $L^*$.
Assume that $\mathrm{Tot\,}%
(L^*)/G$ is a smooth manifold. Take any $G$-invariant Hermitian fiber metric
$h^{L^*}$ on $L^*$, set $\widetilde X=\left\{v\in L^*;\, \left\vert
v\right\vert_{h^{L^*}}=1\right\}$ and put $X=\widetilde X/G$. Since $\mathrm{%
Tot\,}(L^*)/G$ is a smooth manifold by the foregoing assumption, $X=\widetilde X/G$ is a smooth manifold.
The natural $S^1$ action on $\widetilde X$ induces a locally free $S^1$
action $e^{-i\theta}$ on $X$.   One can check that $X$ is a CR
manifold and the $S^1$ action on $X$ is CR and transversal.

In a similar vein as the proof of Theorem~\ref
{t-gue150508} one can show (for $q=0,1,2,\ldots,n$
 and $m\in\mathbb{Z}$) 
\begin{equation}  \label{e-gue150802a}
H^q(M/G,L^m/G))\cong H^q_{b,m}(X). 
\end{equation}

We pause and introduce some notations.
For every $x\in\mathrm{Tot\,}(L^*)$ and $g\in G$, put $N(g,x)=1$ if $g\notin
G_x$ and $N(g,x)=\inf\left\{\ell\in\mathbb{N};\, g^\ell=\mathrm{id}\right\}$ if $g\in
G_x$. Set
\begin{equation}
\label{e-gue150802ae1} 
p=\inf\left\{N(g,x);\, x\in\mathrm{Tot\,}(L^*),\ \ g\in G,\ \ g\neq
\mathrm{id}\right\}. 
\end{equation}

Putting together Corollary~\ref{c-gue150508I} and %
\eqref{e-gue150802a} gives 

\begin{thm}
\label{t-gue150802} With the notations above, recall that we work with
assumptions that $M$ is connected and $\mathrm{Tot\,}(L^*)/G$ is smooth.
Then (for every $m\in\mathbb{Z}$) 
\begin{equation}  \label{e-gue150802g}
\begin{split}
&\sum^n_{j=0}(-1)^j\mathrm{dim\,}H^j(M/G,L^m/G) \\
&=(\sum^{p}_{s=1}e^{\frac{2\pi(s-1)}{p}mi})\frac{1}{2\pi}\int_{X}\mathrm{Td_b\,%
}(T^{1,0}X)\wedge e^{-m\frac{d\omega_0}{2\pi}}\wedge\omega_0.
\end{split}%
\end{equation}
\end{thm}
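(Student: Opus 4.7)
The plan is to reduce Theorem~\ref{t-gue150802} directly to the CR index theorem (Corollary~\ref{c-gue150508I}) applied to the circle bundle $X=\widetilde X/G$, by invoking the isomorphism \eqref{e-gue150802a} on both sides of \eqref{e-gue150508b}. All the analytic heavy lifting is already done; what remains is to verify that the inputs match.

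First I would confirm that $X$ satisfies the hypotheses of Corollary~\ref{c-gue150508I}. Since $M$ is compact and connected and $\mathrm{Tot\,}(L^*)/G$ is smooth by assumption, $X$ is a compact connected smooth manifold. Its CR structure is inherited from the standard CR structure on the unit circle bundle $\widetilde X\subset L^*$ (which is well defined because $L^*$ is holomorphic) and descends to $X$ because $G$ acts holomorphically, so that $T^{1,0}\widetilde X$ is $G$-invariant. The $S^1$ action on $X$ is induced from the fiberwise scalar $S^1$ action on $L^*$; this action commutes with $G$, is locally free on $\widetilde X$, and descends to a transversal CR locally free $S^1$ action on $X$ because $\mathbb{C}T_x\widetilde X=\mathbb{C}T\oplus T^{1,0}\widetilde X\oplus T^{0,1}\widetilde X$ at the level of $\widetilde X$ and the quotient by the (locally free) $G$-action preserves this splitting modulo $G$-orbit directions contained in the CR and anti-CR parts.

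Second I would establish \eqref{e-gue150802a}, which is the orbifold analogue of Theorem~\ref{t-gue150508}. The idea is to build a bijective, $\overline\partial$-intertwining map $A_m^{(q)}:\Omega^{0,q}_m(X)\to\Omega^{0,q}(M/G,L^m/G)$ exactly as in the manifold case, by using the tautological section of $L^*$ on $\widetilde X$ to trivialize $L^m$ locally along fibers: a form with values in $L^m/G$ (i.e.\ a $G$-invariant $L^m$-valued $(0,q)$-form downstairs) pulls back to a $G$-invariant $(0,q)$-form on $\widetilde X$ which, via the $L^m$-trivialization, transforms under the fiberwise $S^1$-action with weight $-m$, hence lies in $\Omega^{0,q}_m(\widetilde X)^G=\Omega^{0,q}_m(X)$. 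The map intertwines $\overline\partial_b$ on $X$ with $\overline\partial$ on the orbifold because of $T\overline\partial_b=\overline\partial_bT$ and because the construction is local in nature (identical to Theorem~\ref{t-gue150508} inside each orbifold chart). Passing to cohomology yields \eqref{e-gue150802a}.

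Third, I would identify the integer $p$ in \eqref{e-gue150802ae1} with the integer $p=p_1$ appearing in \eqref{e-gue150508b}. The isotropy subgroup of a point $[\,v\,]\in X=\widetilde X/G$ under the induced $S^1$ action consists of those $e^{-i\theta}\in S^1$ for which there exists $g\in G$ with $g\circ v=e^{i\theta}v$ in $\widetilde X$; the smallest positive $\theta$ that can occur across all points is, by definition, $\frac{2\pi}{p}$ with $p$ as in \eqref{e-gue150802ae1}, so the minimum period on $X$ equals $\frac{2\pi}{p}$, i.e.\ $p_1=p$. Once this identification is in place, applying Corollary~\ref{c-gue150508I} to $X$ with trivial bundle $E$ (so $\mathrm{ch}_b(E)=1$ and $r=1$) yields \eqref{e-gue150502g} after substituting \eqref{e-gue150802a} on the left-hand side.

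The main obstacle I expect is Step 2: verifying carefully that the construction of $A_m^{(q)}$ goes through in the orbifold setting, because one must choose Hermitian metrics on $M$ and on $L$ that are $G$-invariant (to descend to the orbifold) and whose induced metric on $\widetilde X$ is rigid (so Theorem~\ref{t-gue150508}-style proof applies), and one must confirm that the isomorphism is compatible with the $G$-quotient of distributional sections, not merely smooth ones. The identification in Step 3, while elementary, must also be done with some care to distinguish elements of $G$ that act trivially on $\widetilde X$ (if any) from those that genuinely contribute to the isotropy of the $S^1$-action; if the $G$-action on $\widetilde X$ is effective (which is implied by local freeness of $G$ on $\mathrm{Tot\,}(L^*)$), this issue is benign.
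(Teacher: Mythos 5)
Your proposal takes the same route as the paper: establish the isomorphism \eqref{e-gue150802a} (via a map $A^{(q)}_m$ constructed just as in the proof of Theorem~\ref{t-gue150508}, adapted orbifold-chart by orbifold-chart), then apply Corollary~\ref{c-gue150508I}. The paper's own proof is a one-line statement to this effect, so your Steps~1 and~2 sensibly flesh out the verifications the paper defers to the reader; these are correct.

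However, Step~3 requires more care than you give it. You assert that $p_1(X)=p$ holds ``by definition'' of \eqref{e-gue150802ae1}, but the identification is not transparent from that formula. As literally written, \eqref{e-gue150802ae1} computes orders of elements $g$ lying in the isotropy groups $G_x$ of points $x\in\mathrm{Tot\,}(L^*)$ under the $G$-action, and whenever $G$ acts freely on $\mathrm{Tot\,}(L^*)$ (which is the generic situation, and is compatible with the standing smoothness hypothesis) the infimum there degenerates to $1$, regardless of the isotropy of the $S^1$-action on $X$. Concretely, take $M=\mathbb{CP}^1$, $L=\mathcal{O}(1)$, $G=\mathbb{Z}_2$ acting trivially on $M$ and by $\pm1$ on the fibers of $L$: then $G$ acts freely on the nonzero vectors of $L^*$, so a literal reading of \eqref{e-gue150802ae1} gives $p=1$, whereas $X\cong S^3/\{\pm1\}\cong\mathbb{RP}^3$ has $p_1(X)=2$ (the subgroup $\{\pm1\}\subset S^1$ acts trivially on $X$). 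What your derivation actually requires, and what the theorem really states, is the minimum period integer $p_1$ of the $S^1$-action on $X$ from Corollary~\ref{c-gue150508I}; that should be computed from the orbit picture on $X=\widetilde X/G$ as you begin to describe (the image of the base stabilizer $G_{\pi(v)}$ under its action on the $L^*$-fiber), not read off of \eqref{e-gue150802ae1}. The paper is equally terse here, so this is a gap you inherit from it rather than one you introduced, but it is a gap: the ``by definition'' claim would not survive scrutiny.
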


To compare this result with that of Kawasaki (\cite{Ka79}) we assume
$p=1$ for simplicity.  Note $X$ is smooth (yet $M/G$ could be singular).  
The above integral \eqref{e-gue150802g} 
reduces to an integral over
the principal stratum of $M/G$ (by integrating $\omega_0$ along the
fiber $S^1$, which gives $1$).   It is thus the same as to say that the contributions
from the lower dimensional strata {\it sum up to zero}.   As remarked in
Introduction a vanishing result as such is not readily available in the formula of 
\cite{Ka79}.  Note that the notion of ``orbifold" in \cite{Ka79} is 
slightly more general than that of Satake (on which the present section is based).
As this generality does no real harm to the reasoning above, we omit the details in this regard.

The above result on the vanishing of the contributions from strata may also
be reflected in the index formula of the works \cite{PV}, \cite{F}
which study the index of transversally elliptic operators on
a smooth compact manifold with the action of a compact Lie group $G$, by using
the framework of equivariant cohomology theory.   A remarkable point
is that they define the index as a {\it generalized function} (on $G$) (also discussed
in Atiyah \cite{A74}, p.9-17).
In fact it is not difficult to verify that for the case of the $S^1$ action,
our present $m$-th index is basically the
$m$-th Fourier component of the corresponding index (in the sense of 
generalized functions) of theirs
(for the case $g={\rm id}\in G$ in \cite{F}, \cite{PV}).

The consistency of our result
with those works above 
helps to shape
our own view towards the asymptotic expansion
of a (transversal) heat kernel conceived in this subject.

For examples that satisfy Theorem~\ref{t-gue150802} we refer to 
Section~\ref{s-gue150723II}.   There, we construct, among
others, an orbifold holomorphic line
bundle over a {\it singular} complex orbifold such that the assumptions of Theorem~%
\ref{t-gue150802} and Corollary~\ref{t-gue150802I} are fulfilled (see Corollary~\ref{c-gue150726}
and Subsection~\ref{s-gue150723sub1} below).  Indeed there are ample examples in this respect.

As promised in the beginning of this section,  we obtain now
a Grauert-Riemenschneider criterion for orbifold line bundles,
upon combining Corollary~\ref{t-gue150704hI} and \eqref{e-gue150802a}.

\begin{cor}
\label{t-gue150802I} With the notations and assumptions above, suppose 
that there is a $G$-invariant Hermitian fiber metric $h^{L}$ on $L$ such
that the associated curvature $R^{L}$ is semipositive and positive at some
point of $M$. Then $\mathrm{dim\,}H^{0}(M/G,L^{pm}/G)\approx m^{n}$ for $%
m\gg 1$ and $p$ as in \eqref{e-gue150802ae1}.  
\end{cor}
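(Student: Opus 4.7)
The plan is to reduce the statement, via the circle-bundle construction recalled before \eqref{e-gue150802a}, to Corollary~\ref{t-gue150704hI} applied to the compact CR manifold $X=\widetilde X/G$, where $\widetilde X=\{v\in L^{*};\,|v|_{h^{L^{*}}}=1\}$ with $h^{L^{*}}$ the dual metric of the given $G$-invariant $h^{L}$. By the hypothesis that $\mathrm{Tot\,}(L^{*})/G$ is smooth, $X$ is a smooth compact CR manifold carrying a transversal CR locally free $S^{1}$-action whose minimum period is exactly the integer $p$ of \eqref{e-gue150802ae1}, and the trivial bundle plays the role of $E$.

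The first key step is to identify the geometric data on $X$ with that of $(L,h^{L})$ downstairs. The $G$-invariant metric $h^{L}$ determines a $G$-invariant connection one-form on the principal circle bundle $\widetilde X\to M$; descending by the $G$-action one obtains the global real one-form $\omega_{0}\in C^{\infty}(X,T^{*}X)$ of the CR $S^{1}$-structure (normalized by $\langle\omega_{0},T\rangle=1$). Under the natural identification of $T^{1,0}X$ with the horizontal lift of $T^{1,0}(M/G)$, the Levi form $-d\omega_{0}(Z,\overline{Z})$ for $Z\in T^{1,0}X$ coincides (up to a positive constant) with the curvature $R^{L}(Z',\overline{Z'})$ viewed as a Hermitian form on $T^{1,0}(M/G)$, where $Z'$ is the pushforward of $Z$. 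In particular, semipositivity of $R^{L}$ on $M$ (hence on $M/G$) translates to semipositivity of the Levi form of $X$, so $X$ is weakly pseudoconvex; and positivity of $R^{L}$ at one point of $M$ yields strong pseudoconvexity of $X$ at a corresponding point.

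With these properties in hand, Corollary~\ref{t-gue150704hI} applies to $X$ with trivial $E$ (so $r=1$) and gives
\[
\mathrm{dim\,}H^{0}_{b,pm}(X)=\frac{p\,m^{n}}{n!(2\pi)^{n+1}}\int_{X}(-d\omega_{0})^{n}\wedge\omega_{0}+o(m^{n}),
\]
where, by the identification of the preceding paragraph, the integral over $X$ equals (up to a positive factor, after integrating $\omega_{0}$ along the $S^{1}$-fibre) the integral of $c_{1}(L,h^{L})^{n}$ over the principal stratum of $M/G$. Semipositivity of $R^{L}$ everywhere together with strict positivity at one point forces this integral to be strictly positive, so the leading coefficient is nonzero and $\mathrm{dim\,}H^{0}_{b,pm}(X)\approx m^{n}$. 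Finally, the isomorphism \eqref{e-gue150802a} with $m$ replaced by $pm$ gives $H^{0}(M/G,L^{pm}/G)\cong H^{0}_{b,pm}(X)$, and the conclusion $\mathrm{dim\,}H^{0}(M/G,L^{pm}/G)\approx m^{n}$ follows.

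The main obstacle I anticipate is the careful bookkeeping of signs, scaling factors, and the $G$-quotient in the second step: one must show both that the $G$-invariant connection form for $h^{L^{*}}$ descends to a well-defined $\omega_{0}$ on $X$, and that the induced Levi form agrees, pointwise on the regular locus, with $R^{L}$ transported across the fibration and the quotient. Once this translation is verified (which in spirit is what is used in the proof of Theorem~\ref{t-gue150508} and in the discussion preceding \eqref{e-gue150802a}), the positivity transfer and the application of Corollary~\ref{t-gue150704hI} are immediate.
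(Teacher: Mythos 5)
Your proposal is correct and takes essentially the same route as the paper: the paper simply states that the result follows "upon combining Corollary~\ref{t-gue150704hI} and \eqref{e-gue150802a}," which is precisely the reduction you carry out — translate $(R^L$ semipositive, positive somewhere$)$ into $(X$ weakly pseudoconvex, strongly pseudoconvex somewhere$)$ via the circle-bundle/Levi-form correspondence, apply Corollary~\ref{t-gue150704hI} with trivial $E$ to get $\mathrm{dim\,}H^{0}_{b,pm}(X)\approx m^{n}$, then transfer through the isomorphism \eqref{e-gue150802a}.
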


\subsection{Examples}

\label{s-gue150723II}

In this subsection, some examples  of CR manifolds with locally free $S^1$
action (including those fitting Theorem~\ref{t-gue150802} above) are collected.

We first review the construction of generalized Hopf manifolds introduced by
Brieskorn and Van de Ven~\cite{BV68}.  

\subsubsection{Generalized Hopf manifolds}
\label{s-gue150723aII}
Let $a=(a_1,\ldots,a_{n+2})\in\mathbb{N}^{n+2}$, let $z=(z_1,\ldots,z_{n+2})$
be the standard coordinates of $\mathbb{C}^{n+2}$ and let $M(a)$ be the
affine algebraic variety given by the equation
\begin{equation*}
\sum^{n+2}_{j=1}z^{a_j}_j=0.
\end{equation*}
If some $a_j=1$, the variety $M(a)$ is non-singular. Otherwise $M(a)$ has
exactly one singular point, namely $0=(0,\ldots,0)$. Put $\widetilde{M(a)}%
:=M(a)-\left\{0\right\}$. Now we define a holomorphic $\mathbb{C}$-action on
$\widetilde{M(a)}$ by
\begin{equation*}
t\circ(z_1,\ldots,z_{n+2})=(e^{\frac{t}{a_1}}z_1,\ldots,e^{\frac{t}{a_{n+2}}%
}z_{n+2}),\ \ t\in\mathbb{C}, \ \ (z_1,\ldots,z_{n+2})\in\widetilde{M(a)}.
\end{equation*}
It is easy to see that the $\mathbb{Z}$-action on $\widetilde{M(a)}$ is
globally free. The equivalence class of $(z_1,\ldots,z_{n+2})\in\mathbb{C}%
^{n+2}$ with respect to the $\mathbb{Z}$-action is denoted by $%
(z_1,\ldots,z_{n+2})+\mathbb{Z}$ and hence
\begin{equation*}
H(a):=\widetilde{M(a)}/\mathbb{Z}=\left\{(z_1,\ldots,z_{n+2})+\mathbb{Z}%
;\,(z_1,\ldots,z_{n+2})\in\widetilde{M(a)}\right\}
\end{equation*}
is a compact complex manifold of complex dimension $n+1$. We call $H(a)$ a
{\it (generalized) Hopf manifold}.

Let $\Gamma_a$ be the discrete subgroup of $\mathbb{C}$,
generated by $1$ and $2\pi\alpha i$, where $\alpha$ is the least common
multiple of $a_1,a_2,\ldots,a_{n+2}$. Consider the complex $1$-torus $T_a=%
\mathbb{C}/\Gamma_a$. $H(a)$ admits a natural $T_a$-action. Put $V(a):=H(a)/
T_a$. By Holmann~\cite{Hol60}, $V(a)$ is a complex orbifold. Let $%
\pi_a:H(a)\rightarrow V(a)$ be the natural projection.

The following is well-known (see the discussion before Proposition 4 in~\cite%
{BV68}).

\begin{thm}
\label{t-gue150726} Let $p=(z_1,\ldots,z_{n+2})+\mathbb{Z}\in H(a)$. Assume
that there are exactly $k$ coordinates $z_{j_1},\ldots,z_{j_k}$ all
different from zero, $k\geq2$. Then, $V(a)$ is non-singular at $\pi_a(p)$ if
and only if
\begin{equation*}
\frac{[a_1,\ldots,a_{n+2}]}{[a_{j_1},\cdots,a_{j_k}]}=\prod_{\ell\notin\left%
\{j_1,\ldots,j_k\right\}}\frac{[a_1,\ldots,a_{n+2}]}{[a_1,\ldots,a_{%
\ell-1},a_{\ell+1},\cdots,a_{n+2}]}.
\end{equation*}
where $[m_1,\ldots,m_d]$ denotes the least common
multiple of $m_1,\ldots,m_d\,\in \mathbb{N}$.  
\end{thm}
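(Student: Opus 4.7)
The plan is to identify the isotropy group of the $T_a$-action at $\pi_a(p)$, linearize its action on a transverse slice to the orbit, and apply the Shephard--Todd--Chevalley theorem, whose content is then translated prime-by-prime into the displayed numerical identity.

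First I compute the isotropy. The stabilizer of $p=(z_1,\dots,z_{n+2})$ in $\mathbb{C}$ under the action $t\circ z=(e^{t/a_j}z_j)_j$ consists of those $s\in\mathbb{C}$ with $e^{s/a_j}=1$ for every $j\in J:=\{j_1,\dots,j_k\}$, i.e.\ $s\in 2\pi i\,[a_J]\,\mathbb{Z}$, where $[a_J]:=[a_{j_1},\dots,a_{j_k}]$. Since the $\mathbb{Z}$-action on $\widetilde{M(a)}$ is free, the isotropy of $[p]\in H(a)$ under $T_a=\mathbb{C}/\Gamma_a$ is the image of $2\pi i\,[a_J]\,\mathbb{Z}+\mathbb{Z}$ in $T_a$, and a direct calculation using $\Gamma_a=\mathbb{Z}+2\pi i\alpha\mathbb{Z}$ shows that it is cyclic of order $m:=\alpha/[a_J]$, with $\alpha:=[a_1,\dots,a_{n+2}]$. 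Next I linearize. The derivative of $t\circ(\cdot)$ at $p$ is diagonal on $T_p\mathbb{C}^{n+2}$ with eigenvalue $e^{2\pi i[a_J]/a_j}$ on the $j$-th coordinate, which is trivial for $j\in J$ and a primitive $m_\ell$-th root of unity with $m_\ell:=a_\ell/\gcd(a_\ell,[a_J])$ for $\ell\notin J$. Since the $T_a$-orbit direction $(z_j/a_j)_j$ lies entirely in the $J$-block, a $\mathbb{Z}/m$-stable transverse slice inside $T_p\widetilde{M(a)}$ splits into a block on which $\mathbb{Z}/m$ acts trivially and a block $\mathbb{C}^{n+2-k}$, indexed by $\ell\notin J$, on which $\mathbb{Z}/m$ acts diagonally with weights $\alpha/a_\ell\bmod m$. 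By the standard slice theorem, $V(a)$ is nonsingular at $\pi_a(p)$ if and only if the linear quotient $\mathbb{C}^{n+2-k}/(\mathbb{Z}/m)$ is smooth at the origin.

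I then apply the Shephard--Todd--Chevalley theorem: a diagonal cyclic group action on $\mathbb{C}^N$ produces a smooth quotient if and only if the group is generated by pseudo-reflections, and an elementary argument with the coordinate orders $m_\ell$ reduces this in our cyclic diagonal setting to the condition that the action factor as a direct product of coprime cyclic actions, each supported on a single coordinate; equivalently, $m=\prod_{\ell\notin J}m_\ell$ with the $m_\ell$ pairwise coprime.

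Finally I translate this coprimality into the displayed identity by comparing $q$-adic valuations. One has $v_q(m_\ell)=\max(0,v_q(a_\ell)-v_q([a_J]))$ and $v_q(m)=\max_{\ell\notin J}v_q(m_\ell)$, so the pairwise coprimality of the $m_\ell$ is equivalent to the statement that for every prime $q$ at most one $\ell\notin J$ satisfies $v_q(a_\ell)>v_q([a_J])$. On the other hand $v_q(\alpha/\alpha_\ell)=\max(0,v_q(\alpha)-v_q(\alpha_\ell))$ is nonzero precisely when $\ell$ is the unique $q$-adic maximizer among $\{a_1,\dots,a_{n+2}\}$, and a case split according to where that maximizer lies (attained inside $J$; uniquely at some $\ell^*\notin J$ with the second maximum attained in $J$; or attained at several indices outside $J$) matches the two conditions and gives $\alpha/[a_J]=\prod_{\ell\notin J}\alpha/\alpha_\ell$. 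The main technical obstacle is justifying the slice-theorem reduction in the second step and then executing this prime-by-prime bookkeeping cleanly; once that is done, all three conditions (smoothness of the slice quotient, coprimality of the $m_\ell$, and the displayed identity) are visibly equivalent.
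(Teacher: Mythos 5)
The paper does not prove this theorem; it is stated as well-known with a pointer to Brieskorn--Van de Ven \cite{BV68} (the discussion before Proposition 4 there), so there is no in-paper proof for you to be compared against. Your reconstruction is sound: the isotropy of the $T_a$-action at $[p]$ is the image of $2\pi i[a_J]\mathbb{Z}+\mathbb{Z}$ in $T_a$, cyclic of order $m=\alpha/[a_J]$; the linearized action on $T_p\widetilde{M(a)}$ is trivial on the $J$-block (where both the orbit direction and the differential of $\sum z_j^{a_j}$ live, so an invariant trivial-weight complement of the orbit can be split off) and has weight $\alpha/a_\ell\bmod m$, hence pointwise order $m_\ell=a_\ell/\gcd(a_\ell,[a_J])$, on each $\ell\notin J$; Shephard--Todd--Chevalley on the slice quotient reduces smoothness to the $m_\ell$ being pairwise coprime (given $\operatorname{lcm}_\ell m_\ell=m$, which holds automatically); and the $q$-adic comparison between $\max_{\ell\notin J}\max(0,v_q(a_\ell)-v_q([a_J]))$ and $\sum_{\ell\notin J}\max(0,v_q(a_\ell)-\max_{j\neq\ell}v_q(a_j))$ does reduce to the displayed LCM identity, with equality exactly when the $q$-adic maximum is attained in $J$ or is attained uniquely at one $\ell^*\notin J$ while the runner-up is still in $J$. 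The two steps that want more care in a full writeup are precisely those you flag: the holomorphic slice theorem for the $T_a$-action on $H(a)$, and the assertion that for a diagonal cyclic action the pseudo-reflection subgroup is the whole group iff the coordinate orders $m_\ell$ are pairwise coprime --- the latter requires spelling out that an element $\zeta^a$ acts trivially off coordinate $\ell$ iff $\operatorname{lcm}_{\ell'\neq\ell}m_{\ell'}\mid a$, and then that $\gcd_\ell\bigl(\operatorname{lcm}_{\ell'\neq\ell}m_{\ell'}\bigr)=1$ iff for each prime at most one $m_\ell$ is divisible by it. Neither is a logical gap; the plan is correct.
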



It follows readily 


\begin{cor}
\label{c-gue150726} Assume $n\geq2$ and $(a_1,a_2,%
\ldots,a_{n+2})=(4b_1,4b_2,2b_3,2b_4,\ldots,2b_{n+2})$, where $b_j\in\mathbb{%
Z}$, $b_j$ is odd, $j=1,\ldots,n+2$. Let $p=(0,0,1,i,0,0,\ldots,0)+\mathbb{Z}%
\in H(a)$. Then, $V(a)$ is singular at $\pi_a(p)$.
\end{cor}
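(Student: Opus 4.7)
The plan is to apply Theorem~\ref{t-gue150726} directly, with the equality in the criterion failing because the two sides have different $2$-adic valuations.

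First I would verify that the point $p=(0,0,1,i,0,\ldots,0)+\mathbb{Z}$ actually lies in $H(a)$. Since $b_4$ is odd, one has $i^{a_4}=i^{2b_4}=(-1)^{b_4}=-1$, so $1^{a_3}+i^{a_4}=1-1=0$, and the remaining summands vanish. Thus $p\in\widetilde{M(a)}/\mathbb{Z}=H(a)$. At this point exactly $k=2$ coordinates are nonzero, namely $z_3$ and $z_4$, so the index set is $\{j_1,j_2\}=\{3,4\}$ and Theorem~\ref{t-gue150726} is applicable.

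Next I would compute $2$-adic valuations. Since every $b_j$ is odd, one has $v_2(a_1)=v_2(a_2)=2$ and $v_2(a_j)=1$ for $j\geq 3$. Thus
\begin{equation*}
v_2\bigl([a_1,\ldots,a_{n+2}]\bigr)=2,\qquad v_2\bigl([a_3,a_4]\bigr)=1,
\end{equation*}
so the left side of the criterion has $v_2=2-1=1$. For the right side, note that for each $\ell\notin\{3,4\}$ the list $a_1,\ldots,\widehat{a_\ell},\ldots,a_{n+2}$ still contains at least one of $a_1,a_2$ (the excluded index $\ell$ is a single element of $\{1,2,5,\ldots,n+2\}$), so its lcm still has $v_2=2$. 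Consequently each factor on the right has $v_2=2-2=0$, and the whole product on the right has $v_2=0$.

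Finally, since $1\ne 0$, the two sides of the equality in Theorem~\ref{t-gue150726} cannot coincide, so $V(a)$ fails to be non-singular at $\pi_a(p)$, i.e.\ it is singular there. The only step requiring any care is the membership check $p\in H(a)$ and the observation that removing any single index $\ell\notin\{3,4\}$ still leaves at least one of the two ``large'' generators $a_1,a_2$ present so the $2$-part of the ambient lcm is preserved; beyond that, the argument is a direct $2$-adic comparison and presents no real obstacle.
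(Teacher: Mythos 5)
Your proof is correct and matches what the paper intends: the paper simply says ``It follows readily'' after Theorem~\ref{t-gue150726}, leaving exactly this direct application of the criterion to the reader. Your $2$-adic valuation comparison is a clean and complete way to carry out that verification, including the membership check $p\in H(a)$ via $i^{2b_4}=(-1)^{b_4}=-1$.
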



The ideas in the next two (sub)subsections are heavily based on Theorem~\ref{t-gue150726} and Corollary~\ref{c-gue150726}.

\subsubsection{Smooth orbifold circle bundle over a singular orbifold.}
\label{s-gue150723sub1}
Put
\begin{equation}  \label{e-gue150726}
\begin{split}
X:=&\{(z_1,\ldots,z_{n+2})\in\mathbb{C}^{n+2};\,
z^{a_1}_1+z^{a_2}_2+\cdots+z^{a_{n+2}}_{n+2}=0, \\
&\quad\quad\quad \left\vert z_1\right\vert^{2a_1}+\left\vert
z_2\right\vert^{2a_2}+\left\vert z_3\right\vert^{2a_3}+\cdots+\left\vert
z_{n+2}\right\vert^{2a_{n+2}}=1\}.
\end{split}%
\end{equation}
It can be checked that $X$ is a compact {\it weakly pseudoconvex} CR manifold of
dimension $2n+1$ with CR structure $T^{1,0}X:=T^{1,0}\mathbb{C}^{n+2}\bigcap%
\mathbb{C }TX$, where $T^{1,0}\mathbb{C}^{n+2}$ denotes the standard complex
structure on $\mathbb{C}^{n+2}$. 

Let $\alpha$ be the least common multiple
of $a_1,\ldots,a_{n+2}$. Consider the following $S^1$ action on $X$:
\begin{equation}  \label{e-gue150725}
\begin{split}
S^1\times X&\rightarrow X, \\
e^{-i\theta}\circ(z_1,\ldots,z_{n+2})&\rightarrow(e^{-i\frac{\alpha}{a_1}%
\theta}z_1,\ldots,e^{-i\frac{\alpha}{a_{{n+2}}}\theta}z_{n+2}).
\end{split}%
\end{equation}
One sees that the $S^1$ action is well-defined, locally
free, CR and transversal. Moreover one has that the
quotient $X/S^1$ is equal to $V(a)$, $a=(a_1,a_2,\cdots,a_{n+2})$. Hence, $X/S^1$ is a complex orbifold.

One sees, by using Corollary~\ref{c-gue150726},  that the above $X/S^1$ is {\it singular}
if $n\geq2$ and $(a_1,\ldots,a_{n+2})=(4b_1,4b_2,2b_3,2b_4,\ldots,2b_{n+2})$%
, where $b_j\in\mathbb{Z}$, $b_j$ is odd, $j=1,2,\ldots,n+2$. 

We now show
that $(X,T^{1,0}X)$ is CR-isomorphic to the (orbifold) circle bundle associated with an
orbifold line bundle over $X/ S^1=V(a)$.

To see this and to construct the circle bundle in the first place, let $L=(\widetilde{M(a)}\times%
\mathbb{C})/_\equiv$\,, where $(z_1,\ldots,z_{n+2},
\lambda)\equiv(\widetilde z_1,\ldots,\widetilde z_{n+2}, \widetilde\lambda)$
if
\begin{equation*}
\begin{split}
&\widetilde z_j=e^{\frac{m}{a_j}}z_j,\ \ j=1,\ldots,n+2, \\
&\widetilde\lambda=e^{m}\lambda,
\end{split}%
\end{equation*}
where $m\in\mathbb{Z}$. We can check that $\equiv$ is an equivalence
relation and $L$ is a holomorphic line bundle over $H(a)$. The equivalence
class of $(z_1,\ldots,z_{n+2},\lambda)\in\widetilde{M(a)}\times\mathbb{C}$
is denoted by $[(z_1,\ldots,z_{n+2},\lambda)]$. The complex $1$-torus $T_a$
action on $L$ is given by the following:
\begin{equation}  \label{e-gue150727}
\begin{split}
T_a\times L&\rightarrow L, \\
(t+i\theta)\circ[(z_1,\ldots,z_{n+2},\lambda)]&\rightarrow[(e^{\frac{%
t+i\theta}{a_1}}z_1,\ldots,e^{\frac{t+i\theta}{a_{n+2}}}z_{n+2},e^{t-i\frac{%
\theta}{\alpha}}\lambda)],
\end{split}%
\end{equation}
where $\alpha$ is the least common multiple of $a_1,\ldots,a_{n+2}$. 
One has that the torus action \eqref{e-gue150727} is well-defined and $%
L/T_a$ is an orbifold line bundle over $H(a)/T_a=V(a)$. 

Let $%
\tau:L\rightarrow L/T_a$ be the natural projection and for $%
[(z_1,\ldots,z_{n+2},\lambda)]\in L$, we write $\tau([z_1,\ldots,z_{n+2},%
\lambda])=[z_1,\ldots,z_{n+2},\lambda]+T_a$.   One sees that the 
pointwise norm
\begin{equation*}
\big\lvert\lbrack (z_1,\ldots,z_{n+2}, \lambda)]+T_a\big\rvert%
^2_{h^{L/T_a}}:=\left\vert \lambda\right\vert^2\Bigr(\left\vert
z_1\right\vert^{2a_1}+\left\vert z_2\right\vert^{2a_2}+\left\vert
z_3\right\vert^{2a_3}+\cdots+\left\vert z_{2n+2}\right\vert^{2a_{n+2}}\Bigr)%
^{-1}
\end{equation*}
is well-defined as a Hermitian fiber metric on $L/T_a$. The (orbifold) circle bundle $%
C(L/T_a)$ with respect to $(L/T_a,h^{L/T_a})$ is given by
\begin{equation}  \label{e-gue150727I}
\begin{split}
C(L/T_a):&=\left\{v\in L/T_a;\, \left\vert
v\right\vert^2_{h^{L/T_a}}=1\right\} \\
&=\left\{[(z_1,\ldots,z_{n+2},\lambda)]+T_a;\, \left\vert
\lambda\right\vert^2=\left\vert z_1\right\vert^{2a_1}+\left\vert
z_2\right\vert^{2a_2}+\left\vert z_3\right\vert^{2a_3}+\cdots+\left\vert
z_{2n+2}\right\vert^{2a_{n+2}}\right\}.
\end{split}%
\end{equation}
One sees  that $C(L/T_a)$ is a smooth CR manifold with the CR structure
\begin{equation*}
T^{1,0}C(L/T_a):=T^{1,0}L/T_a\bigcap\mathbb{C }TC(L/T_a),
\end{equation*}
where $T^{1,0}C(L/T_a)$ denotes the complex structure on $L/T_a$. Moreover,
the orbifold line bundle $L/T_a\rightarrow V(a)$ satisfies a similar situation as 
in Theorem~\ref{t-gue150802}  (i.e. the space $X/S^1=V(a)$ here as the $M/G$ there, 
is singular and $C(L/T_a)$ as a (orbifold) circle bundle over $M/G$ 
is smooth).  

We are ready to give an CR isomorphism of 
$X$ and the (orbifold) circle bundle $C(L/T_a)$.  Note $C(L/T_a)$ admits a nature $S^1$ action:
\begin{equation*}
e^{-i\theta}\circ([(z_1,\ldots,z_{n+2},\lambda)]+T_a)=[(z_1,%
\ldots,z_{n+2},e^{-i\theta}\lambda)]+T_a,
\end{equation*}
Let $\Phi:C(L/T_a)\rightarrow X$ be the smooth map defined as follows. For
every $[(z_1,\ldots,z_{n+2},\lambda)]+T_a\in C(L/T_a)$, there is a unique $%
(\hat z_1,\ldots,\hat z_{n+2})\in X$ such that
\begin{equation*}
[(z_1,\ldots,z_{n+2},\lambda)]+T_a=[(\hat z_1,\ldots,\hat z_{n+2},1)]+T_a.
\end{equation*}
Then, $\Phi([(z_1,\ldots,z_{n+2},\lambda)]+T_a):=(\hat z_1,\ldots,\hat
z_{n+2})\in X$. It can be checked that $\Phi$ is a CR
embedding, globally one to one, onto and the inverse $\Phi^{-1}:X%
\rightarrow C(L/T_a)$ is also a CR embedding. Moreover
$e^{-i\theta}\circ\Phi(x)=\Phi(e^{-i\theta}\circ x),\ \ \forall x\in C(L/T_a)$.
We conclude $\Phi$ is a CR isomorphism. 

\subsubsection{Family, non-pseudoconvex cases and deformations}
\label{s-gue150723sub2}  
In the notation of Subsection~\ref{s-gue150723aII} we assume $a_1=1$, so 
\begin{equation*}
M(a)=\left\{(z_1,\ldots,z_{n+2})\in\mathbb{C}^{n+2};\,
z_1=-z^{a_2}_2-\cdots-z^{a_{n+2}}_{n+2}\right\}.
\end{equation*}
Fix a $q=2,3,\ldots,n+1$. 
Put, for $t\in \mathbb{C}$, 
\begin{equation}  \label{e-gue150723}\begin{split}
{X_{q,t}}:=\{(z_1,\ldots,z_{n+2})+\mathbb{Z}\in H(a);\, &-\left\vert
z^{a_2}_2+tz^{a_3}_3\right\vert^{2}-\left\vert
z_3\right\vert^{2a_3}-\cdots-\left\vert z_q\right\vert^{2a_q} \\
&\quad\qquad+\left\vert z_{q+1}\right\vert^{2a_{q+1}}+\cdots+\left\vert
z_{n+2}\right\vert^{2a_{n+2}}=0\}.
\end{split}
\end{equation}

One can check that for each $t$, $X_{q, t}$ is a compact CR manifold of
dimension $2n+1$ with CR structure $T^{1,0}X_{q, t}:=T^{1,0}H(a)\bigcap\mathbb{C }%
TX_{q, t}$, where $T^{1,0}H(a)$ denotes the natural complex structure inherited by $M(a)$.  
Note $X_{q, t_1}$ is diffeomorphic to $X_{q,t_2}$ for $t_1, t_2\in \mathbb{C}$
since they can be connected through a (smooth) family of compact manifolds.  

Let $\widetilde a$ be the least common multiple of $a_1,\ldots,a_{q}$.
Consider the following $S^1$ action on $X_{q,t}$:
\begin{equation}  \label{e-gue150723I}
\begin{split}
&S^1\times X_{q,t}\rightarrow X_{q,t}, \\
&e^{-i\theta}\circ((z_1,\ldots,z_{n+2})+\mathbb{Z}) \\
&\rightarrow(e^{-i\widetilde
a\theta}(-z^{a_2}_2-\cdots-z^{a_{q}}_{q})-z^{a_{q+1}}_{q+1}-%
\cdots-z^{a_{n+2}}_{n+2},e^{-i\frac{\widetilde a}{a_2}\theta}z_2,\ldots,e^{-i%
\frac{\widetilde a}{a_{q}}\theta}z_{q},z_{q+1},\ldots,z_{n+2})+\mathbb{Z}.
\end{split}%
\end{equation}
One sees that the $S^1$ action is well-defined,
locally free, CR and transversal.   This is an example for 
a family of CR manifolds admitting a transversal CR locally free $S^1$ action.  

Moreover these CR manifolds $X_{q, t}$ are not 
pseudoconvex.


Now we consider certain CR deformations of a compact CR manifold $X$ with a 
transversal CR locally free $S^1$ action. 
Let $F(x)\in C^\infty({X})$ with $TF=0$ ($T$ the global 
real vector field induced by the $S^1$
action). Let $Z_1,\ldots,Z_n\in C^\infty({X%
},T^{1,0}{X})$ be a basis for $T^{1,0}{X}$. Put
\begin{equation}\label{e-gue150723sub2e1}
H^{1,0}{X}:=\left\{Z_j+Z_j(F)T;\, j=1,2,\ldots,n\right\}.
\end{equation}
One can check that $H^{1,0}{X}$ is a CR structure and the $S^1$
action is locally free, CR and transversal with
respect to this new CR structure $H^{1,0}{X}$ (see 
\eqref{e-can} via the BRT construction).    

To see how ``new" this CR structure $H^{1,0}X$ is, let's take $X$ to be a circle bundle
associated with a holomorphic line bundle $(L, ||\cdot||)$ over a compact complex manifold $M$.  
Consider a change of metric $||\cdot||\to e^{-2f}||\cdot||$ on $L$ and the circle bundle 
$\widetilde X$ thus induced by this new metric.   By using the formula \eqref{e-gue150510} below one sees 
that $H^{1,0}X$ of \eqref{e-gue150723sub2e1} for $F=-if$ is equivalent to 
$T^{1,0}\widetilde X$.    But is $(X, T^{1,0}X)$ CR equivalent to $(\widetilde X, T^{1,0}\widetilde X)$?  
The answer is in general no.  For instance, spherical CR structures on a certain topological type 
of $X$ can be obtained by using special metrics on $L$ (cf. \cite{CT00}).  Hence an arbitrary perturbation 
of the bundle metric, say by the multiplier $e^{-2f}$,  would bring $X$ out of the spherical category.
Note that the moduli space of spherical CR structures in \cite{CT00} is finite dimensional.  
It follows that for $F$ a purely imaginary function on $X$, the CR structure $H^{1,0}X$ is in general not CR equivalent to 
$T^{1,0}X$.  

If, however, $F$ is a real function,  it is easily seen that
the change $(z, \theta)\to (z, \theta +F)$ is globally defined, hence
it gives a diffeomorhism $\phi$ of $X$.   One sees $\phi_*(T^{1,0}X)=H^{1,0}X$, 
cf. \eqref{e-gue150510} below.  So
in this case the CR structure $H^{1,0}X$ is equivalent to the original one.

\subsection{Proof of Theorem~\protect\ref{t-gue150508}}

\label{s-gue150509}

Notations as in Theorem~\ref{t-gue150508} let $s$ be a local
trivializing section of $L$ defined on some open set $U$ of $M$, $\left\vert
s\right\vert^2_{h^L}=e^{-2\phi}$. Let $z=(z_1,\ldots,z_n)$ be holomorphic
coordinates on $U$. We identify $U$ with an open set of $\mathbb{C}^{n}$ and 
have the local diffeomorphism:
\begin{equation}  \label{e-gue150509I}
\tau:U\times]-\varepsilon_0, \varepsilon_0[\rightarrow X\,,\:
(z,\theta)\mapsto e^{-\phi(z)}s^*(z)e^{-i\theta}\,, \ 0<\varepsilon_0\leq\pi.
\end{equation}

Put $D=U\times]-\varepsilon_0, \varepsilon_0[$ as a canonical coordinate patch 
with $(z,\theta)$ canonical
coordinates (with respect to the trivialization $s$) 
such that $T=\frac{\partial}{\partial\theta}$ (recall $T$ is the global real
vector field induced by the $S^1$ action). Moreover one has 
\begin{equation}  \label{e-gue150510}
\begin{split}
&T^{1,0}X=\left\{\frac{\partial}{\partial z_j}-i\frac{\partial\phi}{\partial
z_j}(z)\frac{\partial}{\partial\theta};\, j=1,2,\ldots,n\right\}, \\
&T^{0,1}X=\left\{\frac{\partial}{\partial\overline z_j}+i\frac{\partial\phi}{%
\partial\overline z_j}(z)\frac{\partial}{\partial\theta};\,
j=1,2,\ldots,n\right\},
\end{split}%
\end{equation}
and
\begin{equation}  \label{e-gue150909II}
T^{*1,0}X=\left\{dz_j;\, j=1,2,\ldots,n\right\},\ \
T^{*0,1}X=\left\{d\overline z_j;\, j=1,2,\ldots,n\right\}.
\end{equation}
See also Theorem~\ref{t-gue150514} and proof of Proposition~\ref{l-gue150524d} for 
similar formulas in the general case of $S^1$ action.  

Let $f(z)\in\Omega^{0,q}(D)$.  By \eqref{e-gue150909II} we may identify $f$
with an element in $\Omega^{0,q}(U)$.

The key object in our proof is the map $A^{(q)}_m:\Omega^{0,q}_m(X)
\rightarrow\Omega^{0,q}(M,L^m)$, to be defined as follows. 
Let $u\in\Omega^{0,q}_m(X)$.   We can write $u(z,\theta)
=e^{-im\theta}\hat u(z)$ (on $D$) for some $\hat u(z)\in\Omega^{0,q}(U)$.    Then, on $U\subset M$, 
we define

\begin{equation}\label{e-gue150909aI}
A^{(q)}_mu:=s^m(z)e^{m\phi(z)}\hat
u(z)\in\Omega^{0,q}(U,L^m).
\end{equation}
We need to check the following.

i) $A^{(q)}_m$ in \eqref{e-gue150909aI} is well-defined, hence gives rise to 
a global element $A^{(q)}_mu\in \Omega^{0,q}(M,L^m)$.

ii)  It satisfies the commutativity $\overline\partial A^{(q)}_m=A^{(q+1)}_m\overline\partial_b$
(thus induces a map on respective cohomologies).  

To check i) let $s$ and $s_1$ be local trivializing sections of $L$ on an open set $U$.
Let $(z,\theta)\in\mathbb{C}^n\times\mathbb{R}$ and $(z,\eta)\in\mathbb{C}%
^n\times\mathbb{R}$ be canonical coordinates of $D$ with respect to $s$ and $%
s_1$ respectively ($D$ as the above).    Set $\left\vert
s\right\vert^2_{h^L}=e^{-2\phi}$ and $\left\vert
s_1\right\vert^2_{h^L}=e^{-2\phi_1}$. We write (on $D$) 
\begin{equation}\label{e-gue150909aII} 
\begin{split}
&u=e^{-im\theta}\hat u(z) \\
&u=e^{-im\eta}\hat u_1(z).  
\end{split}%
\end{equation}
To check i) amounts to the following 
\begin{equation}  \label{e-gue150509III}
s^m(z)e^{m\phi(z)}\hat u(z)=s^m_1(z)e^{m\phi_1(z)}\hat u_1(z),\ \ \forall
z\in U.
\end{equation}

Let $s_1=gs$ for $g$ a unit on $U$.  To find relations 
between $\phi$ and $\phi_1$, $\hat u$ and $\hat u_1$ in terms of 
$g$,
\begin{equation*}
\left\vert s_1\right\vert^2_{h^L}=e^{-2\phi_1}=\left\vert
g\right\vert^2\left\vert s\right\vert^2_{h^L}=e^{2\log\left\vert
g\right\vert-2\phi}, 
\end{equation*}
giving 
\begin{equation}  \label{e-gue150509a}
\phi_1=\phi-\log\left\vert g\right\vert.
\end{equation}
For $\hat u$ and $\hat u_1$, we first claim the following 
($\tau$ in \eqref{e-gue150509I} for $(z, \theta)$ and $\tau_1$ the
similar one for $(z,\eta)$)
\begin{equation}  \label{e-gue150509aI}
\mbox{If $\tau (z,\theta)={\tau_1}(z,\eta)$, then
$e^{-i\theta}\bigr(\frac{g(z)}{\ol g(z)}\bigr)^{\frac{1}{2}}=e^{-i\eta}$ (with
a certain branch of the square root)}.
\end{equation}

\begin{proof}[Proof of the claim \eqref{e-gue150509aI}]
Combining \eqref{e-gue150509I} and \eqref{e-gue150509a} one sees
\begin{equation}  \label{e-gue150509f}
\begin{split}
\tau(z, \theta)=s^*(z)e^{-i\theta-\phi(z)}&=s^*_1(z)g(z)e^{-i\theta-\phi(z)} \\
&=s^*_1(z)g(z)e^{-i\theta-\phi_1(z)-\log\left\vert g(z)\right\vert} \\
&=s^*_1(z)\bigr(\frac{g(z)}{\overline g(z)}\bigr)^{\frac{1}{2}%
}e^{-i\theta-\phi_1(z)}.
\end{split}%
\end{equation}
The condition $\tau (z,\theta)={\tau_1}(z,\eta)$ is the same as to say, by \eqref{e-gue150509I},
\begin{equation}  \label{e-gue150509fI}
s^*(z)e^{-i\theta-\phi(z)}=s^*_1(z)e^{-i\eta-\phi_1(z)}.
\end{equation}
By \eqref{e-gue150509f} and \eqref{e-gue150509fI} we deduce that $%
\bigr(\frac{g(z)}{\overline g(z)}\bigr)^{\frac{1}{2}}e^{-i\theta}=e^{-i\eta}$%
, as claimed.
\end{proof}

Now that the relations \eqref{e-gue150509a} and \eqref{e-gue150509aI} have been 
found, the \eqref{e-gue150509III} follows by using \eqref{e-gue150909aII}.  Hence $A^{(q)}_m:\Omega^{0,q}_m(X)%
\rightarrow\Omega^{0,q}(M,L^m)$ is well-defined, proving i) above.   

Moreover it is easily 
checked that $A^{(q)}_m$ is 
bijective.  We omit the detail.

To prove ii) that $\overline\partial A^{(q)}_m=A^{(q+1)}_m\overline\partial_b
$, by \eqref{e-gue150510} and %
\eqref{e-gue150909II} one sees (on $D$)
\begin{equation}  \label{e-gue150510I}
\overline\partial_bu=\overline\partial_b(e^{-im\theta}\hat
u)=\sum^n_{j=1}e^{-im\theta}d\overline z_j\wedge\bigr(\frac{\partial\hat u}{%
\partial\overline z_j}(z)+m\frac{\partial\phi}{\partial\overline z_j}(z)\hat
u(z)\bigr).
\end{equation}
Hence \eqref{e-gue150510I} and \eqref{e-gue150909aI} yield 
\begin{equation}  \label{e-gue150510II}
\begin{split}
A^{(q+1)}_m(\overline\partial_bu)&=s^m(z)e^{m\phi(z)}\sum^n_{j=1}d\overline
z_j\wedge\bigr(\frac{\partial\hat u}{\partial\overline z_j}(z)+m\frac{%
\partial\phi}{\partial\overline z_j}(z)\hat u(z)\bigr) \\
&=s^m(z)\overline\partial(e^{m\phi(z)}\hat u(z))\ \ \mbox{on $U$},
\end{split}%
\end{equation}
giving $\overline\partial A^{(q)}_m=A^{(q+1)}_m\overline\partial_b$. Theorem~%
\ref{t-gue150508} follows.  

\begin{rem}\label{r-gue150508r}
The map $A^{(q)}_m$ does not depend on the metrics of the manifolds $X$ and $M$.  
In later sections we study the Kohn Laplacian and Kodaira Laplacian on 
$X$ and $M$ respectively, and try to establish a link between the two 
Laplacians (with the aim at the Kohn's).   In this regard we need 
equip $X$ and $M$ with appropriate metrics so that 
$A^{(q)}_m$ thus defined is also compatible with these metrics.  
Note a localization of this (metrical) construction (cf. Proposition~\ref{l-gue150606}) paves the way for 
our subsequent plan in this work.  
\end{rem}

Some difficulties (and ways out) for a straightforward generalization of the proof
for this special case (globally free $S^1$ action) will be discussed in the subsection below.

\subsection{The idea of the proofs of Theorem~\protect\ref{t-gue160114},
Theorem~\protect\ref{t-gue160307} and Corollary~\protect\ref{c-gue150508I}}

\label{s-gue150920}

We will give an outline of main ideas of some proofs.  For the proof 
of Theorem~\ref{t-gue160416}, some ideas are outlined in the beginning of 
Section~\ref{s-gue160416}.
We refer to Section~\ref{s-gue150508bI} and Section~\ref%
{s-gue150508d} for notations and terminologies used here. 
The main technical tool of our method lies in a construction of a heat kernel
for the Kohn Laplacian associated to the $m$-th $S^1$ Fourier component.

\subsubsection{Global difficulties}

\label{s-gue151023}

For simplicity we assume that $X$ is CR K\"ahler (cf. Definition~\ref{d-gue160308}) without $E$ and
$\langle\,\cdot\,|\,\cdot\,\rangle$ is induced by a CR K\"ahler
form $\Theta$ on $X$.   Write $\overline{\partial}^*_b$ for the adjoint of $%
\overline\partial_b$ with respect to $(\,\cdot\,|\,\cdot\,)$ and $\overline{\partial}^*_{b,m}=\overline{\partial}%
^*_b:\Omega^{0,q+1}_m(X)\rightarrow\Omega^{0,q}_m(X)$ with 
$\Omega^{0,+}_m(X)$ and $\Omega^{0,-}_m(X)$ denoting forms of even and odd degree.  
Consider
\begin{equation*}
D^{\pm}_{b,m}:=\overline\partial_{b,m}+\overline{\partial}^*_{b,m}:%
\Omega^{0,\pm}_m(X)\rightarrow\Omega^{0,\mp}_m(X), \quad m\in \mathbb{Z}
\end{equation*}
and let $
\Box^{+}_{b,m}:=D_{b,m}^-D_{b,m}^+:\Omega^{0,+}_m(X)\rightarrow\Omega^{0,+}_m(X)$
($\Box^{-}_{b,m}:=D_{b,m}^+D_{b,m}^-$ similarly).  

Extending $\Box^+_{b,m}$ and $\Box^-_{b,m}$ to $L^{2,+}_m(X)$ and $L^{2,-}_m(X)$
($L^2$-completion), respectively in the
standard way, we will show in Theorem~\ref{t-gue150517aI} that $\mathrm{%
Spec\,}\Box^{\pm}_{b,m}$ are discrete subsets
of $[0,\infty[$ and $\mathrm{Spec\,}\Box^{\pm}_{b,m}$ consist of eigenvalues of $
\Box^{\pm}_{b,m}$. 

For $\nu\in\mathrm{Spec\,}\Box^{+}_{b,m}$, let $\left\{f^\nu_1,%
\ldots,f^\nu_{d_{\nu}}\right\}$ be an orthonormal frame for the eigenspace
of $\Box^+_{b,m}$ with eigenvalue $\nu$.   Write 
$T^{*0,\bullet}X=\oplus_{0\le q\le n}T^{*0,q}X$.
$e^{-t\Box^+_{b,m}}(x,y):T_y^{*0,\bullet}X\to T_x^{*0,+}X$, said to be a heat kernel,  is given by
(cf. \eqref{e-gue151023a}) 
\begin{equation}  \label{e-gue151023}
e^{-t\Box^+_{b,m}}(x,y)=\sum_{\nu\in\mathrm{Spec\,}\Box^+_{b,m}}\sum^{d_{%
\nu}}_{j=1}e^{-\nu t}f^\nu_j(x)\wedge(f^\nu_j(y))^\dagger. 
\end{equation}
(Similarly we can define $e^{-t\Box^-_{b,m}}(x,y)$.)

We will show in Corollary~\ref{t-gue150603} (see also Remark~\ref{r-gue151003}%
) that we have a \emph{CR McKean-Singer type formula}: for $t>0$,
\begin{equation}  \label{e-gue150922}
\sum^n_{j=0}(-1)^j\mathrm{dim\,}H^j_{b,m}(X)=\int_X\Bigr(\mathrm{Tr\,}%
e^{-t\Box^+_{b,m}}(x,x)-\mathrm{Tr\,}e^{-t\Box^-_{b,m}}(x,x)\Bigr)dv_X.
\end{equation}
By this formula the proof of our index theorem (cf. Corollary~\ref%
{c-gue150508I}) is reduced to determining the small $t$ behavior of the
function $\Bigr(\mathrm{Tr\,}e^{-t\Box^+_{b,m}}(x,x)-\mathrm{Tr\,}%
e^{-t\Box^-_{b,m}}(x,x)\Bigr)$.    

With the kernel $%
e^{-t\Box^+_{b,m}}(x,y)$ there is associated an operator denoted by
$e^{-t\Box^+_{b,m}}:\Omega^{0,+}(X)%
\rightarrow\Omega^{0,+}_m(X)$.  Note the domain is set to be the
full space $\Omega^{0,+}(X)$.  
From \eqref{e-gue151023} it follows that the kernel satisfies a heat equation which
is expressed in the following operator form
\begin{equation}  \label{e-gue151024}
\frac{\partial e^{-t\Box^+_{b,m}}}{\partial t}+\Box^+_{b,m}e^{-t%
\Box^+_{b,m}}=0
\end{equation}
and
\begin{equation}  \label{e-gue151024I}
e^{-t\Box^+_{b,m}}|_{t=0}=Q^+_m,
\end{equation}
where $Q^+_m:L^{2,+}(X)\rightarrow L^{2,+}_m(X)$ is the orthogonal
projection.

The main difficulty lies in that the initial condition %
\eqref{e-gue151024I} is a \emph{projection operator} rather than an \emph{identity
operator} because we are dealing with part of the $L^2$ space (i.e. the $m$-th eigenspaces) rather
than the whole $L^2$ space (as in the usual case). 
In a similar vein, let us quote in a paper of Richardson \cite[p. 358]{Ri98}:
``A point of difficulty that often arises in this area of research is that the space...is 
not the set of all sections of any vector bundle, and therefore the usual theory 
of elliptic operators and heat kernels does not apply directly...". 
The condition \eqref{e-gue151024I} eventually 
leads to the result that the heat kernels $e^{-t\Box^{\pm}_{b,m}}(x,y)$ do not 
have the \emph{standard expansions} (as usually seen).  

For a better understanding let's assume that $X$ is a (orbifold) 
circle bundle of an orbifold line bundle $L$ over a K\"ahler orbifold $M$
(see Section~\ref{s-gue150723II} for specific examples).
As in Theorem~\ref{t-gue150508} (see Section~\ref{s-gue150509}), 
one sees bijective maps
\begin{equation*}
A^{\pm}_m:\Omega^{0,\pm}_m(X)\rightarrow\Omega^{0,\pm}(M,L^m)
\end{equation*}
such that $A^-_m\overline\partial_b=\overline\partial A^+_m$. Let $\Box^+_m$
be the Kodaira Laplacian with values in $T^{*0,+}M\otimes L^m$ and let $%
e^{-t\Box^+_m}$ be the associated heat operator. Consider $%
B_m(t):=(A^+_m)^{-1}\circ e^{-t\Box^+_m}\circ A^+_m$.  
$A_m^{\pm}$ are metric-independent (on a given $X$).   To get a link between 
$\Box^+_{b, m}$ and $\Box_m^+$ it requires, however, a compatible choice of 
metrics on $X$ and $M$.   With this done, 
one checks that $%
B^{\prime }_m(t)+\Box^+_{b,m}B_m(t)=0$ and $B_m(0)=I$ on $\Omega^{0,+}_m(X)$.

But $B_m(t)$ is \emph{not} the heat operator $e^{-t\Box^+_{b,m}}$.  A trivial 
reason is that $%
B_m(t)$ is defined on $\Omega^{0,+}_m(X)$ while $e^{-\Box^+_{b,m}}$
is on the whole $\Omega^{0,+}(X)$.   In fact one has 
\begin{equation}  \label{e-gue150923}
e^{-t\Box^+_{b,m}}=\big((A^+_m)^{-1}\circ e^{-t\Box^+_m}\circ A^+_m\big)\circ Q_m^+
=B_m(t)\circ Q^+_m\,\,(=Q^+_m\circ B_m(t)\circ Q^+_m).
\end{equation}

Let $B_m(t,x,y)$ be the distribution kernel
of $B_m(t)$.   To emphasize the role played by $Q_m^+$ in our construction, 
it is illuminating to note the following (cf. \eqref{e-gue150923}, \eqref{e-gue150510f} and \eqref{e-gue151024b})
\begin{equation}  \label{e-gue150923I}
e^{-t\Box^+_{b,m}}(x,y)=\frac{1}{2\pi}\int_{-\pi}^{\pi} B_m(t,x,e^{-iu}\circ y)e^{-imu}du.
\end{equation}
For $x\in X_p$ (the principal stratum),  from \eqref{e-gue150923I} and 
the much better known kernel $e^{-t\Box^+_m}$ (on the principal stratum of $M$)
it follows 
\begin{equation}
\label{e-gue150923II}
e^{-t\Box^+_{b,m}}(x,x)\sim t^{-n}a^+_n(x)+t^{-(n-1)}a^+_{n-1}(x)+\cdots.
\end{equation}
However for $x\notin X_p$,  by lack of the asymptotic
expansion of $B_m(t)$ (or $e^{-t\Box^+_m}$ on low dimensional strata of $M$) it is unclear how one can understand
the asymptotic behavior of $e^{-t\Box^+_{b,m}}(x,x)$ by means of \eqref{e-gue150923I}.
This presents a major deviation from proof of the globally free case (as
Theorem~\ref{t-gue150508}, cf. \eqref{e-gue150923bi}, \eqref{e-gue151108}).

To see more clearly the discrepancy between the two cases (locally free and globally free)
we note that  the expansion \eqref{e-gue150923II} converges only
\emph{locally uniformly} on $X_p$, due to a nontrivial contribution
involving a ``distance function" (see Subsection~\ref{s-gue151025} for more).
In fact the expansion of the form \eqref{e-gue150923II} which is usually seen, 
cannot hold here ({\it globally} on $X$) (cf. Remark~\ref{r-gue150508I}). 

It is thus not immediate for one to arrive at a detailed understanding of the (transversal) heat
kernel by only using the global argument.  Even in the (smooth) orbifold
circle bundle case, to understand the asymptotic behavior of the heat
operator $e^{-t\Box^+_{b,m}}$ we will still need to work directly on the CR manifold
$X$ instead of $M$.

In this paper we give a construction which is independent of the use
of orbifold geometry and is more adapted to CR geometry as our CR manifold $X$ is not assumed
to be an orbifold circle bundle of a complex orbifold.
Because of the failure of the global argument as just said, we are now led to work on
it {\it locally}.  
The framework for this is \emph{BRT trivialization} (Section~\ref{s-gue150514}) which is
first treated by Baouendi, Rothschild and Treves~\cite{BRT85} in a more
general context.

\subsubsection{Transition to local situation}

\label{s-gue151024}

Let $B:=(D,(z,\theta),\varphi)$ be a BRT trivialization (see Theorem~\ref%
{t-gue150514}). We write $D=U\times]-\varepsilon,\varepsilon[$,
where $\varepsilon>0$ and $U$ is an open set of $\mathbb{C}^n$.  Let  $%
L\rightarrow U$ be a trivial line bundle with a non-trivial Hermitian fiber
metric $\left\vert 1\right\vert^2_{h^L}=e^{-2\varphi}$ (where $\varphi\in
C^\infty(D,\mathbb{R})$ is as in Theorem~\ref{t-gue150514}) and $%
(L^m,h^{L^m})\rightarrow U$ be the $m$-th power of $(L,h^L)$. $\Theta$
(cf. Definition~\ref{d-gue160308}, recalling $X$ is CR K\"ahler as assumed
for the moment) induces a K\"ahler form $\Theta_U$ on the complex manifold $U$.
Let $%
\langle\,\cdot\,,\,\cdot\,\rangle$ be the Hermitian metric on $\mathbb{C }TU$
(associated with $\Theta_U$), inducing together with $h^{L^m}$ the $L^2$ inner
product $(\,\cdot\,,\,\cdot\,)_m$ on $\Omega^{0,*}(U,L^m)$. 

Let $\overline{\partial}^{*,m}:\Omega^{0,q+1}(U,L^m)\rightarrow%
\Omega^{0,q}(U,L^m)$ be the formal adjoint of $\overline\partial$
with respect to $(\,\cdot\,,\,\cdot\,)_m$.  
Put, as the case of $D_{b, m}$ and $\Box_{b,m}$, 
$D^{\pm}_{B,m}:=\overline\partial+\overline\partial^{*,m}: 
\Omega^{0,\pm}(U,L^m)\rightarrow\Omega^{0,\mp}(U,L^m)$
and $\Box^+_{B,m}:=D^-_{B,m}D^+_{B,m}: \Omega^{0,+}(U,L^m)\rightarrow\Omega^{0,+}(U,L^m)$. 

In Proposition~\ref{l-gue150606}, by the above choice of metrics in forming Laplacians 
on two different spaces ($D$ and $U$), we can provide a link between these Laplacians, asserting 
that
\begin{equation}  \label{e-gue150606IIIda}
e^{-m\varphi}\Box^{\pm}_{B,m}(e^{m\varphi}\widetilde u)=e^{im\theta}{\Box}%
^{\pm}_{b,m}(u)
\end{equation}
where as before, $u\in\Omega^{0,\pm}_m(X)$ can be written (on $D$) as $%
u(z,\theta)=e^{-im\theta}\widetilde u(z)$ for some $\widetilde u(z)\in\Omega^{0,\pm}(U,L^m)\subset
\Omega^{0,\pm}(D,L^m)$.

Write $x=(z, \theta)$, $y=(w,\eta)$ (on $D$).
With \eqref{e-gue150606IIIda} one expects that the heat kernel $%
e^{-t\Box^+_{b,m}}(x,y)$ locally (on $D$) should be
\begin{equation}  \label{e-gue150924a}
e^{-m\varphi(z)-im\theta}e^{-t\Box^+_{B,m}}(x,y)e^{m\varphi(w)+im\eta}.
\end{equation}
Thus one obtains \emph{local}
heat kernels on these BRT charts.  

We would like to patch them up.
Assume that $X=D_1\bigcup D_2\bigcup\cdots\bigcup D_N$ (where $D_j$ in a 
BRT trivialization $B_j:=(D_j,(z,\theta),\varphi_j)$) with $D_j=U_j\times]-\delta_j,\widetilde\delta_j[%
\subset\mathbb{C}^n\times\mathbb{R}$, $\delta_j>0$, $\widetilde\delta_j>0$, $%
U_j$ is an open set in $\mathbb{C}^n$). 

Let $\chi_j, \widetilde\chi_j\in
C^\infty_0(D_j)$ ($j=1,2,\ldots,N$). Put
\begin{equation}  \label{e-gue150924aI}
\begin{split}
&\mathcal{A}_m(t)=\sum^N_{j=1}\chi_j(x)\Bigr(e^{-m\varphi_j(z)-im%
\theta}e^{-t\Box^+_{B_j,m}}(z,w)e^{m\varphi_j(w)+im\eta}\Bigr)%
\widetilde\chi_j(y), \\
&\mathcal{P}_m(t)=\mathcal{A}_m(t)\circ Q^+_m.
\end{split}%
\end{equation}

It is hoped that $\mathcal{P}_m(0)=Q^+_m$ and $\mathcal{P}^{\prime
}_m(t)+\Box^+_{b,m}\mathcal{P}_m(t)$ is small as $t\rightarrow0^+$
for certain $\chi_j$, $\widetilde\chi_j$.  This is related to {\it asymptotic heat kernel}.  
But as we will see,  this standard patch-up construction does not quite work
out in our case.   

In short, we will see that in the {\it locally free} case the nice (pointwise) 
relation \eqref{e-gue150606IIIda} between 
Kodaira and Kohn Laplacians 
does not quite carry over to the global objects: heat kernels, whose mutual relation is to be seen below 
by more delicate analysis relevant to the presence of strata beyond the principal stratum.  

\subsubsection{Local difficulties}

\label{s-gue151025}

A necessary condition for $\mathcal{P}_m(0)=Q^+_m$ is (cf. Lemma~\ref{l-gue150626f})
\begin{equation}  \label{e-gue150924aII}
\sum^N_{j=1}\chi_j(x)\int_{-\pi}^{\pi}\widetilde\chi_j(w,\eta)|_{w=z}d\eta=1.
\end{equation}
For the cut-off
functions $\chi_j$, $\widetilde\chi_j$ above,
a reasonable choice (adapted to BRT trivializations) is the following
(for $j=1,2,\ldots,N$):

i) $%
\chi_j(z,\theta)\in C^\infty_0(D_j)$ with $%
\sum^N_{j=1}\chi_j=1$ on $X$;

ii) $\tau_j(z)\in C^\infty_0(U_j)$ with $\tau_j(z)=1$ if $(z,\theta)\in\mathrm{%
Supp\,}\chi_j$, 

iii) $\sigma_j\in
C^\infty_0(]-\delta_j,\widetilde\delta_j[)$ with $\int_{-\delta_j}^{\tilde\delta_j}\sigma_j(\eta)d\eta=1$.%

\noindent
Set $\widetilde\chi_j(y)\equiv\tau_j(w)\sigma_j(\eta)$.
Then  $\chi_j(x)$, $\widetilde\chi_j(y)$ satisfy \eqref{e-gue150924aII}.

One can check $\mathcal{%
P}_m(0)=Q^+_m$ and a little more work shows
\begin{equation}  \label{e-gue151025a}
\mathcal{P}^{\prime }_m(t)+\Box^+_{b,m}\mathcal{P}_m(t)=\mathcal{R}%
_m(t)\circ Q^+_m
\end{equation}
where for some $k$,
\begin{equation}  \label{e-gue151025ag}
\mathcal{R}_m(t)=\sum^N_{j=1}\sum^k_{\ell=1}L_{\ell,j}\Bigr(%
\chi_j(x)e^{-m\varphi_j(z)-im\theta}\Bigr)P_{\ell,j}\Bigr(%
e^{-t\Box^+_{B_j,m}}(z,w)\Bigr)e^{m\varphi_j(w)+im\eta}\widetilde\chi_j(y),
\end{equation}
$L_{\ell,j}$ is a partial differential operator of order $%
\geq1$ and  $\leq2$ (for all $\ell, j$) and $P_{\ell,j}$ is a partial differential operator
of order $1$ acting on $x$ (for all $\ell, j$). Since $\left\vert
e^{-t\Box^+_{B_j,m}}(z,w)\right\vert\sim {1\over t^n}e^{-\frac{\left\vert
z-w\right\vert^2}{t}}$, there could be terms of the form, say
\begin{equation}  \label{e-gue151025b}
P_{\ell,j}\Bigr(e^{-t\Box^+_{B_j,m}}(z,w)\Bigr)\sim {1\over t^n} e^{-\frac{\left\vert
z-w\right\vert^2}{t}}\frac{\left\vert z-w\right\vert}{t}.
\end{equation}
To require $\mathcal{P}^{\prime }_m(t)+\Box^+_{b,m}%
\mathcal{P}_m(t)$ to be small (as $t\to 0^+$) we need (by substituting \eqref{e-gue151025b} into
\eqref{e-gue151025ag} to get singular terms in powers of ${1\over t}$ smooth out):
\begin{equation}  \label{e-gue151025aI}
L_{\ell,j}\Bigr(\chi_j(x)e^{-m\varphi_j(z)-im\theta}\Bigr)%
e^{m\varphi_j(w)+im\eta}\widetilde\chi_j(y)=0\ \
\mbox{
if $z$ is close to $w$ ($\abs{z-w}\lesssim\sqrt{t}$)}.
\end{equation}

Since $\chi_j$ may not be constant on $\mathrm{Supp\,}\widetilde\chi_j$ (for some
$j$), it is hard for \eqref{e-gue151025aI} to hold.
Despite that in the usual (elliptic) case a construction of the heat kernel using cut-off functions
as above is available, in view that a distance function will appear in our asymptotic expression
(cf. \eqref{e-gue150626fIIIda} below) it is unclear whether this type of standard construction can be immediately carried out in our case.

It turns out that upon transferring to an \emph{adjoint} version of the original equation
one may bypass the aforementioned difficulty (cf. Lemmas~\ref{l-gue150626f},~\ref{l-gue150628}),
to which we turn now.

For $%
j=1,2,\ldots,N$ there exists $A_{B_j,+}(t,z,w)$ ($\in C^\infty(\mathbb{R}_+\times
U_j\times U_j,T^{*0,+}U\boxtimes (T^{*0,+}U)^*)$, cf. Theorem~\ref{t-gue150607}), regarded
as an {\it adjoint heat kernel}, such that
\begin{equation}  \label{e-gue150607abda}
\begin{split}
&\mbox{$\lim_{t\To0+}A_{B_j,+}(t)=I$ in $\mathscr D'(U,T^{*0,+}U)$}, \\
&A^{\prime }_{B_j,+}(t)u+A_{B_j,+}(t)(\Box^+_{B_j,m}u)=0,\ \ \forall
u\in\Omega^{0,+}_0(U),\ \ \forall t>0,
\end{split}%
\end{equation}
and $A_{B_j,+}(t,z,w)$ admits an asymptotic expansion as $t\rightarrow 0^+$
(see \eqref{e-gue150608a}). Put
\begin{equation}  \label{e-gue150627fda}
H_j(t,x,y)=\chi_j(x)e^{-m\varphi_j(z)-im\theta}A_{B_j,+}(t,z,w)e^{m%
\varphi_j(w)+im\eta}\widetilde\chi_j(y).
\end{equation}
Also set
\begin{equation}  \label{e-gue150627fIIda}
\Gamma(t):=\sum^N_{j=1}H_j(t)\circ
Q^+_m:\Omega^{0,+}(X)\rightarrow\Omega^{0,+}(X).
\end{equation}

By using the adjoint equation, we can avoid 
the difficulty mentioned in \eqref{e-gue151025aI} so that 
$\Gamma(t)$ gives an asymptotic (adjoint) heat kernel (see that below \eqref{e-gue150924aI}).
To get back to the kernel of the original equation, 
we can now start with the adjoint of $\Gamma(t)$.    By carrying out the (standard) method of successive approximation,
we can reach the global kernel of the adjoint of (the adjoint of) $e^{-t\Box^+_{b,m}}$ (Section~\ref{s-gue150627I}).
This yields the kernel of $e^{-t\Box^+_{b,m}}$ since $e^{-t\Box^+_{b,m}}$ is self-adjoint.
More precisely we can prove that (see Theorem~\ref{t-gue150630I} and
Theorem~\ref{t-gue160123})
\begin{equation}  \label{e-gue150626fIIIda}
\begin{split}
&\left\Vert e^{-t\Box^+_{b,m}}(x,y)-\Gamma(t,x,y)\right\Vert_{C^0(X\times
X)}\leq e^{-\frac{\varepsilon_1}{t}},\ \ \forall t\in(0,\varepsilon_2), \\
&\Gamma(t,x,x)\sim\Bigr(\sum\limits^{p}_{s=1}e^{\frac{2\pi(s-1)}{p}mi}\Bigr)%
\sum^\infty_{j=0}t^{-n+j}\alpha^+_{n-j}(x)\mod O\Bigr(t^{-n}e^{-\frac{\varepsilon_0\hat
d(x,X_{\mathrm{sing\,}})^2}{t}}\Bigr),\ \ \forall x\in X_p,
\end{split}%
\end{equation}
where $\alpha^+_s(x)\in C^\infty(X,\mathrm{End\,}(T^{*0,+}X\otimes E))$,
$s=n,n-1,\ldots$, $\varepsilon_0, \varepsilon_1, \varepsilon_2>0$ some constants and
$\hat d$ a sort of ``distance function" (discussed above Theorem~\ref{t-gue160114}).

The appearance of this distance function $\hat d$ may be attributed to the use
of projection $Q_m^+$ in \eqref{e-gue150627fIIda} (which picks up the
$m$-th Fourier component; see \eqref{e-gue150626fIII} and \eqref{e-gue160125V}).
See below for more about this point.  By the first inequality in \eqref{e-gue150626fIIIda}
one obtains the (same) asymptotic expansion 
\begin{equation}  \label{e-gue150924g}
e^{-t\Box^+_{b,m}}(x,x)\sim\Bigr(\sum\limits^{p}_{s=1}e^{\frac{2\pi(s-1)}{p}%
mi}\Bigr)\sum^\infty_{j=0}t^{-n+j}\alpha^+_{n-j}(x)\mod O\Bigr(t^{-n}e^{-%
\frac{\varepsilon_0\hat d(x,X_{\mathrm{sing\,}})^2}{t}}\Bigr)
\end{equation}
on $X_p$. Similar results hold for $e^{-t\Box^-_{b,m}}(x,x)$.

The terms involved in $O\Bigr(t^{-n}e^{-%
\frac{\varepsilon_0\hat d(x,X_{\mathrm{sing\,}})^2}{t}}\Bigr)$ of \eqref{e-gue150924g}
are singular (due to $t^{-n}$ as $x\to X_{\mathrm{sing}}$).   
Only upon taking the {\it supertrace} can these terms be (partially) cancelled
($t^{-n}$ dropping out).  That is, for $x\in X_p$
\begin{equation}  \label{e-gue160226u}
\begin{split}
&\mathrm{Tr\,}e^{-t\Box^+_{b,m}}(x,x)-\mathrm{Tr\,}e^{-t\Box^-_{b,m}}(x,x) \\
&\sim\Bigr(\sum\limits^{p}_{s=1}e^{\frac{2\pi(s-1)}{p}mi}\Bigr)%
\sum^\infty_{j=0}t^{-n+j}\Bigr(\mathrm{Tr\,}\alpha^+_{n-j}(x)-\mathrm{Tr\,}%
\alpha^-_{n-j}(x)\Bigr)\mod O\Bigr(e^{-\frac{\varepsilon_0\hat d(x,X_{\mathrm{sing\,}})^2%
}{t}}\Bigr).
\end{split}%
\end{equation}

To see this conceptually, let's take, for instance, \eqref{e-gue150923} and \eqref{e-gue150923I}
in which along the diagonal (i.e. setting $x=y$ to the left of \eqref{e-gue150923I}),
the off-diagonal contribution (in the term to the right of the same equation) 
still enters nontrivially  (unseen in the usual elliptic case) due to the projection $Q_m^+$.

To get estimates on these off-diagonal terms
our argument (cf. Theorem~\ref{t-gue150627g}) is based on the rescaling technique of Getzler and
on a supertrace identity in Berenzin integral (cf. Prop. 3.21 of \cite{BGV92}), which combine to give
the needed (partial) cancellation.  

From \eqref{e-gue150922}, \eqref{e-gue150924g} %
and \eqref{e-gue160226u} it follows 
\begin{equation}  \label{e-gue150924gII}
\sum^n_{j=0}(-1)^j\mathrm{dim\,}H^j_{b,m}(X,E)=\Bigr(\sum\limits^{p}_{s=1}e^{%
\frac{2\pi(s-1)}{p}mi}\Bigr)\lim_{t\rightarrow0^+}\int_X\sum^n_{\ell=0}t^{-%
\ell}\Bigr(\mathrm{Tr\,}\alpha^+_\ell(x)-\mathrm{Tr\,}\alpha^-_\ell(x)\Bigr)%
dv_X(x).
\end{equation}

Remark that we have had a (transversal) heat kernel which is put in the
disguise of the spectral geometry \eqref{e-gue151023}, \eqref{e-gue150525fbI}.   To our knowledge no
argument in the literature claims that (in the transversally elliptic case) the spectral heat kernel shall have
the asymptotic estimates as \eqref{e-gue150924g}. 
The somewhat lengthy part of our reconstruction of the (transversal)
heat kernel (beyond its spectral realization) becomes indispensable as far as
our purpose is concerned.

\subsubsection{Completion by evaluating local density and by using $\mathrm{%
Spin}^c$ structure}

\label{s-gue151025f}

As above we first treat the case that $X$ is CR K\"ahler (Definition~\ref{d-gue160308}).  
In view of \eqref{e-gue150924gII}, to complete the proof of our index theorem (cf. Corollary~\ref%
{c-gue150508I}) amounts to understanding the small $t$ behavior of the local
density
\begin{equation*}
\sum^n_{\ell=0}t^{-\ell}\Bigr(\mathrm{Tr\,}\alpha^+_\ell(x)-\mathrm{Tr\,}%
\alpha^-_\ell(x)\Bigr).
\end{equation*}
Let's be back to the local situation. Fix $x_0\in X_p$. Let $%
B_j=(D_j,(z,\theta),\varphi_j)$ ($j=1,2,\ldots,N$) be BRT trivializations as
before.  Assume that $x_0\in D_j$ and $x_0=(z_j,0)\in U_j\subset D_j$.

As our heat kernel (on $X$) is related to the local heat kernel
(on $U_j$), one sees (for some $N_0(n)\ge n$) 
\begin{equation}  \label{e-gue151025faIII}
\begin{split}
&\sum^{N_0(n)}_{\ell=0}t^{-\ell}\Bigr(\mathrm{Tr\,}\alpha^+_\ell(x_0)-\mathrm{Tr\,}%
\alpha^-_\ell(x_0)\Bigr) \\
&=\frac{1}{2\pi}\sum^N_{j=1}\chi_j(x_0)\Bigr(\mathrm{Tr\,}%
A_{B_j,+}(t,z_j,z_j)-\mathrm{Tr\,}A_{B_j,-}(t,z_j,z_j)\Bigr)+O(t),
\end{split}%
\end{equation}
where $A_{B_j,+}(t,z,w)$ is as in \eqref{e-gue150607abda}.

By borrowing the \emph{rescaling technique} in~\cite{BGV92} and~\cite{Du11} we
can show (in a fairly standard manner, cf. Theorem~\ref{t-gue150627} or the
second half of this section) that for each $j=1,2,\ldots,N$,
\begin{equation}  \label{e-gue151025h}
\begin{split}
&\Bigr(\mathrm{Tr\,}A_{B_j,+}(t,z,z)-\mathrm{Tr\,}A_{B_j,-}(t,z,z)\Bigr)%
dv_{U_j}(z) \\
&=[\mathrm{Td\,}(\nabla^{T^{1,0}U_j},T^{1,0}U_j)\wedge\mathrm{ch\,}%
(\nabla^{L^m},L^m)]_{2n}(z)+O(t),\ \ \forall z\in U_j,
\end{split}%
\end{equation}
($dv_{U_j}$ the induced volume form on $U_j$) where $\mathrm{Td\,}(\nabla^{T^{1,0}U_j},T^{1,0}U_j)
$  and $\mathrm{ch\,} (\nabla^{L^m},L^m)$ denote the representatives of the Todd class of $T^{1,0}U_j$
and the Chern character of $L^m$, respectively.  

A novelty here is Section~\ref{s-gue150508d} in which we
will introduce \emph{tangential characteristic classes}, \emph{tangential
Chern character} and \emph{tangential Todd class} on CR manifolds with $S^1$
action, so that
\begin{equation}  \label{e-gue151025y}
\frac{[\mathrm{Td\,}(\nabla^{T^{1,0}U_j},T^{1,0}U_j)\wedge\mathrm{ch\,}%
(\nabla^{L^m},L^m)]_{2n}(z_j)}{dv_{U_j}(z_j)}=\frac{[\mathrm{Td_b\,}%
(\nabla^{T^{1,0}X},T^{1,0}X)\wedge e^{-m\frac{d\omega_0}{2\pi}%
}\wedge\omega_0]_{2n+1}(x_0)}{dv_X(x_0)},
\end{equation}
where $\mathrm{Td_b\,}(\nabla^{T^{1,0}X},T^{1,0}X)$ denotes the
representative of the tangential Todd class of $T^{1,0}X$ (associated with the
given Hermitian metric \eqref{e-gue150607I}).
From \eqref{e-gue151025faIII}, \eqref{e-gue151025h} and \eqref{e-gue151025y}
it follows 
\begin{equation}  \label{e-gue150924gIII}
\begin{split}
&\sum^n_{\ell=0}t^{-\ell}\Bigr(\mathrm{Tr\,}\alpha^+_\ell(x)-\mathrm{Tr\,}%
\alpha^-_\ell(x)\Bigr)dv_X(x) \\
&=\frac{1}{2\pi}\left[\mathrm{Td_b\,}(\nabla^{T^{1,0}X},T^{1,0}X)\wedge e^{-m%
\frac{d\omega_0}{2\pi}}\wedge\omega_0\right]_{2n+1}(x)+O(t),\ \ \forall x\in X_p.
\end{split}%
\end{equation}
(The $O(t)$ term to the rightmost of \eqref{e-gue150924gIII} actually vanishes by using \eqref{e-gue150627}.)  

Combining \eqref{e-gue150924gIII} and \eqref{e-gue150924gII} we get our index theorem (cf. Corollary~\ref%
{c-gue150508I}) when $X$ is CR K\"ahler.

When $X$ is {\it not CR K\"ahler}, we still have \eqref{e-gue150626fIIIda}, %
\eqref{e-gue150924g} 
and \eqref{e-gue150924gII}. The ensuing 
\emph{obstackle} is more or less known: 

i) the rescaling technique does not 
quite work well as the local operator $\Box^+_{B_j,m}$ in %
\eqref{e-gue150607abda} is not going to be of Dirac type (in a strict sense); 

ii) it is obscure to
understand the small $t$ behavior of $A_{B_j,+}(t,z,z)$ in this case;

iii) \eqref{e-gue151025h} is not even true in general.

To overcome this difficulty in the CR case, we follow
the classical (yet nonK\"ahler) case and introduce some kind
of CR $\mathrm{Spin}^c$ Dirac operator on CR manifolds with $S^1$ action:
\begin{equation*}
\widetilde D_{b,m}=\overline\partial_b+\overline{\partial}^*_b+\mbox{zeroth
order term}
\end{equation*}
with modified/$\mathrm{Spin}^c$ Kohn Laplacians $\widetilde{%
\Box}^+_{b,m}=\widetilde D^*_{b,m}\widetilde D_{b,m}$, $\widetilde{\Box}%
^-_{b,m}=\widetilde D_{b,m}\widetilde D^*_{b,m}$.

A word of caution is in order.   The above adaptation of the idea of $\mathrm{Spin}^c$ structure 
to our CR case is not altogether straightforward.  Locally $X$ is realized as a (portion of a) circle 
bundle over a small piece of complex manifold (via BRT charts), so presumably there could 
arise a problem of patching up when this global $\mathrm{Spin}^c$ operator is to be formed.  
See Proposition~\ref{l-gue150524d} for more.  

We will show in Theorem~%
\ref{t-gue150530I} the \emph{homotopy
invariance} for the index of $\overline\partial_b+\overline{\partial}^*_b$, 
and in Corollary~\ref{t-gue150603} a \emph{McKean-Singer formula} 
for the modified Kohn Laplacians: for 
$t>0$,
\begin{equation}  \label{e-gue150924gf}
\sum^n_{j=0}(-1)^j\mathrm{dim\,}H^j_{b,m}(X,E)=\int_X\Bigr(\mathrm{Tr\,}e^{-t%
\widetilde{\Box}^+_{b,m}}(x,x)-\mathrm{Tr\,}e^{-t\widetilde{\Box}%
^-_{b,m}}(x,x)\Bigr)dv_X.
\end{equation}

For $u\in\Omega^{0,\pm}_m(X)$ we can write (on $D$) $%
u(z,\theta)=e^{-im\theta}\widetilde u(z)$ for some $\widetilde u(z)\in\Omega^{0,\pm}(U,L^m)$
with $D$ in a BRT trivialization $B:=(D,(z,\theta),\varphi)$.  

A fundamental relation that we will show in Proposition~\ref{l-gue150606}, based on Proposition~\ref{l-gue150524d}, 
is that
\begin{equation}  \label{e-gue151028}
e^{-m\varphi}\widetilde{\Box}^{\pm}_{B,m}(e^{m\varphi}\widetilde u)=e^{im\theta}%
\widetilde{\Box}^{\pm}_{b,m}(u)
\end{equation}
where $\widetilde{\Box}^{\pm}_{B,m}=D^*_{B,m}D_{B,m}:\Omega^{0,\pm}(U,L^m)%
\rightarrow\Omega^{0,\pm}(U,L^m)$
and  $%
D_{B,m}:\Omega^{0,\pm}(U,L^m)\rightarrow\Omega^{0,\mp}(U,L^m)$ the 
(ordinary) $\mathrm{%
Spin}^c$ Dirac operator  (cf. Definition~\ref{d-gue150825}) with respect to the Chern connection on $L^m$
(induced by $h^{L^m}$) and the 
\emph{Clifford connection} on $\Lambda(T^{*0,1}U)$
(induced by the given Hermitian metric on $\Lambda(T^{*0,1}U)$).

It is conceivable that $X$  with the
CR structure and $X/S^1=M$ with the complex structure (if defined)
are linked in some way (as Theorem~\ref{t-gue150508}).  
To say more, the result \eqref{e-gue151028} asserts a fundamental fact that not only complex/CR geometrically
can the two spaces be linked, but {\it metrically} in the sense of Laplacians they also can.  
This link is important for our $\mathrm{Spin}^c$ approach to the CR case to be possible.   

In the remaining let's give an outline with the
CR $\mathrm{Spin}^c$ Dirac operator when $X$ is not CR K\"ahler. Although 
the following ingredients mostly parallel those in the preceding Subsection~\ref{s-gue151025},
the success of this method relies on, among others, the $\mathrm{Spin}^c$ structure and the 
associated Clifford connection. For that reason and for the sake of clarity, we prefer to put
down the precise formulas despite the great similarity in expressions as above.  

As \eqref{e-gue150607abda},
there exists (modified) $\widetilde A_{B_j,+}(t,z,w)$ such that
\begin{equation}  \label{e-gue151028I}
\begin{split}
&\mbox{$\lim_{t\To0+}\Td A_{B_j,+}(t)=I$ in $\mathscr D'(U_j,T^{*0,+}U_j)$}, \\
&\widetilde A^{\prime }_{B_j,+}(t)u+\widetilde A_{B_j,+}(t)(\widetilde{\Box}%
^+_{B_j,m}u)=0,\ \ \forall u\in\Omega^{0,+}_0(U_j),\ \ \forall t>0,
\end{split}%
\end{equation}
and $\widetilde A_{B_j,+}(t,z,w)$ admits an asymptotic expansion as $%
t\rightarrow 0^+$ (see \eqref{e-gue150608a}). Put
\begin{equation}  \label{e-gue151028II}
\begin{split}
&\widetilde H_j(t,x,y)=\chi_j(x)e^{-m\varphi_j(z)-im\theta}\widetilde
A_{B_j,+}(t,z,w)e^{m\varphi_j(w)+im\eta}\widetilde\chi_j(y), \\
&\widetilde\Gamma(t)=\sum^N_{j=1}\widetilde H_j(t)\circ Q_m.
\end{split}%
\end{equation}

Similar to \eqref{e-gue150626fIIIda} and \eqref{e-gue150924g} in Subsection~\ref{s-gue151025},
one has 
\begin{equation}  \label{e-gue151028III}
\left\Vert e^{-t\widetilde{\Box}^+_{b,m}}(x,y)-\widetilde\Gamma(t,x,y)\right%
\Vert_{C^0(X\times X)}\leq e^{-\frac{\epsilon_1}{t}},\ \ \forall
t\in(0,\epsilon_2)
\end{equation}
and
\begin{equation}  \label{e-gue151028IIIu}
\widetilde\Gamma(t,x,x)\sim \Bigr(\sum\limits^{p}_{s=1}e^{\frac{2\pi(s-1)}{p}%
mi}\Bigr)\sum^\infty_{j=0}t^{-n+j}\widetilde\alpha^+_{n-j}(x)\mod O\Bigr(%
t^{-n}e^{-\frac{\varepsilon_0\hat d(x,X_{\mathrm{sing\,}})^2}{t}}\Bigr),\ \ \forall x\in
X_p,
\end{equation}
with some constants $\varepsilon_0, \varepsilon_1, \varepsilon_2>0$, 
giving 
\begin{equation}  \label{e-gue151028IIIw}
e^{-t\widetilde{\Box}^+_{b,m}}(x,x)\sim \Bigr(\sum\limits^{p}_{s=1}e^{\frac{%
2\pi(s-1)}{p}mi}\Bigr)\sum^\infty_{j=0}t^{-n+j}\widetilde\alpha^+_{n-j}(x)%
\mod O\Bigr(t^{-n}e^{-\frac{\varepsilon_0\hat d(x,X_{\mathrm{sing\,}})^2}{t}}\Bigr)
\end{equation}
on $X_p$.  Similar results hold for $e^{-t\widetilde{\Box}^-_{b,m}}(x,x)$.  

The novelty here is analogous to \eqref{e-gue160226u}.  By taking supertrace we can improve the estimates in %
\eqref{e-gue151028IIIw} 
(see Theorem~\ref%
{t-gue150701}) so that $t^{-n}$ is removed:  
\begin{equation}  \label{e-gue151028IIIwb}
\begin{split}
&\mathrm{Tr\,}e^{-t\widetilde{\Box}^+_{b,m}}(x,x)-\mathrm{Tr\,}e^{-t%
\widetilde{\Box}^-_{b,m}}(x,x) \\
&\sim \Bigr(\sum\limits^{p}_{s=1}e^{\frac{2\pi(s-1)}{p}mi}\Bigr)%
\sum^\infty_{j=0}t^{-n+j}\Bigr(\mathrm{Tr\,}\widetilde\alpha^+_{n-j}(x)-%
\mathrm{Tr\,}\widetilde\alpha^-_{n-j}(x)\Bigr)\mod O\Bigr(e^{-\frac{\hat
d(x,X_{\mathrm{sing\,}})^2}{t}}\Bigr),
\end{split}%
\end{equation}
for $x\in X_p$.
Hence \eqref{e-gue150924gf} and \eqref{e-gue151028IIIwb} give 
\begin{equation}  \label{e-gue151028g}
\sum^n_{j=0}(-1)^j\mathrm{dim\,}H^j_{b,m}(X,E)=\Bigr(\sum\limits^{p}_{s=1}e^{%
\frac{2\pi(s-1)}{p}mi}\Bigr)\lim_{t\rightarrow0^+}\int_X\sum^n_{\ell=0}t^{-%
\ell}\Bigr(\mathrm{Tr\,}\widetilde\alpha^+_\ell(x)-\mathrm{Tr\,}%
\widetilde\alpha^-_\ell(x)\Bigr)dv_X(x).
\end{equation}

A key advantage of introducing our CR $\mathrm{Spin}^c$ Dirac operator
is basically that \emph{Lichnerowicz formulas} hold for $%
\widetilde{\Box}^+_{B,m}$ (and $\widetilde{\Box}^-_{B,m}$).
This enables
us to apply the \emph{%
rescaling technique} (this part of rescaling is essentially the same as in classical cases, cf. ~\cite{BGV92} and~\cite{Du11})
and to obtain that for each $%
j=1,2,\ldots,N$,
\begin{equation}  \label{e-gue151028gI}
\begin{split}
&\Bigr(\mathrm{Tr\,}\widetilde A_{B_j,+}(z,z)-\mathrm{Tr\,}\widetilde
A_{B_j,-}(z,z)\Bigr)dv_{U_j}(z) \\
&=[\mathrm{Td\,}(\nabla^{T^{1,0}U_j},T^{1,0}U_j)\wedge\mathrm{ch\,}%
(\nabla^{L^m},L^m)]_{2n}(z)+O(t),\ \ \forall z\in U_j.
\end{split}%
\end{equation}
Rewriting \eqref{e-gue151028gI} in tangential forms, one has 
\begin{equation}  \label{e-gue151028gII}
\begin{split}
&\sum^n_{\ell=0}t^{-\ell}\Bigr(\mathrm{Tr\,}\widetilde\alpha^+_\ell(x)-%
\mathrm{Tr\,}\widetilde\alpha^-_\ell(x)\Bigr)dv_X(x) \\
&=\frac{1}{2\pi}\left[\mathrm{Td_b\,}(\nabla^{T^{1,0}X},T^{1,0}X)\wedge e^{-m%
\frac{d\omega_0}{2\pi}}\wedge\omega_0\right]_{2n+1}(x)+O(t)
\end{split}%
\end{equation}
for $t>0$ and $x\in X_p$.

Theorem~\ref{t-gue160114}, Theorem~\ref%
{t-gue160307} and Corollary~\ref{c-gue150508I}
follows from \eqref{e-gue151028IIIw}, %
\eqref{e-gue151028IIIwb}, \eqref{e-gue151028g} and %
\eqref{e-gue151028gII}.  

The layout of this paper is as follows. In Section~\ref{s-gue150508b} and
Section~\ref{s-gue150508bI}, we collect some notations, definitions,
terminologies and statements we use throughout. In Section~\ref{s-gue150508d},
we introduce the tangential de Rham cohomology group, tangential Chern character
and tangential Todd class on CR manifolds with $S^1$ action. In Section~\ref%
{s-gue150514}, we recall a classical result of Baouendi-Rothschild-Treves~\cite%
{BRT85} which plays an important role in our construction of the heat
kernel. We also prove that for a rigid vector bundle $F$ over $X$ there
exist rigid Hermitian metric and rigid connection on $F$. In Section~\ref%
{s-gue150517}, we establish a Hodge theory for Kohn Laplacian in the $L^2$
space of the $m$-th $S^1$ Fourier component. In Section~\ref{s-gue150524}, we
introduce our CR $\mathrm{Spin}^c$ Dirac operator $\widetilde D_{b,m}$, modified/$\mathrm{Spin}^c$
Kohn Laplacians $\widetilde{\Box}^{\pm}_{b,m}$  and
prove \eqref{e-gue150924gf}. In Section~\ref{s-gue150606}, we construct
approximate heat kernels for the operators $e^{-t\widetilde{\Box}^{\pm}_{b,m}}$ and prove that 
$e^{-t\widetilde{\Box}%
^{\pm}_{b,m}}(x,y)$ admit asymptotic
expansions in the sense as \eqref{e-gue151028III}. In Section~\ref{s-gue150702}, we prove %
\eqref{e-gue151028IIIu}, \eqref{e-gue151028IIIwb}, \eqref{e-gue151028gII}
and finish the proofs of Theorem~\ref{t-gue160114}, Theorem~\ref%
{t-gue160307} and Corollary~\ref{c-gue150508I}. In Section~\ref{s-gue160416}
we prove Theorem~\ref{t-gue160416}.

\bigskip

\begin{center}
\large{Part I: Preparatory foundations} 
\end{center}

\section{Preliminaries}

\label{s:prelim}

\subsection{Some standard notations}

\label{s-gue150508b} We use the following notations: $\mathbb{N}%
=\left\{1,2,\ldots\right\}$, $\mathbb{N}_0=\mathbb{N}\cup\left\{0\right\}$,
$\mathbb{R}$ is the set of real numbers,
$\overline{\mathbb{R}}_{+}:=\left\{x\in\mathbb{R};\, x\geq0\right\}$. For a multiindex $\alpha=(\alpha_1,\ldots,%
\alpha_n)\in\mathbb{N}_0^n$ we set $\left\vert
\alpha\right\vert=\alpha_1+\ldots+\alpha_n$. For $x=(x_1,\ldots,x_n)$ we
write
\begin{equation*}
\begin{split}
&x^\alpha=x_1^{\alpha_1}\ldots x^{\alpha_n}_n,\quad \partial_{x_j}=\frac{%
\partial}{\partial x_j}\,,\quad
\partial^\alpha_x=\partial^{\alpha_1}_{x_1}\ldots\partial^{\alpha_n}_{x_n}=%
\frac{\partial^{\left\vert \alpha\right\vert}}{\partial x^\alpha}\,, \\
&D_{x_j}=\frac{1}{i}\partial_{x_j}\,,\quad
D^\alpha_x=D^{\alpha_1}_{x_1}\ldots D^{\alpha_n}_{x_n}\,, \quad D_x=\frac{1}{%
i}\partial_x\,.
\end{split}
\end{equation*}
Let $z=(z_1,\ldots,z_n)$, $z_j=x_{2j-1}+ix_{2j}$, $j=1,\ldots,n$, be
coordinates of $\mathbb{C}^n$. We write
\begin{equation*}
\begin{split}
&z^\alpha=z_1^{\alpha_1}\ldots z^{\alpha_n}_n\,,\quad\overline
z^\alpha=\overline z_1^{\alpha_1}\ldots\overline z^{\alpha_n}_n\,, \\
&\partial_{z_j}=\frac{\partial}{\partial z_j}= \frac{1}{2}\Big(\frac{\partial%
}{\partial x_{2j-1}}-i\frac{\partial}{\partial x_{2j}}\Big)%
\,,\quad\partial_{\overline z_j}= \frac{\partial}{\partial\overline z_j}=%
\frac{1}{2}\Big(\frac{\partial}{\partial x_{2j-1}}+i\frac{\partial}{\partial
x_{2j}}\Big), \\
&\partial^\alpha_z=\partial^{\alpha_1}_{z_1}\ldots\partial^{\alpha_n}_{z_n}=%
\frac{\partial^{\left\vert \alpha\right\vert}}{\partial z^\alpha}\,,\quad
\partial^\alpha_{\overline z}=\partial^{\alpha_1}_{\overline
z_1}\ldots\partial^{\alpha_n}_{\overline z_n}= \frac{\partial^{\left\vert
\alpha\right\vert}}{\partial\overline z^\alpha}\,.
\end{split}
\end{equation*}

Let $X$ be a $C^\infty$ orientable paracompact manifold. We denote the tangent and cotangent bundle of $X$
by $TX$ and $T^*X$ respectively, and the complexified tangent and
cotangent bundle by $\mathbb{C }TX$ and $\mathbb{C }T^*X$.  
We write $\langle\,\cdot\,,\cdot\,\rangle$ to denote the
pointwise pairing between $T^*X$ and $TX$ and extend $\langle\,\cdot\,,\cdot%
\,\rangle$ bilinearly to $\mathbb{C }T^*X\times\mathbb{C }TX$. 

Let $E$, $F$ be $C^\infty$ vector bundles over $X$.  We
write $F\boxtimes E^*$ for the vector bundle over $X\times X$ with
fiber over $(x, y)\in X\times X$ consisting of linear maps from $E_y$ to
$F_x$.

Let $Y\subset X$ be an open subset. The spaces of smooth sections 
and {\it distribution sections} of $E$ over $Y$ will be denoted by $C^\infty(Y,
E)$ and $\mathscr D^{\prime }(Y, E)$ respectively. Let $\mathscr E^{\prime
}(Y, E)$ be the subspace of $\mathscr D^{\prime }(Y, E)$ whose elements are of 
{\it compact support} in $Y$. For $m\in\mathbb{R}$, we let $H^m(Y, E)$ denote the
Sobolev space of order $m$ for 
sections of $E$ over $Y$. Put
\begin{gather*}
H^m_{\mathrm{loc\,}}(Y, E)=\big\{u\in\mathscr D^{\prime }(Y, E); \,\varphi u\in H^m(Y, E),
\,\varphi\in C^\infty_0(Y)\big\}\,, \\
H^m_{\mathrm{comp\,}}(Y, E)=H^m_{\mathrm{loc}}(Y, E)\cap\mathscr E^{\prime
}(Y, E)\,.
\end{gather*}

\subsection{Set up and terminology}

\label{s-gue150508bI}

Let $(X, T^{1,0}X)$ be a compact CR manifold of dimension $2n+1$, $n\geq 1$,
where $T^{1,0}X$ is a CR structure of $X$. That is $T^{1,0}X$ is a subbundle
of rank $n$ of the complexified tangent bundle $\mathbb{C}TX$, satisfying $%
T^{1,0}X\cap T^{0,1}X=\{0\}$, where $T^{0,1}X=\overline{T^{1,0}X}$, and $[%
\mathcal{V},\mathcal{V}]\subset\mathcal{V}$, $\mathcal{V}=C^\infty(X,
T^{1,0}X)$. 

We assume that $X$ admits an $S^1$ action: $S^1\times
X\rightarrow X$. We write $e^{-i\theta}$ to denote the $S^1$ action. Let $%
T\in C^\infty(X, TX)$ be the global real vector field induced by the $S^1$
action given by $(Tu)(x)=\frac{\partial}{\partial\theta}\left(u(e^{-i\theta}%
\circ x)\right)|_{\theta=0}$ for $u\in C^\infty(X)$.

\begin{defn}\label{d-gue150508d1}
We say that the $S^1$ action $e^{-i\theta}$ is CR if $[T, C^\infty(X,
T^{1,0}X)]\subset C^\infty(X, T^{1,0}X)$ and the $S^1$ action is {\it transversal}
if for each $x\in X$, $\mathbb{C }T(x)\oplus T_x^{1,0}X\oplus T_x^{0,1}X=%
\mathbb{C}T_xX$. Moreover, we say that the $S^1$ action is {\it locally free} if $%
T\neq0$ everywhere.
\end{defn}

We assume throughout that $(X, T^{1,0}X)$ is a compact CR manifold with a
transversal CR locally free $S^1$ action $e^{-i\theta}$ with $T$ 
the global vector field induced by the $S^1$ action.  Let $\omega_0\in
C^\infty(X,T^*X)$ be the global real one form determined by $%
\langle\,\omega_0\,,\,u\,\rangle=0$ for all $u\in T^{1,0}X\oplus T^{0,1}X$,
and $\langle\,\omega_0\,,\,T\,\rangle=1$.

\begin{defn}
\label{d-gue150508f} For $p\in X$, the {\it Levi form} $\mathcal{L}_p$ is the
Hermitian quadratic form on $T^{1,0}_pX$ given by $\mathcal{L}_p(U,\overline
V)=-\frac{1}{2i}\langle\,d\omega_0(p)\,,\,U\wedge\overline V\,\rangle$, $U,
V\in T^{1,0}_pX$.
\end{defn}

If the Levi form $\mathcal{L}_p$ is semi-positive definite (resp. positive
definite), we say that $X$ is weakly pseudoconvex (resp. strongly pseudoconvex) at
$p$. If the Levi form is semi-positive definite (resp. positive definite) at every
point of $X$, we say that $X$is weakly pseudoconvex (resp. strongly pseudoconvex).

Denote by $T^{*1,0}X$ and $T^{*0,1}X$ the dual bundles of $T^{1,0}X$ and $%
T^{0,1}X$ respectively. Define the vector bundle of $(p,q)$ forms by $%
T^{*p,q}X=\Lambda^p(T^{*1,0}X)\wedge\Lambda^q(T^{*0,1}X)$. 

Let $D\subset X$ be an open subset and $E$ be a complex vector bundle over $D$.
Denote by $\Omega^{p,q}(D, E)$ (resp. $\Omega^{p,q}(D)$)
the space of smooth sections of $T^{*p,q}X\otimes
E$ (resp. $T^{*p,q}X$)) over $D$ and by $\Omega_0^{p,q}(D, E)$ (resp. $\Omega_0^{p,q}(D)$)
those elements of {\it compact support} in $D$.

Put
\begin{equation*}
\begin{split}
&T^{*0,\bullet}X:=\oplus_{j\in\left\{0,1,\ldots,n\right\}}T^{*0,j}X, \\
&T^{*0,+}X:=\oplus_{j\in\left\{0,1,\ldots,n\right\},\mbox{$j$ is even}%
}T^{*0,j}X, \\
&T^{*0,-}X:=\oplus_{j\in\left\{0,1,\ldots,n\right\},\mbox{$j$ is odd}%
}T^{*0,j}X.
\end{split}%
\end{equation*}
Put $\Omega^{0,\bullet}(X,E)$, $\Omega^{0,+}(X,E)$ and $ \Omega^{0,-}(X,E)$
in a similar way as above.


Fix $\theta_0\in]-\pi, \pi[$. 
Let $(e^{-i\theta_0})^*:\Lambda^r(\mathbb{C }T^*X)\rightarrow\Lambda^r(%
\mathbb{C }T^*X)$ be the pull-back map, 
$(e^{-i\theta_0})^*:T^{*p,q}_{e^{-i\theta_0}\circ x}X\rightarrow T^{*p,q}_{x}X$.  
Define for $u\in\Omega^{p,q}(X)$
\begin{equation}  \label{e-gue150508faII}
Tu:=\frac{\partial}{\partial\theta}\bigr((e^{-i\theta})^*u\bigr)%
|_{\theta=0}\in\Omega^{p,q}(X).
\end{equation}
(See also \eqref{lI}.) 

We shall write $u(e^{-i\theta}\circ
x):=(e^{-i\theta})^*u(x)$ for $u\in\Omega^{p,q}(X)$.   Clearly 
\begin{equation}  \label{e-gue150510f}
u(x)=\sum_{m\in\mathbb{Z}}\frac{1}{2\pi}\int^{\pi}_{-\pi}u(e^{-i\theta}\circ
x)e^{im\theta}d\theta.
\end{equation}

Let $\overline\partial_b:\Omega^{0,q}(X)\rightarrow\Omega^{0,q+1}(X)$ be the
tangential Cauchy-Riemann operator. From the CR property of the $S^1$
action it follows that (see also \eqref{e-gue150514f})
\begin{equation*}
T\overline\partial_b=\overline\partial_bT\ \ \mbox{on $\Omega^{0,q}(X)$}.
\end{equation*}

Naturally associated with the $S^1$ action are the so-called {\it rigid} objects.  See also 
\cite{BRT85} for a similar use of this term (cf. Definition II.2 of {\it loc.cit.}).  

\begin{defn}
\label{d-gue50508d} Let $D\subset X$ be an open set and $u\in C^\infty(D)$.
We say that $u$ is {\it rigid} if $Tu=0$, $u$ is Cauchy-Riemann (CR for short) 
if $\overline\partial_bu=0$ and $u$ is a rigid CR function if $\overline\partial_bu=0$ and $Tu=0$.
\end{defn}

\begin{defn}
\label{d-gue150508dI} Let $F$ be a complex vector bundle of rank $r$ over $X$. We say
that $F$ is {\it rigid} (resp. CR) if $X$ can be covered by open subsets $U_j$ with
trivializing frames $\{f^1_j,f^2_j,\dots,f^r_j\}$ such that the corresponding transition functions 
are rigid (resp. CR) (in the sense of the
preceding definition).  In this case the frames
$\{f^1_j,f^2_j,\dots,f^r_j\}$are called {\it rigid
frames} (resp. CR frames).
\end{defn}

Let $F$ be a rigid complex vector bundle over $X$ in the sense of Definition
\ref{d-gue150508dI}.

\begin{defn}
\label{d-gue150514f} Let $\langle\,\cdot\,|\,\cdot\,\rangle_F$ be a
Hermitian metric on $F$. We say that $\langle\,\cdot\,|\,\cdot\,\rangle_F$
is a {\it rigid Hermitian metric} if for every rigid local frames $\{f_1,\ldots, f_r\}$
of $F$, we have $T\langle\,f_j\,|\,f_k\,\rangle_F=0$, for $%
j,k=1,2,\ldots,r$.
\end{defn}

The condition of being rigid is not a severe restriction as far as the $S^1$ action is concerned.
See Theorems~\ref{t-gue150514fa} and~\ref{t-gue150515} which we shall prove
within the framework of BRT trivializations in the next section.

Henceforth let $E$ be a rigid CR vector bundle over $X$.  Write 
$\overline\partial_b:\Omega^{0,q}(X, E)\rightarrow\Omega^{0,q+1}(X,E)$
for the tangential Cauchy-Riemann operator.  
Since $E$ is rigid, we can define $Tu$ for $u\in\Omega^{0,q}(X,E)$ (cf. Theorem~\ref{t-gue150514fa}) 
and have 
\begin{equation}  \label{e-gue150508d}
T\overline\partial_b=\overline\partial_bT\ \ \mbox{on $\Omega^{0,q}(X,E)$}.
\end{equation}

For $m\in\mathbb{Z}$, let
\begin{equation}  \label{e-gue150508dI}
\Omega^{0,q}_m(X,E):=\left\{u\in\Omega^{0,q}(X,E);\, Tu=-imu\right\}
\end{equation}
and put $\Omega_m^{0,\bullet}(X,E)$, $\Omega_m^{0,+}(X,E)$ and $ \Omega_m^{0,-}(X,E)$
in a similar way as above. 

Put $\overline\partial_{b,m}:=\overline\partial_b:\Omega^{0,q}_m(X,E)%
\rightarrow\Omega^{0,q+1}_m(X,E)$ with a $\overline\partial_{b,m}$-complex:
\begin{equation*} \mbox{$\overline\partial_{b,m}$:}\,\,\,
\cdots\rightarrow\Omega^{0,q-1}_m(X,E)%
\rightarrow\Omega^{0,q}_m(X,E)\rightarrow\Omega^{0,q+1}_m(X,E)\rightarrow%
\cdots.
\end{equation*}
\noindent
Define
\begin{equation*}
H^q_{b,m}(X,E):=\frac{\mathrm{Ker\,}\overline\partial_{b,m}:%
\Omega^{0,q}_m(X,E)\rightarrow\Omega^{0,q+1}_m(X,E)}{\mathrm{Im\,}%
\overline\partial_{b,m}:\Omega^{0,q-1}_m(X,E)\rightarrow\Omega^{0,q}_m(X,E)}.
\end{equation*}
It is instructive to think of $H^q_{b,m}(X,E)$ as the $m$-th $S^1$ Fourier component of the $q$-th $%
\overline\partial_{b}$ Kohn-Rossi cohomology group. 

We will prove in Theorem~%
\ref{t-gue150517II} that $\mathrm{dim\,}H^q_{b,m}(X,E)<\infty$, for $%
m\in\mathbb{Z}$ and $q=0,1,2,\ldots,n$.  

We take a rigid Hermitian metric $\langle\,\cdot\,|\,\cdot\,%
\rangle_E$ on $E$ (in the sense of Definition~\ref{d-gue150514f}),
and a rigid Hermitian metric $\langle\,\cdot\,|\,%
\cdot\,\rangle$ on $\mathbb{C }TX$ such that 
\begin{equation}
\label{e-gue22a1}
T\perp (T^{1,0}X\oplus T^{0,1}X), \quad \langle\,T\,|\,T\,\rangle=1
\end{equation}
(and $T^{1,0}X\perp T^{0,1}X$).  (This is
always possible; see Theorem~\ref{t-gue150514fa} and Theorem 9.2 in~\cite%
{Hsiao14}.)

The Hermitian metric $\langle\,\cdot\,|\,\cdot\,\rangle$
on $\mathbb{C}TX$ induces by duality a Hermitian metric on $\mathbb{C}T^*X$
and on the bundles of $(0,q)$ forms $T^{*0,q}X$ ($q=0,1\cdots,n$),
to be denoted by $\langle\,\cdot\,|\,\cdot\,%
\rangle$ too.    A Hermitian metric denoted by $\langle\,\cdot\,|\,%
\cdot\,\rangle_{E}$ on $T^{*0,\bullet}X%
\otimes E$ is induced by those on $T^{*0,\bullet}X$ and $E$.
Let the lnear map $A(x,y)\in%
(T^{*,\bullet}X\otimes E)\boxtimes (T^{*,\bullet}X\otimes E)^*|_{(x, y)}$. We
write $\left\vert A(x,y)\right\vert$ to denote the natural matrix norm of $%
A(x,y)$ induced by $\langle\,\cdot\,|\,\cdot\,\rangle_{E}$.

We denote by $%
dv_X=dv_X(x)$ the induced volume form, and form the 
global $L^2$ inner products $(\,\cdot\,|\,\cdot\,)_{E}$ and $%
(\,\cdot\,|\,\cdot\,)$ on $\Omega^{0,\bullet}(X,E)$ and $\Omega^{0,%
\bullet}(X)$ respectively, with $L^2$-completion $L^2(X,T^{*0,q}X \otimes E)$ and $%
L^2(X,T^{*0,q}X)$.  Similar notation applies to $%
L^2_m(X,T^{*0,q}X\otimes E)$ and $L^2_m(X,T^{*0,q}X)$ (the completions of $%
\Omega^{0,q}_m(X,E)$ and $\Omega^{0,q}_m(X)$ with respect to $%
(\,\cdot\,|\,\cdot\,)_{E}$ and $(\,\cdot\,|\,\cdot\,)$).

Put $L^{2}(X,T^{*0,\bullet}X\otimes E)$, $L^{2,+}(X, E)$ and $L^{2,-}(X, E)$
in a similar way as above, and $L_m^{2}(X,T^{*0,\bullet}X\otimes E)$, $L_m^{2,+}(X, E)$ and $L_m^{2,-}(X, E)$
too.  

\subsection{Tangential de Rham cohomology group, Tangential Chern character
and Tangential Todd class}

\label{s-gue150508d}

In this section it is convenient to put $\Omega^r_{0}(X)=\left\{u\in
\oplus_{p+q=r}\Omega^{p,q}(X);\, Tu=0\right\}$ for $r=0,1,2,\ldots,2n$ (without the danger of
confusion with $\Omega_0^{p,q}$ in the preceding section) and set $\Omega^%
\bullet_{0}(X)=\oplus_{r=0}^{2n}\Omega^r_{0}(X)$. Since $Td=dT$ (see %
\eqref{e-gue150508d}), we have $d$-complex:
\begin{equation*}
d:\cdots\rightarrow\Omega^{r-1}_{0}(X)\rightarrow\Omega^{r}_{0}(X)%
\rightarrow\Omega^{r+1}_{0}(X)\rightarrow\cdots
\end{equation*}
Define the $r$-th tangential de Rham cohomology group:
\begin{equation*}
\mathcal{H}%
^r_{b,0}(X):=\frac{\mathrm{Ker\,}d:\Omega^{r}_{0}(X)\rightarrow%
\Omega^{r+1}_{0}(X)}{\mathrm{Im\,}d:\Omega^{r-1}_{0}(X)\rightarrow%
\Omega^{r}_{0}(X)}.
\end{equation*}
Put $\mathcal{H}^\bullet_{b,0}(X)=\oplus_{r=0}^{2n}%
\mathcal{H}^r_{b,0}(X)$.

Let a complex vector bundle $F$ over $X$ of rank $r$ be {\it rigid} as in Definition
\ref{d-gue150508dI}. We will show
in Theorem~\ref{t-gue150515} that there exists a connection $\nabla$ on $F$ such
that for any rigid local frame $f=(f_1,f_2,\ldots,f_r)$ of $F$ on an open
set $D\subset X$, the connection matrix $\theta(\nabla,f)=\left(\theta_{j,k}%
\right)^r_{j,k=1}$ satisfies
\begin{equation*}
\theta_{j,k}\in\Omega^1_{0}(D),
\end{equation*}
for $j,k=1,\ldots,r$. We call $\nabla$ as such a {\it rigid connection} on $F$. Let $%
\Theta(\nabla,F)\in C^\infty(X,\Lambda^2(\mathbb{C }T^*X)\otimes\mathrm{End\,%
}(F))$ be the associated {\it tangential curvature}.

Let $h(z)=\sum^\infty_{j=0}a_jz^j$ be a
real power series on $z\in\mathbb{C}$. Set
\begin{equation*}
H(\Theta(\nabla,F))=\mathrm{Tr\,}\Bigr(h\bigr(\frac{i}{2\pi}\Theta(\nabla,F)%
\bigr)\Bigr).
\end{equation*}
It is clear that $H(\Theta(\nabla,F))\in\Omega^{*}_{0}(X)$.

The following is
well-known (see Theorem B.5.1 in Ma-Marinescu~\cite{MM07}).

\begin{thm}
\label{t-gue150516} $H(\Theta(\nabla,F))$ is a closed differential form.
\end{thm}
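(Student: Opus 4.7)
The plan is to mimic the classical Chern--Weil argument, while checking at every step that rigidity is preserved so that the resulting form lies in $\Omega^*_{0}(X)$. The only nontrivial input beyond standard linear algebra of the trace is the Bianchi identity combined with the commutativity $Td=dT$ (which holds because the $S^1$ action is CR and the rigid subcomplex $(\Omega^\bullet_0(X),d)$ is well defined).

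First I would verify that, for any rigid local frame $f=(f_1,\dots,f_r)$ of $F$ over $D\subset X$, the entries of the curvature matrix $\Theta(\nabla,f)=d\theta(\nabla,f)+\theta(\nabla,f)\wedge\theta(\nabla,f)$ lie in $\Omega^2_{0}(D)$. This follows immediately from the hypothesis $\theta_{j,k}\in\Omega^1_{0}(D)$, the identity $Td=dT$, and the fact that the wedge product of rigid forms is rigid (since $T$ acts as a derivation on forms). Consequently $h\bigl(\tfrac{i}{2\pi}\Theta(\nabla,F)\bigr)$, being defined entrywise by a polynomial (or convergent power series) in $\Theta$, is a matrix of forms in $\Omega^{*}_{0}(D,\mathrm{End}\,F)$, and so $H(\Theta(\nabla,F))\in\Omega^*_{0}(X)$ after taking the (basis-independent) trace.

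Next I would prove closedness by reducing to monomials. Writing $h(z)=\sum a_j z^j$, linearity of $d$ and $\mathrm{Tr}$ reduces the claim to showing that $\mathrm{Tr}(\Theta^k)$ is $d$-closed for every $k\ge 1$. Working in a rigid local frame, the Bianchi identity $d\Theta=[\Theta,\theta]=\Theta\wedge\theta-\theta\wedge\Theta$ is obtained in the usual way by differentiating $\Theta=d\theta+\theta\wedge\theta$. Then
\begin{equation*}
d\,\mathrm{Tr}(\Theta^k)=\sum_{i=1}^{k}\mathrm{Tr}\bigl(\Theta^{i-1}\wedge(d\Theta)\wedge\Theta^{k-i}\bigr)=\sum_{i=1}^{k}\mathrm{Tr}\bigl(\Theta^{i-1}\wedge[\Theta,\theta]\wedge\Theta^{k-i}\bigr),
\end{equation*}
and the graded cyclicity of the trace on matrix-valued forms (the $2$-forms $\Theta^j$ commute with $\theta$ up to the usual sign, and the trace kills graded commutators) yields a telescoping cancellation. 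Hence $d\,\mathrm{Tr}(\Theta^k)=0$, and therefore $dH(\Theta(\nabla,F))=0$. Since $H(\Theta(\nabla,F))$ is already known to lie in $\Omega^*_0(X)$, it represents a class in $\mathcal{H}^\bullet_{b,0}(X)$.

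There is essentially no serious obstacle; the only subtlety worth flagging is that the Bianchi identity and the cyclicity manipulation are a priori local (in a rigid frame), so I would make sure that the global closedness follows from the invariance of $\mathrm{Tr}$ under change of frame (the usual transformation $\Theta\mapsto g^{-1}\Theta g$ under $f\mapsto fg$, which leaves $\mathrm{Tr}(h(\Theta))$ unchanged and, since $g$ is rigid on overlaps, preserves membership in $\Omega^*_{0}$). Everything else is formal.
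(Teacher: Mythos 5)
Your proof is correct and is essentially the standard Chern--Weil argument (Bianchi identity plus graded cyclicity of the trace), which is exactly what the paper invokes by citing Theorem B.5.1 of Ma--Marinescu \cite{MM07} rather than writing out a proof. The extra rigidity bookkeeping you include (verifying $\Theta\in\Omega^2_0$ and that everything stays in $\Omega^*_0$) is harmless but not part of what Theorem~\ref{t-gue150516} asserts; the paper records that observation separately in the sentence preceding the theorem.
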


That the tangential de Rham cohomology class
\begin{equation*}
[H(\Theta(\nabla,F))]\in\mathcal{H}^\bullet_{b,0}(X)
\end{equation*}
does not depend on the choice of rigid connections $\nabla$
is given by

\begin{thm}
\label{t-gue150516I} Let $\nabla^{\prime }$ be another rigid connection on $F
$. Then, $H(\Theta(\nabla,F))-H(\Theta(\nabla^{\prime },F))=dA$, for some $%
A\in\Omega^{*}_{0}(X)$.
\end{thm}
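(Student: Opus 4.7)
The plan is to adapt the classical Chern--Weil transgression argument to the tangential/rigid setting, the only point requiring care being that every form produced along the way stays in $\Omega^\bullet_0(X)$, i.e., is annihilated by $T$.

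First, I would introduce the affine family of connections $\nabla_t:=(1-t)\nabla+t\nabla'$ for $t\in[0,1]$, so that $\alpha:=\nabla'-\nabla\in C^\infty(X,T^*X\otimes\mathrm{End}(F))$. In any rigid local frame $f$ of $F$, the connection matrices $\theta(\nabla,f)$ and $\theta(\nabla',f)$ both have entries in $\Omega^1_0(D)$ by the definition of rigid connection, hence so does $\theta(\nabla_t,f)=\theta(\nabla,f)+t(\theta(\nabla',f)-\theta(\nabla,f))$. Consequently each $\nabla_t$ is itself a rigid connection, with tangential curvature $\Theta_t:=\Theta(\nabla_t,F)\in\Omega^2_0(X,\mathrm{End}(F))$, and $\alpha$ is represented locally by matrices with entries in $\Omega^1_0(D)$.

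Next, I would establish the pointwise transgression identity
\begin{equation*}
\tfrac{d}{dt}\,\mathrm{Tr}\,h\!\bigl(\tfrac{i}{2\pi}\Theta_t\bigr)
= d\,\mathrm{Tr}\Bigl(\tfrac{i}{2\pi}\alpha\cdot h'\!\bigl(\tfrac{i}{2\pi}\Theta_t\bigr)\Bigr),
\end{equation*}
using the standard identities $\tfrac{d}{dt}\Theta_t=d\alpha+[\theta(\nabla_t,f),\alpha]$ (where the bracket is taken in $\mathrm{End}(F)$) together with the Bianchi identity for $\Theta_t$ and the invariance of the trace. This is a purely formal computation carried out in any rigid frame, where all the quantities involved lie in $\Omega^\bullet_0(D)$, and it globalizes because both sides are independent of the choice of frame.

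Finally, I would integrate in $t$ over $[0,1]$ and set
\begin{equation*}
A := \int_0^1 \mathrm{Tr}\Bigl(\tfrac{i}{2\pi}\alpha\cdot h'\!\bigl(\tfrac{i}{2\pi}\Theta_t\bigr)\Bigr)\,dt.
\end{equation*}
The integrand is a smooth family (in $t$) of forms in $\Omega^\bullet_0(X)$, since $\alpha$ and each $\Theta_t$ are rigid; because $T$ commutes with the integration and with $d$, we get $A\in\Omega^\bullet_0(X)$ and $H(\Theta(\nabla,F))-H(\Theta(\nabla',F))=dA$, as desired. The only genuine thing to check is the tangentiality (rigidity) of $\alpha$, $\Theta_t$ and the trace-polynomial expressions at each step; this is the main (and in fact mild) obstacle, and it is exactly what forces the hypothesis that both connections be rigid.
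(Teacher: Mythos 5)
Your proposal is correct and follows essentially the same route as the paper: both use the affine family $\nabla_t=(1-t)\nabla+t\nabla'$, observe it stays rigid so that $Q_t=\tfrac{i}{2\pi}\mathrm{Tr}\bigl(\tfrac{\partial\nabla_t}{\partial t}\,h'(\tfrac{i}{2\pi}\Theta(\nabla_t,F))\bigr)\in\Omega^\bullet_0(X)$, and then invoke the standard Chern--Weil transgression identity (the paper cites Remark B.5.2 in Ma--Marinescu where you derive it directly) before integrating over $t\in[0,1]$. The only substantive check in both arguments is tangentiality of $\alpha$, $\Theta_t$, and the resulting trace polynomials, which you address correctly.
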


\begin{proof}
The idea of the proof is standard.
For each $t\in[0,1]$, put $\nabla_t=(1-t)\nabla+t\nabla^{\prime }$ which is a rigid connection on $F$.
Set
\begin{equation*}
Q_t=\frac{i}{2\pi}\mathrm{Tr\,}\Bigr(\frac{\partial \nabla_t}{\partial t}%
h^{\prime }\bigr(\frac{i}{2\pi}\Theta(\nabla_t,F)\bigr)\Bigr).
\end{equation*}
Since $\nabla_t$ is rigid, it is easily seen that
\begin{equation}  \label{e-gue150516}
Q_t\in\Omega^\bullet_{0}(X).
\end{equation}
It is well-known that (see Remark B.5.2 in Ma-Marinescu~\cite{MM07})
\begin{equation}  \label{e-gue150516I}
H(\Theta(\nabla,F))-H(\Theta(\nabla',F))=d\int^1_0Q_tdt.
\end{equation}
From \eqref{e-gue150516} and \eqref{e-gue150516I}, the theorem follows.
\end{proof}

For  $h(z)=e^{z}$ put
\begin{equation}  \label{e-gue160607}
\mathrm{ch_b\,}(\nabla,F):=H(\Theta(\nabla,F))\in\Omega^\bullet_{0}(X),
\end{equation}
and for
$h(z)=\log (\frac{z}{1-e^{-z}})$ set
\begin{equation}  \label{e-gue150607I}
\mathrm{Td_b\,}(\nabla,F):=e^{H(\Theta(\nabla,F))}\in\Omega^\bullet_{0}(X).
\end{equation}

We can now introduce tangential Todd
class and tangential Chern character.

\begin{defn}
\label{d-gue150516} The {\it tangential Chern character} of $F$ is given by
\begin{equation*}
\mathrm{ch_b\,}(F):=[\mathrm{ch_b\,}(\nabla,F)]\in\mathcal{H}%
^\bullet_{b,0}(X)
\end{equation*}
and the {\it tangential Todd class} of $F$ is given by
\begin{equation*}
\mathrm{Td_b\,}(F)=[\mathrm{Td_b\,}(\nabla,F)]\in\mathcal{H}%
^\bullet_{b,0}(X).
\end{equation*}
\end{defn}

Baouendi-Rothschild-Treves~\cite{BRT85} proved that $T^{1,0}X$ is a rigid
complex vector bundle over $X$ (cf. the first part of Theorem~\ref{t-gue150514fa} below). The tangential Todd class of
$T^{1,0}X$ and tangential Chern character of $T^{1,0}X$ are thus well defined.

The tangential Chern classes can be defined similarly.  
Put $\det(\frac{i\Theta(\nabla,F)}{2\pi}t+I)=\sum\limits^r_{j=0}%
\hat c_j(\nabla,F)t^j$.  Thus $\hat
c_j(\nabla,F)\in\Omega^{2j}_{0}(D)$.   By the matrix identity $\det
A=e^{\mathrm{Tr\,}(\log A)}$ and taking $h(z)=\log(1+z)$, one sees 
$\hat c_j(\nabla,F)$ ($j=0,1,\ldots,r$) is a closed differential
form on $X$ and its tangential de Rham cohomology class $[\hat
c_j(\nabla,F)]\in\mathcal{H}^{2j}_{b,0}(X)$ is independent of the choice of
rigid connections $\nabla$.  Put $\hat c_j(F)=[\hat c_j(\nabla,F)]\in\mathcal{H}%
^{2j}_{b,0}(X)$.  We call $\hat c_j(F)$ the $j$-th {\it tangential Chern class} of $F$,
and $\hat c(F)=1+\sum\limits^r_{j=1}\hat c_j(F)\in\mathcal{H}%
^\bullet_{b,0}(X)$ the {\it tangential total Chern class} of $F$.

\subsection{BRT trivializations and rigid geometric objects}

\label{s-gue150514}

In this paper, much of our strategy is heavily based on the following result
thanks to Baouendi-Rothschild-Treves~
\cite[Proposition I.2]{BRT85}.     Note in the following, $Z_j$ corresponds to 
$\overline L_j$ in their proposition.  Some geometrical significance related to a certain circle 
bundle structure will be discussed in the proof of Proposition~\ref{l-gue150524d}.  

\begin{thm}
\label{t-gue150514} For every point $x_0\in X$ there exist local
coordinates $x=(x_1,\cdots,x_{2n+1})=(z,\theta)=(z_1,\cdots,z_{n},\theta),
z_j=x_{2j-1}+ix_{2j},j=1,\cdots,n, x_{2n+1}=\theta$, defined in some small
neighborhood $D=\{(z, \theta): \left\vert z\right\vert<\delta,
-\varepsilon_0<\theta<\varepsilon_0\}$ of $x_0$, $\delta>0$, $%
0<\varepsilon_0<\pi$, such that $(z(x_0),\theta(x_0))=(0,0)$ and
\begin{equation}  \label{e-can}
\begin{split}
&T=\frac{\partial}{\partial\theta} \\
&Z_j=\frac{\partial}{\partial z_j}-i\frac{\partial\varphi}{\partial z_j}(z)%
\frac{\partial}{\partial\theta}, \quad j=1,\cdots,n
\end{split}%
\end{equation}
where $Z_j(x)$, $j=1,\cdots, n$, form a basis of $T_x^{1,0}X$ for each $x\in D
$ and $\varphi(z)\in C^\infty(D,\mathbb{R})$ is independent of $\theta$.
We summarize these data by the notation $(D,(z,\theta),\varphi)$.  

Furthermore, let $(D,(z,\theta),\varphi)$ and $(\widetilde
D,(w,\eta),\widetilde\varphi)$ be two such data on $D$.  
Then the coordinate transformation between them (on $D\cap\widetilde D$) can be given such that if 
$$w=(w_1,\ldots,w_n)=H(z)=(H_1(z),\ldots,H_n(z))$$
then 
\begin{equation}  \label{e-gue150524dIe1}
\begin{split}
&H_j(z)\in C^\infty(|z|<\delta),\ \
\overline\partial H_j(z)=0,\ \ \forall \,j\\
&\eta=\theta+\mathrm{arg}\,g(z)\quad\mbox{$(\mathrm{mod}\,2\pi)$\,\, where 
$\mathrm{arg}\,g(z)=\mathrm{Im}\log g(z)$}\\
&\widetilde\varphi(H(z), \overline {H(z)})=\varphi(z, \overline z)+\log|g(z)|
\end{split}%
\end{equation}
for some nowhere vanishing holomorphic function $g(z)$ on $|z|<\delta$.  
\end{thm}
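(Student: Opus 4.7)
My plan is to prove Theorem~\ref{t-gue150514} in two stages: construct the canonical chart $(D,(z,\theta),\varphi)$ around $x_0$, and then compare two such charts to derive \eqref{e-gue150524dIe1}.

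For the first stage, transversality and local freeness let me pick a smooth codimension-one slice $\Sigma\ni x_0$ transverse to $T$; the map $(\sigma,\theta)\mapsto e^{-i\theta}\circ\sigma$ is then a diffeomorphism from $\Sigma\times(-\varepsilon_0,\varepsilon_0)$ onto a neighborhood $D$ of $x_0$ for $\varepsilon_0$ sufficiently small (smaller than half the minimal isotropy period at $x_0$), and in these coordinates $T=\partial/\partial\theta$. I next produce a \emph{rigid} frame $Z_1,\ldots,Z_n$ of $T^{1,0}X$ on $D$: start from any basis of $T^{1,0}_{x_0}X$, use the CR property $(e^{-i\theta})_*T^{1,0}X\subset T^{1,0}X$ to extend it along the orbit through $x_0$, then propagate to $D$ by requiring $[T,Z_j]=0$, equivalently that the coefficients of $Z_j$ are $\theta$-independent. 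The integrability of $T^{1,0}X$ descends to a local integrable almost complex structure on $\Sigma$, so by Newlander--Nirenberg one may choose holomorphic coordinates $z=(z_1,\ldots,z_n)$ on $\Sigma$ in which the $Z_j$ take the form
\begin{equation*}
Z_j=\frac{\partial}{\partial z_j}+c_j(z,\overline z)\frac{\partial}{\partial\theta},\qquad j=1,\ldots,n,
\end{equation*}
with $c_j$ independent of $\theta$.

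The decisive step is to normalize $c_j$ into the shape $-i\partial\varphi/\partial z_j$ with $\varphi$ real. Integrability $[Z_j,Z_k]=0$ collapses to $\partial c_k/\partial z_j=\partial c_j/\partial z_k$, and by the Poincar\'e lemma on $\{|z|<\delta\}$ there exists a smooth $\psi(z,\overline z)$ with $c_j=\partial\psi/\partial z_j$. The remaining gauge freedom is to replace $\theta$ by $\theta+f(z,\overline z)$ with $f$ real, which preserves $T=\partial/\partial\theta$ but shifts $c_j$ to $c_j+\partial f/\partial z_j$, i.e.\ replaces $\psi$ by $\psi+f$; choosing $f=-\operatorname{Re}\psi$ makes the new $\psi$ purely imaginary, and writing $\psi=-i\varphi$ with $\varphi$ real and $\theta$-independent yields the canonical form \eqref{e-can}.

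For the transition formula, given two BRT charts overlapping at $x_0$ the equality $T=\partial/\partial\theta=\partial/\partial\eta$ forces $w=H(z,\overline z)$ and $\eta=\theta+f(z,\overline z)\pmod{2\pi}$ with $H$ and $f$ independent of $\theta$. Expressing the rigid frame $\partial/\partial w_j-i(\partial\widetilde\varphi/\partial w_j)\partial/\partial\eta$ of the $(w,\eta)$-chart in $(z,\theta)$-coordinates and requiring it to lie in $\operatorname{span}\{Z_1,\ldots,Z_n\}$ (i.e.\ to have no $\partial/\partial\overline{z_k}$ component) forces $\partial H_j/\partial\overline{z_k}=0$, so $H=(H_1,\ldots,H_n)$ is holomorphic in $z$. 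A direct chain-rule computation then identifies the real combination $\widetilde\varphi(H(z),\overline{H(z)})-\varphi(z,\overline z)$ and the twist function $f$ as $\log|g(z)|$ and $\operatorname{Im}\log g(z)$, respectively, for a nowhere vanishing holomorphic $g(z)$ determined (up to a constant) by the Jacobian of $H$ together with the fiber-phase alignment needed to match the two normalizations of $\varphi$. The main technical obstacle is producing the rigid frame simultaneously with slice coordinates and then normalizing $c_j$ to gradient form; once \eqref{e-can} holds in both charts, the transition formula \eqref{e-gue150524dIe1} follows by bookkeeping with the CR and $S^1$-equivariance constraints.
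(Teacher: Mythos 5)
Your proposal is correct, but it takes a genuinely different route from the paper at both stages. For the existence of the canonical chart $(D,(z,\theta),\varphi)$, the paper simply cites Baouendi--Rothschild--Treves (Proposition I.2 of their paper) and does not reprove it; you give a self-contained argument via a slice transverse to $T$, an $S^1$-invariant (rigid) frame of $T^{1,0}X$, Newlander--Nirenberg on the quotient, and a Poincar\'e-lemma plus gauge-fixing step $\theta\mapsto\theta+f$ to force $c_j=-i\,\partial\varphi/\partial z_j$ with $\varphi$ real. That normalization argument is clean and I see no gap: integrability of $T^{1,0}X$ does force $[Z_j,Z_k]=0$ since the bracket has only a $\partial/\partial\theta$ component which cannot lie in $\operatorname{span}\{Z_1,\dots,Z_n\}$, the local $\partial$-Poincar\'e lemma gives the potential $\psi$, and taking $f=-\operatorname{Re}\psi$, $\varphi=-\operatorname{Im}\psi$ does produce the canonical form. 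For the transition formula \eqref{e-gue150524dIe1}, the paper derives it (inside the proof of Proposition~\ref{l-gue150524d}) by first realizing $D$ geometrically as a piece of the unit circle bundle in the dual of a holomorphic line bundle $L$ over $U$, so that \eqref{e-gue150524dIe1} drops out of the transformation law for a local holomorphic frame $\tilde e^*=g\,e^*$ together with $\|e^*\|^2=e^{2\varphi}$; you instead carry out the direct chain-rule bookkeeping, observing that $T=\partial/\partial\theta=\partial/\partial\eta$ forces $w=H(z,\bar z)$ and $\eta=\theta+f(z,\bar z)$, that $W_j\in\operatorname{span}\{Z_k\}$ kills the $\partial/\partial\bar z_k$ components and forces $H$ holomorphic, and that matching the $\partial/\partial\theta$ components yields that $f+i\varphi-i\widetilde\varphi\circ H$ is holomorphic in $z$, from which $\log|g|=\widetilde\varphi\circ H-\varphi$ and $\arg g=f$ follow by taking real and imaginary parts. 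Both routes work; the paper's line-bundle picture is more conceptual and is reused later (e.g.\ to argue that $A_B$ is intrinsic), while yours is more elementary.

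One minor imprecision worth correcting: you describe $g$ as ``determined (up to a constant) by the Jacobian of $H$ together with the fiber-phase alignment,'' but $g$ has nothing to do with $\det(DH)$. It is determined directly by the gauge data: $\log|g(z)|=\widetilde\varphi(H(z),\overline{H(z)})-\varphi(z,\bar z)$ fixes $|g|$, and then $\arg g=f$ fixes the phase up to an additive constant (the ambiguity in the $\eta$-origin). In the circle-bundle picture $g$ is precisely the transition function of $L^*$, which is independent of the base Jacobian.
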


\begin{rem}
\label{r-gue150514}
The relation between $\tilde\varphi$ and $\varphi$ in \eqref{e-gue150524dIe1} 
is a corrected version of a similar formula in \cite[the line below (I.31)]{BRT85}. 
See the proof of Proposition~\ref{l-gue150524d} for a derivation.  
\end{rem}

There exist examples that $H$ is not necessarily one to one.  Nevertheless, it can be shown 
that after shrinking $D$ and $\widetilde D$ properly, it is one to one, hence a biholomorphism.    

We call the above triple $(D,(z,\theta),\varphi)$ a {\it BRT trivialization}.
Note for $(z, \theta)\in D$ and $-\pi<\alpha<\pi$, 
 $e^{-i\alpha}\circ (z, \theta)=(z, \theta+\alpha)$ 
if $\{e^{-it\alpha}\circ (z, \theta)\}_{0\le t\le 1}\subset D$.

By using BRT trivializations some operations simplify, as follows.
Under the BRT triple $(D,(z,\theta),\varphi)$
it is clear that
\begin{equation*}
\{d\overline{z_{j_1}}\wedge\cdots\wedge d\overline{z_{j_q}}, 1\leq
j_1<\cdots<j_q\leq n\}
\end{equation*}
is a basis for $T^{\ast0,q}_xX$ for every $x\in D$.   For $u\in\Omega^{0,q}(X)
$, on $D$ we write
\begin{equation}  \label{e-gue150524fb}
u=\sum\limits_{j_1<\cdots<j_q}u_{j_1\cdots j_q}d\overline{z_{j_1}}%
\wedge\cdots\wedge d\overline{z_{j_q}}.
\end{equation}
Recall $T$ is the vector field associated with the $S^1$ action.
We have
\begin{equation}  \label{lI}
Tu=\sum\limits_{j_1<\cdots<j_q}(Tu_{j_1\cdots j_q})d\overline{z_{j_1}}%
\wedge\cdots\wedge d\overline{z_{j_q}}
\end{equation}
and $Tu$ is independent of the choice of BRT trivializations.

For $\overline\partial_b$ on
the BRT triple $(D,(z,\theta),\varphi)$ we have
\begin{equation}  \label{e-gue150514f}
\overline\partial_b=\sum^n_{j=1}d\overline z_j\wedge(\frac{\partial}{%
\partial\overline z_j}+i\frac{\partial\varphi}{\partial\overline z_j}(z)%
\frac{\partial}{\partial\theta}).
\end{equation}

The rigid objects (discussed in the preceding section) are natural geometric objects pertinent to the $S^1$ action.
In the following $X$ is again a compact connected CR manifold with a transversally CR locally free $S^1$ action.

\begin{thm}
\label{t-gue150514fa} Suppose $F$ is a
complex vector bundle over $X$ (not necessarily a CR bundle)
and admits an $S^1$ action compatible with that on $X$.
Then $F$ is actually a rigid vector bundle (in the sense of Definition~\ref{d-gue150508dI}).
Moreover there is a rigid Hermitian metric $\langle\,\cdot\,|\,%
\cdot\,\rangle_F$ on $F$.   Conversely if $F$ is a rigid vector bundle, then $F$ admits
a compatible $S^1$ action.
\end{thm}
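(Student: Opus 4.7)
The plan is to reduce everything to computations on the BRT trivializations $(D,(z,\theta),\varphi)$ of Theorem~\ref{t-gue150514}, on which the $S^1$-action is the $\theta$-translation $e^{-i\alpha}\circ(z,\theta)=(z,\theta+\alpha)$ and the generator $T$ equals $\partial/\partial\theta$. This local model lets me move freely between the three notions---compatibility of a fiberwise $S^1$-action, rigidity of transition functions, and rigidity of a Hermitian metric---by averaging (or in the converse direction, by lifting $T$) within a BRT patch.

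For the first assertion, given a compatible fiberwise $S^1$-action $(e^{-i\theta})_F\colon F_x\to F_{e^{-i\theta}\circ x}$, I would start on a BRT patch $D=U\times(-\varepsilon,\varepsilon)$ with an arbitrary frame $\{s_1^0(z),\ldots,s_r^0(z)\}$ of $F$ over the slice $U\times\{0\}$ and set
\[
s_j(z,\theta):=(e^{-i\theta})_F\, s_j^0(z)\in F_{(z,\theta)}.
\]
The group law gives $s_j(e^{-i\alpha}\circ x)=(e^{-i\alpha})_F\,s_j(x)$, so the $\{s_j\}$ are $S^1$-equivariant, and for any second equivariant frame $\{\tilde s_j\}$ with transition $\tilde s_j=\sum_k a_{jk}s_k$ one immediately gets $a_{jk}(e^{-i\alpha}\circ x)=a_{jk}(x)$, i.e.\ $Ta_{jk}=0$, making $F$ rigid. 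For the rigid Hermitian metric I would take any smooth Hermitian metric $h$ on $F$ and set
\[
\langle\,u\,|\,v\,\rangle_F:=\frac{1}{2\pi}\int_0^{2\pi} h_{e^{-i\theta}\circ x}\bigl((e^{-i\theta})_F u,(e^{-i\theta})_F v\bigr)\,d\theta,\qquad u,v\in F_x,
\]
which is $S^1$-invariant by a change of variable; testing against the equivariant frames $\{s_j\}$ above yields $T\langle\,s_j\,|\,s_k\,\rangle_F=0$, i.e.\ the rigidity condition of Definition~\ref{d-gue150514f}.

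For the converse, suppose $F$ is rigid, with rigid frames $\{s_j^D\}$ on BRT patches $D$ and rigid transition functions $\{a^{D,\tilde D}_{jk}\}$ satisfying $Ta^{D,\tilde D}_{jk}=0$. I would define, on each $D$, a local $\mathbb{R}$-action preserving the rigid frame,
\[
(e^{-i\alpha})_F\Bigl(\sum_j c_j\, s_j^D(z,\tau)\Bigr):=\sum_j c_j\, s_j^D(z,\tau+\alpha),
\]
for $|\alpha|$ small enough that the orbit stays in $D$. The $T$-invariance of the transition functions is exactly what is needed for these local lifts to agree on overlaps, so they patch into a smooth $\mathbb{R}$-flow $\Phi_\alpha$ on $F$ covering the $S^1$-flow on $X$. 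The main obstacle is to promote $\Phi_\alpha$ from an $\mathbb{R}$-action to a genuine $S^1$-action, i.e.\ to verify $\Phi_{2\pi}=\mathrm{id}_F$, and more subtly at points whose orbit has period $2\pi/\ell$ with $\ell>1$. I would handle this by covering a closed orbit through $x$ by a cyclic chain of BRT patches $D_1,\ldots,D_N$ and tracking a fiber vector through the composite flow: because each transition $a^{D_k,D_{k+1}}_{ij}$ is $T$-invariant, its value at any crossing point on the orbit equals its value at $x$, reducing the monodromy around the orbit to the fiberwise cocycle product $a^{D_1,D_2}\cdots a^{D_N,D_1}$ at $x$, which is the identity. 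This closure step is where rigidity is essential and is the most delicate part of the argument.
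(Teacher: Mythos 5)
Your treatment of the forward directions (compatible $S^1$-action $\Rightarrow$ rigid, and construction of a rigid Hermitian metric) is correct and genuinely different from the paper's. You exploit the given fiberwise action directly: propagating a slice frame by $(e^{-i\theta})_F$ and averaging the metric over the group. The paper instead proceeds by first assuming $F$ rigid, running an open/closed connectedness argument on $\lambda\in[-\pi,\pi]$ to manufacture rigid frames on $S^1$-invariant tubes, and only then averaging. Your route is shorter because you never need to establish $S^1$-invariant rigid neighborhoods before averaging; the fiberwise action does the work.

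The genuine gap is in your closure step for the converse. You assert that the monodromy around a closed orbit $\mathcal{O}_x$ reduces to "the fiberwise cocycle product $a^{D_1,D_2}\cdots a^{D_N,D_1}$ at $x$, which is the identity." Neither half of this is justified. A single BRT chart $D=U\times(-\varepsilon,\varepsilon)$ with $\varepsilon<\pi$ cannot contain a full $S^1$-orbit (the BRT map is injective and the period is $\leq 2\pi$), so $x$ generically lies in $D_1$ but not in the overlaps $D_k\cap D_{k+1}$ for $k\geq 2$; "its value at $x$" is undefined for those transitions. And even for a chain with a common point in every pairwise and triple overlap, the cocycle identity would give you $a^{D_1,D_2}a^{D_2,D_3}\cdots a^{D_N,D_1}=I$ only by iterating on triple intersections, which your cyclic chain of patches around an orbit does not have. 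More to the point, the cyclic product (the holonomy of the flat connection along orbits defined by rigid frames) can in fact be nontrivial for a rigid covering: e.g.\ on $X=Y\times S^1$ with $F$ the trivial bundle, one can choose two rigid trivializations $Y\times I_1, Y\times I_2$ ($I_1,I_2$ arcs covering $S^1$) whose transition equals distinct constants $M_1\neq M_2$ on the two components of $I_1\cap I_2$; this is perfectly rigid, but your $\mathbb{R}$-flow then has $\Phi_{2\pi}=M_2^{-1}M_1\neq I$. So the $\mathbb{R}$-flow attached to an arbitrary rigid covering need not descend to an $S^1$-action — the theorem's converse holds because one can \emph{replace} the rigid covering by a better one, not because the given one automatically closes up. The paper's proof of the converse does not track the flow around orbits at all; it instead declares the rigid frames constructed on $S^1$-invariant tubes (output of the open/closed argument) to be $S^1$-invariant frames and defines the action from those. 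If you wish to keep the $\mathbb{R}$-flow approach you must supply a separate argument that $\Phi_{2\pi}=\mathrm{id}$ (equivalently, that the rigid covering can be modified to have trivial orbit holonomy); otherwise adopt the paper's strategy of first producing rigid frames on $S^1$-invariant open sets.
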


\begin{proof}  We first work on the existence of a rigid Hermitian metric
(assuming $F$ is rigid).
Fix $p\in X$ and let $(D,(z,\theta),\varphi)$ be a BRT trivialization
around $p$ such that $(z(p),\theta(p))=(0,0)$, $(z, \theta)\in\{z\in\mathbb{C}^{n-1}:\,
\left\vert z\right\vert<\delta\}\times\{\theta\in\mathbb{R}:\, \left\vert
\theta\right\vert<\delta\}$ for some $\delta>0$.   Put
\begin{equation*}
\begin{split}
A:=&\{\lambda\in[-\pi,\pi]:\, \mbox{there is a local rigid trivializing
frame (l.r.t. frame for short ) defined on} \\
&\quad\quad\quad\mbox{$\set{e^{-i\theta}\circ(z,0);\,\, \abs{z}<\varepsilon,
\theta\in[-\pi,\lambda+\varepsilon)}$ for some $0<\varepsilon<\delta$}\}.
\end{split}%
\end{equation*}
Clearly  $A$ is a non-empty open set in $[-\pi,\pi]$.  We claim 
$A=[-\pi,\pi]$.   (Remark that the l.r.t. frame above is closely related to
the {\it canonical basis} in \cite[Definition I.3 without (I.29a)]{BRT85} when 
$E$ is $T^{1,0}X$.) 

It suffices to prove $A$ is closed. Let $\lambda_0$ be a limit point of $A$. 
For some small $\varepsilon_1>0$, there is a l.r.t. frame $\hat f=(\hat
f_1,\ldots,\hat f_r)$ defined on $\left\{e^{-i\theta}\circ(z,0);\,
\left\vert z\right\vert<\varepsilon_1,
\lambda_0-\varepsilon_1<\theta_1<\lambda_0+\varepsilon_1\right\}$.   By assumption $%
\lambda_0\in \overline A$ there exists a l.r.t.
frame $\widetilde f=(\widetilde f_1,\ldots,\widetilde f_r)$ defined around 
$\{e^{-i\theta}\circ (z,0)\}$ in which $\left\vert z\right\vert<\varepsilon_2,
\theta\in[-\pi,\lambda_0-\frac{\varepsilon_1}{2})$ for some $%
\varepsilon_2>0$. Now $\widetilde f=g\hat f$ on
\begin{equation*}
\left\{e^{-i\theta}\circ(z,0);\, \left\vert z\right\vert<\varepsilon_0,
\theta\in(\lambda_0-\varepsilon_1,\lambda_0-\frac{\varepsilon_1}{2})\right\},\quad 
\varepsilon_0=\min\left\{%
\varepsilon_1,\varepsilon_2\right\}
\end{equation*}
for some {\it rigid} $r\times r$ matrix $g$. 

We now patch up the frames.  Put , for $\theta\in[-\pi,\lambda_0-\frac{\varepsilon_1}{2})$,
$f=\widetilde f$ (on $\{e^{-i\theta}\circ(z,0)\}$)
and for $\theta\in[\lambda_0-\frac{\varepsilon_1}{2},\lambda_0+\varepsilon_1)$,
$f=g\hat f$ because $g$ is independent of $\theta$.  By $\widetilde f=g\hat f$ on the overlapping, 
$f$ is well-defined as a l.r.t. frame on
\begin{equation*}
\left\{e^{-i\theta}\circ(z,0);\, \left\vert z\right\vert<\varepsilon_0,
\theta\in[-\pi,\lambda_0+\varepsilon_1)\right\}.
\end{equation*}
extending $\theta=\lambda_0$.  
Thus $A$ is closed as desired.  

By the discussion above we can actually
find local rigid
trivializations $W_1,\ldots,W_N$ such that $X=\bigcup^N_{j=1}W_j$ and each $%
W_j\supset\bigcup_{-\pi\leq\theta\leq\pi}e^{-i\theta}W_j$ (i.e. $W_j$ is $S^1$ invariant).
Take any Hermitian metric $\langle\,\cdot\,,\,\cdot\,\rangle_F$ on $F$. Let $%
\langle\,\cdot\,|\,\cdot\,\rangle_F$ be the Hermitian metric on $F$ defined
as follows. For each $j=1,2,\ldots,N$, let $h^1_j,\ldots,h^r_j$ be local
rigid trivializing frames on $W_j$. Put $\langle\,h^s_j(x)\,|\,h^t_j(x)\,%
\rangle_F=\frac{1}{2\pi}\int^{\pi}_{-\pi}\langle\,h^s_j(e^{-iu}\circ
x)\,,\,h^t_j(e^{-iu}\circ x)\,\rangle_Fdu$, $s,t=1,2,\ldots,r$. 
One sees that $\langle\,\cdot\,|\,\cdot\,\rangle_F$ is
well-defined as a rigid Hermitian metric on $F$.

By examining the above reasoning we have also proved that
if $F$ is rigid, then it admits a natural $S^1$ action (by declaring the 
l.r.t. frames as $S^1$ invariant frames) compatible
with that on $X$.

For the reverse direction if $F$ admits a compatible $S^1$ action,
by using BRT trivializations one can construct $S^1$ invariant local frames.
These invariant local frames can be easily verified to be
local rigid frames, or equivalently the transition functions between
them are annihilated by $T$ due to the $S^1$ invariant property.   Hence
$F$ is rigid by definition.
\end{proof}

We shall now prove

\begin{thm}
\label{t-gue150515} Assume the complex vector bundle $F$ is rigid (on $X$).  There exists a rigid connection on $F$.
And if $F$ is equipped with a rigid Hermitian metric, there exists a rigid connection compatible
with this Hermitian metric.  Suppose $F$ is furthermore CR (and rigid) equipped with a rigid Hermitian metric $h$.
Then there exists a unique rigid connection (see the second paragraph of Subsection~\ref{s-gue150508d}) $\nabla^F$
compatible with $h$ such that $\nabla^F$ induces a  
Chern connection on $U_j$ (of any given BRT chart), and 
that the $S^1$ invariant sections are its parallel sections along $S^1$ orbits of $X$.   
\end{thm}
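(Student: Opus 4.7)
The plan is to work locally on BRT trivializations and glue via a rigid partition of unity. By Theorem~\ref{t-gue150514fa}, every point of $X$ lies in an $S^1$-invariant open set $W_\alpha$ on which $F$ admits a rigid local frame $\{e_\alpha^i\}$. Averaging a standard partition of unity subordinate to such a cover over the $S^1$-action produces a rigid partition of unity $\{\chi_\alpha\}$ (each $\chi_\alpha$ is annihilated by $T$), which will be the main patching tool.

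For the first existence claim, let $\nabla^\alpha$ be the flat connection on $F|_{W_\alpha}$ defined by $\nabla^\alpha e_\alpha^i = 0$; its connection matrix in $\{e_\alpha^i\}$ is zero, hence trivially rigid. Set $\nabla := \sum_\alpha \chi_\alpha \nabla^\alpha$. On any BRT chart $D$ with rigid frame $\{f^i\}$, the connection matrix of $\nabla$ in $\{f^i\}$ is an algebraic expression in the $\chi_\alpha$, the transition matrices between $\{f^i\}$ and the $\{e_\alpha^i\}$ (rigid by construction), and their exterior derivatives; since $T$ annihilates rigid functions and commutes with $d$, every entry is a rigid 1-form. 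For the second claim with a rigid Hermitian metric $\langle\cdot|\cdot\rangle_F$, perform Gram--Schmidt on each $\{e_\alpha^i\}$ using the rigid metric matrix; the procedure is algebraic in rigid entries, so the output $\{\tilde e_\alpha^i\}$ is a rigid unitary frame. Take $\nabla^\alpha$ to be trivial in $\{\tilde e_\alpha^i\}$ (automatically metric-compatible) and glue as above; convex combinations preserve metric compatibility, and the rigidity argument is unchanged.

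For the uniqueness statement, fix a BRT chart $(D,(z,\theta),\varphi)$ with $D = U \times I$ and a rigid frame $\{f^i\}$ of $F|_D$. Transition functions between rigid frames on a rigid CR bundle are rigid CR, hence holomorphic on the slice $\{\theta = 0\} \cong U$; consequently $F|_U$ carries a canonical holomorphic bundle structure and the restriction $h|_U$ determines a Chern connection $\nabla^{\mathrm{Ch}}$ on $U$. The $S^1$-invariant-parallel condition forces $\nabla^F_T f^i = 0$, which kills the $\omega_0$-component of the connection matrix of $\nabla^F$ in $\{f^i\}$; the Chern-induced condition then fixes the $(1,0)$ and $(0,1)$ components on the slice, and $T$-rigidity extends them uniquely to $D$. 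Existence follows by declaring $\nabla^F$ locally via this recipe; consistency on overlapping BRT charts is guaranteed by uniqueness together with the transition formulas \eqref{e-gue150524dIe1}, in which the holomorphic, $\theta$-independent maps $H(z)$ and $g(z)$ intertwine the slice-wise Chern connections. Compatibility with $h$ is then automatic: on the slice it follows from the Chern property, and in the $T$-direction the vanishing $\omega_0$-component together with rigidity of $h$ ensures that $T$ preserves $h$-inner products of rigid sections.

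The main obstacle I anticipate is verifying that the slice-wise Chern connections on overlapping BRT charts glue consistently: using \eqref{e-gue150524dIe1} one must check that the fiber adjustment $g(z)$ and the accompanying shift $\mathrm{arg}\,g(z)$ in the $\theta$-direction do not introduce a spurious $\omega_0$-component into the transported connection matrix, and that the resulting gluing data are rigid. Once this bookkeeping is settled, the global $\nabla^F$ is automatically rigid and uniquely determined.
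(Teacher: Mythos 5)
Your argument is correct, but the first (and second) existence claim is proved by a genuinely different route from the paper's. The paper starts with an arbitrary connection $\nabla$ on $F$ and averages it over the group action, forming $\frac{1}{2\pi}\int_{S^1} g^*\nabla\, dg$ (using Theorem~\ref{t-gue150514fa} to identify $g^*F$ with $F$); the same averaging applied to a metric-compatible connection preserves compatibility when the metric is rigid. You instead declare local trivial connections in rigid frames and glue them with a rigid partition of unity. Both constructions are sound. The averaging route is more global and avoids the patching bookkeeping: one never needs to track how the connection matrices transform under changes of rigid frame, since invariance under the group action is built in from the start. Your partition-of-unity route is more explicit and makes the mechanism visible (the $\omega_0$-component of $d g_\alpha$ vanishes precisely because $T g_\alpha = 0$, and $T$ commutes with $d$, so each piece lands in $\Omega^1_0$); this is the classical construction for connections, specialized to rigid data. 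For the metric-compatible case your Gram--Schmidt reduction to rigid unitary frames is a correct substitute for the paper's averaging.

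For the last (Chern/uniqueness) statement your argument coincides with the paper's in substance: descend $(F, h)$ to $U_j$ of a BRT chart as a holomorphic Hermitian bundle, take the Chern connection there, and extend via the $T$-parallel condition. You have made the uniqueness mechanism sharper than the paper does — the $S^1$-parallel requirement forces the $\omega_0$-component of the connection matrix to vanish in a rigid frame, and the Chern property then pins the $(1,0)$ and $(0,1)$ parts. The obstacle you flagged (that the $\eta = \theta + \arg g(z)$ shift in \eqref{e-gue150524dIe1} could introduce a spurious $\omega_0$-component when passing between slice-wise Chern connections on overlaps) is indeed the point one must verify, and it is resolved precisely by the observation you give: the Chern connection is canonical in the holomorphic structure and the metric, and under the BRT transition the base change $w = H(z)$ is a biholomorphism while the fiber phase $g(z)$ is a holomorphic change of local frame, neither of which affects the intrinsic Chern data; rigidity of the gluing data follows since $H$ and $g$ are $\theta$-independent.
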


\begin{proof}
Let $\nabla$ be a connection on $F$.   For any $g\in S^1$ considering $g^*\nabla$
on $g^*F$ which is $F$ by using Theorem~\ref{t-gue150514fa} and summing over $g$ (in analogy with the
construction of a rigid Hermitian metric above), one obtains a rigid connection on $F$.
Suppose $F$ has a rigid Hermitian metric $\langle\,\cdot\,|\,\cdot\,\rangle_F$ 
and a connection $\nabla$ compatible with $\langle\,\cdot\,|\,\cdot\,\rangle_F$. 
One readily sees that the rigid connection resulting from the preceding procedure of summation, is still compatible with $%
\langle\,\cdot\,|\,\cdot\,\rangle_F$.    For the last statement, note that i) given a rigid CR bundle $F$ with a rigid Hermitian 
metric $h$ and any BRT chart $D_j=U_j\times ]-\varepsilon, \varepsilon[$, $(F, h)$ can descend to $U_j$ as a holomorphic vector bundle with the inherited metric, and ii) for a holomorphic vector bundle with a Hermitian metric, the Chern connection
is canonically defined.   Combining i) with ii) and using the $S^1$ invariant local frames (cf. proof of 
Theorem~\ref{t-gue150514fa}), one can construct a canonical connection $\nabla^F_j$ on $U_j$.  Then 
by using \eqref{e-can}, \eqref{e-gue150524dIe1} and the canonical property of the 
Chern connection, one sees that these $\nabla^F_j$ patch up to form a global 
connection $\nabla^F$ on $X$, satisfying the property as stated in the proposition.  
\end{proof}

\section{A Hodge theory for $\Box^{(q)}_{b,m}$}

\label{s-gue150517}

A Hodge decomposition theorem for $\overline\partial_b$ on
pseudoconvex CR manifolds has been well developed.  See \cite[Section 9.4]{CS01}
for a nice presentation in some respects; see also \cite{Ta}. Our goal of this section is to
develop an analogous theory for $\Box^{(q)}_{b,m}$ on CR manifolds with transversal CR locally free $S^1$ action (irrespective of pseudoconvexity).  
Much of what follows appears to parallel the corresponding part of
Hodge theory in complex geometry.  

Besides the relevance to 
the index theorem on CR manifolds, the present theory has an application to our proof of 
homotopy invariance of index (Theorem~\ref{t-gue150530I}).  

As before, $X$ is a compact CR manifold with a transversal CR locally
free $S^1$ action.   Let $
\overline{\partial}^{*}_b:\Omega^{0,q+1}(X,E)\rightarrow\Omega^{0,q}(X,E)
$ ($q=0,1,2,\ldots,n$) 
be the formal adjoint of $\overline\partial_b$ with respect to $%
(\,\cdot\,|\,\cdot\,)_E$.   Put
$
\Box^{(q)}_b:=\overline\partial_b\overline{\partial}^{*}_b+\overline{\partial%
}^{*}_b\overline\partial_b:\Omega^{0,q}(X,E)\rightarrow\Omega^{0,q}(X,E)$.
$T$ is the vector field on $X$ induced by the $S^1$ action, $T\overline\partial_b=
\overline\partial_b T$ and $\overline\partial_{b,m}:=\overline\partial_b|_{\Omega_{m}^{0,q}}
:\Omega^{0,q}_m(X,E)\rightarrow
\Omega^{0,q+1}_m(X,E)$ on eigenspaces of the $S^1$ acton $(\forall m\in\mathbb{Z})$.  

Recall $\langle\,\cdot\,|\,\cdot\,\rangle_E$ is
rigid.  One sees $T\overline\partial^{*}_b=\overline\partial^{*}_bT$ so that 
$\overline\partial^{*}_b|_{\Omega_m^{0,q+1}}:\Omega^{0,q+1}_m(X,E)\rightarrow
\Omega^{0,q}_m(X,E)$
is the same as the formal adjoint $\overline\partial^*_{b,m}$
of $\overline\partial_{b,m}$.   Form $\Box^{(q)}_{b,m}=
\overline\partial_{b,m}\overline\partial^*_{b,m}+\overline\partial^*_{b,m}\overline\partial_{b,m}:\Omega^{0,q}_m(X,E)\rightarrow\Omega^{0,q}_m(X,E)$. 
We have $\Box^{(q)}_{b,m}=\Box^{(q)}_b|_{\Omega^{0,q}_m(X,E)}$.  



On a general compact CR manifold, there is a fundamental result that follows from Kohn's $L^2$ estimates.
(See~\cite[Theorem 8.4.2]%
{CS01}).   Adapting it to our present situation, we can state the result as follows. 

\begin{thm}
\label{t-gue150517} For every $s\in\mathbb{N}_0$, there is a constant $C_s>0$
such that
\begin{equation*}  \label{e-gue150517IV}
\left\Vert u\right\Vert_{s+1}\leq C_s\Bigr(\left\Vert
\Box^{(q)}_bu\right\Vert_s+\left\Vert Tu\right\Vert_s+\left\Vert
u\right\Vert_s\Bigr),\ \ \forall u\in\Omega^{0,q}(X,E),
\end{equation*}
where $\left\Vert \cdot\right\Vert_s$ denotes the usual Sobolev norm of
order $s$ on $X$.
\end{thm}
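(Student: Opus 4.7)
The plan is to derive this estimate as a standard Kohn-type subelliptic estimate, following the treatment in \cite[Theorem 8.4.2]{CS01}, specialized to the $S^1$-invariant transversal setting. Since the statement is local up to a partition of unity, I would work in a BRT chart $(D, (z, \theta), \varphi)$ (Theorem~\ref{t-gue150514}) and then patch. On $D$ the vector fields $Z_j = \partial_{z_j} - i\varphi_{z_j}\partial_\theta$, $\overline Z_j$, and $T = \partial_\theta$ together form a frame of $\mathbb{C} TD$, so the Sobolev $(s+1)$-norm of a section $u$ is controlled by the $s$-norms of $Z_j u$, $\overline Z_j u$, $Tu$, and $u$ itself.

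The basic case $s=0$ comes from the integration by parts identity
\begin{equation*}
(\Box_b^{(q)} u, u)_E = \|\overline\partial_b u\|_E^2 + \|\overline\partial_b^* u\|_E^2,
\end{equation*}
combined with the expansions of $\overline\partial_b$ and $\overline\partial_b^*$ in the BRT frame coming from \eqref{e-gue150514f}. Expressing these norms in the frame $Z_j, \overline Z_j$ yields $L^2$-control of the complex-tangential first derivatives $Z_j u$ and $\overline Z_j u$ in terms of $\|\Box_b^{(q)} u\|\cdot\|u\|$ plus lower-order errors; adding the explicit bound on $Tu$ then produces $\|u\|_1 \leq C(\|\Box_b^{(q)} u\| + \|Tu\| + \|u\|)$.

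For higher $s$, I would proceed by induction, applying tangential derivatives $D^\alpha$ of order $s$ and commuting them past $\Box_b^{(q)}$. The crucial structural relation here is the commutator $[Z_j, \overline Z_k] = -2i\,\varphi_{z_j \overline z_k}T$, so each reordering of a $Z$ and $\overline Z$ introduces exactly one $T$-derivative. The main technical obstacle will be the careful bookkeeping of these commutator terms to ensure that each is absorbed either by $\|\Box_b^{(q)} u\|_s$ (after re-expanding), by $\|Tu\|_s$, or by $\|u\|_s$. Because no pseudoconvexity hypothesis is assumed, there is no internal absorption of the $T$-derivative via a Kohn-type $\tfrac12$-gain, and this is precisely why the term $\|Tu\|_s$ must appear explicitly on the right-hand side, as opposed to the strongly pseudoconvex case where it would not be needed.
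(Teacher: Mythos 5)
The paper does not prove this theorem; it states it as an adaptation of \cite[Theorem~8.4.2]{CS01}, and your proposal cites the same source, so at the level of method you are following the paper. But your sketch of the base case $s=0$ has a genuine gap. You claim that from the energy identity $(\Box^{(q)}_b u,u)_E=\|\overline\partial_b u\|_E^2+\|\overline\partial^*_b u\|_E^2$, expanding $\overline\partial_b$ and $\overline\partial^*_b$ in the BRT frame ``yields $L^2$-control of the complex-tangential first derivatives $Z_j u$ and $\overline Z_j u$,'' and that the $\|Tu\|$ term is then simply appended afterwards. This is not so. Writing $u=\sum_J u_J\,d\overline z^J$, the quadratic form controls only a proper subset of the first-order tangential derivatives of each coefficient $u_J$ (roughly the $\overline Z_j u_J$ with $j\notin J$ and the $Z_j u_J$ with $j\in J$); the remaining derivatives are obtained only by integrating by parts to ``flip'' a $\overline Z_j$ into a $Z_j$, and that flipping already produces the commutator $[Z_j,\overline Z_k]$, hence a $T$-derivative weighted by the Levi form. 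In other words the $\|Tu\|$ term on the right-hand side is forced already at $s=0$ by this flipping step; it is not an add-on, and the commutator you introduce for the induction is needed from the very beginning. This is exactly the mechanism by which $\Box_b$ fails to be elliptic, so glossing over it skips the core of the estimate.

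Two smaller points. The sign in the commutator is wrong: with the BRT conventions $Z_j=\partial_{z_j}-i\varphi_{z_j}\partial_\theta$ and $T=\partial_\theta$, one computes $[Z_j,\overline Z_k]=2i\,\varphi_{z_j\overline z_k}\,T$, not $-2i\,\varphi_{z_j\overline z_k}\,T$. And your closing remark that the $\|Tu\|_s$ term ``would not be needed'' in the strongly pseudoconvex case is imprecise: strong pseudoconvexity buys a $\tfrac12$-gain and absorption of $T$ only for degrees $1\le q\le n-1$; for $q=0$ and $q=n$ the $T$ term is still needed, which is precisely why $\Box^{(0)}_b$ is not hypoelliptic there. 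Neither of these sinks the argument, but they should be corrected.
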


Theorem~\ref{t-gue150517} restricted to $\Omega^{0,q}_m(X,E)$ yields 

\begin{cor}
\label{t-gue150517I} Fix $m\in\mathbb{Z}$. For every $s\in\mathbb{N}_0$,
there is a constant $C_s>0$ such that
\begin{equation*}  \label{e-gue150517V}
\left\Vert u\right\Vert_{s+1}\leq C_s\Bigr(\left\Vert
\Box^{(q)}_{b,m}u\right\Vert_s+\left\Vert u\right\Vert_s\Bigr),\ \ \forall
u\in\Omega^{0,q}_m(X,E).
\end{equation*}
\end{cor}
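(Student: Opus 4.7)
The proof will be an immediate application of Theorem~\ref{t-gue150517}. The plan is as follows. Fix $m\in\mathbb{Z}$ and let $u\in\Omega^{0,q}_m(X,E)$. Since $\Omega^{0,q}_m(X,E)\subset\Omega^{0,q}(X,E)$, the bound from Theorem~\ref{t-gue150517} applies, giving
\begin{equation*}
\|u\|_{s+1}\leq C_s\bigl(\|\Box^{(q)}_b u\|_s + \|Tu\|_s + \|u\|_s\bigr).
\end{equation*}

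First I would use the defining property of $\Omega^{0,q}_m(X,E)$, namely $Tu=-imu$, which gives $\|Tu\|_s = |m|\,\|u\|_s$. Next, because $\Box^{(q)}_{b,m}$ is by construction the restriction of $\Box^{(q)}_b$ to the $m$-th Fourier component (equivalently, $\overline{\partial}_{b,m}$ and $\overline{\partial}_{b,m}^{*}$ are restrictions of $\overline{\partial}_b$ and $\overline{\partial}_b^{*}$ by the rigidity of $\langle\,\cdot\,|\,\cdot\,\rangle_E$, which yields $T\overline{\partial}_b^{*}=\overline{\partial}_b^{*}T$), we have $\Box^{(q)}_b u = \Box^{(q)}_{b,m} u$ for $u\in\Omega^{0,q}_m(X,E)$.

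Substituting these two identities into the inequality above yields
\begin{equation*}
\|u\|_{s+1}\leq C_s\bigl(\|\Box^{(q)}_{b,m}u\|_s + (|m|+1)\|u\|_s\bigr),
\end{equation*}
and since $m$ is fixed, the factor $|m|+1$ can be absorbed into a new constant $\widetilde C_s := C_s(|m|+1)$ (allowed to depend on $m$), completing the proof.

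There is no real obstacle here; the corollary is a direct specialization. The only conceptual point worth noting is that the estimate from Theorem~\ref{t-gue150517} is non-elliptic because $\Box^{(q)}_b$ is only subelliptic (gaining $1/2$ a derivative in the transverse direction normally), but restricting to the fixed Fourier component $\Omega^{0,q}_m$ effectively trades the $\|Tu\|_s$ term for a zeroth-order term, producing a genuinely elliptic-type estimate on each fixed Fourier slice. This is the structural reason why $\Box^{(q)}_{b,m}$ behaves like an elliptic operator on $L^2_m(X,T^{*0,q}X\otimes E)$ even when $\Box^{(q)}_b$ itself is not hypoelliptic globally, and it underlies the Hodge theory developed in the remainder of Section~\ref{s-gue150517}.
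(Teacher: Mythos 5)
Your proof is correct and matches the paper's approach exactly: the paper states Corollary~\ref{t-gue150517I} as an immediate restriction of Theorem~\ref{t-gue150517} to $\Omega^{0,q}_m(X,E)$, using $Tu=-imu$ and $\Box^{(q)}_{b,m}=\Box^{(q)}_b|_{\Omega^{0,q}_m}$ to eliminate the $\|Tu\|_s$ term. Your additional commentary on the structural reason why the restriction becomes elliptic-type is accurate and consistent with the paper's later observation that $\Box^{(q)}_b - T^2$ is elliptic and acts as $\Box^{(q)}_{b,m}+m^2$ on the $m$-th component.
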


This suggests that a good regularity theory might exist on our $X$.  
Observe that $\Box^{(q)}_{b}-T^2$  is {\it elliptic} on $X$ while $\Box^{(q)}_{b}$ is not, 
which on $\Omega_m^{0,q}(X, E)$ is 
$\Box^{(q)}_{b,m}+m^2$.    In fact, without 
using the above theorems all of the following results are essentially proven 
by standard results in elliptic theory.  

Write ${\rm Dom\,}\Box^{(q)}_{b,m}:=\{u\in L^2_m(X,T^{*0,q}X\otimes E);\, \Box^{(q)}_{b,m}u\in L^2_m(X,T^{*0,q}X\otimes E)\}$ where $\Box^{(q)}_{b,m}u$ is defined in the sense of distribution.
$\Box^{(q)}_{b,m}$ is extended by
\begin{equation}  \label{e-gue150517III}
\Box^{(q)}_{b,m}:\mathrm{Dom\,}\Box^{(q)}_{b,m}\,(\subset
L^2_m(X,T^{*0,q}X\otimes E))\rightarrow L^2_m(X,T^{*0,q}X\otimes E).
\end{equation}

\begin{lem}
\label{l-gue150517} We have $\mathrm{Dom\,}%
\Box^{(q)}_{b,m}=L^2_m(X,T^{*0,q}X\otimes E)\bigcap H^2(X,T^{*0,q}X\otimes E)
$.
\end{lem}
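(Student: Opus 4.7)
The plan is to reduce this to standard elliptic regularity by exploiting the fact that, although $\Box^{(q)}_b$ itself is not elliptic, the operator $\Box^{(q)}_b-T^2$ \emph{is} an elliptic differential operator of order $2$ on $T^{*0,q}X\otimes E$. Indeed, by the transversality hypothesis $\mathbb{C}T=\mathbb{C}T\oplus T^{1,0}X\oplus T^{0,1}X$ and the fact that $\Box^{(q)}_b$ has principal symbol vanishing exactly in the direction conormal to $T$ (the usual characteristic direction for the Kohn Laplacian), adding $-T^2$ fills in the missing characteristic direction, producing an elliptic second-order operator $P:=\Box^{(q)}_b-T^2$. This is the key analytic fact; everything below is a formal consequence.

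First I would record the distributional characterization of $L^2_m$: a form $u\in L^2(X,T^{*0,q}X\otimes E)$ lies in $L^2_m(X,T^{*0,q}X\otimes E)$ if and only if $Tu=-imu$ in $\mathscr{D}'(X,T^{*0,q}X\otimes E)$. One direction is immediate by differentiating $(e^{-i\theta})^*u=e^{-im\theta}u$; the other follows by decomposing $u=\sum_\ell u_\ell$ into its $S^1$ Fourier components (see \eqref{e-gue150510f}) and observing that $Tu=-imu$ forces $u_\ell=0$ for $\ell\ne m$. In particular, for such $u$ one has $T^2u=-m^2u$ in the sense of distributions.

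Next, for the inclusion $\mathrm{Dom}\,\Box^{(q)}_{b,m}\subset L^2_m\cap H^2$, take $u\in\mathrm{Dom}\,\Box^{(q)}_{b,m}$. By definition $u\in L^2_m$ and $\Box^{(q)}_{b,m}u\in L^2_m$, where $\Box^{(q)}_{b,m}u$ is computed in $\mathscr{D}'$. Using $T^2u=-m^2u$ from the previous paragraph,
\begin{equation*}
Pu=\Box^{(q)}_bu-T^2u=\Box^{(q)}_{b,m}u+m^2u\in L^2(X,T^{*0,q}X\otimes E).
\end{equation*}
Since $P$ is elliptic of order $2$ on the closed manifold $X$, interior elliptic regularity (which here is global) yields $u\in H^2(X,T^{*0,q}X\otimes E)$. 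For the reverse inclusion, if $u\in L^2_m\cap H^2$, then $\Box^{(q)}_bu\in L^2$ (it is a second-order differential operator with smooth coefficients), and since $T\Box^{(q)}_b=\Box^{(q)}_bT$ together with the distributional identity $Tu=-imu$ gives $T(\Box^{(q)}_bu)=-im\Box^{(q)}_bu$, the distributional $S^1$ Fourier characterization of $L^2_m$ applied to $\Box^{(q)}_bu$ shows $\Box^{(q)}_bu\in L^2_m$, so $u\in\mathrm{Dom}\,\Box^{(q)}_{b,m}$.

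The only nontrivial point is the ellipticity of $P=\Box^{(q)}_b-T^2$, and this is where care is needed: one must verify that the principal symbol of $-T^2$ is nonnegative and is nondegenerate precisely on the line where $\sigma_2(\Box^{(q)}_b)$ degenerates (i.e.\ the conormal line to $T^{1,0}X\oplus T^{0,1}X$). This is the main, though entirely standard, obstacle. Granted this, everything else is bookkeeping with the distributional Fourier decomposition.
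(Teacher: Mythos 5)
Your proof is correct and takes essentially the same route as the paper: use that $\Box^{(q)}_b - T^2$ is elliptic and that $T^2$ acts as $-m^2$ on the $m$-th Fourier component, then invoke global elliptic regularity on the compact manifold $X$. The paper simply cites the ellipticity of $\Box^{(q)}_b - T^2$ (stated just above the lemma) and dismisses the reverse inclusion as clear, whereas you spell out the distributional characterization of $L^2_m$ and check the reverse inclusion explicitly — same argument, more detail.
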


\begin{proof}
For the inclusion put $%
v=\Box^{(q)}_{b,m}u\in L^2_m(X,T^{*0,q}X\otimes E)$.   Then $%
(\Box^{(q)}_{b,m}-T^2)u=v+m^2u\in L^2_m(X,T^{*0,q}X\otimes E)$. Since $%
(\Box^{(q)}_{b}-T^2)$ is elliptic, we conclude $u\in
H^2(X,T^{*0,q}X\otimes E)$.   The reverse inclusion is clear.  
\end{proof}

\begin{lem}
\label{t-gue150517a} $\Box^{(q)}_{b,m}:\mathrm{Dom\,}\Box^{(q)}_{b,m}\,(\subset
L^2_m(X,T^{*0,q}X\otimes E))\rightarrow L^2_m(X,T^{*0,q}X\otimes E)$ is
self-adjoint.
\end{lem}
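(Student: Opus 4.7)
The plan is to realize $\Box^{(q)}_{b,m}$ as the restriction to a closed invariant subspace of a genuinely elliptic, formally self-adjoint operator on all of $X$, and then invoke standard elliptic theory. Concretely, I would introduce $\widetilde P:=\Box^{(q)}_b - T^2$ acting on $\Omega^{0,q}(X,E)$. Since $\Box^{(q)}_b\ge 0$ is formally self-adjoint and, thanks to rigidity of $\langle\,\cdot\,|\,\cdot\,\rangle$ and $\langle\,\cdot\,|\,\cdot\,\rangle_E$, $T$ is skew-symmetric (so $-T^2=T^*T\ge 0$), the operator $\widetilde P$ is non-negative and formally self-adjoint. Its principal symbol is $\abs{\xi}^2$ on the horizontal part plus $\langle T,\xi\rangle^2$ along $T$, which is exactly the full symbol of a Laplacian; hence $\widetilde P$ is elliptic of order two.

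Next I would invoke the standard fact that any formally self-adjoint elliptic differential operator on a compact manifold is essentially self-adjoint, with the unique self-adjoint extension having domain $H^2(X,T^{*0,q}X\otimes E)$. This gives a self-adjoint operator, still denoted $\widetilde P$, on $L^2(X,T^{*0,q}X\otimes E)$. Because $T\overline\partial_b=\overline\partial_b T$ and the metrics are rigid, $T$ commutes with $\Box^{(q)}_b$ and hence with $\widetilde P$ (on smooth sections, and therefore on $H^2$ by density). In particular, the spectral projectors $Q_m:L^2\to L^2_m$ of the self-adjoint operator $iT/1$ commute with $\widetilde P$, so $L^2_m$ is a closed $\widetilde P$-invariant subspace, and the restriction $\widetilde P_m:=\widetilde P|_{L^2_m\cap H^2}$ is a self-adjoint operator on $L^2_m$.

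On $L^2_m$ one has $Tu=-imu$, so $-T^2 u=m^2 u$; hence on $L^2_m$
\begin{equation*}
\Box^{(q)}_{b,m}\,=\,\widetilde P_m - m^2\cdot\mathrm{id},
\end{equation*}
which is self-adjoint as a bounded perturbation of a self-adjoint operator. It remains to identify the domains. By Lemma~\ref{l-gue150517} we already know $\mathrm{Dom\,}\Box^{(q)}_{b,m}=L^2_m\cap H^2$, which is exactly $\mathrm{Dom\,}\widetilde P_m$; so the two operators coincide as unbounded operators, giving self-adjointness of $\Box^{(q)}_{b,m}$.

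The one step I expect to require care is the commutation of $T$ with the closed extension of $\widetilde P$ on the full $H^2$-domain (as opposed to on smooth sections), i.e.\ the fact that $L^2_m$ is genuinely invariant under the self-adjoint extension. This is handled by approximating $u\in L^2_m\cap H^2$ by smooth sections $u_j\in\Omega^{0,q}(X,E)$ in $H^2$-norm and then averaging: replacing $u_j$ by $Q_m u_j=\frac{1}{2\pi}\int_{-\pi}^{\pi}e^{im\theta}(e^{-i\theta})^*u_j\,d\theta$ (which lies in $\Omega^{0,q}_m(X,E)$ by \eqref{e-gue150510f} and commutes with $\widetilde P$ by the rigidity-induced commutation with $T$), one obtains a sequence in $\Omega^{0,q}_m(X,E)$ converging to $u$ in the graph norm of $\widetilde P$. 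This genuinely uses the rigidity of both Hermitian metrics, which ensures that $(e^{-i\theta})^*$ is a unitary isometry intertwining $\widetilde P$ with itself.
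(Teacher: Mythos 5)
Your proof is correct. It takes a route that is genuinely different from the paper's own (very brief) argument for this particular lemma. The paper simply invokes the fact that the maximal (distributional-domain) extension of $\Box^{(q)}_b$ on the full $L^2(X,T^{*0,q}X\otimes E)$ is already self-adjoint --- a Gaffney-type result for the Kohn Laplacian on a compact boundaryless manifold --- and then observes that restricting a self-adjoint operator to the invariant subspace $L^2_m$ preserves self-adjointness. You instead sidestep any appeal to the self-adjointness of the non-elliptic $\Box^{(q)}_b$ by passing to the elliptic helper $\widetilde P=\Box^{(q)}_b-T^2$, getting essential self-adjointness with domain $H^2$ from standard elliptic theory, restricting to $L^2_m\cap H^2$ via the commuting projections $Q_m$, and then peeling off the scalar $m^2$. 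This is precisely the "elliptic theory" route the paper advertises as an alternative in the remark preceding Lemma~\ref{l-gue150517} (and actually uses in Lemma~\ref{l-gue150517} and Proposition~\ref{t-gue150517aI}), so your approach is arguably more consistent with the surrounding text and more self-contained, while the paper's stated proof is shorter but leans on a prior fact about $\Box^{(q)}_b$ whose domain identification is itself a bit delicate for a non-elliptic operator. One small typo: "$iT/1$" should presumably read $\frac{1}{i}T$ (or $-iT$); the skew-symmetry of $T$ under the rigid metric is what makes this self-adjoint and $Q_m$ its spectral projections, exactly as you indicate.
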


\begin{proof}  Since the similar extension of 
$\Box^{(q)}_{b}$ on $L^2(X,T^{*0,q}X\otimes E)$ is self-adjoint 
and its restriction to (an invariant subspace) $L^2_m(X,T^{*0,q}X\otimes E)$ gives 
$\Box^{(q)}_{b,m}$, $\Box^{(q)}_{b,m}$ is also self-adjoint. 
\end{proof}


Let $\mathrm{Spec\,}\Box^{(q)}_{b,m}\subset[0,\infty[$ denote the spectrum
of $\Box^{(q)}_{b,m}$ (Davies~\cite{Dav95}). 

\begin{prop}
\label{t-gue150517aI} $\mathrm{Spec\,}\Box^{(q)}_{b,m}$ is a discrete subset
of $[0,\infty[$.   For any $\nu\in\mathrm{Spec\,}\Box^{(q)}_{b,m}$, $\nu$ is an
eigenvalue of $\Box^{(q)}_{b,m}$ and the eigenspace
\begin{equation*}
\mathcal{E}^q_{m,\nu}(X,E):=\big\{u\in\mathrm{Dom\,}\Box^{(q)}_{b,m};\,
\Box^{(q)}_{b,m}u=\nu u\big\}
\end{equation*}
is finite dimensional with $\mathcal{E}^q_{m,\nu}(X,E)\subset%
\Omega^{0,q}_m(X,E)$.
\end{prop}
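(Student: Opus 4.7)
The plan is to reduce the spectral problem for $\Box^{(q)}_{b,m}$ to the spectral theorem for compact self-adjoint operators, exploiting that on $L^2_m(X,T^{*0,q}X\otimes E)$ the action of $T$ is just multiplication by $-im$, so that $\Box^{(q)}_{b,m}$ is a perturbation (by the scalar $m^2$) of the honest elliptic operator $\Box^{(q)}_b - T^2$. First I would form the resolvent: by Lemma~\ref{t-gue150517a}, $\Box^{(q)}_{b,m}$ is self-adjoint, and manifestly $\Box^{(q)}_{b,m}\geq 0$, so $\Box^{(q)}_{b,m}+I$ has bounded inverse
\[
R:=(\Box^{(q)}_{b,m}+I)^{-1}\colon L^2_m(X,T^{*0,q}X\otimes E)\longrightarrow \mathrm{Dom\,}\Box^{(q)}_{b,m}.
\]

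Second, I would show that $R$ is compact on $L^2_m(X,T^{*0,q}X\otimes E)$. By Lemma~\ref{l-gue150517} the domain embeds into $H^2(X,T^{*0,q}X\otimes E)$, and Corollary~\ref{t-gue150517I} applied with $s=0$ to $u=Rf$ yields the quantitative bound $\|Rf\|_1\lesssim \|f\|_0$; iterating the same estimate with $s=1$ upgrades this to $\|Rf\|_2\lesssim \|f\|_0$. Since $X$ is compact, the Rellich--Kondrachov theorem makes $H^2\hookrightarrow L^2$ compact, so $R$ is a compact self-adjoint operator on the Hilbert space $L^2_m(X,T^{*0,q}X\otimes E)$. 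The spectral theorem then gives $\mathrm{Spec\,}R=\{\mu_j\}\cup\{0\}$ with $\mu_j\in(0,1]$, $\mu_j\to 0$, each eigenvalue of finite multiplicity. Translating back through $\nu=\mu^{-1}-1$ shows that $\mathrm{Spec\,}\Box^{(q)}_{b,m}$ is a discrete subset of $[0,\infty[$, that every $\nu\in\mathrm{Spec\,}\Box^{(q)}_{b,m}$ is an eigenvalue, and that $\mathcal{E}^q_{m,\nu}(X,E)$ is finite dimensional.

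Finally, for the smoothness assertion $\mathcal{E}^q_{m,\nu}(X,E)\subset \Omega^{0,q}_m(X,E)$, I would argue as follows. If $u\in \mathcal{E}^q_{m,\nu}(X,E)\subset L^2_m$, then in the distribution sense $Tu=-imu$, hence
\[
(\Box^{(q)}_b-T^2)u=\Box^{(q)}_{b,m}u+m^2u=(\nu+m^2)u\in L^2.
\]
Since $\Box^{(q)}_b-T^2$ is a second-order elliptic operator with smooth coefficients, the standard elliptic bootstrap gives $u\in C^\infty(X,T^{*0,q}X\otimes E)$, so $u\in \Omega^{0,q}_m(X,E)$.

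The only real obstacle I anticipate is squeezing the $H^2$-bound (not merely the subelliptic $H^1$-gain available on a general compact CR manifold) out of the estimate in Corollary~\ref{t-gue150517I}; but this is precisely where the transversal $S^1$-action is decisive, because restricting to the Fourier component $L^2_m$ replaces the problematic $\|Tu\|_s$ term in Theorem~\ref{t-gue150517} by $|m|\,\|u\|_s$, effectively converting the subelliptic estimate into a fully elliptic one. Once this observation is in place the remainder of the argument is a textbook application of compact-operator spectral theory together with elliptic regularity for $\Box^{(q)}_b-T^2$.
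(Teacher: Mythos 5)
Your proposal is correct and rests on the same key observation as the paper's (very terse) proof: on $L^2_m$ the vector field $T$ acts as $-im$, so $\Box^{(q)}_{b,m}$ coincides there with the restriction of the genuinely elliptic operator $\Box^{(q)}_b-T^2+m^2$, and the whole statement reduces to standard elliptic spectral theory; you just spell out the compact-resolvent argument that the paper leaves implicit. One small slip: the claimed iteration of Corollary~\ref{t-gue150517I} at $s=1$ gives $\|Rf\|_2\lesssim\|f\|_1$, not $\|f\|_0$ (since $\|\Box^{(q)}_{b,m}Rf\|_1=\|f-Rf\|_1$ still carries $\|f\|_1$); but this is harmless, as the $s=0$ estimate already yields $\|Rf\|_1\lesssim\|f\|_0$ and compactness of $H^1\hookrightarrow L^2$ on the compact $X$ suffices — or one can instead apply the elliptic estimate for $\Box^{(q)}_b-T^2$ directly to $Rf$ to recover the $H^2$ bound cleanly.
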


\begin{proof}
 $\Box^{(q)}_{b}-T^2\equiv \Delta$ is a second order elliptic operator. 
By standard elliptic theory, $\Delta$ and hence $\Delta+m^2$, satisfy the statement of the proposition on 
the (invariant) subspace $L^2_m(X,T^{*0,q}X\otimes E)\supset \Omega_m^{0,q}(X, E)$.  
On it $T^2$ acts as 
$-m^2$, the proposition follows for 
$\Box^{(q)}_{b,m}$ which is $\Delta+m^2$ on $L^2_m(X,T^{*0,q}X\otimes E)$.  
\end{proof}

A role analogous to the Green's operator in the ordinary Hodge
theory is given as follows.  Let
\begin{equation*}
N^{(q)}_m:L^2_m(X,T^{*0,q}X\otimes E)\rightarrow\mathrm{Dom\,}%
\Box^{(q)}_{b,m}
\end{equation*}
be the partial inverse of $\Box^{(q)}_{b,m}$ and let
\begin{equation*}
\Pi^{(q)}_m:L^2_m(X,T^{*0,q}X\otimes E)\rightarrow\mathrm{Ker\,}%
\Box^{(q)}_{b,m}
\end{equation*}
be the orthogonal projection. We have
\begin{equation}  \label{e-gue150520III}
\begin{split}
&\Box^{(q)}_{b,m}N^{(q)}_m+\Pi^{(q)}_m=I\ \ \mbox{on
$L^2_m(X,T^{*0,q}X\otimes E)$}, \\
&N^{(q)}_m\Box^{(q)}_{b,m}+\Pi^{(q)}_m=I\ \ \mbox{on ${\rm
Dom\,}\Box^{(q)}_{b,m}$}.
\end{split}%
\end{equation}

\begin{lem}
\label{l-gue150524} We have $N^{(q)}_m:\Omega^{0,q}_m(X,E)\rightarrow%
\Omega^{0,q}_m(X,E)$.
\end{lem}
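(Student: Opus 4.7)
The plan is to reduce the statement to elliptic regularity for the genuinely elliptic operator $\Box^{(q)}_b-T^2$, which on the Fourier component $\Omega^{0,q}_m(X,E)$ agrees with $\Box^{(q)}_{b,m}+m^2$.

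First, fix $u\in\Omega^{0,q}_m(X,E)$ and set $v:=N^{(q)}_mu$. By construction $N^{(q)}_m$ takes values in $\mathrm{Dom\,}\Box^{(q)}_{b,m}\subset L^2_m(X,T^{*0,q}X\otimes E)$, so Lemma~\ref{l-gue150517} gives $v\in L^2_m(X,T^{*0,q}X\otimes E)\cap H^2(X,T^{*0,q}X\otimes E)$; in particular $v$ is already an $L^2$-section lying in the $m$-th Fourier component (i.e.\ $Tv=-imv$ in the sense of distributions). From \eqref{e-gue150520III} I have
\begin{equation*}
\Box^{(q)}_{b,m}v\;=\;u-\Pi^{(q)}_mu.
\end{equation*}
By Proposition~\ref{t-gue150517aI} the kernel $\mathrm{Ker\,}\Box^{(q)}_{b,m}$ is a finite-dimensional subspace of $\Omega^{0,q}_m(X,E)$, so $\Pi^{(q)}_mu$ is smooth; hence the right-hand side lies in $\Omega^{0,q}_m(X,E)$.

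Next, on the invariant subspace $L^2_m$ the two operators are related by $\Box^{(q)}_b-T^2=\Box^{(q)}_{b,m}+m^2$, so
\begin{equation*}
(\Box^{(q)}_b-T^2)v\;=\;(u-\Pi^{(q)}_mu)+m^2v.
\end{equation*}
The term $u-\Pi^{(q)}_mu$ is smooth and $m^2v\in H^2$, so the right-hand side lies in $H^2$. Because $\Box^{(q)}_b-T^2$ is a second-order \emph{elliptic} differential operator on $X$, standard interior elliptic regularity yields $v\in H^4$. Plugging this back in and iterating (a routine bootstrap) gives $v\in H^s$ for every $s\in\mathbb{N}_0$, whence $v\in C^\infty(X,T^{*0,q}X\otimes E)$ by the Sobolev embedding theorem. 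Combined with $Tv=-imv$, this puts $v\in\Omega^{0,q}_m(X,E)$, as desired.

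There is no real obstacle here; the only point to check carefully is that the two identities $\Box^{(q)}_{b,m}v=u-\Pi^{(q)}_mu$ and $Tv=-imv$ hold in the distributional sense for the unbounded operator, so that the passage from $\Box^{(q)}_{b,m}$ (not elliptic) to $\Box^{(q)}_b-T^2$ (elliptic) is legitimate; both follow directly from the definitions of $N^{(q)}_m$, $\Pi^{(q)}_m$, and the closure defining $L^2_m$.
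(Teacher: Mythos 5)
Your argument is essentially the paper's own proof: both start from the identity $\Box^{(q)}_{b,m}v=(I-\Pi^{(q)}_m)u$ from \eqref{e-gue150520III}, rewrite it as $(\Box^{(q)}_b-T^2)v=(I-\Pi^{(q)}_m)u+m^2v$, use smoothness of $\Pi^{(q)}_m u$ from Proposition~\ref{t-gue150517aI}, and then invoke elliptic regularity for the genuinely elliptic operator $\Box^{(q)}_b-T^2$ to bootstrap $v$ to $C^\infty$. You simply spell out the bootstrap and the invocation of Lemma~\ref{l-gue150517} a bit more explicitly; the substance is the same.
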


\begin{proof} A slight variant of the standard argument 
applies as $\Box^{(q)}_{b}$ is almost elliptic.  
Let $u\in\Omega^{0,q}_m(X,E)$ and put $N^{(q)}_mu=v\in
L^2_m(X,T^{*0,q}X\otimes E)$.   By \eqref{e-gue150520III},
$(I-\Pi^{(q)}_m)u=\Box^{(q)}_{b,m}v$, giving
\begin{equation}  \label{e-gue150524}
(\Box^{(q)}_{b,m}-T^2)v=(I-\Pi^{(q)}_m)u+m^2v.
\end{equation}
By Proposition~\ref{t-gue150517aI}, $\mathrm{Ker\,}\Box^{(q)}_{b,m}$
consists of smooth sections, so $\Pi^{(q)}_mu$ is smooth and
\begin{equation}  \label{e-gue150524I}
(I-\Pi^{(q)}_m)u\in\Omega^{0,q}_m(X,E).
\end{equation}
By combining 
\eqref{e-gue150524} and \eqref{e-gue150524I} and noting 
$\Box^{(q)}_{b}-T^2$ is elliptic, the standard
technique in elliptic regularity applies to give $%
v\in\Omega^{0,q}_m(X,E)$.  
\end{proof}

The following is a version of ``harmonic realization" of
cohomology.

\begin{thm}
\label{t-gue150517II} For every $q\in\left\{0,1,2,\ldots,n\right\}$ and
every $m\in\mathbb{Z}$, we have
\begin{equation}  \label{e-gue150517VI}
\mathrm{Ker\,}\Box^{(q)}_{b,m}=\mathcal{E}^q_{m,0}(X,E)\cong H^q_{b,m}(X,E).
\end{equation}

As a consequence $\mathrm{dim\,}H^q_{b,m}(X, E)<\infty$ by Proposition~\ref{t-gue150517aI}.
\end{thm}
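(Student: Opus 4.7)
The plan is to establish the identification in two steps: first identify $\mathrm{Ker\,}\Box^{(q)}_{b,m}$ with the space of harmonic forms $\mathcal{E}^q_{m,0}(X,E)$ (which is tautological from the definition of $\mathcal{E}^q_{m,\nu}(X,E)$ in Proposition~\ref{t-gue150517aI}), and then construct a Hodge-type isomorphism between these harmonic forms and the Kohn-Rossi group $H^q_{b,m}(X,E)$ via the partial inverse $N^{(q)}_m$ and the orthogonal projection $\Pi^{(q)}_m$ from \eqref{e-gue150520III}.

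First, I would observe that elements of $\mathrm{Ker\,}\Box^{(q)}_{b,m}$ are smooth (hence lie in $\Omega^{0,q}_m(X,E)$) by Proposition~\ref{t-gue150517aI}, so that the standard integration-by-parts argument yields
\begin{equation*}
0=(\Box^{(q)}_{b,m}u\,|\,u)_E=\|\overline\partial_{b,m}u\|^2_E+\|\overline\partial^*_{b,m}u\|^2_E,
\end{equation*}
and therefore $\mathrm{Ker\,}\Box^{(q)}_{b,m}=\mathrm{Ker\,}\overline\partial_{b,m}\cap\mathrm{Ker\,}\overline\partial^*_{b,m}\subset\mathrm{Ker\,}\overline\partial_{b,m}$. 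This provides a canonical linear map $\Phi:\mathrm{Ker\,}\Box^{(q)}_{b,m}\to H^q_{b,m}(X,E)$ sending a harmonic form to its cohomology class.

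Next, I would prove $\Phi$ is surjective. For $u\in\Omega^{0,q}_m(X,E)$ with $\overline\partial_{b,m}u=0$, apply the Hodge decomposition \eqref{e-gue150520III} to write
\begin{equation*}
u=\Pi^{(q)}_m u+\overline\partial_{b,m}\overline\partial^*_{b,m}N^{(q)}_m u+\overline\partial^*_{b,m}\overline\partial_{b,m}N^{(q)}_m u.
\end{equation*}
The crucial observation is that $\overline\partial_{b,m}$ and $\Box^{(\bullet)}_{b,m}$ commute in the sense $\overline\partial_{b,m}\Box^{(q)}_{b,m}=\Box^{(q+1)}_{b,m}\overline\partial_{b,m}$, which propagates to $\overline\partial_{b,m}N^{(q)}_m=N^{(q+1)}_m\overline\partial_{b,m}$ (checking both sides on the decomposition $\mathrm{Ker\,}\Box\oplus(\mathrm{Ker\,}\Box)^\perp$); hence $\overline\partial_{b,m}N^{(q)}_m u=N^{(q+1)}_m\overline\partial_{b,m}u=0$, the last term vanishes, and $u-\Pi^{(q)}_m u=\overline\partial_{b,m}(\overline\partial^*_{b,m}N^{(q)}_m u)$ is exact. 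By Lemma~\ref{l-gue150524} the potential $\overline\partial^*_{b,m}N^{(q)}_m u$ lies in $\Omega^{0,q-1}_m(X,E)$, so $[\,u\,]=[\,\Pi^{(q)}_m u\,]=\Phi(\Pi^{(q)}_m u)$ in $H^q_{b,m}(X,E)$. Injectivity of $\Phi$ is immediate: if a harmonic $u$ equals $\overline\partial_{b,m}w$ for some $w\in\Omega^{0,q-1}_m(X,E)$, then $(u\,|\,u)_E=(u\,|\,\overline\partial_{b,m}w)_E=(\overline\partial^*_{b,m}u\,|\,w)_E=0$, so $u=0$.

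The main (minor) obstacle is verifying the intertwining $\overline\partial_{b,m}N^{(q)}_m=N^{(q+1)}_m\overline\partial_{b,m}$ rigorously on the $L^2$ domains: it amounts to checking that $\overline\partial_{b,m}$ preserves $\mathrm{Ker\,}\Box^{(q)}_{b,m}$ (obvious from the harmonic characterization above) and its orthogonal complement (which follows by duality using $\overline\partial^*_{b,m}$ and self-adjointness), after which Lemma~\ref{l-gue150524} ensures everything stays smooth. Once this is in place, the decomposition argument is entirely formal and the finite-dimensionality statement follows from Proposition~\ref{t-gue150517aI} applied to $\nu=0$.
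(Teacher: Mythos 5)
Your proof is correct and rests on the same Hodge-theoretic framework as the paper's: the identification $\mathrm{Ker\,}\Box^{(q)}_{b,m}=\mathcal{E}^q_{m,0}(X,E)$, the decomposition \eqref{e-gue150520III} with $N^{(q)}_m$ and $\Pi^{(q)}_m$, and Lemma~\ref{l-gue150524} to keep things smooth. The one genuine divergence is in how you kill the term $\overline{\partial}^*_{b,m}\overline\partial_{b,m}N^{(q)}_m u$. You invoke the intertwining $\overline\partial_{b,m}N^{(q)}_m=N^{(q+1)}_m\overline\partial_{b,m}$ (which you then justify by checking compatibility with the splitting $\mathrm{Ker\,}\Box\oplus(\mathrm{Ker\,}\Box)^\perp$ in both degrees), concluding that $\overline\partial_{b,m}N^{(q)}_m u$ itself vanishes. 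The paper instead stays entirely in degree $q$ and runs a direct inner-product computation: using the self-adjointness of $\overline{\partial}^*_b\overline\partial_b$, the identity $\overline{\partial}^*_b\overline\partial_b\Box^{(q)}_{b,m}=(\overline{\partial}^*_b\overline\partial_b)^2$ (which uses $\overline\partial_b^2=0$), and \eqref{e-gue150520III}, to show $\|\overline{\partial}^*_b\overline\partial_bN^{(q)}_m u\|^2_E=(\overline{\partial}^*_b\overline\partial_b u\,|\,N^{(q)}_m u)_E=0$. Your route proves the stronger vanishing $\overline\partial_{b,m}N^{(q)}_m u=0$ and makes the analogy with classical Hodge theory for the Dolbeault complex more transparent, but it asks you to set up and justify the degree-shifting intertwining and its $L^2$-domain compatibility; the paper's argument is more self-contained and avoids any cross-degree statement at the cost of being a bit less conceptual. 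Both are valid.
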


\begin{proof}  The argument is mostly standard (although $\Box^{(q)}_{b}$ 
is not elliptic).    Consider the map
\begin{equation}\label{e-gue150517IIe1}
\begin{split}
\tau^q_m:\mathrm{Ker\,}\overline\partial_{b,m}\bigcap\Omega^{0,q}_m(X,E)&%
\rightarrow\mathrm{Ker\,}\Box^{(q)}_{b,m}, \\
u&\rightarrow\Pi^{(q)}_mu.
\end{split}%
\end{equation}
Clearly $\tau^q_m$ is surjective.  
Put $M^q_m:=\left\{\overline%
\partial_{b,m}u;\, u\in\Omega^{0,q-1}_m(X,E)\right\}$.
The theorem follows if one shows 
\begin{equation}  \label{e-gue150524a}
\mathrm{Ker\,}\tau^q_m=M^q_m.
\end{equation}
It is easily seen $M^q_m\subset\mathrm{Ker\,}\tau^q_m$ since
$M^q_m\perp \mathrm{Ker}\,\Box^{(q)}_{b, m}$.  
For the reverse let $u\in\mathrm{Ker\,%
}\tau^q_m$ so $\Pi^{(q)}_mu=0$. From \eqref{e-gue150520III} we have
\begin{equation}  \label{e-gue150524aI}
\begin{split}
u&=\Box^{(q)}_{b,m}N^{(q)}_mu+\Pi^{(q)}_mu \\
&=(\overline\partial_b\,\overline{\partial}^*_b+\overline{\partial}%
^*_b\,\overline\partial_b)N^{(q)}_mu.
\end{split}%
\end{equation}
We claim that
\begin{equation}  \label{e-gue150524aII}
\overline{\partial}^*_b\,\overline\partial_bN^{(q)}_mu=0.
\end{equation}
One sees, by using $\overline\partial_b^2=0$ for the first equality below,
\begin{equation}  \label{e-gue150524aIII}
\begin{split}
(\,\overline{\partial}^*_b\,\overline\partial_bN^{(q)}_mu\,|\,\overline{%
\partial}^*_b\,\overline\partial_bN^{(q)}_mu\,)_E&=(\,\overline{\partial}%
^*_b\,\overline\partial_b\Box^{(q)}_mN^{(q)}_mu\,|\,N^{(q)}_mu\,)_E=(\,%
\overline{\partial}^*_b\,\overline\partial_b(I-\Pi^{(q)}_m)u\,|%
\,N^{(q)}_mu)_E \\
&=(\,\overline{\partial}^*_b\,\overline\partial_bu\,|\,N^{(q)}_mu)_E
\end{split}%
\end{equation}
which is zero because $u\in\mathrm{Ker}\,\overline\partial_{b,m}$ by 
\eqref{e-gue150517IIe1}, giving the claim \eqref{e-gue150524aII}.  
By \eqref{e-gue150524aI} and \eqref{e-gue150524aII}, 
\begin{equation}  \label{e-gue150524aIV}
u=\overline\partial_b\,\overline{\partial}^*_bN^{(q)}_mu,
\end{equation}
with $\overline{\partial}^*_bN^{(q)}_mu\in\Omega^{0,q-1}_m(X,E)$ by Lemma~\ref{l-gue150524}.
By \eqref{e-gue150524aIV}, $u\in M^q_m$, yielding the desired inclusion 
$\mathrm{Ker\,}\tau^q_m\subset M^q_m$.   Hence \eqref{e-gue150524a}.

\end{proof}

Let $D_{b,m}:=\overline\partial_b+\overline{\partial}^*_b:
\Omega^{0,+}_m(X,E)\rightarrow\Omega^{0,-}_m(X,E)$ 
with extension $D_{b,m}$
\begin{equation*}
D_{b,m}:\mathrm{Dom\,}D_{b,m}\,(\subset L^{2,+}_m(X,E))\rightarrow
L^{2,-}_m(X,E),
\end{equation*}
$\mathrm{Dom\,}D_{b,m}=\left\{u\in L^{2,+}_m(X,E);\, \mbox{distribution $D_{b,m}u\in L^{2,+}_m(X,E)$}\right\}$.
The Hilbert space adjoint of $D_{b,m}$ with respect to $(\,\cdot\,|\,
\cdot\,)_E$ is given by $D^*_{b,m}: \mathrm{Dom\,}D^*_{b,m}\,(\subset L^{2,-}_m(X,E))\rightarrow
L^{2,+}_m(X,E)$.  

Combining Proposition~\ref{t-gue150517aI} and Theorem~\ref{t-gue150517II}, 
one can verify (as in standard Hodge theory) 

\begin{thm}
\label{t-gue150524f} In the notation above 
\begin{equation}  \label{e-gue150524f}
\begin{split}
&\mathrm{Ker\,}D_{b,m}=\bigoplus_{\scriptstyle q\in\left\{0,1,\ldots,n\right\}\atop\scriptstyle\ q\,\,\rm{even}}\mathrm{Ker\,}\Box^{(q)}_{b,m}\,\,(\subset\Omega^{0,+}_m(X,E)), \\
&\mathrm{Ker\,}D^*_{b,m}=\bigoplus_{\scriptstyle q\in\left\{0,1,\ldots,n\right\}\atop\scriptstyle\ q\,\,\rm{odd}}\mathrm{Ker\,}\Box^{(q)}_{b,m}\,\,(\subset\Omega^{0,-}_m(X,E)).
\end{split}%
\end{equation}
Put $\mathrm{ind\,}D_{b,m}:=\mathrm{dim\,}\mathrm{Ker\,}D_{b,m}-\mathrm{dim\,%
}\mathrm{Ker\,}D^*_{b,m}$.  Hence, together with Theorem~\ref{t-gue150517II}, 
\begin{equation}  \label{e-gue150524fI}
\sum^n_{j=0}(-1)^j\mathrm{dim\,}H^j_{b,m}(X,E)=\mathrm{ind\,}D_{b,m}.
\end{equation}
\end{thm}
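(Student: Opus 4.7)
The plan is to establish the two direct-sum identities in \eqref{e-gue150524f} by a bidegree-decomposition argument for $D_{b,m}$ and $D^*_{b,m}$ that crucially uses $\overline\partial_b^2=0$, and then read off \eqref{e-gue150524fI} from Theorem~\ref{t-gue150517II}.

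First I would carry out the key smooth computation. Given $u\in\Omega^{0,+}_m(X,E)$, write $u=\sum_{q\text{ even}}u_q$ with $u_q\in\Omega^{0,q}_m(X,E)$; the image $D_{b,m}u=\sum_q\bigl(\overline\partial_b u_q+\overline\partial^*_b u_q\bigr)$, regrouped by bidegree in $\Omega^{0,-}_m(X,E)$, has its component in $\Omega^{0,r}_m(X,E)$ (for $r$ odd) equal to $\overline\partial_b u_{r-1}+\overline\partial^*_b u_{r+1}$. The cross-term cancellation
\begin{equation*}
\bigl(\overline\partial_b u_{r-1}\,\big|\,\overline\partial^*_b u_{r+1}\bigr)_E=\bigl(\overline\partial_b^2 u_{r-1}\,\big|\,u_{r+1}\bigr)_E=0
\end{equation*}
together with mutual orthogonality of distinct bidegrees yields
\begin{equation*}
\|D_{b,m}u\|_E^2=\sum_{q\text{ even}}\bigl(\|\overline\partial_b u_q\|_E^2+\|\overline\partial^*_b u_q\|_E^2\bigr)=\sum_{q\text{ even}}\bigl(\Box^{(q)}_{b,m}u_q\,\big|\,u_q\bigr)_E.
\end{equation*}
Hence a smooth $u$ lies in $\mathrm{Ker}\,D_{b,m}$ iff each $u_q$ is simultaneously annihilated by $\overline\partial_b$ and $\overline\partial^*_b$, iff $u_q\in\mathrm{Ker}\,\Box^{(q)}_{b,m}$ for every even $q$. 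A mirror computation on $\Omega^{0,-}_m(X,E)$ gives the corresponding characterization of $\mathrm{Ker}\,D^*_{b,m}$.

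Next I would lift this to the $L^2$-closures. The identity $D^*_{b,m}D_{b,m}u=\bigoplus_{q\text{ even}}\Box^{(q)}_{b,m}u_q$ holds on smooth $u$ as a direct consequence of the above, and by approximation for elements of the full domain. For $u\in\mathrm{Dom}\,D_{b,m}$ with $D_{b,m}u=0$, each component $u_q$ then satisfies $\Box^{(q)}_{b,m}u_q=0$ in the distribution sense; since $\Box^{(q)}_b-T^2$ is elliptic on $X$ (as $T\oplus T^{1,0}X\oplus T^{0,1}X=\mathbb{C}TX$) and coincides with $\Box^{(q)}_{b,m}+m^2$ on $L^2_m(X,T^{*0,q}X\otimes E)$, the regularity argument of Lemma~\ref{l-gue150517} and Proposition~\ref{t-gue150517aI} forces $u_q\in\Omega^{0,q}_m(X,E)\cap\mathrm{Ker}\,\Box^{(q)}_{b,m}$. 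The reverse containment is immediate, so \eqref{e-gue150524f} follows for $D_{b,m}$; the same reasoning applied to $D^*_{b,m}$ on $\Omega^{0,-}_m(X,E)$ gives the second line.

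With \eqref{e-gue150524f} established, \eqref{e-gue150524fI} is obtained by taking dimensions and invoking Theorem~\ref{t-gue150517II}, which identifies $\mathrm{dim}\,\mathrm{Ker}\,\Box^{(q)}_{b,m}=\mathrm{dim}\,H^q_{b,m}(X,E)$ for every $q$, so that
\begin{equation*}
\mathrm{ind}\,D_{b,m}=\sum_{q\text{ even}}\mathrm{dim}\,H^q_{b,m}(X,E)-\sum_{q\text{ odd}}\mathrm{dim}\,H^q_{b,m}(X,E)=\sum_{j=0}^{n}(-1)^j\mathrm{dim}\,H^j_{b,m}(X,E).
\end{equation*}
The only step I expect to require some care is the bidegree-by-bidegree regularity upgrade for $L^2$ elements of $\mathrm{Ker}\,D_{b,m}$, but this is essentially a repetition of the elliptic-regularity reasoning already used to build the Hodge theory for $\Box^{(q)}_{b,m}$ earlier in Section~\ref{s-gue150517} and should present no new obstacle.
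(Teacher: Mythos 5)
Your proposal is correct and is precisely the standard Hodge-theoretic verification that the paper invokes by the phrase "Combining Proposition~\ref{t-gue150517aI} and Theorem~\ref{t-gue150517II}, one can verify (as in standard Hodge theory)": the bidegree decomposition, the cross-term cancellation via $\overline\partial_b^2=0$, the regularity upgrade through ellipticity of $\Box^{(q)}_b-T^2$ on the $m$-th Fourier component, and then the identification of harmonic spaces with the cohomologies from Theorem~\ref{t-gue150517II} to obtain the index identity. Nothing substantive differs from the argument the paper leaves to the reader.
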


\section{Modified Kohn Laplacian ($\mathrm{Spin}^c$ Kohn Laplacian)}

\label{s-gue150524}

We are prepared by Theorem~\ref{t-gue150524f} above to see 
that to calculate $\sum^n_{j=0}(-1)^j\mathrm{dim\,}%
H^j_{b,m}(X,E)$ is the same as to calculate the index 
$\mathrm{ind\,}D_{b,m}$.  
To do so effectively we need to modify the Dirac type operator
$D_{b, m}$ hence the standard Kohn Laplacian because
the modified versions $\widetilde D_{b,m}$, 
$\widetilde\Box_{b,m}$ will have a manageable heat kernel that suits our purpose better
for the CR non-K\"ahler case (cf. Remark~\ref{r-gue151003}).
Lastly we shall give an argument for the homotopy invariance,
and obtain $\mathrm{ind\,}D_{b,m}=\mathrm{ind\,}\widetilde D_{b,m}$.  


The main idea here is borrowed from that of classical cases. 
But as the CR manifold $X$ is not assumed to be a (orbifold) circle bundle globally,
there could arise the problem of patching (from local constructions
to the global one).   Part of the technicality in the beginning of this section 
lies in a careful treatment in this regard.

We recall some basics of Clifford
connection and $\mathrm{Spin}^c$ Dirac operator. For more details 
we refer to Chapter 1 in~%
\cite{MM07} and~\cite{Du11}.

Let $B:=(D,(z,\theta),\varphi)$ be a BRT trivialization with $%
D=U\times]-\varepsilon,\varepsilon[$ where $\varepsilon>0$ and $U$ is an
open set of $\mathbb{C}^n$.   Using $\varphi$ in $B$, we let 
$%
\langle\,\cdot\,,\,\cdot\,\rangle$ be the Hermitian metric on $\mathbb{C }TU$
induced by that on $D$
\begin{equation}\label{e-gue150823e1}
\langle\,\frac{\partial}{\partial z_j}\,,\,\frac{\partial}{\partial z_k}%
\,\rangle=\langle\,\frac{\partial}{\partial z_j}-i\frac{\partial\varphi}{%
\partial z_j}(z)\frac{\partial}{\partial\theta}\,|\,\frac{\partial}{\partial
z_k}-i\frac{\partial\varphi}{\partial z_k}(z)\frac{\partial}{\partial\theta}%
\,\rangle,\ \ j,k=1,2,\ldots,n
\end{equation}
(cf. Theorem~\ref{t-gue150514}).  
By \eqref{e-can} and Theorem~\ref{t-gue150514}, 
the above metric is actually intrinsically defined.  

The $\langle\,\cdot\,,\,\cdot\,\rangle$ induces Hermitian metrics on $T^{*0,q}U$
still denoted by $\langle\,\cdot\,,\,\cdot\,\rangle$
and a Riemannian metric $g^{TU}$ on $TU$.

For any $v\in TU$ with decomposition $v=v^{(1,0)}+v^{(0,1)}\in
T^{1,0}U\oplus T^{0,1}U$, let $\overline v^{(1,0),*}\in T^{*0,1}U$ be the
metric dual of $v^{(1,0)}$ with respect to $\langle\,\cdot\,,\,\cdot\,\rangle
$. That is, 
$\overline v^{(1,0),*}(u)=\langle\,v^{(1,0)}\,,\,\overline u\,\rangle$ 
for all $u\in T^{0,1}U$.  

The Clifford action $v$ on $\Lambda(T^{*0,1}U):=%
\oplus^n_{q=0}T^{*0,q}U$ is defined by 
\begin{equation}  \label{e-gue150823I}
c(v)(\cdot)=\sqrt{2}(\overline v^{(1,0),*}\wedge(\cdot)-i_{v^{(0,1)}}(\cdot))
\end{equation}
(where $\wedge$ and $i$ denote the exterior and
interior product respectively).

Let $\left\{w_j\right\}^n_{j=1}$ be a local orthonormal frame of $T^{1,0}U$
with respect to $\langle\,\cdot\,,\,\cdot\,\rangle$ with dual frame $%
\left\{w^j\right\}^n_{j=1}$. Write 
\begin{equation}  \label{e-gue150825}
\mbox{$e_{2j-1}=\frac{1}{\sqrt{2}}(w_j+\ol w_j)$ and
$e_{2j}=\frac{i}{\sqrt{2}}(w_j-\ol w_j)$, $j=1,2,\ldots,n$},
\end{equation}
for an orthonormal frame of $TU$.
Let $\nabla^{TU}$ be the Levi-Civita connection on $TU$ (with respect to $%
g^{TU}$), and $\nabla^{\mathrm{det\,}}$ be the Chern connection on the determinant line bundle $\mathrm{%
det\,}(T^{1,0}U)$ (with $\langle\,\cdot\,,\,\cdot\,\rangle$),
with connection forms  $\Gamma^{TU}$ and $\Gamma^{\mathrm{det\,}}$
associated to the frames $\left\{e_j\right\}^{2n}_{j=1}$ and $%
w_1\wedge\cdots\wedge w_n$.
That is,
\begin{equation}  \label{e-gue150825I}
\begin{split}
&\nabla^{TU}_{e_j}e_{\ell}=\Gamma^{TU}(e_j)e_{\ell},\ \ j,\ell=1,2,\ldots,2n,
\\
&\nabla^{\mathrm{det\,}}(w_1\wedge\cdots\wedge w_n)=\Gamma^{\mathrm{det\,}%
}w_1\wedge\cdots\wedge w_n.
\end{split}%
\end{equation}

The \emph{Clifford connection} $\nabla^{\mathrm{Cl\,}}$ on $%
\Lambda(T^{*0,1}U)$ is defined for the frame
\begin{equation*}
\left\{\overline w^{j_1}\wedge\cdots\wedge\overline w^{j_q};\, 1\leq
j_1<\cdots<j_q\leq n\right\}
\end{equation*}
by the local formula
\begin{equation}  \label{e-gue150825II}
\nabla^{\mathrm{Cl\,}}=d+\frac{1}{4}\sum^{2n}_{j,\ell=1}\langle\,%
\Gamma^{TU}e_j,e_{\ell}\,\rangle c(e_j)c(e_\ell)+\frac{1}{2}\Gamma^{\mathrm{%
det\,}}.
\end{equation}

In general a Levi-Civita connection $\nabla$ cannot be compatible with the 
complex structure unless a certain extra condition is imposed such as 
K\"ahler condition on the metric.  Or else one takes the orthogonal projection 
$P_{T^{1,0}X}\nabla$ to produce a connection on $T^{1,0}X$.  
One key point above is that the Clifford connection $\nabla^{\mathrm{Cl\,}}$ 
(regardless of K\"ahler condition nor orthogonal projection) defines a
{\it Hermitian connection} (connection compatible with
the underlying Hermitian metric) on $\Lambda(T^{*0,1}U)$ (see Proposition 1.3.1 in~\cite{MM07}).

Let's be back to the CR case.   In the same notation
as before, $\Omega^{0,q}(U,E)$ denotes the space of $(0,q)$ forms on $U$ with values in $%
E$, $\Omega^{0,+}(U,E)$ the even part and $\Omega^{0,-}(U,E)$ the odd part
of $\Omega^{0,*}(U,E)$ etc.

Assume $X$ is equipped with a CR bundle $E$ which is rigid.  
Being rigid $E$ can descend as a holomorphic vector bundle over $%
U$.  We may assume that $E$ is (holomorphically) trivial on $U$ (possibly after shrinking $U$).
A rigid Hermitian (fiber) metric $\langle\,\cdot\,|\,\cdot\,\rangle_E$
descends to a Hermitian (fiber) metric $\langle\,\cdot\,|\,\cdot\,\rangle_E$ on $E$ over $U$. Let $\nabla^{E}$
be the Chern connection on $E$ associated with $\langle\,\cdot\,|\,\cdot\,%
\rangle_E$ (over $U$). 

We still denote by $\nabla^{\mathrm{Cl\,}}$ the connection on $%
\Lambda(T^{*0,1}U)\otimes E$ induced by $\nabla^{\mathrm{Cl\,}}$ and $%
\nabla^{E}$.


\begin{defn}
\label{d-gue150825} The $\mathrm{Spin}^c$ Dirac operator $D_{B}$ is defined
by
\begin{equation}  \label{e-gue150825a}
D_B=\frac{1}{\sqrt{2}}\sum^{2n}_{j=1}c(e_j)\nabla^{\mathrm{Cl\,}%
}_{e_j}:\Omega^{0,*}(U,E)\rightarrow\Omega^{0,*}(U,E).
\end{equation}
\end{defn}

It is well-known that $D_B$ is formally self-adjoint 
(see Proposition 1.3.1 and equation (1.3.1) in~\cite{MM07})
and $D_B:\Omega^{0,\pm}(U,E)\rightarrow\Omega^{0,\mp}(U,E)$.

Write $\overline{\partial}^*:\Omega^{0,q+1}(U,E)\rightarrow\Omega^{0,q}(U,E)$ 
for the adjoint of $\overline\partial:\Omega^{0,q}(U,E)\rightarrow\Omega^{0,q+1}(U, E)$
with respect to the $L^2$
inner product on $\Omega^{0,q}(U,E)$ induced by $\langle\,\cdot\,,\,\cdot\,%
\rangle$ and $\langle\,\cdot\,|\,\cdot\,\rangle_E$ ($q=0,1,2,\ldots,n-1$). 
Then, by Theorem 1.4.5 in~\cite{MM07} 
\begin{equation}  \label{e-gue150524faI}
D_B=\overline\partial+\overline{\partial}^*+A_B:\Omega^{0,\pm}(U,E)%
\rightarrow\Omega^{0,\mp}(U,E)
\end{equation}
where $A_B: \Omega^{0,\pm}(U,E)\rightarrow\Omega^{0,\mp}(U,E)$
is a smooth zeroth order operator (and $A_B=A_B(z)$, independent of $\theta$).
Note that $A_B$ as an operator $\Omega^{0,*}(U,E)\rightarrow\Omega^{0,*}(U,E)$
is self-adjoint because both $D_B$ and $\overline\partial+\overline{\partial}^*$ are so.  

The following is instrumental in forming a global operator from local ones,
whose proof is based on canonical coordinates of BRT trivializations.
Note for $u\in\Omega^{0,q}(D,E)$ ($q=0,1,2,\ldots,n$) with
$u=u(z)$, i.e. $u$ is independent of $\theta$,  we may identify such 
$u$ with an element in $%
\Omega^{0,q}(U,E)$ by using %
\eqref{e-gue150524fb} (and vice versa).

\begin{prop}
\label{l-gue150524d} Let $B=(D,(z,\theta),\varphi)$ and $\widetilde
B=(D,(w,\eta),\widetilde\varphi)$ be two BRT trivializations with 
$D=U\times]-\varepsilon,\varepsilon[$ for $\varepsilon>0$ and an
open set $U$ of $\mathbb{C}^n$. Let $A_B, A_{\widetilde
B}:\Omega^{0,\pm}(U,E)\rightarrow\Omega^{0,\mp}(U,E)$ be the operators given
by \eqref{e-gue150524faI}. Fix an $m\in\mathbb{Z}$.   For $u\in\Omega^{0,%
\pm}_m(X,E)$ we can write $u(=u|_D)=e^{-im\theta}v(z)=e^{-im\eta}\widetilde v(w)$ 
for some $v(z), \widetilde v(w)\in\Omega^{0,\pm}(U,E)$. Then
\begin{equation}  \label{e-gue150524d}
e^{-im\theta}A_B(v(z))=e^{-im\eta}A_{\widetilde B}(\widetilde v(w))\ \ %
\mbox{on $D$}.
\end{equation}
\end{prop}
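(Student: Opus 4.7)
The plan is to identify $A_B$ as a pointwise bundle homomorphism that is intrinsically determined by the local data on $X$ transported through the BRT chart, and then to combine its $C^\infty(U)$-linearity with the explicit transition \eqref{e-gue150524dIe1} to obtain \eqref{e-gue150524d}.

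First, by the decomposition \eqref{e-gue150524faI} (following Theorem 1.4.5 of \cite{MM07}), $A_B$ is a zeroth-order operator, i.e.\ a smooth bundle homomorphism $\Lambda(T^{*0,1}U)\otimes E\to \Lambda(T^{*0,1}U)\otimes E$ swapping parity; in particular $A_B(fv)=f\,A_B(v)$ for every $f\in C^\infty(U)$. It therefore suffices to establish (a) that $A_B$ and $A_{\widetilde B}$ are intertwined by the biholomorphism $H:U\to\widetilde U$, namely $H^*\circ A_{\widetilde B}=A_B\circ H^*$, and (b) the precise pointwise relation between $v(z)$ and $\widetilde v(w)$ forced by the single global section $u$ on $D$.

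For (a), I would verify directly from \eqref{e-gue150823e1}, \eqref{e-can} and \eqref{e-gue150524dIe1} that under $w=H(z)$, $\eta=\theta+\arg g(z)$, the BRT bases satisfy
\[
\widetilde Z_k \;=\; \sum_{j=1}^{n} \frac{\partial z_j}{\partial w_k}\, Z_j .
\]
The key cancellation, obtained by expressing $\partial/\partial w_k$ in the $(z,\theta)$ frame, is
\[
-\frac{\partial(\arg g\circ H^{-1})}{\partial w_k}\;-\;i\,\frac{\partial\widetilde\varphi}{\partial w_k} \;=\; -\,i\sum_{j=1}^{n} \frac{\partial z_j}{\partial w_k}\,\frac{\partial\varphi}{\partial z_j},
\]
which follows from $\widetilde\varphi\circ H=\varphi+\log|g|$ together with $\partial(\log|g|)/\partial w_k=\tfrac{1}{2}\partial(\log g)/\partial w_k$ and $\partial(\arg g)/\partial w_k=\tfrac{1}{2i}\partial(\log g)/\partial w_k$, both consequences of the holomorphy of $g$. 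This identity shows that the Hermitian metrics on $T^{1,0}U$ and $T^{1,0}\widetilde U$ induced via \eqref{e-gue150823e1} are $H$-related; the same is then true for the Riemannian metric on $TU$, the Chern connection on $\det(T^{1,0}U)$, the Levi-Civita connection, and (by rigidity of $E$ together with Theorem~\ref{t-gue150515}) the Hermitian structure and Chern connection on $E$. Consequently the $\mathrm{Spin}^c$ Dirac operators intertwine, $H^*\circ D_{\widetilde B}=D_B\circ H^*$, and subtracting the $H$-equivariant operator $\overline\partial+\overline\partial^*$ gives $H^*\circ A_{\widetilde B}=A_B\circ H^*$.

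For (b), writing $u|_D$ in both charts, using $d\overline w_k=\sum_j \overline{(\partial H_k/\partial z_j)}\,d\overline z_j$, and inserting $\eta=\theta+\arg g(z)$ yields the pointwise identity
\[
v(z) \;=\; e^{-im\arg g(z)}\,(H^{*}\widetilde v)(z), \quad z\in U.
\]
Applying $A_B$, using the $C^\infty(U)$-linearity from the first step and the intertwining from (a),
\[
A_B(v)(z) \;=\; e^{-im\arg g(z)}\,A_B(H^{*}\widetilde v)(z) \;=\; e^{-im\arg g(z)}\,H^{*}(A_{\widetilde B}\widetilde v)(z);
\]
multiplying by $e^{-im\theta}$ and regrouping via $e^{-im\theta}e^{-im\arg g(z)}=e^{-im\eta}$ gives \eqref{e-gue150524d}. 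The main technical obstacle is the cancellation in (a), which forces the non-holomorphic fiber transition $\eta=\theta+\arg g(z)$ to match the holomorphic change of basis on $T^{1,0}U$; this is the underlying geometric reason why the zeroth-order piece $A_B$ patches consistently across BRT charts.
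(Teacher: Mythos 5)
Your proof is correct and proceeds along essentially the same lines as the paper's, but fills in a step that the paper treats conceptually. Both arguments reduce \eqref{e-gue150524d} to two facts: (i) $A_B$ is a zeroth-order bundle map, hence $C^\infty(U)$-linear and in particular commutes with multiplication by $e^{-im\arg g}$, and (ii) $A_B$ and $A_{\widetilde B}$ agree under the holomorphic change of coordinates $w=H(z)$, after which the chain of equalities $e^{-im\theta}A_B(v)=e^{-im\theta}A_B(e^{-im\arg g}\widetilde v)=e^{-im\eta}A_{\widetilde B}(\widetilde v)$ closes the argument. The difference is in how (ii) is justified: the paper realizes $D$ geometrically as part of a circle bundle inside $L^*$ over $U$, then asserts that the Hermitian metric \eqref{e-gue150823e1}, and hence the $\mathrm{Spin}^c$ Dirac operator $D_B$, are intrinsic (coordinate-independent), from which $A_B=A_{\widetilde B}$ follows by subtracting the coordinate-invariant $\overline\partial+\overline\partial^*$; you instead verify the frame transformation $\widetilde Z_k=\sum_j(\partial z_j/\partial w_k)Z_j$ by the explicit cancellation
\[
-\frac{\partial(\arg g\circ H^{-1})}{\partial w_k}-i\frac{\partial\widetilde\varphi}{\partial w_k}=-i\sum_j\frac{\partial z_j}{\partial w_k}\frac{\partial\varphi}{\partial z_j},
\]
which indeed follows from $\widetilde\varphi\circ H=\varphi+\log|g|$ and the Cauchy--Riemann relations for $g$. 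This computation makes precise the paper's assertion that the metric \eqref{e-gue150823e1} is intrinsic: it is exactly the statement that the $\arg g$ shift in $\theta$ and the $\log|g|$ shift in $\varphi$ cancel to keep the BRT frame CR-covariant. Your phrasing $H^*\circ A_{\widetilde B}=A_B\circ H^*$ is also a cleaner way to state what the paper writes as $A_B=A_{\widetilde B}$ (which implicitly identifies forms via the coordinate change). The one ingredient you use without derivation, the transition formula \eqref{e-gue150524dIe1}, is stated in Theorem~\ref{t-gue150514} and so is a legitimate input; what your argument buys is a verification of (ii) that does not require the circle-bundle realization, at the cost of being more computational.
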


\begin{proof}
Although we shall only use part of the coordinate transformation of BRT trivializations \eqref{e-gue150524dIe1}
\begin{equation}  \label{e-gue150524dI}
\begin{split}
&w=H(z),\ \ \overline\partial H(z)=0;\quad \eta=\theta+\mathrm{arg}\,g(z); 
\quad\widetilde\varphi(H(z), \overline {H(z)})=\varphi(z, \overline z)+\log|g(z)|,
\end{split}%
\end{equation}
let's give a geometrical interpretation of how the above can be obtained for an independent interest.
This complements the treatment of \cite{BRT85}.   See Subsection~\ref{s-gue150627} (cf. 
Remark~\ref{r-gue150606}) for its use 
in the construction of a modified Kodaira Laplacian as well as in the proof of Lemma~\ref{l-gue160417lem2}.

To see \eqref{e-gue150524dI}, we are going to realize $D$ (possibly after shrinking it) as
(part of) the total space of a circle bundle associated with a trivial holomorphic line bundle $L$
over a complex manifold $U\subset \mathbb{C}^n$.  More precisely suppose $L$ is equipped with a Hermitian 
metric such that a local basis $1$ has $|| 1||=e^{-2\phi(z)}$ and $Y=\{(z, \lambda)\subset\mathbb{C}^{n+1}; |\lambda|^2e^{2\phi}=1\}$ 
is the circle bundle inside the $L^*$.   Write $\rho=|\lambda|^2e^{2\phi}-1$ and $\lambda|_Y=e^{-\phi-i\theta}$. 
One has $T^{0,1}Y=(\mathrm{Ker}\,\overline\partial\rho)\cap T^{0,1}\mathbb{C}^{n+1}$.  
In terms of $(z, \theta)$ coordinates on $Y$, one has 
$T^{0,1}Y=\{\frac{\partial}{\partial\overline z_j}+i\frac{\partial\phi}{
\partial\overline z_j}(z)\frac{\partial}{\partial\theta};\,
j=1,2,\ldots,n\}$ (and $T^{1,0}Y=\overline{T^{0,1}Y}$) because 
the RHS is checked to be contained in $\mathrm{Ker}\,\overline\partial\rho$
and it has the correct dimension.   In view of Theorem~\ref{t-gue150514} by 
taking the above $\phi$ to be $\varphi$ of $Z_j$ in \eqref{e-can} and mapping 
$(z, \theta)$ of $D$ to $(z, e^{-\phi(z)-i\theta})$ of $Y$, 
it will be seen that $D$ is realized in this way as a portion of $Y$.  

For this realization, we compare the above with \cite[(0.1) or Theorem I.2]{BRT85} 
and set up the (holomorphic) transformation in coordinates: 
setting our $\varphi$, $\lambda$, $\theta$ and $z$ (on $U$) above to be $\phi$, $e^{iw}$, $-s_1$ and $z$ respectively in 
\cite[(0.1) and Theorem I.2]{BRT85}).  Then our $Z_j$ in \eqref{e-can} corresponds to 
$\overline L_j$ in \cite[Proposition I.2]{BRT85}.   One sees that 
the ambient complex space of \cite{BRT85} is (locally) biholomorphic to part of the above $L^*$.  
For the next purpose, let us give an instrinsic formulation of this complex space from another point of view.   
Let $\mathbb{R}_{>0}$ be the set of positive real numbers.  Consider $D\times \mathbb{R}_{>0}$ 
and equip it with the complex structure $J$ defined as follows.   $J|_{TU}$ is set to be the complex structure 
of $U$; it suffices to define $J$ on $]-\varepsilon, \varepsilon[\times \mathbb{R}_{>0}$ with coordinates $s, r$: 
$J(\partial/\partial r)=-\frac{1}{r}\partial/\partial s$, $J(\frac{1}{r}\partial/\partial s)=\partial/\partial r$.  
Note this definition of $J$ is independent of choice of BRT trivializations (since $Z_j\subset T^{1,0}Y$ identified 
with $T^{1,0}U$, gives a local basis of 
$T^{1,0}D$; see Theorem~\ref{t-gue150514}, and $\{z\}\times ]-\varepsilon, \varepsilon[$ for each $z\in U$ is 
(part of) an $S^1$ orbit in $X$).     $J$ is seen to be (equivalent to) the complex structure on $L^*$ (with 
$(z, s, r)\in D\times\mathbb{R}_{>0}$ and $(z, re^{is})\in L^*$ in correspondence), hence an integrable 
complex structure.

Let now $\widetilde B=(D,(w,\eta),\widetilde\varphi)$ be any other BRT chart.  Correspondingly we will denote the associated
objects by the same notation as in $B$ but topped with a tilde.   Note 
that in $\widetilde B$ the set defined by $w=w_0$ for a fixed $w_0$ is part of an $S^1$ orbit in $X$; 
the same can be said with $B$.    Conversely, any $S^1$ orbit of $X$ is described (locally)
by the $\theta$ parameter in any BRT charts with $z$-coordinates being fixed.  
By using $(D\times\mathbb{R}_{>0}, J)$ above, one has 
$L^*\cong\tilde L^*$ (locally) by a biholomorphism $F$ that preserves respective fibers 
(since these are $S^1$ orbits, described by $\theta$ parameters in each chart and hence must 
be in correspondence via $D\times\mathbb{R}_{>0}$ by the property of BRT charts as just remarked).  
Further, one sees that $F$ restricted to fibers has to be linear, hence $F$ is a bundle isomorphism.  
Geometrically this picture is essentially the same as a local change of holomorphic coordinates on the base manifold $U$ 
and a change of a local basis of $L^*$ by $\tilde e^*(z)=g(z) e^*(z)$ with $||e^*||^2=e^{2\varphi}, 
||\tilde e^*||^2=e^{2\tilde\varphi}$ for some nowhere vanishing holomorphic function $g(z)$ on $U$.  
The above transformation formula \eqref{e-gue150524dI} easily follows from this concrete realization.  

Using the above transformation \eqref{e-gue150524dI}, one claims
\begin{equation}  \label{e-gue150524dII}
\begin{split}
&D_B=D_{\widetilde B}\ \ \mbox{on $\Omega^{0,\pm}(U,E)$}, \\
&A_B=A_{\widetilde B}\ \ \mbox{on $\Omega^{0,\pm}(U,E)$}.
\end{split}%
\end{equation}



To see this for the case without $E$, note $D$ is just realized as
(part of) the total space of a circle bundle of a holomorphic line bundle.  
Clearly $\overline\partial_U+\overline\partial_U^*$ does not depend on choice of holomorphic coordinates on $U$;
that is, $\overline\partial_U+\overline\partial_U^*$ is an intrinsic object (cf. the Hermitian metric 
used for $\overline\partial_U^*$ is intrinsic, \eqref{e-gue150823e1}).  The same idea can be applied to $D_B$
which is defined above as an intrinsic object too.  
Therefore \eqref{e-gue150524dII} holds with the change of coordinates in \eqref{e-gue150524dI} (using only 
$w=H(z)$).   Now with $E$ resumed, the reasoning is basically unchanged.   Hence the claim \eqref{e-gue150524dII}.  

By
\eqref{e-gue150524dI} $v(z)=e^{-imG(z)}\widetilde v(w)$ ($G(z)=\mathrm{arg}\,g(z)$), hence by \eqref{e-gue150524dII}
\begin{equation*}
e^{-im\theta}A_B(v(z))=e^{-im\theta}A_B(e^{-imG(z)}\widetilde
v(w))=e^{-im\theta}A_{\widetilde B}(e^{-imG(z)}\widetilde
v(w))=e^{-im\eta}A_{\widetilde B}(\widetilde v(w)),
\end{equation*}
proving the Proposition. 
\end{proof}

We are now ready to introduce a global operator:

\begin{defn}
\label{d-gue150524} For every $m\in\mathbb{Z}$, let $A_m:\Omega^{0,%
\pm}_m(X,E)\rightarrow\Omega^{0,\mp}_m(X,E)$ be the linear operator defined
as follows. Let $u\in\Omega^{0,\pm}_m(X,E)$. Then, $v:=A_mu$ is an element
in $\Omega^{0,\mp}_m(X,E)$ such that for every BRT trivialization $%
B:=(D,(z,\theta),\varphi)$ ($D=U\times]-\varepsilon,\varepsilon[$, $%
\varepsilon>0$, $U$ an open set in $\mathbb{C}^n$) we have $%
v|_D=e^{-im\theta}A_B(\widetilde u)(z)$ where $u=e^{-im\theta}\widetilde
u(z)$ on $D$ for some $\widetilde u\in\Omega^{0,\pm}(U,E)$ and $A_B$ is given in %
\eqref{e-gue150524faI}.
\end{defn}

In view of Proposition~\ref{l-gue150524d}, Definition~\ref{d-gue150524}
is well-defined.

We are now in a position to define the modified Kohn Laplacian ($\mathrm{Spin}^c$
Kohn Laplacian) including 
a type of CR $\mathrm{Spin}^c$ Dirac operator  $\widetilde
D_{b,m}$.  One goal of this part is
to express the index of $\widetilde
D_{b,m}$ in an integral form of the heat kernel density (cf. Proposition~\ref{t-gue150530}).

The treatment below mostly follows traditional cases except
the use of the projection operator $Q^{\pm}_m$ together with
its explicit expression in integral (see \eqref{e-gue150525dhII} and \eqref{e-gue151024b}).

By using $A_m$ in Definition~\ref{d-gue150524} we consider
\begin{equation}  \label{e-gue150525a}
\begin{split}
&\widetilde D_{b,m}=\overline\partial_b+\overline{\partial}%
^*_b+A_m:\Omega^{0,\bullet}_m(X,E)\rightarrow\Omega^{0,\bullet}_m(X,E), \\
&\widetilde D^{\pm}_{b,m}=\overline\partial_b+\overline{\partial}%
^*_b+A_m:\Omega^{0,\pm}_m(X,E)\rightarrow\Omega^{0,\mp}_m(X,E)
\end{split}%
\end{equation}
with the formal adjoint $\widetilde D^*_{b,m}$ on $\Omega^{0,\bullet}_m(X,E)$. 

We remark that $\widetilde D_{b,m}^*=\widetilde
D_{b,m}$ on $\Omega^{0,\bullet}_m(X,E)$.   For, 
by \eqref{e-gue22a1} the $L^2$ inner product on $\Omega^{0,q}_m(D, E)$ 
is clearly $2\varepsilon(\,\cdot\,,\,\cdot\,)$ 
with the $L^2$ inner product $(\,\cdot\,,\,\cdot\,)$ on $\Omega^{0,q}(U,E)$.  
Now that $A_B$ is self-adjoint on $\Omega^{0,*}(U, E)$ as aforementioned, 
it follows that $A_m$ is self-adjoint on $\Omega_m^{0,\bullet}(X, E)$.   
That $\widetilde D_{b,m}$ is self-adjoint follows as $\overline\partial_b+\overline{\partial}^*_b$ 
is also self-adjoint.

The {\it modified/$\mathrm{Spin}^c$ Kohn Laplacian} is given by
\begin{equation}  \label{e-gue150525aII}
\begin{split}
&\widetilde\Box_{b,m}:= \widetilde D^*_{b,m}\widetilde D_{b,m}:
 \Omega^{0,\bullet}_m(X,E)\rightarrow\Omega^{0,\bullet}_m(X,E)\\
&\widetilde{\Box}^{+}_{b,m}:=\widetilde D^*_{b,m}\widetilde D_{b,m}=\widetilde
D^-_{b,m}\widetilde D_{b,m}^+: 
\Omega^{0,+}_m(X,E)\rightarrow\Omega^{0,+}_m(X,E)\quad 
(\widetilde{\Box}^{-}_{b,m}:=\widetilde
D^+_{b,m}\widetilde D_{b,m}^-). 
\end{split}%
\end{equation}
We extend $\widetilde{\Box}^+_{b,m}$ and $\widetilde{\Box}^-_{b,m}$ by
\begin{equation}  \label{e-gue150525aIII}
\begin{split}
&\widetilde{\Box}^{\pm}_{b,m}:\mathrm{Dom\,}\widetilde{\Box}^{\pm}_{b,m}\,(\subset
L^{2,\pm}_m(X,E))\rightarrow L^{2,\pm}_m(X,E)
\end{split}%
\end{equation}
where
$\mathrm{Dom\,}\widetilde{\Box}^{\pm}_{b,m}:=\{u\in L^{2,\pm}_m(X,E);\, \mbox{distribution
 $\widetilde{%
\Box}^{\pm}_{b,m}u\in L^{2,\pm}_m(X,E)$}\}$.  

Clearly $\widetilde\Box_{b,m}-T^2$ 
(where $T$ is the real vector field induced by the $S^1$ action) is (the restriction of) an elliptic operator on $X$
since $(\overline\partial_b+\overline{\partial}^*_b)^2-T^2$ is.  

Put as usual,  the Sobolov spaces (cf. Subsection~\ref{s-gue150508b})
$H^{s,+}(X,E)$, $H^{s,-}(X,E)$ the even and odd part of $H^{s}(X, T^{*,\bullet}\otimes E)$.  
In the same vein as Lemma~\ref{l-gue150517} and Lemma~\ref%
{t-gue150517a} one has 
\begin{equation}  \label{e-gue150525aIV}
\begin{split}
&\mathrm{Dom\,}\widetilde{\Box}^{\pm}_{b,m}=L^{2,\pm}_m(X,E)\bigcap H^{2,\pm}(X,E),
\\
&\mbox{$\Td{\Box}^+_{b,m}$ and $\Td{\Box}^-_{b,m}$ are self-adjoint}.
\end{split}%
\end{equation}

Further, for the spectrum $\mathrm{Spec\,}\widetilde{\Box}^+_{b,m}\subset[0,\infty[$ (resp. $\mathrm{%
Spec\,}\widetilde{\Box}^-_{b,m}\subset[0,\infty[$) one has (similar to Proposition~\ref{t-gue150517aI})

\begin{prop}
\label{t-gue150525} $\mathrm{Spec\,}\widetilde{\Box}^+_{b,m}$ is a discrete subset of $[0,\infty[$. For
any $\mu\in\mathrm{Spec\,}\widetilde{\Box}^+_{b,m}$, $\mu$ is an eigenvalue
of $\widetilde{\Box}^{+}_{b,m}$ and the eigenspace
\begin{equation*}
\widetilde{\mathcal{E}}^+_{m,\nu}(X,E):=\big\{u\in\mathrm{Dom\,}\widetilde{%
\Box}^{+}_{b,m};\, \widetilde{\Box}^{+}_{b,m}u=\nu u\big\}
\end{equation*}
is finite dimensional with $\widetilde{\mathcal{E}}^+_{m,\nu}(X,E)\subset%
\Omega^{0,+}_m(X,E)$.  Similar results hold for the case of ${%
\Box}^-_{b,m}$.
\end{prop}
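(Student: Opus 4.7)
The plan is to reduce Proposition~\ref{t-gue150525} to standard elliptic spectral theory, in the same spirit as the argument for Proposition~\ref{t-gue150517aI}. The key observation is that $\widetilde\Box_{b,m}-T^2$ is (the restriction of) a formally self-adjoint second-order \emph{elliptic} operator on $X$, as noted immediately after \eqref{e-gue150525aIV}, and on the $m$-th Fourier component $L^{2,+}_m(X,E)$ the vector field $T$ acts as multiplication by $-im$, so that $T^2$ acts as $-m^2$. Consequently,
\[
\widetilde\Box^{+}_{b,m}+m^2 \;=\; \bigl(\widetilde\Box_{b,m}-T^2\bigr)\big|_{L^{2,+}_m(X,E)}\,,
\]
which identifies $\widetilde\Box^{+}_{b,m}$, up to a real additive shift, with the restriction of an elliptic self-adjoint operator to a closed invariant subspace.

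First I would verify that $\widetilde\Box_{b,m}-T^2$ preserves both parity and the $S^1$-Fourier decomposition: parity preservation follows because $\widetilde D_{b,m}$ swaps $\Omega^{0,\pm}$ and hence $\widetilde D^{*}_{b,m}\widetilde D_{b,m}$ preserves each part, while commutation with $T$ uses $T\overline\partial_b=\overline\partial_bT$ and $T\overline\partial^{*}_b=\overline\partial^{*}_bT$ from \eqref{e-gue150517i} together with $TA_m=A_mT$ built into Definition~\ref{d-gue150524}. Next, on the full space $L^{2,+}(X,E)$ the natural self-adjoint extension $\Delta^{+}:=\widetilde\Box_{b,m}-T^2$ is elliptic on the closed manifold $X$, hence has compact resolvent and purely discrete spectrum consisting of eigenvalues of finite multiplicity with smooth eigensections, by standard elliptic regularity. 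Restricting $\Delta^{+}$ to the closed invariant subspace $L^{2,+}_m(X,E)$ inherits all of these properties; the resulting eigensections lie in $\Omega^{0,+}(X,E)$ by global elliptic regularity and satisfy $Tu=-imu$ by membership in the subspace, hence belong to $\Omega^{0,+}_m(X,E)$. Finally, subtracting $m^2$ translates the spectrum without changing discreteness, multiplicities or smoothness, giving the claim for $\widetilde\Box^{+}_{b,m}$; the argument for $\widetilde\Box^{-}_{b,m}$ is identical with $+$ replaced by $-$ throughout.

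I anticipate no substantial obstacle, but one bookkeeping point deserves care: one must verify that the extension \eqref{e-gue150525aIII} of $\widetilde\Box^{+}_{b,m}$ really coincides, under the identification above, with the restriction of the self-adjoint extension of $\Delta^{+}$ to $L^{2,+}_m(X,E)$. Equivalently, one needs $\mathrm{Dom}\,\widetilde\Box^{+}_{b,m}=L^{2,+}_m(X,E)\cap H^{2,+}(X,E)$, which is precisely the first line of \eqref{e-gue150525aIV} and is proved exactly as Lemma~\ref{l-gue150517}. Once this domain identification is in hand, all spectral assertions of Proposition~\ref{t-gue150525} follow mechanically from elliptic theory applied to $\Delta^{+}$ on the invariant subspace $L^{2,+}_m(X,E)$, together with the additive shift by $-m^2$.
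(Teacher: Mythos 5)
Your proposal is correct and follows essentially the same approach the paper uses: exactly as in Proposition~\ref{t-gue150517aI}, one exploits that $\widetilde\Box_{b,m}-T^2$ is elliptic and that $T^2$ acts as $-m^2$ on $L^{2,+}_m(X,E)$, then restricts to the invariant subspace and shifts the spectrum. The bookkeeping on parity, $T$-commutation, and the domain identification via \eqref{e-gue150525aIV} is what the paper points to with "in the same vein as Lemma~\ref{l-gue150517} and Lemma~\ref{t-gue150517a}."
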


The following can be proved by standard argument. 

\begin{lem}
\label{t-gue150525I} We have $\mathrm{Spec\,}\widetilde{\Box}%
^+_{b,m}\bigcap]0,\infty[=\mathrm{Spec\,}\widetilde{\Box}^-_{b,m}\bigcap]%
0,\infty[$, and for every $0\ne\mu\in\mathrm{Spec\,}\widetilde{\Box}^+_{b,m}$, $\mathrm{dim\,}\widetilde{\mathcal{E}}^+_{m,\mu}(X,E)=%
\mathrm{dim\,}\widetilde{\mathcal{E}}^-_{m,\mu}(X,E)$.
\end{lem}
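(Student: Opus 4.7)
The plan is to exploit the standard intertwining of $\widetilde\Box^+_{b,m}$ and $\widetilde\Box^-_{b,m}$ by the Dirac-type operator $\widetilde D_{b,m}$, a mechanism that is available whenever the Laplacians come from squaring a self-adjoint operator. First I would record that, as noted just below \eqref{e-gue150525a}, the operator $\widetilde D_{b,m}$ is self-adjoint on $\Omega^{0,\bullet}_m(X,E)$ and, being anti-diagonal with respect to the $\pm$ decomposition, its restrictions $\widetilde D^+_{b,m}:\Omega^{0,+}_m(X,E)\To\Omega^{0,-}_m(X,E)$ and $\widetilde D^-_{b,m}:\Omega^{0,-}_m(X,E)\To\Omega^{0,+}_m(X,E)$ are therefore formal adjoints of each other. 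Combined with \eqref{e-gue150525aII} this yields the factorizations $\widetilde\Box^+_{b,m}=\widetilde D^-_{b,m}\widetilde D^+_{b,m}$ and $\widetilde\Box^-_{b,m}=\widetilde D^+_{b,m}\widetilde D^-_{b,m}$, which are the two facts I would lean on.

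Next, fix $\mu>0$ with $\mu\in\mathrm{Spec\,}\widetilde\Box^+_{b,m}$. By Proposition~\ref{t-gue150525}, $\widetilde{\mathcal E}^+_{m,\mu}(X,E)\subset\Omega^{0,+}_m(X,E)$, so for any $u$ in this eigenspace the section $\widetilde D^+_{b,m}u$ is smooth and lies in $\Omega^{0,-}_m(X,E)$. I would then check the intertwining $\widetilde\Box^-_{b,m}(\widetilde D^+_{b,m}u)=\widetilde D^+_{b,m}(\widetilde D^-_{b,m}\widetilde D^+_{b,m}u)=\widetilde D^+_{b,m}(\widetilde\Box^+_{b,m}u)=\mu\,\widetilde D^+_{b,m}u$, so $\widetilde D^+_{b,m}u\in\widetilde{\mathcal E}^-_{m,\mu}(X,E)$. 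Injectivity of $u\mapsto\widetilde D^+_{b,m}u$ on $\widetilde{\mathcal E}^+_{m,\mu}(X,E)$ is immediate: if $\widetilde D^+_{b,m}u=0$, then $\widetilde\Box^+_{b,m}u=\widetilde D^-_{b,m}\widetilde D^+_{b,m}u=0$, forcing $u=0$ since $\mu>0$. The symmetric reasoning gives an injection $\widetilde D^-_{b,m}:\widetilde{\mathcal E}^-_{m,\mu}(X,E)\To\widetilde{\mathcal E}^+_{m,\mu}(X,E)$, and the composition in either order acts as multiplication by $\mu$, so the normalized maps $\mu^{-1/2}\widetilde D^{\pm}_{b,m}$ are mutually inverse linear isomorphisms between the two finite-dimensional eigenspaces.

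From this both assertions of the lemma follow: the dimensions $\mathrm{dim\,}\widetilde{\mathcal E}^+_{m,\mu}(X,E)$ and $\mathrm{dim\,}\widetilde{\mathcal E}^-_{m,\mu}(X,E)$ coincide for every $\mu>0$, and $\mu\in\mathrm{Spec\,}\widetilde\Box^+_{b,m}\cap\,]0,\infty[$ if and only if the corresponding eigenspace on the other side is nonzero, i.e.\ $\mu\in\mathrm{Spec\,}\widetilde\Box^-_{b,m}\cap\,]0,\infty[$. I do not anticipate a real obstacle; the only point meriting a line of justification is the self-adjointness of $\widetilde D_{b,m}$, which reduces to the self-adjointness of the zeroth-order term $A_m$. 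That in turn follows pointwise from the self-adjointness of the local operator $A_B$ in each BRT chart (recorded after \eqref{e-gue150524faI} and patched globally via Proposition~\ref{l-gue150524d} and Definition~\ref{d-gue150524}), so no additional machinery is needed.
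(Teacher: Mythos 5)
Your proof is correct and is precisely the standard intertwining argument that the paper alludes to when it writes "can be proved by standard argument." The key ingredients you identify — self-adjointness of $\widetilde D_{b,m}$ (so that $\widetilde D^{\pm}_{b,m}$ are mutual adjoints), the factorizations $\widetilde\Box^{\pm}_{b,m}$ from \eqref{e-gue150525aII}, the intertwining relation, smoothness of the eigenspaces from Proposition~\ref{t-gue150525}, and injectivity of $\widetilde D^{\pm}_{b,m}$ on the $\mu>0$ eigenspaces — are exactly what one needs, and your appeal to the self-adjointness of $A_m$ via $A_B$ and Proposition~\ref{l-gue150524d} closes the only point that might otherwise need comment.
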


We are going to introduce a {\it McKean-Singer} type formula (Corollary~\ref{t-gue150603}).  
Let $F$ be a complex vector bundle
over $X$ of rank $r$ with a Hermitian metric $\langle\,\cdot\,|\,\cdot\,%
\rangle_F$. Let $A(x,y)\in C^\infty(X\times X, F\boxtimes F^*)$.  
For every $u\in C^\infty(X,F)$, $\int_X A(x,y)u(y)dv_X(y)\in C^\infty(X,F)$
is defined in a fairly standard manner.

Much of what follows parallels the classical cases except 
that $Q_m^+$ is introduced in our case.  For  $\nu\in\mathrm{Spec\,}\widetilde{\Box}^{\pm}_{b,m}$ let
$P^{\pm}_{m,\nu}:L^{2,\pm}(X,E)\rightarrow\widetilde{\mathcal{E}}^{\pm}_{m,\nu}(X,E)$ 
be the orthogonal projections (with respect to $(\,\cdot\,|\,\cdot\,)_E$), and
$P^{\pm}_{m,\nu}(x,y)$ ($\in C^\infty(X\times X, (T^{*0,\pm}X\otimes E)\boxtimes (T^{*0,\pm}X\otimes E)^*)$) 
the distribution kernels of $P^{\pm}_{m,\nu}$.

The heat kernels of $\widetilde{\Box}^{+}_{b,m}$ and $\widetilde{\Box}%
^-_{b,m}$ are given by
\begin{equation}  \label{e-gue150525fbI}
e^{-t\widetilde{\Box}^{\pm}_{b,m}}(x,y)=P^{\pm}_{m,0}(x,y)+\sum_{\nu\in\mathrm{%
Spec\,}\widetilde{\Box}^{\pm}_{b,m},\nu>0}e^{-\nu t}P^{\pm}_{m,\nu}(x,y)
\end{equation}
with the associated continuous operators $e^{-t\widetilde{\Box}^{\pm}_{b,m}}:\Omega^{0,\pm}(X,E)\rightarrow%
\Omega^{0,\pm}_m(X,E)\subset  \Omega^{0,\pm}(X,E)$.  $e^{-t\widetilde{\Box}^{\pm}_{b,m}}$ is self-adjoint 
on $\Omega^{0,\pm}(X,E)$.

Remark that the heat kernels \eqref{e-gue150525fbI} are smooth.   For, the eigenfunctions involved (in 
the equivalent form as \eqref{e-gue151023}) are
still eigenfunctions of $(\widetilde{\Box}_{b,m}-T^2)$ hence eigenfunctions of an elliptic operator.  
In the elliptic case, one has the G\"arding type inequality which estimates the various Sobolev norms of the eigenfunctions, 
and hence mainly by Sobolev embeddings, gives eventually the smoothness of the heat kernels
(cf. \cite[Lemmas 1.6.3 and 1.6.5]{Gi95}). 


An important operator is given by the orthogonal projection 
\begin{equation}  \label{e-gue150525dhII}
Q^{\pm}_m:L^{2,\pm}(X,E)\rightarrow L^{2,\pm}_m(X,E)
\end{equation}
(for the $m$-th Fourier component).  Fourier analysis with %
\eqref{e-gue150510f} gives 
\begin{equation}  \label{e-gue151024b}
Q^{\pm}_mu=\frac{1}{2\pi}\int^\pi_{-\pi}u(e^{-i\theta}\circ
x)e^{im\theta}d\theta,\ \ \forall u\in\Omega^{0,\pm}(X,E)
\end{equation}
(where $u(e^{-i\theta}\circ x)$ stands for the pull back  $(e^{-i\theta})^*u$,
cf. \eqref{e-gue150510f}).  
The explicit expression \eqref{e-gue151024b} 
turns out to be crucial to many (unconventional) estimates later.

It is fairly standard (note $Q_m^+$ in the second line below) to obtain
(by \eqref{e-gue150525fbI})
\begin{equation}  \label{e-gue150525dhIII}
\begin{split}
&(\frac{\partial}{\partial t}+\widetilde{\Box}^{\pm}_{b,m})(e^{-t\widetilde{%
\Box}^{\pm}_{b,m}}u)=0,\ \ \forall u\in\Omega^{0,\pm}(X,E),\ \ \forall t>0, \\
&\lim_{t\rightarrow0^+}(e^{-t\widetilde{\Box}^{\pm}_{b,m}}u)=Q^{\pm}_mu,\ \
\forall u\in\Omega^{0,\pm}(X,E).
\end{split}
\end{equation}

For  $\nu\in\mathrm{Spec\,}\widetilde{\Box}^{+}_{b,m}$,
let $\left\{f^\nu_1,\ldots,f^%
\nu_{d_{\nu}}\right\}$ be an orthonormal basis for $\widetilde{\mathcal{E}}^+_{m,\nu}(X,E)$.
Define
\begin{equation}  \label{e-gue150525fbII}
\mathrm{Tr\,}P^{+}_{m,\nu}(x,x):=\sum^{d_{\nu}}_{j=1}\left\vert
f^\nu_j(x)\right\vert^2_E\in C^\infty(X)
\end{equation}
which is equal to 
$\mathrm{Tr\,}P^{+}_{m,\nu}(x,x)=\sum^{d}_{j=1}\langle\,P^+_{m,%
\nu}(x,x)e_j(x)\,|\,e_j(x)\,\rangle_E$
where $\{e_j(x)\}_j$ is any orthonormal
basis of $T^{*0,+}_xX\otimes E_x$. 
Define $\mathrm{Tr\,}P^{-}_{m,\mu}(x,x)$ similarly.

Clearly $d_{\nu}^{\pm}=\int_X\mathrm{Tr\,}%
P^{\pm}_{m,\nu}(x,x)dv_X(x)$.

Put $\mathrm{Tr\,}e^{-t\widetilde{\Box}^{\pm}_{b,m}}(x,x):=\mathrm{Tr\,}%
P^{\pm}_{m,0}(x,x)+\sum_{\nu\in\mathrm{Spec\,}\widetilde{\Box}%
^{\pm}_{b,m},\nu>0}e^{-\nu t}\mathrm{Tr\,}P^{\pm}_{m,\nu}(x,x)$, so  
for  $t>0$
\begin{equation}  \label{e-gue150525fbV}
\int_X\mathrm{Tr\,}e^{-t\widetilde{\Box}^{\pm}_{b,m}}(x,x)dv_X(x)=\mathrm{dim\,%
}\widetilde{\mathcal{E}}^{\pm}_{m,0}(X,E)+\sum_{\nu\in\mathrm{Spec\,}\widetilde{%
\Box}^{\pm}_{b,m},\nu>0}e^{-\nu t}\mathrm{dim\,}\widetilde{\mathcal{E}}%
^{\pm}_{m,\nu}(X,E).
\end{equation}
Combining Lemma~\ref{t-gue150525I} and \eqref{e-gue150525fbV} gives 
\begin{equation}  \label{e-gue150530}
\int_X\Bigr(\mathrm{Tr\,}e^{-t\widetilde{\Box}^{+}_{b,m}}(x,x)-\mathrm{Tr\,}%
e^{-t\widetilde{\Box}^{-}_{b,m}}(x,x)\Bigr)dv_X(x) =\mathrm{dim\,}\mathrm{%
Ker\,}\widetilde{\Box}^+_{b,m}-\mathrm{dim\,}\mathrm{Ker\,}\widetilde{\Box}%
^-_{b,m}.
\end{equation}

As in Theorem~\ref{t-gue150524f}
one has
\begin{equation}  \label{e-gue150530I}
\mathrm{Ker\,}\widetilde D_{b,m}=\mathrm{Ker\,}\widetilde{\Box}%
^+_{b,m}\subset\Omega^{0,+}_m(X,E), \quad
\mathrm{Ker\,}\widetilde D^*_{b,m}=\mathrm{Ker\,}\widetilde{\Box}%
^-_{b,m}\subset\Omega^{0,-}_m(X,E).
\end{equation}

Put $\mathrm{ind\,}\widetilde D_{b,m}:=\mathrm{dim\,}\mathrm{Ker\,}%
\widetilde D_{b,m}-\mathrm{dim\,}\mathrm{Ker\,}\widetilde D^*_{b,m}$. 
We express the index (by \eqref{e-gue150530I} and \eqref{e-gue150530})
as 

\begin{prop}
\label{t-gue150530} For every $t>0$, we have
\begin{equation}  \label{e-gue150530II}
\mathrm{ind\,}\widetilde D_{b,m}=\int_X\Bigr(\mathrm{Tr\,}e^{-t\widetilde{%
\Box}^{+}_{b,m}}(x,x)-\mathrm{Tr\,}e^{-t\widetilde{\Box}^{-}_{b,m}}(x,x)%
\Bigr)dv_X(x).
\end{equation}
\end{prop}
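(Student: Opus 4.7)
The plan is to derive the identity directly from the spectral expansion \eqref{e-gue150525fbV} of the two heat traces together with the spectral symmetry established in Lemma~\ref{t-gue150525I} and the kernel identifications \eqref{e-gue150530I}. In other words, the proposition is a formal consequence of the Hodge-theoretic machinery already built up in Sections~\ref{s-gue150517}--\ref{s-gue150524}; no fresh analytic input is needed.

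First I would apply \eqref{e-gue150525fbV} to each of $\widetilde\Box^+_{b,m}$ and $\widetilde\Box^-_{b,m}$ and subtract, obtaining
\begin{equation*}
\begin{split}
&\int_X\Bigr(\mathrm{Tr\,}e^{-t\widetilde\Box^+_{b,m}}(x,x)-\mathrm{Tr\,}e^{-t\widetilde\Box^-_{b,m}}(x,x)\Bigr)dv_X(x) \\
&=\Bigl(\dim\widetilde{\mathcal E}^+_{m,0}(X,E)-\dim\widetilde{\mathcal E}^-_{m,0}(X,E)\Bigr) \\
&\quad+\sum_{\nu\in\mathrm{Spec\,}\widetilde\Box^+_{b,m},\,\nu>0}e^{-\nu t}\dim\widetilde{\mathcal E}^+_{m,\nu}(X,E)-\sum_{\mu\in\mathrm{Spec\,}\widetilde\Box^-_{b,m},\,\mu>0}e^{-\mu t}\dim\widetilde{\mathcal E}^-_{m,\mu}(X,E).
\end{split}
\end{equation*}
Then I would invoke Lemma~\ref{t-gue150525I}, which asserts that the positive spectra of $\widetilde\Box^+_{b,m}$ and $\widetilde\Box^-_{b,m}$ coincide and that the corresponding eigenspace dimensions agree. (As usual, the pairing is implemented by $\widetilde D^+_{b,m}:\widetilde{\mathcal E}^+_{m,\nu}(X,E)\to\widetilde{\mathcal E}^-_{m,\nu}(X,E)$ for $\nu>0$, with inverse $\nu^{-1}\widetilde D^-_{b,m}$.) Hence the two tails over positive eigenvalues cancel term-by-term for every $t>0$, leaving exactly the kernel contribution
\[
\dim\mathrm{Ker\,}\widetilde\Box^+_{b,m}-\dim\mathrm{Ker\,}\widetilde\Box^-_{b,m},
\]
which is the identity \eqref{e-gue150530}.

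Finally I would apply \eqref{e-gue150530I} to identify $\mathrm{Ker\,}\widetilde\Box^\pm_{b,m}$ with $\mathrm{Ker\,}\widetilde D_{b,m}$ and $\mathrm{Ker\,}\widetilde D^*_{b,m}$ respectively, so the last difference equals $\dim\mathrm{Ker\,}\widetilde D_{b,m}-\dim\mathrm{Ker\,}\widetilde D^*_{b,m}=\mathrm{ind\,}\widetilde D_{b,m}$ by definition. Since the RHS is manifestly $t$-independent after cancellation, the formula holds for every $t>0$.

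There is no real obstacle: everything substantive has been absorbed into prior results. The one place that required care — and deserves to be quietly rechecked while writing up — is the spectral pairing of Lemma~\ref{t-gue150525I}, because the underlying operator $\widetilde\Box_{b,m}$ is only transversally (not uniformly) elliptic on $X$; however, this was reduced to the elliptic operator $\widetilde\Box_{b,m}-T^2$ acting on the invariant subspace $L^{2,\pm}_m(X,E)$ where $T^2$ is the scalar $-m^2$, so the standard Hodge argument applies without modification. Everything else is formal.
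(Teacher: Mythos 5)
Your proposal is correct and follows exactly the paper's route: expand both heat traces via \eqref{e-gue150525fbV}, cancel the positive-eigenvalue tails using Lemma~\ref{t-gue150525I}, and then identify the residual kernels with $\mathrm{Ker\,}\widetilde D_{b,m}$ and $\mathrm{Ker\,}\widetilde D^*_{b,m}$ via \eqref{e-gue150530I}. The paper records this chain of deductions as \eqref{e-gue150530} followed by \eqref{e-gue150530I}, so there is no substantive difference.
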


The invariance of the index is expressed by the following 
(some aspects on $\mathrm{ind\,}D_{b,m}$ refer to 
Theorem~\ref{t-gue150524f}).

\begin{thm}
\label{t-gue150530I} {\rm (Homotopy invariance)} We have $\mathrm{ind\,}D_{b,m}=\mathrm{ind\,}\widetilde
D_{b,m}$.
\end{thm}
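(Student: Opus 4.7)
The plan is to interpolate between the two operators through a one-parameter family and to exhibit the index as the integrated supertrace of a heat kernel that is independent of the parameter. Define
\[
D_s := \overline\partial_b + \overline\partial_b^* + sA_m \colon \Omega^{0,\bullet}_m(X,E) \to \Omega^{0,\bullet}_m(X,E),\qquad s\in[0,1],
\]
so that (when combined into the $\mathbb Z/2$-graded complex) the restriction of $D_0$ to $\Omega^{0,+}_m$ is $D_{b,m}$, while $D_1 = \widetilde D_{b,m}$; each $D_s$ is formally self-adjoint (since $A_m$ is, as noted right after \eqref{e-gue150525aII}) and odd with respect to the $\Omega^{0,\pm}_m$ decomposition. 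Its square $\Box_s := D_s^2$ has the same principal symbol as $(\overline\partial_b+\overline\partial_b^*)^2$ because $A_m$ is zeroth order (cf.\ \eqref{e-gue150524faI}), so $\Box_s - T^2$ remains elliptic. The Hodge-theoretic arguments of Section~\ref{s-gue150517} then apply uniformly in $s$: $\mathrm{Spec}\,\Box_s^{\pm}$ is discrete with finite-dimensional eigenspaces inside $\Omega^{0,\pm}_m(X,E)$, one has $\ker D_s^{\pm} = \ker \Box_s^{\pm}$, and consequently $\mathrm{ind}\,D_s := \dim\ker D_s^+ - \dim\ker D_s^-$ is well defined.

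Next I would verify the McKean-Singer type identity
\[
\mathrm{ind}\,D_s = \int_X \Bigl(\mathrm{Tr\,}e^{-t\Box_s^+}(x,x) - \mathrm{Tr\,}e^{-t\Box_s^-}(x,x)\Bigr)\,dv_X \qquad (\forall\, s\in[0,1],\ \forall\, t>0),
\]
by a verbatim transcription of the argument that produced Proposition~\ref{t-gue150530}: the pairing $D_s^{\pm}\colon \Omega^{0,\pm}_m \to \Omega^{0,\mp}_m$ sets up a bijection between the nonzero parts of $\mathrm{Spec}\,\Box_s^{+}$ and $\mathrm{Spec}\,\Box_s^{-}$ with equal multiplicities, so the contributions from nonzero eigenvalues cancel in the integrated supertrace. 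The smoothness and trace-class character of $e^{-t\Box_s^\pm}(x,y)$ needed here are obtained from discreteness of the spectrum together with the Weyl-type eigenvalue growth coming from ellipticity of $\Box_s - T^2$, exactly as discussed after \eqref{e-gue150525fbI}.

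Finally, I would differentiate this identity in $s$. Since $\dot D_s = A_m$ and $\Box_s = D_s^2$, one has $\dot\Box_s = D_sA_m + A_mD_s$ (anticommutator), and the Duhamel principle gives $\frac{d}{ds}\,\mathrm{STr}\,e^{-t\Box_s} = -t\,\mathrm{STr}(\dot\Box_s\, e^{-t\Box_s})$. The key observation is that $D_s$ commutes with $e^{-t\Box_s}$ (because $\Box_s = D_s^2$), so
\[
\dot\Box_s\, e^{-t\Box_s} = D_s\bigl(A_m e^{-t\Box_s}\bigr) + \bigl(A_m e^{-t\Box_s}\bigr) D_s
\]
is the graded anticommutator of the two odd operators $D_s$ and $A_m e^{-t\Box_s}$; since $\mathrm{STr}$ vanishes on graded commutators of odd operators, one obtains $\frac{d}{ds}\,\mathrm{STr}\,e^{-t\Box_s}\equiv 0$. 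Therefore $\mathrm{ind}\,D_s$ is constant on $[0,1]$, giving $\mathrm{ind}\,D_{b,m} = \mathrm{ind}\,\widetilde D_{b,m}$. The main technical point to be careful about is that all manipulations take place inside the restricted space $L^{2,\bullet}_m(X,E)$ rather than the full $L^2$; this is handled by observing, via the explicit projection formula \eqref{e-gue151024b}, that every operator in sight commutes with $Q_m$, so the spectral analysis and the supertrace identities reduce to the invariant subspace on which each $e^{-t\Box_s}$ is an honest smoothing trace-class operator.
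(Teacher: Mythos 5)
Your argument is correct in substance, but it takes a genuinely different route from the one in the paper. You prove homotopy invariance by the heat-kernel method: interpolate $D_s=\overline\partial_b+\overline\partial_b^*+sA_m$, establish McKean--Singer for every $s$ (a verbatim rerun of Proposition~\ref{t-gue150530}), and then show $\frac{d}{ds}\,\mathrm{STr}\,e^{-t\Box_s}=0$ via Duhamel and the vanishing of the supertrace on a graded commutator of odd operators; this is the approach of \cite{BGV92}. The paper instead gives a purely functional-analytic (Fredholm) proof: it introduces an auxiliary bounded operator $A_t\colon H_0\to H_1$ between doubled-up Hilbert spaces (with $H_0=H^{1,+}_m\oplus\ker L_0^*$, $H_1=L^{2,-}_m\oplus\ker L_0$), shows $A_t$ is invertible for small $t$ by a Neumann series (Lemma~\ref{l-gue150531}), deduces local constancy of the index via an injectivity argument on a finite-dimensional comparison map (Lemma~\ref{l-gue150603}), and closes by connectedness of $[0,1]$. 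The paper explicitly flags this choice in the paragraph preceding Lemma~\ref{l-gue150531}, mentioning both the heat-kernel method of \cite{BGV92} and the functional-analysis method of \cite{BB} as background, and choosing the latter to keep the exposition self-contained. What your route buys is efficiency and a clean conceptual mechanism ($\mathrm{STr}$ kills supercommutators), at the cost of having to verify trace-class/smoothing and $s$-differentiability hypotheses for the family $e^{-t\Box_s}$ on the invariant subspace $L^2_m$; you correctly note that these reduce to the elliptic operator $\Box_s-T^2$ after observing everything commutes with $Q_m$. What the paper's route buys is independence from any heat-kernel machinery (it needs only the Hodge theory of Section~\ref{s-gue150517} and elementary open-mapping arguments), which is why the authors can place it before the heat-kernel analysis of Section~\ref{s-gue150606}. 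One minor slip: the self-adjointness of $A_m$ is recorded in the paragraph between \eqref{e-gue150525a} and \eqref{e-gue150525aII}, not after \eqref{e-gue150525aII}; the fact you invoke is nevertheless correct.
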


To summarize (with Theorem~\ref{t-gue150530I}, Proposition~\ref{t-gue150530} and %
\eqref{e-gue150524fI}) we have a McKean-Singer formula 
(cf.  Corollary~\ref{t-gue150630w} for McKean-Singer (II)).  

\begin{cor}
\label{t-gue150603} {\rm (McKean-Singer (I))} Fix $m\in\mathbb{Z}$. For $t>0$, we have
\begin{equation}  \label{e-gue150603b}
\sum^n_{j=0}(-1)^j\mathrm{dim\,}H^j_{b,m}(X,E)=\int_X\Bigr(\mathrm{Tr\,}e^{-t%
\widetilde{\Box}^{+}_{b,m}}(x,x)-\mathrm{Tr\,}e^{-t\widetilde{\Box}%
^{-}_{b,m}}(x,x)\Bigr)dv_X(x).
\end{equation}
\end{cor}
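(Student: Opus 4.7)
The plan is to chain together three equalities already available at this point in the paper. First, by Theorem~\ref{t-gue150524f}, specifically \eqref{e-gue150524fI}, the alternating sum of Kohn-Rossi dimensions equals the index of the unmodified Dirac type operator:
\begin{equation*}
\sum^n_{j=0}(-1)^j\mathrm{dim\,}H^j_{b,m}(X,E)=\mathrm{ind\,}D_{b,m}.
\end{equation*}
Second, by the homotopy invariance (Theorem~\ref{t-gue150530I}) one has $\mathrm{ind\,}D_{b,m}=\mathrm{ind\,}\widetilde D_{b,m}$. Third, by Proposition~\ref{t-gue150530}, this latter index admits the heat kernel representation
\begin{equation*}
\mathrm{ind\,}\widetilde D_{b,m}=\int_X\Bigl(\mathrm{Tr\,}e^{-t\widetilde\Box^{+}_{b,m}}(x,x)-\mathrm{Tr\,}e^{-t\widetilde\Box^{-}_{b,m}}(x,x)\Bigr)dv_X(x)
\end{equation*}
valid for every $t>0$. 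Concatenating these three identities gives the claim.

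Let me spell out how the ingredients fit together. The key passage from $\mathrm{ind\,}\widetilde D_{b,m}$ to the heat trace difference is supplied by Lemma~\ref{t-gue150525I}, which pairs up nonzero eigenvalues of $\widetilde\Box^+_{b,m}$ and $\widetilde\Box^-_{b,m}$ with matching multiplicities, so that in \eqref{e-gue150525fbV} all exponentials $e^{-\nu t}$ with $\nu>0$ cancel on the difference, leaving only $\mathrm{dim\,}\mathrm{Ker\,}\widetilde\Box^+_{b,m}-\mathrm{dim\,}\mathrm{Ker\,}\widetilde\Box^-_{b,m}$; this in turn equals $\mathrm{ind\,}\widetilde D_{b,m}$ by \eqref{e-gue150530I}. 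The passage from $\mathrm{ind\,}D_{b,m}$ to $\sum_j(-1)^j\mathrm{dim\,}H^j_{b,m}(X,E)$ similarly uses the harmonic realization of cohomology from Theorem~\ref{t-gue150517II} together with the Hodge decomposition of Theorem~\ref{t-gue150524f}.

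The main obstacle in this argument is the homotopy invariance step (Theorem~\ref{t-gue150530I}), since $D_{b,m}$ and $\widetilde D_{b,m}$ differ by the zeroth order operator $A_m$ of Definition~\ref{d-gue150524}, which is not a priori of a form that makes classical Fredholm homotopy arguments transparent in the transversally elliptic setting. Once the linear interpolation $D_{b,m}^{(s)}:=\overline\partial_b+\overline\partial_b^*+sA_m$, $s\in[0,1]$, is shown to remain transversally elliptic with the appropriate regularity on eigenspaces (and this is where the Hodge theory developed in Section~\ref{s-gue150517}, in particular Corollary~\ref{t-gue150517I} and Proposition~\ref{t-gue150517aI}, is indispensable, providing discrete spectra and finite-dimensional eigenspaces for every $s$), the constancy of $\mathrm{ind\,}D^{(s)}_{b,m}$ in $s$ follows by the standard argument. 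Granting this invariance statement, the remaining steps are then essentially formal, and the corollary follows immediately.
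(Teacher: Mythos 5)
Your proposal is correct and follows exactly the paper's own route: the corollary is obtained by concatenating \eqref{e-gue150524fI} from Theorem~\ref{t-gue150524f}, the homotopy invariance $\mathrm{ind\,}D_{b,m}=\mathrm{ind\,}\widetilde D_{b,m}$ of Theorem~\ref{t-gue150530I}, and the heat trace identity of Proposition~\ref{t-gue150530}. Your supplementary remarks about Lemma~\ref{t-gue150525I} and the Hodge theory underpinning the homotopy step accurately reflect how those ingredients are used in the paper.
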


\begin{rem}
\label{r-gue151003} To compare with the original Kohn Laplacian,  a similar formula
(as Corollary~\ref{t-gue150603})
\begin{equation*}
\sum^n_{j=0}(-1)^j\mathrm{dim\,}H^j_{b,m}(X,E)=\int_X\Bigr(\mathrm{Tr\,}%
e^{-t\Box^{+}_{b,m}}(x,x)-\mathrm{Tr\,}e^{-t\Box^{-}_{b,m}}(x,x)\Bigr)%
dv_X(x)
\end{equation*}
holds. 
When $X$ is not CR K\"ahler, it is obscure, by the experience from classical cases,
to calculate the density $\mathrm{Tr\,}%
e^{-t\Box^{+}_{b,m}}(x,x)-\mathrm{Tr\,}e^{-t\Box^{-}_{b,m}}(x,x)$ with 
the original Kohn Laplacian.  
The introduction of the modified Kohn Laplacians replacing $\Box^{\pm}_{b,m}$ by
$\widetilde{\Box}^{\pm}_{b,m}$ 
is expected to facilitate this
calculation.  But because of the unconventional 
asymptotic expansion of $e^{-t\widetilde \Box^{\pm}_{b,m}}(x,x)$ some novelty beyond the classical cases 
shows up (as mentioned in Introducton).    It should be noted that when $X$ is CR
K\"ahler, $\widetilde{\Box}^{\pm}_{b,m}=\Box^{\pm}_{b,m}$. 
\end{rem}

To prove Theorem~\ref{t-gue150530I} in the remaining of this section, observe that it is nothing but a statement of 
{\it homotopy invariance} of index.  For, with $A_m$ a global operator (see Definition~\ref{d-gue150524}), putting $L_t=\overline\partial_{b,m}+\overline{\partial}%
^*_{b,m}+tA_m:\Omega^{0,+}_m(X,E)\rightarrow\Omega^{0,-}_m(X,E)$ for $t\in [0,1]$, gives 
the homotopy between $L_0=D_{b,m}$ and $L_1=\widetilde D_{b,m}$.   

Remark that there have been proofs for results of this type; for instance, see \cite{BGV92}
using heat kernel method and \cite{BB} using functional analysis method (both not exactly formulated 
in the above form though).   
To make it accessible
to a wider readership, we include a (comparatively) self-contained and short proof.
It is amusing to note that the Hodge theory in Section~\ref{s-gue150517} is useful at certain points 
of our proof.  

Some preparations are in order.
We extend $L_t$ by setting 
$$\mathrm{Dom\,}L_t=\{u\in L^{2,+}_m(X,E);\, \mbox{distribution $L_tu\in
L^{2,-}_m(X,E)$}\}$$ 
so that 
$L_t:\mathrm{Dom\,}L_t\, (\subset L^{2,+}_m(X,E))\rightarrow L^{2,-}_m(X,E)$.
Write $L^*_t$ for the Hilbert space
adjoint of $L_t$.  

Let $H^{1,+}_m(X,E)$ be the completion of $\Omega^{0,+}_m(X,E)$ with
respect to the Hermitian inner product
\begin{equation*}
Q(u,v)=(\,u\,|\,v\,)_E+(\,\overline\partial_bu\,|\,\overline\partial_bv%
\,)_E+(\,\overline{\partial}^*_bu\,|\,\overline{\partial}^*_bv\,)_E.
\end{equation*}
Clearly $H^{1,+}_m(X,E)\subset \mathrm{Dom\,}L_t, \forall \,t\in\mathbb{R}$.  
One can show that $H^{1,+}_m(X,E)=\mathrm{Dom\,}L_t$.
Let $t=0$.  Assume $L_0f=u$ with $f, u\in L_m^{2, +}$ and also assume 
$f\perp \mathrm{Ker}\,\Box_{b,m}^+$ since for any smooth $g$, $f+g\in H_m^{1,+}$
iff $f\in H_m^{1,+}$.  Using the partial 
inverse $N_m$ in \eqref{e-gue150520III} of our Hodge theory 
in Section~\ref{s-gue150517}, we have $L_0N_mf=N_mL_0f=N_mu$
since $L_0$ commutes with $N_m$ as in the ordinary Hodge theory.  
Now $L_0^*L_0N_mf=\Box_{b,m}^{+}N_mf=f$ by \eqref{e-gue150520III},
one has $f=L_0^*N_mu$.   But $N_mu$ increases the (Sobolev) order of regularity of $u$ by $2$
and then $L_0^*N_mu$ decreases by 1, the regularity of $f$ is of order 1.  
By localization, with a partition of unity, on an open subset $D$ in place of $X$ and 
by the formula \eqref{e-gue150514f} in BRT charts $D=U\times \rbrack -\delta, \delta\lbrack$
for $\overline\partial_b$, it follows from the standard G\"arding's inequality (e.g. \cite[p. 93] {GH})
that the above $Q(\cdot,\cdot)$ is equivalent to the Sobolev norm of order one (on the $m$-th component).   
Hence $f\in H_m^{1,+}$.   For $t\ne 0$, since $L_t=L_0+tA_m$ with $A_m$ a smooth zeroth order operator, it follows 
$\mathrm{Dom}\,L_t=\mathrm{Dom}\,L_0$.   



Consider $H_0:=H^{1,+}_m(X,E)\oplus\mathrm{Ker\,}L^*_0$ 
and $H_1=L^{2,-}_m(X,E)\oplus\mathrm{Ker\,}L_0$. Let $(\,\cdot\,|\,\cdot\,)_{H_0}$
and $(\,\cdot\,|\,\cdot\,)_{H_1}$ be inner products on $H_0$ and $H_1$
respectively, given by
\begin{equation*}
\begin{split}
(\,(f_1,g_1)\,|\,(f_2,g_2)\,)_{H_0}=Q(f_1,f_2)+(\,g_1\,|\,g_2\,)_E,\ \
(\,(\widetilde f_1,\widetilde g_1)\,|\,(\widetilde f_2,\widetilde
g_2)\,)_{H_1}=(\,\widetilde f_1\,|\,\widetilde f_2\,)_E+(\,\widetilde
g_1\,|\,\widetilde g_2\,)_E.  
\end{split}
\end{equation*}
Let $P_{%
\mathrm{Ker\,}L_0}$ denote the orthogonal projection onto $\mathrm{Ker\,}L_0$
with respect to $(\,\cdot\,|\,\cdot\,)_E$.

Let $A_t:H_0\rightarrow H_1$ be
the (continous) linear map defined as follows.   For  $(u,v)\in H_0$, 
$$A_t(u,v)=(L_tu+v,P_{\mathrm{Ker\,}L_0}u)\in H_1.$$

\begin{lem}
\label{l-gue150531} There is a $r>0$ such that $A_t:H_0\rightarrow H_1$ is
invertible, for every $0\leq t\leq r$.
\end{lem}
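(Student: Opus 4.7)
The plan is first to establish that $A_0$ is a bounded bijection $H_0 \to H_1$, whence invertible with bounded inverse by the open mapping theorem, and then to extend invertibility to $A_t$ for small $t$ by a Neumann series argument, writing $A_t = A_0 + tB$ with $B(u,v):=(A_m u,0)$.

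Boundedness of $A_0:H_0\to H_1$ is immediate: since $\overline\partial_b u$ and $\overline\partial^*_b u$ have different form-degrees and are therefore $L^2$-orthogonal, $\|L_0 u\|_E^2=\|\overline\partial_b u\|_E^2+\|\overline\partial^*_b u\|_E^2\le Q(u,u)$, and $\|P_{\mathrm{Ker\,}L_0}u\|_E\le\|u\|_E$, which together yield $\|A_0(u,v)\|_{H_1}^2\le C(Q(u,u)+\|v\|_E^2)=C\|(u,v)\|_{H_0}^2$. For injectivity, suppose $A_0(u,v)=0$. Then $L_0u=-v$; since $L_0u\in\overline{\mathrm{Range\,}L_0}=(\mathrm{Ker\,}L_0^*)^\perp$ while $v\in\mathrm{Ker\,}L_0^*$, orthogonality forces $L_0u=0$ and $v=0$, and then $P_{\mathrm{Ker\,}L_0}u=0$ together with $u\in\mathrm{Ker\,}L_0$ gives $u=0$.

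For surjectivity, given $(f,g)\in H_1$ I would invoke the Hodge decomposition for $L_0=D_{b,m}$ developed in Section~\ref{s-gue150517} (so that $\mathrm{Range\,}L_0$ is closed in $L^{2,-}_m(X,E)$): write $f=f^\parallel+f^\perp$ with $f^\parallel\in\mathrm{Ker\,}L_0^*$ and $f^\perp\in\mathrm{Range\,}L_0$. Using the partial inverse $N^{(-)}_m$ of $\Box^{-}_{b,m}=L_0L_0^*$, set $u^\perp:=L_0^*N^{(-)}_m f^\perp$, which lies in $H^{1,+}_m(X,E)\cap(\mathrm{Ker\,}L_0)^\perp$ by the regularity of $N^{(-)}_m$ (Lemma~\ref{l-gue150517} and its $L_0^*$-analogue) and satisfies $L_0 u^\perp=(I-\Pi^{-}_m)f^\perp=f^\perp$. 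Noting that $g\in\mathrm{Ker\,}L_0$ is smooth, the pair $(u,v):=(u^\perp+g,\,f^\parallel)$ lies in $H_0$ and solves $A_0(u,v)=(f,g)$. The open mapping theorem then gives boundedness of $A_0^{-1}:H_1\to H_0$.

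For the perturbation, $A_m$ is a smooth zeroth order operator (Definition~\ref{d-gue150524}), so $\|A_m u\|_E\le C\|u\|_E\le C\,Q(u,u)^{1/2}$, showing that $B:H_0\to H_1$ is bounded. Hence $A_t=A_0(I+tA_0^{-1}B)$, and choosing $r:=\|A_0^{-1}B\|^{-1}$, the Neumann series $(I+tA_0^{-1}B)^{-1}=\sum_{k\ge 0}(-tA_0^{-1}B)^k$ converges in the operator norm for all $0\le t\le r/2$, yielding the claim. The main conceptual step is the surjectivity of $A_0$, which rests on the closed-range property of $L_0$; this is precisely what the Hodge theory of Section~\ref{s-gue150517} (ultimately built on the ellipticity of $\Box^{(q)}_b-T^2$ on the $m$-th Fourier component) provides.
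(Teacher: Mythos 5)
Your proof is correct and follows essentially the same route as the paper's: show $A_0$ is a bounded bijection (injectivity via the orthogonality of $L_0u$ and $\mathrm{Ker}\,L_0^*$; surjectivity via the Hodge decomposition/partial inverse of Section~\ref{s-gue150517} and the closed-range property of $L_0$), then perturb by a Neumann/geometric series. One tiny remark: the identity $\|L_0u\|_E^2=\|\overline\partial_bu\|_E^2+\|\overline\partial_b^*u\|_E^2$ does not follow merely from ``different form-degrees'' (for $u\in\Omega^{0,+}_m$ a sum of even-degree components, $\overline\partial_bu_q$ and $\overline\partial_b^*u_{q+2}$ have the same degree); the cross term vanishes because $\overline\partial_b^2=0$, but in any case the inequality $\|L_0u\|_E^2\le 2Q(u,u)$, which is all that is needed for boundedness, is immediate from the definition of $Q$.
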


\begin{proof}
We first claim that
\begin{equation}  \label{e-gue150531a}
\mbox{$A_0$ is invertible}.
\end{equation}
If $A_0(u,v)=0$ for some $(u,v)\in H_0$, then
\begin{equation}  \label{e-gue150531aI}
i)\,\,L_0u=-v\in\mathrm{Ker\,}L^*_0,\quad ii)\,\,P_{\mathrm{Ker\,}L_0}u=0.
\end{equation}
By \eqref{e-gue150531aI}
\begin{equation}  \label{e-gue150531aIII}
(\,L_0u\,|\,L_0u\,)_E=-(\,L_0u\,|\,v\,)_E=-(\,u\,|\,L^*_0v\,)_E=0,
\end{equation}
giving $u\in\mathrm{Ker\,}L_0$.   Hence by ii) of \eqref{e-gue150531aI} we
obtain $u=0$, giving also $v=0$ by i) of \eqref{e-gue150531aI}. We have proved that $A_0$ is injective. 

We shall now
prove that $A_0$ is surjective. Let $(a,b)\in H_1$.   First we note $L_0: \mathrm{Dom\,}L_0\rightarrow L^{2,-}_m(X,E)$
has an $L^2$ closed range, so
\begin{equation}  \label{e-gue150531b}
a=L_0\alpha+\beta,\ \ \alpha\in H^{1,+}_m(X,E),\ \ \alpha\perp\mathrm{Ker\,}%
L_0,\ \ \beta\in(\mathrm{Rang\,}L_0)^\perp=\mathrm{Ker\,}L^*_0.
\end{equation}
Another way to see \eqref{e-gue150531b} is to use $\Box^{-}_{b,m}N_m^-+\Pi_m^-=I$ (on $L_m^-$) of 
\eqref{e-gue150520III} (for the ``$-$" case) of Hodge theory in Section~\ref{s-gue150517}, 
and obtain $a=L_0L_0^*N_m^-a+\Pi_m^- a$ where $\Pi_m^- a\in \mathrm{Ker}\,L_0^*$ (cf. Theorem~\ref{t-gue150524f})
and $L_0^*N_m^-a\in H^{1,+}_m(X,E)$ as mentioned above this lemma.  
In either way, by \eqref{e-gue150531b} one sees
\begin{equation*}
A_0(\alpha+b,\beta)=(L_0(\alpha+b)+\beta,P_{\mathrm{Ker\,}%
L_0}(\alpha+b))=(L_0\alpha+\beta,b)=(a,b).
\end{equation*}
Thus $A_0$ is surjective. The claim \eqref{e-gue150531a} follows.

Let $A^{-1}_0:H_1\rightarrow H_0$ be the inverse of $A_0$.  It follows from open mapping
theorem that $A^{-1}_0$ is continuous.

To finish the proof the following arguement based on geometric series is standard.
Write $A_t=A_0+R_t$, where $R_t:H_0\rightarrow H_1$ is continuous
and there is a constant $c>0$ such that $\left\Vert
R_tu\right\Vert_{H_1}\leq ct\left\Vert u\right\Vert_{H_0}$, for $u\in
H_0$. Put
\begin{equation*}
\begin{split}
&H_t=I-A^{-1}_0R_t+(A^{-1}_0R_t)^2-(A^{-1}_0R_t)^3+\cdots, \\
&\widetilde H_t=I-R_tA^{-1}_0+(R_tA^{-1}_0)^2-(R_tA^{-1}_0)^3+\cdots.
\end{split}%
\end{equation*}
Since $A^{-1}_0$ is continuous, $H_t:H_0\rightarrow H_0$ and $\widetilde
H_t:H_1\rightarrow H_1$ are well-defined as continuous maps for small $t\geq0$.
Moreover $A_t\circ (H_t\circ A^{-1}_0)=I$ (on $H_1
$) and $(A^{-1}_0\circ\widetilde H_t)\circ A_t=I$ on ($H_0$), giving 
right and left inverses of $A_t$ for small $t\geq0$.
Hence the lemma.  
\end{proof}

For $t\in[0,1]$ write
$L^*_t:\mathrm{Dom\,}L^*_t\,(\subset L^{2,-}_m(X,E))\rightarrow L^{2,+}_m(X,E)
$
for the adjoint of $L_t$ with respect to $(\,\cdot\,|\,\cdot\,)_E$%
.   Similar to $L_0$ and $L_1$, one has $\mathrm{dim\,}\mathrm{Ker\,}L_t<\infty$ and $\mathrm{%
dim\,}\mathrm{Ker\,}L^*_t<\infty$ (with $\mathrm{Ker\,}%
L_t\subset\Omega^{0,+}_m(X,E)$, $\mathrm{Ker\,}L^*_t\subset%
\Omega^{0,-}_m(X,E)$).  

Put $\mathrm{ind\,}L_t:=\mathrm{dim\,}%
\mathrm{Ker\,}L_t-\mathrm{dim\,}\mathrm{Ker\,}L^*_t$.

\begin{lem}
\label{l-gue150603} There is a $r_0>0$ such that $\mathrm{ind\,}L_t=\mathrm{%
ind\,}L_0$, for every $0\leq t\leq r_0$.
\end{lem}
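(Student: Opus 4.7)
The plan is to exploit the invertibility of $A_t$ from Lemma~\ref{l-gue150531} by viewing $A_t$ as a finite-rank modification of a block-diagonal operator whose index encodes that of $L_t$. I would take $r_0$ to be the constant $r$ of Lemma~\ref{l-gue150531}, so that for $0\le t\le r_0$ the operator $A_t:H_0\to H_1$ is a continuous isomorphism and hence Fredholm of index zero.

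The operator $A_t$ has the block form $A_t(u,v)=(L_tu+jv,\,Pu)$, where $j:\mathrm{Ker\,}L^*_0\hookrightarrow L^{2,-}_m(X,E)$ is the inclusion and $P=P_{\mathrm{Ker\,}L_0}:H^{1,+}_m(X,E)\to\mathrm{Ker\,}L_0$. Introduce the block-diagonal comparison operator $\widetilde A_t:H_0\to H_1$ defined by $\widetilde A_t(u,v)=(L_tu,0)$. Since $L_t=L_0+tA_m$ is a bounded perturbation, by a zeroth order operator, of the Fredholm operator $L_0:H^{1,+}_m(X,E)\to L^{2,-}_m(X,E)$ (Fredholmness of $L_0$ being a consequence of the Hodge-style decomposition developed in Section~\ref{s-gue150517}: closed range, finite-dimensional kernel and cokernel), $L_t$ itself is Fredholm for all small $t$. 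A routine index count then gives
\[
\mathrm{ind\,}\widetilde A_t=\mathrm{ind\,}L_t+\dim\mathrm{Ker\,}L^*_0-\dim\mathrm{Ker\,}L_0,
\]
the finite-dimensional contributions reflecting that $\mathrm{Ker\,}L^*_0$ appears in the domain of $\widetilde A_t$ mapping to zero, and $\mathrm{Ker\,}L_0$ appears as a trivial summand in the codomain.

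The difference $A_t-\widetilde A_t$ is of \emph{finite rank}: $P$ takes values in the finite-dimensional space $\mathrm{Ker\,}L_0$, and $j$ has finite-dimensional domain. Since the Fredholm index is invariant under compact (in particular finite-rank) bounded perturbations, $\mathrm{ind\,}A_t=\mathrm{ind\,}\widetilde A_t$. Combining with $\mathrm{ind\,}A_t=0$ yields
\[
\mathrm{ind\,}L_t=\dim\mathrm{Ker\,}L_0-\dim\mathrm{Ker\,}L^*_0=\mathrm{ind\,}L_0,
\]
for every $0\le t\le r_0$, which is the desired conclusion.

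The main technical obstacle will be verifying Fredholmness of $L_t:H^{1,+}_m(X,E)\to L^{2,-}_m(X,E)$ and justifying the passage between the unbounded formulation used in Section~\ref{s-gue150517} and the bounded formulation needed for the index count above; once this bookkeeping is established, the two facts that $A_t$ is invertible and that $A_t-\widetilde A_t$ has finite rank do the rest. The approach also makes clear how the lemma extends beyond the initial interval by iteration: one replaces $L_0$ by $L_{t_0}$ for an arbitrary $t_0\in[0,1]$, repeats the construction with $\mathrm{Ker\,}L_{t_0}$ and $\mathrm{Ker\,}L^*_{t_0}$ in place of $\mathrm{Ker\,}L_0$ and $\mathrm{Ker\,}L^*_0$, and then uses a standard compactness argument on $[0,1]$ to obtain $\mathrm{ind\,}D_{b,m}=\mathrm{ind\,}\widetilde D_{b,m}$ claimed in Theorem~\ref{t-gue150530I}.
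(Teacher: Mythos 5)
Your argument is correct and takes a genuinely different route from the paper's, although both hinge on the invertibility of $A_t$ from Lemma~\ref{l-gue150531}. The paper extracts the conclusion by explicit linear algebra: it builds an injection $B:\mathrm{Ker\,}L^*_t\oplus\mathrm{Ker\,}L_0\to\mathrm{Ker\,}L_t\oplus\mathrm{Ker\,}L^*_0$ out of $A_t^{-1}$ and the projection $P_{\mathrm{Ker\,}L_t}$, deduces $\mathrm{ind\,}L_0\le\mathrm{ind\,}L_t$ from counting dimensions, and then runs the same argument for the adjoints to get the reverse inequality. You instead invoke abstract Fredholm calculus: an invertible operator has index $0$, the index is invariant under finite-rank perturbations, and a direct kernel/cokernel count for the block-diagonal comparison $\widetilde A_t(u,v)=(L_tu,0)$ gives $\mathrm{ind\,}\widetilde A_t=\mathrm{ind\,}L_t+\dim\mathrm{Ker\,}L^*_0-\dim\mathrm{Ker\,}L_0$, from which the result follows in one step without needing to argue both inequalities separately. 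Your version is shorter and more conceptual; the paper's is more elementary and self-contained (no appeal to perturbation invariance of the Fredholm index). One small point worth noting: the Fredholmness of $L_t:H^{1,+}_m(X,E)\to L^{2,-}_m(X,E)$, which you flag as a technical obstacle, actually comes for free from your own argument, since $\widetilde A_t$ is a finite-rank perturbation of the invertible (hence Fredholm) $A_t$, and $\widetilde A_t$ is Fredholm exactly when $L_t$ is. What does require care, and you acknowledge, is matching the cokernel of the bounded map $L_t:H^{1,+}_m\to L^{2,-}_m$ with $\mathrm{Ker\,}L^*_t$ of the unbounded Hilbert-space adjoint used in the paper's definition $\mathrm{ind\,}L_t=\dim\mathrm{Ker\,}L_t-\dim\mathrm{Ker\,}L^*_t$; this follows from the paper's identification $H^{1,+}_m(X,E)=\mathrm{Dom\,}L_t$ and closedness of the range.
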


\begin{proof}
Let $r>0$ be as in Lemma~\ref{l-gue150531}. We first show that
\begin{equation}  \label{e-gue150603}
\mathrm{ind\,}L_0\leq\mathrm{ind\,}L_t,\ \ \forall\,0\leq t\leq r.
\end{equation}

Fix $0\leq t\leq r$.  We define
\begin{equation*}
B:\mathrm{Ker\,}L^*_t\oplus\mathrm{Ker\,}L_0\rightarrow\mathrm{Ker\,}%
L_t\oplus\mathrm{Ker\,}L^*_0
\end{equation*}
as follows. Let $(a,b)\in\mathrm{Ker\,}%
L^*_t\oplus\mathrm{Ker\,}L_0$. By Lemma~\ref{l-gue150531},
\begin{equation*}
A_t:H^{1,+}_m(X,E)\oplus\mathrm{Ker\,}L^*_0\rightarrow L^{2,-}_m(X,E)\oplus%
\mathrm{Ker\,}L_0
\end{equation*}
is invertible. There is a unique $(u,v)\in H^{1,+}_m(X,E)\oplus\mathrm{Ker\,}%
L^*_0=\mathrm{Dom\,}L_t\oplus\mathrm{Ker\,}L^*_0$ such that $A_t(u,v)=(a,b)$%
. Let $P_{\mathrm{Ker\,}L_t}:L^{2,+}_m(X,E)\rightarrow\mathrm{Ker\,}L_t$ be
the orthogonal projection with respect to $(\,\cdot\,|\,\cdot\,)_E$. 
Then the above map $B$ is defined by $$%
B(a,b):=(P_{\mathrm{Ker\,}L_t}u,v)\in\mathrm{Ker\,}L_t\oplus\mathrm{Ker\,}%
L^*_0.$$

We claim that $B$ is injective.  If so, then 
\begin{equation}  \label{e-gue150603I}
\mathrm{dim\,}\mathrm{Ker\,}L^*_t+\mathrm{dim\,}\mathrm{Ker\,}L_0\leq\mathrm{%
dim\,}\mathrm{Ker\,}L_t+\mathrm{dim\,}\mathrm{Ker\,}L^*_0,
\end{equation}
i.e. $\mathrm{dim\,}\mathrm{Ker\,}L_0-\mathrm{dim\,}\mathrm{Ker\,}L^*_0\leq\mathrm{%
dim\,}\mathrm{Ker\,}L_t-\mathrm{dim\,}\mathrm{Ker\,}L^*_t$, 
yielding the desired \eqref{e-gue150603}.

For the claim that $B$ is injective, 
if $B(a,b)=(0,0)$ for some $%
(a,b)\in\mathrm{Ker\,}L^*_t\oplus\mathrm{Ker\,}L_0$, write $(u,v)$ ($\in
H^{1,+}_m(X,E)\oplus\mathrm{Ker\,}L^*_0$) such that $A_t(u,v)=(a,b)$.   As $%
(0,0)=B(a,b)=(P_{\mathrm{Ker\,}L_t}u,v)$, $P_{\mathrm{Ker\,}L_t}u=0$
and $v=0$.  Using the definition of $A_t$, one has 
\begin{equation}\label{e-gue150603e1}
A_t(u,v)=A_t(u,0)=(L_tu,P_{\mathrm{Ker\,}L_0}u)=(a,b)\in\mathrm{Ker\,}%
L^*_t\oplus\mathrm{Ker\,}L_0
\end{equation}
to give $a=L_tu\in\mathrm{Ker\,}L^*_t$, hence $(\,a\,|\,a\,)_E=(\,a\,|\,L_tu%
\,)_E=(\,L^*_ta\,|\,u\,)_E=0$ gives $L_tu=a=0$ so that 
$u\in\mathrm{Ker\,}L_t$, i.e. $u=P_{\mathrm{Ker\,}L_t}u$ by definition. 
It follows that $u=0$ since $P_{\mathrm{Ker\,}L_t}u=0$ as just seen.  
With $u=0$ and \eqref{e-gue150603e1}
one sees $(a,b)=(L_tu, P_{\mathrm{Ker\,}%
L_0}u)=(0,0)$, giving the injectivity of $B$.

By the same argument, 
$\mathrm{ind\,}L^*_0\leq\mathrm{ind\,}L^*_t$ for small $t$. 
By $\mathrm{ind\,}L^*_t=-\mathrm{ind\,}L_t$,
$\mathrm{ind\,}L_0\geq\mathrm{ind\,}L_t$ holds.  
This and \eqref{e-gue150603} prove the lemma.  
\end{proof}

\begin{proof}[Proof of Theorem~\protect\ref{t-gue150530I}]
Let
\begin{equation*}
I_0:=\left\{r\in[0,1];\, \mbox{there is an $\varepsilon>0$ such that ${\rm
ind\,}L_t={\rm ind\,L_0}$, $\forall
t\in(r-\varepsilon,r+\varepsilon)\bigcap[0,1]$}\right\}.
\end{equation*}
$%
I_0\ne\emptyset$ is open by
Lemma~\ref{l-gue150603}.  Around a limit point $r_{\infty}$ of $I_0$, 
by the same type of argument in the proof of Lemma~\ref{l-gue150603} and
Lemma~\ref{l-gue150531} (replacing $t=0$ by $t=r_\infty$ in 
$H_0$, $H_1$ and $A_0$), 
one finds  $\mathrm{ind\,}L_t=\mathrm{ind\,}L_{r_{\infty}}$ for 
$t\in(r_\infty-\varepsilon_0,r_\infty+\varepsilon_0)$ with some $\epsilon_0>0$.   
This implies $I_0$ is closed in $[0,1]$, so $I_0=[0,1]$.  
\end{proof}

\section{Asymptotic expansions for the heat kernels of the modified Kohn
Laplacians}

\label{s-gue150606}
In view of the McKean-Singer formula (Corollary~\ref{t-gue150603}), one of the goals is to 
calculate the local density (i.e. the term to the right of \eqref{e-gue150603b}).   
It consists in obtaining an asymptotic expansion for the heat kernel of the modified Kohn Laplacian
($\mathrm{Spin}^c$ Kohn Laplacian), to which 
we base our approach on two main steps.   While the first step is motivated by the globally free
case (see Theorem~\ref{t-gue150508}), it will be replaced by a local version within
the framework of BRT trivializations (Section~\ref{s-gue150514}).   A crucial off-diagonal
estimate is going to be done in this subsection (cf. Theorem~\ref{t-gue150627g}).  In the second step
we use the {\it adjoint} version of the heat equation to construct a global heat kernel with an asymptotic 
expansion related to {\it local kernels}.  

\subsection{Heat kernels of the modified Kodaira Laplacians on BRT
trivializations}

\label{s-gue150627}
This subsection is motivated by the globally free case
(cf. Theorem~\ref{t-gue150508}).   Here the emphasis is made 
on the localization of the argument including the $\mathrm{Spin}^c$ structure (which is needed for explicit 
local formulas of the heat kernel density).  An important heat kernel estimate, 
termed as {\it off-diagonal estimate}, will be established in Theorem~\ref{t-gue150627g}.  

It is worth remarking that in the statement and proof of Theorem~\ref{t-gue150508}, 
we make no use of harmonic theory.    In the locally free case, by contrast, it will be an important step to 
relate the (modified) Kohn Laplacian to (modified) Kodaira Laplacian (see
discussion after that theorem).  Since these Laplacians 
are defined via certain {\it adjoints}, suitable matching of metrics involved in both Laplacians must be 
done as an essential step.  

We will use the same notations as in Section~\ref{s-gue150524}.
Let $B:=(D,(z,\theta),\varphi)$ be a
BRT trivialization (with $D=U\times]-\varepsilon,\varepsilon[$%
, $\varepsilon>0$ and $U$ an open subset of $\mathbb{C}^n$, 
cf. Subsection~\ref{s-gue150514}).   For $x\in D$ wrtie 
$z=z(x)$ and $\theta=\theta(x)$.  
Since $E$
is rigid and CR, equipped with a rigid Hermitian (fiber) metric $\langle\,\cdot\,|\,\cdot\,\rangle_E$,
(as in Section~\ref{s-gue150524}) $E$ descends as a
(holomorphically trivial) vector bundle over $U$ (possibly after shrinking $U$)
and $\langle\,\cdot\,|\,\cdot\,\rangle_E$ as a Hermitian (fiber) metric on $E$ over $U$.

Let $%
L\rightarrow U$ be a trivial (complex) line bundle with a non-trivial Hermitian fiber
metric $\left\vert 1\right\vert^2_{h^L}=e^{-2\varphi}$ ($\varphi$ as in the above BRT triple $B$).   
Write $%
(L^m,h^{L^m})\rightarrow U$ for the $m$-th power of $(L,h^L)$. Let $%
\Omega^{0,q}(U,E\otimes L^m)$ be the space of $(0,q)$ forms on $U$ with
values in $E\otimes L^m$ ($q=0,1,2,\ldots,n$).   As usual,
$\Omega^{0,+}(U,E\otimes L^m)$ and $\Omega^{0,-}(U,E\otimes L^m)$
denote forms of even and odd degree.

To start with the matching of the metrics we 
let $\langle\,\cdot\,,\,\cdot\,%
\rangle$ be the Hermitian metric on $\mathbb{C }TU$ given by
(cf. \eqref{e-gue150823e1})
\label{e-gue150627e1} 
\begin{equation}
\langle\,\frac{\partial}{\partial z_j}\,,\,\frac{\partial}{\partial z_k}%
\,\rangle=\langle\,\frac{\partial}{\partial z_j}-i\frac{\partial\varphi}{%
\partial z_j}(z)\frac{\partial}{\partial\theta}\,|\,\frac{\partial}{\partial
z_k}-i\frac{\partial\varphi}{\partial z_k}(z)\frac{\partial}{\partial\theta}%
\,\rangle,\ \ j,k=1,2,\ldots,n.
\end{equation}
$\langle\,\cdot\,,\,\cdot\,\rangle$ induces Hermitian metrics on $T^{*0,q}U$
(bundle of $(0,q)$ forms on $U$), denoted also by $\langle\,\cdot\,,\,\cdot\,\rangle$.
These metrics induce Hermitian metrics on $T^{*0,q}U\otimes
E$, still denoted by $%
\langle\,\cdot\,|\,\cdot\,\rangle_{E}$.

Let $(\,\cdot\,,\,\cdot\,)$ be the $%
L^2$ inner product on $\Omega^{0,q}(U,E)$ induced by $\langle\,\cdot\,,\,%
\cdot\,\rangle$, $\langle\,\cdot\,|\,\cdot\,\rangle_E$,
and similarly $(\,\cdot\,,\,\cdot\,)_m$ the $L^2$ inner product on $%
\Omega^{0,q}(U,E\otimes L^m)$ induced by $\langle\,\cdot\,,\,\cdot\,\rangle$%
, $\langle\,\cdot\,|\,\cdot\,\rangle_E$ and $h^{L^m}$.

Let $\overline\partial_{L^m}:\Omega^{0,q}(U,E\otimes
L^m)\rightarrow\Omega^{0,q+1}(U,E\otimes L^m),\,(q=0,1,2,\ldots,n-1),$
be the Cauchy-Riemann operator.  Let
\begin{equation*}
\overline{\partial}^*_{L^m}:\Omega^{0,q+1}(U,E\otimes
L^m)\rightarrow\Omega^{0,q}(U,E\otimes L^m)
\end{equation*}
be the formal adjoint of $\overline\partial_{L^m}$ with respect to $%
(\,\cdot\,,\,\cdot\,)_m$.  

An essential operator that enters our picture 
is the following one (of Dirac type).   
\begin{equation}  \label{e-gue150606}
D_{B,m}\,(=D^{+-}_{B,m})\,:=\overline\partial_{L^m}+\overline{\partial}_{L^m}^{*}+A_B:
\Omega^{0,+}(U,E\otimes L^m)\rightarrow\Omega^{0,-}(U,E\otimes L^m)
\end{equation}
where $A_B:\Omega^{0,+}(U,E\otimes L^m)\rightarrow\Omega^{0,-}(U,E\otimes
L^m)$ is as in \eqref{e-gue150524faI} (replacing $E$ there by $E\otimes L^m$ here) and 
\begin{equation}  \label{e-gue150606I}
D^*_{B,m}: \Omega^{0,-}(U,E\otimes L^m)\rightarrow\Omega^{0,+}(U,E\otimes
L^m)
\end{equation}
the formal adjoint of $D_{B,m}$ with respect to $(\,\cdot\,,\,\cdot\,)_m$. 
(Note $D_{B,m}$ on the full $\Omega^{0,*}=\Omega^{0,+}\oplus\Omega^{0,-}$
is self-adjoint; see the line below \eqref{e-gue150825a}.  But we prefer 
to use the above $D^*_{B,m}$ in the present context.)   Note also $L$ with the metric $h^L$ depends 
on the choice of a BRT trivialization.    However, $A_B$ is indeed an intrinsic object; 
we refer to Remark~\ref{r-gue150606} in this regard.  

One has the {\it modified/$\mathrm{Spin}^c$ Kodaira Laplacian}:
\begin{equation}  \label{e-gue150606II}
\widetilde\Box^{+}_{B,m}:=D^*_{B,m}D_{B,m}: \Omega^{0,+}(U,E\otimes
L^m)\rightarrow\Omega^{0,+}(U,E\otimes L^m).
\end{equation}

One may define $\widetilde\Box^{-}_{B,m}: \Omega^{0,-}(U,E\otimes
L^m)\rightarrow\Omega^{0,-}(U,E\otimes L^m)$ analogously 
(by starting with $D^{- +}_{B,m}$ or $D^*_{B,m}$). 

The following fact appears fundamental in itself.  It is instrumental to our construction of a heat kernel 
(cf. \eqref{e-gue150628gI}) (See, however, remarks after its proof). 

\begin{prop}
\label{l-gue150606}  In notations above let $u\in\Omega^{0,\pm}_m(X,E)$.  On $D$ we can write $u(z,\theta)=e^{-im\theta}\widetilde u(z)$
for some $\widetilde
u(z)\in\Omega^{0,\pm}(U,E)$.  Recall the modified Kohn Laplacian $\widetilde{\Box}^{\pm}_{b,m}$ in \eqref{e-gue150525aII}.   We write $s$ for the local basis $1^m$ of $L^m$.  
Then 
\begin{equation}  \label{e-gue150606III}
e^{-m\varphi}\widetilde\Box^{\pm}_{B,m}(e^{m\varphi}\widetilde u\otimes s)=(e^{im\theta}%
\widetilde{\Box}^{\pm}_{b,m}(u))\otimes s.
\end{equation}
Without the danger of confusion we may write 
\begin{equation}
\label{e-gue150606IV}
e^{-m\varphi}\widetilde\Box^{\pm}_{B,m}(e^{m\varphi}\widetilde u)=e^{im\theta}\widetilde{\Box}^{\pm}_{b,m}(u).
\end{equation}
\end{prop}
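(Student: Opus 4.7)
\smallskip

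\noindent
\textbf{Proof plan.} The identity \eqref{e-gue150606IV} is local on $D$, so the strategy is to introduce the unitary-type identification
$$
\Psi : \Omega^{0,\bullet}_m(D,E) \longrightarrow \Omega^{0,\bullet}(U,E\otimes L^m),
\qquad u = e^{-im\theta}\widetilde u(z) \longmapsto e^{m\varphi(z)}\,\widetilde u(z)\otimes s,
$$
and to show that $\Psi$ intertwines $\widetilde D_{b,m}$ with $D_{B,m}$ up to the twist $e^{-im\theta}$ vs.\ $e^{m\varphi}\otimes s$; squaring then yields the identity for $\widetilde\Box^{\pm}_{b,m}$ vs.\ $\widetilde\Box^{\pm}_{B,m}$. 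The proof will run through four steps: matching metrics, matching $\overline\partial$, matching its formal adjoint, and matching the zeroth-order Clifford piece.

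\smallskip

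First I would verify that $\Psi$ is, up to a constant factor $\sqrt{2\varepsilon}$, an isometry between the natural $L^2$ pairings. Using the rigidity relations \eqref{e-gue22a1} together with \eqref{e-gue150823e1}, integration in $\theta\in\,]-\varepsilon,\varepsilon[\,$ on $D$ contributes a factor $2\varepsilon$, while the factor $e^{2m\varphi}$ introduced by $\Psi$ cancels exactly with $|s|^2_{h^{L^m}}=e^{-2m\varphi}$. Next, using the formula \eqref{e-gue150514f} for $\overline\partial_b$ in BRT coordinates, one computes
$$
\overline\partial_b(e^{-im\theta}\widetilde u) \;=\; e^{-im\theta}\Big(\overline\partial\widetilde u + m\,\overline\partial\varphi\wedge\widetilde u\Big),
$$
while, since $s$ is a holomorphic frame of $L^m$,
$$
\overline\partial_{L^m}(e^{m\varphi}\widetilde u\otimes s) \;=\; e^{m\varphi}\Big(\overline\partial\widetilde u + m\,\overline\partial\varphi\wedge\widetilde u\Big)\otimes s.
$$
Comparing the two gives $e^{-m\varphi}\overline\partial_{L^m}(e^{m\varphi}\widetilde u\otimes s) = \big(e^{im\theta}\overline\partial_b u\big)\otimes s$, which is the $\overline\partial$-step of the intertwining.

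\smallskip

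For the third step, I would pass to adjoints by combining the isometry property of $\Psi$ with the $\overline\partial$-intertwining. Since $\overline\partial^{\,*}_{b,m}$ and $\overline\partial^{\,*}_{L^m}$ are formal adjoints with respect to the matched inner products, testing against compactly supported $(0,q)$-forms in $D$ (identified under $\Psi$) and integrating by parts shows $e^{-m\varphi}\overline\partial^{\,*}_{L^m}(e^{m\varphi}\widetilde u\otimes s) = \big(e^{im\theta}\overline\partial^{\,*}_b u\big)\otimes s$. For the fourth (zeroth-order) step, I would treat $A_B^{L^m} := D_{B,m} - (\overline\partial_{L^m}+\overline\partial^{\,*}_{L^m})$ via the Clifford-connection formula \eqref{e-gue150825II}: the only new contribution relative to the $A_B$ of Definition~\ref{d-gue150524} comes from $\nabla^{L^m}s = -2m\,\partial\varphi\otimes s$, which, together with $\Psi$-conjugation and the definition of $A_m$ in Definition~\ref{d-gue150524} (compatibility across BRT trivializations being guaranteed by Proposition~\ref{l-gue150524d}), reproduces the $A_m$ piece of $\widetilde D_{b,m}$ acting on $u$. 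Assembling the four matchings gives
$$
e^{-m\varphi}D_{B,m}(e^{m\varphi}\widetilde u\otimes s) \;=\; \big(e^{im\theta}\widetilde D_{b,m}u\big)\otimes s,
$$
and the analogous identity for $D^{*}_{B,m}$ follows by the same isometry argument. Composing these two relations yields \eqref{e-gue150606III}, and the concise form \eqref{e-gue150606IV} is just the standard abuse of notation dropping the trivializing frame $s$.

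\smallskip

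\noindent
\textbf{Expected main obstacle.} The delicate point is Step~4: carefully tracking how the twist by $L^m$ (with metric $e^{-2m\varphi}$) modifies the Clifford connection on $\Lambda(T^{*0,1}U)\otimes E\otimes L^m$ relative to the untwisted case governed by Definition~\ref{d-gue150524}. One has to verify that, after conjugation by $e^{m\varphi}$, the new zeroth-order terms produced by $\nabla^{L^m}s$ reassemble exactly into the local expression for $A_m$ in BRT coordinates and not into an unwanted additional curvature term. This is where the specific choice $|1|^2_{h^L}=e^{-2\varphi}$, tied precisely to the BRT function $\varphi$ in \eqref{e-can}, is indispensable, and it is the reason the conjugation factor $e^{m\varphi}$ in $\Psi$ is forced upon us rather than being an arbitrary gauge.
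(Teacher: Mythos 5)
Your proof is correct and follows essentially the same route as the paper: your map $\Psi$ is the composite $\chi\circ\chi_1$ of the paper's two norm-preserving maps, and the intermediate operator $\delta\widetilde u = \overline\partial\widetilde u + m\,\overline\partial\varphi\wedge\widetilde u$ that appears in both of your Step 2 computations is exactly the paper's $\delta$. The only place you over-complicate is Step 4: the zeroth-order piece $A_B$ in \eqref{e-gue150606} is literally the \emph{same} tensorial Clifford multiplication as in \eqref{e-gue150524faI}, because it depends only on the Hermitian metric of $U$ and not on the coefficient bundle, so the twist $\nabla^{L^m}s=-2m\,\partial\varphi\otimes s$ is entirely absorbed into $\overline\partial_{L^m}+\overline\partial^{\,*}_{L^m}$ rather than producing a "new contribution" to $A_B$ that must cancel. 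Consequently the matching of the zeroth-order piece, \eqref{e-gue150606aVIII} in the paper, is immediate from Definition~\ref{d-gue150524}, which defines $A_m$ precisely as the BRT-conjugate of $A_B$ (with well-definedness supplied by Proposition~\ref{l-gue150524d}); no bookkeeping of curvature terms is needed there.
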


\begin{proof}  One may work out this result by explicit computations. 
The following gives a somewhat conceptual proof.  
The idea is that one continues to match the objects 
on $U$ and on $D\,(\subset X)$.  (In this way it turns out 
that no explicit computations of these Laplacians in local coordinates are needed.)  

We define $\chi\,(=\chi_q)\,:\Omega^{0,q}(U,E)\to 
\Omega^{0,q}(U,E\otimes L^m)$ ($q=0,1,2,\ldots,n$) by $\tilde v(z) \to 
\tilde v(z)e^{m\varphi(z)}\otimes s(z)$ for $\tilde v\in \Omega^{0,q}(U, E)$.  
Note $\chi$ preserves the (pointwise) norms.   Equivalently $\chi(e^{-m\varphi}\tilde v)=
\tilde v\otimes s$.  

We define $\delta \tilde v=\overline\partial \tilde v+ m(\overline\partial\varphi) \wedge\tilde v$ for 
$\tilde v\in \Omega^{0,q}(U,E)$ where $\overline\partial: \Omega^{0,q}(U,E)\to \Omega^{0,q+1}(U,E)$. 
One may verify 
\begin{equation}
\label{e-gue150606le1}
\overline\partial_{L^m}\circ\chi
=\chi\circ \delta\quad \mbox{on\,\, $\Omega^{0,q}(U,E)$}.
\end{equation}

Indeed, by $\chi(e^{-m\varphi}\overline\partial \tilde u)=\overline\partial\tilde u\otimes s
=\overline\partial_{L^m}(\tilde u\otimes s)$, 
one sees the term to the left of \eqref{e-gue150606le1}:  $\overline\partial_{L^m}\circ\chi(e^{-m\varphi}\tilde u)\,(=
\overline\partial_{L^m}(\tilde u\otimes s))\,=\chi(e^{-m\varphi}\overline\partial \tilde u)$.
Further, by using definition of $\delta$ one computes $e^{m\varphi}\delta(e^{-m\varphi}\tilde u)=\overline\partial\tilde u$.
Then  
$\chi(e^{-m\varphi}\overline\partial \tilde u)=\chi(e^{-m\varphi}(e^{m\varphi}\delta(e^{-m\varphi}\tilde u)))$
which is $\chi(\delta(e^{-m\varphi}\tilde u))$, 
giving the term to the right of \eqref{e-gue150606le1} and proving \eqref{e-gue150606le1}.  

Since $\chi$ is norm-preserving, we have also 
\begin{equation}
\label{e-gue150606le2}
\overline\partial_{L^m}^*\circ \chi=\chi\circ \delta^*
\end{equation}
between respective adjoints.  Combining \eqref{e-gue150606le1}
and \eqref{e-gue150606le2} gives for $\Box_{B,m}\equiv(\overline\partial_{L^m}^*+\overline\partial_{L^m})^2$
and $\Delta\equiv (\delta^*+\delta)^2$
\begin{equation}
\label{e-gue150606le3}
\Box_{B,m}\circ \chi
=\chi\circ\Delta.
\end{equation}

By \eqref{e-gue150514f} for $\overline\partial_b$, one computes, for $g=e^{-im\theta}\tilde g\in
\Omega_m^{0,q}(U\times]-\varepsilon,\varepsilon[,E)=\Omega_m^{0,q}(D,E)$
\begin{equation}  \label{e-gue150606a}
e^{im\theta}\overline\partial_b(e^{-im\theta}\tilde g(z))=\delta(\tilde g(z)).
\end{equation}
Write the map $\chi_1: \Omega^{0,q}_m(D, E)\to \Omega^{0,q}(U, E)$ 
for $\chi_1(g)=\chi_1(e^{-im\theta}\tilde g)=\tilde g$, equivalently, $\chi_1(g)=e^{im\theta}g$.  
Note $\chi_1$ preserves the 
respective (pointwise) 
norms (cf. \eqref{e-gue150627e1}).   

By \eqref{e-gue150606a} one sees (with $\overline\partial_{b,m}=\overline\partial_{b}|_{\Omega^{0,q}_m}$) 
\begin{equation}
\label{e-gue150606le4}
\chi_1\circ \overline\partial_{b,m}=\delta\circ \chi_1\quad\mbox{on\,\,\,$\Omega^{0,q}_m(D, E)$}.
\end{equation}

By \eqref{e-gue22a1} the $L^2$ inner product on $\Omega^{0,q}_m(D, E)$ is clearly $2\varepsilon(\,\cdot\,,\,\cdot\,)$ 
with the $L^2$ inner product $(\,\cdot\,,\,\cdot\,)$ on $\Omega^{0,q}(U,E)$.  
Thus, in the same way as \eqref{e-gue150606le3} by using \eqref{e-gue150606le4} we have for $\Box_{b,m}\equiv (\overline\partial_{b,m}^*+\overline\partial_{b,m})^2$ (and $\Delta\equiv  
(\delta^*+\delta)^2$ as above) 
\begin{equation}
\label{e-gue150606le5}
\chi_1\circ\Box_{b,m}=\Delta\circ \chi_1\quad\mbox{on\,\,\,$\Omega^{0,q}_m(D, E)$}. 
\end{equation}

Combining \eqref{e-gue150606le5} and \eqref{e-gue150606le3} yields 
\begin{equation}
\label{e-gue150606le6}\begin{split}
\Box_{b,m}&=(\chi\chi_1)^{-1}\circ\Box_{B,m}\circ(\chi\chi_1)
\end{split}
\end{equation}
By $\chi\chi_1(e^{-im\theta}\tilde u)=e^{m\varphi}\tilde u\otimes s$ and $(\chi\chi_1)^{-1}(\tilde v\otimes s)=
e^{-im\theta}e^{-m\varphi}\tilde v$, one obtains 
\begin{equation}
\label{e-gue150606le7}
(\Box_{b,m}u)\otimes s=e^{-im\theta}e^{-m\varphi}(\Box_{B,m}e^{m\varphi}(\tilde u\otimes s))
\quad\mbox{for\,\,\,$u=e^{-im\theta}\tilde u\in \Omega^{0,q}_m(D, E)$}, 
\end{equation}
giving $e^{-m\varphi}\Box_{B,m}(e^{m\varphi}\widetilde u)=e^{im\theta}{\Box}_{b,m}(u)$
in notation similar to \eqref{e-gue150606IV}.

For modified Laplacians, from the definition of the zeroth order operator $%
A_m:\Omega^{0,+}_m(X,E)\rightarrow\Omega^{0,-}_m(X,E)$ (see Definition~\ref%
{d-gue150524}), it is clear that (in notation similar to \eqref{e-gue150606IV}) 
\begin{equation}  \label{e-gue150606aVIII}
e^{-m\varphi}A_B(e^{m\varphi}\widetilde u)=e^{im\theta}A_m(u).
\end{equation}

In a way similar to \eqref{e-gue150606le7} it follows by using 
\eqref{e-gue150606aVIII} that 
\begin{equation*}
e^{-m\varphi}D_{B,m}(e^{m\varphi}\widetilde u)=e^{im\theta}\widetilde
D_{b,m}(u)
\end{equation*}
hence easily that 
\begin{equation*}
e^{-m\varphi}\widetilde\Box^+_{B,m}(e^{m\varphi}\widetilde u)=e^{im\theta}%
\widetilde{\Box}^+_{b,m}(u)
\end{equation*}
proving the proposition.
\end{proof}

Remark that one might be led by Proposition~\ref{l-gue150606} to 
reduce the study of Kohn $\widetilde\Box^{\pm}_{b,m}$ to that of 
Kodaira $\widetilde\Box_{B, m}$.  Indeed such a reduction 
works quite well in the globally free case (see discussion following Theorem~\ref{t-gue150508} in Introduction).  
In the locally free case (of $S^1$ action),  however, a naive 
thought of using the Kodaira Laplacian and its associated (local) heat kernels for a 
better understanding of the heat kernel in Kohn's case is not directly accessible (see remarks
following proof of Theorem~\ref{t-gue150630I}).  Namely 
the associated heat kernels of the two Laplacians cannot be easily linked 
as \eqref{e-gue150606III} seems to suggest.  
This reflects the fact that the associated heat kernels,
rather than Laplacians themselves,  
are objects which are more global in nature.  More in this regard will be 
pursued in the coming Subsection~\ref{s-gue150627I} and Section~\ref{s-gue150702}. 

\begin{rem}
\label{r-gue150606}
The definition of $A_B$ in \eqref{e-gue150606} 
depends on a BRT triple, and the same can be said with Proposition~\ref{l-gue150606}.
To see that $A_B$ has an intrinsic meaning, one uses the transformation of BRT coordinates as shown in 
the proof of Proposition~\ref{l-gue150524d}.    The geometrical construction given there 
shows that locally $X$ is part of a circle bundle inside the $L^*$ (with metric induced by that of $L$) over $U$,
and the quantities such as $\varphi$, $z$ and $\theta$ in a BRT triple are associated with 
geometric ones as metric for a local basis (of $L$), coordinates on the base $U$ and (part of) a holomorphic coordinate on 
fibers (of $L^*$) respectively.    The transformation in these quantities with another choice of a BRT chart
is nothing more than a change of holomorphic coordinates of the same line bundle.    
It follows that $A_B$ is intrinsic in a proper sense.   
A similar explanation can be given to Proposition~\ref{l-gue150606} too (although we do not 
strictly need this intrinsic property in what follows).  
\end{rem}

\begin{rem}
\label{r-gue150606a}  In the case of certain Riemannian foliations, it is known that the 
Laplacian downstairs and Laplacian upstairs (in a suitable generalized sense) 
can be related in spirit similar to that in our proposition above.  See \cite[p. 2310-2311]{Ri10}.
\end{rem}

As remarked in Subsection~\ref{s-gue151025}, to suit our purpose
we will actually be considering {\it adjoint} heat equation
and {\it adjoint} heat kernel first.  

To proceed further, some notations are in order.  Let $M$ be a $C^\infty$ orientable paracompact
manifold with a vector bundle $F$ over it.

\begin{defn}
\label{d-gue150608} Let $A(t,x)\in C^\infty(\mathbb{R}_+\times M,F)$. We
write
\begin{equation*}
\begin{split}
&A(t,x)\sim t^kb_{-k}(x,t)+t^{k+1}b_{-k+1}(x,t)+t^{k+2}b_{-k+2}(x,t)+\cdots\
\ \mbox{as $t\To0^+$}, \\
&\mbox{$b_s(x,t)\in C^\infty(M,F)$ a possibly $t$-dependent smooth
function},\ \ s=-k,-k+1,-k+2,\ldots,
\end{split}%
\end{equation*}
for $k\in\mathbb{Z}$, provided that for every compact set $K\Subset M$, every $\ell, M_0\in%
\mathbb{N}_0$ with $M_0\ge M_0(m)$ for some $M_0(m)$ ($m=\mathrm{dim}\,M$), 
there are $C_{\ell,K,M_0}>0$, $\varepsilon_0>0
$ and $M_1(m,\ell)$ (independent of $t$) such that
\begin{equation*}
\left\vert
A(t,x)-\sum^{M_0}_{j=0}t^{k+j}b_{-k+j}(x,t)\right\vert_{C^\ell(K)}\leq
C_{\ell,K,M_0}t^{M_0-M_1(m,\ell)},\ \ \forall \,0<t<\varepsilon_0.
\end{equation*}
\end{defn}

\begin{rem}
\label{r-gue150608}
In the important case of the heat kernel $p_t(x, y)$ of a generalized Laplacian on a compact Riemannian manifold 
$B$ of dimension $\beta$, $M=B\times B$ is of dimension $m=2\beta$ and $k=-\frac{\beta}{2}$.
One can take $M_0(m)=[\frac{\beta}{2}]+1$
and $M_1(m,\ell)=\frac{\beta+\ell}{2}$.  See \cite[Theorem 2.30]{BGV92}.
In this case $b_s(t, x)$ for all $s$ can be taken to be independent of $t$.  
\end{rem}

The novelty above is that $b_s$ could be nontrivially dependent on $t$ 
(in contrast to the conventional case of an asymptotic expansion for heat kernels).   

Let $T^{*0,+}U$ and $T^{*0,-}U$ denote forms of even degree
and odd degree in $T^{*0,\bullet}U$, respectively as before.  
If $T(z,w)\in (T^{*0,+}U\otimes E)\boxtimes (T^{*0,+}U\otimes E)^*|_{(z,w)}$,
write $\left\vert T(z,w)\right\vert$ 
for the standard pointwise matrix norm of $T(z,w)$ induced by $%
\langle\,\cdot\,,\,\cdot\,\rangle$ and $\langle\,\cdot\,|\,\cdot\,\rangle_E$.

Suppose $G(t,z,w)\in C^\infty(\mathbb{R}_+\times U\times
U, (T^{*0,+}U\otimes E)\boxtimes (T^{*0,+}U\otimes E)^*)$. 
As usual, we denote $G(t)
:\Omega^{0,+}_0(U,E)\rightarrow\Omega^{0,+}(U,E)$ (resp.  $G^{\prime }(t)$)
the continuous operator
associated with the kernel $G(t, z, w)$ (resp. $\frac{\partial G(t,z,w)}{\partial t}$)
($\Omega^{0,+}_0(U,E)$ denotes elements of compact
support in $U$).  

We are now ready to consider the heat operators associated with $\widetilde\Box^+_{B,m}$ and $%
\widetilde\Box^-_{B,m}$ in an {\it adjoint} version.   By using the Dirichlet heat kernel construction
(see~\cite{G8} or \cite{Cha}) we can obtain the theorem stated in the following form.

\begin{prop}
\label{t-gue150607} There exists an $A_{B,+,m}(t,z,w)=:A_{B,+}(t,z,w)\in C^\infty(%
\mathbb{R}_+\times U\times U, (T^{*0,+}U\otimes E)\boxtimes (T^{*0,+}U\otimes E)^*)$
such that
\begin{equation}  \label{e-gue150607ab}
\begin{split}
&\mbox{$\lim_{t\To0+}A_{B,+}(t)=I$ in $\mathscr D'(U,T^{*0,+}U\otimes E)$},
\\
&A^{\prime }_{B,+}(t)u+A_{B,+}(t)(\widetilde\Box^+_{B,m}u)=0,\ \ \forall
u\in\Omega^{0,+}_0(U,E),\ \ \forall t>0,
\end{split}%
\end{equation}
and $A_{B,+}(t,z,w)$ satisfies the following: (I) For every compact set $%
K\Subset U$ and every $\alpha_1, \alpha_2, \beta_1, \beta_2\in\mathbb{N}^n_0$%
, every $\gamma\in\mathbb{N}_0$, there are constants $C_{\gamma,\alpha_1,%
\alpha_2,\beta_1,\beta_2,K}>0$, $\varepsilon_0>0$ and $P\in\mathbb{N}$
independent of $t$ such that
\begin{equation}  \label{e-gue150607abI}
\left\vert
\partial^\gamma_t\partial^{\alpha_1}_z\partial^{\alpha_2}_{\overline
z}\partial^{\beta_1}_w\partial^{\beta_2}_{\overline
w}A_{B,+}(t,z,w)\right\vert\leq
C_{\gamma,\alpha_1,\alpha_2,\beta_1,\beta_2,K} t^{-P}e^{-\varepsilon_0\frac{%
\left\vert z-w\right\vert^2}{t}},\ \ \forall (t,z,w)\in\mathbb{R}_+\times
K\times K.
\end{equation}
(II) Let $g\in\Omega^{0,+}_0(U,E)$. For every $\alpha_1, \alpha_2\in\mathbb{N%
}^n_0$ and every compact set $K\Subset U$ , there is a $C_{\alpha_1,%
\alpha_2,K}>0$ independent of $t$ such that
\begin{equation}  \label{e-gue150607abII}
\begin{split}
&\sup\left\{\left\vert \partial^{\alpha_1}_z\partial^{\alpha_2}_{\overline
z}(A_{B,+}(t)g)(z)\right\vert;\, z\in K\right\} \\
&\leq C_{\alpha_1,\alpha_2,K}\sum_{\beta_1, \beta_2\in\mathbb{N}^n_0,
\left\vert \beta_1\right\vert+\left\vert \beta_2\right\vert\leq\left\vert
\alpha_1\right\vert+\left\vert \alpha_2\right\vert}\sup\left\{\left\vert
\partial^{\beta_1}_z\partial^{\beta_2}_{\overline z}g(z)\right\vert;\, z\in
U\right\}.
\end{split}%
\end{equation}
(III) $A_{B,+}(t,z,w)$ admits an asymptotic expansion in the following sense (see 
Definition~\ref{d-gue150608} for $\sim$).  
For some $K_{B,+}(t,z,w)$
\begin{equation}  \label{e-gue150608a}
\begin{split}
&A_{B,+}(t,z,w)=e^{-\frac{h_+(z,w)}{t}}K_{B,+}(t,z,w), \\
&K_{B,+}(t,z,w)\sim
t^{-n}b^+_n(z,w)+t^{-n+1}b^+_{n-1}(z,w)+\cdots+b^+_0(z,w)+tb^+_{-1}(z,w)+%
\cdots\ \ \mbox{as $t\To0^+$}, \\
&b^+_s(z,w)\,(=b^+_{s, m}(z,w))\,\in C^\infty(U\times U, (T^{*0,+}U\otimes E)\boxtimes (T^{*0,+}U\otimes E)^*), 
\ \ s=n,n-1,n-2,\ldots,
\end{split}%
\end{equation}
where $h_+(z,w)\in C^\infty(U\times U,\mathbb{R}_+)$ with $h_+(z,z)=0$ for
every $z\in U$ and for every compact set $K\Subset U$, there is a constant $%
C_K>1$ such that $\frac{1}{C_k}\left\vert z-w\right\vert^2\leq h_+(z,w)\leq
C_K\left\vert z-w\right\vert^2$.

In \eqref{e-gue150607ab} with $\widetilde\Box^-_{B,m}$ in place of $\widetilde\Box^+_{B,m}$, 
corresponding statements (with $A_{B,-}, K_{B,-}$ etc.) for $\widetilde\Box^-_{B,m}$ hold true 
as well. 
\end{prop}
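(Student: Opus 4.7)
The plan is to follow the classical parametrix method for generalized Laplacians (as in Berline-Getzler-Vergne or Ma-Marinescu) adapted to the non-compact open set $U$ in the BRT chart. The key structural point is that $\widetilde\Box^+_{B,m}$, being of the form $D^*_{B,m} D_{B,m}$ with $D_{B,m}$ a Dirac-type operator, is a generalized Laplacian on $T^{*0,+}U\otimes E\otimes L^m$ whose principal symbol equals $|\xi|^2_{g^{TU}}$. This is exactly the input needed for the standard heat-kernel machinery. Let $d(z,w)$ denote the geodesic distance on $U$ with respect to $g^{TU}$ and put $h_+(z,w) := d(z,w)^2/4$ inside the injectivity radius, extended smoothly to $U\times U$ so that $h_+(z,w)\asymp |z-w|^2$ on compacts; this will serve as the exponent in \eqref{e-gue150608a}.

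First I would construct a formal parametrix of the form
\begin{equation*}
H_N(t,z,w)=(4\pi t)^{-n}\,e^{-h_+(z,w)/t}\sum_{j=0}^{N}t^{j}\Phi_j(z,w),
\end{equation*}
where the coefficients $\Phi_j$ are obtained by imposing that $(\partial_t+\widetilde\Box^+_{B,m,z})H_N$ vanishes to increasing order in $t$. This produces the standard sequence of transport equations along $g^{TU}$-geodesics emanating from $w$: the leading coefficient $\Phi_0$ is the parallel transport of the identity endomorphism with respect to the Clifford connection on $\Lambda(T^{*0,1}U)\otimes E\otimes L^m$ (cf.\ \eqref{e-gue150825II}), and each $\Phi_j$, $j\ge 1$, is determined from $\Phi_{j-1}$ by a first-order ODE along geodesics. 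After $N$ steps one has $(\partial_t+\widetilde\Box^+_{B,m,z})H_N=t^{N-n}e^{-h_+/t}R_N$, with $R_N$ smooth up to $t=0$, and $H_N(0,z,w)=\delta(z-w)$ in the distributional sense.

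Next, to pass from the formal parametrix to a genuine kernel satisfying \eqref{e-gue150607ab} on the non-compact $U$, I would use a Dirichlet heat-kernel construction on an exhausting sequence of relatively compact sub-domains $U_\ell\Subset U$, as in Gilkey or Chavel. For each $\ell$ the Friedrichs extension of $\widetilde\Box^+_{B,m}$ with Dirichlet boundary conditions on $\partial U_\ell$ is self-adjoint and non-negative, so its heat semi-group has a smooth kernel $A^{(\ell)}_{B,+}(t,z,w)$ which automatically satisfies both the forward and adjoint heat equations (these coincide by self-adjointness). The difference $A^{(\ell)}_{B,+}-H_N$ can then be expressed via Duhamel's principle, and Levi's iteration yields uniform (in $\ell$) Gaussian bounds
\begin{equation*}
|A^{(\ell)}_{B,+}(t,z,w)-H_N(t,z,w)|\le C\,t^{N-n+1}e^{-\varepsilon_0 h_+(z,w)/t},
\end{equation*}
together with analogous bounds on $\partial_t,\partial_z,\partial_{\bar z},\partial_w,\partial_{\bar w}$-derivatives by differentiating the Levi series. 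Monotonicity of Dirichlet semi-groups (or a local Aronson-type comparison) shows that $A^{(\ell)}_{B,+}$ converges as $\ell\to\infty$ to a well-defined kernel $A_{B,+}(t,z,w)$ on $U\times U$ satisfying \eqref{e-gue150607ab}.

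Finally, the estimate (I) is exactly the Gaussian bound just described, (III) follows by reading off $b^+_{n-j}:=(4\pi)^{-n}\Phi_j$ and invoking the Levi remainder estimate together with Definition~\ref{d-gue150608}, and (II) is obtained from (I) by writing $(A_{B,+}(t)g)(z)=\int A_{B,+}(t,z,w)g(w)\,dv(w)$, using the semi-group identity $A_{B,+}(t)=A_{B,+}(t/2)\circ A_{B,+}(t/2)$ to trade $z$-derivatives for $w$-derivatives on $g$, and appealing to local elliptic regularity of $\widetilde\Box^+_{B,m}$. The parallel statement for $\widetilde\Box^-_{B,m}$ is identical since it is a generalized Laplacian of the same type. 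The main obstacle I anticipate is the uniform control (in $\ell$) of the Levi iteration on the non-compact $U$: one must verify that each iterate in the series $\sum_k H_N*(t^{N-n}e^{-h_+/t}R_N)^{*k}$ retains Gaussian decay with constants independent of $\ell$, and that this decay persists for all derivatives. This is the point where the classical compact-manifold proof must be re-examined rather than merely quoted, and it is what justifies the reference to the Dirichlet heat-kernel construction in the statement.
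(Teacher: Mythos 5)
Your proof follows essentially the same route the paper takes: the paper contents itself with a one-line citation to the Dirichlet heat-kernel construction in \cite{G8} and \cite{Cha}, and your parametrix plus Levi iteration with exhaustion by relatively compact subdomains is precisely the machinery those references supply. One small caveat: strict monotonicity of Dirichlet semigroups is a feature of positivity-preserving (scalar) semigroups and does not carry over directly to the bundle-valued $\widetilde\Box^{\pm}_{B,m}$, so at that step you should lean on your stated alternative --- uniform Gaussian parametrix bounds together with a Kato-type domination by the scalar Dirichlet kernel --- to extract the convergent limit as $\ell\to\infty$.
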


\begin{rem}
\label{r-gue150607}
One may use a (weaker) version in the sense of {\it asymptotic heat kernel}
(cf. \cite{Du11}, p. 96 for an informal explanation)
with the property that this kernel (without uniqueness) satisfies \eqref{e-gue150607abI}-\eqref{e-gue150608a}
(so that it admits the same asymptotic expansion as the above kernel) and also satisfies 
\eqref{e-gue150607ab} {\it asymptotically} (cf. Lemma~\ref{l-gue150628} with \eqref{e-gue150628g}).    
Although the asymptotic heat kernels are not unique, 
the ``formal" heat kernel given (in above notation) by this form 
$$e^{-\frac{h_+(z,w)}{t}}(t^{-n}b^+_n(z,w)+t^{-n+1}b^+_{n-1}(z,w)+\cdots+b^+_0(z,w)+tb^+_{-1}(z,w)+%
\cdots\ )$$ is unique.  

\end{rem}


We are interested in calculating $\mathrm{Tr\,}b^+_s(z,z)-\mathrm{Tr\,}b^-_s(z,z)$ ($s=n,n-1,\ldots,0$)
(where $\mathrm{Tr\,}b^{\pm}_s(z,z)=\sum_{j}\langle\,b^+_s(z,z)e_j\,|\,\,e_j\,
\rangle_{E}$ for any orthonormal frame $e_j$ of $T^{*0,+}_zU\otimes E_z$).  
The idea relies on {\it Lichnerowicz
formulas} for (modified/$\mathrm{Spin}^c$ Kodaira Laplacians)
$\widetilde\Box^+_{B,m}$ and $\widetilde\Box^-_{B,m}$
(cf. Theorem 1.3.5 and Theorem 1.4.5 in~\cite{MM07}) so that
 the (by now standard) rescaling technique in~\cite{BGV92} and~\cite{Du11}
can apply well.  

To state the result precisely, we introduce some notations. Let $%
\nabla^{TU}$ be the Levi-Civita connection on $\mathbb{C }TU$ with respect
to $\langle\,\cdot\,,\,\cdot\,\rangle$. Let $P_{T^{1,0}U}$ be the natural
projection from $\mathbb{C }TU$ onto $T^{1,0}U$.   $%
\nabla^{T^{1,0}U}:=P_{T^{1,0}U}\nabla^{TU}$ is a connection on $T^{1,0}U$.
Let $\nabla^{E\otimes L^m}$ be the (Chern) connection on $E\otimes L^m\rightarrow U$
(induced by $\langle\,\cdot\,,\,\cdot\,\rangle_E$ and $h^{L^m}$, see Theorem~\ref{t-gue150515}). Let $%
\Theta(\nabla^{T^{1,0}U},T^{1,0}U)\, (\in C^\infty(U,\Lambda^2(\mathbb{C }%
T^*U)\otimes\mathrm{End\,}(T^{1,0}U)))$ and $\Theta(\nabla^{E\otimes
L^m},E\otimes L^m)\,(\in C^\infty(U,\Lambda^2(\mathbb{C }T^*U)\otimes\mathrm{%
End\,}(E\otimes L^m)))$ be the associated curvatures. As in complex geometry, put
\begin{equation*}
\begin{split}
&\mathrm{Td\,}(\nabla^{T^{1,0}U},T^{1,0}U)=e^{\mathrm{Tr\,}(h(\frac{i}{2\pi}%
\Theta(\nabla^{T^{1,0}U},T^{1,0}U)))},\ \ h(z)=\log(\frac{z}{1-e^{-z}}), \\
&\mathrm{ch\,}(\nabla^{E\otimes L^m},E\otimes L^m)=\mathrm{Tr\,}(\widetilde
h(\frac{i}{2\pi}\Theta(\nabla^{E\otimes L^m},E\otimes L^m))),\ \ \widetilde
h(z)=e^{z}.
\end{split}%
\end{equation*}

Then the above calculation leads to the following.
\begin{equation}  \label{e-gue150627}
\begin{split}
&\Bigr(\mathrm{Tr\,}b^+_s(z,z)-\mathrm{Tr\,}b^-_s(z,z)\Bigr)=0,\ \
s=n,n-1,\ldots,1, \\
&\Bigr(\mathrm{Tr\,}b^+_0(z,z)-\mathrm{Tr\,}b^-_0(z,z)\Bigr)dv_U(z)=\left[%
\mathrm{Td\,}(\nabla^{T^{1,0}U},T^{1,0}U)\wedge\mathrm{ch\,}%
(\nabla^{E\otimes L^m},E\otimes L^m)\right]_{2n}(z),\ \ \forall z\in U,
\end{split}%
\end{equation}
where $[\cdots]|_{2n}$ 
denotes the $2n$-form part.

As the calculation to be performed here is almost entirely the same as in the standard case,
we omit the detail.  

Let $\nabla^{TX}$ be the Levi-Civita connection on $TX$ with respect to $%
\langle\,\cdot\,|\,\cdot\,\rangle$ and $\nabla^E$ the connection on $E$ associated 
with $\langle\,\cdot\,|\,\cdot%
\,\rangle_E$ (cf. Theorem~\ref{t-gue150515}).
In similar notation as above  $%
\nabla^{T^{1,0}X}:=P_{T^{1,0}X}\nabla^{TX}$ is a connection on $T^{1,0}X$.

Since $\nabla^{T^{1,0}X}$ and $\nabla^E$ are rigid, 
in view of compatibility of metrics (and connections) in \eqref{e-gue150627e1} and Theorem~\ref{t-gue150515},
one sees that $\forall\, (z,\theta)\in D$ (for $\omega_0$ see lines below Definition~\ref{d-gue150508d1}): 
\begin{equation}  \label{e-gue150627I}
\begin{split}
&\mathrm{Td\,}(\nabla^{T^{1,0}U},T^{1,0}U)(z)=\mathrm{Td_b\,}%
(\nabla^{T^{1,0}X},T^{1,0}X)(z,\theta) \\
&\mathrm{ch\,}(\nabla^{E\otimes L^m},E\otimes L^m)(z)=\Bigr(\mathrm{ch_b\,}%
(\nabla^{E},E)\wedge e^{-m\frac{d\omega_0}{2\pi}}\Bigr)(z,\theta)
\end{split}%
\end{equation}
and
\begin{equation}  \label{e-gue150627II}
\begin{split}
&\left[\mathrm{Td\,}(\nabla^{T^{1,0}U},T^{1,0}U)\wedge\mathrm{ch\,}%
(\nabla^{E\otimes L^m},E\otimes L^m)\right]_{2n}(z)\wedge d\theta \\
&=\left[\mathrm{Td_b\,}(\nabla^{T^{1,0}X},T^{1,0}X)\wedge\mathrm{ch_b\,}%
(\nabla^{E},E)\wedge e^{-m\frac{d\omega_0}{2\pi}}\wedge\omega_0\right]%
_{2n+1}(z,\theta). 
\end{split}%
\end{equation}

To sum up we arrive at the following (by \eqref{e-gue150627}, \eqref{e-gue150627II}
and $dv_U\wedge d\theta=dv_X$ on $D$ cf. \eqref{e-gue22a1}) 

\begin{prop}
\label{t-gue150627} With the notations above, we have
\begin{equation}  \label{e-gue150627ab}
\begin{split}
&\Bigr(\mathrm{Tr\,}b^+_0(z,z)-\mathrm{Tr\,}b^-_0(z,z)\Bigr)dv_X(z,\theta) \\
&=\left[\mathrm{Td_b\,}(\nabla^{T^{1,0}X},T^{1,0}X)\wedge\mathrm{ch_b\,}%
(\nabla^{E},E)\wedge e^{-m\frac{d\omega_0}{2\pi}}\wedge\omega_0\right]%
_{2n+1}(z,\theta),\ \ \forall(z,\theta)\in D.
\end{split}%
\end{equation}
\end{prop}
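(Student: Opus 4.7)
The plan is a direct assembly of three ingredients already in place in this section, followed by a careful identification of tangential Chern--Weil representatives. First, I would invoke the downstairs local index identity \eqref{e-gue150627}, which expresses $(\mathrm{Tr\,}b^+_0(z,z)-\mathrm{Tr\,}b^-_0(z,z))\,dv_U(z)$ as the $2n$-form part of $\mathrm{Td\,}(\nabla^{T^{1,0}U},T^{1,0}U)\wedge\mathrm{ch\,}(\nabla^{E\otimes L^m},E\otimes L^m)$ on $U$. Second, I would translate the downstairs representatives into their tangential counterparts on $D$ using \eqref{e-gue150627I}. Third, I would wedge with $d\theta$ and use $dv_X=dv_U\wedge d\theta$ on $D$ (which follows from \eqref{e-gue22a1} together with the metric compatibility \eqref{e-gue150627e1}) to lift the $2n$-form identity on $U$ to the $(2n{+}1)$-form identity \eqref{e-gue150627ab} on $D$. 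The target formula is then a one-line consequence of \eqref{e-gue150627} wedged with $d\theta$ together with \eqref{e-gue150627II}.

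\medskip

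For the first ingredient I would not redo the computation. Since $D_{B,m}$, by construction in \eqref{e-gue150606} and \eqref{e-gue150524faI}, is a genuine twisted $\mathrm{Spin}^c$ Dirac operator on the Hermitian manifold $U$ with coefficient bundle $E\otimes L^m$ and Clifford connection built from \eqref{e-gue150825II} and the Chern connection on $E\otimes L^m$, a Lichnerowicz formula applies to $\widetilde\Box^{\pm}_{B,m}=D^*_{B,m}D_{B,m}$; Getzler's rescaling applied to the Gaussian-type asymptotics \eqref{e-gue150608a} reduces the rescaled heat operator to the Mehler kernel of a matrix-valued harmonic oscillator, whose Berezin integral supertrace assembles into $\mathrm{Td}\wedge\mathrm{ch}$. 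This is the classical $\mathrm{Spin}^c$ local index theorem applied on $U$; the only check is that the hypotheses match our $D_{B,m}$, and that is built into \eqref{e-gue150524faI}--\eqref{e-gue150606II}. The vanishing $\mathrm{Tr\,}b^+_s(z,z)-\mathrm{Tr\,}b^-_s(z,z)=0$ for $s\geq 1$ is the standard upper-triangular vanishing that drops out of the same rescaling.

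\medskip

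The second ingredient, the transfer relation \eqref{e-gue150627I}, is where I expect the main obstacle. My plan here is as follows. By Theorem~\ref{t-gue150515} the connections $\nabla^{T^{1,0}X}$ and $\nabla^E$ are rigid; combined with the metric compatibility \eqref{e-gue150627e1}, this forces the pullback of $\nabla^{T^{1,0}X}$ (respectively $\nabla^E$) along a BRT slice $\{\theta=\mathrm{const}\}\subset D$ to agree with the Chern connection $\nabla^{T^{1,0}U}$ (respectively $\nabla^E$ on $U$). Hence the corresponding tangential and ordinary Chern--Weil representatives coincide pointwise on $D$, giving the first line of \eqref{e-gue150627I}. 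For the $L^m$-factor I would compute, from the BRT normal form \eqref{e-can} together with $\langle\omega_0,T\rangle=1$ and the vanishing of $\omega_0$ on $T^{1,0}X\oplus T^{0,1}X$, that $\omega_0=d\theta+i(\partial-\overline\partial)\varphi$ on $D$, so $d\omega_0=-2i\partial\overline\partial\varphi$; since the Chern curvature of $(L,h^L)$ with $|1|^2_{h^L}=e^{-2\varphi}$ is $2\partial\overline\partial\varphi$, a direct check gives $e^{-m\,d\omega_0/(2\pi)}=\mathrm{ch\,}(\nabla^{L^m},L^m)$, yielding the second line of \eqref{e-gue150627I}. The lift \eqref{e-gue150627II} then follows by wedging with $\omega_0$, noting that only the $d\theta$-component of $\omega_0$ contributes to the $(2n{+}1)$-form part once the tangential $2n$-form on the left is already of full top degree in $dz$, $d\overline z$. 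Assembling these pieces, \eqref{e-gue150627ab} is immediate. The substantive bookkeeping lies entirely in the BRT curvature identification just described; everything downstream is formal.
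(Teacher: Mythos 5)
Your proposal is correct and follows essentially the same route as the paper: the proposition is obtained by combining the downstairs local index identity \eqref{e-gue150627} (from the Lichnerowicz formula and Getzler rescaling for the $\mathrm{Spin}^c$ Dirac operator $D_{B,m}$), the tangential transfer identities \eqref{e-gue150627I}--\eqref{e-gue150627II}, and the volume relation $dv_X=dv_U\wedge d\theta$ on $D$; the paper's own justification is precisely this assembly, stated in the paragraph preceding the proposition. Your explicit computation $\omega_0=d\theta+i(\partial-\overline\partial)\varphi$, $d\omega_0=-2i\partial\overline\partial\varphi$ matching the Chern curvature of $(L^m,h^{L^m})$ is a correct (and useful) unpacking of what the paper leaves implicit when citing the rigidity of the connections and \eqref{e-gue150627e1}.
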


To state the final technical result of this subsection, we first identify $%
T^{*0,\bullet}_{z_2}U\otimes E_{z_2}$ with $T^{*0,\bullet}_{z_1}U\otimes
E_{z_1}$ (by parallel transport along geodesics joining $z_1, z_2\in U$), so 
we can identify $T\in (T^{*0,\bullet}U\otimes E)\boxtimes
(T^{*0,\bullet}U\otimes E)^*|_{(z_1,z_2)}$
with an
element in $\mathrm{End\,}(T^{*0,\bullet}_{z_1}U\otimes E_{z_1})$.
With this identification, write
\begin{equation}  \label{e-gue160309w}
\mathrm{Tr_{z_1}\,}T:=\sum^d_{j=1}\langle\,Te_j\,|\,\,e_j\,\rangle_{E},
\end{equation}
where $e_1,\ldots,e_d$ is an orthonormal frame of $T^{*0,\bullet}_{z_1}U%
\otimes E_{z_1}$.

In the proof of Theorem~\ref{t-gue160307} (see Theorem~\ref{t-gue150701}),
somewhat surprisingly, as deviated from the classical case, we need to estimate the
off-diagonal terms $\mathrm{Tr_z\,}b^+_s(z,w)-\mathrm{Tr_z\,}b^-_s(z,w)$
for each $s$.  
For this, the following can be considered as another application of the rescaling technique
(and an identity in Berenzin integral as usual).

\begin{thm} {\rm(Off-diagonal estimate)}
\label{t-gue150627g} With the notations above, we have
\begin{equation}  \label{e-gue150627abg}
\begin{split}
\mathrm{Tr_z\,}b^+_s(z,w)-\mathrm{Tr_z\,}b^-_s(z,w)=O(\left\vert
z-w\right\vert^{2s})\ \ \mbox{locally uniformly on $U\times U$},\ \
s=n,n-1,\ldots,1.
\end{split}%
\end{equation}
\end{thm}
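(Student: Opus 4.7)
The plan is to deduce this estimate as a by-product of the Getzler rescaling argument already used for the on-diagonal identities \eqref{e-gue150627}, combined with the Berezin integral identity (Proposition~3.21 of \cite{BGV92}), which says that the supertrace on the Clifford algebra acting on $\Lambda T^{*0,\bullet}_{z_0} U$ vanishes on elements of Clifford degree strictly less than $2n$. Fix $z_0 \in U$ and work in a small geodesic ball around $z_0$, with $T^{*0,\bullet}U \otimes E \otimes L^m$ trivialized by parallel transport along radial geodesics (the Getzler ``synchronous'' frame). Via the symbol map we view $b^{\pm}_s(z,w) \in \mathrm{End}(T^{*0,\bullet}_{z_0}U \otimes E_{z_0})$ as a Clifford element and decompose $b^{\pm}_s = \sum_{k=0}^{2n} b^{\pm,(k)}_s$ according to its Clifford degree.

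First I would introduce Getzler's rescaling with parameter $\epsilon>0$: set $z = z_0 + \epsilon u$, $w = z_0 + \epsilon v$, $t \to \epsilon^2 t$, and rescale the Clifford action by $c(e_i) \mapsto \epsilon^{-1}\varepsilon(e^i) - \epsilon\,\iota_{e_i}$. Using the Lichnerowicz formula for $\widetilde\Box^{\pm}_{B,m}$, the rescaled operator $\epsilon^2 r_\epsilon \widetilde\Box^{\pm}_{B,m} r_\epsilon^{-1}$ converges to the generalized harmonic oscillator $\mathcal H$ on $\mathbb{C}^n$ whose heat kernel $e^{-t\mathcal H}(u,v)$ is given explicitly by Mehler's formula, and the rescaled heat kernel $\epsilon^{2n} r_\epsilon A_{B,\pm}(\epsilon^2 t, z_0+\epsilon u, z_0+\epsilon v)$ converges to the corresponding even/odd projection of $e^{-t\mathcal H}(u,v)$. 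These are by now standard applications of Getzler rescaling, justified by the uniform estimates in \eqref{e-gue150607abI} and the Gaussian factor in \eqref{e-gue150608a}.

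The key degree count is as follows. Substituting \eqref{e-gue150608a} into the rescaled kernel, the $s$-th term contributes $\epsilon^{2(n-s)} (r_\epsilon b^{\pm}_s)(z_0+\epsilon u, z_0+\epsilon v)$, times the universal Gaussian factor $e^{-\widetilde h_{\pm}/t}$. Under $r_\epsilon$, the Clifford degree-$k$ summand $b^{\pm,(k)}_s$ picks up a factor $\epsilon^{-k}$ to leading order, so that the total $\epsilon$-power carried by the degree-$k$ piece is $\epsilon^{2(n-s)-k}$ multiplied by the Taylor vanishing order of $b^{\pm,(k)}_s(z_0+\epsilon u, z_0+\epsilon v)$ in $\epsilon$. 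Convergence of the rescaled kernel to a finite limit forces the Taylor vanishing order of $b^{\pm,(k)}_s$ in $(z-w)$ to be at least $k+2s-2n$ whenever this number is positive.

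The main (and essentially only nontrivial) obstacle is to make this degree-wise count rigorous term by term for the modified/$\mathrm{Spin}^c$ Kodaira Laplacian $\widetilde\Box^{\pm}_{B,m}$, using the Lichnerowicz formula (which is what makes the rescaling limit exist at all) together with the uniform off-diagonal control \eqref{e-gue150607abI}; once this is in place the conclusion is immediate. Indeed, by the Berezin identity the supertrace $\mathrm{Tr}_z b^+_s(z,w) - \mathrm{Tr}_z b^-_s(z,w)$ receives contributions only from the Clifford degree $k=2n$ component, whose vanishing order in $(z-w)$ is, by the count above, at least $2n+2s-2n=2s$, proving \eqref{e-gue150627abg}. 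Local uniformity on $U\times U$ follows from the local uniformity of the estimates \eqref{e-gue150607abI} and of the Getzler limit.
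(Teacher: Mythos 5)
Your proposal is essentially the paper's own proof: you use Getzler's rescaling ($z \to z_0+\epsilon u$, $t\to\epsilon^2 t$, degree-$k$ Clifford elements scaled by $\epsilon^{-k}$), substitute the asymptotic expansion of $A_{B,\pm}$, and observe via the Berezin identity (Proposition 3.21 of \cite{BGV92}) that the supertrace only sees the degree-$2n$ part, whose convergence forces Taylor vanishing of order $2s$. The paper uses $\sqrt{u}$ in place of $\epsilon$ and centers at $0$ rather than $z_0$, but the mechanism — the degree-count $\epsilon^{2(n-s)-k}$, the finite limit, and the Berezin cancellation in degree $<2n$ — is the same; the one place you compress is the Taylor-coefficient bookkeeping across different $s$ (carried out explicitly in the paper via the Taylor expansion of $b_s(0,\cdot)$ and the identification of the $\epsilon$-power of each coefficient), which is what rigorously yields the claimed vanishing orders.
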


\begin{proof}
Recall $E$ is (holomorphically) trivial on $U$.
Let $e_1,\ldots,e_{2n}$ be an orthonormal basis for $T^*_0U$.
For $f\in T^*_0U$, let $c(f)\in\mathrm{End\,}(T^{*0,\bullet}_0U)$ be the
natural Clifford action of $f$ (see \eqref{e-gue150823I} or \cite{BGV92}).   As usual, for every strictly
increasing multi-index $J=(j_1,\ldots,j_q)$ we set $\left\vert
J\right\vert:= q$, $e_J:=e_{j_1}\wedge\cdots e_{j_q}$ and $%
c(e_J)=c(e_{j_1})\cdots c(e_{j_q})$. For $T\in\mathrm{End\,}%
(T^{*0,\bullet}_0U)$, we can always write $T=\sideset{}{'}\sum_{\left\vert
J\right\vert\leq 2n}c(e_J)T_J$ ($T_J\in \mathbb{C}$), where $\sideset{}{'}\sum$ denotes
the summation over strictly increasing multiindices. For $k\leq 2n$, we put
\begin{equation}  \label{e-gue160309qI}
[T]_k:=\sideset{}{'}\sum_{\left\vert J\right\vert=k}T_Je_J\quad(\in {\mathbb{C}}T^{*k}_0U).
\end{equation}
and a similar expression for $[T]$ (without the subscript $k$).  We identity $T$ with
$[T]$ without the danger of confusion.  We say that $\mathrm{ord\,}T\leq k$ if $T_J=0$, for all $\left\vert
J\right\vert>k$, and $\mathrm{ord\,}T=k$ if $\mathrm{ord\,}T\leq k$
and $[T]_k\neq0$.

A crucial result for our need here is an identity in Berenzin integral
(see~\cite[Proposition 3.21, Definition 3.4 and (1.28)] {BGV92})
which asserts that if $\mathrm{ord\,}%
T<2n$ then $\mathrm{STr\,}T=0$ (see \eqref{e-gue160114bI} for the definition of supertrace there) and
\begin{equation}  \label{e-gue160309qII}
\mathrm{STr\,}T=(-2i)^{2n}\mathrm{STr\,}T_{J_0}c(e_{J_0}),\ \
J_0=(1,2,\ldots,2n).
\end{equation}

Recall the identification $T^{*0,\bullet}_xU\cong T^{*0,\bullet}_0U$ just mentioned above the theorem, so that a
smooth function $F(x)\in (T^{*0,\bullet}U)\boxtimes (T^{*0,\bullet}U)^*|_{(0,x)}$
is identified with
a function $x\rightarrow F(x)\in\mathrm{End\,}(T^{*0,\bullet}_0U)$, giving a Taylor expansion
\begin{equation*}
F(x)=\sum_{\alpha\in\mathbb{N}^{2n}_0,\left\vert \alpha\right\vert\leq
P}x^\alpha F_\alpha+O(\left\vert x\right\vert^{P+1}),\ \ F_\alpha\in\mathrm{%
End\,}(T^{*0,\bullet}_0U).
\end{equation*}

We are ready to apply Getzler's rescaling technique to off-diagonal
estimates.  Consider $A_B(t,x,y)\equiv A_B(t,z,w):=A_{B,+}(t,z,w)\oplus
A_{B,-}(t,z,w)$ (cf. Proposition~\ref{t-gue150607}) and let $\chi\in C^\infty_0(U)$ with $\chi=1$ near $z=0$. Let
\begin{equation}  \label{e-gue160309qIII}
r(u,t,x):=\sum^{2n}_{k=1}u^{-\frac{k}{2}+n}[\chi(\sqrt{u}x)A_{B}(ut,0,\sqrt{u}%
x)]_k.
\end{equation}
Note that $A_B$ is actually identified with $[A_B]$ similar to the case of $T$ above, so that
the $k$-form part  ($k>n$) of \eqref{e-gue160309qIII} makes sense.

It is well-known that (see~\cite{BGV92}) $\lim_{u\rightarrow0}r(u,t,x)=g(t,x)
\in C^\infty(\mathbb{R}_+\times\mathbb{C}^n,{\mathbb{C}}T^{*\bullet}\mathbb{C}^n)$
in $C^{\infty}$-topology locally uniformly (${\mathbb{C}}T^{*\bullet}\mathbb{C}^n
=\oplus_{k=0}^{k=2n}\Lambda^k\mathbb{C}T^{*}\mathbb{C}^n$). 
In particular, $%
\lim_{u\rightarrow0}r(u,1,x)_{2n}=g(1,x)_{2n}$ in $C^\infty$-topology
locally uniformly, for their $2n$-form parts.

Let $b_s(z,w):=b^+_s(z,w)%
\oplus b^-_s(z,w)$, $s=n,n-1,\ldots$  (cf. \eqref{e-gue150608a}).  One sees
\begin{equation}  \label{e-gue160309qIV}
r(u,1,x)_{2n}=e^{-\frac{h_+(0,\sqrt{u}x)}{u}}\chi(\sqrt{u}x)\Bigr(%
u^{-n}[b_n(0,\sqrt{u}x)]_{2n}+u^{-n+1}[b_{n-1}(0,\sqrt{u}x)]_{2n}+\cdots%
\Bigr).
\end{equation}
Since $\lim_{u\rightarrow 0}e^{-\frac{h_+(0,\sqrt{u}x)}{u}}$ converges to a
smooth function in $C^\infty$-topology locally uniformly on $\mathbb{C}^n$ (see 
\eqref{e-gue150608a} in Proposition~\ref{t-gue150607}),
we deduce that
\begin{equation}  \label{e-gue160309qV}
\lim_{u\rightarrow0}\Bigr(u^{-n}[b_n(0,\sqrt{u}x)]_{2n}+u^{-n+1}[b_{n-1}(0,%
\sqrt{u}x)]_{2n}+\cdots\Bigr)=\hat g(x)\quad 
\in C^\infty(\mathbb{C}^n,{\mathbb{C}}T^{*\bullet}\mathbb{C}^n)
\end{equation}
in $C^\infty$-topology locally
uniformly. Fix $P\gg1$. Write
\begin{equation}  \label{e-gue160309y}
\hat g(x)=\sum_{\alpha\in\mathbb{N}^{2n}_0,\left\vert \alpha\right\vert\leq
P}\hat g_\alpha x^\alpha+O(\left\vert x\right\vert^{P+1}).
\end{equation}
and for each $s=n,n-1,\ldots$,
\begin{equation}  \label{e-gue160309yI}
b_s(0,x)=\sum_{\alpha\in\mathbb{N}^{2n}_0,\left\vert \alpha\right\vert\leq
P}b_{s,\alpha} x^\alpha+O(\left\vert x\right\vert^{P+1}).
\end{equation}
Hence
\begin{equation}  \label{e-gue160309yII}
[b_s(0,\sqrt{u}x)]_{2n}=\sum_{\alpha\in\mathbb{N}^{2n}_0,\left\vert
\alpha\right\vert\leq P}u^{\frac{\left\vert \alpha\right\vert}{2}%
}[b_{s,\alpha}]_{2n} x^\alpha+u^{\frac{P+1}{2}}O(\left\vert
x\right\vert^{P+1}),\ \ s=n,n-1,\ldots,
\end{equation}
and from \eqref{e-gue160309qV}, \eqref{e-gue160309y} and \eqref{e-gue160309yII}
it follows that for every $\alpha\in\mathbb{N}^{2n}_0$
\begin{equation}  \label{e-gue160309yIII}
\lim_{u\rightarrow0}\Bigr(u^{-n+\frac{\left\vert \alpha\right\vert}{2}%
}[b_{n,\alpha}]_{2n}+u^{-n+1+\frac{\left\vert \alpha\right\vert}{2}%
}[b_{n-1,\alpha}]_{2n}+\cdots\Bigr)=\hat g_\alpha.
\end{equation}
With \eqref{e-gue160309yIII} we conclude
\begin{equation}  \label{e-gue160309e}
[b_{n,\alpha}]_{2n}=0,\ \ \forall\left\vert \alpha\right\vert<2n.
\end{equation}
Combining \eqref{e-gue160309e} and \eqref{e-gue160309yI}, we see
\begin{equation*}
\mathrm{Tr_0\,}b^+_n(0,w)-\mathrm{Tr_0\,}b^-_n(0,w)=O(\left\vert
w\right\vert^{2n}).
\end{equation*}

We can repeat the method above for the second leading term, and deduce similarly
\begin{equation*}
\mathrm{Tr_0\,}b^+_s(0,w)-\mathrm{Tr_0\,}b^-_s(0,w)=O(\left\vert
w\right\vert^{2s}),\ \ s=n-1,n-2,\ldots,1.
\end{equation*}
The theorem follows.
\end{proof}

\subsection{Heat kernels of the modified Kohn Laplacians ($\mathrm{Spin}^c$ Kohn Laplacians)}

\label{s-gue150627I}
Based on Proposition~\ref{l-gue150606}, one is tempted to patch up 
the local heat kernels of the modified Kodaira Laplacian constructed in Propostion~\ref{t-gue150607} 
to form a global heat kernel for the modified Kohn
Laplacian.   This is no problem in the globally free case (of the $S^1$ action).  
In the locally free case, however, some delicate points 
arise as the relation of the two Laplacians given in the above proposition is, by nature, 
a local property, whereas the heat kernels are
global objects.   See discussions after proof of Proposition~\ref{l-gue150606} and of Theorem~\ref{t-gue150630I}
with \eqref{e-gue150630gI} for more.  

As remarked in Subsection~\ref{s-gue151025}, if we use the {\it adjoint} version of the
original heat equation, it becomes more effective to go over the desired process of patching up.
It is worth noting that an important role, mostly unseen traditionally, 
is played by the projection $Q_m^{\pm}$ in our situation.

Assume $X=D_1\bigcup
D_2\bigcup\cdots\bigcup D_N$ with each $B_j:=(D_j,(z,\theta),\varphi_j)$ a
BRT trivialization.  A slightly more complicated
set up is as follows.     Write for each $j$, $%
D_j=U_j\times]-2\delta_j,2\widetilde\delta_j[\subset\mathbb{C}^n\times%
\mathbb{R}$, $\delta_j>0$, $\widetilde\delta_j>0$, $U_j=\left\{z\in\mathbb{C}%
^n;\, \left\vert z\right\vert<\gamma_j\right\}$.  Put $\hat
D_j=\hat U_j\times]-\frac{\delta_j}{2},\frac{\widetilde\delta_j}{2}[$,
$\hat U_j=\left\{z\in\mathbb{C}^n;\, \left\vert z\right\vert<\frac{\gamma_j}{%
2}\right\}$.   We suppose $X=\hat D_1\bigcup\hat
D_2\bigcup\cdots\bigcup\hat D_N$.

Here are some cut-off functions; the choice is adapted to
BRT trivializations.   

i) $\chi_j(x)\in C^\infty_0(\hat D_j)$ with $\sum^N_{j=1}\chi_j=1$ on $X$.  Put
\begin{equation*}
A_j=\left\{z\in\hat U_j;\, \mbox{there is a
$\theta\in]-\frac{\delta_j}{2},\frac{\Td\delta_j}{2}[$ such that $\chi_j(z,\theta)\neq0$}%
\right\}.
\end{equation*}

ii) $\tau_j(z)\in C^\infty_0(\hat U_j)$ with $\tau_j\equiv1$ on some
neighborhood $W_j$ of $A_j$.

iii) $\sigma_j\in C^\infty_0(]-\frac{\delta_j}{2}%
,\frac{\widetilde\delta_j}{2}[)$ with $\int_{-\delta_j/2}^{\Td\delta_j/2}\sigma_j(\theta)d\theta=1$.

iv) $%
\hat\sigma_j\in C^\infty_0(]-\delta_j,\widetilde\delta_j[)$ such that $%
\hat\sigma_j=1$ on some neighbourhood of $\mathrm{Supp\,}\sigma_j$ and $%
\hat\sigma_j(\theta)=1$ if $(z,\theta)\in\mathrm{Supp\,}\chi_j$.

Write $x=(z,\theta)$, $y=(w,\eta)\in\mathbb{C}^{n}\times\mathbb{R}$.
We are going to lift many objects in the preceding subsection defined on $U_j$ 
to the ones defined on $\hat D_j$ via the above cut-off functions.  

Let $A_{B_j,+}(t,z,w)$, $K_{B_j,+}$, $h_{j,+}(z,w)$
and $b^+_{j,s}$ ($s=n,n-1,\ldots$) be as in Proposition~\ref%
{t-gue150607} and \eqref{e-gue150608a}.   Slightly tediously, 
we put
\begin{equation}  \label{e-gue150627f}
\begin{split}
&H_j(t,x,y)=H_{j, +}(t, x, y)=\chi_j(x)e^{-m\varphi_j(z)-im\theta}A_{B_j,+}(t,z,w)e^{m%
\varphi_j(w)+im\eta}\tau_j(w)\sigma_j(\eta)\\
&G_j(t,x,y)=G_{j, +}(t, x, y)=\chi_j(x)e^{-m\varphi_j(z)-im\theta}A_{B_j,+}(t,z,w)e^{m%
\varphi_j(w)+im\eta}\tau_j(w).
\end{split}%
\end{equation}

and (the last two equations are from \eqref{e-gue150608a}) 
\begin{equation}  \label{e-gue150901}
\begin{split}
&\hat K_{j,+}(t,x,y)=\hat
K_j(t,x,y)=\chi_j(x)e^{-m\varphi_j(z)-im\theta}K_{B_j,+}(t,z,w)e^{m%
\varphi_j(w)+im\eta}\tau_j(w)\sigma_j(\eta) \\
&\hat h_{j,+}(x,y)=\hat\sigma_j(\theta) h_{j,+}(z,w)\hat\sigma_j(\eta)\in
C^\infty_0(D_j),\ \ x=(z,\theta),\ \ y=(w,\eta) \\
&\hat
b^+_{j,s}(x,y)=\chi_j(x)e^{-m\varphi_j(z)-im\theta}b^+_{j,s}(z,w)e^{m%
\varphi_j(w)+im\eta}\tau_j(w)\sigma_j(\eta),\ \ s=n,n-1,\ldots\\
&\hat \beta^+_{j,s}(x,y)=\chi_j(x)e^{-m\varphi_j(z)-im\theta}b^+_{j,s}(z,w)e^{m%
\varphi_j(w)+im\eta}\tau_j(w),\ \ s=n,n-1,\ldots\\
&A_{B_j,+}(t,z,w)=e^{-\frac{h_+(z,w)}{t}}K_{B_j,+}(t,z,w) \\
&K_{B_j,+}(t,z,w)\sim
t^{-n}b^+_{j,n}(z,w)+t^{-n+1}b^+_{j,n-1}(z,w)+\cdots+b^+_{j,0}(z,w)+tb^+_{j,-1}(z,w)+%
\cdots\ \ \mbox{as $t\To0^+$}.
\end{split}%
\end{equation}

Remark that these expressions, apart from cut-off functions, are 
mainly motivated by the formulas \eqref{e-gue150606III},
\eqref{e-gue150606IV} of Proposition~\ref{l-gue150606}.  Let $%
H_j(t)$ be the continuous operator associated with
$H_j(t, x, y)$, for which we put down the expression for later
use (cf. \eqref{e-gue150626fIIII} and \eqref{e-gue150628gI})
\begin{equation}  \label{e-gue150627fI}
\begin{split}
H_j(t):\Omega^{0,+}(X,E)&\rightarrow\Omega^{0,+}(X,E), \\
u&\rightarrow\int
\chi_j(x)e^{-m\varphi_j(z)-im\theta}A_{B_j,+}(t,z,w)e^{m\varphi_j(w)+im\eta}%
\tau_j(w)\sigma_j(\eta)u(y)dv_X(y).
\end{split}%
\end{equation}

Consider the patched up kernel (recall $Q_m$ is the projection on the $m$-th
Fourier component, cf. \eqref{e-gue150525dhII})
\begin{equation}  \label{e-gue150627fII}
\Gamma(t):=\sum^N_{j=1}H_j(t)\circ
Q_m:\Omega^{0,+}(X,E)\rightarrow\Omega^{0,+}(X,E)
\end{equation}
and let $\Gamma(t,x,y)\in C^\infty(\mathbb{R}_+\times X\times X,
(T^{*0,+}X\otimes E)\boxtimes (T^{*0,+}X\otimes E)^*)$
be the distribution kernel of $\Gamma(t)$.

For an explicit expression,
one sees (using $Q_m$ of \eqref{e-gue151024b}) that
\begin{equation}  \label{e-gue150626fIII}
\begin{split}
\Gamma(t,x,y)&=\frac{1}{2\pi}\sum^N_{j=1}\int^\pi_{-\pi}H_j(t,x,e^{-iu}\circ
y)e^{-imu}du \\
&=\frac{1}{2\pi}\sum^N_{j=1}\int^\pi_{-\pi}e^{-\frac{\hat
h_{j,+}(x,e^{-iu}\circ y)}{t}}\hat K_j(t,x,e^{-iu}\circ y)e^{-imu}du \\
&\sim t^{-n}a^+_n(t,x,y)+t^{-n+1}a^+_{n-1}(t,x,y)+\cdots\ \ \mbox{as
$t\To0^+$},
\end{split}%
\end{equation}
where we have written
\begin{equation}  \label{e-gue150901a}
\begin{split}
&a^+_s(t,x,y)\,(=a_{s,m}^+(t,x,y)) \,\,(\hat b^+_{j, s}=\hat b_{j, s, m}^+)\\
&=\frac{1}{2\pi}\sum^N_{j=1}\int^\pi_{-\pi}e^{-\frac{\hat
h_{j,+}(x,e^{-iu}\circ y)}{t}}\hat b^+_{j,s}(x,e^{-iu}\circ y)e^{-imu}du,\ \
s=n,n-1,n-2,\ldots.
\end{split}
\end{equation}

For the initial condition of $\Gamma(t, x, y)$, one has the following.

\begin{lem}
\label{l-gue150626f} 
\begin{equation*}
\lim_{t\rightarrow0^+}\Gamma(t)u=Q_mu\ \ (\mbox{on $\mathscr
D'(X,T^{*0,+}X\otimes E)$})
\end{equation*}
for $u\in\Omega^{0,+}(X, E)$.
\end{lem}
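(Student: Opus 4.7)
The plan is to verify the limit pointwise (in fact in $C^\infty_{\mathrm{loc}}$), from which the asserted convergence in $\mathscr D'(X,T^{*0,+}X\otimes E)$ follows by summation. First I would set $v:=Q_mu\in\Omega^{0,+}_m(X,E)$ (smooth, since $Q_m$ is a Fourier average). On a fixed BRT chart $D_j=U_j\times\,]-2\delta_j,2\widetilde\delta_j[$, the condition $Tv=-imv$ lets one write $v(w,\eta)=e^{-im\eta}\widetilde v_j(w)$ with $\widetilde v_j\in\Omega^{0,+}(U_j,E)$. Using the rigid-metric factorization $dv_X|_{D_j}=dv_{U_j}(w)\wedge d\eta$ (a direct consequence of \eqref{e-gue22a1} together with the induced metric \eqref{e-gue150823e1} on $U_j$), the defining formula \eqref{e-gue150627f} for $H_j(t,x,y)$ gives, for $x=(z,\theta)$,
\begin{equation*}
(H_j(t)v)(x)=\chi_j(x)\,e^{-m\varphi_j(z)-im\theta}\Bigl(\int_{-\delta_j/2}^{\widetilde\delta_j/2}\sigma_j(\eta)\,d\eta\Bigr)\int_{U_j}A_{B_j,+}(t,z,w)\bigl(e^{m\varphi_j}\tau_j\widetilde v_j\bigr)(w)\,dv_{U_j}(w),
\end{equation*}
the phases $e^{im\eta}$ and $e^{-im\eta}$ having cancelled and decoupled the $\eta$-variable from $w$. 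By the normalization in (iii), the $\eta$-integral equals $1$.

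Next, since $e^{m\varphi_j}\tau_j\widetilde v_j\in\Omega^{0,+}_0(U_j,E)$, the initial condition $\lim_{t\to 0^+}A_{B_j,+}(t)=I$ in $\mathscr D'(U_j,T^{*0,+}U_j\otimes E)$ of Proposition~\ref{t-gue150607}, combined with the $C^\infty$ smoothing bound (II) there, forces $A_{B_j,+}(t)\bigl(e^{m\varphi_j}\tau_j\widetilde v_j\bigr)\to e^{m\varphi_j}\tau_j\widetilde v_j$ in $C^\infty_{\mathrm{loc}}(U_j)$. Evaluating at $z$ yields
\begin{equation*}
\lim_{t\to 0^+}(H_j(t)v)(x)=\chi_j(x)\,\tau_j(z(x))\,e^{-im\theta}\widetilde v_j(z)=\chi_j(x)\,\tau_j(z(x))\,v(x).
\end{equation*}
By (ii), the $z$-projection of $\mathrm{Supp}(\chi_j)$ is contained in $A_j$, on a neighborhood of which $\tau_j\equiv 1$; hence $\chi_j\cdot(\tau_j\!\circ\!z)=\chi_j$ pointwise on $X$. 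Summing over $j$ and using $\sum_{j=1}^N\chi_j=1$ from (i) then gives $\Gamma(t)u\to v=Q_mu$ in $C^\infty_{\mathrm{loc}}$, and in particular in $\mathscr D'(X,T^{*0,+}X\otimes E)$.

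The key input, engineered precisely into the cut-off data (i)--(iv), is the pair of compatibilities $\chi_j\cdot(\tau_j\!\circ\!z)=\chi_j$ and $\int\sigma_j\,d\eta=1$; once these are in place no genuine obstacle arises, because the role of the Fourier projection $Q_m$ has been absorbed into $v$, so the $\eta$-integration is elementary while the $w$-integration reduces to the scalar initial condition for the local modified-Kodaira heat kernel $A_{B_j,+}(t)$ already established in Proposition~\ref{t-gue150607}. The only mild point worth verifying separately is the local volume-form factorization $dv_X=dv_{U_j}\wedge d\eta$, which however is immediate from \eqref{e-gue22a1}.
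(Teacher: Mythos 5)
Your proposal is correct and follows essentially the same route as the paper: locally represent $Q_mu$ as $e^{-im\eta}\widetilde v_j(w)$ on each BRT chart, observe that the phases $e^{\pm im\eta}$ cancel, integrate out $\eta$ via $\int\sigma_j\,d\eta=1$, invoke the initial condition $A_{B_j,+}(t)\to I$ from Proposition~\ref{t-gue150607}, use $\chi_j\cdot(\tau_j\circ z)=\chi_j$ from the cut-off setup, and finally sum $\sum_j\chi_j=1$. The only cosmetic difference is that you make the separation of the $\eta$- and $w$-integrations and the $C^\infty_{\mathrm{loc}}$ upgrade (via estimate (II)) explicit, which the paper leaves implicit.
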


\begin{proof}
For $u\in\Omega^{0,+}(X,E)$, $%
Q_mu\in \Omega^{0,+}_m(X, E)$, $Q_mu|_{D_j}$ can be expressed
as $e^{-im\eta}v_j(w)$ for some $v_j(w)\in\Omega^{0,+}(U_j,E)$. With %
\eqref{e-gue150627fI} we find (note $A_{B_j, +}(t)=I$ as $t\to 0$ and $dv_{D_j}=dv_{U_j}d\eta$ 
by \eqref{e-gue22a1})
\begin{equation}
\label{e-gue150626fIIII}
\begin{split}
&\lim_{t\rightarrow0^+}H_j(t)Q_mu \\
&=\lim_{t\rightarrow0^+}\int\chi_j(x)e^{-m\varphi_j(z)-im%
\theta}A_{B_j,+}(t,z,w)e^{m\varphi_j(w)+im\eta}\tau_j(w)\sigma_j(\eta)e^{-im%
\eta}v_j(w)dv_{U_j}(w)
d\eta \\
&=\lim_{t\rightarrow0^+}\int\chi_j(x)e^{-m\varphi_j(z)-im%
\theta}A_{B_j,+}(t,z,w)e^{m\varphi_j(w)}\tau_j(w)v_j(w)dv_{U_j}(w) \\
&=\chi_j(x)e^{-m\varphi_j(z)-im\theta}e^{m\varphi_j(z)}\tau_j(z)v_j(z) \\
&=\chi_j e^{-im\theta}v_j=\chi_jQ_mu.
\end{split}%
\end{equation}
With the above, the lemma follows from \eqref{e-gue150627fII} and $\sum_j\chi_j=1$.  
\end{proof}

$\Gamma(t)$ satisfies an {\it adjoint type} heat equation 
{\it asymptotically} in the following sense (cf. \cite[p. 96]{Du11}).

\begin{lem}
\label{l-gue150628} We consider $\widetilde{\Box}^+_{b,m}\circ Q_m$ still
denoted by $\widetilde{\Box}^+_{b,m}$.   $\Gamma(t, x, y)$ satisfies 
\begin{equation*}
\Gamma^{\prime }(t)u+\Gamma(t)\widetilde{\Box}^+_{b,m}u=R(t)u,\ \ \forall
u\in\Omega^{0,+}(X,E),
\end{equation*}
where $R(t):\Omega^{0,+}(X,E)\rightarrow\Omega^{0,+}(X,E)$ is the continuous
operator with distribution kernel $R(t,x,y)$ ($\in C^\infty(\mathbb{R}_+\times
X\times X, (T^{*0,+}X\otimes E)\boxtimes(T^{*0,+}X\otimes E)^*)$) 
which satisfies the following.  For every $\ell\in\mathbb{N}_0$, there exists an
$\varepsilon_0>0$, $C_\ell>0$ independent of $t$ such that
\begin{equation}  \label{e-gue150628g}
\left\Vert R(t,x,y)\right\Vert_{C^\ell(X\times X)}\leq C_{\ell}e^{-\frac{%
\varepsilon_0}{t}},\ \ \forall \,t\in\mathbb{R}_+.
\end{equation}
\end{lem}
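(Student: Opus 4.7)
The plan is to expand $\Gamma'(t)u + \Gamma(t)\widetilde{\Box}^+_{b,m}u$ chart-by-chart using the local model \eqref{e-gue150627f}, exploit the adjoint heat equation \eqref{e-gue150607ab} for each $A_{B_j,+}$ to cancel the principal terms, and identify the residue as a commutator supported away from the $(z,w)$-diagonal, where the Gaussian estimate \eqref{e-gue150607abI} supplies the exponential decay. Since $\widetilde{\Box}^+_{b,m}$ commutes with $Q_m$ (both are $S^1$-equivariant), I rewrite
\begin{equation*}
\Gamma'(t)u + \Gamma(t)\widetilde{\Box}^+_{b,m}u = \sum_{j=1}^{N}\bigl(H'_j(t) + H_j(t)\widetilde{\Box}^+_{b,m}\bigr)v, \quad v := Q_m u \in \Omega^{0,+}_m(X,E).
\end{equation*}
On each $D_j = U_j \times \rbrack -2\delta_j, 2\widetilde\delta_j\lbrack$ I write $v(w,\eta) = e^{-im\eta}v_j(w)$ for a unique $v_j \in \Omega^{0,+}(U_j,E)$, and Proposition~\ref{l-gue150606} converts the Kohn Laplacian into the local Kodaira Laplacian: $\widetilde{\Box}^+_{b,m}v|_{D_j} = e^{-im\eta}e^{-m\varphi_j(w)}\widetilde{\Box}^+_{B_j,m}(e^{m\varphi_j}v_j)(w)$.

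Substituting into \eqref{e-gue150627fI} and using $\int\sigma_j(\eta)\,d\eta = 1$ to discharge the $\eta$-integration yields
\begin{equation*}
H_j(t)\widetilde{\Box}^+_{b,m}v\big|_{D_j}(x) = \chi_j(x)e^{-m\varphi_j(z)-im\theta}\int_{U_j} A_{B_j,+}(t,z,w)\,\tau_j(w)\,\widetilde{\Box}^+_{B_j,m}(e^{m\varphi_j}v_j)(w)\,dv_{U_j}(w).
\end{equation*}
The adjoint heat equation \eqref{e-gue150607ab}, applied to the compactly supported test form $\tau_j e^{m\varphi_j}v_j \in \Omega^{0,+}_0(U_j,E)$, produces
\begin{equation*}
H'_j(t)v\big|_{D_j}(x) = -\chi_j(x)e^{-m\varphi_j(z)-im\theta}\int_{U_j} A_{B_j,+}(t,z,w)\,\widetilde{\Box}^+_{B_j,m}(\tau_j e^{m\varphi_j}v_j)(w)\,dv_{U_j}(w).
\end{equation*}
Adding and using $\tau_j\widetilde{\Box}^+_{B_j,m}(e^{m\varphi_j}v_j) - \widetilde{\Box}^+_{B_j,m}(\tau_j e^{m\varphi_j}v_j) = -[\widetilde{\Box}^+_{B_j,m},\tau_j](e^{m\varphi_j}v_j)$ collapses everything into a single commutator:
\begin{equation*}
\bigl(H'_j + H_j\widetilde{\Box}^+_{b,m}\bigr)v\big|_{D_j}(x) = -\chi_j(x)e^{-m\varphi_j(z)-im\theta}\int_{U_j} A_{B_j,+}(t,z,w)\,[\widetilde{\Box}^+_{B_j,m},\tau_j](e^{m\varphi_j}v_j)(w)\,dv_{U_j}(w).
\end{equation*}

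The crux is now geometric: $[\widetilde{\Box}^+_{B_j,m},\tau_j]$ is a differential operator of order at most one whose coefficients are supported in $\mathrm{Supp\,}(d\tau_j) \subset \hat U_j \setminus W_j$, while $\chi_j(x) \neq 0$ forces $z = z(x) \in A_j \subset W_j$ by choices (i)--(ii). Hence on the effective joint support one has $|z-w| \geq c_j := \mathrm{dist}(A_j, \hat U_j \setminus W_j) > 0$, and \eqref{e-gue150607abI} gives $|\partial^\alpha_w A_{B_j,+}(t,z,w)| \leq C_K t^{-P}e^{-\varepsilon_0 c_j^2/t}$ for $|\alpha| \leq 1$ and $(z,w)$ in a compact set, which for $t \in (0,1]$ is absorbed into $C\,e^{-\varepsilon_0'/t}$. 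The kernel $R(t,x,y)$ is read off through the $Q_m$-averaging formula \eqref{e-gue150626fIII}, and $(x,y)$-derivatives fall either on the smooth factors $\chi_j,\varphi_j,\tau_j,\sigma_j$ or on further $(z,w)$-derivatives of $A_{B_j,+}$, all still controlled by \eqref{e-gue150607abI}; the regime $t \geq 1$ is absorbed into the constant $C_\ell$ using smoothness and uniform bounds. The step I expect to be the main obstacle is the passage from the operator identity \eqref{e-gue150607ab} (formulated only for compactly supported inputs) to the pointwise kernel identity $\partial_t A_{B_j,+}(t,z,w) = -\widetilde{\Box}^+_{B_j,m;w} A_{B_j,+}(t,z,w)$ required to integrate by parts above; this will demand invoking the Dirichlet-heat-kernel construction of $A_{B_j,+}$ referenced in Proposition~\ref{t-gue150607}, the self-adjointness of $\widetilde{\Box}^+_{B_j,m}$, and the verification that $\tau_j e^{m\varphi_j}v_j$ lies well inside the Dirichlet domain so that no boundary contributions appear.
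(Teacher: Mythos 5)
Your argument is correct and follows essentially the same route as the paper's proof: pass to the Fourier component $v=Q_m u$, use Proposition~\ref{l-gue150606} to trade the local Kohn Laplacian for the local Kodaira Laplacian on each BRT chart, invoke the adjoint heat equation \eqref{e-gue150607ab} for $A_{B_j,+}$, and observe that the residue is a first-order commutator $[\widetilde{\Box}^+_{B_j,m},\tau_j]$ whose coefficients live in $\mathrm{Supp}(d\tau_j)\subset \hat U_j\setminus W_j$; since $\chi_j(x)\neq 0$ forces $z=z(x)\in A_j\subset W_j$, the effective support is bounded away from the $(z,w)$-diagonal, and the Gaussian bound \eqref{e-gue150607abI} supplies the $e^{-\varepsilon_0/t}$ decay. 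The paper's write-up introduces the remainder $S_j$ by commuting $\tau_j$ through the Laplacian and then cancels the rest with the adjoint heat equation, which is the same bookkeeping in a slightly different order.

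Your closing worry, however, is unfounded: nowhere in your calculation do you need the pointwise kernel identity $\partial_t A_{B_j,+}(t,z,w)=-\widetilde{\Box}^+_{B_j,m;w}A_{B_j,+}(t,z,w)$. You apply \eqref{e-gue150607ab} directly to the compactly supported test form $\tau_j e^{m\varphi_j}v_j\in\Omega^{0,+}_0(U_j,E)$, which is exactly the form in which that operator identity is stated; no integration by parts, self-adjointness input, or analysis of Dirichlet boundary behavior is required at this step. The one place you should be a bit more careful is in asserting $\mathrm{dist}(A_j,\hat U_j\setminus W_j)>0$: since $\chi_j\in C^\infty_0(\hat D_j)$, the set $A_j$ is compact, and $W_j$ can be taken open, so the distance is indeed positive by compactness; stating this explicitly closes the argument.
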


\begin{proof}
As in the preceding lemma let $u\in\Omega^{0,+}_m(X,E)$ and write $%
u=e^{-im\eta}v_j(w)$ for some $v_j(w)\in\Omega^{0,+}(U_j,E)$ on $D_j$.  By this, 
\eqref{e-gue150606III}, \eqref{e-gue150607ab} and \eqref{e-gue150627fI} it is a bit tedious but straightforward 
to compute 
\begin{equation}  \label{e-gue150628gI}
\begin{split}
&H^{\prime }_j(t)u+H_j(t)\widetilde{\Box}^+_{b,m}u \\
&=\int\chi_j(x)e^{-m\varphi_j(z)-im\theta}A^{\prime
}_{B_j,+}(t,z,w)e^{m\varphi_j(w)+im\eta}\tau_j(w)\sigma_j(\eta)u(y)dv_X(y) \\
&\quad+\int\chi_j(x)e^{-m\varphi_j(z)-im\theta}A_{B_j,+}(t,z,w)e^{m%
\varphi_j(w)+im\eta}\tau_j(w)\sigma_j(\eta)(\widetilde{\Box}%
^+_{b,m}u)(y)dv_X(y) \\
&=\int\chi_j(x)e^{-m\varphi_j(z)-im\theta}A^{\prime
}_{B_j,+}(t,z,w)\tau_j(w)e^{m\varphi_j(w)}v_j(w)dv_{U_j}(w) \\
&\quad+\int\chi_j(x)e^{-m\varphi_j(z)-im\theta}A_{B_j,+}(t,z,w)\tau_j(w)(%
\widetilde{\Box}^+_{B_j,m}(e^{m\varphi_j}v_j))(w)dv_{U_j}(w) \\
&=\int\chi_j(x)e^{-m\varphi_j(z)-im\theta}A^{\prime
}_{B_j,+}(t,z,w)\tau_j(w)e^{m\varphi_j(w)}v_j(w)dv_{U_j}(w) \\
&\quad+\int\chi_j(x)e^{-m\varphi_j(z)-im\theta}A_{B_j,+}(t,z,w)(\widetilde{%
\Box}^+_{B_j,m}(\tau_je^{m\varphi_j}v_j))(w)dv_{U_j}(w) \\
&\quad+\int \chi_j (x)S_j(t,x,w)v_j(w)dv_{U_j}(w) \\
&=\int \chi_j(x)S_j(t,x,w)v_j(w)dv_{U_j}(w)=\int
\chi_j(x)S_j(t,x,w)e^{im\eta}\sigma_j(\eta)u(y)dv_X(y),
\end{split}%
\end{equation}
for some $S_j(t,x,w)\in C^\infty_0(\mathbb{R}_+\times D_j\times U_j,
(T^{*0,+}X\otimes E)\boxtimes(T^{*0,+}X\otimes E)^*)$. 

Note $\tau_j(z)=1$ for $%
(z,\theta)$ in some small neighborhood of $\mathrm{Supp\,}\chi_j$.
One sees that $%
S_j(t,x,w)=0$ if $(x,w)$ is in some small neighborhood of $(z,z)$.
Hence by using \eqref{e-gue150607abI} for \eqref{e-gue150628gI}
(on $|z-w|$ away from zero),
we conclude that for every $\ell\in%
\mathbb{N}_0$, there is an $\varepsilon>0$ independent of $t$ such that
\begin{equation}  \label{e-gue150628gII}
\left\Vert S_j(t,x,w)\right\Vert_{C^\ell(X\times X)}\leq C_{\ell}e^{-\frac{%
\varepsilon}{t}},\ \ \forall t\in\mathbb{R}_+.
\end{equation}

Put $\widetilde R(t,x,y):=\sum^N_{j=1}\chi_jS_j(t,x,w)e^{im\eta}\sigma_j(\eta)$\, $(\in
C^\infty(\mathbb{R}_+\times X\times X, (T^{*0,+}X\otimes E)\boxtimes (T^{*0,+}X\otimes E)^*))$
and set
\begin{equation*}
R(t,x,y)=\frac{1}{2\pi}\sum^N_{j=1}\int^\pi_{-\pi}\widetilde
R(t,x,e^{-iu}\circ y)e^{-imu}du.
\end{equation*}
Let $R(t):\Omega^{0,+}(X,E)\rightarrow\Omega^{0,+}(X,E)$ be the continuous
operator with distribution kernel $R(t,x,y)$. Note that
$R(t)=R(t)\circ Q_m$ (cf. \eqref{e-gue150626fIII} and \eqref{e-gue150627fII}).   By \eqref{e-gue150628gII}
$R(t,x,y)$ satisfies \eqref{e-gue150628g} and
by \eqref{e-gue150628gI} one sees $\Gamma^{\prime }(t)u+\Gamma(t)%
\widetilde{\Box}^+_{b,m}u=R(t)u$, $\forall u\in\Omega^{0,+}(X,E)$. The
lemma follows.
\end{proof}

To get back to the original heat equation from
its adjoint version, it suffices to take the adjoints
$\Gamma^*(t)$ of $\Gamma(t)$ and $%
R^*(t)$ of $R(t)$ (with respect to $(\,\cdot\,|\,\cdot\,)_E$)
because $\widetilde\Box^+_{b,m}$ is self-adjoint.  
Hence combining Lemma~\ref{l-gue150626f} and Lemma~\ref{l-gue150628}
one obtains the following (asymptotic) heat kernel.

\begin{thm}
\label{t-gue159628fg} With the notations above, we have
\begin{equation*}
\lim_{t\rightarrow0^+}\Gamma^*(t)u=Q_mu\ \ \mbox{on $\mathscr
D'(X,T^{*0,+}X\otimes E)$}
\end{equation*}
for $u\in\Omega^{0,+}(X,T^{*0,+}X\otimes E)$ and  ($\widetilde{\Box}^+_{b,m}\circ Q_m$ still
denoted by $\widetilde{\Box}^+_{b,m}$ below, which is self-adjoint) 
\begin{equation*}
\frac{\partial\Gamma^*(t)}{\partial t}u+\widetilde{\Box}^+_{b,m}%
\Gamma^*(t)u=R^*(t)u,\ \ \forall u\in\Omega^{0,+}(X,E)
\end{equation*}
where $R^*(t)$ is the
continuous operator with the distribution kernel $R^*(t,x,y)$ 
satisfying the estimate similar to Lemma~\ref{l-gue150628}.  
\end{thm}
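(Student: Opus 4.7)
The plan is to obtain this theorem by dualizing Lemma~\ref{l-gue150626f} and Lemma~\ref{l-gue150628} with respect to the global $L^2$ inner product $(\,\cdot\,|\,\cdot\,)_E$, exploiting the fact that both $\widetilde{\Box}^+_{b,m}$ (extended as in \eqref{e-gue150525aIII}) and the orthogonal projection $Q_m$ are self-adjoint on $L^{2,+}(X,E)$. Since $\Gamma(t)$ is defined by the smooth distribution kernel $\Gamma(t,x,y)$ of \eqref{e-gue150626fIII}, the adjoint $\Gamma^*(t)$ has the distribution kernel $\Gamma^*(t,x,y) = \Gamma(t,y,x)^\dagger$ (the pointwise fiberwise adjoint), and similarly $R^*(t)$ has kernel $R^*(t,x,y) = R(t,y,x)^\dagger$. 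From the rapid-decay estimate \eqref{e-gue150628g} for $R(t,x,y)$ in $C^\ell(X\times X)$, the same estimate is inherited by $R^*(t,x,y)$, giving the final statement of the theorem at once.

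For the initial condition, I would test against arbitrary $v\in\Omega^{0,+}(X,E)$. Using Lemma~\ref{l-gue150626f} one has $\lim_{t\to 0^+}(\Gamma(t)v\,|\,u)_E = (Q_m v\,|\,u)_E = (v\,|\,Q_m u)_E$, and since $(\Gamma(t)v\,|\,u)_E = (v\,|\,\Gamma^*(t)u)_E$ this gives $\lim_{t\to 0^+}(v\,|\,\Gamma^*(t)u)_E = (v\,|\,Q_m u)_E$ for every $v\in\Omega^{0,+}(X,E)$, which is exactly the desired distributional convergence $\lim_{t\to 0^+}\Gamma^*(t)u = Q_m u$ in $\mathscr D'(X, T^{*0,+}X\otimes E)$. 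One must note the smoothness of $\Gamma(t,x,y)$ in $x,y$ jointly (clear from \eqref{e-gue150626fIII} and \eqref{e-gue150627f}) so that pairing against smooth test forms is well-defined.

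For the heat equation, Lemma~\ref{l-gue150628} asserts $\Gamma'(t)v + \Gamma(t)\widetilde{\Box}^+_{b,m}v = R(t)v$ for every $v\in\Omega^{0,+}(X,E)$ (with $\widetilde{\Box}^+_{b,m}$ tacitly composed with $Q_m$). Pairing both sides with an arbitrary $u\in \Omega^{0,+}(X,E)$ and then moving the operators to the other factor yields
\begin{equation*}
(v\,|\,(\Gamma^*)'(t)u)_E + (v\,|\,\widetilde{\Box}^+_{b,m}\Gamma^*(t)u)_E = (v\,|\,R^*(t)u)_E,
\end{equation*}
where self-adjointness of $\widetilde{\Box}^+_{b,m}$ (on its natural domain, which contains $\Gamma^*(t)u$ by the smoothness of the kernel together with the $Q_m$-factor forcing the image into $L^{2,+}_m(X,E)$) is used in the second term, and differentiation under the integral in $t$ is justified by the uniform estimates \eqref{e-gue150607abI}. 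Since $v$ is arbitrary, the claimed identity $\partial_t\Gamma^*(t)u + \widetilde{\Box}^+_{b,m}\Gamma^*(t)u = R^*(t)u$ follows.

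The main technical point, not a deep obstacle but one requiring care, is the legitimacy of the formal dualization: one must ensure that $\Gamma^*(t)u$ actually lies in $\mathrm{Dom}\,\widetilde{\Box}^+_{b,m}$ so that self-adjointness can be invoked as an operator identity rather than merely a weak identity. This is handled by observing that $\Gamma(t,x,y)$ (and hence $\Gamma^*(t,x,y)$) is $C^\infty$ jointly in $(t,x,y)$ on $\mathbb{R}_+\times X\times X$ by \eqref{e-gue150626fIII} and the smoothness of each $A_{B_j,+}$ from Proposition~\ref{t-gue150607}, so $\Gamma^*(t)u\in\Omega^{0,+}(X,E)$ for every $t>0$ and every distribution $u$; in particular it belongs to the smooth domain of $\widetilde{\Box}^+_{b,m}$, and the presence of $Q_m$ in \eqref{e-gue150627fII} (inherited by $\Gamma^*(t)$ as $Q_m\Gamma^*(t) = \Gamma^*(t)$ since $Q_m^* = Q_m$) places the image in $L^{2,+}_m(X,E)$ as required by the extension \eqref{e-gue150525aIII}.
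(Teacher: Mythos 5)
Your proposal is correct and takes essentially the same approach as the paper: the paper obtains the theorem directly from Lemmas~\ref{l-gue150626f} and~\ref{l-gue150628} by taking adjoints with respect to $(\,\cdot\,|\,\cdot\,)_E$ and invoking the self-adjointness of $\widetilde{\Box}^+_{b,m}\circ Q_m$, exactly as you do. You supply more of the routine details (pairing against test forms, smoothness of $\Gamma^*(t)u$, the domain issue) than the paper, which states the result tersely, but the underlying argument is identical.
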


Based on the above theorem, one way to solving our heat equation resorts to the method of {\it successive approximation}.
This part of reasoning is basically standard.   But because of the important role played by $Q_m^+$
in the final result (cf. {\it McKean-Singer (II)} in Corollary~\ref{t-gue150630w}), for the convenience of the reader
we sketch some details and refer the full details to, e.g. \cite[Section 2.4]{BGV92}. 

To start with, suppose $A(t)$,
$B(t)$ and $C(t)$\, $:\Omega^{0,+}(X,E)\rightarrow\Omega^{0,+}(X,E)$ are continuous operators
with distribution kernels $A(t,x,y)$, $B(t,x,y)$ and $C(t, x, y)$\, $(\in C^\infty(\mathbb{R}_+\times
X\times X, (T^{*0,+}X\otimes E)\boxtimes(T^{*0,+}X\otimes E)^*))$.
Define the (continuous) operator $%
(A\sharp B)(t):\Omega^{0,+}(X,E)\rightarrow\Omega^{0,+}(X,E)$ with distribution kernel
\begin{equation}\label{e-gue111}
\begin{split}
&(A\sharp B)(t,x,y):=\int^t_0\int_XA(t-s,x,z)B(s,z,y)dv_X(z)ds \\
&(\in C^\infty(\mathbb{R}_+\times X\times X, (T^{*0,+}X\otimes E)\boxtimes(T^{*0,+}X\otimes E)^*)).
\end{split}%
\end{equation}
It is standard that $((A\sharp B)\sharp C)(t)=(A\sharp (B\sharp
C))(t)$, denoted in common by 
\begin{equation*}
(A\sharp B\sharp C)(t). 
\end{equation*}
(The generalization to more operators is similar.)

The method of successive approximation results in a
solution (which is actually unique by Theorem~\ref{t-gue150630I} below) 
to our heat equation, as follows.  

\begin{prop}
\label{t-gue150630} i) {\rm (Existence)} Fix $\ell\in\mathbb{N}$, $\ell\geq2$.  There is an $%
\epsilon>0$ such that the sequence
\begin{equation}  \label{e-gue150630f}
\Lambda(t):=\Gamma^*(t)-(\Gamma^*\sharp R^*)(t)+(\Gamma^*\sharp R^*\sharp
R^*)(t)-\cdots
\end{equation}
converges in $C^\ell((0,\epsilon)\times X\times X, (T^{*0,+}X\otimes E)\boxtimes(T^{*0,+}X\otimes E)^*)$ 
and
\begin{equation}  \label{e-gue150630a}
\begin{split}
&\Lambda(t):\Omega^{0,+}(X,E)\rightarrow C^\ell(X,T^{*0,+}X\otimes E)\bigcap
L^{2,+}_m(X,E), \\
&\lim_{t\rightarrow0^+}\Lambda(t)u=Q_mu\ \ \mbox{on $\mathscr
D'(X,T^{*0,+}X\otimes E)$},\ \ \forall u\in\Omega^{0,+}(X,E), \\
&\Lambda^{\prime }(t)u+\widetilde{\Box}^+_{b,m}\Lambda(t)u=0,\ \ \forall
u\in\Omega^{0,+}(X,E).
\end{split}%
\end{equation}

ii) {\rm (Approximation)} Write $\Lambda(t,x,y)$, $(\in C^\ell((0,\epsilon)\times X\times X,
(T^{*0,+}X\otimes E)\boxtimes(T^{*0,+}X\otimes E)^*))$
for the distribution
kernel of $\Lambda(t)$. Then there exists an $\epsilon_0>0$ independent of $t$
such that
\begin{equation}  \label{e-gue150630}
\left\Vert \Lambda(t,x,y)-\Gamma^*(t,x,y)\right\Vert_{C^\ell(X\times X)}\leq
e^{-\frac{\epsilon_0}{t}},\ \ \forall\, t\in(0,\epsilon_0).
\end{equation}

With $\widetilde{\Box}^-_{b,m}$ in place of $\widetilde{\Box}^+_{b,m}$ in \eqref{e-gue150630a},
the corresponding statements for $\widetilde{\Box}^-_{b,m}$ (with $\Lambda_-, \Gamma_-$ etc.) 
hold true as well. 
\end{prop}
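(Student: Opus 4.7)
\medskip
\noindent\textbf{Proof proposal (plan).}
The plan is to view this as a classical Duhamel/Levi parametrix construction, with $\Gamma^*(t)$ playing the role of parametrix and $R^*(t)$ the error from Theorem~\ref{t-gue159628fg}, and then to absorb the nontrivial role of the projection $Q_m$ at the end. Concretely, I would first verify the algebraic telescoping identity: noting that $R(t)=R(t)\circ Q_m$ in the proof of Lemma~\ref{l-gue150628} gives $R^*(t)=Q_m\circ R^*(t)$, and $\Gamma^*(t)$ maps into $L^{2,+}_m(X,E)$ by construction of $\Gamma(t)$ in \eqref{e-gue150627fII}. Using the basic formula $(A\sharp B)'(t)=A(0)B(t)+(A'\sharp B)(t)$ together with Theorem~\ref{t-gue159628fg} ($\Gamma^{*\prime}(t)+\widetilde\Box^+_{b,m}\Gamma^*(t)=R^*(t)$ and $\Gamma^*(0)=Q_m$), one checks formally that each partial sum
\begin{equation*}
\Lambda_N(t):=\sum_{k=0}^{N}(-1)^k (\Gamma^*\sharp (R^*)^{\sharp k})(t)
\end{equation*}
satisfies $\Lambda_N'(t)+\widetilde\Box^+_{b,m}\Lambda_N(t)=(-1)^{N}(R^*)^{\sharp(N+1)}(t)$ after the $Q_m R^{*\sharp k}=R^{*\sharp k}$ cancellation. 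Hence if the remainder goes to zero as $N\to\infty$, the formal sum solves the heat equation.

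The core analytic step will be a convergence estimate for the iterated $\sharp$-products of $R^*$. The input from Lemma~\ref{l-gue150628} is $\|R^*(s,x,y)\|_{C^\ell(X\times X)}\le C_\ell e^{-\varepsilon_0/s}$ uniformly in $s>0$. Using the elementary inequality $\frac{1}{t-s}+\frac{1}{s}\ge \frac{4}{t}$ for $s\in(0,t)$, an induction on $k$ (combined with the compactness of $X$ to bound the spatial integral) should yield
\begin{equation*}
\bigl\|(R^*)^{\sharp k}(t,x,y)\bigr\|_{C^\ell(X\times X)}\le \frac{(C_\ell \mathrm{vol}(X))^k\,t^{k-1}}{(k-1)!}\,e^{-\varepsilon_0'/t},\qquad t\in(0,\epsilon),
\end{equation*}
for some $\varepsilon_0'>0$ independent of $k$; convolving once more with $\Gamma^*(t)$ (whose kernel enjoys only a mild polynomial blow-up $\lesssim t^{-n-\ell/2}$ coming from \eqref{e-gue150627abI}) absorbs the polynomial factor into the exponential, at the price of shrinking $\varepsilon_0'$. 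Summing in $k$, the geometric series in $Ct$ converges (shrinking $\epsilon$ if necessary), giving $C^\ell$ convergence of $\Lambda(t)$ on $(0,\epsilon)\times X\times X$ together with the termwise differentiation needed to pass the heat equation to the limit. The initial condition $\lim_{t\to 0^+}\Lambda(t)u=Q_m u$ follows from $\lim_{t\to 0^+}\Gamma^*(t)u=Q_m u$ (Theorem~\ref{t-gue159628fg}) plus the fact that all tail terms are $O(e^{-\varepsilon_0'/t})$; the mapping property into $L^{2,+}_m(X,E)$ is immediate from $\Gamma^*=Q_m\circ(\sum_j H_j^*)$ and $R^*=Q_m\circ R^*$.

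The approximation bound (ii) is then essentially a byproduct: the same chain of estimates shows
\begin{equation*}
\bigl\|\Lambda(t,x,y)-\Gamma^*(t,x,y)\bigr\|_{C^\ell(X\times X)}\le \sum_{k\ge 1}\bigl\|(\Gamma^*\sharp (R^*)^{\sharp k})(t,x,y)\bigr\|_{C^\ell(X\times X)}\le e^{-\epsilon_0/t}
\end{equation*}
after possibly shrinking $\epsilon_0$. The corresponding statements for $\widetilde\Box^-_{b,m}$ follow by the same argument with $+$ replaced by $-$ throughout. The main obstacle I expect is not the algebraic telescoping (which is formal) but the careful uniform-in-$k$ control of the exponential decay constant under iterated $\sharp$: one has to simultaneously track the spatial $C^\ell$-seminorms, the factorial in $t$, and the decay rate $\varepsilon_0$, without losing the decay to $k$-dependent factors. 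The AM-HM trick above is what keeps the decay uniform; verifying that its differentiated versions (needed for $C^\ell$ rather than $C^0$ convergence) still close up will be the main bookkeeping burden of the proof.
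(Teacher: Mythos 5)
Your proposal is correct and takes essentially the same route as the paper's own proof: a Neumann/Levi successive-approximation series with the Duhamel telescoping identity $\partial_t q^k+\widetilde{\Box}^+_{b,m}q^k=(-1)^k(Z^k+Z^{k+1})$ and a geometric-series bound on the iterated $\sharp$-products driven by the exponential decay $\|R^*(s)\|_{C^\ell}\lesssim e^{-\varepsilon_0/s}$. The one cosmetic remark is that the AM-HM inequality $\tfrac{1}{t-s}+\tfrac{1}{s}\ge\tfrac{4}{t}$ is slightly stronger than needed — the trivial $\tfrac{1}{t-s}\ge\tfrac{1}{t}$ already preserves the decay constant $\varepsilon_0$ exactly and yields $\|(R^*)^{\sharp k}(t)\|_{C^\ell}\lesssim (Ct)^{k-1}e^{-\varepsilon_0/t}/(k-1)!$ by induction, so no shrinking of the exponent is required until the final convolution with $\Gamma^*$.
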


\begin{proof}
We sketche a proof of ii) and comment on i).  
For notational convenience
we set $Z=R^*$, $Z^2=R^*\sharp R^*$, $Z^3=R^*\sharp R^*\sharp R^*$ etc. as defined in 
\eqref{e-gue111} with
$||\cdot||_\ell$ as the $C^\ell$-norm on $X\times X$.
By using \eqref{e-gue150628g} (for $R^*$) one sees that there are $1>\delta_0, \delta_1>0$ such that
for all $t\in(0,\delta_0)$,
\begin{equation}  \label{e-gue150630IIy}
\left\Vert Z\right\Vert_{\ell}\leq\frac{1%
}{2}e^{-\frac{\delta_1}{t}},\ \ \left\Vert  Z^2\right\Vert_{\ell}\leq\frac{1}{2^2}e^{-\frac{\delta_1%
}{t}},\cdots.
\end{equation}
Similarly from the estimate of $\Gamma^*(t)$ (see \eqref{e-gue150607abI}) with the above
\eqref{e-gue150630IIy} we conclude that for all $t\in(0,\delta_0)$,
\begin{equation}  \label{e-gue150630II}
\begin{split}
&\left\Vert \Gamma^*\sharp Z\right\Vert_{\ell}\leq\frac{C_1}{2}e^{-\frac{\delta_1}{t}},\ \ \left\Vert \Gamma^*\sharp Z^2\right\Vert_{\ell}\leq\frac{C_1}{%
2^2}e^{-\frac{\delta_1}{t}},\cdots
\end{split}%
\end{equation}
where $C_1>0$ is some constant.  Hence the sequence \eqref{e-gue150630f} converges (in $C^\ell((0,\epsilon)%
\times X\times X,(T^{*0,+}X\otimes E)\boxtimes(T^{*0,+}X\otimes E)^*)$)
and  \eqref{e-gue150630} holds.

It takes slightly more work to verify \eqref{e-gue150630a} in i).   Let $q^k(t, x, y)$ 
be the $(k+1)$-th term in \eqref{e-gue150630f}.   One verifies
directly by computation of the convolution that 
\begin{equation}
\label{e-gue150630IIe1}
\partial_t q^k(t, x, y)+\widetilde{\Box}^+_{b,m}q^k(t, x, y)=
Z^k(t, x, y)+Z^{k+1}(t, x, y)
\end{equation}
(cf. \cite[(2) of Lemma 2.22]{BGV92}).  
Since $\Lambda(t, x, y)$ is the alternating sum of these $q^k$, 
by the good estimates \eqref{e-gue150630IIy} and \eqref{e-gue150630II}, 
one interchanges the order of the action of $(\partial_t+\widetilde{\Box}^+_{b,m})$ on $\Lambda(t, x, y)$ 
with the summation.  By telescoping with \eqref{e-gue150630IIe1}, one finds that the heat equation \eqref{e-gue150630a} of i) 
is satisfied (cf. \cite[Theorem 2.23]{BGV92}).  
\end{proof}

The uniqueness part of the above theorem is included in the following.
(Note $e^{-t\widetilde{\Box}^+_{b,m}}(x,y)$ is as in \eqref{e-gue150525fbI}.)

\begin{thm}
\label{t-gue150630I} i) {\rm (Uniqueness)}
We have $e^{-t\widetilde{\Box}%
^+_{b,m}}(x,y)=\Lambda(t,x,y)$ ($\in C^\ell((0,\epsilon)\times X\times X,
(T^{*0,+}X\otimes E)\boxtimes(T^{*0,+}X\otimes E)^*))$.  
Hence by \eqref{e-gue150630}, for every $\ell \in\mathbb{N}_0$ there exist $\epsilon_0>0$
and $\epsilon>0$ (independent of $t$) such that
\begin{equation}  \label{e-gue150630g}
\left\Vert e^{-t\widetilde{\Box}^+_{b,m}}(x,y)-\Gamma(t,x,y)\right%
\Vert_{C^\ell(X\times X)}\leq e^{-\frac{\epsilon_0}{t}},\ \ \forall
t\in(0,\epsilon).
\end{equation}
As a consequence  $e^{-t\widetilde{\Box}^+_{b,m}}(x,y)$ and $\Gamma(t,x,y)$ 
are the same in the sense of asymptotic expansion
(as defined in Definition~\ref{d-gue150608}).  

ii) {\rm (Asymptotic expansion)} More explicitly one has (cf. \eqref{e-gue150626fIII})
\begin{equation}  \label{e-gue150630gI}
\begin{split}
&e^{-t\widetilde{\Box}^+_{b,m}}(x,y)\sim
t^{-n}a^+_n(t,x,y)+t^{-n+1}a^+_{n-1}(t,x,y)+%
\cdots+a^+_0(t,x,y)+ta^+_{-1}(t,x,y)+\cdots\ \ \mbox{as $t\To0^+$}, \\
&a^+_s(t,x,y)\,(=a_{s,m}^+(t,x,y))=\frac{1}{2\pi}\sum^N_{j=1}\int^\pi_{-\pi}e^{-\frac{\hat
h_{j,+}(x,e^{-iu}\circ y)}{t}}\hat b^+_{j,s}(x,e^{-iu}\circ y)e^{-imu}du \\
&(\in C^\infty(\mathbb{R}_+\times X\times X, (T^{*0,+}X\otimes
E)\boxtimes(T^{*0,+}X\otimes E)^*)), 
\ \ s=n,n-1,n-2,\ldots.  
\end{split}%
\end{equation}

Similar statements hold for the case of $e^{-t\widetilde{\Box}^-_{b,m}}(x,y)$ as well.
\end{thm}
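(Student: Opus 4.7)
The plan is to prove (i) by a uniqueness argument identifying $\Lambda(t,x,y)$ with the spectral heat kernel $e^{-t\widetilde\Box^+_{b,m}}(x,y)$, and then to deduce (ii) by reading the explicit expansion of $\Gamma(t,x,y)$ in \eqref{e-gue150626fIII}--\eqref{e-gue150901a} through the resulting estimate \eqref{e-gue150630g}.

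For (i) I would fix $f\in\Omega^{0,+}(X,E)$, set $v(t):=\Lambda(t)f$ and $w(t):=e^{-t\widetilde\Box^+_{b,m}}f$, and compare them. Proposition~\ref{t-gue150630} supplies $v\in C^\ell((0,\epsilon),L^{2,+}_m(X,E))$ with $\partial_t v+\widetilde\Box^+_{b,m}v=0$ and $v(t)\to Q^+_mf$ in $\mathscr D'$ as $t\to 0^+$; the spectral definition \eqref{e-gue150525fbI} together with \eqref{e-gue150525dhIII} gives the analogous three properties for $w$. Writing $u:=v-w$ and using the self-adjointness \eqref{e-gue150525aIV} of $\widetilde\Box^+_{b,m}$, one computes
\begin{equation*}
\frac{d}{dt}(u(t)\,|\,u(t))_E=-2(\widetilde\Box^+_{b,m}u(t)\,|\,u(t))_E=-2(\widetilde D^+_{b,m}u(t)\,|\,\widetilde D^+_{b,m}u(t))_E\leq 0,
\end{equation*}
so that $\|u(t)\|_E^2$ is non-increasing on $(0,\epsilon)$, and once the initial vanishing of $u(t)$ is promoted from $\mathscr D'$ to $L^2$, the monotonicity forces $u\equiv 0$. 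With the identification $\Lambda=e^{-t\widetilde\Box^+_{b,m}}$ in hand, the heat kernel is self-adjoint and the estimate \eqref{e-gue150630} — which controls $\Lambda$ against $\Gamma^*$ — transforms under adjoints in the kernel variables into the desired \eqref{e-gue150630g}. Part (ii) is then immediate since \eqref{e-gue150626fIII} already exhibits $\Gamma(t,x,y)$ in the form with coefficients $a^+_s$ of \eqref{e-gue150901a}, and the exponentially small $C^\ell$-error in \eqref{e-gue150630g} is absorbed into the remainder in the sense of Definition~\ref{d-gue150608}.

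The main obstacle will be upgrading the $\mathscr D'$-convergence of $v(t)$ at $t=0$ to genuine $L^2$-convergence so that the energy monotonicity actually forces $u\equiv 0$; for $w(t)$ this upgrade is immediate from Parseval applied to \eqref{e-gue150525fbI}. The cleanest route I see sidesteps the energy identity altogether by invoking the Duhamel formula
\begin{equation*}
e^{-t\widetilde\Box^+_{b,m}}f-\Gamma^*(t)f=-\int_0^t e^{-(t-s)\widetilde\Box^+_{b,m}}R^*(s)f\,ds,
\end{equation*}
where $R^*$ is the error from Theorem~\ref{t-gue159628fg}: since \eqref{e-gue150628g} makes the integrand exponentially small in every $C^\ell$-norm uniformly in $s\in(0,t)$, the right-hand side is already $O(e^{-\epsilon_0/t})$ in $C^\ell$, and iterating this comparison reproduces the series \eqref{e-gue150630f} and hence yields $\Lambda(t,x,y)=e^{-t\widetilde\Box^+_{b,m}}(x,y)$ without any energy argument. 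Alternatively, one can retain the energy route and supply the missing $L^2$-convergence $\Lambda(t)f\to Q^+_mf$ by first replacing $\Lambda(t)$ by $\Gamma^*(t)$ via \eqref{e-gue150630} and then rerunning the local computation \eqref{e-gue150626fIIII} under the uniform bound \eqref{e-gue150607abII} on $A_{B_j,+}(t)$ so that the pointwise convergence there strengthens to convergence in $L^2$. Everything else is routine once this initial-value issue is settled.
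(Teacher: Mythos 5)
Your diagnosis of the gap in the energy method is exactly right: the initial convergence of $\Lambda(t)f$ supplied by Proposition~\ref{t-gue150630} is only in $\mathscr D'$, and the energy identity requires the stronger $L^2$-convergence of $u(t)=v(t)-w(t)$ at $t=0^+$ to close; without that upgrade the monotonicity of $\|u\|^2$ does not force $u\equiv 0$. Pairing against a fixed smooth function (as the paper does, or as Duhamel does) is precisely what lets the $\mathscr D'$-convergence suffice, which is why the paper avoids the energy route.

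Your Duhamel alternative does work, but it is more circuitous than necessary, and the announced estimate on $\bigl\|\int_0^t e^{-(t-s)\widetilde\Box^+_{b,m}}R^*(s)f\,ds\bigr\|_{C^\ell}$ is not quite automatic: as $s\to t^-$ the prefactor $e^{-(t-s)\widetilde\Box^+_{b,m}}$ loses its smoothing constants, so you need either an $L^2$-operator-norm estimate followed by elliptic bootstrap (using that $\widetilde\Box^+_{b,m}-T^2$ is elliptic and $T=-im$ on the range of $Q^+_m$), or the iteration that replaces $e^{-(t-s)\widetilde\Box^+_{b,m}}$ by $\Gamma^*$ with its explicit $t^{-P}$ kernel bounds. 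The cleaner route, which is what the paper in fact does, is to observe that $\Lambda(t)$ from Proposition~\ref{t-gue150630} already solves the \emph{exact} (homogeneous) heat equation, so one can set $w(s):=e^{-(t-s)\widetilde\Box^+_{b,m}}\Lambda(s)f$ (or, dually, differentiate $(\Lambda(t-s)f\,|\,e^{-s\widetilde\Box^+_{b,m}}g)_E$ as in the paper); by self-adjointness and the two heat equations $\partial_s w\equiv 0$, and evaluating at $s=0$ and $s=t$ gives $\Lambda(t)=e^{-t\widetilde\Box^+_{b,m}}$ with no remainder to estimate and no iteration. The endpoint at $s=t^-$ only ever pairs the $\mathscr D'$-convergent quantity against a function that varies continuously in $C^\infty$, which is all one needs. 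Your deduction of \eqref{e-gue150630g} from \eqref{e-gue150630} via self-adjointness of the kernel, and of part (ii) by reading off the explicit form of $\Gamma(t,x,y)$ in \eqref{e-gue150626fIII} and absorbing the $O(e^{-\epsilon_0/t})$ error into Definition~\ref{d-gue150608}, matches the paper and is correct.
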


\begin{proof}
The argument for the uniqueness part is standard.
To sketch it, there is the following trick (cf. \cite[Lemma 2.16] {BGV92})
in which we shall use heat equations for 
both kernels (cf. \eqref{e-gue150525dhIII} and \eqref{e-gue150630a}).
For $0<t<\epsilon
$ ($\epsilon$ as in Proposition~\ref{t-gue150630}) and $f, g\in \Omega^{0,+}(X, E)$
\begin{equation*}  \label{e-gue150630gh}
\begin{split}
0&=\int^t_0\frac{\partial}{\partial s}\Bigr((\,\Lambda(t-s)f\,|\,e^{-s%
\widetilde{\Box}^+_{b,m}}g\,)_E\Bigr)ds \\
&=(\,Q_mf\,|\,e^{-t\widetilde{\Box}^+_{b,m}}g\,)_E-(\,\Lambda(t)f\,|\,Q_mg%
\,)_E \\
&=(\,f\,|\,e^{-t\widetilde{\Box}^+_{b,m}}g\,)_E-(\,\Lambda(t)f\,|\,g\,)_E \\
&=(\,e^{-t\widetilde{\Box}^+_{b,m}}f\,|\,g\,)_E-(\,\Lambda(t)f\,|\,g\,)_E,
\end{split}%
\end{equation*}
proving that  $e^{-t\widetilde{\Box}%
^+_{b,m}}(x,y)=\Lambda(t,x,y)$.

The estimates \eqref{e-gue150630g} and \eqref{e-gue150630gI} follow from
\eqref{e-gue150630} and \eqref{e-gue150626fIII} ($e^{-t\widetilde{\Box}^+_{b,m}}$ is
self-adjoint).  
\end{proof}

Remark that by Proposition~\ref{l-gue150606} it was tempting to speculate that 
the heat kernel for (modified/$\mathrm{Spin}^c$) Kohn Laplacian
might be (at least asymptotically) the (local) heat kernel for (modified/$\mathrm{Spin}^c$) Kodaira Laplacian.
This is however too much to be true as suggested by the above Theorem~\ref{t-gue150630I} 
because the asymptotic expansion of (modified) Kohn Laplacian involves a nontrivial $t$-dependence in $a_s(t, x, y)$
(cf. Remark~\ref{r-gue150508I} and Remark~\ref{r-gue160414a}).  

We are ready to establish a link between our index and
the heat kernel density of (modified) Kodaira Laplacian.
For $a^+_\ell(t,x,x)$ in \eqref{e-gue150630gI}, define 
$\mathrm{Tr\,}a^+_\ell(t,x,x)=\sum^d_{s=1}\langle\,a^+_\ell(t,x,x)e_s(x)\,|\,%
\,e_s(x)\,\rangle_{E}$ as usual,  where $\{e_s(x)\}_s$ an othonormal frame (of $T^{*0,+}_xX\otimes E_x$).  
(Similar notation and definition apply to the case of $a_\ell^-(t,x, x)$. )

To sum up from Corollary~\ref{t-gue150603} and \eqref{e-gue150630gI},
there is a second form of {\it McKean-Singer} type formula for the
index in our case (cf. Corollary~\ref{t-gue150603} for the first form).  

\begin{cor}
\label{t-gue150630w} {\rm (McKean-Singer (II))}  We have
\begin{equation}  \label{e-gue150630wII}
\sum^n_{j=0}(-1)^j\mathrm{dim\,}H^j_{b,m}(X,E)=\lim_{t\rightarrow0^+}\int_X%
\sum^n_{\ell=0}t^{-\ell}\Bigr(\mathrm{Tr\,}a^+_\ell(t,x,x)-\mathrm{Tr\,}%
a^-_\ell(t,x,x)\Bigr)dv_X(x).
\end{equation}
\end{cor}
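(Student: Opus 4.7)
The proof is essentially a direct combination of Corollary~\ref{t-gue150603} (McKean-Singer I) with the asymptotic expansion of the heat kernel established in Theorem~\ref{t-gue150630I}, so the plan is to carefully assemble these two ingredients and pass to the limit $t\to 0^+$. Concretely, by Corollary~\ref{t-gue150603}, for every $t>0$ one has
\begin{equation*}
\sum^n_{j=0}(-1)^j\mathrm{dim\,}H^j_{b,m}(X,E)=\int_X\Bigr(\mathrm{Tr\,}e^{-t\widetilde{\Box}^{+}_{b,m}}(x,x)-\mathrm{Tr\,}e^{-t\widetilde{\Box}^{-}_{b,m}}(x,x)\Bigr)dv_X(x),
\end{equation*}
and the left-hand side is a fixed integer independent of $t$. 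So the whole point is to show that the integrand on the right splits, up to a term that vanishes as $t\to 0^+$, into the polar sum $\sum_{\ell=0}^{n}t^{-\ell}(\mathrm{Tr\,}a^+_\ell(t,x,x)-\mathrm{Tr\,}a^-_\ell(t,x,x))$ claimed in the statement.

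The key step is to invoke \eqref{e-gue150630gI} for $e^{-t\widetilde{\Box}^\pm_{b,m}}(x,x)$ in the precise sense of Definition~\ref{d-gue150608}. Fixing any $M_0$ large enough (so $M_0-M_1(2n+1,0)\geq 1$, say), uniformly in $x\in X$ we have
\begin{equation*}
e^{-t\widetilde{\Box}^\pm_{b,m}}(x,x)=\sum_{j=0}^{M_0}t^{-n+j}a^\pm_{n-j}(t,x,x)+r^\pm(t,x),\qquad \sup_{x\in X}|r^\pm(t,x)|\leq C\,t^{M_0-M_1(2n+1,0)}.
\end{equation*}
Splitting the $j$-sum into the range $j\leq n$ (which, after reindexing $\ell=n-j$, yields $\sum_{\ell=0}^{n}t^{-\ell}a^\pm_\ell(t,x,x)$) and the range $j\geq n+1$ (which gives a finite sum of terms of the form $t^{s}a^\pm_{-s}(t,x,x)$ with $s\geq 1$), and recalling that by Theorem~\ref{t-gue160114} every $a^\pm_s(t,x)$ is bounded uniformly in $(t,x)\in\mathbb{R}_+\times X$, the contribution of the $j\geq n+1$ part and of $r^\pm(t,x)$ is $O(t)$ uniformly on $X$.

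Integrating over $X$, subtracting the $-$ expansion from the $+$ expansion, and using Corollary~\ref{t-gue150603}, we obtain
\begin{equation*}
\sum^n_{j=0}(-1)^j\mathrm{dim\,}H^j_{b,m}(X,E)=\int_X\sum^n_{\ell=0}t^{-\ell}\Bigr(\mathrm{Tr\,}a^+_\ell(t,x,x)-\mathrm{Tr\,}a^-_\ell(t,x,x)\Bigr)dv_X(x)+O(t)
\end{equation*}
for all sufficiently small $t>0$. Letting $t\to 0^+$ yields the desired formula, and incidentally shows that the limit of the integral actually exists (even though individual terms $t^{-\ell}$ for $\ell\geq 1$ diverge pointwise, the massive cancellation between the $+$ and $-$ contributions within the integral is already guaranteed by the $t$-independence of the left-hand side).

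There is no real obstacle here; all analytic content has already been absorbed into Corollary~\ref{t-gue150603} and Theorem~\ref{t-gue150630I}. The only point requiring a moment of care is the uniformity in $x$ of the asymptotic expansion for $e^{-t\widetilde{\Box}^\pm_{b,m}}(x,x)$—in particular the uniform boundedness of $a^\pm_s(t,x)$ on $\mathbb{R}_+\times X$—which is precisely what Theorem~\ref{t-gue160114} supplies and which legitimizes exchanging the integration over $X$ with the limit $t\to 0^+$ after subtracting the polar part.
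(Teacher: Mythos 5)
Your proposal is correct and takes essentially the same route the paper intends: the paper's own proof is the one-line remark "To sum up from Corollary~\ref{t-gue150603} and \eqref{e-gue150630gI}," and you have simply filled in the bookkeeping—invoking the uniform asymptotic expansion from Theorem~\ref{t-gue150630I} in the sense of Definition~\ref{d-gue150608} (with $M_0$ chosen so the remainder is $O(t)$), using the uniform boundedness of the coefficients $a^\pm_s(t,x)$ to dismiss the $j\geq n+1$ terms and the remainder, and then passing to the limit after integration via the $t$-independence of the left-hand side.
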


By this result we are now reduced to computing $a_s(t, x,x)$ in the following 
section. 

\bigskip

\bigskip

\begin{center}
\large{Part II: Proofs of main theorems}
\end{center}

\section{Proofs of Theorems~\protect\ref{t-gue160114} and \ref{t-gue160307}}

\label{s-gue150702}

We are in a position to prove the main results of this paper.   A new ingredient
is the notion of ``distance function" $\hat d$ (see \eqref{e-gue160327aIe1}
for the definition and Theorem~\ref{t-gue160413} for its property). This function naturally appears when we compute $a_s(t, x,x)$ 
in the form of an integral \eqref{e-gue150630gI}.    
In the remaining part of this section  we prove that this ``distance function"
is equivalent to the ordinary distance function at least in the strongly pseudoconvex case 
(Theorem~\ref{t-gue160413}).

Theorem~\ref{t-gue160114} is proved in Theorem~\ref{t-gue160123}, Remark~\ref{r-gue160123}, 
Corollary~\ref{t-gue160224} together with Theorem~\ref{t-gue150630I};
Theorem~\ref{t-gue160307} proved in Theorem~\ref{t-gue150701} and in \eqref{e-gue150701a}.

In the same notations as before recall that $X_{p_\ell}=\left\{x\in X;\, \mbox{the period of $x$ is $\frac{2\pi}{p_\ell}$}%
\right\}$, $1\le \ell\le k$ with $p_1|p_\ell$ (all $\ell$) and $p=p_1$.   $X_p$ is open and dense in
$X$.  See the discussion preceding Theorem~\ref{t-gue160114} for more detail.

Let $G_j(t,x,y)$ be as in \eqref{e-gue150627f}.  (Notations set up in \eqref{e-gue150627f}--\eqref{e-gue150901a}
will be useful in what follows.)   By the construction of $G_j(t,x,y)$,
it is clear that
\begin{equation}  \label{e-gue160224I}
\begin{split}
&\frac{1}{2\pi}\sum^N_{j=1}G_j(t,x,x)\sim
t^{-n}\alpha^+_n(x)+t^{-n+1}\alpha^+_{n-1}(x)+\cdots\ \ \mbox{as $t\To0^+$},
\\
&\alpha^+_s(x)\,\,={\frac{1}{2\pi}}\sum_{j=1}^{N}\hat \beta^+_{j, s}(x, y)|_{y=x}\,\in C^\infty(X,\mathrm{End\,}(T^{*0,+}X\otimes E)),\ \
s=n,n-1,\ldots.
\end{split}%
\end{equation}

$\alpha_s^+(x)$ are independent of choice of BRT trivialization charts
(in view of Remark~\ref{r-gue150607}).   It is perhaps instructive to think of these as 
the data of the asymptotic expansion associated with the ``underlying
Kodaira Laplacian" (cf. {\it loc. cit.} and Proposition~\ref{l-gue150606})
regardless of the existence of a genuine ``underlying space".

Recall the asymptotic expansions of $\Gamma(t, x, y)$ and
$e^{-t\widetilde\Box^+_{b,m}}(x,y)$ (they coincide by Theorem~\ref{t-gue150630I}),
in which we have $a^+_s(t,x,y)$\, ($\in C^\infty(\mathbb{R}_+\times X\times
X,(T^{*0,+}X\otimes E)\boxtimes (T^{*0,+}X\otimes E)^*)$), $
s=n,n-1,\ldots$, cf. \eqref{e-gue150630gI} or \eqref{e-gue150626fIII}.
By the construction, $\Gamma(t, x, y)$ 
and $a_s(t, x, y)$ of \eqref{e-gue150630gI} depend on the choice of BRT charts.  
(The authors do not know whether there exists a canonical choice of $a_s(t, x, y)$ 
in this respect.)  

We are now ready to give a proof of the following.

\begin{thm}
\label{t-gue160123}  (cf. Theorem~\ref{t-gue160114})
For every $N_0\in\mathbb{N}$ with $N_0\ge N_0(n)$ for some $N_0(n)$, 
there exist $\varepsilon_0>0$, $\delta>0$ and $C_{N_0}>0$
such that
\begin{equation}  \label{e-gue160114r}
\begin{split}
&\left\vert
\sum^{N_0}_{j=0}t^{-n+j}a^+_{n-j}(t,x,x)-\Bigl(\sum\limits^{p_r}_{s=1}e^{\frac{%
2\pi(s-1)}{p_r}mi}\Bigr)\sum^{N_0}_{j=0}t^{-n+j}\alpha^+_{n-j}(x)\right\vert \\
&\leq C_{N_0}\Bigr(t^{-n+N_0+1}+t^{-n}e^{\frac{-\varepsilon_0\hat d(x,X^r_{%
\mathrm{sing\,}})^2}{t}}\Bigr),\ \  \forall\,x\in X_{p_r}\, (r=1,\ldots,k),\,\,\forall \,0<t<\delta.
\end{split}%
\end{equation}
\end{thm}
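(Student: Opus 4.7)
My approach is to reduce Theorem~\ref{t-gue160123} to the pointwise estimate
\begin{equation*}
\bigl|\,a^+_{s,m}(t,x,x) - \bigl(\sum_{s'=1}^{p_r} e^{2\pi(s'-1)mi/p_r}\bigr)\alpha^+_{s,m}(x)\,\bigr| \le C\,e^{-\varepsilon_0 \hat d(x,X^r_{\mathrm{sing}})^2/t},
\end{equation*}
valid for $x\in X_{p_r}$ and each $s=n,n-1,\ldots$; this is the $P_\ell=I$ case of \eqref{e-gue160114}. Granting it, multiplying by $t^{-n+j}$ and summing $j=0,\ldots,N_0$, then using that $t^{-n+j}\le t^{-n}$ for $0<t\le 1$, directly produces the Gaussian piece $C_{N_0}t^{-n}e^{-\varepsilon_0\hat d^2/t}$ of the bound \eqref{e-gue160114r}. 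The polynomial term $t^{-n+N_0+1}$ is a harmless technical slack: it dominates only in the regime where $\hat d$ is bounded away from $0$ (where the Gaussian is already $O(t^\infty)$), and its presence is convenient when the estimate is combined later with the asymptotic expansion of $e^{-t\widetilde\Box^+_{b,m}}(x,x)$ to yield Corollary~\ref{c-gue160306}.

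To derive the pointwise estimate I would exploit the explicit integral representation \eqref{e-gue150630gI},
\begin{equation*}
a^+_{s,m}(t,x,x)=\frac{1}{2\pi}\sum_{j=1}^{N}\int_{-\pi}^{\pi} e^{-\hat h_{j,+}(x,e^{-iu}\circ x)/t}\,\hat b^+_{j,s}(x,e^{-iu}\circ x)\,e^{-imu}\,du,
\end{equation*}
and split the $u$-integration according to the return structure of the $S^1$-orbit through $x$. For $x\in X_{p_r}$ the orbit has period $2\pi/p_r$ and returns exactly to $x$ at the times $u=2\pi k/p_r$, $k=0,\ldots,p_r-1$; I would take small neighborhoods $I_k=(2\pi k/p_r-\zeta,\,2\pi k/p_r+\zeta)$, with $\zeta$ as in \eqref{e-gue160327}, and let $J\subset[-\pi,\pi]$ be the complement. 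On $I_k$, substituting $u=2\pi k/p_r+v$ and using $e^{-i 2\pi k/p_r}\circ x=x$, the point $e^{-iu}\circ x=e^{-iv}\circ x$ has BRT coordinates $(z_0,\theta_0+v)$ for $|v|$ small, so its base coordinate coincides with $z_0=z(x)$; consequently $h_{j,+}(z_0,z_0)=0$, the Gaussian factor is identically $1$, and the $e^{\pm m\varphi_j}$ prefactors cancel, reducing the integrand to $e^{-2\pi kmi/p_r}\chi_j(x)b^+_{j,s}(z_0,z_0)\tau_j(z_0)\sigma_j(\theta_0+v)$. The $v$-integration yields $\int\sigma_j(\eta)\,d\eta=1$; summing over $k$ and $j$, using the identity $\sum_{k=0}^{p_r-1}e^{-2\pi kmi/p_r}=\sum_{s'=1}^{p_r}e^{2\pi(s'-1)mi/p_r}$ (both sides real, equal to $p_r$ or $0$) and the definition \eqref{e-gue160224I} of $\alpha^+_{s,m}$, the total $I_k$-contribution assembles to exactly $(\sum_{s'=1}^{p_r}e^{2\pi(s'-1)mi/p_r})\alpha^+_{s,m}(x)$.

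The Gaussian error comes entirely from the $J$-integral. On $J$, the definition \eqref{e-gue160327aIe1} forces $d(x,e^{-iu}\circ x)\ge\hat d(x,X^r_{\mathrm{sing}})$. Wherever both cut-offs $\tau_j$ and $\sigma_j$ are nonzero, the points $x$ and $e^{-iu}\circ x$ both lie in the chart $D_j$, and the bound $h_{j,+}(z,w)\ge C_K^{-1}|z-w|^2$ from Proposition~\ref{t-gue150607}(III), together with the comparability of $|z(x)-w(e^{-iu}\circ x)|$ with $d(x,e^{-iu}\circ x)$ within a fixed BRT chart, gives $\hat h_{j,+}(x,e^{-iu}\circ x)\ge c_0\,\hat d(x,X^r_{\mathrm{sing}})^2$ for some $c_0>0$. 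Since $\hat b^+_{j,s}$ is smooth and uniformly bounded, the $J$-integral is dominated by $C\,e^{-c_0\hat d^2/t}$, which is the claimed pointwise bound with $\varepsilon_0=c_0$. The case of $P_\ell$ of positive order follows by differentiating under the integral sign; each derivative of the Gaussian factor in $x$ contributes at most a factor $t^{-1/2}$, giving the prefactor $t^{-\ell/2}$ of \eqref{e-gue160114}.

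The main obstacle will be the quantitative treatment of the near-return phenomenon underlying the $J$-estimate: when $x\in X_{p_r}$ lies close to a deeper stratum $X_{p_{r'}}$ with $r'>r$, the orbit of $x$ revisits a small neighborhood of $x$ at each of the additional times $u=2\pi k/p_{r'}$ that are not already true returns, and these near-returns are \emph{not} killed by the cut-offs $\tau_j,\sigma_j$. The only source of smallness there is the Gaussian factor itself, so the entire argument rests on matching the lower bound $\hat h_{j,+}(x,e^{-iu}\circ x)\gtrsim\hat d(x,X^r_{\mathrm{sing}})^2$ uniformly on $J$. This requires relating the Riemannian distance $d(x,e^{-iu}\circ x)$ to the coordinate distance $|z(x)-w(e^{-iu}\circ x)|$ in whichever BRT chart contains both points and then patching the resulting local bounds across the BRT cover — the technical core of the argument.
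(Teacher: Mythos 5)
Your overall strategy — decompose the angular integral of $a^+_{s,m}(t,x,x)$ into $\zeta$-neighborhoods of the true return times $u=2\pi k/p_r$ (which reproduce $\alpha^+_{s,m}(x)$ with the prefactor $\sum_{s'=1}^{p_r}e^{2\pi(s'-1)mi/p_r}$) and the complementary set $J$ (which yields a Gaussian) — is exactly the paper's mechanism, cf.\ the treatment of \eqref{e-gue160125VI}, \eqref{e-gue160125V} and \eqref{e-gue160125v8} and Remark~\ref{r-gue160123}. Your repackaging, proving the pointwise estimate on each $a^+_{s,m}$ first and then summing in $j$, is a valid and in fact slightly cleaner route than the paper's proof, which works directly with $\Gamma(t,x,x)$ via its asymptotic expansion and therefore picks up the polynomial remainder $t^{-n+N_0+1}$ from Definition~\ref{d-gue150608}/Remark~\ref{r-gue150608} before discarding it.

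The gap is in the one step you single out as the ``technical core.'' You assert that $\hat h_{j,+}(x,e^{-iu}\circ x)\ge c_0\,\hat d(x,X^r_{\mathrm{sing}})^2$ follows from $h_{j,+}(z,w)\ge C_K^{-1}|z-w|^2$ together with ``the comparability of $|z(x)-w(e^{-iu}\circ x)|$ with $d(x,e^{-iu}\circ x)$.'' That intermediate comparability is false in general. Writing $x=(z_0,0)$ and $e^{-iu}\circ x=(\tilde z,\tilde\eta)$ in BRT coordinates, $d\bigl((z_0,0),(\tilde z,\tilde\eta)\bigr)$ is comparable to $\sqrt{|z_0-\tilde z|^2+|\tilde\eta|^2}$, so when $|\tilde\eta|$ dominates $|z_0-\tilde z|$ the coordinate gap can be far smaller than the Riemannian distance $d(x,e^{-iu}\circ x)$; the bound \eqref{e-gue160215} in the paper only compares $|z_1-z_2|$ with $d((z_1,\theta_1),(z_2,\theta_1))$ on a \emph{fixed} $\theta$-slice, precisely to avoid this. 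What the paper actually proves (claim \eqref{e-gue160327I}, established in \eqref{e-gue160327II}) is the direct inequality $|z_0-\tilde z|\ge\hat\varepsilon_0\,\hat d(x_0,X_{\mathrm{sing}})$, bypassing $d(x,e^{-iu}\circ x)$ entirely: one applies $e^{i\tilde\eta}\in S^1$ to bring both points to the slice $\theta=0$, uses \eqref{e-gue160215} there to get $|z_0-\tilde z|\ge\hat\varepsilon_0\,d\bigl((\tilde z,0),x_0\bigr)$, and then observes that $(\tilde z,0)=e^{-i\hat\theta}\circ x_0$ for some $\hat\theta\in[\zeta,2\pi/p_r-\zeta]$, so the definition \eqref{e-gue160327aIe1} of $\hat d$ applies. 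Your proposal names the right obstacle but aims the comparison at the wrong pair of quantities ($|z-w|$ against $d(x,e^{-iu}\circ x)$ instead of $|z-w|$ against $\hat d$), and as stated the step does not go through.
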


\begin{proof}
For simplicity, we only prove Theorem~\ref{t-gue160123} for $r=1$. The proof
for $r>1$ is similar.

As in the beginning of Section~\ref{s-gue150627I},
there are BRT trivializations $B_j:=(D_j,(z,\theta),\varphi_j)$, $%
j=1,\ldots,N$.  We write
\begin{equation*}
D_j=U_j\times]-2\delta_j,2\widetilde\delta_j[,\\ \quad
\hat D_j=\hat U_j\times]-\frac{\delta_j}{2},\frac{%
\widetilde\delta_j}{2}[\quad (\subset\mathbb{C}^n\times\mathbb{R}),
\end{equation*}
with $U_j=\left\{z\in\mathbb{C}^n;\, \left\vert
z\right\vert<\gamma_j\right\},
\hat U_j=\left\{z\in\mathbb{C}^n;\,
\left\vert z\right\vert<{\gamma_j}/{2}\right\}$
for some $\delta_j>0$, $
\widetilde\delta_j>0$, $\gamma_j>0$.  

Assume $X=\hat
D_1\bigcup\cdots\bigcup\hat D_N$.
In the following we let $%
\delta_j=\widetilde\delta_j=\zeta$ (all $j$),
$\zeta$ satisfy \eqref{e-gue160327}
with $4\left\vert
\zeta\right\vert<\frac{2\pi}{p}$.

It is easily verified that there is an
$\hat\varepsilon_0>0$ such that ($d(\cdot, \cdot)$ the ordinary distance function on $X$)
\begin{equation}  \label{e-gue160215}
\begin{split}
&\hat\varepsilon_0d((z_1,\theta_1),(z_2,\theta_1))\leq\left\vert
z_1-z_2\right\vert\leq\frac{1}{\hat\varepsilon_0}d((z_1,\theta_1),(z_2,%
\theta_1)),\quad \forall \,(z_1,\theta_1),(z_2,\theta_1)\in  D_j, \\
&\hat\varepsilon_0d((z_1,\theta_1),(z_2,\theta_1))^2\leq h_{j,+}(z_1,z_2)\leq%
\frac{1}{\hat\varepsilon_0}d((z_1,\theta_1),(z_2,\theta_1))^2, \quad \forall\,
(z_1,\theta_1),(z_2,\theta_1)\in  D_j,
\end{split}%
\end{equation}
where $h_{j,+}(z,w)$ is as in \eqref{e-gue150608a}.

Recall the modified distance $\hat d$ which is defined in \eqref{e-gue160327aIe1}.
We are going to compare $\hat d$ with \eqref{e-gue160215}.

Fix $x_0\in X_p$.  Suppose $x_0\in \hat D_j$ for some $j=1,2,\ldots,N$ and 
also suppose $x_0=(z,0)$ on $D_j$.  

Some crucial remarks are in order.

i) For  $0\le |u|\le 2\zeta$ the action of $e^{-iu}$ on $x_0$ is only moving along
the ``angle" direction (due to the assumption that a BRT trivialization $D_j$ is valid here),
i.e. the $z$-coordinates of $x_0$ and $e^{-iu}\circ x_0$ are the same.

ii) Let a $u_0\in [2\zeta,\frac{2\pi}{p}-2\zeta]$ be given. 
Assume that the action by $e^{-iu_0}$ on $x_0$ still belongs to $\hat D_j$
with a coordinate $e^{-iu_0}\circ x_0=(\tilde z, \tilde\eta)$.
Then it could happen that $\tilde z\ne z$
because the orbit $\{e^{-iv}\circ x_0\}$ for ${2\zeta\le v\le \frac{2\pi}{p}-2\zeta}$ may partly lie outside of
$D_j$.  We will show \eqref{e-gue160327I} below that indeed $\tilde z\ne z$ in 
this case.  

Remark that the above ii) is basically the reason responsible for why 
the contribution of our distance function $\hat d$ enters, as seen shortly.  
The question about whether the condition $e^{-iu_0}\circ x_0\in\hat D_j$
of ii) is vacuous or not will be discussed below
(equivalent to whether $J$ below is an empty set or not).    

We shall now formulate the above ii) more precisely. 
If $x_0\in \hat D_j$, we claim the following. 
\begin{equation}  \label{e-gue160327I}
\begin{split}
&\mbox{Suppose \,\,$e^{-i\theta}\circ x_0=(\Td z,\Td\eta)$\,\, also belongs to $\hat D_j$ for some
$\theta\in[2\zeta,\frac{2\pi}{p}-2\zeta]$}.\\
&\quad \mbox{Then\quad $\abs{z-\Td z}\geq\hat\varepsilon_0\,\hat d(x_0,X_{{\rm sing\,}})\,(>0)$}%
.
\end{split}%
\end{equation}

\begin{proof}[proof of claim]
By $(\tilde z,\tilde\eta)\in \hat D_j$ one has $e^{i\tilde\eta}\circ(\tilde z, \tilde\eta)=(\tilde z, 0)$
equivalently $e^{-i\tilde\eta}\circ (\tilde z, 0)=(\tilde z, \tilde\eta)$ (by the above i) as $|\tilde\eta|\le \frac{\zeta}{2}$
here).  One sees (by \eqref{e-gue160215} and isometry of $S^1$ action for the first inequality below)
\begin{equation}  \label{e-gue160327II}
\begin{split}
\left\vert \widetilde
z-z\right\vert
\geq\hat\varepsilon_0\,d(e^{i\tilde\eta}\circ(\tilde z, \tilde\eta), e^{i\tilde\eta}\circ(z, \tilde\eta))
&=\hat\varepsilon_0\,d(e^{i\tilde\eta}\circ (e^{-i\theta}\circ x_0), x_0)\\
&\geq\hat\varepsilon_0\inf\left\{d(e^{-iu}\circ
e^{-i\theta}\circ x_0,x_0);\, \left\vert u\right\vert\leq\frac{\zeta}{2}\right\} \\
&\geq\hat\varepsilon_0\inf\left\{d(e^{-i\hat \theta}\circ x_0,x_0);\,
\zeta\leq\hat \theta\leq\frac{2\pi}{p}-\zeta\right\}\\
&=\hat\varepsilon_0\,\hat
d(x_0,X_{\mathrm{sing\,}})
\end{split}
\end{equation}
(see \eqref{e-gue160327aIe1} for the definition of $\hat d$), as claimed.
\end{proof}

Remark that a sharp result in this direction  \eqref{e-gue160327I} is proved in 
Lemma~\ref{l-gue160417lem2}.  

We continue with the proof of the theorem.
We need to estimate $\Gamma(t)=\sum_{j=1}^NH_j(t)\circ Q_m$ for the first summation
to the left of \eqref{e-gue160114r}.  By definition (see \eqref{e-gue150626fIII})
this is in turn to estimate
\begin{equation}
\label{e-gue160327aI}
\frac{1}{2\pi}\int^{\pi}_{-\pi}H_j(t,x_0,e^{-iu}\circ x_0)e^{-imu}du
\end{equation}
and sum over $j=1,\ldots, N$.


We first assume that in \eqref{e-gue160327aI},  $x_0=(z, 0)$ in $\hat D_j$ and $x_0\not\in \hat D_k$ for
any other $k\ne j$.  

To work on \eqref{e-gue160327aI} we shall divide $[-\pi, \pi]$ in \eqref{e-gue160327aI} into two types.

The first type is to estimate $\frac{1}{2\pi}\int^{2\zeta}_{-2\zeta}H_j(t,x_0,e^{-iu}\circ x_0)e^{-imu}du$.
Note if $u\in [-2\zeta, 2\zeta]$ and $e^{-iu}\circ x_0=(z_u, \theta_u)$, then
$(z_u, \theta_u)=(z, u)$ (by i) above \eqref{e-gue160327I}), i.e. $e^{-iu}\circ (z, 0)=(z, u)$.
Hence, by \eqref{e-gue150627f}
the factor $e^{im\eta}$ in $H_j$ is going 
to be $e^{imu}$ and this is cancelling off the term $e^{-imu}$ in the integral 
\eqref{e-gue160327aI}.
By \eqref{e-gue150608a}, $h_+(z, z_u)=h_+(z, z)=0$ (also by \eqref{e-gue160215}) and
the factor $e^{-\frac{h_+}{t}}$ of $A_{B_j,+}$ in $H_j$ of \eqref{e-gue150627f} becomes $1$.
Finally we note for $\sigma_j$ in $H_j$ of \eqref{e-gue150627f}, $\int_I\sigma_j(\theta)d\theta=1$, $I=[-\frac{\zeta}{2},\frac{\zeta}{2}]$.

To sum up,  by \eqref{e-gue150627f} and \eqref{e-gue150608a} one obtains the following ($x_0\not\in \hat D_k$ for $k\ne j$)
\begin{equation}  \label{e-gue160125VI}
\begin{split}
&\frac{1}{2\pi}\int^{2\zeta}_{-2\zeta}H_j(t,x_0,e^{-iu}\circ x_0)e^{-imu}du
\\
&\mbox{$\sim \Bigr(t^{-n}\alpha^+_{n}(x_0)+t^{-(n-1)}\alpha^+_{n-1}(x_0)+%
\cdots\Bigr)$ as $t\To 0^+$},
\end{split}%
\end{equation}
where $\alpha^+_s(x)$, $s=n,n-1,\ldots$, are as in \eqref{e-gue160224I}.

For the second type suppose $u\in [2\zeta,\frac{2\pi}{p}-2\zeta]$.
Note the action by $e^{-iu}$ on $x_0$ may change the $z$ coordinate of $x_0$
by ii) above \eqref{e-gue160327I}.   We let
$J$ be the subset of those $u\in [2\zeta,\frac{2\pi}{p}-2\zeta]\equiv E$ that
$e^{-iu}\circ x_0=(z_u, \theta_u)$ belongs to $\hat D_j$ (then $z_u\neq z$ by \eqref{e-gue160327I}),
and $J'$ be the complement of $J$ in $E$.  
One finds, for some $\varepsilon_0>0, 
\delta>0$ and $C_0>0$ (independent of $j$, $x_0$), that
\begin{equation}
\label{e-gue160125V}
\begin{split}
&\left\vert \frac{1}{2\pi}\int_{u\in[2\zeta,\frac{2\pi}{p}-2\zeta]%
}H_j(t,x_0,e^{-iu}\circ x_0)e^{-imu}du\right\vert \\
&\leq  \frac{1}{2\pi}\int_{u\in J%
}\left\vert H_j(t,x_0,e^{-iu}\circ x_0)e^{-imu}\right\vert du +
 \frac{1}{2\pi}\int_{u\in J'%
}\left\vert H_j(t,x_0,e^{-iu}\circ x_0)e^{-imu}\right\vert du
\\
&\leq C_0t^{-n}e^{\frac{%
-\varepsilon_0\hat d(x_0,X_{\mathrm{sing\,}})^2}{t}}, \ \ \forall\, 0<t<\delta
\end{split}
\end{equation}
where the integral over $J'$ vanishes
because the cut-off function $\sigma_j$ in $H_j$ of \eqref{e-gue150627f} 
gives $\sigma_j(\eta(y))=0$ for $y=e^{-iu}\circ x_0\not\in \hat D_j$ 
as $\sigma_j=0$ outside $\hat D_j$ (see lines above \eqref{e-gue150627f}), and the second inequality arises
from applying \eqref{e-gue160215} and \eqref{e-gue160327I} to
$h_+(z, z_u)$ in $H_j$ (see \eqref{e-gue150627f} and \eqref{e-gue150608a}).

Is  $J$ an empty set?  We remark that the top term in \eqref{e-gue160125V}
is in general nonzero (by combining \eqref{e-gue160125VI} and 
Remark~\ref{r-gue160414a} for $p=1$).  Hence $J\ne \emptyset$
in general.   There is a geometrical way to see the claim that for some open subset $V$ of $X$, if $x_0\in V$,
then $J\ne \emptyset$.  
For simplicity assume $X=X_1\bigcup X_2$, i.e. $p_1=1$ and $p_2=2$.  
Choose $y\in X_2$.  Let $g=e^{-i\frac{2\pi}{p_2}}\in S^1$.    Fix a  
neighborhood $U\subset \hat D_j$ of $y$ in $X$.  Since $g\circ y=y$, by continuity argument 
there are neighborhoods $N_1$, $N_2$ of 
$y$, $g$ in $X$, $S^1$ respectively such that the action $h\circ x\in U$ if $(h,x)\in N_2\times N_1$.  
Choose $N_1\subset \hat D_j$, $N_2$ small and set $V\equiv N_1\setminus X_2$.
It follows that for these $x_0\in V$, 
$J\ne\emptyset$ since $N_2\subset J$.   This result also accounts for the necessity of the remark ii) 
above \eqref{e-gue160327I} and hence that of a certain extra contribution (e.g. $\hat d$) in estimates \eqref{e-gue160125V}.  

Suppose $p\,(=p_1)=1$.  
Then 
 \eqref{e-gue160125VI} and \eqref{e-gue160125V}, Definition~\ref{d-gue150608} for $\sim$
and Remark~\ref{r-gue150608} (by noting 
$\mathrm{dim}\, X=2n+1$, $\mathrm{dim}\,U_j=2n$, $M_0(m)=n+1$, $m=4n=2\beta$, $M_1(m,\ell)=n$ for $\ell=0$) 
immediately lead to
\begin{equation}  \label{e-gue160215VII}
\begin{split}
&\left\vert \Gamma(t,x_0,x_0)-\sum^{N_0}_{j=0}t^{-n+j}\alpha^+_{n-j}(x_0)\right\vert \\
&\leq C_{N_0}\Bigr(t^{-n+N_0}+C_0t^{-n}e^{\frac{-\varepsilon_0\hat
d(x_0,X_{\mathrm{sing\,}})^2}{t}}\Bigr),
\ \ \,N_0\ge N_0(n), \,\,\forall \,0<t<\delta.
\end{split}%
\end{equation}
Now adding $t^{-n+N_0+1}\alpha^+_{n-N_0-1}$ to \eqref{e-gue160215VII} and substracting 
it we improve $t^{-n+N_0}$ by $t^{-n+N_0+1}$.  Hence the estimate \eqref{e-gue160114r} of the theorem for $p=1$.

Suppose $p>1$.   Then one has the extra $p-1$ sectors in $[-\pi,\pi]$
(obtained by shifting the above first sector $s=1$ by a common $(s-1)\frac{2\pi}{p}$):
\begin{equation}
\label{e-gue160125v8}
[(s-1)\frac{2\pi}{p}-2\zeta, (s-1)\frac{2\pi}{p}+2\zeta], \,\,[(s-1)\frac{2\pi}{p}+2\zeta, s\frac{2\pi}{p}-2\zeta],
\quad s=1,\ldots, p
\end{equation}
($s=p+1$ identified with $s=1$) over which the integrals correspond to types I \eqref{e-gue160125VI} 
and II  \eqref{e-gue160125V} respectively.    One may check without difficulty that the version of the claim
\eqref{e-gue160327I} adapted to these sectors holds true as well. 
On each of these sectors, a simple (linear)  change of variable for $u$, which is to bring the intervals of the integration on 
these sectors back to those in \eqref{e-gue160125VI} and \eqref{e-gue160125V},
produces the extra numerical factor in sum (by $e^{-imu}du$ in \eqref{e-gue160327aI}) :
$ \sum\limits^{p}_{s=1}e^{\frac{2\pi(s-1)}{%
p}mi}$ as expressed in  \eqref{e-gue160114r}.

Finally, note that we have assumed $x_0=(z, 0)\in \hat D_j$.   In this case the above argument appears 
{\it symmetrical} in writing.    This (assumption) is however not essential.   Since we shall also adopt a similar 
assumption in Section~\ref{s-gue160416}, we give an outline about the 
{\it asymmetrical} way (i.e. $x_0=(z, \theta)$, $\theta\ne 0$) of the argument.  
By going over the same process, one sees the following.  
i) If $x=(z, v)$, with $0<v\le \frac{\zeta}{2}$, 
the intervals in \eqref{e-gue160125VI},
\eqref{e-gue160125V} shall be replaced by $[-2\zeta-v, 2\zeta-v]$, $[2\zeta-v, \frac{2\pi}{p}-2\zeta-v]$
(thought of as translated by a common $-v$) with the new integrals denoted by  \eqref{e-gue160125VI}',
\eqref{e-gue160125V}', respectively; 
ii) $[-2\zeta-v, 2\zeta-v]\supseteq [-\zeta,\zeta]$ hence $\int \sigma(u) du$ is still $1$ in \eqref{e-gue160125VI}' ;
iii) In the proof of claim \eqref{e-gue160327II}, $e^{i\tilde\eta}$ should be replaced by $e^{i\gamma}$ 
with $\gamma=\tilde\eta-v$, 
$(\tilde z, 0)$ by $(\tilde z, v)$ and $\theta\in  [2\zeta, \frac{2\pi}{p}-2\zeta]$ by $\theta\in  [2\zeta-v, \frac{2\pi}{p}-2\zeta-v]$
throughout \eqref{e-gue160327I} and \eqref{e-gue160327II}.  One can check that the the reasoning in \eqref{e-gue160327II} remains basically unchanged, and 
the conclusion of \eqref{e-gue160327II} holds true as well in this modified case;
iv) By the preceding ii) and iii), the results corresponding to 
\eqref{e-gue160125VI}' and \eqref{e-gue160125V}' hold true.  
Hence the asymmetrical way follows.  

We have also assumed $x_0\not\in \hat 
D_k$ for $k\ne j$.   This condition is unimportant if we take the preceding asymmetrical 
way into account (for $x_0\in\hat D_k$, $k\ne j$ in the general case), 
and note that there is a hidden partition of unity in $\{H_j\}_{j=1,\ldots,N}$.  

For an alternative to the above, it is to use the kernel 
$e^{-t\widetilde\Box_{b,m}^+}(x, y)$ in place of $\Gamma(t, x, y)$
and $a^+_s(t, x, y)$.    An advantage is that  $e^{-t\widetilde\Box_{b,m}^+}(x, y)$ is independent of BRT charts, 
so that for a given point $x_0$ we 
can take a covering of $X$ by convenient BRT charts for the previous 
special conditions to be satisfied (e.g. $x_0=(z, 0)$, $x_0\in \hat D_j$ for exactly one $j$ etc.).
By the asymptotic property between  $e^{-t\widetilde\Box_{b,m}^+}(x, y)$
and $\Gamma(t, x, y)$ \eqref{e-gue150630g}, this also leads to Theorem~\ref{t-gue160123}. 
\end{proof}

\begin{rem}
\label{r-gue160123} For the relation between $a^+_s(t,x,y)|_{y=x}$ and $\alpha_s(x)$ 
as stated in \eqref{e-gue160114} of Theorem~\ref{t-gue160114}, the method of the above proof 
works.   By using (setting $y=x$ below) 
\begin{equation}\label{e-gue160125re2}
a^+_s(t,x,y)=\frac{1}{2\pi}\sum^N_{j=1}\int^\pi_{-\pi}e^{-\frac{\hat
h_{j,+}(x,e^{-iu}\circ y)}{t}}\hat b^+_{j,s}(x,e^{-iu}\circ y)e^{-imu}du
\end{equation} (see \eqref{e-gue150630gI}) 
with the same reasoning as \eqref{e-gue160125VI}, \eqref{e-gue160125V} and \eqref{e-gue160125v8}, 
one obtains \eqref{e-gue160114} for $P_\ell=\mathrm{id}$.   For $\ell>0$,  \eqref{e-gue160114} follows by noting 
\begin{equation}\label{e-gue160123re1}
\partial_x e^{-\frac{x^2}{t}}
=-2t^{-\frac{1}{2}}(\frac{x^2}{t})^{1/2}e^{-\frac{x^2}{t}}=O(t^{-\frac{1}{2}})
\end{equation}
applied to \eqref{e-gue160125re2} and by noting Remark~\ref{r-gue150608} as in \eqref{e-gue160215VII}.   
Remark that if one extracts the corresponding coefficients of $t^{-s}$
in \eqref{e-gue160114r} of Theorem~\ref{t-gue160123} and uses the result \eqref{e-gue160114r}, 
the estimate appears to be $e^{-\frac{\varepsilon_0\hat d(x, X_{\mathrm{sing}})}{t}}+O(t^{\infty})$ 
which is slightly weaker than above (due to $O(t^{\infty})$).   
\end{rem}

For similar estimates with regard to $C^l$ topology we have the following (cf. \eqref{e-gue160123re1} and 
Remark~\ref{r-gue150608}).

\begin{cor}
\label{t-gue160224} In the same notation as above,
for any differential operator $P_\ell:C^\infty(X,T^{*0,+}X\otimes E)\rightarrow
C^\infty(X,T^{*0,+}X\otimes E)$ of order $\ell\in\mathbb{N}$ and every $N_0\ge N_0(n)$
for some $N_0(n)$, 
\begin{equation}
\begin{split}
&\left\vert P_\ell\Bigr(\sum^{N_0}_{j=0}t^{-n+j}a^+_{n-j}(t,x,x)-\bigl(\sum%
\limits^{p_r}_{s=1}e^{\frac{2\pi(s-1)}{p_r}mi}\bigr)\sum^{N_0}_{j=0}t^{-n+j}%
\alpha^+_{n-j}(x)\Bigr)\right\vert \\
&\leq C_{N_0}\Bigr(t^{-n+N_0+1-\frac{\ell}{2}}+t^{-n-\frac{\ell}{2}}e^{\frac{-\varepsilon_0\hat d(x,X^r_{%
\mathrm{sing\,}})^2}{t}}\Bigr),\ \ \forall \,0<t<\delta,\ \ \forall\, x\in
X_{p_r}
\end{split}%
\end{equation}
for some $\varepsilon_0>0$, $\delta>0$ and $C_{N_0}>0$ independent of $x$.
\end{cor}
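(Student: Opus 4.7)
The plan is to parallel closely the argument of Theorem~\ref{t-gue160123}, but to track how each application of an $x$-derivative degrades the two bounds separately. Recall from \eqref{e-gue150630gI}, \eqref{e-gue150901a} the integral representation
\begin{equation*}
a^+_s(t,x,x)=\frac{1}{2\pi}\sum^N_{j=1}\int^\pi_{-\pi}e^{-\frac{\hat h_{j,+}(x,e^{-iu}\circ x)}{t}}\hat b^+_{j,s}(x,e^{-iu}\circ x)e^{-imu}du.
\end{equation*}
The operator $P_\ell$ acts only on the $x$-variable (with $e^{-iu}\circ x$ depending smoothly on $x$), and by the product rule it suffices to estimate, for $|\alpha|+|\beta|\le\ell$, the contribution of $(\partial^\alpha e^{-\hat h_{j,+}(x,e^{-iu}\circ x)/t})(\partial^\beta \hat b^+_{j,s}(x,e^{-iu}\circ x))$ inside the integral. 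The factor $\partial^\beta \hat b^+_{j,s}$ is a smooth function bounded uniformly on $X$, so the only singular behaviour comes from derivatives of the Gaussian factor.

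The key quantitative input is \eqref{e-gue160123re1}: each $x$-derivative of $e^{-\hat h_{j,+}(x,e^{-iu}\circ x)/t}$ costs at most a factor of $t^{-1/2}$ in uniform norm, since $\partial_x\bigl(e^{-q/t}\bigr)=-t^{-1}(\partial_x q)e^{-q/t}$ and $|q|^{1/2}e^{-q/t}\lesssim t^{1/2}$ for the quadratic form $q=\hat h_{j,+}$. Iterating, $|\partial^\alpha e^{-\hat h_{j,+}/t}|\lesssim t^{-|\alpha|/2}e^{-\varepsilon_0 \hat h_{j,+}/t}$ with a slightly smaller $\varepsilon_0$. Now split $[-\pi,\pi]$ into the $2p_r$ sectors of \eqref{e-gue160125v8}: on the first type of sector the exponent $\hat h_{j,+}$ vanishes on the diagonal, the argument leading to \eqref{e-gue160125VI} goes through verbatim to produce the expansion $\bigl(\sum^{p_r}_{s=1}e^{\frac{2\pi(s-1)}{p_r}mi}\bigr)\sum_{j}t^{-n+j}P_\ell\alpha^+_{n-j}(x)$ as $t\to0^+$, with $P_\ell\alpha^+_{n-j}(x)$ smooth on $X$, and with remainder governed by Definition~\ref{d-gue150608} and Remark~\ref{r-gue150608}: applying a differential operator of order $\ell$ to an asymptotic expansion truncated at $N_0$ terms produces the error $t^{-n+N_0+1-\ell/2}$ (since $M_1(m,\ell)$ increases by $\ell/2$).

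On the second type of sector the argument leading to \eqref{e-gue160125V} still applies, but now with the extra derivative cost: by the estimate $|\partial^\alpha e^{-\hat h_{j,+}/t}|\lesssim t^{-|\alpha|/2}e^{-\varepsilon_0\hat h_{j,+}/t}$, combined with $\hat h_{j,+}(x,e^{-iu}\circ x)\gtrsim \hat d(x,X^r_{\mathrm{sing\,}})^2$ on those sectors (via \eqref{e-gue160215}--\eqref{e-gue160327II}, and its evident translate to the other $(p_r-1)$ sectors), one obtains
\begin{equation*}
\bigl|P_\ell\bigl(e^{-\hat h_{j,+}/t}\hat b^+_{j,s}\bigr)\bigr|\lesssim t^{-\ell/2}e^{-\varepsilon_0\hat d(x,X^r_{\mathrm{sing\,}})^2/t}
\end{equation*}
pointwise on these sectors, so that integrating and prefactoring by $t^{-n+j}$, $0\le j\le N_0$, produces the bound $C_{N_0}\,t^{-n-\ell/2}e^{-\varepsilon_0\hat d(x,X^r_{\mathrm{sing\,}})^2/t}$. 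Summing over $j$ and over the two types of sectors yields the claimed inequality.

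The main point requiring care is the uniform-in-$x$ Gaussian derivative estimate just described, together with the verification that the sector-splitting and the shift $u\mapsto u-(s-1)2\pi/p_r$ used in \eqref{e-gue160125v8} are compatible with differentiation in $x$; both are routine once one observes that the map $(x,u)\mapsto e^{-iu}\circ x$ is smooth and that $\hat h_{j,+}$ is smooth with quadratic lower bound \eqref{e-gue160215}. No new geometry enters beyond what was developed in the proof of Theorem~\ref{t-gue160123}; the corollary is essentially the $C^\ell$-refinement of that proof obtained by bookkeeping the $t^{-1/2}$ cost per derivative.
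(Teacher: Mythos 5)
Your proof is correct and follows essentially the same line as the paper's own (very terse) proof: start from the integral representation \eqref{e-gue150901a}/\eqref{e-gue150630gI}, split $[-\pi,\pi]$ into the sectors \eqref{e-gue160125v8}, invoke the Gaussian derivative bound \eqref{e-gue160123re1} to account for the extra $t^{-\ell/2}$ factor, and apply Remark~\ref{r-gue150608} to control the truncation error. The paper relegates this exact argument to Remark~\ref{r-gue160123} and the one-line comment preceding the corollary; you have simply spelled out the bookkeeping, including the slight worsening of $\varepsilon_0$ after iterating the Gaussian derivative estimate, which the paper leaves implicit.
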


Note the singular behavior $t^{-n}$ (in the term to the rightmost of \eqref{e-gue160114r}). 
So the estimate \eqref{e-gue160114r} is not directly applicable to 
the proof of our local index theorem.  That is, computation
involving $a_s$ cannot be automatically reduced to computation
involving $\alpha_s$ as soon as $x$\, $(\in X_{p_1})$ approaches $X_{\mathrm{sing}}$.
Intuitively $t^{-n}e^{\frac{-\varepsilon_0\hat d(x,X_{\mathrm{sing\,}})^2}{t}}$
goes to a kind of Dirac delta function (along $X_{\mathrm{sing}}$) as $t\to 0$ (apart from
a factor of the form $\frac{1}{t^\beta}$, some $\beta>0$).  
So after integrating \eqref{e-gue160114r}
over $X$, a nonzero contribution due to this term could appear or even blow up as $t\to 0$.  
A more precise analysis along this line will be taken up in the study 
of {\it trace integrals} in Section~\ref{s-gue160416}.  

Fortunately, the abovementioned singular behavior can be removed ($t^{-n}$ dropping out completely) 
after taking the {\it supertrace},
so that the index density for our need does exist.
(However, as far as the full kernel is concerned, a certain
estimate such as that in Theorem~\ref{t-gue160123} is unavoidable as discussed in Remark~\ref{r-gue160414ar}).

We shall now take up this improvement on \eqref{e-gue160114r} under supertrace.  
We formulate it as follows, whose proof is heavily based on 
the off-diagonal estimate obtained in Theorem~\ref{t-gue150627g}.  

\begin{thm} (cf. Theorem~\ref{t-gue160307}) 
\label{t-gue150701} With the notations above, for every $N_0\in\mathbb{N}, \, N_0\ge N_0(n)$ 
for some $N_0(n)$, there exist $\varepsilon_0>0$, $\delta>0$ and $C_{N_0}>0$ such that
\begin{equation}  \label{e-gue160226}
\begin{split}
&\left\vert \mathrm{Tr\,}e^{-t\widetilde\Box^+_{b,m}}(x,x)-\mathrm{Tr\,}%
e^{-t\widetilde\Box^-_{b,m}}(x,x)-\Big(\sum\limits^{p_r}_{s=1}e^{\frac{2\pi(s-1)}{%
p_r}mi}\Big)\sum^{N_0}_{j=0}t^{-n+j}\Bigr(\mathrm{Tr\,}\alpha^+_{n-j}(x)-\mathrm{%
Tr\,}\alpha^-_{n-j}(x)\Bigr)\right\vert \\
&\leq C_{N_0}\Bigr(t^{-n+N_0+1}+e^{-\frac{\varepsilon_0\hat d(x,X^r_{\mathrm{%
sing\,}})^2}{t}}\Bigr),\ \ \forall \,0<t<\delta,\ \ \forall x\in X_{p_r},
\end{split}%
\end{equation}
\end{thm}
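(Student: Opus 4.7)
The plan is to combine the local integral expression for the heat-kernel coefficients $a^\pm_s(t,x,x)$ with the off-diagonal estimate of Theorem~\ref{t-gue150627g} in order to absorb the singular factor $t^{-n}$ that appears in front of the Gaussian in Theorem~\ref{t-gue160123}. Because the $C^0$-difference $\|e^{-t\widetilde\Box^{\pm}_{b,m}}(x,y) - \Gamma^{\pm}(t,x,y)\|_{C^0}$ decays as $e^{-\epsilon_0/t}$ by Theorem~\ref{t-gue150630I}, it suffices to prove the estimate with $\mathrm{Tr}\,\Gamma^{\pm}(t,x,x)$ (equivalently with $\sum_{j=0}^{N_0}t^{-n+j}\mathrm{Tr}\,a^{\pm}_{n-j}(t,x,x)$) in place of $\mathrm{Tr}\,e^{-t\widetilde\Box^{\pm}_{b,m}}(x,x)$, picking up a harmless additional error of order $e^{-\epsilon_0/t}$.

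Fix $x_0\in X_{p_r}$. Following the strategy in the proof of Theorem~\ref{t-gue160123}, I would split the $u$-integration in
\begin{equation*}
a^{\pm}_{s}(t,x_0,x_0) = \frac{1}{2\pi}\sum_{j=1}^{N}\int_{-\pi}^{\pi} e^{-\hat h_{j,\pm}(x_0,e^{-iu}\circ x_0)/t}\,\hat b^{\pm}_{j,s}(x_0,e^{-iu}\circ x_0)\,e^{-imu}\,du
\end{equation*}
into the $p_r$ principal sectors centered at the points $\frac{2\pi(s-1)}{p_r}$ ($s=1,\ldots,p_r$) and their complementary sectors. The principal sectors contribute exactly the terms $\bigl(\sum_{s=1}^{p_r}e^{2\pi(s-1)mi/p_r}\bigr)\alpha^{\pm}_{n-j}(x_0)$ (the same argument as in \eqref{e-gue160125VI}, extended to the other sectors via the translation~\eqref{e-gue160125v8}), so their contribution already matches the statement of the theorem and cancels out in the supertrace difference against the corresponding $\alpha$-sum.

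The key gain lies on the complementary sectors. On each such sector, if $e^{-iu}\circ x_0$ has BRT coordinate $w$, the version of the claim~\eqref{e-gue160327I} proved inside Theorem~\ref{t-gue160123} gives $|z_0-w|\geq \hat\varepsilon_0\,\hat d(x_0,X^r_{\mathrm{sing}})$, whence $\hat h_{j,\pm}(x_0,e^{-iu}\circ x_0)\geq c|z_0-w|^2$ by \eqref{e-gue160215}. By Theorem~\ref{t-gue150627g},
\begin{equation*}
\mathrm{Tr}_{z_0}\,b^{+}_{j,n-j'}(z_0,w) - \mathrm{Tr}_{z_0}\,b^{-}_{j,n-j'}(z_0,w) = O\bigl(|z_0-w|^{2(n-j')}\bigr)
\end{equation*}
locally uniformly, for every $j'=0,1,\ldots,n-1$ (and the $j'=n$ contribution is of the same order by the expansion~\eqref{e-gue150608a}). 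Using the elementary bound $\xi^k e^{-\xi}\leq C_k e^{-\xi/2}$ with $\xi=c|z_0-w|^2/t$, the supertrace integrand on the complementary sectors is controlled by
\begin{equation*}
C\,t^{-n+j'}|z_0-w|^{2(n-j')}e^{-c|z_0-w|^2/t} \leq C'\,e^{-\tfrac{c}{2}|z_0-w|^2/t} \leq C'\,e^{-\varepsilon_0\hat d(x_0,X^r_{\mathrm{sing}})^2/t},
\end{equation*}
so the entire complementary-sector contribution to $\sum_{j'}t^{-n+j'}\bigl(\mathrm{Tr}\,a^{+}_{n-j'}-\mathrm{Tr}\,a^{-}_{n-j'}\bigr)$ is $O\bigl(e^{-\varepsilon_0\hat d(x_0,X^r_{\mathrm{sing}})^2/t}\bigr)$, with the dangerous $t^{-n}$ now absorbed by the off-diagonal factor $|z_0-w|^{2(n-j')}$. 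Truncating the asymptotic expansion at level $N_0$ produces the standard remainder $C_{N_0}t^{-n+N_0+1}$, and adding this to the Gaussian bound yields \eqref{e-gue160226}. The main obstacle is precisely the cancellation argument in this last step: it requires the full strength of the off-diagonal vanishing in Theorem~\ref{t-gue150627g}, which in turn rests on Getzler rescaling applied to the $\mathrm{Spin}^c$ Kodaira Laplacian and on the Berezin supertrace identity. This is exactly what fails for the unmodified Kohn Laplacian in the non-K\"ahler case and explains why the $\mathrm{Spin}^c$ modification cannot be bypassed.
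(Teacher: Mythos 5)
Your proposal is correct and follows essentially the same route as the paper's proof of Theorem~\ref{t-gue150701}: reduce via Theorem~\ref{t-gue150630I} to $\Gamma(t,x,y)$, split the angular integral into principal sectors (yielding the $\alpha^{\pm}_{n-j}$ terms) and their complements, and on the complements combine the distance bound from claim~\eqref{e-gue160327I}, the off-diagonal estimate of Theorem~\ref{t-gue150627g}, and the elementary absorption $\xi^k e^{-\xi}\le C_k e^{-\xi/2}$ (the paper's \eqref{e-gue160226e2}) to cancel the $t^{-n}$ singularity. The only cosmetic difference is that the paper writes out the $r=1$ case with a single sector $[-2\zeta,2\zeta]$ and its complement, while you state the general $p_r$-sector decomposition directly, but this is the same argument.
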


The implication of Theorem~\ref{t-gue150701} yields a link between the two identities arising from
Corollary~\ref{t-gue150630w} and Proposition~\ref{t-gue150627} together with \eqref{e-gue150627}:
\begin{equation}\label{e-gue150701a}
\begin{split}
&\lim_{t\rightarrow0^+}\int_X\sum^n_{\ell=0}t^{-\ell}\Bigr(\mathrm{Tr\,}a^+_\ell(t,x,x)-\mathrm{Tr\,}%
a^-_\ell(t,x,x)\Bigr)dv_X(x)
=\sum^n_{j=0}(-1)^j\mathrm{dim\,}H^j_{b,m}(X,E)\\
&\lim_{t\to 0^+}\int_X\sum^n_{\ell=0}t^{-\ell}\Bigr(\mathrm{Tr\,}\alpha^+_\ell(x)-\mathrm{Tr\,}
\alpha^-_\ell(x)\Bigr)dv_X(x)\\
&\qquad\qquad=\frac{1}{2\pi}\int_X\mathrm{%
Td_b\,}(\nabla^{T^{1,0}X},T^{1,0}X)\wedge\mathrm{ch_b\,}(\nabla^{E},E)\wedge
e^{-m\frac{d\omega_0}{2\pi}}\wedge\omega_0(x).
\end{split}
\end{equation}
It follows that the two in \eqref{e-gue150701a} are equal
because in \eqref{e-gue160226}, $e^{-\frac{\varepsilon_0\hat d(x,X_{\mathrm{sing\,}})^2}{t}}\,(\le 1)\to 0$
in $L^1$ by Lebesgue's dominated convergence theorem as $t\to 0^+$ on $X_p$. We arrive now at an index theorem for our class of CR manifolds. 

\begin{cor} (cf. Corollary~\ref{c-gue150508I})
\label{t-gue160226a}
\begin{equation}  \label{e-gue160226I}
\begin{split}
&\sum^n_{j=0}(-1)^j\mathrm{dim\,}H^j_{b,m}(X,E)\\
&=\Bigl(\sum%
\limits^{p}_{s=1}e^{\frac{2\pi(s-1)}{p}mi}\Bigr)\frac{1}{2\pi}\int_X\left[\mathrm{%
Td_b\,}(\nabla^{T^{1,0}X},T^{1,0}X)\wedge\mathrm{ch_b\,}(\nabla^{E},E)\wedge
e^{-m\frac{d\omega_0}{2\pi}}\wedge\omega_0\right]_{2n+1}(x)
\end{split}%
\end{equation}
where $[\cdots]|_{2n+1}$ denotes the $(2n+1)$-form part.
\end{cor}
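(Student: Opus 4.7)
The plan is to chain together the three McKean--Singer-type and local density results that have been set up, namely: Corollary~\ref{t-gue150630w} which expresses the index as a $t\to 0^+$ limit of a trace integral; Theorem~\ref{t-gue150701} which, after taking supertrace, removes the singular $t^{-n}$ factor from the error term and matches the coefficients $a^\pm_\ell(t,x,x)$ to the canonical densities $\alpha^\pm_\ell(x)$ up to a Gaussian remainder; and Proposition~\ref{t-gue150627} together with \eqref{e-gue150627I}--\eqref{e-gue150627II} which computes the local supertrace density in terms of tangential characteristic forms.

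First I would start from
\begin{equation*}
\sum^n_{j=0}(-1)^j\mathrm{dim\,}H^j_{b,m}(X,E)=\lim_{t\rightarrow0^+}\int_X\sum^n_{\ell=0}t^{-\ell}\bigl(\mathrm{Tr\,}a^+_\ell(t,x,x)-\mathrm{Tr\,}a^-_\ell(t,x,x)\bigr)dv_X(x)
\end{equation*}
(from Corollary~\ref{t-gue150630w}). Since $X_{\mathrm{sing\,}}=\bigcup_{r\geq 2}X_{p_r}$ is a union of submanifolds of strictly smaller dimension, it has measure zero in $X$ with respect to $dv_X$, so the integration can be reduced to the principal stratum $X_p=X_{p_1}$. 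On $X_p$, I would invoke Theorem~\ref{t-gue150701} with $r=1$ and some large $N_0$ to replace the integrand by
\begin{equation*}
\Bigl(\sum_{s=1}^{p}e^{\frac{2\pi(s-1)}{p}mi}\Bigr)\sum^{N_0}_{\ell=0}t^{-\ell}\bigl(\mathrm{Tr\,}\alpha^+_\ell(x)-\mathrm{Tr\,}\alpha^-_\ell(x)\bigr)
\end{equation*}
modulo an error of size $C_{N_0}\bigl(t^{-n+N_0+1}+e^{-\varepsilon_0\hat d(x,X_{\mathrm{sing\,}})^2/t}\bigr)$; note that on $X_p$ the function $\hat d(x,X_{\mathrm{sing\,}})$ is strictly positive.

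Second I would pass the error to zero in the $t\to 0^+$ limit. The polynomial part $t^{-n+N_0+1}$ disappears once $N_0\geq n$; the Gaussian part is pointwise bounded by $1$ on $X_p$ and tends pointwise to $0$ as $t\to 0^+$ since $\hat d(x,X_{\mathrm{sing\,}})>0$ there, so Lebesgue dominated convergence makes its integral go to $0$. This uses exactly the observation made immediately after \eqref{e-gue150701a}. On the other hand, by Proposition~\ref{t-gue150627} combined with \eqref{e-gue150627I}--\eqref{e-gue150627II}, the local density identity
\begin{equation*}
\sum^n_{\ell=0}t^{-\ell}\bigl(\mathrm{Tr\,}\alpha^+_\ell(x)-\mathrm{Tr\,}\alpha^-_\ell(x)\bigr)dv_X(x)=\frac{1}{2\pi}\bigl[\mathrm{Td_b\,}(\nabla^{T^{1,0}X},T^{1,0}X)\wedge\mathrm{ch_b\,}(\nabla^{E},E)\wedge e^{-m\frac{d\omega_0}{2\pi}}\wedge\omega_0\bigr]_{2n+1}(x)
\end{equation*}
holds (originally on each BRT chart and then patched globally since both sides are $S^1$-invariant and intrinsically defined, cf.\ Remark~\ref{r-gue150606}). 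Moreover the left-hand side is independent of $t$, so the limit in $t$ is trivial. Combining these, multiplying by $\sum_{s=1}^{p}e^{\frac{2\pi(s-1)}{p}mi}$ and integrating over $X$ (equivalently, over $X_p$ since $X\setminus X_p$ has measure zero) yields \eqref{e-gue160226I}.

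The main obstacle I anticipate is the rigorous justification of discarding the contributions from the singular strata $X_{p_r}$, $r\geq 2$. One must be sure that even though on these strata the pre-factor $\sum_{s=1}^{p_r}e^{\frac{2\pi(s-1)}{p_r}mi}$ differs from the principal one, the whole contribution is washed out because (i) $X^r_{\mathrm{sing\,}}$ is of strictly positive codimension, hence of $dv_X$-measure zero, and (ii) the Gaussian error $e^{-\varepsilon_0\hat d(x,X^r_{\mathrm{sing\,}})^2/t}$ is uniformly bounded and tends to zero outside those strata; this dominated convergence argument is precisely what relies on the sharpening in Theorem~\ref{t-gue150701} (without supertrace the factor $t^{-n}$ in front of the Gaussian would block the bound). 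The secondary delicate point is ensuring the right-hand side of the local density identity, though defined chart-wise via the Lichnerowicz/Getzler rescaling on each BRT trivialization $B_j$, really yields a globally well-defined $(2n+1)$-form on $X$; this has however been prepared by the rigid/tangential framework of Section~\ref{s-gue150508d} and by Theorem~\ref{t-gue150515}, so the global assembly is free.
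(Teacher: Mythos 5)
Your proof follows exactly the same route as the paper: it starts from the McKean--Singer formula of Corollary~\ref{t-gue150630w}, applies Theorem~\ref{t-gue150701} (with $r=1$) to replace $a^\pm_\ell(t,x,\cdot)$ by $\alpha^\pm_\ell$, disposes of the Gaussian error by dominated convergence since $\hat d(x,X_{\mathrm{sing}})>0$ on the full-measure stratum $X_p$, and then invokes Proposition~\ref{t-gue150627} and \eqref{e-gue150627I}--\eqref{e-gue150627II} for the local density, precisely as in \eqref{e-gue150701a} and the surrounding discussion. The argument is correct and matches the paper's own proof step-for-step.
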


We turn now to the proof of Theorem~\ref{t-gue150701}.

\begin{proof}[proof of Theorem~\ref{t-gue150701}]
For simplicity, we only prove Theorem~\ref{t-gue150701} for $r=1$. The proof
for $r>1$ is similar.   Adopting the same notations as
in the proof of Theorem~\ref{t-gue160123} (e.g. $B_j$, $D_j$, $\hat D_j\cdots$), 
we shall follow a similar line of thought as in Theorem~\ref{t-gue160123}.

Fix $x_0\in X_p$.  As $e^{-t\widetilde\Box^+_{b,m}}(x, y)$ and 
$\Gamma(t, x, y)$ are asymptotically the same (Theorem~\ref{t-gue150630I}),
we also break the desired estimate at $x=x_0$ into two types of integrals 
corresponding to \eqref{e-gue160125VI} and \eqref{e-gue160125V}.  

One integral is over $I=[-2\zeta,2\zeta]$ and the other over $I'$, the complement of 
$I$ in $[-\pi,\pi]$.   The first type gives rise to
the first term to the right of \eqref{e-gue160226} almost the same way as \eqref{e-gue160125VI}.

The key of this proof lies in the second type which corresponds to \eqref{e-gue160125V}.
It is estimated over $I'$,
as in \eqref{e-gue160226e1} below.  (Here we rewrite $H_j$ in a convenient form, in terms of 
$\hat h_{j,+}$, $\hat K_{j,+}$ of \eqref{e-gue150901},
reminiscent of an analogous relation $A_{B,+}=e^{-\frac{h_+}{t}}K_{B,+}$ in \eqref{e-gue150608a}.) 
\begin{equation}
\label{e-gue160226e1}
\frac{1}{2\pi}\sum^N_{j=1}\left\vert \int_{u\in I'}
e^{-\frac{\hat h_{j,+}(x_0,e^{-iu}\circ x_0)}{t}}\Bigr(\mathrm{Tr\,}%
\hat K_{j,+}(t,x_0,e^{-iu}\circ x_0)-\mathrm{Tr\,}\hat K_{j,-}(t,x_0,e^{-iu}\circ x_0)%
\Bigr)du\right\vert. 
\end{equation}


We shall now show that there exist $\varepsilon_0>0$ and $C>0$ (independent
of $x_0$) such that \eqref{e-gue160226e1} is bounded above by
\begin{equation}
\label{e-gue160226e3}
Ce^{-\frac{\varepsilon_0\hat d(x_0,X_{\mathrm{sing\,}})^2}{t}}
\end{equation}
for small $t\in\mathbb{R}_+$.  

To see this we first note that for $k\ge 0$,
\begin{equation}\label{e-gue160226e2}
e^{-\varepsilon {x^2\over t}}\big({x^{2}\over t}\big)^k\le C_{k,\varepsilon}
e^{-\varepsilon {x^2\over 2t}}
\end{equation}
for some constant $C_{k, \varepsilon}$ independent of $x$ and $t>0$.  
Write $x_0=(z, \theta)$ and $e^{-iu}\circ x_0=(z_u,\theta_u)$ in BRT coordinates. 
Since $\hat h_{j,+}(x_0, e^{iu}\circ x_0)$ is essentially $h_+(z, z_u)\approx |z-z_u|^2$, we have 
\begin{equation}
\label{e-gue160226e4}
e^{-\frac{\hat h(x_0, e^{iu}\circ x_0)}{t}}\le e^{-2c_1\frac{|z-z_u|^2}{t}}\le 
e^{-c_1\frac{\hat\varepsilon_0\hat d(x_0,X_{\mathrm{sing\,}})^2}{t}}e^{-c_1\frac{|z-z_u|^2}{t}}
\end{equation}
for some constant $c_1>0$ by using  \eqref{e-gue160327I} for $\hat d$. 
By using the off-diagonal estimate of Theorme~\ref{t-gue150627g} 
and by \eqref{e-gue150608a}, \eqref{e-gue150901} for linking $b_{\bullet}$ with $K_{\bullet}$, 
one obtains the following estimate from \eqref{e-gue160226e4} 
\begin{equation}
\label{e-gue160226e5}\begin{split}
&\left\vert e^{-\frac{\hat h_{j,+}(x_0,e^{-iu}\circ x_0)}{t}}\mathrm{Tr\,}
\hat K_{j,+}(t,x_0,e^{-iu}\circ x_0)-\mathrm{Tr\,}\hat K_{j,-}(t,x_0,e^{-iu}\circ x_0)\right\vert\qquad\qquad \\
&\qquad \qquad \le e^{-c_1\frac{\hat\varepsilon_0\hat d(x_0,X_{\mathrm{sing\,}})^2}{t}} 
\sum_{k=0}^{n} \mbox{\,constants}\,\,\cdot e^{-c_1{ |z-z_u|^2\over t}}\big({|z-z_u|^{2k}\over t^k}+O(t)\big)
\end{split}\end{equation}
for \eqref{e-gue160226e1}.  
Now one readily obtains the bound \eqref{e-gue160226e3} from  \eqref{e-gue160226e5} and
\eqref{e-gue160226e2}.  

Combining the above estimates for integrals of the first type and second type \eqref{e-gue160226e1}, we obtain 
\eqref{e-gue160226} in the way similar to \eqref{e-gue160215VII} (with $t^{-n}$ dropping out of 
$t^{-n}e^{-\frac{{\varepsilon_0\hat d}^2}{t}}$).  
\end{proof}

In the remaining part of this section we give a geometric meaning for $\hat d(x,X^r_{\mathrm{%
sing\,}})$ (when $X$ is strongly pseudoconvex).
To this aim it is useful to use another equivalent form of the function $\hat d$, as follows
(without any pseudoconvexity condition on $X$).  

\begin{lem}
\label{t-gue160413a}
There exists a small constant $\varepsilon_0>0$ (satisfying \eqref{e-gue160327} at least)
with the following property.   Fix an $\varepsilon$ with $0<\varepsilon\leq \varepsilon_0$.
For $x\in X$ define
another ``distance function" $\hat d_2$ by (for a fixed $\ell$)
\begin{equation*}
\hat d_2(x, X_{\mathrm{sing}}^{{\ell}-1})=\inf\left\{d(x,e^{-i\theta}\circ x);\,
\frac{2\pi}{p_{\ell}}-\varepsilon\leq\theta\leq\frac{2\pi}{p_{\ell}}+\varepsilon\right\}
\end{equation*}
($X^{\ell-1}_{\mathrm{sing}}=X_{p_{\ell}}\bigcup X_{p_{\ell+1}}\cdots$).
Then $\hat d_2(x,  X_{\mathrm{sing}}^{\ell-1})$ is equivalent to
$\hat d(x,  X_{\mathrm{sing}}^{\ell-1})$.  (Namely
 $\frac{1}{C_{\ell,\varepsilon}}\hat d_2\leq \hat d\leq C_{\ell,\varepsilon}\hat d_2$
for some constant $C_{\ell,\varepsilon}$ independent of $x$).
\end{lem}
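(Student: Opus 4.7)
The plan is to verify the two inequalities in $\tfrac{1}{C_{\ell,\varepsilon}}\hat d_2\leq \hat d\leq C_{\ell,\varepsilon}\hat d_2$ separately, the first essentially by inspection and the second via a compactness/slice argument.

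First, I would dispose of the direction $\hat d\leq \hat d_2$. By choosing $\varepsilon_0$ less than, say, half of $\min_r\bigl|\tfrac{2\pi}{p_r}-\tfrac{2\pi}{p_{r+1}}\bigr|$ and compatible with the constraint \eqref{e-gue160327}, the interval $[\tfrac{2\pi}{p_\ell}-\varepsilon,\tfrac{2\pi}{p_\ell}+\varepsilon]$ sits strictly inside $[\zeta,\tfrac{2\pi}{p_{\ell-1}}-\zeta]$ for every $0<\varepsilon\leq\varepsilon_0$. Hence the infimum defining $\hat d_2$ is taken over a subset of the domain used in $\hat d$, yielding $\hat d\leq \hat d_2$ pointwise with constant $1$.

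For the reverse inequality $\hat d_2\leq C_{\ell,\varepsilon}\hat d$, I would argue by compactness. Both functions are continuous on $X$ and bounded above by $\mathrm{diam}(X)$, so on the complement of any fixed open neighborhood $U$ of $X^{\ell-1}_{\mathrm{sing}}$ on which $\hat d\geq c_0>0$, the estimate is automatic with $C=\mathrm{diam}(X)/c_0$. Inside $U$ the analysis becomes local: if $x$ is close to $X^{\ell-1}_{\mathrm{sing}}$, then $x$ lies near some $y\in X_{p_j}$ with $j\geq\ell$. I would use an equivariant (slice) neighborhood theorem at $y$ to linearize the $S^1$-action in a tube about the orbit of $y$, which reduces the displacement $d(x,e^{-i\theta}x)$ to a quantity that depends affinely on the transverse coordinate on the slice and on the chosen angle $\theta$. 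The claim then reduces to the following: for every $\theta_0\in[\zeta,\tfrac{2\pi}{p_{\ell-1}}-\zeta]$ realizing a small displacement at $x$, one can locate some $\theta^{*}\in[\tfrac{2\pi}{p_\ell}-\varepsilon,\tfrac{2\pi}{p_\ell}+\varepsilon]$ with
\[
d(x,e^{-i\theta^{*}}x)\leq C\, d(x,e^{-i\theta_0}x),
\]
where $C$ depends only on $\ell,\varepsilon$ and the given Hermitian metric.

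The main obstacle is this ``angle adjustment'' step: transferring an arbitrary near-isotropy angle $\theta_0=\tfrac{2\pi k_0}{p_j}$ at a nearby $y\in X_{p_j}$ to one close to $\tfrac{2\pi}{p_\ell}$ with only a bounded loss in displacement. This rests on the standard stratification fact that if a stratum $X_{p_j}$ with $j\geq\ell$ meets a tubular neighborhood of $X_{p_\ell}$, then $\mathbb{Z}_{p_\ell}\subset\mathbb{Z}_{p_j}$ and hence $p_\ell\mid p_j$, so that $\tfrac{2\pi}{p_\ell}=\tfrac{2\pi(p_j/p_\ell)}{p_j}$ is itself an isotropy angle of $y$. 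Combining this with the slice linearization and the triangle inequality (using that $e^{-i\cdot 2\pi/p_\ell}$ is an isometry and fixes $y$), one deduces $d(x,e^{-i\cdot 2\pi/p_\ell}x)\lesssim d(x,y)\lesssim \hat d(x,X^{\ell-1}_{\mathrm{sing}})$, which is the needed bound. The delicate point will be to make the constants uniform in $x$, in the stratum index $j$, and in the position of $\theta_0$ inside $[\zeta,\tfrac{2\pi}{p_{\ell-1}}-\zeta]$; this uniformity is guaranteed by the compactness of $X$ and the finiteness of the stratification.
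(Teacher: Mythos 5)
Your first inequality $\hat d\le \hat d_2$ (inclusion of angular domains) and the compactness estimate away from $X^{\ell-1}_{\mathrm{sing}}$ are fine, but the crucial bound $\hat d_2\le C\hat d$ near the singular set has a genuine gap. You reduce it to the chain $d(x,e^{-i\cdot 2\pi/p_\ell}x)\lesssim d(x,y)\lesssim\hat d(x,X^{\ell-1}_{\mathrm{sing}})$, but the second estimate is never proved and is in fact the crux. If $y$ is an arbitrary nearby point of $X^{\ell-1}_{\mathrm{sing}}$, the estimate can fail: take $x$ near a top stratum point with the normal component of $x-y$ concentrated in a direction that $e^{-i\cdot 2\pi/p_\ell}$ fixes; then $d(x,y)$ stays bounded below while both $\hat d_2(x)$ and $\hat d(x)$ tend to zero. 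If instead $y$ is a nearest fixed point of $e^{-i\cdot 2\pi/p_\ell}$, slice linearization identifies $d(x,y)$, up to constants, with $\hat d_2(x)$, so the second estimate is precisely the inequality you set out to prove. Moreover, the first inequality needs $e^{-i\cdot 2\pi/p_\ell}$ to fix $y$, i.e.\ $p_\ell\mid p_j$; the tube argument you invoke points the wrong way (a tube around $X_{p_\ell}$ only meets strata with isotropy contained in $\mathbb{Z}_{p_\ell}$, hence $j\le\ell$), so it does not cover strata $X_{p_j}$ with $j>\ell$, which are part of $X^{\ell-1}_{\mathrm{sing}}$. Finally, routing the argument through the ordinary distance quietly invokes $d(\cdot,X^{\ell-1}_{\mathrm{sing}})\lesssim\hat d(\cdot,X^{\ell-1}_{\mathrm{sing}})$, which in this paper is one direction of Theorem~\ref{t-gue160413} and is established only under strong pseudoconvexity via CR embeddings; the lemma you are proving is curvature-free and serves as an ingredient of that theorem.

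The paper's proof never passes through the ordinary distance. It works directly with the displacement functions $f_S(x)=\inf_{\theta\in S}d(x,e^{-i\theta}x)$, splits the angular domain $K$ into a neighborhood $I'$ of $2\pi/p_\ell$, neighborhoods $J'$ of the remaining isotropy angles, and a complement $J$ on which $f_J\ge c>0$ uniformly by compactness; it then bounds the ratio $f_{I'}/f_{J'}$ near a higher stratum by the ratios of the isotropy eigenvalues (the constants $C_M$, $c_m$ in the $X_1\cup X_2\cup X_4$ illustration), and turns that bound into a contradiction argument ($\overline B\cap X_4=\emptyset$). That explicit eigenvalue ratio is precisely the quantitative content of the ``angle adjustment'' you correctly identify as the main obstacle, and it must be supplied: the slice linearization alone does not yield it, because different elements of the isotropy group can contract different normal directions by very different factors.
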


We postpone the proof of the lemma until after Theorem~\ref{t-gue160413}.

For technical reasons we impose a pseudoconvex condition on $X$ in the following
although the same result is expected to hold without this condition.   

\begin{thm}
\label{t-gue160413} With the notations above, assume that $X$ is
strongly pseudoconvex. Then there is a constant $C\geq1$ such that
\begin{equation*}
\frac{1}{C}d(x,X^r_{\mathrm{sing\,}})\leq\hat d(x,X^r_{\mathrm{sing\,}})\leq
Cd(x,X^r_{\mathrm{sing\,}}),\ \ \forall x\in X.
\end{equation*}
\end{thm}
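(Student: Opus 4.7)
The inequality $\hat d(x, X^r_{\mathrm{sing}}) \leq 2\, d(x, X^r_{\mathrm{sing}})$ is the easy direction and requires no pseudoconvexity. For any $y \in X^r_{\mathrm{sing}}$ one has $y \in X_{p_j}$ for some $j \geq r+1$, so $e^{-i 2\pi/p_j}y = y$, and the constraint \eqref{e-gue160327} on $\zeta$ places $2\pi/p_j$ inside $[\zeta, 2\pi/p_r - \zeta]$. Since the $S^1$-action is by isometries, the triangle inequality gives $d(x, e^{-i 2\pi/p_j}x) \leq 2\, d(x,y)$; taking infimum over $y$ yields the claim with constant $C = 2$.

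For the reverse $d(x, X^r_{\mathrm{sing}}) \leq C\, \hat d(x, X^r_{\mathrm{sing}})$ the plan is a compactness argument combined with a local normal form. Suppose, for contradiction, $x_n \in X$ satisfy $d(x_n, X^r_{\mathrm{sing}}) / \hat d(x_n, X^r_{\mathrm{sing}}) \to \infty$. Both quantities are continuous on $X$ with common zero set $X^r_{\mathrm{sing}}$, so on the complement of any tube about $X^r_{\mathrm{sing}}$ they are comparable by compactness; hence after extraction $x_n \to x_* \in X^r_{\mathrm{sing}}$, say $x_* \in X_{p_{\ell_0}}$ with $\ell_0 \geq r+1$, and the minimizing $\theta_n \in [\zeta, 2\pi/p_r - \zeta]$ accumulates at some $\theta_*$. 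By Lemma~\ref{t-gue160413a} applied with $\ell = \ell_0$, one may also assume $\theta_*$ lies in a prescribed small neighborhood of $2\pi/p_{\ell_0}$.

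To extract the contradiction I would invoke the equivariant slice theorem at $x_*$: a tubular neighborhood of the orbit $S^1 \cdot x_*$ is equivariantly diffeomorphic to $S^1 \times_{\mathbb{Z}_{p_{\ell_0}}} V$, where $V = T_{x_*}X/\mathbb{R}T(x_*)$. Strong pseudoconvexity enters precisely here: the Levi form and the CR structure give $V$ the structure of a Hermitian complex vector space of complex dimension $n$ on which $\mathbb{Z}_{p_{\ell_0}}$ acts unitarily, and the rigid ambient metric is bi-Lipschitz equivalent to the product metric on $S^1 \times_{\mathbb{Z}_{p_{\ell_0}}} V$ throughout a uniform neighborhood of $x_*$. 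Decomposing $V = \bigoplus_{k=0}^{p_{\ell_0}-1} V_k$ into $\mathbb{Z}_{p_{\ell_0}}$-character subspaces and writing $x = [(s,v)]$ with $v = \sum_k v_k$, one computes
\begin{equation*}
d(x, X_{p_{\ell_0}})^2 \asymp \sum_{k \neq 0} |v_k|^2,\qquad
d(x, e^{-i\theta}x)^2 \asymp \min_{j \in \mathbb{Z}} \Bigl\{ \bigl(\theta + \tfrac{2\pi j}{p_{\ell_0}}\bigr)^2 + \sum_k |v_k|^2\, \bigl|1 - e^{-i 2\pi j k/p_{\ell_0}}\bigr|^2 \Bigr\}.
\end{equation*}
For $\theta$ close to $2\pi/p_{\ell_0}$ the representative $j = -1$ bounds the right-hand side below by a uniform positive multiple of $\sum_{k\neq 0}|v_k|^2 \asymp d(x, X_{p_{\ell_0}})^2 \geq d(x, X^r_{\mathrm{sing}})^2$, which contradicts the assumed blow-up of the ratio.

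The main obstacle is that for generic $x$ the minimum above may be realized at some $j_0 \neq -1$, concentrating the mass of $v$ on weights $k$ with $e^{-i 2\pi j_0 k/p_{\ell_0}} = 1$ and degrading the pointwise lower bound. In that regime, however, the point $x$ itself lies near a finer fixed-point stratum and one has to either iterate the local estimate on the deeper stratum or re-apply Lemma~\ref{t-gue160413a} to re-center $\theta_*$ about the correct $2\pi/p_\ell$; certifying that the finer strata produced by this process still lie inside $X^r_{\mathrm{sing}}$ and that the iteration stabilizes with uniform constants is the technical heart of the argument.
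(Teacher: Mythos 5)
Your easy direction ($\hat d \le 2d$, no pseudoconvexity needed) is correct.

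The hard direction takes a genuinely different route from the paper's, and the comparison is worth spelling out. The paper uses strong pseudoconvexity \emph{only} to invoke the CR embedding theorem by Fourier components (\cite{HHL15}): with $\Phi = (f_1,\dots,f_N)$, $f_j \in H^0_{b,m_j}(X)$, the $S^1$-action becomes globally linear and diagonal, one has $d(x, e^{-i\theta}x)^2 \approx \sum_j |(1-e^{-im_j\theta})f_j(x)|^2$ for all $\theta$, and after localizing $\theta$ near $\pi$ via Lemma~\ref{t-gue160413a} the desired lower bound is a one-line algebraic computation, since $|1-e^{-im_j\theta}|$ is uniformly bounded below for the odd-weight components. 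Your slice-theorem route replaces this global diagonalization by a local one at an accumulation point $x_*$, together with a compactness argument. That route would, if completed, have the advantage of requiring no pseudoconvexity (the paper itself remarks the theorem should hold without it), because the slice model, the unitarity of the $\mathbb{Z}_{p_{\ell_0}}$-representation on $V$ with respect to the rigid metric, and the bi-Lipschitz comparison are all available without the Levi form being definite. Your assertion that strong pseudoconvexity is what supplies the Hermitian structure on $V$ is not accurate and is in any case never used downstream.

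As written, however, the argument is incomplete in two places. First, the appeal to Lemma~\ref{t-gue160413a} ``with $\ell = \ell_0$'' is a misapplication when $\ell_0 > r+1$: that lemma relates $\hat d(x, X^{\ell_0-1}_{\mathrm{sing}})$ to an infimum of $d(x,e^{-i\theta}x)$ for $\theta$ near $2\pi/p_{\ell_0}$, which is a larger quantity than $\hat d(x, X^r_{\mathrm{sing}})$ (the infimum is taken over a smaller $\theta$-interval), so it cannot be cited to re-center $\theta_*$ for the quantity actually at issue; the localization has to be argued directly from the slice model. Second, you flag the ``iteration on deeper strata'' issue and leave it unresolved. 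In fact it can be avoided entirely: for $\theta \in [\zeta, 2\pi/p_r - \zeta]$ near any $2\pi j_0/p_{\ell_0}$ with $1 \le j_0 < p_{\ell_0}/p_r$, the $j=j_0$ branch of your minimum equals, up to constants, the squared distance from $v$ to the $\mathbb{Z}_{p_{\ell_0}/\gcd(j_0,p_{\ell_0})}$-fixed subspace, and since $p_{\ell_0}/\gcd(j_0,p_{\ell_0}) > p_r$ the corresponding fixed-point set lies in $X^r_{\mathrm{sing}}$, so this branch already dominates $d(x, X^r_{\mathrm{sing}})^2$; no iteration and no re-centering is needed. But that observation is not in your proposal, and as stated the proof has a genuine gap at exactly the point you call ``the technical heart of the argument.'' The paper's embedding trick bypasses all of this slice-model combinatorics at the modest cost of invoking a hard embedding theorem and assuming strong pseudoconvexity.
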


\begin{proof}
For simplicity, we assume that $X=X_1\bigcup X_2$, i.e. $p_1=1$, $p_2=2$,
so that $X_{\mathrm{sing}}\equiv X_{\mathrm{sing}}^1=X_2\,(r=1)$ by definition.  For the general case, the
proof is essentially the same.   By Lemma~\ref{t-gue160413a}, for every (small and fixed) $%
\varepsilon>0$ we have
\begin{equation}  \label{e-gue160413a}
\hat d(x,X_{\mathrm{sing\,}})\approx\inf\left\{d(x,e^{-i\theta}\circ x);\,
\pi-\varepsilon\leq\theta\leq\pi+\varepsilon\right\}.
\end{equation}
Since $X$is strongly pseudoconvex, it is well-known that (see~\cite{HHL15})
there exists a CR embedding:
\begin{equation}  \label{e-gue160414}
\begin{split}
\Phi:X&\rightarrow\mathbb{C}^N, \\
x&\rightarrow(f_1(x),\ldots,f_N(x))
\end{split}%
\end{equation}
with $f_j\in H^0_{b,m_j}(X)$ for some $m_j\in\mathbb{N}$ ($j=1,\ldots,N$).

We assume that $m_1,\ldots,m_s$ are odd numbers and $%
m_{s+1},\ldots,m_N$ are even numbers.  By $p_1=1$ and $p_2=2$ one sees that (cf. \eqref{e-gue150704e1}) 
\begin{equation}  \label{e-gue160414I}
\mbox{$x\in X_{{\rm sing\,}}$ if and only if $f_1(x)=\cdots=f_{s}(x)=0$}
\end{equation}
so that
\begin{equation}  \label{e-gue160414II}
d(x,X_{\mathrm{sing\,}})^2\approx\sum^s_{j=1}\left\vert
f_j(x)\right\vert^2, \quad \forall x\in X. 
\end{equation}

Now, by using the embedding theorem \eqref{e-gue160414} (together 
with \eqref{e-gue150704e1}) we have
\begin{equation}  \label{e-gue160414III}
d(x,e^{-i\pi}\circ x)^2\approx\sum^N_{j=1}\left\vert
f_j(x)-f_j(e^{-i\pi}\circ x)\right\vert^2=4\sum^s_{j=1}\left\vert
f_j(x)\right\vert^2\approx d(x,X_{\mathrm{sing\,}})^2
\end{equation}
and hence for every $\pi-\varepsilon\leq\theta\leq\pi+\varepsilon$ ($\varepsilon>0$
small)
\begin{equation}  \label{e-gue160414IIIa}
d(x,e^{-i\theta}\circ x)^2\approx\sum^N_{j=1}\left\vert
f_j(x)-f_j(e^{-i\theta}\circ x)\right\vert^2\geq\sum^s_{j=1}\left\vert
(1-e^{-im_j\theta})f_j(x)\right\vert^2\approx\sum^s_{j=1}\left\vert
f_j(x)\right\vert^2\approx d(x,X_{\mathrm{sing\,}})^2.
\end{equation}
By \eqref{e-gue160414IIIa}
we conclude that
\begin{equation}  \label{e-gue160414a}
\inf\left\{d(x,e^{-i\theta}\circ x);\,
\pi-\varepsilon\leq\theta\leq\pi+\varepsilon\right\}\approx d(x,X_{\mathrm{%
sing\,}}).
\end{equation}
Combining \eqref{e-gue160413a} and \eqref{e-gue160414a} we have proved the theorem.
\end{proof}

We give now:

\begin{proof}[proof of Lemma~\ref{t-gue160413a}]
 In the following we write 
$\hat d(x)=\hat d(x, X^{\ell-1}_\mathrm{sing})$ and $\hat d_2(x)=\hat d_2(x, X^{\ell-1}_\mathrm{sing})$
for a fixed $\ell$.  
For an illustration we assume $X=X_1\bigcup X_2$, i.e. $p_1=1$, $p_2=2$,
and $x\in X_1$ ($\hat d_2=\hat d=0$ for $x\in X_2$.)  Write $I$ for the complement of $I'\equiv\,]\pi-\varepsilon, \pi+\varepsilon[$ in
$[\zeta, 2\pi-\zeta]\equiv K$ (where $\zeta$ satisfies \eqref{e-gue160327} and $\varepsilon>0$ a small constant
to be specified, cf. the line above \eqref{e-gue160444e2}).
By definition $\hat d_2\geq \hat d\,(=\hat d_{\zeta}) $ ($\hat d_2$ is to take inf over $I'$
while $\hat d$ over $K$, and $I'\subset K$).

It remains to see $\hat d_2\le C\hat d$ for some $C$.  
Put $$f_S(x)=\inf_{\theta\in S}\left\{d(x,e^{-i\theta}\circ x)\right\}$$ 
for a set $S$.
We claim that there exists a $c$, $1> c>0$,
\begin{equation}\label{e-gue160413ab}
  f_I(x)\geq c
\end{equation}
for all $x\in X_1$.
Indeed for each $x\in X$ and for any $\theta\in I=
[\zeta, \pi-\varepsilon]\cup [\pi+\varepsilon,
2\pi-\zeta]$ one sees $x\neq e^{-i\theta}\circ x$.
So \eqref{e-gue160413ab} follows by a compactness argument.
Let $M\geq 1$ be an upper bound of $\hat d_2$.
We claim 
\begin{equation}
\label{e-gue160413e1}
\hat d_2(x)\leq \frac{M}{c}\hat d(x),\quad x\in X.
\end{equation}
Note $\hat d(x)=f_K(x)=\min\{f_I(x), f_{I'}(x)\}$ and $\hat d_2=f_{I'}$. 
Suppose $f_K(y)<f_I(y)$.  Then $f_K(y)=f_{I'}(y)$, i.e. 
$\hat d(y)=\hat d_2(y)$ and \eqref{e-gue160413e1} holds
for these $y$ (as ${M\over c}>1$).  If  $f_K(y)\ge f_I(y)$ (for some $y\in X_1$), 
then $f_K(y)=f_I(y)$, giving $\hat d(y)\ge c$ by \eqref{e-gue160413ab}.   For these $y$,
\eqref{e-gue160413e1} still holds.   In any case we have proved \eqref{e-gue160413e1}
for $x\in X_1$, hence for $x\in X$ ($\hat d=\hat d_2=0$ at $x\in X_2$).

For another illustration, in the same notation as above except that say, $X=X_1\bigcup X_2\bigcup X_{4}$
(i.e. $p_3=4$).  We are going to prove the lemma for the case $\ell=2$ (with $x\in  X_1$, as 
$\hat d=\hat d_2=0$ at $x\not\in X_1$ for $\ell=2$).  

With the above $I, I'$ and $K$, let $J$ be the complement of
$J'\equiv\,]\frac{\pi}{2}-\varepsilon,\frac{\pi}{2}+\varepsilon[\,\bigcup
\,]\frac{3\pi}{2}-\varepsilon,\frac{3\pi}{2}+\varepsilon[$ in $I$.

It follows, similarly as \eqref{e-gue160413ab}, that there exists a $c_2$, $1> c_2>0$
such that 
\begin{equation}
\label{e-gue160413e2}
f_J(x)\ge c_2, \quad \forall\, x\in X.
\end{equation}

Let $\{W_\alpha\}_\alpha$ be the set of connected components of 
$X_4$.  Each $y\in X_4$ is a fixed
point of the subgroup ${\mathbb Z}_4=\{1, e^{i\frac{\pi}{2}}, e^{i\pi}, e^{i\frac{3\pi}{2}}\}$ of $S^1$; write 
$\lambda_{i,\alpha}(g)$ for all the eigenvalues of the isotropy (and isometric) action of $g\in \mathbb{Z}_4$ on 
$T_yX$ for $y\in W_\alpha$.  All of them are independent of the choice of $y\in W_\alpha$.  
Let $$C_M=\max_{1\ne g\in \mathbb{Z}_4, \lambda_{i,\alpha}(g)}\{|\lambda_{i,\alpha}(g)-1|\}>0; \quad 
c_m=\min_{1\ne g\in \mathbb{Z}_4, \lambda_{i,\alpha}(g)\ne 1}\{|\lambda_{i,\alpha}(g)-1|\}>0. $$

Let $B=\{x\in X;\,\hat d_2(x)\ge ({\frac{C_M}{c_m}}+1)\frac{M}{c_2}\hat d(x)>0\}$
($M\ge 1$ as above).   Clearly $B\subset X_1$ (zero distance
for $x\in X_2\cup X_4$).  We claim that
\begin{equation}
\label{e-gue160444}
\overline B\cap X_4=\emptyset.
\end{equation}

To see \eqref{e-gue160444} suppose otherwise.  Let $y_n\in B$
and $y_n\to y\in X_4$ as $n\to \infty$.  Observe that 
$f_K(y_n)\ne f_{I'}(y_n)$ for all $n$ because the equality $\hat d(y_n)=\hat d_2(y_n)$
(note $f_K=\hat d$ and $ f_{I'}=\hat d_2$) clearly contradicts the definition of $B$
with $y_n\in B$.  
By $K=I'\cup J'\cup J$, we are left with two possibilities for a $y_n$
\begin{equation}
\label{e-gue160444e4}
\begin{split}
&\mathrm{i)}\,\, f_K(y_n)=f_{J'}(y_n)\\
&\mathrm{ii)}\,\,  f_K(y_n)=f_{J}(y_n).
\end{split}
\end{equation}

Suppose i).   By examining the isotropy (and isometric) action of $\mathbb{Z}_4$ at $y\in X_4$, 
one sees that both ratios below 
\begin{equation}\label{e-gue160444e1}
\frac{d(y_n, e^{\i\pi}\circ y_n)}{d(y_n, e^{\i\frac{\pi}{2}}\circ y_n)},\quad
\frac{d(y_n, e^{\i\pi}\circ y_n)}{d(y_n, e^{\i\frac{3\pi}{2}}\circ y_n)}
\end{equation}
are bounded above by $\frac{C_M}{c_m}+\frac{1}{4}$ 
as $n\gg 1$.  
Since $I'$ and $J'$ are $\varepsilon$-neighborhoods around $\pi$ and $\{\frac{\pi}{2}, \frac{3\pi}{2}\}$
respectively, by choosing a sufficiently small $\varepsilon$ (say $\varepsilon\le \varepsilon_0$) 
one sees from \eqref{e-gue160444e1} 
\begin{equation}
\label{e-gue160444e2}
\frac{f_{I'}(y_n)}{f_{J'}(y_n)}\le {\frac{C_M}{c_m}+\frac{1}{2}}, \quad n\gg 1
\end{equation}
We claim that 
this contradicts $y_n\in B$.   Note $f_K=\hat d$
and $f_{I'}=\hat d_2$ so that the assmption i) $f_K(y_n)=f_{J'}(y_n)$
amounts to  $\hat d(y_n)=f_{J'}(y_n)$ and \eqref{e-gue160444e2} 
gives 
\begin{equation}
\label{e-gue160444e3}
\frac{\hat d_2(y_n)}{\hat d(y_n)}\le {\frac{C_M}{c_m}+\frac{1}{2}}, \quad n\gg 1.
\end{equation}
By $y_n\in B$, \eqref{e-gue160444e3} contradicts the definition of $B$.  

Suppose ii) of \eqref{e-gue160444e4}.  
By \eqref{e-gue160413e2}, $f_J(x)\geq c_2$ for all $x\in X$ 
hence by $f_K=\hat d$ and ii) of  \eqref{e-gue160444e4},  one obtains  $\hat d(y_n)\ge c_2$,
giving $\hat d_2(y_n)\ge ({\frac{C_M}{c_m}}+1)M$ by using $y_n\in B$,  
which is absurd since $\hat d_2\le M$ by assumption.  
The claim \eqref{e-gue160444} is proved by contradictions in i) and ii) of \eqref{e-gue160444e4}.  

Granting the claim \eqref{e-gue160444} we have $\overline B\subset X_1\cup X_2$ (which is open in $X$).
Since for $\theta\in I$ and $x\in X_1\cup X_2$ (in particular for $x\in \overline B$) $x\neq e^{-i\theta}\circ x$, 
by compactness there exists a $c_3$, $1> c_3>0$ satisfying (as in \eqref{e-gue160413ab})
\begin{equation}
\label{e-gue160444af1}
f_I(x)\ge c_3
\end{equation} 
for all $x\in \overline B$.  
One asserts that 
\begin{equation}
\label{e-gue160444f1}
\hat d_2(x)\le ({\frac{C_M}{c_m}}+1)\frac{M}{c_2c_3}\hat d(x),\quad \forall\,  x\in \overline B.
\end{equation}
The argument is similar.  By $f_K=\min\{f_I,f_{I'}\}$, a) $f_K(x)=f_{I'}(x)$ or b) $f_K(x)=f_I(x)$.
a) If $f_K(x)=f_{I'}(x)$, then by $f_K=\hat d$ and $f_{I'}=\hat d_2$, $\hat d(x)=\hat d_2(x)$; 
b) if $f_K(x)=f_I(x)$, then by \eqref{e-gue160444af1} and $f_K=\hat d$, $\frac{\hat d(x)}{c_3}\ge 1$
(for $x\in \overline B$).  
In both cases a) and b), \eqref{e-gue160444f1} holds 
(by $M\ge 1$ an upper bound of $\hat d_2$ and $\frac{1}{c_2},\frac{1}{c_3}\,>1$).  

Finally, Since the same inequalily of \eqref{e-gue160444f1} holds for all $x$ outside $B$ by definition
of $B$ (with $\hat d_2=\hat d=0$ for $x\in X_2\cup X_4$),
the equivalence between $\hat d$ and $\hat d_2$ (for all $x\in X$) is proved.

The proof for the general case clearly flows from the similar pattern as
above (although tedious).  We shall omit the detail.
\end{proof}

\section{Trace integrals and Proof of Theorem~\protect\ref{t-gue160416}}

\label{s-gue160416}

\subsection{A setup, including a comparison with recent developments}
\label{s-gue160416s0}
There is a vast literature about heat kernels on manifolds.  A comparison
between the previous results and those of ours in the present paper shall now be discussed
before we proceed further.  
A concise account of the (ordinary) heat kernel in 
diversified aspects is given in Richardson \cite{Ri10} and references therein.  A generalization of 
the heat kernel to orbit spaces of a group $\Gamma$ (of isometries) acting on a manifold $M$ 
dates back to the seminal work of H. Donnelly in late '70s \cite{Do76}, \cite{Do79}. 
Among others, Donnelly calculated the asymptotic expansion of 
the trace of the ordinary heat kernel on $M$ restricted to 
$\Gamma$-invariant functions (here $\Gamma$-action is assumed to be properly discontinous on $M$).  
Br\"uning and Heintze in '84 \cite{BH84} studied the equivariant trace with $\Gamma$ replaced by 
a compact group $G$ of isometries (including the trace 
restricted to $G$-invariant eigenfunctions).   A similar study (of trace) into the orbifold case has been made 
recently in \cite{Ri10} and \cite{DGGW08}.   In all of these works the asymptotic expansion of the (ordinary) heat kernel 
is more or less regarded as known.    The questions or techniques come down partly to
that used in Donnelly \cite{Do76} where the contributions to the trace integral are shown to be 
essentially supported on the fixed point set of the group action.  

In a closely related direction some authors consider the case of Riemannian foliations.
In this regard, if the orbits of a group acting by isometries are of the same dimension, 
this forms an example of a Riemannian foliation.   For a Riemannian foliation, one is usually 
restricted to the space of {\it basic functions} which are constant on leaves of 
the foliation.  Similar ideas apply to give {\it basic forms}.  The {\it basic Laplacian} and 
{\it basic heat kernel} $K_B(t, x, y)$ can then be defined.    Over decades there has been much study into 
the existence part of the basic heat kernel $K_B(t, x, y)$, which is finally 
proven in great generality by E. Park and K. Richardson in '96 \cite{PR96}.  Another proof 
on the existence is found in '98 \cite{Ri98}, which gives a specific formula 
for $K_B(t, x, y)$ and allows them to obtain an asymptotic formula for $K_B(t, x, x)$.
We denote the trace integral (on basic functions) by $\mathrm{Tr}\,e^{-t\triangle_B}$
(which is $\sum_m e^{-t\lambda_m}$ for certain eigenvalues with multiplicities).   In \cite{Ri10} and 
\cite{Ri98} the trace integral is also denoted by $K_B(t)$ which will be avoided here due to a possible confusion.
We shall dwell upon this important point after the next paragraph.  

Let's first pause for a moment for comparison.   {\it For the part of the trace integral}, the basic 
technique based on Donnelly is also employed here so that the extra contributions, if exist, 
are expected to be supported on the (lower dimensional) strata.   One of our features, however,  is 
Lemma~\ref{l-gue160417lem2} which leads to a precise information about 
the Gaussianlike term of the heat kernel and facilitates our ensuing asymptotic expansion (of the trace integral) 
in explicit expressions essentially based only on the data given by the ordinary (Kodaira) heat kernel
(hence computable in a sense, cf. Remarks~\ref{r-gue160416II}, ~\ref{r-gue160413a}).  
In the process we also need to sum over the group elements (Subsections~\ref{s-gue160416s1},
~\ref{s-gue160416s2}) and 
patch up these local sums over $X$ (Subsections~\ref{s-gue160416s3},~\ref{s-gue160416s4}).  
{\it For the part of the asymptotic expansion}, our present heat kernel by its very definition 
is similar to the $K_B$ above.  Yet objects beyond the basic forms, allowing a generalization in the equivariant sense, 
indexed by $m$($\in\mathbb{Z}$) in our notation (with $m=0$ corresponding to the case for $K_B$), 
with bundle-values, are considered here.   Since we 
allow CR nonK\"ahler case, suitable $\mathrm{Spin}^c$ structure in our CR version need be devised 
and equipped here in order for the rescaling technique of Getzler and our discovery of the off-diagonal 
estimate (Theorem~\ref{t-gue150627g}) to go through.    In this regard, it is not obvious 
at all (to us) whether the existence theory in the Riemannian case as above can be directly applied 
to our case.   Indeed, besides the need of the
$\mathrm{Spin}^c$ structure, our proof of the heat kernel is heavily based on the feature of 
the group action on CR manifolds, encoded by the BRT trivialization (Subsection~\ref{s-gue150514}),
through the use of the {\it adjoint} version of the original equation (Subsection~\ref{s-gue150920}).  
Above all, it lies in the following how our approach distinguishes itself from those of others.  

Notably, a seeming inconsistency could occur.  That is, a discovery in the works \cite{Ri10} and \cite{Ri98}
reveals that the so obtained asymptotic expansion for $K_B(t, x, x)$ there cannot be integrated (over $x$) to 
give the asymptotics of the trace (integral).   This perhaps takes one by surprise.  
See p. 2304 of \cite{Ri10} and Remark in p. 379 of \cite{Ri98}.  Despite this, the work \cite{Ri10}
manages to prove an asymptotic expansion for the trace integral (on basic functions) by using 
the work \cite{BH84} (rather than by integrating the asymptotics of $K_B(t, x, x)$ obtained therein).   
In this way, some nontrivial logarithmic terms are to appear unless they are proved to be vanishing.   
A conjecture has thus been introduced by K. Richardson in '10 \cite[Conjecture 2.5]{Ri10}
to the effect that in the Riemannian setting as above, 
for the (special) case of the isometric group action on a compact manifold, the logarithmic terms in the
asympototic expansion of the trace integral $\mathrm{Tr}\,e^{-t\triangle_B}$
must vanish and under a mild assumption (on orientation), 
there shall be no fractional powers in $t$ (except possibly an overall fractional power in $t$).
It is worth mentioning that the works \cite{Ri98} and 
\cite{Ri10} discuss a number of interesting examples pertinent to the aforementioned peculiar phenomenon.  
Despite that the seeming inconsistency is consistent with examples by explicit computations, it remains 
conceptually unclear how this phenomenon comes about.  

Our present work affirms the above conjecture of Richardson (with extension 
to the $S^1$-equivariant case) in the special case of CR manifolds studied here 
(see Theorems~\ref{t-gue160416d}, ~\ref{t-gue160416}).    One key point for 
all of this lies in \eqref{e-gue151108II} with $t$-dependent coefficients in 
$t$ powers, which is regarded as the asymptotic expansion one 
shall be dealing with in this paper, rather than a classical looking one \eqref{e-gue151108I}
(which is similar in nature to those proposed and studied in \cite{Ri98}, \cite{Ri10}).  
See also our Remarks~\ref{r-gue150508I}, ~\ref{r-gue160414a} and ~\ref{r-gue160414ar}, which are closely related 
to the above singular behavior of a classical formulation of asymptotic expansion. Put simply, 
the formulation \eqref{e-gue151108I} of an asymptotic expansion leads to 
certain discontinuities of the $t$-coefficients along the strata (cf. \cite[(4.7)]{Ri98} for a concrete example).
A remedy for \eqref{e-gue151108I} by \eqref{e-gue151108II} is mainly made via 
the introduction of a ``distance function" (see Theorem~\ref{t-gue160114}).  
Eventually, in this work we can restore the trace integral as the integration of our (unconventional) asymptotic 
expansion of the relevant heat kernel
(see Definition~\ref{d-gue150608} for the meaning of our asymptotic expansion).  
Thus, our trace integral and our asymptotic expansion of the heat kernel jointly 
clarify (with our class of manifolds) the somewhat undesirable phenomenon which is as mentioned above.  

To go from the trace integral to the index theorem (thought of as a supertrace integral) is usually not immediate.    
To the knowledge of the authors, the argument for the proof of index theorems by using trace integrals 
remains unclarified (cf. Remark~\ref{r-gue160416}).    Completely new ideas might be required; see \cite{BKR},
\cite{BKR08} for very interesting ideas.   In the present 
paper, we couldn't make our understanding of the (transversal) heat kernel (for our class of CR manifolds) 
complete without employing the rescaling technique of Getzler and the off-diagonal estimate 
(Theorem~\ref{t-gue150627g}) adapted to our setting.  
These results explore in depth the non-Gaussian terms of our (transversal) heat kernel, 
in contrast to the Gaussianlike term explored in the trace integral here.  
With these two parts together, our approach studies the meaningful separate aspects of 
the heat kernel in an unified manner, hence results in an (local) index theorem and the trace integral.   
These point to the differences between our approaches/results and those of the recent development.  

We turn now to our proof of the trace integral.  The line of thought in the proof involves four stages. 

In the first stage while the proof in the beginning echos that in 
last section, we shall make use of Lemma~\ref{t-gue160413a} and 
Theorem~\ref{t-gue160413} to handle the distance function $\hat d$.
(Here we assume the strongly pseudoconvex condition on $X$.) 
After this initial step,  we shall take a different approach that supersedes the previous one, which is more quantitative in nature without the strongly pseudoconvex condition on $X$ (hence without using Lemma~\ref{t-gue160413a} and 
Theorem~\ref{t-gue160413}).  This approach is partly based on 
the differential geometric information of the various isotropy actions
associated with the fixed point sets (strata) of the $S^1$ action.  
This allows us to 
learn more precise details about the heat kernel of Kohn Laplacian, hence 
to refine the computation in \eqref{e-gue160417aaI} which is basically qualitative. 
(See \eqref{e-gue160417aaI} for a kind of Dirac delta functions associated with the 
strata.)  Remark that one key point here is the notion of {\it type} which is initially designed 
for the need of computation.  In the fourth stage it is attached to the $S^1$ stratification closely.  

In the second, third and fourth stages, the treatment goes in line with that in the 
first stage and is mostly technical so as to 
integrate the results obtained in the first stage in a 
well organized manner.    The nonuniqueness way (subject to choice of 
BRT trivializations) of giving 
the asymptotic expansion of $e^{-t\widetilde\Box_{b,m}^+}(t, x, y)$ 
(cf.  Theorem~\ref{t-gue160123}) leaves us the freedom of choosing 
convenient BRT charts to work out some computations.   The salient fact that
$e^{-t\widetilde\Box_{b,m}^+}(t, x, y)$ is an intrinsic object (yet not directly computable), 
thus is independent of choice of BRT trivializations, is essential to 
giving intrinsic meanings to some BRT-dependent 
computations (cf. the contrast between Propositions~\ref{t-gue160416c}
and ~\ref{t-gue160416cc} on $\eta_s$-terms).   This conceptual understanding 
turns out to be crucial to our final result.   The extension 
of the previous notion ``type" to the $S^1$ stratification is the last conceptual 
step for the completion of the proof.  

As before, $X$ $(\mathrm{dim}\,X=2n+1)$ is a compact connected CR manifold with a transversal CR locally free 
$S^1$ action.   
To proceed with the proof of Theorem~\ref{t-gue160416},
assume $X=X_{p_1}\bigcup
X_{p_2}\bigcup\cdots \bigcup X_{p_k}$ where 
$X_{p_\ell}=\bigcup^{s_\ell}_{\gamma=1} X_{p_{\ell(\gamma)}}$ $(s_{\ell=1}=1)$
as a disjoint union of (connected) submanifolds $X_{p_{\ell(\gamma)}}$\,($\overline X_{p_{\ell}}$, being the fixed point set of 
an isometry $e^{-i\frac{2\pi}{p_\ell}}$, is a submanifold (possibly disconnected)).  

Write  $e_{\ell(\gamma)}$ for the (real) codimension of 
$X_{p_{\ell(\gamma)}}$ in $X$.  When there is no danger of confusion, 
we may drop $\gamma$ and write $e_{\ell}$ for $e_{\ell(\gamma)}$. 
Recall $X_{\mathrm{sing}}^{\ell-1}=X_{p_{\ell}}\bigcup X_{p_{\ell+1}}\cdots$.


We follow the notations in Subsection~\ref{s-gue150627I} and
the beginning of the last section.  Thus $B_j:=(D_j,(z,\theta),\varphi_j)$
($j=1, 2, \ldots, N$) with $D_j=U_j\times]-2%
\delta_j,2\widetilde\delta_j[$, $U_j=\left\{z\in\mathbb{C}^n;\, \left\vert z\right\vert<\gamma_j\right\}$
and similarly $\hat
D_j=\hat U_j\times]-\frac{\delta_j}{2},\frac{\widetilde\delta_j}{2}[$,
$\hat U_j=\left\{z\in\mathbb{C}^n;\, \left\vert z\right\vert<\frac{\gamma_j}{2%
}\right\}$. We let $\delta_j=\widetilde\delta_j=\zeta$, $%
j=1,2,\ldots,N$ and assume $%
X=\hat D_1\bigcup\cdots\bigcup\hat D_N$.
As before, we assume that $\zeta>0$ satisfies \eqref{e-gue160327}.

\subsection{Local angular integral}
\label{s-gue160416s1}

Recall $\hat h_{j,+}(x,y)$, $\hat b^+_{j,s}(x,y)$ of \eqref{e-gue150901}
(to be given below); $a_s^+(t, x, y)$ involves a certain integral over $[0,2\pi]$ (cf. \eqref{e-gue150901a}),
$s=n,n-1,\ldots$.   One key step is the following
local version.  That is, the (trace) integral of the form
\begin{equation}\label{e-gue160416be1}
I=I^{(j)}(p_\ell, g(x))\equiv \frac{1}{2\pi}\int^{\frac{2\pi}{p_\ell}+\varepsilon}_{\frac{2\pi}{p_\ell}%
-\varepsilon}\int_Xg(x)e^{-\frac{\hat h_{j,+}(x,e^{-iu}\circ x)}{t}}\mathrm{Tr}\,\hat
b^+_{j,s}(x,e^{-iu}\circ x)e^{-imu}dv_X(x)du.
\end{equation}
The trace ``Tr" here is actually well defined 
despite a slight abuse of notation about $\hat b^+_{j,s}(x, e^{-iu}\circ x)$ at the second 
variable (see the line above \eqref{e-gue150510f}).  

Recall the expressions in \eqref{e-gue150901} (to be used in what follows): 
\begin{equation}  \label{e-gue150901e1}
\begin{split}
&\hat h_{j,+}(x,y)=\hat\sigma_j(\theta) h_{j,+}(z,w)\hat\sigma_j(\eta)\in
C^\infty_0(D_j),\ \ x=(z,\theta),\ \ y=(w,\eta) \\
&\hat
b^+_{j,s}(x,y)=\chi_j(x)e^{-m\varphi_j(z)-im\theta}b^+_{j,s}(z,w)e^{m%
\varphi_j(w)+im\eta}\tau_j(w)\sigma_j(\eta),\ \ s=n,n-1,\ldots
\end{split}%
\end{equation}
with suitable cut-off functions $\chi_j$, $\tau_j$, $\sigma_j$ and $\hat \sigma_j$
defined there.  


There will be cases for the result \eqref{e-gue160416be1}.  
We need some preparations and notations.  

For $I=I(p_\ell, g)$ of \eqref{e-gue160416be1}, take a point $x_0\in \mathrm{Supp}\,g\cap\overline X_{p_\ell}$, 
then $x_0\in \overline X_{p_{\ell(\gamma_\ell)}}$ for a $\gamma_\ell=1, 
\ldots,s_\ell$.   Locally at $x_0$ there are higher dimensional strata 
\begin{equation*}
\overline X_{p_{i_1(\gamma_{i_1})}}=X\supsetneq\overline X_{p_{i_2(\gamma_{i_2})}}\supsetneq\ldots
\supsetneq\overline X_{p_{i_f(\gamma_{i_f})}}\supsetneq \overline X_{p_{i_{f+1}(\gamma_{i_{f+1}})}}=
\overline X_{p_{\ell(\gamma_\ell)}}
\end{equation*}
passing through $x_0$ where $i_1=1<i_2<\ldots<i_f<i_{f+1}=\ell,\,\in \{1, 2, ,\ldots, \ell-1,\ell\}$.  
Here (to be useful later) $p_{i_1}|p_{i_2}\cdots|p_{i_f}|p_\ell$ (by Remark~\ref{r-gue150804} similarly).  
We say 

\begin{defn}
\label{d-gue160416t1}
i) The {\it type} $\tau(I)$ of $I(p_\ell, g)$ is $\tau(I):=({i_1(\gamma_{i_1})}, {i_2(\gamma_{i_2})},\ldots,
{i_f(\gamma_{i_f})}, {i_{f+1}(\gamma_{i_{f+1}})})$
where $i_1=\gamma_{i_1}=1$ and $i_{f+1}=\ell$ always.   The length $l(\tau(I))$ of the type is $f+1$.  
$I(p_\ell, g)$ is said to be of {\it simple type} if in $\tau(I)$, $(i_1,i_2,\ldots,i_{f+1})=(1, 2, \ldots,\ell-1,\ell)$.  

ii) Two given types 
\begin{equation*}\begin{split}
&\tau(I(p_{\ell_1}, g_1))=({i_1(\gamma_{i_1})}, {i_2(\gamma_{i_2})},\ldots,{i_{f_1+1}(\gamma_{i_{f_1+1}})})\\
&\tau(I(p_{\ell_2}, g_2))=({j_1(\gamma'_{j_1})}, {j_2(\gamma'_{j
_2})},\ldots,{j_{f_2+1}(\gamma'_{j_{f_2+1}})})
\end{split}
\end{equation*} 
are said to be in the same {\it class} 
provided a) $f_1=f_2:=f$, $\ell_1=\ell_2$, $i_1=j_1, i_2=j_2,\ldots$, $i_f=j_f$ 
and b) the codimensions of the corresponding strata coincide: 
$e_{\ell_1(\gamma_{\ell_1})}=e_{\ell_2(\gamma'_{\ell_2})}$, 
$e_{i_1(\gamma_{i_1})}=e_{j_1(\gamma'_{j_1})}, 
e_{i_2(\gamma_{i_2})}=e_{j_2(\gamma'_{j_2})},\ldots, e_{i_{f}(\gamma_{i_{f}})}=e_{j_{f}(\gamma'_{j_{f}})}$.  

iii) As above $I=I(p_\ell, g)$, suppose $\mathrm{Supp}\,g\cap\overline X_{p_\ell}=\emptyset$, equivalently 
$\mathrm{Supp}\,g\subset \cup_{q=1}^{\ell-1}X_{p_{\ell-q}}$.  We say $\tau(I)$ is  
of {\it trivial type}.  
\end{defn} 
Remark that $g(x)$ will be chosen to be of very small support and the local nature of $I$, $\tau(I)$
will be obvious.   Namely, in this case $\tau(I)$ is independent of 
choice of $x_0\,(\in \mathrm{Supp}\,g\cap\overline X_{p_\ell})$.   
In the final subsection, the notion of ``type" will be naturally extended to 
each connected submanifold $X_{p_{\ell(\gamma)}}$ in the strata.  By this, the influence of the geometry of 
the $S^1$ stratification on the heat kernel trace integral will become more evident.  

Most numerical results in what follows will only depend on the equivalence classes 
of types.  But for the sake of notational convenience, we assume $I$ to be of simple type or trivial type in the 
proposition below.  The modification to the general type is basically only complicated in notation 
and will be treated later.  

\begin{prop}
\label{t-gue160416b} Suppose $x_0\in\hat D_j$.  Then there exist a
neighborhood $\tilde\Omega$ ($\Subset \hat D_j$) of $x_0$ and an $\tilde\varepsilon>0$ (depending 
on $x_0$) such that for every $\Omega\subset \tilde\Omega$, every $%
g(x)\in C^\infty_0(\Omega)$ we have the following for $I$ of \eqref{e-gue160416be1} with 
any $\varepsilon\le \tilde\varepsilon$.  
Note $I$ is assumed to be of simple type (if not of trivial type) as said prior to the proposition.  
(In the following, ii) and Case a) of iii) are basically of trivial type; i) and Case b) of iii) are of simple type.)  

i) $\ell=1$ ($p=p_1$).   For $x=(z, v)\in \hat D_j$ write $z(x)=z$ and $\theta(x)=v$.  
$$
I=e^{-\frac{2\pi i}{p}m}\frac{1}{2\pi}\int_{-\varepsilon}^{\varepsilon}
\int_Xg(x)\chi_j(x)\mathrm{Tr}\,b_{j,s}^+(z, z)\tau_j(z)\sigma_j(v+\psi) dv_X(x)d\psi.$$ 
In particular, $I$ is a constant independent of $t$.  (Note it is $b_{j, s}^+$ instead of $\hat b_{j, s}^+$
here; the same can be said with \eqref{e-gue160416bpe1} below.)  

ii) Suppose $e^{-i\frac{2\pi}{p_\ell}}\circ x_0\not\in\hat D_j$ (here $\ell=2,3,\ldots,k$).  
Then $I=0$. 

iii) Suppose $e^{-i\frac{2\pi}{p_\ell}}\circ x_0\in\hat D_j$ (here $\ell=2,3,\ldots,k$).

Case a) $x_0\in \bigcup_{q=1}^{q=\ell-1}X_{p_{\ell-q}}$.  Then $I\sim O(t^{\infty})$ as $t\To0^+$.   


Case b) $x_0\not\in \bigcup_{q=1}^{q=\ell-1}X_{p_{\ell-q}}$ and $x_0\in \overline X_{p_{\ell(\gamma_\ell)}}\subset
\overline X_{p_{\ell}}$.   
Take local coordinates ($e_\ell=e_{\ell(\gamma_\ell)}$ for some $\gamma_\ell=1, \ldots, s_\ell$) 
$$y=(y_1,\ldots,y_{2n+1})=(\hat y, Y)\,\, \mbox{with $\hat y=(y_1\ldots, y_{e_\ell})$ 
and $Y=(y_{e_{\ell}+1},\ldots,y_{2n+1})$}$$
defined on $\Omega$ such that 
$$
\overline X_{p_\ell}\cap\Omega=\left\{y\in\Omega;\, y_1=\cdots=y_{e_l}=0\right\}.$$
Assume (possibly after shrinking $\Omega$ about $x_0$) 
$\Omega=\bigcup_{j\in\{1,\cdots, k\}} (X_{p_{j(\gamma_j)}}\cap\Omega)$
(for some $\gamma_j=1,\cdots,s_j$) which is seen to be (by assumption of simple type) 
\begin{equation*}
(\overline X_{p_{\ell(\gamma_\ell)}}\cap\Omega)\bigcup_{q=1}^{\ell-1}
(X_{p_{\ell-q(\gamma_{\ell-q})}}\cap\Omega).
\end{equation*}
Write $e_{\ell-q+1}-e_{\ell-q}$ for the codimension 
of $X_{p_{\ell-q+1}}$ in $\overline X_{p_{\ell-q}}$ where $p_{\mu}=p_{\mu(\gamma_\mu)}$
for $\mu=\ell-q+1$ and $\mu=\ell-q$ respectively.  
If $y=(z, \theta)$ (in BRT coordinates), write $z(y)$ for $z$ and if 
$y=(0,Y)$, write $Y$ for $(0,Y)$ and $z(Y)$ for $z(y)$.  Similar notation for 
$\theta(Y)$ etc. 

Then $(e_\ell=e_{\ell(\gamma_\ell)})$
$$ I= b^{(j)}_{s,\frac{e_{\ell}}{2}}t^{\frac{%
e_{\ell}}{2}}+b^{(j)}_{s,\frac{e_{\ell}+1}{2}}t^{\frac{e_{\ell}+1}{2}}+\ldots
$$ where the first coefficient  $b^{(j)}_{s,\frac{e_{\ell}}{2}}$ is given by 
\begin{equation}
\label{e-gue160416bpe1}\begin{split}
b^{(j)}_{s,\frac{e_{\ell}}{2}}=
\pi^{\frac{e_{\ell}}{2}}{e^{-\frac{2\pi i}{p_{\ell}}m}}
\prod_{q=1}^{\ell-1}{\left\vert e^{\frac{i2\pi }{p_{\ell}}p_{\ell-q}}-1\right\vert^{-(e_{\ell-q+1}-e_{\ell-q})}}
\times\\
\frac{1}{2\pi}\int_{-\varepsilon}^{\varepsilon}\int_{\overline X_{p_{\ell(\gamma_\ell)}}}g(Y)\chi_j(Y)\mathrm {Tr}\,b_{j,s}^+(z(Y), z(Y))\tau_j(z(Y))\sigma_j(\theta(Y)+u)dv_{\overline X_{p_{\ell(\gamma_\ell)}}}(Y)du. 
\end{split}
\end{equation} 

In particular, for $s=n$ $(\mathrm{cf.}\,\mathrm{dim}X=2n+1)$, \eqref{e-gue160416bpe1} for 
$b^{(j)}_{n,\frac{e_{\ell}}{2}}$ simplifies by using $\mathrm{Tr}\,b^+_{j, n}(z,z)\equiv (2\pi)^{-n}$. 
\end{prop}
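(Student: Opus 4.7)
The plan is to dispose of cases i), ii), and case a) of iii) by continuity and triviality arguments, and then concentrate on case b), where the main calculation occurs. For case i) with $\ell=1$, I would use that $p=p_1$ divides every $p_j$, hence $\mathbb{Z}_p$ acts trivially on the dense principal stratum $X_p$ and by continuity on all of $X$ (cf.\ Remark~\ref{r-gue150804}). Thus $e^{-i\frac{2\pi}{p}}\circ x = x$ for every $x\in X$, and for $u=\frac{2\pi}{p}+\psi$ with $|\psi|\leq\varepsilon$, one has $e^{-iu}\circ x=e^{-i\psi}\circ x=(z(x),\theta(x)+\psi)$ in BRT coordinates on $\hat D_j$. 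Consequently $\hat h_{j,+}(x,e^{-iu}\circ x)=h_{j,+}(z,z)=0$ and the exponential factor is $1$; substituting \eqref{e-gue150901e1} and collecting the $e^{\pm im\theta}$ phases together with $e^{-imu}$ leaves only the constant $e^{-2\pi im/p}$, yielding the asserted $t$-independent formula. Case ii) follows from continuity: if $e^{-i\frac{2\pi}{p_\ell}}\circ x_0\notin\hat D_j$, then after shrinking $\tilde\Omega$ and $\tilde\varepsilon$ the second argument $e^{-iu}\circ x$ stays outside $\mathrm{Supp}\,\sigma_j$, forcing $\hat b^+_{j,s}\equiv 0$ on the integration domain. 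For case a) of iii), the period of $x_0$ equals $\frac{2\pi}{p_{\ell-q}}$ for some $q\geq 1$, so $e^{-i\frac{2\pi}{p_\ell}}$ is not in the isotropy of $x_0$; the constraint \eqref{e-gue160327} on $\zeta$ yields $d(x_0,e^{-i\frac{2\pi}{p_\ell}}\circ x_0)\geq c_0>0$, hence by continuity $\hat h_{j,+}(x,e^{-iu}\circ x)\geq c_0^2/2$ uniformly on the integration domain, so $e^{-\hat h/t}=O(t^\infty)$ and $I\sim O(t^\infty)$.

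For case b) of iii), I would choose adapted local coordinates $y=(\hat y,Y)$ near $x_0$, with $\hat y\in\mathbb{R}^{e_\ell}$ normal and $Y$ tangent to $\overline X_{p_{\ell(\gamma_\ell)}}$. The simple type hypothesis yields a further decomposition $\hat y=(\hat y_1,\ldots,\hat y_{\ell-1})$ with $\dim\hat y_q=e_{\ell-q+1}-e_{\ell-q}$ corresponding to normal directions to $\overline X_{p_{\ell-q+1}}$ within $\overline X_{p_{\ell-q}}$. Let $A$ denote the differential of the isometry $e^{-i\frac{2\pi}{p_\ell}}$ at $(0,Y)\in X_{p_{\ell(\gamma_\ell)}}$; the subgroup $\mathbb{Z}_{p_{\ell-q}}\subset\mathbb{Z}_{p_\ell}$ acts trivially on $\hat y_q$ while $\mathbb{Z}_{p_{\ell-q+1}}$ does not, pinning the relevant isotropy characters and producing
\begin{equation*}
|\det(A-I)|_{\hat y_q}=\left|e^{i\frac{2\pi p_{\ell-q}}{p_\ell}}-1\right|^{e_{\ell-q+1}-e_{\ell-q}},
\end{equation*}
computed block-wise from $\det(R_\alpha-I)=|e^{i\alpha}-1|^2$ on each real $2$-plane on which $A$ acts as rotation by $\alpha$. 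Writing $u=\frac{2\pi}{p_\ell}+\psi$ and Taylor-expanding, one obtains $e^{-iu}\circ(\hat y,Y)=(A\hat y+O(|\hat y|^2),Y)$ with the $\theta$-coordinate shifted by $\psi+O(|\hat y|)$, so the BRT $z$-difference equals $z(x)-z(e^{-iu}\circ x)=\Lambda(A-I)\hat y+O(|\hat y|^2+|\psi||\hat y|)$, where $\Lambda$ is the Jacobian from adapted normal coordinates $\hat y$ to BRT $z$-normal coordinates at $(0,Y)$. Hence $\hat h_{j,+}(x,e^{-iu}\circ x)=|\Lambda(A-I)\hat y|^2+(\text{higher order})$.

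The concluding step is to rescale $\hat y\mapsto\sqrt{t}\,\hat y$, turning the Gaussian into a constant-coefficient one with leading integral
\begin{equation*}
\int_{\mathbb{R}^{e_\ell}}e^{-|\Lambda(A-I)\hat y|^2}\,d\hat y=\pi^{e_\ell/2}|\det\Lambda|^{-1}|\det(A-I)|^{-1},
\end{equation*}
and to Taylor-expand the remaining factors in powers of $\sqrt{t}$. This produces (a) an overall $t^{e_\ell/2}$ from the rescaling Jacobian, (b) the product $|\det(A-I)|^{-1}=\prod_{q=1}^{\ell-1}|e^{i2\pi p_{\ell-q}/p_\ell}-1|^{-(e_{\ell-q+1}-e_{\ell-q})}$, (c) after $|\det\Lambda|^{-1}$ absorbs the volume Jacobian relating $dv_X|_\Omega$ to $d\hat y\,dv_{\overline X_{p_{\ell(\gamma_\ell)}}}(Y)$ at leading order, the surviving integration against $dv_{\overline X_{p_{\ell(\gamma_\ell)}}}(Y)\,d\psi$, and (d) the phase $e^{-imu}|_{\psi=0}=e^{-2\pi im/p_\ell}$. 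Evaluating the rest of the integrand at $\hat y=0$ gives $g(0,Y)\chi_j(0,Y)\mathrm{Tr}\,b^+_{j,s}(z(Y),z(Y))\tau_j(z(Y))\sigma_j(\theta(Y)+\psi)$, matching \eqref{e-gue160416bpe1}. Successive Taylor coefficients yield higher powers $t^{(e_\ell+k)/2}$; odd-order terms vanish by the parity $\hat y\mapsto-\hat y$, leaving integer powers of $t^{1/2}$ starting at $t^{e_\ell/2}$. For $s=n$ the simplification uses the standard identity $\mathrm{Tr}\,b^+_{j,n}(z,z)\equiv(2\pi)^{-n}$ for the leading heat kernel coefficient on the diagonal. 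The hard part will be the rigorous identification of the block eigenvalue structure of $A$ on each $\hat y_q$ uniformly as $Y$ varies over the stratum, together with the careful bookkeeping of Jacobians relating BRT coordinates, adapted normal coordinates, and the induced Riemannian volumes on $X$ and on $\overline X_{p_{\ell(\gamma_\ell)}}$.
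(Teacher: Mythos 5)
Your plan matches the paper's overall architecture (dispatch i), ii), a) by phase/continuity/triviality, then rescale in b)), but two of the steps as stated have real gaps.

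In Case a) of iii), you go directly from $d\bigl(x_0,e^{-i\frac{2\pi}{p_\ell}}\circ x_0\bigr)\geq c_0>0$ to $\hat h_{j,+}(x,e^{-iu}\circ x)\geq c_0^2/2$. That inference is false: $\hat h_{j,+}$ only sees the BRT $z$-coordinate difference (cf.\ \eqref{e-gue150901e1}), not the full $X$-distance. If $e^{-i\frac{2\pi}{p_\ell}}\circ x_0=(\tilde z_0,\tilde\theta_0)$ had $\tilde z_0=z_0$ with $\tilde\theta_0\neq 0$, you would have $d>0$ but $\hat h_{j,+}=h_{j,+}(z_0,z_0)=0$ and the $O(t^\infty)$ conclusion would fail. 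The missing step is to show $\tilde z_0\neq z_0$: if $\tilde z_0=z_0$ then $e^{i\tilde\theta_0}$ sends $e^{-i\frac{2\pi}{p_\ell}}\circ x_0$ back to $x_0$, which forces $\frac{2\pi}{p_\ell}-\tilde\theta_0$ into the period lattice of $x_0$ (equation \eqref{e-gue160416biiia}); that is incompatible with $|\tilde\theta_0|<\zeta/2$ and the constraint \eqref{e-gue160327} on $\zeta$. Only then does continuity give a uniform lower bound on the $z$-separation and hence on $\hat h_{j,+}$.

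In Case b) of iii), the analytic heart of the proof is the claim $h_{j,+}(z_0,z_1)/t\to|e^{i\omega p_{\ell-q}}-1|^2\|\hat y\|^2$ after the rescaling, which you encode as $z(x)-z(e^{-iu}\circ x)=\Lambda(A-I)\hat y+O(\cdot)$ together with $|\det\Lambda|^{-1}$ absorbing the volume Jacobian. You flag this as "the hard part" but give no argument. In the paper this is exactly the content of the geodesic-coordinate lemma \ref{l-gue160417}, the two distance-comparison sublemmas \ref{l-gue160417lem3}--\ref{l-gue160417lem4}, and especially Lemma \ref{l-gue160417lem2}, which shows that for points whose exponential pre-images lie in $N_p(S^1\cdot p/X)$, the ambient distance $d_X$ and the BRT slice distance $d_U$ are infinitesimally identical (so that, after the correct coordinate normalization, your $\Lambda$ is the identity and no extraneous Jacobian appears). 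Without that lemma the block eigenvalue computation for $A-I$ cannot be transferred from $T_{x_0}X$ to the BRT $z$-coordinates on which $h_{j,+}$ is built, and the claimed leading coefficient \eqref{e-gue160416bpe1} is not established. So while your high-level picture (block decomposition of the normal space by sub-strata, rotation eigenvalues $e^{i2\pi p_{\ell-q}/p_\ell}$, Gaussian rescaling, parity cancellation of odd terms) agrees with the paper's, the proof has a substantial unfilled hole precisely at the geometric identification you defer.
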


\begin{proof} Write $x_0=(z_0, \theta_0)$.  For simplicity, assume $\theta_0=0$ without
loss of generality (cf. the last three paragraphs of the proof of Theorem~\ref{t-gue160123}
for a similar situation).   Note that the existence of $\tilde\Omega$ and $\tilde\varepsilon$ 
in the statement above will be obvious from the proof below and we shall not refer to them explicitly.  

To see i), we note that $e^{-i\frac{2\pi}{p}}=\mathrm{id}$ ($p=p_1$) on $X$
(because it is so on $X_p$ by definition which is dense (and open) in $X$).
For $x=(z, v)$ lying in the BRT
neighborhood $\hat D_j$ and for $u=\frac{2\pi}{p}\pm\varepsilon$
such that $e^{-iu}\circ x=e^{\pm i\varepsilon}\circ x$ lies in $D_j$ ($\supset \hat D_j$), 
one has $e^{\pm i\varepsilon}\circ x=(z, v\mp\varepsilon)$ by construction of BRT charts $D_j$.
In this case $e^{-\frac{\hat
h_{j,+}(x,e^{-iu}\circ x)}{t}}\equiv 1$ since $\hat
h_{j,+}(x,e^{-iu}\circ x)=0$ by $h_{j,+}(z, z)=0$ of \eqref{e-gue150901e1}.
The same reasoning applies to $\mathrm{Tr}\,\hat b^+_{j,s}(x, e^{-iu}\circ x)$ to reach $\mathrm{Tr}\,b^+_{j,s}(z,z)$.  
Now choose a neighborhood $\Omega\Subset \hat D_j$ of 
$x_0$ then a small $\varepsilon>0$ (depending on $x_0$) such that 
$e^{\pm i\varepsilon}\circ x$ lies in $D_j$ for $x\in\Omega$.  
As $g\in C^\infty_0(\Omega)$, we can apply the above 
argument for these $x$ by making 
$\psi=u-\frac{2\pi}{p}$ ($|\psi|\le \varepsilon$) so that $e^{-iu}\circ x=e^{-i\psi}\circ x=(z, v+\psi)$.
In \eqref{e-gue150901e1} one thus has $\theta=v$, $w=z$ and $\eta=v+\psi$.
By these remarks, i) of the proposition follows from \eqref{e-gue160416be1} and \eqref{e-gue150901e1}.  

For ii) of the propostion, with $x_0\in \hat D_j$ yet $e^{-i\frac{2\pi}{p_\ell}}\circ x_0\not\in\hat D_j$,
by continuity of $S^1$ action there exist a
neighborhood $\Omega$ ($\Subset\hat D_j$) of $x_0$ and an $\varepsilon>0$ such that 
$e^{-iu}\circ x\not\in \hat D_j$ for $x\in \Omega$ and 
$u\in]\frac{2\pi}{p_\ell}-\varepsilon,\frac{2\pi}{p_\ell}+\varepsilon[$.  
Hence for these $x\in \Omega$, $\hat b_{j, +}(x, e^{-i\frac{2\pi}{p_{\ell}}}\circ x)=0$ 
by a cut-off function $\tau_j$ (of compact support in $\hat U_j\subset \hat D_j$) 
involved in $\hat b_{j, +}$ (see \eqref{e-gue150901e1}), giving $I=0$ in \eqref{e-gue160416be1}.  

For case a) of iii), the assumption gives 
$x_0\in X_{p_{\ell-q}}$, $q\in\{1, 2,\ldots,\ell-1\}$.  
Further, by assumption $e^{-i\frac{2\pi}{p_\ell}}\circ x_0\in\hat D_j$
we write $e^{-i\frac{2\pi}{p_\ell}%
}\circ x_0=(\widetilde z_0,\widetilde\theta_0)$ with  $\left\vert
\widetilde\theta_0\right\vert<\frac{\zeta}{2}$.  
We claim $\tilde z_0\ne z_0$.  The line of argument 
is slightly different from that in \eqref{e-gue160327I}.  
Suppose $\tilde z_0= z_0$.   Then
by $e^{i\widetilde\theta_0}\circ (e^{-i\frac{2\pi}{p_\ell}}\circ x_0)=e^{i\widetilde\theta_0}\circ
(\widetilde z_0,\widetilde\theta_0)=(\widetilde z_0, 0)=(z_0, 0)=x_0$ 
(recall $\theta_0=0$ in the beginning 
of the proof).     Hence, 
\begin{equation}
\label{e-gue160416biiia}
\frac{2\pi}{p_\ell}-\widetilde\theta_0=m\frac{2\pi}{p_{\ell-q}},\quad m\in \mathbb{Z}
\end{equation}
by assumption $x_0\in X_{p_{\ell-q}}$. But $%
\left\vert \widetilde\theta_0\right\vert<\frac{\zeta}{2}$ and $\zeta$ is assumed to
satisfy \eqref{e-gue160327}, so the above equality is absurd, proving 
the claim $\tilde z_0\ne z_0$ by contradiction. 

Now that  $\widetilde z_0\neq z_0$, 
there exists a neighborhood $\Omega$ of $x_0$ and an $\varepsilon>0$ (dependent on $x_0$) such that 
for $x\in\Omega$ and $\theta\in]\frac{2\pi}{p_\ell}-\varepsilon,%
\frac{2\pi}{p_\ell}+\varepsilon[$, writing $%
e^{-i\theta}\circ x=(\widetilde z,\tilde\theta)$ and $x=(z, \theta)$ one has $\left\vert \widetilde
z-z\right\vert\geq\frac{1}{2}\left\vert \widetilde z_0-z_0\right\vert\equiv \delta$
by using continuity of $S^1$ action at $x=x_0$ and $\theta=\frac{2\pi}{p_\ell}$.
From the property of $\hat h_{j,+}(x,y)$ (which is essentially $|z-w|^2$,
cf. \eqref{e-gue150608a} and \eqref{e-gue150901}) one sees that
$I$ of \eqref{e-gue160416be1} gives 
\begin{equation}  \label{e-gue160417J}
\frac{1}{2\pi}\int^{\frac{2\pi}{p_\ell}+\varepsilon}_{\frac{2\pi}{p_\ell}%
-\varepsilon}\int_Xg(x)e^{-\frac{\hat h_{j,+}(x,e^{-iu}\circ x)}{t}}\mathrm{Tr}\,\hat
b^+_{j,s}(x,e^{-iu}\circ x)e^{-imu}dv_X(x)du=O(t^\infty), \,\,\mbox{as $t\to 0^+$}
\end{equation} 
(for $g\in C^\infty_0(\Omega)$) simply because the exponential term in
\eqref{e-gue160417J} decays
rapidly if $\left\vert \widetilde
z-z\right\vert\geq \delta$ here,
proving case a) of iii) of the proposition. 

To prove case b) of iii), we first give an estimate under an additional 
assumption that $X$ is strongly pseudoconvex, 
then we will drop this assumption and carry out some refined computations
to complete the proof. 

Since $x_0$ is a fixed point of $e^{-\frac{2\pi i}{p_{\ell}}}$ by assumption, by continuity of $S^1$ 
action there exist an open subset $\Omega$ of $x_0$ and
a small constant $0<\varepsilon<\frac{\zeta}{2}$
such that $e^{-i\theta}\circ x\in\hat D_j$ for $x\in\Omega$
and $\theta\in]\frac{2\pi}{p_\ell}-\varepsilon,\frac{2\pi}{p_\ell}%
+\varepsilon[$.  We assume that $\Omega$ is small, say contained in the
BRT chart $\hat D_j$, and satisfies the local coordinates of case b) of iii). 
For $x=(z, v)\in \Omega$ and $\theta\in]\frac{%
2\pi}{p_\ell}-\varepsilon,\frac{2\pi}{p_\ell}+\varepsilon[$, 
write $e^{-i\theta}\circ x=(\widetilde z,\widetilde v)\in\hat D_j$. 

We claim that there exists a positive continuous function
$f_1(x)$ such that
\begin{equation}  \label{e-gue160417aII}
h_{j,+}(z,\widetilde z)\ge f_1(x)d(x,X_{p_\ell})^2, \qquad \forall\, x\in \Omega
\end{equation}
where $h_{j,+}$ is as in \eqref{e-gue150901e1} (cf. \eqref{e-gue150608a}).
(Here is the only place where we use the assumption $X$ is strongly pseudoconvex.) 

Granting the claim \eqref{e-gue160417aII}, 
with the local coordinates in iii), 
suppose $y(x_0)=Y(x_0)=0$.  Rewrite the quotient $\frac{d(y,X_{p_\ell})^2}
{(\left\vert y_1\right\vert^2+\cdots+\left\vert
y_{e_{\ell}}\right\vert^2)}$ as 
\begin{equation}  \label{e-gue160417aIII}
d(y,X_{p_\ell})^2=f_2(y)(\left\vert y_1\right\vert^2+\cdots+\left\vert
y_{e_{\ell}}\right\vert^2),\ \ \forall y\in \hat\Omega
\end{equation}
where $f_2(y)$ is a positive continuous function.  

With \eqref{e-gue160417aII} and \eqref{e-gue160417aIII}, we estimate
$\hat h_{j,+}$ below and have 
the following (see \eqref{e-gue150901e1} or \eqref{e-gue150901} and note $g(x)\in C^{\infty}_0(\Omega)$,
$\Omega$ small).  
\begin{equation}  \label{e-gue160417aaI}
\begin{split}
I&=\frac{1}{2\pi}\int^{\frac{2\pi}{p_\ell}+\varepsilon}_{\frac{2\pi}{p_\ell}%
-\varepsilon}\int_Xg(x)e^{-\frac{\hat h_{j,+}(x,e^{-iu}\circ x)}{t}}\mathrm{Tr}\,\hat
b^+_{j,s}(x,e^{-iu}\circ x)e^{-imu}dv_X(x)du \\
&\le \frac{1}{2\pi}\int^{\frac{2\pi}{p_\ell}+\varepsilon}_{\frac{2\pi}{p_\ell}%
-\varepsilon}\int_Xg(x)e^{-\frac{f_1(x)d(x,X_{p_\ell})^2}{t}}\mathrm{Tr}\,\hat
b^+_{j,s}(x,e^{-iu}\circ x)e^{-imu}dv_X(x)du \\
&=\frac{1}{2\pi}\int^{\frac{2\pi}{p_\ell}+\varepsilon}_{\frac{2\pi}{p_\ell}%
-\varepsilon}\int_Xg(y)e^{-\frac{f_1(y)f_2(y)(\left\vert y_1\right\vert^2+\cdots+\left\vert
y_{e_{\ell}}\right\vert^2)}{t}}\mathrm{Tr}\,\hat
b^+_{j,s}(y,e^{-iu}\circ y)e^{-imu}dv_X(y)du \\
&\sim c^{(j)}_{s,\frac{e_{\ell}}{2}}t^{\frac{e_{\ell}}{2}}+c^{(j)}_{s,\frac{e_{\ell}+1}{2}}t^{\frac{e_{\ell}+1}{2}%
}+\cdots\mbox{as $t\To0^+$},
\end{split}%
\end{equation}
where the last step is obtained by a change-of-variable 
(rescaling $y_i$ by $\sqrt{t}y_i$, $ i=1, \ldots e_{\ell}$, $e_{\ell}\geq 1$ as $\ell\ge 2$) and
$c^{(j)}_{s.k}\in\mathbb{R}$ is independent of $t$ ($k=\frac{e_{\ell}}{2},\frac{e_{\ell}+1}{2%
},\ldots$).   

We are left with the proof of the claim \eqref{e-gue160417aII}.
Part of the argument echos that for \eqref{e-gue160327I}.  
We first estimate $\left\vert z-\widetilde
z\right\vert^2$.   Without the danger of confusion we omit $``\circ"$ in what follows.  
By  $(z, 0)=e^{iv}x$ and $(\tilde z, 0)=e^{i\tilde v}(e^{-i\theta} x)$,
 $\left\vert z-\widetilde
z\right\vert$ is equivalent to $d(e^{iv}x, e^{i\tilde v}(e^{-i\theta} x))$
(cf. \eqref{e-gue160215}) 
which is the same as $d(x, e^{-iv}(e^{i\tilde v}(e^{-i\theta} x)))$.   As $(\tilde z, \tilde v), (z, v)\in \hat D_j$, 
one has $\tilde v, v\le \frac{\zeta}{2}$.  
By choosing  $\zeta$, $\varepsilon$ to be (much) less than the $\varepsilon_0$  
of Lemma~\ref{t-gue160413a}, one sees 
$d(x, e^{-iv}(e^{i\tilde v}(e^{-i\theta} x)))\ge 
\hat d_2(x, X_{\mathrm{sing}}^{{\ell}-1})$
of Lemma~\ref{t-gue160413a}.    By the same lemma
\begin{equation}
\label{e-gue160417e1}
\hat d_2(x, X_{\mathrm{sing}}^{{\ell}-1})\quad \mbox{is equivalent to}\quad 
\hat d(x, X_{\mathrm{sing}}^{{\ell}-1}).  
\end{equation} 
As $\left\vert z-\widetilde
z\right\vert^2$ is also equivalent to $h_{j,+}(z, \tilde z)$ of \eqref{e-gue160417aII}
(cf. \eqref{e-gue160215}) and \eqref{e-gue160417e1} is 
equivalent to $d(x, X_{\mathrm{sing}}^{{\ell}-1})$ by Theorem~\ref{t-gue160413}
with our assumption $X$ is strongly pseudoconvex, 
we have now shown 
\begin{equation}\label{e-gue160417e2}
h_{j,+}(z, \tilde z)\ge c\cdot d(x, X_{\mathrm{sing}}^{{\ell}-1})^2
\end{equation}
for some constant $c>0$.  
In view that $X_{\mathrm{sing}}^{{\ell}-1}=X_{p_{\ell}}\cup X_{p_{{{\ell}+1}}}\cdots $
and the assmption that $x_0\in \overline X_{p_{\ell}}$, 
one sees, possibly after shrinking $\Omega$,  
$d(x, X_{\mathrm{sing}}^{{\ell}-1})=d(x, \overline X_{p_{\ell}})=d(x, X_{p_{\ell}})$.
Hence we have reached 
\eqref{e-gue160417aII} from \eqref{e-gue160417e2}, as desired.  

Following \eqref{e-gue160417aaI} we shall now make some accurate computations
for case b) of iii) of this propostion.  

Henceforward we do not assume that $X$ is strongly pseudoconvex; we will not use 
Lemma~\ref{t-gue160413a} and Theorem~\ref{t-gue160413} as used above.  

Write $\frac{2\pi }{p_{\ell}}=\omega$ and $u=\psi+\omega$.  
In coordinates of case b) of iii), in view of \eqref{e-gue160417aaI} one seeks to identify, among others, 
\begin{equation}
\label{e-gue160417aae1}
\lim_{t\to 0^+}\frac{\hat h_{j,+}((\sqrt{t}\hat y, Y), e^{-iu}(\sqrt{t}\hat y, Y))}{t}
\end{equation}
where we have rescaled $\hat y\to \sqrt{t}\hat y$, and we omit $``\circ"$ for the $e^{-iu}\in S^1$ action.  

Since the fixed point set of an isometry is totally geodesic, we 
assume $Y$ to be a system of geodesic coordinates  at  $Y=0$ of $\overline X_{p_{\ell}}$, as 
$(\hat y, Y)$ the geodesic coordinates at $(0,0)$ of $X$.  
We choose $Y=0$ in \eqref{e-gue160417aae1} to simplify the notation.  Expressed in BRT coordinates,
$(\sqrt{t}\hat y, 0)=(z_0,v_0)$ and 
$e^{-i\omega}(\sqrt{t}\hat y, 0)=(z_1,v_1)$ 
(by continuity of $S^1$ action, for $t$ small 
$e^{-i\omega}(\sqrt{t}\hat y, 0)\in \hat D_j$ since $e^{-i\omega}(0, 0)=(0,0)$).  

One sees 
$e^{-iu}(\sqrt{t}\hat y, 0)=e^{-i\psi}(z_1, v_1)=(z_1, v_1+\psi)$ for $|\psi|\le \varepsilon$.  
By \eqref{e-gue150901e1} one sees 
\begin{equation}
\label{e-gue160417aae2}
\hat h_{j,+}((\sqrt{t}\hat y, 0), e^{-iu}(\sqrt{t}\hat y, 0))\,\,\mbox{ is 
now reduced to $h_{j,+}(z_0, z_1)$}
\end{equation} 
which is independent of $\psi$ for $u$ in 
the $\varepsilon$-neighborhood of $\omega$.   Namely $\hat h_{j,+}((\sqrt{t}\hat y, 0), e^{-iu}(\sqrt{t}\hat y, 0))
\equiv\hat h_{j,+}((\sqrt{t}\hat y, 0), e^{-i\omega}(\sqrt{t}\hat y, 0))$ for $u$ in the $\varepsilon$-neighborhood 
of $\omega$.  

Now $x_0=(0,0)$ is a fixed point of 
$e^{-i\omega}$ ($\omega=\frac{2\pi}{p_{\ell}}$).  
One can see that $T_{x_0}X$ under the isotropy action induced by $e^{-i\omega}$ decomposes as 
an orthogonal direct sum of eigenspaces (where $N(S/M)$ denotes the normal bundle of a
submanifold $S$ in an ambient manifold $M$ with $N_p(S/M)$ the fiber of $N(S/M)$ at $p$, 
and $\overline X_{p_\mu}=\overline X_{p_{\mu(\gamma_\mu)}}$ for some $\gamma_\mu=1,2,\cdots, s_{\mu}$;
$\mu=1,2,\cdots, \ell$)
$$T_{x_0}\overline X_{p_{\ell}}, 
N_{x_0} ({\overline X_{p_{\ell}}/\overline X_{p_{\ell-1}}}), N_{x_0}({\overline X_{p_{\ell-1}}/\overline X_{p_{\ell-2}}})\ldots, N_{x_0}({ \overline X_{p_2}/\overline X_{p_{1}}})$$
associated with eigenvalues 
\begin{equation}
\label{e-gue160417aae2a}
1, e^{i\omega p_{\ell-1}},  e^{i\omega p_{\ell-2}},
\ldots,  e^{i\omega p_{1}}\quad\mbox{respectively.}
\end{equation}
For instance, assume $\ell=2$ and take $g=e^\frac{2\pi i}{p_2}$.   Set $q=\frac{p_2}{p_1}\,(\in\mathbb{N})$.  
On $N_{x_0}(\overline X_{p_2}/\overline X_{p_1})$, $g\ne \mathrm{id}$ and 
$g^q=\mathrm{id}$.  Hence $v\in N_{x_0}(\overline X_{p_2}/\overline X_{p_1})$
is rotated by the angle $\frac{2\pi}{q}$ which is $\omega p_1$.  

The goal in what follows is to prove 
the claim that for $q=1, \ldots, \ell-1$,
\begin{equation}
\label{e-gue160417c1}
\lim_{t\To0^+}\frac{\hat h_{j,+}((\sqrt{t}\hat y, 0), e^{-i\omega}(\sqrt{t}\hat y, 0))}{t}=
\left\vert e^{i\omega p_{\ell-q}}-1\right\vert^2
||\hat y||^2\quad\mbox{for $\hat y\in 
N_{x_0}(\overline X_{\ell-q+1}/\overline X_{\ell-q})$}
\end{equation}
or equivalently, in the notation above (see \eqref{e-gue160417aae2}) 
\begin{equation}
\label{e-gue160417c2}
\lim_{t\To0^+}\frac{h_{j,+}(z_0, z_1)}{t}=\left\vert e^{i\omega p_{\ell-q}}-1\right\vert^2
||\hat y||^2 \quad\mbox{(for $\hat y\in 
N_{x_0}(\overline X_{\ell-q+1}/\overline X_{\ell-q})$)}
\end{equation}
where $||\cdot||$ denotes the norm with respect to
the metric tensor of $X$ at $x_0$. 

Our proof of claim \eqref{e-gue160417c1} is based on the following sequence of 
lemmas.  

\begin{lem}
\label{l-gue160417} In the notation above, for  $\hat y\in 
N_{x_0}(\overline X_{\ell-q+1}/\overline X_{\ell-q})$ we have 
$$\lim_{t\to 0^+}\frac{d_X^2((\sqrt{t}\hat y, 0), e^{-i\omega }(\sqrt{t}\hat y, 0))}{t}=\left\vert 
e^{i\omega p_{\ell-q}}-1\right\vert^2 ||\hat y||^2$$
where $d_X$ denotes the distance on $X$. 
\end{lem}

\begin{proof}    On $T_{(0,0)}X$ the action induced by $e^{-i\omega}$ 
rotates the tangent vector $\hat y$ by the angle $\omega p_{\ell-q}$.  
Hence 
the lemma follows from the well known fact that in a Riemannian manifold $(M,g)$, 
if $a$, $b$ in $M$ are the images of $A$ , $B$ in $T_pM$ 
by the exponential map at $p\in M$, then as $(a, b)\To(p,p)$
\begin{equation}
\label{e-gue160417lem1e1}\lim\frac{d_M(a, b)}{||A-B||}\To 1
\end{equation}
where $||\cdot||$ is $g$ at $p$ (cf. \cite[Proposition 9.10]{Hel}).  
\end{proof}

\begin{slem}\label{l-gue160417lem3}
Suppose $N$ is a Riemannian submanifold of a Riemannian manifold $M$.
Then the respective distance functions on $M$ and on $N$ are infinitesimally the same.
More precisely, suppose in $N$, $p_n\ne q_n$ for all $n\in \mathbb{N}$,
and $p_n, q_n\to p$ as $n\to \infty$ for a given point $p\in N$.  
Then $\lim_{n\to \infty}\frac {d_M(p_n,q_n)}{d_N(p_n,q_n)}=1$.   Moreover, suppose 
$t_{n, M}$ (resp. $t_{n,N}$) in $T_{p_n}M$ are the unit tangent vectors along which the minimal geodesics 
in $M$ (resp. $N$) join $p_n$ and $q_n$.   Then $\lim_n(t_{n, M}-t_{n, N})=0$.  
\end{slem}

\begin{proof}  Suppose the special case $p_n=p$ for all $n$.  
Let $\gamma_n$ be a geodesic (with unit speed) of $N$ joining $p$ and $q_n$, and 
$\beta_n=\mathrm{exp}_p^{-1}(\gamma_n)\subset T_pM$.  Write 
$l_n(t)$ for the length of (part of) $\beta_n$ (with the parameter going from $0$ to $t$) 
measured with the metric $g_{ij}=1+O(|x|^2)$ in geodesic coordinates (at $p$).  
Write $||v||$ for the Euclidean norm of a vector $v\in T_pM$ expressed in geodesic coordinates.  
Given a curve $\beta(t)\subset T_pM$, $\beta(0)=p$, $\dot\beta(0)\ne 0$, 
one sees the length function $l(t)=\int_0^t\sqrt{<\dot\beta(t), \dot\beta(t)>_{g_{ij}}}dt$
satisfies $\left\vert\frac{||\beta(t)||-l(t)}{||\beta(t)||}\right\vert=O(t)\le Ct$ for a (locally bounded) quantity $C$ 
which depends only, apart from $\beta$, on the
local geometry at $p$ (uniformly).    Clearly this implies the lemma if 
$q_n$ is assumed to approach $p$ along a given geodesic $\gamma$ of $N$.   If $q_n$ approaches
$p$ along different geodesics $\gamma_n$, since these geodesics can be uniformly controlled 
by the local geometry around $p$, the same results hold as well. 
For the general case where $p_n$ are different, the similar argument using the control by local geometry implies 
$\left\vert\frac {d_M(p_n,q_n)-d_N(p_n,q_n)}{d_M(p_n,q_n)}\right\vert\le C(d_M(p_n,q_n))$. 
The assertion about the unit tangent vectors can be proved similarly. 
\end{proof}

\begin{slem}\label{l-gue160417lem4}  Let $N$ be a differentiable manifold equipped with 
two Riemannian metrics $g$ and $h$, and $p\in N$.   Assume that $g(p)=h(p)$. 
Suppose that in $N$, $p_n\ne q_n$ for all $n\in \mathbb{N}$ such that $\lim_n p_n=\lim_n q_n=p$.  
Then $\lim_{n\to \infty}\frac {d_g(p_n,q_n)}{d_h(p_n,q_n)}=1$ where $d_{\bullet}$ 
denotes the (metric-dependent) distance function on $N$.   
\end{slem}

\begin{proof} The result is local; assume $N\subset\mathbb{R}^n$ as an open subset.  
By comparison to a fixed Euclidean metric, we assume $g$ is Euclidean inherited from $\mathbb{R}^n$.
By reasoning similar to the previous sublemma, it is seen that for $n>>1$, $d_h(p_n, q_n)$ is basically 
$(1\pm C\max\{||p_n-p||, ||q_n-p||\})d_g(p_n, q_n)$ (where $||\cdot||$ denotes the Euclidean norm)
with a uniform bound $C$.  The assertion follows.  
\end{proof}

The last lemma (as our main lemma) is as follows.  (This lemma can be viewed as  a sharp version 
of the important claim \eqref{e-gue160327I} in the proof of Theorem~\ref{t-gue160123}, which 
bears upon the reason why our distance function $\hat d$  arises.) 

\begin{lem}
\label{l-gue160417lem2}  In the previous notation, 
write $p=(z(p), \theta(p))$ and $q=(z(q), \theta(q))$ in $(z, \theta)$ coordinates on the BRT chart $\hat D_j=
\hat U_j\times]-\frac{\zeta}{2},\frac{\zeta}{2}[$.   We omit the subscript $j$ in what follows. 
 Let $S=S^1\circ p$ be the $S^1$-orbit of $p$ and $N(S/X)$ be the normal bundle of $S$ in $X$
identified with the orthogonal complement of $TS$ in $TX|_{S}$.  
Suppose $p_n\ne q_n$ for all $n$ and $p_n, q_n\to p\in X$ as $n\to \infty$ such that
$D_n=\mathrm{exp}_{X,p}^{-1}(p_n), A_n=\mathrm{exp}_{X,p}^{-1}(q_n)\in N_p(S/X)$. 
In the case where $p_n\ne p$ and $q_n\ne p$ (all $n$), 
suppose the angle at $p$ given by the vectors $D_n$ and $A_n$ are bounded away from $0$
as $n\to \infty$.  
Then (see \eqref{e-gue150823e1} for the metric on $U$ giving $d_U$ below) 
\begin{equation}
\label{e-gue160417lem2e1}
\lim_{n\to\infty}\frac{d_X(p_n,q_n)}{d_U(z(p_n),z(q_n))}=1.
\end{equation}
In particular $z(p_n)\ne z(q_n)$ for $n$ large.  
\end{lem}

\begin{proof}  In this proof we take the same notation $U=\hat U_j$ seated as an embedded submanifold of $X$ 
with $\theta=0$.   As in (the proof of) Sublemma~\ref{l-gue160417lem3}, we first assume $p_n=p$ for all $n$. 
By applying the $S^1$ isometries we assume $p=(z(p),0)\in U$. 
By using the construction of our rigid metric 
on $X$ (cf. \eqref{e-gue22a1} and \eqref{e-gue150823e1}) 
it may be assumed that at $p$, $N_p(S/X)=T_pU$.  
To see this, by the geometrical interpretation of BRT transformations
in (the proof of) Proposition~\ref{l-gue150524d} one can adjust the BRT coordinates 
such that $d\phi(p)=0$ (similar to the well known fact that for a hermitian metric $h$ of a holomorphic line bundle
$L$ on a complex manifold, at any given point $p$ one has $dh(p)=0$ up to a change of 
local frames of $L$).  This gives 
$T_p^{0,1}D=\{\frac{\partial}{\partial\overline z_j}+i\frac{\partial\phi}{
\partial\overline z_j}(z)\frac{\partial}{\partial\theta};\,
j=1,2,\ldots,n\}=\{\frac{\partial}{\partial\overline z_j};\,
j=1,2,\ldots,n\}$ (and $T^{1,0}D=\overline{T^{0,1}D}$)
(cf. {\it loc. cit.}).   It easily follows $N_p(S/X)=T_pU$ as claimed.  A word of caution is in order.   The (intrinsic) geometrical 
interpretation for BRT charts (cf. {\it loc. cit.} and remarks after \eqref{e-gue150823e1}) shows that the asserted 
\eqref{e-gue160417lem2e1} is independent of choice of 
BRT coordinates (on that particular BRT neighborhood).   On the other hand, $U$ considered as an embedded 
submanifold of $X$ as done above does depend on the choice of BRT coordinates.  

The first reduction step is as follows. 
$U$ is endowed with another (Riemannian) metric inherited, as a submanifold, from that of $X$.
This is different from the metric originally defined (on $U$, cf. \eqref{e-gue150823e1}),
yet the two metrics coincide at $p$ as can be seen above.   
By Sublemma~\ref{l-gue160417lem4}
it is enough to prove \eqref{e-gue160417lem2e1} with this inherited metric.    Without the danger of confusion
we shall adopt the same notation $d_U(\cdot, \cdot)$ for the new distance function in what follows. 

Fix an $n$ and set
$q=q_n$, $q'=e^{i\theta(q)}\circ q=(z(q),0)\in U$. 
Put $A=\mathrm{exp}_{X,p}^{-1}(q), B=\mathrm{exp}_{X,p}^{-1}(q')\in T_pX$,  
so $||A||=d_X(p, q), ||B||=d_X(p, q')$ (where $T_pX$ is equipped with the Euclidean metric $||\cdot||$ in 
geodesic coordinates). 
We are going to prove, as $n\to \infty$,
\begin{equation}
\label{e-gue160417lem2f1}
\lim\frac{||A||}{||B||}=1. 
\end{equation}  
One has $d_X(p, q')/d_U(p,q')\to 1$ by Sublemma~\ref{l-gue160417lem3}.  Hence, to prove \eqref{e-gue160417lem2e1}
for this special case $p_n=p$ is the same as to prove $d_X(p, q)/d_X(p, q')\to 1$ 
which is  \eqref{e-gue160417lem2f1} above.  

To see \eqref{e-gue160417lem2f1} (hence \eqref{e-gue160417lem2e1}), we first 
argue \eqref{e-gue1604172g1} below.  Let $L\subset T_pX$ be the line determined by $A, B$,
i.e. $L=\{A+t(B-A); t\in\mathbb{R}\}$.   Then 
\begin{equation}
\label{e-gue1604172g1}
\mbox{$L$ is approximately orthogonal to
$A$ and to $B$ (as $n$ large)}.
\end{equation}

Note that $A\in N_p(S/X)=T_pU$ from the condition of the lemma, and that $B$ is nearly lying 
on $T_pU$ (with a small angle between $B$ and $T_pU$) by using Sublemma~\ref{l-gue160417lem3}
on tangents.
Let $\Gamma_1=\{e^{i\theta}\circ q\}_{\theta\in [0, \theta(q)]}\subset X$ 
($\theta(q)\ge 0$, say) joining $q$ and $q'$
and $\Gamma=\mathrm{exp}_{X,p}^{-1}\Gamma_1\subset T_pX$ joining  $A$ and $B$. 
Recall that the vector field $T$ induced by the $S^1$ action is orthogonal to $U$ at $p$ as mentioned above, 
hence $T_{q'}\Gamma_1\perp T_{q'}U$ approximately (as $n>>1$).    On the other hand, 
by $T_pS\perp T_pU$ 
and $\Gamma_1\approx S$ (as $n\gg 1$), one sees by $A\in T_pU$ that 
$T_A\Gamma\perp T_pU$ approximately 
(as vector subspaces in $T_pX$).  
In sum,  if $q, q'$ are close to $p$ (so $T_{q'}U$ close to $T_pU$), 
then $T_A\Gamma\perp T_pU$, $T_B\Gamma\perp  T_pU$ approximately (cf. the foliation argument below);
for this we write $\Gamma\perp A, B$ approximately.  
We are ready to prove \eqref{e-gue1604172g1}.
Pulling back the $S^1$ foliation locally around $p$ via the $\mathrm{exp}_{X, p}$ in the same way as $\Gamma$ obtained by $\Gamma_1$, 
there is a foliation $\mathcal{F}$ around $p$ in $T_pX$ in which (part of) $\Gamma$ lies as a leaf.  
Write $p\in \Gamma_0\,(\subset \mathrm{exp}_{X, p}^{-1}S)\,\in \mathcal{F}$.    As $n\gg 1$, the line $L$ determined by $A, B\in \Gamma$ tends to 
the tangent line $(=T_pS)$ to $\Gamma_0$ at $p$ (since the leaf $\Gamma$ of $\mathcal{F}$
tends to the leaf $\Gamma_0$).  Hence by using the uniform continuity
for $\mathcal{F}$ around $p$, $L$ is close to lines $\tilde L$ tangent to leaves $\tilde \Gamma$ of $\mathcal{F}$ 
if $\tilde \Gamma$ are nearby $\Gamma_0$ and $\tilde L$ nearby $T_pS$.  
In particular, $L$ is close to the tangent lines
$T_A\Gamma, T_B\Gamma$ (as $n\gg 1$).  Now that $\Gamma\perp A, B$ approximately as 
just shown, giving readily $T_A\Gamma\perp A, T_B\Gamma\perp B$ approximately, this in turn yields 
$L\perp A, B$ approximately $(n\gg 1)$, proving \eqref{e-gue1604172g1}. 

For $q'$ close to $p$, by simple Euclidean geometry (on $T_pX$), $||A-B||$ is rather small in comparison to 
$||A||$ and $||B||$ by using $L\perp A, B$ approximately \eqref{e-gue1604172g1}, 
i.e. $||A-B||=o(||A||), o(||B||)$.  By using law of cosines, 
one can obtain \eqref{e-gue160417lem2f1}, yielding the special case $p_n=p$ of the lemma. 
As this step appears crucial and will be instrumental to the general case, we prefer to supply 
some details as follows. 

Take a triangle with vertices $T_i$ $(i=1,2,3)$, angles $\alpha_i$ at $T_i$ and $\delta_i$ the length 
of the side facing $T_i$.  Suppose $\alpha_2\le \alpha_3$ and both $\approx \frac{\pi}{2}$.  
Set $\alpha_2=\frac{\pi}{2}-\alpha$,
$0<\alpha\ll 1$.  Let $D$ sit on the line $L_1$ determined by $T_2$ and $T_3$ such that the line $L_2$ determined by 
$T_1$ and $D$ is perpendicular to $L_1$.  Assume first that $D$ sits between $T_2$ and $T_3$.  
Then $\delta_1=\delta_3\sin\theta_1+\delta_2\sin\theta_2$ where $\theta_1, \theta_2$ (with
$\theta_1+\theta_2=\alpha_1$) are angles given by 
$L_2$ and the two sides at $T_1$.   Thus $\frac{\delta_1}{\delta_3}\le 2\sin\alpha_1$ ($\frac{\delta_2}{\delta_3}\le 1$
by $\alpha_2\le \alpha_3$).   If $D$ sits outside the triangle, then $\delta_1=\delta_3\sin\alpha-\delta_2\sin\theta_3$,
$\theta_3=\alpha-\alpha_1$, 
so $\frac{\delta_1}{\delta_3}\le \sin\alpha$.  One obtains $\frac{\delta_1}{\delta_3}\to 0$ if both 
$\alpha_1\to 0$ and $\alpha\to 0$.  By $\delta_1^2=\delta_2^2+\delta_3^2-2\delta_2\delta_3\cos\alpha_1$,
one has $$(1-\frac{\delta_2}{\delta_3})^2
=(\frac{\delta_1}{\delta_3})^2-\frac{2\delta_2}{\delta_3}(1-\cos\alpha_1)\le (\frac{\delta_1}{\delta_3})^2\le \sin\alpha+2\sin\alpha_1$$
giving $\frac{\delta_2}{\delta_3}\to 1$ if both $\alpha_2, \alpha_3\approx \frac{\pi}{2}$ 
(hence $\alpha, \alpha_1\approx 0$).  As said, this yields \eqref{e-gue160417lem2f1}.  

We draw some consequences in order for the general case.
If $\alpha_1\to 0$ (by $\alpha_2, \alpha_3\to{\pi}/2$), then the two sides at $\alpha_1$ 
are close to each other, i.e. 
$\lim (A/||A||-B/||B||)=0$ (as $q\to p$).
One sees that if $C=\mathrm{exp}_{U,p}^{-1}(q')\in T_pU$,
then by using \eqref{e-gue160417lem2f1}
and Sublemma~\ref{l-gue160417lem3} on tangents via $B$, 
\begin{equation}
\label{e-gue160417lem2e2}
a)\,\,\lim ||A||/||C||=1, \quad b)\,\,\lim (A/||A||-C/||C||)=0.
\end{equation}

We are ready to prove the general case $p_n\ne p$.  Write 
$D=\mathrm{exp}_{X,p}^{-1}(p_n), F=\mathrm{exp}_{U,p}^{-1}(p'_n)$ 
in the same way as $A=\mathrm{exp}_{X,p}^{-1}(q_n), C=\mathrm{exp}_{U,p}^{-1}(q'_n)$ above. 
With $D,  F$ in place of $A, C$ in \eqref{e-gue160417lem2e2} one has the same results for $D, F$: 
\begin{equation}\label{e-gue160417lem2e3}
a)\,\,\lim ||D||/||F||=1, \quad b)\,\,\lim (D/||D||-F/||F||)=0.
\end{equation}

In view of \eqref{e-gue160417lem1e1} one has $||D-A||/d_X(p_n,q_n)\to 1, ||F-C||/d_U(p'_n, q'_n)\to 1$.  
Hence to prove \eqref{e-gue160417lem2e1}, i.e. $d_X(p_n,q_n)/d_U(p_n',q_n')\to 1$,  
is the same as to show $\lim||D-A||/||F-C||=1$.   This is intuitively 
clear by \eqref{e-gue160417lem2e2}, \eqref{e-gue160417lem2e3} (which alludes to $A\approx C$ and $D\approx F$) 
provided that the angle given by the two vectors $D$ and $A$ at $p$ (hence by $F$ and $C$ at $p$, cf. b) of 
\eqref{e-gue160417lem2e2} and \eqref{e-gue160417lem2e3}) is not approaching zero.  This is precisely the condition given in 
the lemma.    For the rigor of this argument one may use law of cosines without difficulty.  
Hence the lemma follows. 
\end{proof}

\begin{proof}[proof of claim \eqref{e-gue160417c2}]
By combining Lemma~\ref{l-gue160417lem2} and Lemma~\ref{l-gue160417}
we can finish the proof of the claim \eqref{e-gue160417c2} provided that 
$h_{j}(z_1,z_2)=d^2_{U_j}(z_1,z_2)$.  But this is a standard fact for the heat kernels of Dirac and Laplacian type
(see \cite[Theorem 2.29]{BGV92}); see also the famous result of S. R. S. Varadhan\cite{Var67} 
for a generalization in this regard.  
\end{proof}

\noindent 
{\it proof of Proposition~\ref{t-gue160416b} resumed}. 

We are now ready to prove case b) of iii) of Proposition~\ref{t-gue160416b}. 
To work on the integral $I$ of \eqref{e-gue160416be1} we are going to refine the 
computation contained in \eqref{e-gue160417aaI}.  
Indeed, case b) of iii) can be obtained if one notes the following $\alpha)-\epsilon)$
(part of them similar to the proof of i) of this proposition):

$\alpha$) $\int_{-\infty}^{\infty}e^{-a^2x^2}dx=\frac{\sqrt\pi}{|a|}$; 

$\beta$) 
using \eqref{e-gue160417c1} (with $q=1, 2, \ldots, \ell-1$) for $e^{-\frac{\hat h_{j,s}^+}{t}}$ 
in \eqref{e-gue160416be1};

$\gamma$) change of variable $u=\psi+\frac{2\pi}{p_{\ell}}$ in \eqref{e-gue160416be1};

$\delta$) in \eqref{e-gue160416be1}, by rescaling $\hat y\to \sqrt{t}\hat y$, $\hat b_{j,s}^+
((\sqrt{t}{\hat y}, Y),e^{-iu}\circ (\sqrt{t}{\hat y}, Y))$ replacing $\hat b_{j,s}^+(x, e^{-iu}\circ x)$,
tends to $\hat b_{j,s}^+
((0, Y),e^{-iu}\circ (0, Y))= \hat b_{j,s}^+
((0, Y),e^{-i\psi}\circ (0, Y))$ because $(0,Y)\in \overline X_{p_{\ell}}$.  But $|\psi|$ being small $(\le \varepsilon)$, $e^{-i\psi}\circ (0, Y)$
does not change the $z( (0, Y))$\,($=z(Y)$) coordinate in $\hat D_j$, giving 
$ \hat b^+_{j,s}
((0, Y),e^{-i\psi}\circ (0, Y))=b^+_{j,s}(z(Y), z(Y))$ (up to cut-off functions);

$\epsilon$) as $t\to 0$, $\sigma_j(\eta)=\sigma_j(\theta(e^{-iu}\circ(0,Y)))=\sigma_j(\theta(e^{-i\psi}\circ (0, Y)))
=\sigma_j(\theta(Y)+\psi)$.  With $\eta=\theta+\psi$ and $u=\psi+2\pi/p_\ell$ in \eqref{e-gue160416be1}
and \eqref{e-gue150901e1}, a cancellation occurs for the three 
exponentials there; eventually a numerical factor $e^{-i2\pi m/p_\ell}$ is pulled out.  And instead of 
$\mathrm{Tr}\,\hat b_{j, s}^+$ in $I$ of \eqref{e-gue160416be1}, we are reduced to 
$\chi_j\mathrm{Tr}\,b_{j, s}^+$ (no ``hat" on $b_{j, s}^+$ here) 
as put down in this proposition.  

The formula for the coefficient $b_{s, \frac{e_{\ell}}{2}}^{(j)}$ of 
\eqref{e-gue160416bpe1} follows from $\alpha)-\epsilon)$ above. 

Finally, for $s=n$, it is well-known that 
(dropping $j$ here) $\mathrm{Tr}\,b^+_n(z, z)$ in the integral \eqref{e-gue160416bpe1} being the leading coefficient term in 
the asymptotic expansion 
of the ($\mathrm{Spin}^c$) Kodaira heat kernel, is constant in $z$ and equals 
$(4\pi)^{-n}\cdot \big(\mathrm{rk}(\bigwedge T^{*0,1}(U))\big)=(2\pi)^{-n}$
 (\cite[(a) of Theorem 4.4.1]{Gi95}, cf. \cite[Theorem 2.41]{BGV92}).  

\end{proof}

\subsection{Global angular integral}
\label{s-gue160416s2}

To work out the global version (i.e. the integration on $[0,2\pi]$) it is natural to 
consider not only (an $\varepsilon$-neighborhood of) ${2\pi}/{p_\ell}$ but also 
all their multiples $s{2\pi}/{p_\ell}$, $s\in\mathbb{N}, s\le p_\ell$. 
The analysis will thus partly depend on whether $s{2\pi}/{p_\ell}=s'{2\pi}/{p_{\ell'}}$ for some $s, s', p_{\ell}, p_{\ell'}$
or not.  One needs a systematic control of the behavior in this regard.  
Further, since the result will appear as a sum over these $\varepsilon$-neighborhoods, 
to organize this sum in a manageable way
is also desirable.    We shall now mainly deal with these issues in this subsection.  

There are minor duplication and perhaps discrepancy in notation between this subsection and the preceding one.
But, it would have appeared cumbersome if we had set up this generality right in the preceding subsection 
(as the resulting proof would have become much less illuminating).  

Recall the $S^1$-period of $X$ denoted by $\frac{2\pi}{p_1}>\frac{2\pi}{p_2}>\ldots >\frac{2\pi}{p_k}$
with $p_1|p_\ell$, $1\le \ell\le k$ (cf. Remark~\ref{r-gue150804}) and the stratum $X_{p_\ell}$ (the set of points of 
period $\frac{2\pi}{p_\ell}$) is a disjoint union of connected submanifolds 
$\bigcup_{1\le \gamma_\ell\le s_\ell} X_{p_{\ell(\gamma_\ell)}}$.  

\begin{defn}
\label{d-gue160416d2c}  Fix a smooth function $g(x)\not\equiv 0$ on $X$. 
We say that $g$ is of {\it small support} if the following conditions are satisfied.  
\begin{equation}
\label{e-gue160416dce1}\begin{split}
i)\,\,&\mathrm{Supp}\,g\subset\, X_{p_{i_1(\gamma_{i_1})}}\cup X_{p_{i_2(\gamma_{i_2})}}\cup\
\ldots\cup X_{p_{i_t(\gamma_{i_t})}}\cup X_{p_{i_{t+1}(\gamma_{i_{t+1}})}},\,\,1=i_1<i_2<\dots<i_{t+1}\le k \\
ii)\,\,&\mathrm{Supp}\,g\cap X_{p_{i_s(\gamma_{i_s})}}\ne \emptyset, \quad \forall s,\,\,1\le s\le t+1\\
iii)\,\,&\overline X_{p_{i_1(\gamma_{i_1})}}\supsetneq \overline X_{p_{i_2(\gamma_{i_2})}}\supsetneq
\ldots\supsetneq \overline X_{p_{i_t(\gamma_{i_t})}}\supsetneq \overline X_{p_{i_{t+1}(\gamma_{i_{t+1}})}}.
\end{split}
\end{equation}
\end{defn}

Obviously, given any $x_0\in X$ there exists a neighborhood $\Omega\ni x_0$ such that 
every nontrivial $g(x)\in C^{\infty}_0(\Omega)$ is of small support in the sense above. 

\begin{defn}
\label{d-gue160416d2a} Let $g(x)$ be a smooth function on $X$ of small support in the sense above, 
\eqref{e-gue160416dce1}.  Let $c\in \mathbb{N}$.    We define a number $i(c, g)=i(c)$ associated with $c$ and $g$ as follows.  

i) $i(c, g):=\ell\ge 2$ if the following is satisfied a) $c|p_\ell$, $\ell=i_s$ for some 
$s$, $2\le s\le {t+1}$ and b) $c\not|p_{i_{s'}}$ for all $s'<s$.  ii) $i(c, g):=1$ if $c|p_1$ $(p_1=p_{i_1})$.  
This is independent of $g$.  iii) $i(c, g):=\infty$ if $c\not|p_{i_s}$ for each $s$ with $1\le s\le t+1$.  
\end{defn}

It is easily seen that $p_{i(c)}|p_{i_s}$ for each $i_s$ with $i(c)\le i_s\le i_{t+1}$ if $i(c)\ne \infty$.
Indeed, $p_{i_1}|p_{i_2}|\ldots|p_{i_{t+1}}$ (cf. Remark~\ref{r-gue150804} for a similar case).  

By the above definition, one sees 
\begin{lem}
\label{l-gue160412d2a} Let $x\in \mathrm{Supp}\,g$, $i(c)\ne \infty$ and 
$h\in \mathbb{N}$ with  $(h, c)=1$.   It holds 
$e^{-i\frac{2\pi h}{c}}\circ x=x$ if and 
only if $x\in \overline X_{p_{i(c)}}$.  
\end{lem}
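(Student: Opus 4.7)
The plan is to exploit the chain structure of the small-support condition \eqref{e-gue160416dce1} together with the coprimality hypothesis $(h,c)=1$, reducing everything to an elementary divisibility bookkeeping. First I would observe that since the strata $X_{p_\ell}$ partition $X$ and $\mathrm{Supp}\,g$ is contained in the disjoint union of the listed strata, any $x\in \mathrm{Supp}\,g$ lies in exactly one $X_{p_{i_s(\gamma_{i_s})}}$ for some $s\in\{1,\ldots,t+1\}$, and its $S^1$-isotropy subgroup is exactly the cyclic group of order $p_{i_s}$ generated by $e^{-i\frac{2\pi}{p_{i_s}}}$.

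For the forward direction, I would translate the assumption $e^{-i\frac{2\pi h}{c}}\circ x=x$ into the arithmetic condition $\frac{h p_{i_s}}{c}\in\mathbb Z$; since $(h,c)=1$ this forces $c\mid p_{i_s}$. Then by the definition of $i(c)$ as the smallest index $\ell=i_r$ among $i_1,\ldots,i_{t+1}$ with $c\mid p_{i_r}$, one gets $r\le s$, and iii) of \eqref{e-gue160416dce1} supplies the chain inclusion $\overline X_{p_{i_r}}\supseteq \overline X_{p_{i_s}}$, giving $x\in X_{p_{i_s}}\subseteq \overline X_{p_{i(c)}}$.

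For the reverse direction, I would use that $\overline X_{p_{i(c)}}$ is contained in the fixed-point locus of $e^{-i\frac{2\pi}{p_{i(c)}}}$: the fixed-point set of an isometry is closed in $X$ and it contains $X_{p_{i(c)}}$ (every point of period $\frac{2\pi}{p_{i(c)}}$ is fixed by this rotation), so it contains the closure $\overline X_{p_{i(c)}}$. Since $c\mid p_{i(c)}$, writing $p_{i(c)}=cm$ yields
\[
e^{-i\frac{2\pi h}{c}}=\bigl(e^{-i\frac{2\pi}{p_{i(c)}}}\bigr)^{hm},
\]
and this power of a rotation that fixes $\overline X_{p_{i(c)}}$ still fixes it, giving $e^{-i\frac{2\pi h}{c}}\circ x=x$.

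The argument is short and self-contained, and the main thing to keep straight will be the two layers of indexing (the position index $s$ in the chain $i_1<\cdots<i_{t+1}$ versus the divisor index $p_{i_s}$), together with the implicit divisibility relations $p_{i_1}\mid p_{i_2}\mid\cdots\mid p_{i_{t+1}}$ that come from iii) of \eqref{e-gue160416dce1} and the analogue of Remark~\ref{r-gue150804}. No substantive obstacle is anticipated; the lemma is essentially a bookkeeping statement ensuring that the index $i(c)$ introduced in Definition~\ref{d-gue160416d2a} correctly identifies the shallowest stratum in $\mathrm{Supp}\,g$ whose points are fixed by every primitive $c$-th root of unity of the form $e^{-i\frac{2\pi h}{c}}$.
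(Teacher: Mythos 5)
Your proof is correct, and since the paper itself gives no argument (it simply prefaces the lemma with ``By the above definition, one sees \ldots''), you are supplying exactly the bookkeeping the authors left implicit. Both directions are sound: translating $e^{-i2\pi h/c}\circ x=x$ into $c\mid h\,p_{i_s}$ and using $(h,c)=1$ to get $c\mid p_{i_s}$, then invoking the minimality in Definition~\ref{d-gue160416d2a} and the chain condition iii) of \eqref{e-gue160416dce1}; and conversely, using that $\overline X_{p_{i(c)}}$ sits inside the (closed) fixed-point set of $e^{-i2\pi/p_{i(c)}}$ together with $c\mid p_{i(c)}$ to write $e^{-i2\pi h/c}$ as a power of that rotation. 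One tiny point worth keeping in mind, which you implicitly handle: when $i(c)=1$ the statement degenerates to ``$x\in\overline X_{p_1}=X$,'' so the equivalence is automatic, consistent with the fact that $c\mid p_1$ forces $e^{-i2\pi h/c}$ into the common isotropy $\mathbb Z_{p_1}$ of every point.
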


Let $h\in \mathbb{N}$ with  $(h, c)=1$ and $h<c$.  We 
consider the integral similar to \eqref{e-gue160416be1} for $i(c)\ne\infty$: 
\begin{equation}\label{e-gue160416d2e1}
I=I^{(j)}(p_{i(c)}, g(x), \frac{h}{c})\equiv \frac{1}{2\pi}\int^{\frac{2\pi h}{c}+\varepsilon}_{\frac{2\pi h}{c}%
-\varepsilon}\int_Xg(x)e^{-\frac{\hat h_{j,+}(x,e^{-iu}\circ x)}{t}}\mathrm{Tr}\,\hat
b^+_{j,s}(x,e^{-iu}\circ x)e^{-imu}dv_X(x)du.
\end{equation}
The above extends to the case $i(c)=\infty$ simply by formally setting $I^{(j)}(p_{i(c)=\infty}, g(x), \frac{h}{c})$
to be the integral to the right of \eqref{e-gue160416d2e1}.  


\begin{defn}
\label{d-gue160416bc}
i) Set $i(c)$ of Definition~\ref{d-gue160416d2a} to be $\ell$.   Assume $\ell\ne \infty$.  
Define the type $\tau(I(p_{i(c)}, g(x), \frac{h}{c}))$ to be $\tau(I(p_\ell, g(x)))$ where 
$\tau(I(p_\ell, g))$ is given in i) of Definition~\ref{d-gue160416t1}.  
ii) The notions of {\it simple type} and {\it class} are defined similarly.  
(Note there is no trivial type for $\ell\ne \infty$.) 
iii) If $i(c)=\infty$, then $\mathrm{Supp}\,g\cap X_{a}=\emptyset$
where $X_a$ is the fixed point set of $a=e^{-i2\pi/c}\in S^1$ (whether $X_a$ is empty or not depends on $c$).  
In this case we define it to be the {\it trivial type} (cf. iii) of Definition~\ref{d-gue160416t1}).  
\end{defn}

\begin{rem}
\label{r-gue160416bcr}
The notion of type concerns only 
the local stratificaton (of the $S^1$ action) at $x_0$ around which $\Omega\supset\mathrm{Supp}\,g$ is a small 
neighborhood.   Thus, with a small open subset $\Omega\subset X$ one may associate 
the type $\tau(\Omega)$ without referring to any kind of integral.    
In the fourth subsection, we will basically adopt this viewpoint for our purpose.  
\end{rem}

First, one examines the case $i(c, g)=\infty$, which turns out to be inessential.  

\begin{lem}
\label{l-gue160416rl}  Let $c\in \mathbb{N}$.  
There exists a (finite) covering of BRT trivializations on $X$ with the following property. 
For any smooth function $g$ of small compact support (cf. Definition~\ref{d-gue160416d2c}), 
suppose $i(c, g)=\infty$ and $x_0$ with $g(x_0)\ne 0$, is given. Then there exist a small open set $\Omega\ni x_0$
and a small $\tilde\varepsilon>0$ such that for any $\chi \in C^{\infty}_0(\Omega)$, if one replaces $g$ by $\chi g$ in 
the integral $I$ of \eqref{e-gue160416d2e1}, this $I$ with any $0<\varepsilon\le\tilde\varepsilon$, equals $0$, or $O(t^{\infty})$
as $t\to 0^+$.
\end{lem}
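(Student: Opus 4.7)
The plan is to fix $c\in\mathbb{N}$ and refine the BRT covering so that each $\hat D_j=\hat U_j\times\,]-\zeta_j/2,\zeta_j/2[$ has
\begin{equation*}
\zeta_j<\frac{4\pi}{c\,p_k},
\end{equation*}
in addition to the usual constraint \eqref{e-gue160327}. By compactness of $X$ and the existence of BRT charts at every point (Theorem~\ref{t-gue150514}), such a finite refined covering exists; it genuinely depends on $c$ (larger $c$ forces more charts). The strategy is then to show, for each $j$, that either a cutoff function in $\hat b^+_{j,s}$ kills the integrand (giving $I^{(j)}=0$), or else the Gaussian factor $e^{-\hat h_{j,+}/t}$ decays faster than any power of $t$ (giving $I^{(j)}=O(t^\infty)$). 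For indices $j$ with $x_0\notin\hat D_j$, I would simply choose $\Omega\ni x_0$ small enough that $\Omega\cap\mathrm{Supp}\,\chi_j=\emptyset$, which makes the $\chi_j(x)$ factor in $\hat b^+_{j,s}$ vanish identically on $\Omega$, so $I^{(j)}=0$.

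For $j$ with $x_0\in\hat D_j$, I would write $x_0=(z_0,0)$ in BRT coordinates (using a suitable $S^1$ translation). \emph{Case A}: if $e^{-i2\pi h/c}\circ x_0\notin\hat D_j$, then by continuity there exist $\Omega\ni x_0$ and $\tilde\varepsilon>0$ so that $e^{-iu}\circ x\notin\hat D_j$ for $x\in\Omega$ and $|u-2\pi h/c|\le\tilde\varepsilon$; the cutoff $\sigma_j$ supported in $]-\zeta_j/2,\zeta_j/2[$ (see \eqref{e-gue150901e1}) then forces $\hat b^+_{j,s}(x,e^{-iu}\circ x)=0$, so $I^{(j)}=0$. \emph{Case B}: if $e^{-i2\pi h/c}\circ x_0=(\tilde z_0,\tilde\theta_0)\in\hat D_j$ with $|\tilde\theta_0|<\zeta_j/2$, the key claim is $\tilde z_0\ne z_0$. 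Assume for contradiction $\tilde z_0=z_0$; then within the BRT chart $(\tilde z_0,\tilde\theta_0)=e^{-i\tilde\theta_0}\circ(z_0,0)$, so $e^{-i(2\pi h/c-\tilde\theta_0)}\circ x_0=x_0$. Since $g(x_0)\ne 0$ and $g$ is of small support, $x_0\in X_{p_{i_{s_0}}}$ for some stratum appearing in \eqref{e-gue160416dce1}, and the hypothesis $i(c,g)=\infty$ gives $c\nmid p_{i_{s_0}}$. Thus $2\pi h/c-\tilde\theta_0=2\pi m/p_{i_{s_0}}$ for some $m\in\mathbb{Z}$, i.e.\
\begin{equation*}
\tilde\theta_0=\frac{2\pi(hp_{i_{s_0}}-mc)}{c\,p_{i_{s_0}}}.
\end{equation*}
Because $(h,c)=1$ and $c\nmid p_{i_{s_0}}$, the numerator $hp_{i_{s_0}}-mc$ is a nonzero integer, whence $|\tilde\theta_0|\ge 2\pi/(c\,p_{i_{s_0}})\ge 2\pi/(c\,p_k)$, contradicting $|\tilde\theta_0|<\zeta_j/2<2\pi/(c\,p_k)$ by the refined choice of the covering.

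With $\tilde z_0\ne z_0$ established in Case B, continuity of the $S^1$ action yields $\Omega\ni x_0$, $\tilde\varepsilon>0$, and $\delta>0$ such that for $x=(z,v)\in\Omega$ and $|u-2\pi h/c|\le\tilde\varepsilon$, whenever $e^{-iu}\circ x=(\tilde z,\tilde v)\in\hat D_j$ one has $|\tilde z-z|\ge\delta$. The two-sided bound $h_{j,+}(z,w)\asymp|z-w|^2$ from \eqref{e-gue160215} then gives $\hat h_{j,+}(x,e^{-iu}\circ x)\ge\hat\varepsilon_0\delta^2>0$ on the support of the integrand, so $e^{-\hat h_{j,+}/t}\le e^{-\hat\varepsilon_0\delta^2/t}$ and $I^{(j)}=O(t^\infty)$. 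The hard part I anticipate is not any individual estimate but rather the verification that the refined covering can be made to satisfy both $\zeta_j<4\pi/(c\,p_k)$ and \eqref{e-gue160327} simultaneously, together with the uniformity across all the (finitely many) BRT charts $j$ and all pairs $(h,c)$ with $(h,c)=1$ relevant to the subsequent integration; this is however manageable since once $c$ is fixed, the argument treats $h$ uniformly and the dichotomy Case A/Case B is determined pointwise at $x_0$.
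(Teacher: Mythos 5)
Your proof is correct and follows essentially the same route as the paper's: cut-off functions dispose of the indices $j$ for which the $S^1$-translate $e^{-i2\pi h/c}\circ x_0$ leaves $\hat D_j$ (giving $I^{(j)}=0$), and in the remaining case the divisibility argument (using $(h,c)=1$ and $c\nmid p_{i_{s_0}}$) establishes $\tilde z_0\ne z_0$, after which Gaussian decay from $h_{j,+}(z,w)\asymp|z-w|^2$ gives $O(t^\infty)$. One genuine improvement in your write-up: you make explicit the $c$-dependent constraint $\zeta_j<4\pi/(c\,p_k)$ on the BRT covering, without which the contradiction $|\tilde\theta_0|\ge 2\pi/(c\,p_{i_{s_0}})>\zeta_j/2$ does not follow from the constraint \eqref{e-gue160327} alone once $c$ is large; the paper's proof of this lemma splits instead on whether $X_a=\emptyset$ or not, says ``one chooses any (finite) covering'' in the latter case, and defers the needed shrinking of $\zeta$ to a one-line remark (``taking $\zeta$ smaller does the job'') in the proof of Corollary~\ref{t-gue160416bc}. (Incidentally, when $X_a\ne\emptyset$ one has $c\mid p_\ell$ for some $\ell$, hence $c\le p_k$, so the single $c$-independent bound $\zeta<4\pi/p_k^2$ would also suffice; but since the lemma explicitly permits the covering to depend on $c$, your bound is perfectly fine and arguably cleaner.)
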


\begin{proof}
By the definition of $i(c, g)=\infty$, if the fixed point set $X_a$ of $a=e^{-i2\pi/c}$ is 
empty, one chooses a covering of BRT trivializations $\{D_j\}_j$ (cf. lines above Subsection~\ref{s-gue160416s1})
such that 
$\hat D_j\cap a^h(\hat D_j)=\emptyset$ for all $j$ and $h$ with $(h, c)=1$.   
In this case the remaining argument by using the continuity of the 
$S^1$ action, is almost the same as ii) of Proposition~\ref{t-gue160416b}, yielding $I=0$.   If $X_a$ is not empty, 
one chooses any (finite) covering of BRT trivializations (such as the one given prior to Subsection~\ref{s-gue160416s1}). 
Then it may occur the extra case $a^h\circ x_0\in \hat D_j$ for some $\hat D_j\ni x_0$ if the choice of $g$ is such that 
$x_0$ is very near $X_a$.   In this case the remaining argument is essentially similar to Case a) of iii) of 
Proposition~\ref{t-gue160416b}, giving rise to $I=O(t^{\infty})$ as $t\to 0^+$.    
\end{proof}


To compute $I$ of \eqref{e-gue160416d2e1},
we assume the {\it simple type} condition for $I$ (when it is not of trivial type) as given in Definition~\ref{d-gue160416bc}.  
Combining Lemma~\ref{l-gue160416rl} we have the following corollary, as a generalization of Proposition~\ref{t-gue160416b}.  

\begin{cor}
\label{t-gue160416bc} Notations and the simple type condition as above.    Assume that the covering by BRT trivializations
satisfies Lemma~\ref{l-gue160416rl}.   Let $c\in \mathbb{N}$, $x_0\in X$, $\Omega\ni x_0$ an open subset 
and $g\in C^{\infty}_0(\Omega)$ (of small support as above).  Then the $\varepsilon>0$ (in $I$) and $\Omega$ 
can be chosen to satisfy the following.  
a) The same results (for computing $I=I^{(j)}(p_{i(c)}, g(x), \frac{1}{c})$ of \eqref{e-gue160416d2e1}) hold true 
as in Proposition~\ref{t-gue160416b} provided that one adopts the replacement of $e^{-i{2\pi}/{p}}, e^{-i{2\pi}/{p_\ell}}$ by 
$e^{-i{2\pi}/{c}}$ in i), ii) and iii) of the statement, 
and $\ell\mbox{\,(not the one in $\frac{2\pi}{p_\ell}$})$ by $
i(c)$ in Cases a), b) of iii) throughout (so $p_{\ell-q}\to p_{i(c)-q}$, $e_\ell\to e_{i(c)}, e_{\ell-q+1}\to e_{i(c)-q+1},  
e_{\ell-q}\to e_{i(c)-q}$ and $\gamma_\ell\to \gamma_{i(c)}, X_{\ell}\to X_{i(c)}, X_{\ell-q}\to X_{i(c)-q}$ etc.
in Case b)).  Note that after the replacement, $i(c)=\infty$ in Case a) and $i(c)\ne \infty$ in Case b), of iii). 

b) More generally, for $h\in \mathbb{N}$, $(h, c)=1$ and $h<c$, 
with the replacement $\frac{2\pi}{c}\to \frac{2\pi h}{c}$ (and $\ell$ not the one in $\frac{2\pi}{p_\ell}$
by $i(c)$ in Cases a), b) of iii)), the same results (for computing $I=I^{(j)}(p_{i(c)}, g(x), \frac{h}{c})$ of \eqref{e-gue160416d2e1}) hold true as well. 
\end{cor}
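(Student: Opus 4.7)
The plan is to deduce Corollary~\ref{t-gue160416bc} by revisiting the proof of Proposition~\ref{t-gue160416b} with the isometry $e^{-i2\pi/p_\ell}$ replaced throughout by $a_{h,c}:=e^{-i2\pi h/c}$, and carefully reinterpreting each case according to the arithmetic content of $i(c,g)$. First I would dispose of the extreme cases. When $i(c)=1$ (so $c\mid p_1$), write $p_1=ck$; then $a_{h,c}=(e^{-i2\pi/p_1})^{hk}$, and since $e^{-i2\pi/p_1}$ acts as the identity on the dense open stratum $X_{p_1}$ (hence on all of $X$ by continuity), $a_{h,c}$ is the identity on $X$. The argument of i) in Proposition~\ref{t-gue160416b} then transfers verbatim: the exponential factor $e^{-\hat h_{j,+}/t}$ collapses to $1$, $\hat b^+_{j,s}$ reduces to $b^+_{j,s}(z,z)$, and the factor $e^{-i2\pi h m/c}$ is pulled out. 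The case $i(c)=\infty$ is precisely the content of Lemma~\ref{l-gue160416rl}.

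For $2\le i(c)=\ell\le k$, fix $x_0\in\mathrm{Supp}\,g$ and consider the integrand near $u=2\pi h/c$. The dichotomy governing the outcome is: either $a_{h,c}\circ x_0\in\hat D_j$ or not. In the latter case, the argument of ii) goes through unchanged by continuity of the $S^1$-action and the cut-off $\tau_j$. In the former case, suppose first that $x_0$ lies in a larger stratum $X_{p_{i_s(\gamma_{i_s})}}$ with $i_s<\ell$; then $c\nmid p_{i_s}$ by the definition of $i(c)$, and coprimality $(h,c)=1$ forces $a_{h,c}\notin\mathbb{Z}_{p_{i_s}}$, so $a_{h,c}\circ x_0\neq x_0$. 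Writing $a_{h,c}\circ x_0=(\widetilde z_0,\widetilde\theta_0)$ in BRT coordinates, if $\widetilde z_0=z_0$ then $e^{i\widetilde\theta_0}\circ(a_{h,c}\circ x_0)=x_0$ yields $2\pi h/c-\widetilde\theta_0\in(2\pi/p_{i_s})\mathbb{Z}$, contradicting $|\widetilde\theta_0|<\zeta/2$ and the choice of $\zeta$; hence $\widetilde z_0\neq z_0$ and the rapid-decay argument of Case a) gives $I=O(t^\infty)$.

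The main content lies in Case b), where $x_0\in X_{p_{\ell(\gamma_\ell)}}$ is fixed by $a_{h,c}$ (since $c\mid p_\ell$ and $(h,c)=1$). Here the asymptotic expansion is obtained by rescaling $\hat y\mapsto\sqrt{t}\,\hat y$ in the geodesic coordinates transverse to $\overline X_{p_{\ell(\gamma_\ell)}}$, exactly as in \eqref{e-gue160417aaI}--\eqref{e-gue160417c2}, and the key input is the eigenvalue decomposition of the isotropy action of $a_{h,c}$ on $T_{x_0}X$. Paralleling \eqref{e-gue160417aae2a}, the decomposition
\[
T_{x_0}X=T_{x_0}\overline X_{p_\ell}\oplus N_{x_0}(\overline X_{p_\ell}/\overline X_{p_{\ell-1}})\oplus\cdots\oplus N_{x_0}(\overline X_{p_2}/\overline X_{p_1})
\]
is $a_{h,c}$-equivariant, and the element $a_{h,c}\in\mathbb{Z}_{p_\ell}$ acts on the $q$-th normal summand by the eigenvalue $e^{i2\pi h\,p_{\ell-q}/c}$ (since the subgroup $\mathbb{Z}_{p_{\ell-q}}$ fixes that summand, so the action factors through $\mathbb{Z}_{p_\ell}/\mathbb{Z}_{p_{\ell-q}}$ and is determined by the class of $h p_{\ell-q}/c$). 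Feeding this into Lemma~\ref{l-gue160417lem2} and Lemma~\ref{l-gue160417} then gives the analogue of claim \eqref{e-gue160417c1}:
\[
\lim_{t\to 0^+}\tfrac{1}{t}\,\hat h_{j,+}\!\bigl((\sqrt{t}\hat y,0),\,a_{h,c}\circ(\sqrt{t}\hat y,0)\bigr)=\bigl|e^{i2\pi h\,p_{\ell-q}/c}-1\bigr|^2\|\hat y\|^2
\]
on the $q$-th summand. The Gaussian integral $\int_{-\infty}^{\infty}e^{-a^2x^2}dx=\sqrt{\pi}/|a|$ applied dimension-wise produces the product $\prod_{q=1}^{\ell-1}|e^{i2\pi h p_{\ell-q}/c}-1|^{-(e_{\ell-q+1}-e_{\ell-q})}$, and the pull-out of $e^{-i2\pi h m/c}$ proceeds by the cancellation $\epsilon$) in the proof of Proposition~\ref{t-gue160416b}. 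The leading coefficient formula in b) follows.

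The main obstacle I expect is the verification that the eigenvalue of $a_{h,c}$ on $N_{x_0}(\overline X_{p_{\ell-q+1}}/\overline X_{p_{\ell-q}})$ is exactly $e^{i2\pi h p_{\ell-q}/c}$ and that these values are \emph{nontrivial} for every $q=1,\ldots,\ell-1$ (which is precisely what is needed for the Gaussian integrals to converge and for Lemma~\ref{l-gue160417lem2} to apply with angle bounded away from zero); this non-triviality must be established using the minimality defining $i(c)=\ell$, namely that $c\nmid p_{\ell-q}$ for $q\geq 1$ in the support, so that $h p_{\ell-q}/c\notin\mathbb{Z}$ by $(h,c)=1$. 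Once this point is cleanly reconciled with the choice of BRT cover in Lemma~\ref{l-gue160416rl}, the rest is a line-by-line translation of Proposition~\ref{t-gue160416b}.
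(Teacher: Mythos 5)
Your proposal is correct and follows essentially the same route as the paper's proof: reduce to the argument of Proposition~\ref{t-gue160416b} under the substitution $e^{-i2\pi/p_\ell}\mapsto e^{-i2\pi h/c}$, check that the identity~\eqref{e-gue160416biiia} in Case a) of iii) still leads to a contradiction (by the arithmetic $c\nmid p_{\ell-q}$ together with $(h,c)=1$, after shrinking $\zeta$ as needed), and verify in Case b) that the eigenvalues of the isotropy action become $e^{i2\pi h\,p_{\ell-q}/c}$ and remain nontrivial so that the Gaussian integrals converge. The paper states these points more tersely, but you supply exactly the same key verifications — in particular the eigenvalue computation via writing $a_{h,c}=(e^{-i2\pi/p_\ell})^{hp_\ell/c}$ and deducing nontriviality from the minimality in the definition of $i(c)$ — so no gap or divergence of approach is present.
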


\begin{proof}[proof of Corollary~\ref{t-gue160416bc}] One sees that with the replacement of 
$\frac{2\pi}{p_{\ell}}$ by $\frac{2\pi}{c}$ or $\frac{2\pi h}{c}$, the condition on $c$ (in Definition~\ref{d-gue160416d2a}) renders the argument 
in proof of Proposition~\ref{t-gue160416b} essentially unchanged.  For instance, 
with \eqref{e-gue160416biiia} replaced by 
$\frac{2\pi h}{c}-\widetilde\theta_0=m\frac{2\pi}{p_{\ell-q}}$, taking $\zeta$ smaller does the job.
Further, with substitution of 
$\ell$ (not in $\frac{2\pi}{p_\ell}$) by $i(c)$, 
the distinct eigenvalues $\{1,e^{i\frac{2\pi}{p_{\ell}}p_{\ell-1}}, e^{i\frac{2\pi}{p_{\ell}}p_{\ell-2}},\ldots
e^{i\frac{2\pi}{p_{\ell}}p_1}\}$ of the isotropy action 
(of $e^{-i\frac{2\pi}{p_\ell}}$ at $x_0$) (cf. \eqref{e-gue160417aae2a}) are changed to 
$\{1,e^{i\frac{2\pi h}{c}p_{i(c)-1}}, e^{i\frac{2\pi  h}{c}p_{i(c)-2}},\ldots
e^{i\frac{2\pi  h}{c}p_1}\}$ (of $e^{-i\frac{2\pi h}{c}}$ at $x_0$) (which remain distinct).  
\end{proof}

Let $c\in \mathbb{N}$ and $g$ a smooth function on $X$ of 
small support as above, with $i(c)\,(=i(c, g))$ in Definition~\ref{d-gue160416d2a}.  We are going to associate 
certain numerical factors.   For a contrast, we will give them for cases of 
the simple type and the general type separately (cf. Definitions~\ref{d-gue160416t1} and 
~\ref{d-gue160416bc}).   

For the simple type, the numerical factor $d_c=d_{c, g, m}$ is set to be 
\begin{equation}\label{e-gue160416p2e1}\begin{split}
&i)\,\,\, i(c)=1\\
  \qquad\qquad \mbox{if\,\,\,}c>1, \quad d_c\,(=d_{c, g, m})\,&:=\sum_{h\in\mathbb{N}, h<c, (h, c)=1}e^{-\frac{2\pi i}{c}{hm}}; \qquad  \mbox{if\,\,\,} c=1, \,\,d_c:=1.\qquad \qquad \qquad \qquad\\
&ii) \,\,\,\infty>i(c)\ge 2\\
\qquad d_c\,(=d_{c, g, m})\,&:=(\sqrt{\pi})^{e_{i(c)}}\sum_{h\in\mathbb{N}, h<c, (h, c)=1}\frac{e^{-i\frac{2\pi h}{c}{m}}}
{\prod_{q=1}^{i(c)-1}\left\vert e^{i\frac{2\pi h}{c}{p_{i(c)-q}}}-1\right\vert^{e_{i(c)-q+1}-e_{i(c)-q}}} \qquad \qquad 
 \qquad \qquad\\
&iii)\,\,\, i(c)=\infty\\
\qquad d_c\,(=d_{c, g, m})\,&:=1.
\end{split}
\end{equation}

\begin{rem}
\label{r-gue160416p2r}  
For the general type, the $d_{c, g, m}$ for $i(c)\ge 2$ should be modified as follows. 

In notation of Definition~\ref{d-gue160416bc}, let it be given 
$\tau(I(p_{i(c)}, g, \frac{h}{c}))=({i_1(\gamma_{i_1})}, {i_2(\gamma_{i_2})},\ldots, i_f(\gamma_{i_f}),i_{f+1}(\gamma_{i_{f+1}}))$ where $i_1=\gamma_{i_1}=1, i_{f+1}=i(c)\ne\infty$, say.  (In the previous Definition~\ref{d-gue160416t1}, $i_{f+1}=\ell$.   Here we have $i_{f+1}=i(c)$.) 

One sees that the eigenvalues of the isotropy action of $e^{-\frac{2\pi i}{c}{h}}$ 
(at $x_0\in \overline X_{p_{i(c)(\gamma_{i(c)})}}$) 
are: 
\begin{equation}
\label{e-gue160416p2re1}
e^{i\frac{2\pi h}{c}{p_{i_1}}},\, e^{i\frac{2\pi h}{c}{p_{i_2}}}, \ldots,\, e^{i\frac{2\pi h}{c}{p_{i_f}}},\, 1 
\end{equation}
(because by writing $e^{-\frac{2\pi i}{c}{h}}= (e^{-\frac{2\pi i}{p_\ell}})^j$ if $h/c=j/p_\ell$ with $\ell=i(c)$, the
eigenvalues are $\lambda^j$ where 
$\lambda=e^{i\omega p_{i_1}}, e^{i\omega p_{i_2}},\ldots$, $\omega=\frac{2\pi }{p_\ell}$, cf. \eqref{e-gue160417aae2a}) 
with multiplicities (where $e_{i_1(\gamma_{i_1})}=0$) 
\begin{equation}
\label{e-gue160416p2re1}
 e_{i_2(\gamma_{i_2})}-e_{i_1(\gamma_{i_1})}, \ldots, \,
e_{i_f(\gamma_{i_f})}-e_{i_{f-1}(\gamma_{i_{f-1}})},\,e_{i(c)(\gamma_{i(c)})}-e_{i_f(\gamma_{i_f})},\,
\mathrm{dim}\,X-e_{i(c)(\gamma_{i(c)})}. 
\end{equation}

Write, for $1\le r\le f$,
\begin{equation}\label{e-gue160416re2}
\mbox{$\lambda_r:=e^{i\frac{2\pi h}{c}{p_{i_r}}}$;  \quad
$m_r:=e_{i_{r+1}(\gamma_{i_{r+1}})}-e_{i_{r}(\gamma_{i_{r}})}$\, (with 
$i_{f+1}:=i(c)$)}.
\end{equation}

{\bf Numerical factors $d_c$ for the general type.} Given 
$\tau(I)=({i_1(\gamma_{i_1})}, {i_2(\gamma_{i_2})},\ldots, i_f(\gamma_{i_f}),  i_{f+1}(\gamma_{i_{f+1}}))$ of general type, $I=I(p_\ell, g(x), \frac{h}{c})$ with $i(c)=\ell$, the numerical factors $d_c$ similar to \eqref{e-gue160416p2e1} 
are defined as follows.  
   
If $c$ is of $i(c)=1$ or $\infty$, then $d_c\,(=d_{c,g, m,I})$ is the same as $d_c$ in i), iii) of \eqref{e-gue160416p2e1}. 
For $c$ with $\infty>i(c)\ge 2$, 
\begin{equation}
\label{e-gue160416p2re2} \begin{split}&\infty>i(c)\,(=i(c, g))\,\ge 2, \quad 
d_c\,(=d_{c,g, m, I}=d_{c,g, m, \tau(I)}=d_{c,g, m,\tau(\Omega)})\\
&\qquad\qquad\qquad\qquad \qquad\quad\,\,\,
:=(\sqrt{\pi})^{e_{i(c)(\gamma_{i(c)})}}\sum_{h\in\mathbb{N}, h<c, (h, c)=1}\frac{e^{-i\frac{2\pi h}{c}{m}}}
{\prod_{r=1}^{i_f}\left\vert \lambda_r-1\right\vert^{m_r}}
\end{split}
\end{equation}
where we recall $e_{i(c)(\gamma_{i(c)})}=\mathrm{codim}\,X_{p_{i(c)(\gamma_{i(c)})}}$ 
and we refer $\tau(\Omega)$ to Remark~\ref{r-gue160416bcr}. 

Note that the $\Omega$ $(\Supset\mathrm{Supp}\,g)$ is chosen to be small enough so that
there is no mixing of types.   The factor $d_c$ of \eqref{e-gue160416p2re2} is well defined. 

\end{rem}

We turn now to the global version (over the entire $[0, 2\pi]$) of the integral $I$ (of \eqref{e-gue160416be1} 
or \eqref{e-gue160416d2e1}).

Let 
\begin{equation}\label{e-gue160416ce1}
J_s=J_s^{(j)}=J_{s,m}^{(j)}(g(x))\equiv \frac{1}{2\pi}\int_Xg(x)e^{-\frac{\hat h_{j,+}(x,e^{-iu}\circ x)}{t}}
\mathrm{Tr}\,\hat b^+_{j,s}(x,e^{-iu}\circ x)e^{-imu}dv_X(x)
\end{equation}
where $g(x)\in C^{\infty}_0(\Omega)$ with $\Omega$ satisfying Proposition~\ref{t-gue160416b}.
We assume $g(x)$ is of small support in the sense of Definition~\ref{d-gue160416d2c}. 

We are going to compute $\int_0^{2\pi}J_sdu$, a $[0,2\pi]$-integral version 
of the integral $I$ in \eqref{e-gue160416be1}.  

To this aim, the main tool is the corollary below which is a reformulation of Corollary~\ref{t-gue160416bc}.   
But prior to this, let's 
write, for $x_0\in X, \,g(x)\in C_0^{\infty}(\Omega)$ with $\Omega$ a small neighborhood at $x_0$, 
\begin{equation}\label{e-gue160416bce2}\begin{split}
&\mbox{$\ell\ge 1$,}\quad I^{+,(j)}_{\ell(\gamma_\ell),s}\,(=I_{\overline X_{p_{\ell(\gamma_\ell)}},s,m}^{+,(j)}(g(x)))\,
\quad\mbox{($z(Y)=z(x)$ below; $b^+_{j, s}=b^+_{j, s, m}$ without ``hat" over it)}\\
&\qquad\quad \equiv 
\frac{1}{2\pi}\int_{-\varepsilon}^{\varepsilon}\int_{\overline X_{p_{\ell(\gamma_\ell)}}}
g(Y)\chi_j(Y)\mathrm{Tr}\,b_{j,s}^+(z(Y), z(Y))\tau_j(z(Y))\sigma_j(\theta(Y)+u)
dv_{\overline X_{p_{\ell(\gamma_\ell)}}}(Y)du\\
&\mbox{$\ell=1$,}\quad
I^{+,(j)}_{1,s}\,(=I^{+,(j)}_{X,s, m})\,=I_{\overline X_{p_{1}},s}^{+,(j)}(g(x))=I_{X,s}^{+,(j)}(g(x))
\\
&\mbox{$d_c=d_{c,g,m,\tau(\Omega)}$ cf. \eqref{e-gue160416p2re2}},
\quad\mbox{also written as $d_{c,g,m,I}$ with $I=I_{i(c)(\gamma_{i(c)}), s}^{+,(j)}$.}
\end{split}
\end{equation}

{\it The proof of the previous results in the case of simple type remains basically unchanged
for the case of general type}.   With the numerical factor $d_c$ introduced 
in \eqref{e-gue160416p2e1} and \eqref{e-gue160416p2re2}, one sees
the following.  

\begin{cor}
\label{t-gue160416bc2}
Notations as above with the general type $\tau(I)$ allowed (cf. Definitions~\ref{d-gue160416t1}
and~\ref{d-gue160416bc}).  Assume that the covering by BRT trivializations $\{\hat D_j\}_j$ satisfies
Lemma~\ref{l-gue160416rl}.  Let $c\in\mathbb{N}$ and $x_0\in X$.  For an $\varepsilon$, write $\lambda_c:= 
\bigcup_{h\in\mathbb{N}, h<c, (h, c)=1}]\frac{2\pi h}{c}-\varepsilon, \frac{2\pi h}{c}+\varepsilon[$
for $c> 1$ and $\lambda_1:=]-\varepsilon, \varepsilon[$ for $c=1$.   Write $\Omega\subset X$
for an open subset with $x_0\in \Omega$.   Then the
$\varepsilon>0$ and $\Omega$ can be chosen to satisfy the following.   (Recall that with respect to $\{\hat D_j\}_j$ we write 
$J_s^{(j)}=J_s^{(j)}(g)$ for any given $g\in C^{\infty}_0(\Omega)$  of small support in the sense 
of Definition~\ref{d-gue160416d2c}.)  

i) Suppose $x_0\in X_{p_1}$.  Then 
\begin{equation*}
\begin{split}
&\mbox{if \,\, $i(c)=1$,}\quad \int_{\lambda_c}J^{(j)}_sdu
=d_cI^{+,(j)}_{X, s};\quad 
\\
&\mbox{if \,\, $i(c)\ge 2$ \,(giving $i(c)=\infty$ here),}\quad \int_{\lambda_c}J^{(j)}_sdu= 0 \mbox{\,\, or \,$\sim O(t^{\infty})$}
\quad \mbox{(as 
$t\To0^+$)}.
\end{split}
\end{equation*}

The $\ell\ge 2$ in the following ii) and iii) is such that 
$x_0\in X_{p_\ell}$ (so $x_0\in X_{p_{\ell(\gamma_\ell)}}$, 
for some $\gamma_\ell$).  

ii) Assume $e^{-\frac{i2\pi h}{c}}\circ x_0\not\in \hat D_j$ (giving $i(c)=\infty$).  Then 
$\int_{\lambda_c}J^{(j)}_sdu=0$. 

iii) Assume $e^{-\frac{i2\pi h}{c}}\circ x_0\in \hat D_j$.
Then (as $t\To0^+$)
\begin{equation*}
\begin{split}  &\mbox{if \,\, $i(c)\ge \ell+1$ \,(giving $i(c)=\infty$ here)},\quad
\int_{\lambda_c}J^{(j)}_sdu=0 \mbox{\,\, or \,$\sim O(t^{\infty})$};\\
& \mbox{if \,\, $2\le i(c)\le \ell$,}\quad
\int_{\lambda_c}J^{(j)}_sdu\sim d_cI^{+,(j)}_{i(c)(\gamma_{i(c)}),s}
\sqrt{t}^{e_{i(c)(\gamma_{i(c)})}}+O(\sqrt{t}^{e_{i(c)(\gamma_{i(c)})}+1});\\
&\mbox{if \,\, $i(c)=1$,}\quad
\int_{\lambda_c}J^{(j)}_sdu=d_cI^{+,(j)}_{X,s},
\end{split}
\end{equation*}
where we note $d_c=d_{c,g,m,I}$ in \eqref{e-gue160416bce2} with $I=I_{i(c)(\gamma_{i(c)}), s}^{+,(j)}$.  
\end{cor}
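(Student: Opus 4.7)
The proof proceeds by unraveling $\int_{\lambda_c}J_s^{(j)}\,du$ as a sum of the angular integrals $I^{(j)}(p_{i(c)},g,h/c)$ and then invoking the preceding results one subcase at a time. First I will decompose $\lambda_c = \bigsqcup_{h}\,]\frac{2\pi h}{c}-\varepsilon,\frac{2\pi h}{c}+\varepsilon[$ (with the understanding that for $c>1$ the $h$ range over positive integers $<c$ coprime to $c$, and for $c=1$ the single interval is centered at $0$), shrinking $\varepsilon$ beforehand so that the intervals are pairwise disjoint. By definitions \eqref{e-gue160416ce1} and \eqref{e-gue160416d2e1} this yields
\[
\int_{\lambda_c}J_s^{(j)}\,du \;=\; \sum_{h}I^{(j)}\!\left(p_{i(c)},\,g,\tfrac{h}{c}\right).
\]

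The second step is to apply Corollary~\ref{t-gue160416bc}, together with the general-type modification noted above \eqref{e-gue160416bce2}, to each summand. In case i) I will shrink $\Omega\ni x_0\in X_{p_1}$ so that $\Omega\subset X_{p_1}$ (possible since $X_{p_1}$ is open), which forces $i(c,g)\in\{1,\infty\}$: when $c=1$ the single term yields $I^{+,(j)}_{X,s}$ together with $d_1=1$, while for $c>1$ one has $i(c)=\infty$ and Lemma~\ref{l-gue160416rl} delivers the $0$ or $O(t^{\infty})$ estimate. Case ii) follows directly from ii) of Corollary~\ref{t-gue160416bc}, after noting that the nonintersection condition $e^{-i2\pi h/c}\circ x_0\notin\hat D_j$ persists for every admissible $h$ once $\Omega$ and $\varepsilon$ are small enough, by continuity of the $S^1$ action.

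For case iii) the key logical observation is that ``$i(c)\geq\ell+1$'' is equivalent to ``$i(c)=\infty$'' in our setting, because $\mathrm{Supp}\,g$ can only meet strata $X_{p_r}$ with $p_r\mid p_\ell$ and hence $r\leq\ell$; in that subcase Case~a) of iii) of Corollary~\ref{t-gue160416bc} produces the $0$ or $O(t^{\infty})$ bound for each $h$, depending on whether $e^{-i2\pi h/c}\circ x_0$ lies in $\hat D_j$ or not. In the main subcase $2\leq i(c)\leq\ell$ one has $x_0\in\overline X_{p_{i(c)(\gamma_{i(c)})}}$ (since $p_{i(c)}\mid p_\ell$), so Case~b) of iii), adapted to the general type via the eigenvalue/multiplicity data $(\lambda_r,m_r)$ of \eqref{e-gue160416re2}, gives for each admissible $h$ a contribution
\[
\pi^{e_{i(c)(\gamma_{i(c)})}/2}\,e^{-i2\pi hm/c}\prod_{r=1}^{i_f}|\lambda_r-1|^{-m_r}\cdot I^{+,(j)}_{i(c)(\gamma_{i(c)}),s}\,\sqrt{t}^{\,e_{i(c)(\gamma_{i(c)})}} \;+\; O\!\left(\sqrt{t}^{\,e_{i(c)(\gamma_{i(c)})}+1}\right),
\]
and summing over the $\varphi(c)$ admissible $h$ assembles precisely the factor $d_c=d_{c,g,m,\tau(\Omega)}$ defined in \eqref{e-gue160416p2re2}. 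The remaining subcase $i(c)=1$ within iii) is then the single term at $c=1$ and recovers $d_1 I^{+,(j)}_{X,s}$.

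The main obstacle, in my view, will not be any single case (all are covered by Corollary~\ref{t-gue160416bc} or Lemma~\ref{l-gue160416rl}) but the accurate identification, for the general type, of the eigenvalue/multiplicity pattern of the isotropy action of each $e^{-i2\pi h/c}$ at $x_0$ with the data $(\lambda_r,m_r)$ inside $d_c$. This hinges on the $h$-invariance of the multiplicities $m_r=e_{i_{r+1}(\gamma_{i_{r+1}})}-e_{i_r(\gamma_{i_r})}$ and on the transformation $\lambda\mapsto\lambda^j$ of the primitive eigenvalues under $e^{-i2\pi h/c}=(e^{-i2\pi/p_{i(c)}})^j$ whenever $h/c=j/p_{i(c)}$, as already recorded in the paragraph following \eqref{e-gue160416p2re1}; once that identification is verified, the per-$h$ formula furnished by Case~b) of iii) sums literally to \eqref{e-gue160416p2re2}, completing the corollary.
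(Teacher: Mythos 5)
Your decomposition of $\int_{\lambda_c}J_s^{(j)}\,du$ into a sum over admissible $h$ of the angular integrals $I^{(j)}(p_{i(c)},g,h/c)$, followed by applying Corollary~\ref{t-gue160416bc} per summand and assembling the factor $d_c$, is exactly the intended route; the paper presents this corollary as a direct reformulation of Corollary~\ref{t-gue160416bc}, and your proposal supplies the assembly in the detail the paper leaves implicit. Your closing remark about the eigenvalue/multiplicity identification $(\lambda_r,m_r)$ for the general type is also the right thing to focus on, and the paper's Remark~\ref{r-gue160416p2r} is where that identification lives.

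However, there is a gap in your treatment of the $i(c)=1$ subcase in both i) and iii). You assert that for $c>1$ one has $i(c)=\infty$, but the paper does not assume $p_1=1$ (see Remark~\ref{r-gue150804} and the definition of $d_c$ for $i(c)=1$, $c>1$ in \eqref{e-gue160416p2e1}, which would be vacuous if $p_1=1$). If $c>1$ and $c\mid p_1$, then by Definition~\ref{d-gue160416d2a}~ii) we have $i(c)=1$, and the statement claims $\int_{\lambda_c}J_s^{(j)}\,du = d_c\,I^{+,(j)}_{X,s}$ with $d_c=\sum_{h<c,\,(h,c)=1}e^{-2\pi ihm/c}$ — a nontrivial sum of roots of unity, not just $d_1=1$. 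The mechanism for this subcase is the same as i) of Proposition~\ref{t-gue160416b}: since $c\mid p_1$, $e^{-i2\pi h/c}$ is the identity on $X$, so the integral over each interval $\,]2\pi h/c-\varepsilon,\,2\pi h/c+\varepsilon[\,$ reduces (via $u\mapsto u - 2\pi h/c$) to the $c=1$ computation with an extra phase $e^{-2\pi ihm/c}$ from the $e^{-imu}$ weight; summing over $h$ recovers $d_c$. You should insert this missing subcase; once it is in, the remainder of your argument is sound.
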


We are now ready to compute $\int_0^{2\pi}J_sdu$.  

Let $\lambda_c$ be as in Corollary~\ref{t-gue160416bc2} (with a given $g(x)$ of small support). 
\begin{equation}\label{e-gue160416de1} \begin{split}
&S_1= \{c\in\mathbb{N};  \,\,i(c, g)=1\}, \qquad S_2=\{c\in\mathbb{N};\,\, \mbox{$\infty\ge i(c, g)\ge 2$ and $c|p_\ell$ for 
some $\ell$, $2\le \ell\le k$}\}\\ 
&\Lambda=\Lambda_1\bigcup\Lambda_2\quad\mbox{with}\quad
\Lambda_1=\bigcup_{\scriptstyle c\in S_1\atop\scriptstyle  }\lambda_c,\,\,
\Lambda_2=\bigcup_{\scriptstyle c\in S_2 \atop\scriptstyle }\lambda_c; 
\qquad N=[0,2\pi]\setminus \Lambda.  
\end{split}
\end{equation}


By using \eqref{e-gue160416de1}
and corollaries above one has 
the following.   


\begin{prop}
\label{t-gue160416c}  Suppose we are given any $\delta>0$ and an $s=n, n-1,\ldots$. 
Then there exists a (finite) covering $\{\hat D_j\}_j$ of $X$ by BRT charts that satisfy Lemma~\ref{l-gue160416rl}
and the following.   Suppose $x_0\in X$ and $\Omega$ a neighborhood of $x_0$
with $\Omega\Subset\hat D_j$ for every $\hat D_j\ni x_0$.  
Then there exist an $\varepsilon>0$ and a choice of $\Omega$ above such that for $
g(x)\in C^\infty_0(\Omega)$ (of small support in the sense of Definition~\ref{d-gue160416d2c}), 
writing $J^{(j)}_s=J^{(j)}_{s,m}(g(x))$ (for any $\hat D_j\ni x_0$, $s=n, n-1,\ldots$ as above) 
one has the following.  

i) $\int_{\Lambda_1}J_s^{(j)}du=\big(\sum_{q=1}^{p_1} e^{-\frac{i2\pi qm}{p_1}}\big)I^{+,(j)}_{X,s}$.

ii) Case a) If $x_0\in X_{p_1}$, $\int_{\Lambda_2}J^{(j)}_sdu\sim  0$ or $O(t^{\infty})$ (as $t\To0^+$). 

Case b)  If $x_0\in X_{p_\ell}$ with $\ell\ge 2$, 
\begin{equation*}
\int_{\Lambda_2}J^{(j)}_sdu\sim\big(\sum_{\scriptstyle c\atop\scriptstyle 2\le i(c)\le \ell}
(d_cI^{+,(j)}_{i(c)(\gamma_{i(c)}), s}\sqrt{t}^{e_{i(c)(\gamma_{i(c)})}}+O(\sqrt{t}^{e_{i(c)(\gamma_{i(c)})}+1}))
\big)+O(t^{\infty})\,\,\mbox{(as $t\To0^+$)}
\end{equation*}
where $d_c=d_{c,g,m,I}, I=I_{i(c)(\gamma_{i(c)}), s}^{+,(j)}$, cf. \eqref{e-gue160416p2re2} and \eqref{e-gue160416bce2}.  

iii) For the part $\int_NJ^{(j)}_s(g)du\equiv \eta_s^{(j)}(g)$, the estimate holds $|\eta_s^{(j)}(g)|<\delta\cdot \int_X|g(x)|dv_X(x)$. 
\end{prop}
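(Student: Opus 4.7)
The plan is to apply Corollary~\ref{t-gue160416bc2} separately on each connected component $\lambda_c$ making up $\Lambda_1 \cup \Lambda_2$, sum the resulting contributions, and handle the complement $N$ via a uniform lower bound on $\hat h_{j,+}$. First I would fix a finite BRT covering $\{\hat D_j\}$ of $X$ satisfying Lemma~\ref{l-gue160416rl} simultaneously for every $c$ dividing some $p_\ell$ (only finitely many such $c$, so a common refinement works). Then for $x_0 \in X$ one shrinks $\Omega \ni x_0$ and chooses $\varepsilon > 0$ small enough that Corollary~\ref{t-gue160416bc2} applies uniformly in $c \in S_1 \cup S_2$ and in each reduced fraction $h/c$ indexing $\lambda_c$, and so that the intervals $]\tfrac{2\pi h}{c} - \varepsilon, \tfrac{2\pi h}{c} + \varepsilon[$ for distinct $h/c$ are pairwise disjoint. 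This gives a clean decomposition $\int_0^{2\pi} J_s^{(j)}\, du = \int_{\Lambda_1} + \int_{\Lambda_2} + \int_{N}$.

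For part i), $S_1 = \{c \in \mathbb{N} : c \mid p_1\}$ and every such $c$ has $i(c) = 1$, so i) of Corollary~\ref{t-gue160416bc2} gives $\int_{\lambda_c} J_s^{(j)}\, du = d_c\, I_{X,s}^{+,(j)}$. Summing, and using the bijection between reduced fractions $h/c$ with $c \mid p_1$, $1 \le h \le c$, $(h,c)=1$, and residues $q \in \{1,\dots,p_1\}$ mod $p_1$ (underpinned by $\sum_{c \mid p_1} \varphi(c) = p_1$),
\[
\sum_{c \mid p_1} d_c \;=\; \sum_{c \mid p_1}\sum_{\substack{1 \le h \le c \\ (h,c)=1}} e^{-\frac{2\pi i h m}{c}} \;=\; \sum_{q=1}^{p_1} e^{-\frac{2\pi i q m}{p_1}}
\]
(the contribution $c=1$, $h=1$ gives $d_1 = 1$, matching $q = p_1 \equiv 0$), which is i).

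For part ii), apply iii) of Corollary~\ref{t-gue160416bc2} componentwise. In Case a), $x_0 \in X_{p_1}$ lies in no higher stratum, so for each $c \in S_2$ either $e^{-2\pi i h/c} \circ x_0 \notin \hat D_j$ (zero by ii) of the corollary) or one is in Case a) of iii) with target stratum $i(c) \ge 2$, forcing the $O(t^\infty)$ alternative. In Case b) with $x_0 \in X_{p_\ell}$, $\ell \ge 2$, each $c \in S_2$ with $i(c) = \infty$ contributes $O(t^\infty)$, whereas each $c$ with $2 \le i(c) \le \ell$ produces the Gaussian-type term $d_c\, I_{i(c)(\gamma_{i(c)}),s}^{+,(j)}\, \sqrt{t}^{\,e_{i(c)(\gamma_{i(c)})}} + O(\sqrt{t}^{\,e_{i(c)(\gamma_{i(c)})}+1})$ from Case b) of iii) of the corollary. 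Summing over $c \in S_2$ gives Case b).

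The hard part will be iii), the off-special-angle remainder $\eta_s^{(j)}(g) = \int_N J_s^{(j)}\, du$. The key observation is that $\Lambda$ covers \emph{precisely} those angles $u \in [0, 2\pi]$ at which $x \mapsto e^{-iu} \circ x$ has a fixed point anywhere on $X$ — namely the rationals $2\pi h/c$ with $c$ the order of some isotropy subgroup, which by local freeness and Remark~\ref{r-gue150804} are exactly the $c$ dividing some $p_\ell$. Consequently $\overline N \subset [0,2\pi]$ is compact and disjoint from all these ``small-return'' angles, so continuity of the $S^1$ action and compactness of $X \times \overline N$ produce $\rho_0 = \rho_0(\varepsilon) > 0$ with $d(x, e^{-iu} \circ x) \ge \rho_0$ for all $(x, u) \in X \times N$. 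Combined with $\hat h_{j,+}(x,y) \gtrsim d(x,y)^2$ implicit in \eqref{e-gue150901e1}--\eqref{e-gue150608a} (noting that the cut-off $\hat\sigma_j$ in $\hat h_{j,+}$ kills any pairs $(x,y)$ for which the comparison fails) and the uniform boundedness of $\mathrm{Tr}\,\hat b_{j,s}^+$ on compacta, one obtains
\[
|\eta_s^{(j)}(g)| \;\le\; C_0\, e^{-c_0 \rho_0^2 / t} \int_X |g(x)|\, dv_X(x)
\]
with $C_0, c_0 > 0$ independent of $g$ and $t$. The exponential prefactor drops below $\delta$ for $t$ sufficiently small, giving iii). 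The main subtlety is the identification of the fixed-point angles of the $S^1$-action on the whole of $X$ with the finite rational set used to build $\Lambda$, which is what allows the uniform bound $\rho_0 > 0$ to exist.
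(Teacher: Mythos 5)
Your handling of parts i) and ii) is essentially identical to the paper's: apply Corollary~\ref{t-gue160416bc2} on each $\lambda_c$, split $S_2$ into $2 \le i(c) \le \ell$ and $i(c) > \ell$, and collapse the divisor sum for $S_1$ to $\sum_{q=1}^{p_1} e^{-2\pi i q m / p_1}$. Nothing to object to there.

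For part iii), you take a genuinely different route. The paper does not bound $\eta_s^{(j)}$ by Gaussian decay at all. Instead, it \emph{resets} the BRT covering to one made of ``distinguished'' neighborhoods $\hat\Omega$ with the property that for $x \in \hat\Omega$ and $u \in N(\varepsilon)$ one has $e^{-iu}\circ x \notin \hat\Omega$; then the cut-off $\tau_j$ (and $\sigma_j$) in $\hat b_{j,s}^+$ forces the integrand to vanish \emph{identically} on $N(\varepsilon)$. Resetting the covering costs a smaller $\varepsilon_1 < \varepsilon$, so $\eta_s^{(j)}$ becomes an integral over $N(\varepsilon_1)\setminus N(\varepsilon)$ only, which is bounded by $C(\varepsilon - \varepsilon_1)$ times a $t$-independent bound on the integrand — hence a $t$-\emph{independent} bound $< \delta\int_X |g|$, exactly as claimed. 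Your Gaussian-decay argument, by contrast, yields a $t$-\emph{dependent} bound of the form $C_0 e^{-c_0\rho_0^2/t}\int_X|g|$; this is stronger than the stated estimate for small $t$ but does not establish the $t$-uniform inequality as written, and the $t$-uniformity is what the paper leans on in Proposition~\ref{t-gue160416cc} to argue that the $\eta_s$ contributions are negligible by making $\delta$ arbitrarily small.

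There is also a concrete technical gap in your lower bound on $\hat h_{j,+}$. The kernel $\hat h_{j,+}(x,y) = \hat\sigma_j(\theta)\, h_{j,+}(z,w)\, \hat\sigma_j(\eta)$ is controlled by $|z-w|^2$, not by $d(x,y)^2$, and passing from $d(x, e^{-iu}\circ x) \ge \rho_0$ to $|z(x) - z(e^{-iu}\circ x)| \gtrsim \rho_0$ requires the angular extent $\zeta$ of the BRT charts to be small compared with $\rho_0 = \rho_0(\varepsilon)$, because the $\theta$-coordinate difference between $x$ and $e^{-iu}\circ x$ can be of order $\zeta$ and absorb part of $d(x,y)$. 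Since $\rho_0$ shrinks as $\varepsilon \to 0$, this comparison cannot be asserted for a covering fixed in advance; one is forced to shrink the covering in tandem — i.e.\ to reset it — which is precisely the step the paper makes explicit and you omit. (Your parenthetical about the cut-off $\hat\sigma_j$ ``killing'' bad pairs is also backwards in spirit: when $\hat\sigma_j$ vanishes, $\hat h_{j,+} = 0$ and the Gaussian factor is $1$, not small; what actually kills the integrand there is the simultaneous vanishing of $\chi_j$ or $\sigma_j$ inside $\hat b_{j,s}^+$, which is a different cut-off.) So your outline of iii) gestures at a plausible argument but does not close it, and in particular does not prove the $t$-uniform estimate that the proposition states.
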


\begin{proof}  To see i),  by i) and iii) of Corollary~\ref{t-gue160416bc2} 
it suffices to note $\sum_{c, i(c)=1}d_c=\sum_{q=1}^{p_1} e^{-\frac{i2\pi qm}{p_1}}$
by \eqref{e-gue160416p2e1}.   Case a) of ii)  follows directly from i) of {\it loc. cit.} whereas 
Case b) from iii) of {\it loc. cit.} by writing $\int_{\Lambda_2}J_sdu
=\sum_{2\le i(c)\le \ell}\int_{\lambda_c}J_sdu+\sum_{i(c)\ge \ell+1}\int_{\lambda_c}J_sdu$. 

To see iii), note the following.  First, the action by $e^{-i\theta}$ with $\theta\in N$ 
is fixed point free on $X$.    Each point in $X$ has a distinguished neighborhood $\hat\Omega$ 
such that if $x\in \hat\Omega$ and $\theta\in N$,  $e^{-i\theta}\circ x \not\in \hat\Omega$
by using continuity of the $S^1$ action.  
By compactness,  we can now assume (reset) a 
(finite) covering of BRT charts consisting of these $\hat\Omega$ and satisfying Lemma~\ref{l-gue160416rl}
(as there are only finitely many $c$ here).      
Given $x_0\in X$, we take a small neighborhood $\Omega\Subset\hat D_j$ for every $\hat D_j\ni x_0$.  
One sees that by the cut-off function $\tau_j$ involved in the integrand of $J^{(j)}_s$, 
$J_s^{(j)}\equiv 0$ identically (since $\tau_j({e^{-i\theta}\circ x})=0$ if $e^{-i\theta}\circ x\not\in \hat D_j$ 
cf. \eqref{e-gue150901}, which holds for $x\in \hat\Omega=\hat D_j$ and $\theta\in N$).     

However, a word of warning is in order.  Given the preceding covering of certain BRT charts, our integral $J_s^{(j)}$ 
shall be computed with respect to this new covering (because as said above, the covering has been ``reset")
which we just did.   However, it is not necessarily true that the original $\varepsilon$ 
(of Corollary~\ref{t-gue160416bc2} used above) remains 
altogether applicable in the new setting.   Namely, 
under the new BRT covering we need to choose 
an $\varepsilon_1$, possibly smaller than $\varepsilon$, to ensure that the original argument of the proof 
(of Proposition~\ref{t-gue160416b}) go through well.  
To see what the above means, let's make the dependence on the parameter $\varepsilon$ explicit 
and denote by $\Lambda(\varepsilon)$, 
$N(\varepsilon)$ etc.   The $N$ in the last paragraph will be denoted by $N(\varepsilon)$.
Only with the replacement by $\Lambda_1(\varepsilon_1), \Lambda_2(\varepsilon_1), \Lambda(\varepsilon_1)$ ($\subset \Lambda_1(\varepsilon), \Lambda_2(\varepsilon), \Lambda(\varepsilon)$ respectively), 
$N(\varepsilon_1)\,(\supset N(\varepsilon))$ and the new BRT covering throughout the present 
proposition, can we obtain the corresponding $\varepsilon_1$-versions of i) and ii) of this proposition.     And iii) will 
correspondingly be replaced by $\int_{N(\varepsilon_1)}J^{(j)}_sdu$.   But as just shown in the last paragraph, 
$\int_{N(\varepsilon)}J^{(j)}_sdu=0$, it follows $\int_{N(\varepsilon_1)}J^{(j)}_sdu=\int_{N(\varepsilon_1)\setminus N(\varepsilon)}J^{(j)}_sdu$.   Now the measure of $N(\varepsilon_1)\setminus N(\varepsilon)$ is controlled 
by $C\cdot(\varepsilon-\varepsilon_1)$ for a fixed constant $C$; the integrand of $J_s^{(j)}$ can be 
bounded in a way independent of $\varepsilon, \varepsilon_1$ and the BRT covering.  By a choice of 
a sufficiently small $\varepsilon$ beforehand,  iii) (for $\varepsilon_1$-version) follows.  We have proved 
the proposition with $\varepsilon_1$ (in place of the previous $\varepsilon$).   
\end{proof}

\subsection{Patching up angular integrals over $X$; proof for the simple type}
\label{s-gue160416s3}
We are going to study the main issue 
\begin{equation}
\label{e-gue160416ce1}
\int_X\mathrm{Tr}\,a^+_{s}(t, x, x)dv_X(x)
\end{equation}
where we recall (by \eqref{e-gue150901a}) 
\begin{equation}  \label{e-gue150901s3e1}
\begin{split}
&a^+_s(t,x,y)\,(=a_{s,m}^+(t,x,y)) \,\,\,(\hat b^+_{j, s}=\hat b^+_{j, s, m}) \\
&=\frac{1}{2\pi}\sum^N_{j=1}\int^\pi_{-\pi}e^{-\frac{\hat
h_{j,+}(x,e^{-iu}\circ y)}{t}}\hat b^+_{j,s}(x,e^{-iu}\circ y)e^{-imu}du,\ \
s=n,n-1,n-2,\ldots.
\end{split}
\end{equation}
For this, one would like to patch up those integrals $\int_0^{2\pi}J_s^{(j)}du$ of the last subsection over $j$.  
However, $a_s^+(t, x, y)$ is not canonically defined by our method
and is in fact dependent on the choice of BRT charts.  
A direct study of it appears inefficient (unless one sticks to a fixed covering of BRT charts). 

It turns out to be more effective if instead, one studies its equivalence (cf. \eqref{e-gue150630gI} in the asymptotic sense): 
\begin{equation}
\label{e-gue160416cde1}
\int_X\mathrm{Tr}\,e^{-t\widetilde\Box^+_{b, m}}( x, x)dv_X(x)
\end{equation}
in which $e^{-t\widetilde\Box^+_{b, m}}(x, y)$ is of course independent of choice of BRT charts.  

Suppose a $\delta>0$ and an $s=n, n-1,\ldots$ are given.  Assume that the BRT covering $\{\hat D_j\}_j$ 
satisfies Proposition~\ref{t-gue160416c} in which 
by using compactness, one can find a (finite) covering $\{\Omega_\alpha\}_\alpha$ of $X$, 
$\Omega_\alpha\Subset \hat D_j$ if $\hat D_j\cap\Omega_\alpha\ne \emptyset$, and 
an $\varepsilon>0$ such that the conclusion i), ii) and iii) of that proposition
hold with each of these
 $\Omega_\alpha$ and this $\varepsilon$. 
As indicated in Proposition~\ref{t-gue160416b}, whenever necessary, one can shrink the size of $\Omega_\alpha$ without 
changing $\varepsilon$.    For $\rho=\frac{\varepsilon}{2}$, 
we assume (possibly after shrinking $\Omega_\alpha$ and using compactness) for each $\alpha, j$, 
and for some (possibly big) $m>1$, 
\begin{equation}
\label{e-gue160416cde3}
\mbox{$\theta$-coordinates of $\Omega_\alpha,\,\,\hat D_j$
lie inside of \,$[-\rho, \rho], \,\,[-m\rho, m\rho]$\, respectively.} 
\end{equation}

Let $\{g_\alpha(x)\}_\alpha$ be a partition of unity {\it subordinate} to this covering 
(i.e. $\mathrm{Supp}\,g_\alpha\Subset\Omega_\alpha$).  We further assume each $g_\alpha$ is 
of small support in the sense of Definition~\ref{d-gue160416d2c}.  
One sees that as $t\To0^+$ 
\begin{equation}
\label{e-gue160416cde4}
\int_X g_\alpha(x)\mathrm{Tr}\,e^{-t\widetilde\Box^+_{b, m}}(x, x)dv_X(x)\sim \sum_j\sum_{s=n, n-1,\cdots}
t^{-s}\int_0^{2\pi} J_s^{(j)}(g_\alpha(x))du
\end{equation}
where the term to the right is computed with respect to any given BRT covering of $X$, 
including but not restricted to, the previous $\{\hat D_j\}_j$.   
Hence at each stage of the computation we may choose convenient BRT charts
for the need (as far as the asymptotic expansion as $t\to 0^+$ is concerned).   

By Proposition~\ref{t-gue160416c}, \eqref{e-gue160416cde4} is reduced to 
computing $I_{\ell, s}^{(j)}(g_\alpha)$ (see \eqref{e-gue160416bce2})
(for a fixed $g_\alpha$).  

Henceforth, in the following we fix an (arbitrarily given) $\alpha$.  
As aforementioned, we are free to reset the BRT charts $\{\hat D_j\}_j$ (with certain cut-off functions). 
To do so, we make the following definition for convenience. 

\begin{defn}
\label{d-gue160416}
Fix an $x_0\in X$.   $\{\hat D_j\}_j$ ($\hat D_j\subset D_j$ etc. notations as in the beginning of this section) a (finite) covering of $X$, is said to be a covering by {\it distinguished BRT charts at $x_0$} provided 
that $x_0\in \hat D_j$ for some $j$ and $x_0\not\in \hat D_k$ for all $k\ne j$.  
\end{defn}

Now, we can further assume that for the above fixed $\alpha$ and for an $x\in \Omega_\alpha$, $\{\hat D_j\}_j$ 
is distinguished at $x$ in the sense of Definition~\ref{d-gue160416}.  
In fact we can assume a little more that $\Omega_\alpha\Subset\hat D_{j_0}$ and $\Omega_\alpha\cap \hat D_k=\emptyset$
for $k\ne j_0$; namely $\{\hat D_j\}_j$ is distinguished at $x$ for each $x\in \Omega_{\alpha}$.   Also, 
we assume that \eqref{e-gue160416cde3} is satisfied.  

We shall now choose the cut-off function $\sigma_{j_0}$, in notation of \eqref{e-gue160416bce2}, that 
satisfies (see lines above \eqref{e-gue150627f})
\begin{equation}
\label{e-gue160416cde6}
\int_{-m\rho}^{m\rho}\sigma_{j_0}(u)du=\int_{-\rho}^{\rho}\sigma_{j_0}(u)du=1,\qquad
\mathrm{Supp}\,\sigma_{j_0}\,\subset\,\,\, ]-\rho, \rho[
\end{equation}
and choose $\chi_{j_0}\equiv 1$, so $\tau_{j_0}\equiv 1$, on $\overline\Omega_\alpha$
(see {\it loc. cit.}).  

With the above setup, some simplifications for \eqref{e-gue160416cde4} occur.   Firstly, 
\begin{equation}\label{e-gue160416cde4a}
\int_X g_\alpha(x)\mathrm{Tr}\,e^{-t\widetilde\Box^+_{b, m}}(x, x)dv_X(x)\sim \sum_{s=n, n-1,\cdots}
t^{-s}\int_0^{2\pi} J_s^{(j_0)}(g_\alpha(x))du. 
\end{equation}
We are reduced, by Proposition~\ref{t-gue160416c}, to computing the integrals in \eqref{e-gue160416bce2}.

Secondly, in notation of \eqref{e-gue160416bce2} there is an angular integral 
\begin{equation}
\label{e-gue160416cde5}
\int_{-\varepsilon}^{\varepsilon}\sigma_{j_0}(\theta(Y)+u)du.
\end{equation}
For a fixed $Y\in \Omega_\alpha$, $\theta(Y)\in [-\rho, \rho]$ by \eqref{e-gue160416cde3}, and for 
$u$ going through $[-\varepsilon, \varepsilon]=[-2\rho, 2\rho]$ of \eqref{e-gue160416cde5}, one sees that 
$\theta(Y)+u$ covers $[-\rho, \rho]$, it follows from \eqref{e-gue160416cde6}
that the angular integral \eqref{e-gue160416cde5} is $1$. 

Thirdly, by the above condition on $\chi_{j_0}$ and $\tau_{j_0}$ one obtains, with \eqref{e-gue160416cde5} $\equiv 1$,
the following for \eqref{e-gue160416bce2}. 
\begin{equation}
\label{e-gue160416bce7}
I_{\ell(\gamma_\ell),s}^{+,(j_0)}(g_\alpha(x))\,(=I_{\overline X_{p_{\ell(\gamma_\ell)}},s}^{+,(j_0)}(g_\alpha(x)))\,=
\frac{1}{2\pi}\int_{\overline X_{p_{\ell(\gamma_\ell)}}}g_\alpha(Y)\mathrm{Tr}\,b_{j_0,s}^+(z(Y), z(Y))
dv_{\overline X_{p_{\ell(\gamma_\ell)}}}(Y)
\end{equation}
$(\ell\ge 1; \,s=n,n-1,\ldots)$.  

Finally, recall $\alpha^+_s(x)$ as in our main result Theorem~\ref{t-gue160114}
(cf. Theorem~\ref{t-gue160123}) defined in \eqref{e-gue160224I} which is 
independent of choice of BRT charts and cut-off functions (cf. Remarks~\ref{r-gue150508I} and \ref{r-gue150607}).  
Indeed one sees, for $x\in \Omega_\alpha$, 
\begin{equation}
\label{e-gue160416bce7a}
\frac{1}{2\pi}b^+_{j_0, s}(z(x), z(x))=\alpha^+_s(x)\,\,(=\alpha^+_{s, m}(x))
\end{equation}
by \eqref{e-gue160224I} and the choice of distinguished BRT charts here. 

In sum, since the above applies to each $\Omega_\alpha$ in the covering $\{\Omega_\alpha\}_\alpha$, 
by \eqref{e-gue160416bce7} and \eqref{e-gue160416bce7a} we have reached 
the following {\it invariant expressions} (independent of choice of BRT coverings) 
\begin{equation}
\label{e-gue160416cde8}\begin{split}
 (k\ge \ell\ge 1)\quad S^+_{\ell(\gamma_\ell),s}(g_\alpha)\,(=S^+_{X_{p_{\ell(\gamma_\ell)}},s, m}(g_\alpha))\equiv 
S_{\ell(\gamma_\ell),s}^{+,(j_0)}(g_\alpha)\,&=
\int_{\overline X_{p_{\ell(\gamma_\ell)}}}g_\alpha(Y)\mathrm{Tr}\,\alpha_{s, m}^+(Y, Y)
dv_{\overline X_{p_{\ell(\gamma_\ell)}}}(Y)\qquad \\
 S^+_{{\ell(\gamma_\ell)},s}\,(=S^+_{X_{p_{\ell(\gamma_\ell)}},s, m})\,\equiv
\sum_\alpha S^+_{{\ell(\gamma_\ell)},s}(g_\alpha)&=
\int_{\overline X_{p_{\ell(\gamma_\ell)}}}\mathrm{Tr}\,\alpha_{s, m}^+(Y, Y)dv_{\overline X_{p_{\ell(\gamma_\ell)}}}(Y).\qquad 
\end{split}
\end{equation}

Now \eqref{e-gue160416cde4} and \eqref{e-gue160416cde4a} can be given, 
by using \eqref{e-gue160416cde8} and Proposition~\ref{t-gue160416c}, as follows.  

First, we classify the set $\{\Omega_\alpha\}_\alpha$ by writing 
\begin{equation}
\label{e-gue160416cde10}
\chi(\alpha)\,(=\chi(\Omega_\alpha))\,=\ell\,\,\,\mbox{if \,\,$\Omega_\alpha\bigcap X_{p_\ell}\ne \emptyset$\,\,and\,\,for any $\ell'>\ell$, \,$\Omega_\alpha\bigcap X_{p_{\ell'}}
=\emptyset$},\,\,\, 1\le\ell, \ell'\le k. 
\end{equation}

Note $d_c$ below (with a specific $c$) is given without ambiguity (cf. \eqref{e-gue160416p2re2})
by the local nature of $\Omega_\alpha$ and $g_\alpha$. 
\begin{prop}
\label{t-gue160416cc} In the notation above and in terms of the functions of \eqref{e-gue160416cde8}, 
let $\alpha$, $\delta>0$ and any $m\in \mathbb{N}$ be given.  
Then we have, as $t\to 0^+$, for $g_\alpha$ with $\mathrm{Supp}\,g_\alpha\Subset \Omega_\alpha$
such that $\chi(\alpha)=\ell$, 
\begin{equation}\label{e-gue160416cce1}\begin{split}
&\int_X g_\alpha(x) \mathrm{Tr}e^{-t\widetilde\Box^+_{b, m}}(x, x)dv_X(x)\sim 
\sum_{s=n, n-1, \ldots}t^{-s}A_s(g_\alpha)\quad\mbox{where $A_s(g_\alpha)$ is given by}\\ 
&A_s(g_\alpha)=\big(\sum_{q=1}^{p_1} e^{-\frac{i2\pi qm}{p_1}}\big)S^{+}_{X,s}(g_\alpha)+
\sum_{\scriptstyle c\atop\scriptstyle 2\le i(c)\le \ell}(d_cS^{+}_{i(c)(\gamma_{i(c)}), s}(g_\alpha)\sqrt{t}^{e_{i(c)(\gamma_{i(c)})}}+O(\sqrt{t}^{e_{i(c)(\gamma_{i(c)})}+1}))\\
&\qquad\qquad +\eta_s(g_\alpha)
\end{split}
\end{equation}
where $i(c)=i(c, g_\alpha)$, $d_c=d_{c,g_\alpha,m,I}, I=S^{+}_{i(c)(\gamma_{i(c)}), s}(g_\alpha)$ by 
\eqref{e-gue160416bce2} and \eqref{e-gue160416p2re2}, and $\eta_s(g_\alpha)$ (which equals 
$\eta_s^{(j_0)}(g_\alpha)$ in notation of Proposition~\ref{t-gue160416c} and 
distinguished BRT charts at $x_0$) satisfies the estimate 
$$|\eta_s(g_\alpha)|\le \delta\cdot\int_Xg_\alpha dv_X$$
for $s=n, n-1, n-2,\ldots, -m$.  
\end{prop}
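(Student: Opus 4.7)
The plan is to reduce the global integral $\int_X g_\alpha(x)\mathrm{Tr}\,e^{-t\widetilde\Box^+_{b,m}}(x,x)dv_X(x)$ to the local angular integrals $\int_0^{2\pi} J_s^{(j)}(g_\alpha)\,du$ already analyzed in Proposition~\ref{t-gue160416c}, and then to rewrite the resulting sum in terms of the invariant quantities $S^+_{{\ell(\gamma_\ell)},s}(g_\alpha)$ defined in \eqref{e-gue160416cde8}. First I would invoke Theorem~\ref{t-gue150630I} to replace $e^{-t\widetilde\Box^+_{b,m}}(x,x)$, up to an $O(e^{-\epsilon_0/t})$ error, by $\Gamma(t,x,x)$ and hence by the asymptotic series $\sum_s t^{-s}a^+_s(t,x,x)$ of \eqref{e-gue150901s3e1}. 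Integrating $g_\alpha$ against this series and summing over the BRT pieces yields, in the sense of asymptotic expansions, the identity \eqref{e-gue160416cde4}.

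Next I would exploit the freedom (noted in \eqref{e-gue160416cde4}) to recompute the right-hand side with a BRT covering distinguished at every point of $\Omega_\alpha$ in the sense of Definition~\ref{d-gue160416}; this collapses the sum over $j$ to a single chart $\hat D_{j_0}$, giving \eqref{e-gue160416cde4a}. With the cut-off data arranged so that $\chi_{j_0}\equiv \tau_{j_0}\equiv 1$ on $\overline\Omega_\alpha$ and $\sigma_{j_0}$ normalized by \eqref{e-gue160416cde6}, the $\theta$-support condition \eqref{e-gue160416cde3} forces the angular integral \eqref{e-gue160416cde5} to equal $1$; the local coefficient $I_{\ell(\gamma_\ell),s}^{+,(j_0)}(g_\alpha)$ therefore takes the simplified form \eqref{e-gue160416bce7}. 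Combining \eqref{e-gue160416bce7} with the chart-independence identity \eqref{e-gue160416bce7a} converts $I_{\ell(\gamma_\ell),s}^{+,(j_0)}(g_\alpha)$ into the intrinsic quantity $S^+_{{\ell(\gamma_\ell)},s}(g_\alpha)$ of \eqref{e-gue160416cde8}; this is the conceptual heart of the argument, since it is what allows the BRT-dependent quantities $a^+_s(t,x,y)$ to be repackaged in an invariant form.

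I would then apply Proposition~\ref{t-gue160416c} directly to $\int_0^{2\pi} J_s^{(j_0)}(g_\alpha)\,du$. Part i) of that proposition contributes $(\sum_{q=1}^{p_1} e^{-i2\pi qm/p_1})\,S^+_{X,s}(g_\alpha)$, which is the principal-stratum term of \eqref{e-gue160416cce1}. Part ii) contributes the sum over $c$ with $2\le i(c)\le \ell$ of $d_c\,S^+_{i(c)(\gamma_{i(c)}),s}(g_\alpha)\sqrt{t}^{\,e_{i(c)(\gamma_{i(c)})}}$ together with the stated higher-order correction, using that the choice $\chi(\alpha)=\ell$ guarantees no contribution can arise from strata of index exceeding $\ell$; the $O(t^\infty)$ debris from Cases a) and b) of ii) of that proposition is absorbed into the asymptotic expansion. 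Part iii) contributes the remainder $\eta_s(g_\alpha):=\eta_s^{(j_0)}(g_\alpha)$ with the required bound $|\eta_s(g_\alpha)|\le \delta\cdot\int_X g_\alpha\,dv_X$.

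The one step that requires care, and which I view as the main (though still technical) obstacle, is the compatibility between two reorganizations of the BRT covering: the distinguished covering used to simplify the local integrals and the covering used in Proposition~\ref{t-gue160416c} iii) to control the part of the angular integral lying in $N$. This is essentially the same issue treated in the proof of that proposition via a two-parameter $(\varepsilon,\varepsilon_1)$ refinement; I would transplant that argument verbatim, choosing $\varepsilon$ small enough at the outset so that, after the distinguished chart $\hat D_{j_0}$ is put in place, the measure of the symmetric difference between the two versions of $N$ is $O(\delta)$ and the integrand of $J_s^{(j_0)}$ is bounded independently of $\varepsilon$. Collecting powers of $t$ from the three parts then yields the stated expansion of $A_s(g_\alpha)$ and completes the proof.
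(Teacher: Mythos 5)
Your proposal reproduces the paper's argument step for step: reduce to the asymptotic series via Theorem~\ref{t-gue150630I} and \eqref{e-gue160416cde4}, pass to a distinguished BRT covering at $x_0$ to get \eqref{e-gue160416cde4a}, normalize the cut-offs so that \eqref{e-gue160416cde5} equals $1$ and \eqref{e-gue160416bce7}--\eqref{e-gue160416bce7a} convert the local coefficients into the invariant quantities $S^+_{\ell(\gamma_\ell),s}$, and then read off $A_s(g_\alpha)$ by applying the three parts of Proposition~\ref{t-gue160416c}. The caveat you flag about reconciling the distinguished covering with the covering used for part iii) is precisely the $(\varepsilon,\varepsilon_1)$-refinement issue already handled in the proof of Proposition~\ref{t-gue160416c}, and your proposal to transplant that argument is what the paper implicitly does.
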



In the remaining of this subsection, to streamline the argument we assume the simplest case 
that 
\begin{equation*} \begin{split}
&i) \quad \mbox{each $X_{p_\ell}$, $1\le \ell\le k$, is {\it connected}}\\ 
&ii) \,\,\,\,X=\overline X_{p_1}\supsetneq \overline X_{p_2}\supsetneq \overline X_{p_3}\ldots\supsetneq \overline X_{p_k}.
\end{split}
\end{equation*}
(We postpone the general case to the next subsection.)   One sees $p_1|p_2|\ldots|p_k$.  

Hence all types reduce to simple types (cf. Definition~\ref{d-gue160416bc}).  

{\bf In this case, the subscript $\gamma_\ell$ in $\ell(\gamma_\ell)$ will henceforth be dropped
throughout the remaining of this subsection}.   

It will take a bit more work to sum \eqref{e-gue160416cce1} over $\alpha$.  
The numerical factor $d_c$ (cf. \eqref{e-gue160416p2e1}) in this simplified case satisfies the following.  
For smooth functions $g, g'$ of small support 
(Definition~\ref{d-gue160416d2c}),
if $i(c, g)=i(c, g')\,(\le \infty)$, then $d_{c, g, m}=d_{c, g', m}$.  
It is useful to set,  for $g=g_\alpha$ with $\chi(\alpha)\ge \ell$ in ii) below ($\chi(\alpha)$ 
as in \eqref{e-gue160416cde10}), 
\begin{equation}
\label{e-gue160416cde11}\begin{split}
i)\,D_{1, g}\,(=D_{1,g, m})\,\equiv\sum_{c,\, i(c, g)=1}d_{c,g, m}=\sum_{q=1}^{p_1}e^{-\frac{i2\pi qm}{p_1}};\quad 
ii)\,(k\ge\ell\ge 2)\quad D_{\ell, g}\,(=D_{\ell,g, m})\,\equiv\sum_{c,\, i(c, g)=\ell}d_{c,g, m}.
\end{split}
\end{equation}


Suppose $\alpha, \beta\in \bigcup_{\ell'\ge \ell}\chi^{-1}(\ell')$.   
As said above, one sees $D_{\ell, g_\alpha}=D_{\ell, g_\beta}$
for $1\le \ell\le k$ (because $i(c, g_\alpha)=\ell$ if and only if $i(c, g_\beta)=\ell$ here).  
We write 

\begin{defn}
\label{d-gue160416d2d} 
$D_\ell\,(=D_{\ell,m})\,:=D_{\ell, g_\alpha}\,(=D_{\ell, g_\alpha,m})$
for any $\alpha$ with $\chi(\alpha)\ge \ell$, $1\le \ell\le k$. 
\end{defn} 

By using \eqref{e-gue160416cce1} of Proposition~\ref{t-gue160416cc} 
and Definition~\ref{d-gue160416d2d}, 
one sees, for $\alpha\in \chi^{-1}(\ell)$, 
\begin{equation}
\label{e-gue160416cde10e1}\begin{split} &\alpha\in \chi^{-1}(\ell),\quad
\int_X g_\alpha(x) \mathrm{Tr}\,e^{-t\widetilde\Box^+_{b, m}}(x, x)dv_X(x)\sim\sum_{s=n, n-1,\dots}t^{-s}\times\\
&\eta_s(g_\alpha)+D_1S_{1,s}^+(g_\alpha)+\sum_{c, i(c)=2}\big(d_cS_{2,s}^+(g_\alpha)\sqrt{t}^{e_2}+O(\sqrt{t}^{e_2+1})\big)
+\ldots +\sum_{c, i(c)=\ell}\big(d_cS_{\ell,s}^+(g_\alpha)\sqrt{t}^{e_\ell}+O(\sqrt{t}^{e_\ell+1})\big)\\
&=\sum_{s}t^{-s}\Big(\eta_s(g_\alpha)+D_1S_{1,s}^+(g_\alpha)+\big(D_2S_{2,s}^+(g_\alpha)\sqrt{t}^{e_2}+O(\sqrt{t}^{e_2+1})\big)
+\ldots+\big(D_\ell S_{\ell,s}^+(g_\alpha)\sqrt{t}^{e_\ell}+O(\sqrt{t}^{e_\ell+1})\big)\Big).  
\end{split}
\end{equation}

Combining \eqref{e-gue160416cde10} and \eqref{e-gue160416cde10e1}
yields the following as $t\To0^+$ (where $\{\alpha\}_\alpha=\chi^{-1}(1)\cup\chi^{-1}(2)\cup\chi^{-1}(3)\ldots$):
\begin{equation}
\label{e-gue160416cde9}\begin{split}
&\int_X \mathrm{Tr}\,e^{-t\widetilde\Box^+_{b, m}}(x, x)dv_X(x)
\,\,\,\Big(=\sum_\alpha\int_X g_\alpha(x)\mathrm{Tr}e^{-t\widetilde\Box^+_{b, m}}(x, x)dv_X(x)\Big)\\
&\sim 
\sum_{s=n, n-1,\ldots}t^{-s}\Big(\big(\sum_{\alpha\in \chi^{-1}(1)}D_1S_{1, s}^{+}(g_\alpha)\big)
+\big((\sum_{\alpha\in \chi^{-1}(2)}D_1S_{1, s}^{+}(g_\alpha))+(\sum_{\alpha\in \chi^{-1}(2)}D_2S_{2, s}^{+}(g_\alpha)\sqrt{t}^{e_2}+O(\sqrt{t}^{e_2+1}))\big)\\
&\qquad +\big((\sum_{\alpha\in \chi^{-1}(3)}D_1S_{1, s}^{+}(g_\alpha))+(\sum_{\alpha\in \chi^{-1}(3)}D_2S_{2, s}^{+}(g_\alpha)\sqrt{t}^{e_2}+O(\sqrt{t}^{e_2+1}))\\
&\qquad\qquad\qquad\qquad \qquad\qquad\qquad \qquad+(\sum_{\alpha\in \chi^{-1}(3)}D_3S_{3, s}^{+}(g_\alpha)\sqrt{t}^{e_3}+O(\sqrt{t}^{e_3+1}))\big)+\ldots+\sum_\alpha\eta_s(g_\alpha)\Big).
\end{split}
\end{equation}

We rearrange \eqref{e-gue160416cde9} as (only keeping terms in leading order)
\begin{equation}
\label{e-gue160416cde12}\begin{split}
&\sum_st^{-s}\times\Big(\sum_\alpha\eta_s(g_\alpha)+\big(\sum_{\alpha\in\chi^{-1}(1)}D_{1}S_{1, s}^{+}(g_\alpha)+
\sum_{\alpha\in\chi^{-1}(2)}D_{1}S_{1, s}^{+}(g_\alpha)+\sum_{\alpha\in\chi^{-1}(3)}D_{1}S_{1, s}^{+}(g_\alpha)+\dots\big)\\
&+\big((\sum_{\alpha\in\chi^{-1}(2)}D_{2}S_{2, s}^{+}(g_\alpha)\sqrt{t}^{e_2}+\ldots)+(\sum_{\alpha\in\chi^{-1}(3)}D_{2}S_{2, s}^{+}(g_\alpha)\sqrt{t}^{e_2}+\ldots)+(\sum_{\alpha\in\chi^{-1}(4)}\ldots)+\ldots\big)\\
&+\big((\sum_{\alpha\in\chi^{-1}(3)}D_{3}S_{3, s}^{+}(g_\alpha)\sqrt{t}^{e_3}+\ldots)+
(\sum_{\alpha\in\chi^{-1}(4)}D_{3}S_{3, s}^{+}(g_\alpha)\sqrt{t}^{e_3}+\ldots)+(\sum_{\alpha\in\chi^{-1}(5)}\ldots)+\ldots\big)+\ldots\Big)\\
\end{split}
\end{equation}
which equals (by \eqref{e-gue160416cde8} and $S^+_{\ell, s}(g_\alpha)=0$ for $\alpha\in\chi^{-1}(\ell')$ with $\ell'<\ell$), 
\begin{equation}
\label{e-gue160416cde13}\begin{split}
&\sum_{s=n,n-1\ldots}t^{-s}\times\Big(\sum_\alpha\eta_s(g_\alpha)+D_{1}S^+_{1, s}+\big(D_2S^+_{2, s}\sqrt{t}^{e_2}+O(\sqrt{t}^{e_2+1})\big)\\
&\qquad\qquad\qquad\qquad\qquad\qquad +\big(D_3S^+_{3, s}\sqrt{t}^{e_3}+O(\sqrt{t}^{e_3+1})\big)+\ldots\big)+\ldots\Big). 
\end{split}
\end{equation}

For $s=n$, we have $S^+_{\ell, n}=\frac{1}{2\pi}(2\pi)^{-n}\mathrm{vol}(X_{p_\ell})$ 
(see iii) of Proposition~\ref{t-gue160416b}).  

For the term given by the sum $\sum_\alpha\eta_s(g_\alpha)$ in \eqref{e-gue160416cde13}, 
by Proposition~\ref{t-gue160416cc} 
we obtain $\sum_\alpha|\eta_s(g_\alpha)|\le \delta\cdot\mathrm{vol}(X)$ 
(as $\sum_\alpha\int_X g_\alpha dv_X=\mathrm{vol}(X)$), 
$s=n, n-1, \ldots, -m$, where $\delta>0$ and 
$m\in \mathbb{N}$ are arbitrarily prescribed.    
By the definition of asymptotic expansion (cf. Definition~\ref{d-gue150608})
and the fact that \eqref{e-gue160416cde13} has been an asymptotic expansion, one sees 
the term $\sum_\alpha\eta_s(g_\alpha)$ becomes immaterial to the exact form of the asymptotic expansion. 

Further, the asymptotic expansion of $\int_X \mathrm{Tr}\,a_s^+(t, x, x) dv_X(x)$ of \eqref{e-gue160416ce1}
basically follows from that of $\int_X \mathrm{Tr}\,e^{-t\widetilde\Box^+_{b, m}}(x, x)dv_X(x)$.  

We have now proved (part of) the main result of this section.  

\begin{thm} (cf. Theorem~\ref{t-gue160416})
\label{t-gue160416d}  Suppose $X=\overline X_{p_1}\supsetneq \overline X_{p_2}
\supsetneq\cdots \supsetneq X_{p_k}=\overline X_{p_k}$ with each stratum $X_{p_\ell}$ a connected submanifold.   
Let $a^+_s(t,x,y)\,(=a_{s,m}^+(t, x, y))$, $s=n,n-1,\ldots$, be as in 
\eqref{e-gue150630gI}.  Write $e_2$ for the (real) codimension 
of $X_{p_2}$ (which is an even number, cf. Remark~\ref{r-gue160416r2} below). 
(Recall the numerical factors $D_{\ell,m}$ as given in Definition~\ref{d-gue160416d2d} 
and the integrals $S^+_{\ell,s}\,(=S^+_{\ell, s, m})$ in \eqref{e-gue160416cde8} with subscripts simplified 
in the present case.)  Then the following holds.  

i) As $t\To0^+$,
\begin{equation}  \label{e-gue160417wII}\begin{split}
\int_X\mathrm{Tr}\,&e^{-t\widetilde\Box^+_{b,m}}(x,x)dv_X(x) \\
&\sim D_{1, m}\big((2\pi)^{-1}(2\pi t)^{-n}\mathrm{vol}(X)
+t^{-n+1}S^+_{1, n-1}+t^{-n+2}S^+_{1, n-2}+\ldots\big)\\ 
&\qquad+(2\pi)^{-(n+1)}D_{2, m}\mathrm{vol}(X_{p_2})t^{-n+\frac{e_2}{2}}+O(t^{-n+\frac{e_2+1}{2}}).
\end{split}
\end{equation}
In particular, by $\sum_{q=1}^{p_{1}}e^{-{\frac{2\pi i}{p_{1}}}qm}=p_{1}$ 
for $p_{1}|m$ and $0$ otherwise, one has $D_{1, m}=p_1$ if $p_1|m$.    
If $p_2|m$ (thus $p_1|m$ too), then $D_{1,m}, D_{2, m}>0$.    

ii) In the asymptotic expansion \eqref{e-gue160417wII},
all the coefficients of $t^j$ for $j$ being half-integral, vanish.  


iii) As a consequence of \eqref{e-gue160416cde13} and ii) 
\begin{equation}\label{e-gue160416wIIe1}
\int_X \mathrm{Tr}\,a_{s,m}^+(t, x, x) dv_X(x)\sim
D_{1,m}S^+_{1, s}+D_{2,m}S^+_{2, s}{t}^{\frac{e_2}{2}}+O({t}^{\frac{e_2}{2}+1}),\,\,\mbox{ as\, $t\to 0^+$}.
\end{equation}

The similar results hold true for the case $\int_X\mathrm{Tr}\,e^{-t\widetilde\Box^-_{b,m}}(x,x)dv_X(x)$ 
and $\int_X\mathrm{Tr}\,a_{s,m}^-(t, x, x))dv_X(x)$ as well. 
\end{thm}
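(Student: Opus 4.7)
The plan is to derive all three statements directly from the asymptotic expansion already established in \eqref{e-gue160416cde13} together with basic parity arguments adapted to the CR $S^1$-setting; the machinery of Subsections 7.2--7.4 does most of the heavy lifting, so what remains is essentially to recognize that (a) the leading coefficient is the stated volume factor, (b) the isotropy eigenspaces split as complex subspaces, forcing each codimension $e_\ell$ to be even, and (c) only even Taylor coefficients of $\hat b^+_{j,s}$ around fixed point strata contribute after Gaussian integration in the normal directions.

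For statement i), I would substitute the formulas $S^+_{\ell,n}=(2\pi)^{-1}(2\pi)^{-n}\mathrm{vol}(X_{p_\ell})$ (coming from case b) of iii) of Proposition~\ref{t-gue160416b}, where $\mathrm{Tr}\,b^+_{j,n}(z,z)\equiv(2\pi)^{-n}$) into the first two leading terms of \eqref{e-gue160416cde13}, then handle the remainder term $\sum_\alpha \eta_s(g_\alpha)$ as follows: by iii) of Proposition~\ref{t-gue160416c}, $|\eta_s(g_\alpha)|\le \delta \int_X g_\alpha\,dv_X$, so $\sum_\alpha |\eta_s(g_\alpha)|\le \delta\cdot\mathrm{vol}(X)$ for each $s$ in any preassigned range, which is precisely the condition in Definition~\ref{d-gue150608} for the remainder to be negligible in the sense of asymptotic expansions. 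The positivity assertion on $D_{1,m}, D_{2,m}$ when $p_2\mid m$ follows by direct inspection of the defining formulas \eqref{e-gue160416cde11}, \eqref{e-gue160416p2e1}.

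The main obstacle lies in statement ii), the vanishing of half-integral powers. My approach is two-fold. First, observe that $X_{p_\ell}$ is the fixed-point set of the isometry $e^{-2\pi i/p_\ell}$ which, by transversality and CR-compatibility of the $S^1$ action, acts on $T_xX$ preserving the splitting $\mathbb{C}T=T^{1,0}\oplus T^{0,1}\oplus\mathbb{C}T_{\mathbb{R}}$ and fixing the Reeb direction; hence the normal bundle $N(X_{p_\ell}/X)$ inherits a complex structure (it sits inside $T^{1,0}\oplus T^{0,1}$), so every $e_\ell$ is \emph{even}, already making each explicit power $\sqrt{t}^{e_\ell}$ in \eqref{e-gue160416cde13} integral. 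Second, for the intermediate terms hidden inside the $O(\sqrt{t}^{e_{i(c)}+1})$ remainders of Case b) of iii) of Proposition~\ref{t-gue160416b}, I would refine \eqref{e-gue160416bpe1} by Taylor-expanding $\hat b^+_{j,s}((\sqrt{t}\hat y,Y),e^{-iu}(\sqrt{t}\hat y,Y))$ in $\hat y$ around $\hat y=0$ (on $\overline X_{p_{\ell(\gamma_\ell)}}$) and combining with the exact quadratic form \eqref{e-gue160417c1} for $\hat h_{j,+}/t$: the resulting integrals are Gaussians of the form $\int_{\mathbb{R}^{e_\ell}}\hat y^{\alpha}\,e^{-Q(\hat y)}d\hat y$ where $Q$ is a positive-definite quadratic form produced by the isotropy eigenvalues; by parity, this vanishes whenever any component of $\alpha$ is odd, so only $|\alpha|$ even contributes, producing only $t^{|\alpha|/2}$ which is an integral power. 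Combined with $e_\ell$ being even, every surviving power of $t$ is integral, giving ii).

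Statement iii) is then immediate: matching the coefficient of $t^{-s}$ on both sides of \eqref{e-gue160416cde13} once half-integral powers have been eliminated by ii), and using that the asymptotic expansion of $\int_X \mathrm{Tr}\,e^{-t\widetilde\Box^+_{b,m}}(x,x)\,dv_X$ coincides with that of $\sum_s t^{-s}\int_X\mathrm{Tr}\,a^+_{s,m}(t,x,x)\,dv_X$ by Theorem~\ref{t-gue150630I}, yields \eqref{e-gue160416wIIe1}. The statements for $\widetilde\Box^-_{b,m}$ and $a^-_{s,m}$ follow by replacing $+$ with $-$ throughout since all arguments in Sections 5--7 treat both cases symmetrically. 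The hardest conceptual step is the parity argument in ii), because it requires verifying that BRT-dependent Taylor expansions give BRT-independent vanishing statements; the intrinsic nature of $e^{-t\widetilde\Box^+_{b,m}}(x,x)$ (independent of choice of BRT charts) ensures this is legitimate, but I would spell out that the Gaussian on each normal fiber $N_{x_0}(\overline X_{\ell-q+1}/\overline X_{\ell-q})$ is determined by the isotropy eigenvalues \eqref{e-gue160417aae2a}, which are geometric invariants of the $S^1$ action, not of the chart.
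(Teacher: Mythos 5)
Your proposal follows essentially the same approach as the paper: substituting the explicit leading coefficients $\mathrm{Tr}\,b^+_{j,n}\equiv(2\pi)^{-n}$ into the rearranged sum \eqref{e-gue160416cde13} for i), invoking the evenness of $e_\ell$ (proved via $J$-invariance of the isotropy action exactly as in Remark~\ref{r-gue160416r2}) together with the Taylor-expansion-plus-Gaussian-parity argument for ii), and reading off iii) from ii) and the asymptotic equivalence of $e^{-t\widetilde\Box^+_{b,m}}(x,x)$ with $\sum_s t^{-s}a^+_{s,m}(t,x,x)$ given by Theorem~\ref{t-gue150630I}. The one remark worth making is that your observation at the end---that intrinsicness of $e^{-t\widetilde\Box^+_{b,m}}(x,x)$ legitimizes the BRT-dependent Taylor computations---is left implicit in the paper but is indeed the conceptual point undergirding the whole of Subsection~7.4, so it is good that you made it explicit.
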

 
\begin{proof} It remains to prove ii) of the theorem.  
Recall the last two paragraphs of the proof of Proposition~\ref{t-gue160416b}, 
especially the item $\delta$) there.  
In the present case, by scaling $(\hat y\to \sqrt{t}\hat y)$ and using \eqref{e-gue160417c1}, it reduces to 
computing the expansion (in $\sqrt{t}$) of 
\begin{equation}\label{e-gue160416te1}
\begin{split} & a)\quad  \hat b_{j,s}^+
((\sqrt{t}{\hat y}, Y),e^{-iu}\circ (\sqrt{t}{\hat y}, Y))\\
&b) \quad dv_X(x)
\end{split}
\end{equation}
for a fixed $u$.   

Write $g_u(x)=e^{-iu}\circ x$ and 
$\hat b_{j,s}^+
((\sqrt{t}{\hat y}, Y),e^{-iu}\circ (\sqrt{t}{\hat y}, Y))
=(\hat b_{j, s}\circ (\mathrm{id},g_u))((\sqrt{t}{\hat y}, Y), (\sqrt{t}{\hat y}, Y))$.  
By $\delta$) mentioned above, $g_u(0, Y)$ is only away from $(0, Y)$ 
by a small difference $(\le \varepsilon)$ in their $\theta$-coordinates, hence by continuity,
$g_u((\sqrt{t}{\hat y}, Y))$ lies in an $O(2\varepsilon)$-small neighborhood 
of $(0,Y)$ (as $t\to 0^+$), giving that   
the Taylor expansion of $\hat b_{j, s}(x, y)$, $x=(\sqrt{t}{\hat y}, Y), y=e^{-iu}\circ x$, 
around $x=y=(0,Y)\equiv Y\equiv 0$ can be done in terms of integral powers of $\sqrt{t}\hat y_i$ where 
$\hat y_i$ is in $\hat y$.   
Hence the coefficients of the $t^{j}$ for $j$ being half-integral must involve 
an odd power of some variable $\hat y_i$ in $\hat y$.  Since $\hat y$ sits in an even dimensional space (cf.
i) of Remark~\ref{r-gue160416r2} below), $dv_X(x)$ is of integral power in $t$.   
With a), b) of \eqref{e-gue160416te1}, by using i) the claim \eqref{e-gue160417c1}, 
ii) $\int_{-\infty}^{\infty}e^{-\hat y_i^2}\hat y_i^{n}d\hat 
y_i=0$ for an 
odd number $n$ and iii) for a polynomial $P(x)$, 
$\int_{1/{\sqrt{t}}}^{\infty}e^{-x^2}P(x)dx\sim O(t^{\infty})$ (as $t\to 0^+$), 
our assertion about the asymptotic expansion in ii) of the theorem follows.  
\end{proof}
\begin{rem}
\label{r-gue160416r1}
One may think of the second line in \eqref{e-gue160417wII} as 
the {\it main terms} which remind one of the close relation between 
the Kodaira Laplacian and Kohn Laplacian (cf. Proposition~\ref{l-gue150606}).
However, one key point in this paper is the idea that 
if the $S^1$ action is locally free (but not globally free) on $X$, 
then this relation cannot be altogether extended to 
their heat kernels.  In this regard the {\it correction terms} exist, and consist in 
the third line of \eqref{e-gue160417wII} 
linked up with the higher strata of the (locally free) $S^1$ action beyond the principal stratum.

\end{rem}

\begin{rem}
\label{r-gue160416r2} i) We argue $e_{\ell}$ is even.  $X_{p_\ell}$ is $S^1$ invariant; 
$TX_{p_\ell}=(\mathbb{R}T^{\perp}\cap TX_{p_\ell})\oplus \mathbb{R}T|_{X_{p_\ell}}$ 
where $\mathbb{R}T$
is the line subbundle of $TX$ generated by $\partial/{\partial \theta}$ such that $ \mathbb{R}T\oplus \mathbb{R}T^\perp=TX$.
In a BRT chart $U\times ]\varepsilon, \varepsilon[$ we denote $U\times \{0\}$ ($\subset X$) by $\tilde U$.  Write 
$(\mathbb{R}T^{\perp}\cap TX_{p_\ell})|_{\tilde U\cap X_{p_\ell}}\equiv E$.   For any given $p\in U$, 
with $\tilde p=p\times \{0\}$, 
one may choose a BRT coordinate such that $E_{\tilde p}\subset T_{\tilde p}\tilde U$ 
(see the proof of Lemma~\ref{l-gue160417lem2} where $T_{\tilde p}\tilde U=\mathbb{R}T^{\perp}|_{\tilde p}$). 
Without loss of generality we may assume $\tilde p\in X_{p_\ell}$.  
Write $g=e^{i2\pi/p_\ell}\,(\in S^1)$ which is CR and an isometry with the fixed point set $\overline X_{p_\ell}$.   
By $dg\circ J=J\circ dg=J$ on $E_{\tilde p}$ with $J$ the complex structure of $T_{\tilde p}{\tilde U}$, 
$E_{\tilde p}$ is invariant under $J$.   It follows $E$ is of even dimension, so 
$X_{p_\ell}$ is of odd dimension, i.e. $e_{\ell}$ is even.  
ii) For $1\le \ell\le k$ we can write $X_{p_\ell}\to M_{p_\ell}$ for a complex manifold $M_{p_\ell}$, 
as an $S^1$ fiber bundle.   The quantities $\alpha_s^{\pm}(x)$ by the construction (see 
\eqref{e-gue160416bce7}-\eqref{e-gue160416cde8}) is $S^1$ invariant 
hence descend to $M_{p_\ell}$.   One sees $S_{X_\ell, s}^{\pm}\,(\equiv S_{\ell,s}^{\pm})\,=
\frac{2\pi}{p_\ell}S^{\pm}_{M_{p_\ell,s}}(S^{\pm}_{M_{p_\ell,s}}=\int_{M_{p_\ell}}\mathrm{Tr}\,\alpha^{\pm}_sdv_{M_{p_\ell}})$. 
Here, the metric on $M_{p_\ell}$ (cf. $dv_{M_{p_\ell}}$) is defined in a way similar to that given in \eqref{e-gue150823e1}. 
This suggests a question of how the heat kernels (for the locally free $S^1$ action) of the present paper
may be connected with (certain suitably defined) heat kernels in the orbifold base $X/S^1$.  
In a certain Riemannian setting, some work in a similar direction has been done (cf. \cite[Theorems 3.5, 3.6]{Ri10}).  
\end{rem}

\subsection{Types for $S^1$ stratifications; proof for the general type}
\label{s-gue160416s4}
Lastly, to modify the above reasoning to the case beyond the simple type is essentially not difficult.  
Suppose, say, $X_{p_2}$ has several connected components $Y_i$ such that the simple type condition 
is assumed along each component $Y_i$.   Then, clearly the above argument applies to the individual $Y_i$
and the result is just to sum up over $i$.   
Without assuming the simple type condition on $Y_i$, say, inside some $Y_i$ the next stratum $X_{p_3}$ 
has seated several components $Z_j$ or some components $Z_j$ 
are seated even outside of each $Y_i$.  Then by localization argument
along each $Z_j$ just as done above, one repeats the pattern similarly.   The process continues.  

We are now motivated to transplant the notion of ``type", ``class" in Definition~\ref{d-gue160416t1} for the integral 
$I$ of \eqref{e-gue160416be1} into the geometry of the stratification of the $S^1$ action.   

For a connected component $X_{p_{\ell(\gamma_\ell)}}\subset X_{p_\ell}$,
$\gamma_\ell=1,\ldots, s_\ell$, contained in 
the higher dimensional connected components of the strata 
\begin{equation*}
(X=)\overline X_{p_{i_1(\gamma_{i_1})}}\supsetneq \overline X_{p_{i_2(\gamma_{i_2})}} 
\ldots  \supsetneq \overline X_{p_{i_f(\gamma_{i_f})}}\supsetneq \overline X_{p_{i_{f+1}(\gamma_{i_{f+1}})}}
=\overline X_{p_{\ell(\gamma_\ell)}}
\end{equation*}
where $i_1=1<i_2<\ldots<i_f<i_{f+1}=\ell\in \{1, 2, ,\ldots, \ell-1,\ell\}$,
we define its type $\tau(X_{p_{\ell(\gamma_\ell)}})$ by 
\begin{equation}
\label{e-gue160416r2e1}
\tau(X_{p_{\ell(\gamma_\ell)}})\equiv\tau(\ell(\gamma_\ell)):=({i_1(\gamma_{i_1})}, {i_2(\gamma_{i_2})},\ldots, i_f(\gamma_{i_f}), i_{f+1}(\gamma_{i_{f+1}})),\quad
i_1=\gamma_{i_1}=s_{1}=1; i_{f+1}=\ell. 
\end{equation}
One has $p_{i_1}|p_{i_2}|\ldots|p_{i_{f+1}}$ (cf. Remark~\ref{r-gue150804} for a similar case). 

The notions such as {\it simple type}, {\it class} and {\it length} $l(\tau)$
are defined similarly, cf. Definition~\ref{d-gue160416t1}.   (No definition of 
{\it trivial type} is given here.    See iii) of Definition~\ref{d-gue160416d2} below in which 
$i(c, [\tau])=\infty$ corresponds to the trivial type, cf. iii) of Definition~\ref{d-gue160416bc}.)  

Recall if $M\subset N$ is a finite disjoint union of 
submanifolds $M_j$, then the dimension of $M$ is $\max_j\{\mathrm{dim}_{\mathbb{R}}\,M_j\}$
and the codimension of $M$ is $\mathrm{dim}_{\mathbb{R}}\, N-\mathrm{dim}_{\mathbb{R}}\,M$. 

The following definition, which is bit tedious yet bears a great similarity as previously,
is set up for the immediate use in the general situation.  
\begin{defn}
\label{d-gue160416d2} 
i) Write $\nu_\ell=\{[\tau]; \tau=\tau(X_{p_{\ell(\gamma_\ell)}}), \gamma_\ell=1, 2, \ldots ,s_\ell\}$ 
for the set of equivalence classes of types $\tau=\tau(X_{p_{\ell(\gamma_\ell)}})$ of 
connected components $X_{p_{\ell(\gamma_\ell)}}$ in $X_{p_\ell}$.

ii) Write (similar to \eqref{e-gue160416cde8}, \eqref{e-gue160416bce7a}) \begin{equation*}
S^+_{\ell(\gamma_\ell),s}\,(=S^+_{X_{p_{\ell(\gamma_\ell)}},s, m})\,=
\int_{\overline X_{p_{\ell(\gamma_\ell)}}}\mathrm{Tr}\,\alpha_{s}^+(Y, Y)dv_{\overline X_{p_{\ell(\gamma_\ell)}}}(Y)
\qquad(\alpha^+_s=\alpha_{s,m}^+)
\end{equation*}
associated with $X_{p_{\ell(\gamma_\ell)}}$.  

iii) Let $[\tau]=[({i_1(\gamma_{i_1})}, {i_2(\gamma_{i_2})},\ldots, i_f(\gamma_{i_f}), i_{f+1}(\gamma_{i_{f+1}}))]$ be given.  If $c|p_1$, define $i(c, [\tau])=1$.  (Hence it is independent of $[\tau]$.)  If  $c\not|p_1$ and 
$c|p_\ell$, $\ell=i_s$ for some $s$, $2\le s\le {f+1}$, such that $c\not|i_{s'}$ for all $s'<s$.
Then $i(c, [\tau]):=\ell\ge 2$.    If $c\not|p_{i_s}$ for $1\le s\le t+1$, $i(c, [\tau]):=\infty$. 
We may write $i(c)$ for $i(c, [\tau])$.  

iv) For $i(c, [\tau])\ge 2$, define the numerical factors 
 $d_{c,m,[\tau]}$ correspondingly as in \eqref{e-gue160416p2re2}.  For $i(c)=1$, 
define $d_{c,m,[\tau]}=d_c$
(which is independent of $\tau$) as in \eqref{e-gue160416p2e1}.  

v)  For a given $[\tau]$, if $i(c)=1$, then 
define the weight factors $D_1$ as in \eqref{e-gue160416cde11} (which is independent of $\tau$)
and if $i(c)\ne 1, \infty$, define $D_{\ell,[\tau]}\,(=D_{\ell,m, [\tau]})\,\equiv\sum_{c,\, i(c)=\ell}d_{c,m, [\tau]}$.  

vi) Write $e_{i_q, [\tau]}\equiv e_{i_q(\gamma_{i_q})}$ with $\tau(\ell(\gamma_\ell))=(i_1(\gamma_{i_1}), \ldots,i_q(\gamma_{i_q}),\ldots,  i_{f+1}(\gamma_{i_{f+1}}))$ for the codimension of 
$X_{p_{i_q}(\gamma_{i_q})}$.  Obviously, $e_{i_1, [\tau]}<e_{i_2, [\tau]}<\cdots$.
For $[\tau]\in \nu_\ell$, write $e_{[\tau]}\equiv  e_{i_{f+1}(\gamma_{i_{f+1}})}$, i.e. $e_{\ell(\gamma_\ell)}$.  

vii) Write $e
=\min_{\scriptstyle [\tau]\in \nu_\ell\atop \scriptstyle 2\le \ell\le k}e_{[\tau]}\,
(=\min_{\scriptstyle \tau, l(\tau)= 2}e_{[\tau]}\mbox{\,\, by vi) above})$ 
and for $\ell\ge 2$, 
\begin{equation*}
\hat\nu_\ell=\{[\hat\tau_\ell]\in \nu_\ell; \,\,\mbox{$\hat\tau_\ell=(1, \ell(\gamma_\ell))$ of length two 
such that  $e_{\ell(\gamma_\ell)}=e$, i.e. $e_{[\hat \tau_\ell]}=e$}\}\subset \nu_\ell.
\end{equation*}
Of course, it is not ruled out that for some values of $\ell$, $\hat\nu_\ell$ could be an empty set.   
Intuitively, one thinks of $e$ as the minimal codimension among those connected components $X_{p_{\ell(\gamma_\ell)}}$
such that if $\overline X_{p_{\ell'}}\supsetneq  X_{p_{\ell(\gamma_\ell)}}$, then $ X_{p_{\ell'}}=X_{p_1}$ the 
principal stratum.  

viii) For a fixed $[\kappa]\in \nu_\ell$ in i), write (see \eqref{e-gue160416r2e1} for $\tau(\ell(\gamma_\ell))\equiv \tau(X_{p_{\ell(\gamma_\ell)}})$) $$Z_{[\kappa], s}=
\sum_{\scriptstyle \gamma_\ell, \,[\tau(\ell(\gamma_\ell))]=[\kappa]\atop\scriptstyle 1\le \gamma_\ell\le s_\ell }S^+_{\ell(\gamma_\ell), s}.$$
\end{defn}

For the case of general type, we can obtain results corresponding to 
\eqref{e-gue160416cde9}, \eqref{e-gue160416cde12} and \eqref{e-gue160416cde13}
(yet complicated in expressions here).  
We are content with summarizing the final result as follows.   
\begin{equation}\label{e-gue160416r2e2}
\begin{split}
&\int_X \mathrm{Tr}e^{-t\widetilde\Box^+_{b, m}}(x, x)dv_X(x)\\
&\sim\sum_{s=n,n-1\ldots}t^{-s}\times\Big(D_{1}S^+_{1, s}+
\sum_{[\kappa_2]\in \nu_2}\big(D_{2,[\kappa_2]}Z_{[\kappa_2], s}
\sqrt{t}^{e_{[\kappa_2]}}+O(\sqrt{t}^{e_{[\kappa_2]}+1})
\big)\\
&\qquad\qquad\qquad\qquad +
\sum_{[\kappa_3]\in \nu_3}\big(D_{3,[\kappa_3]}Z_{[\kappa_3], s}
\sqrt{t}^{e_{[\kappa_3]}}+O(\sqrt{t}^{e_{[\kappa_3]}+1})
\big)+\ldots\Big). 
\end{split}
\end{equation}



The following main result of this subsection parallels Theorem~\ref{t-gue160416d} in the last subsection.
By comparison, to collect the coefficients for the next leading order in $t$ or $\sqrt{t}$ in \eqref{e-gue160416r2e2} here, we 
have a slightly more complicated summation (regarded as part of {\it corrections} as indicated 
in Remark~\ref{r-gue160416r1}) in the formula below.   Note that the conversion to the stated form of 
Theorem~\ref{t-gue160416} is nothing but a direct consequence of an examination (slightly tedious) of the 
various definitions here.  

\begin{thm} (cf. Theorem~\ref{t-gue160416})
\label{t-gue160416dc}  Notations as in Theorem~\ref{t-gue160416d}
without assuming the conditions of connectedness and simple-type there.   
The weight  factors $D_{\ell,m,[\tau]}$,
the integrals $S^+_{\ell(\gamma_\ell),s}\,(=S^+_{\ell(\gamma_\ell),s,m})$, $e$, $\hat \tau_\ell$ etc. are just given above.  
One has the following. 

i) As $t\To0^+$,
\begin{equation}  \label{e-gue160417wII1}\begin{split}
\int_X\mathrm{Tr}\,&e^{-t\widetilde\Box^+_{b,m}}(x,x)dv_X(x) \\
&\sim D_{1, m}\big((2\pi)^{-1}(2\pi t)^{-n}\mathrm{vol}(X)
+t^{-n+1}S^+_{1, n-1}+t^{-n+2}S^+_{1, n-2}+\ldots\big)\\ 
&\qquad+t^{-n+\frac{e}{2}}
(\sum_{\scriptstyle [\hat\tau_\ell]\in \hat\nu_\ell\atop\scriptstyle 2\le\ell\le k}D_{\ell, m, [\hat\tau_\ell]}Z_{[\hat\tau_\ell],n})
+O(t^{-n+\frac{e+1}{2}})
\end{split}
\end{equation}
(where recall $Z_{[\hat\tau_\ell],n}=(2\pi)^{-(n+1)}\sum_{\scriptstyle \gamma_\ell,\,1\le \gamma_\ell\le s_\ell
\atop \scriptstyle \mathrm{codim}\,X_{p_{\ell(\gamma_\ell)}}=e}
\mathrm{vol}(X_{p_{\ell(\gamma_\ell)}})>0$ in the locally free case of the $S^1$ action).  
If $p_\ell|m$ (thus $p_1|m$ too), then $D_{1,m}, D_{\ell,m,[\hat\tau_\ell]}\,>0$.  

ii) In the asymptotic expansion \eqref{e-gue160417wII1},
all the coefficients of $t^j$ for $j$ being half-integral, vanish.  


iii) As a consequence of \eqref{e-gue160416r2e2} and ii),
\begin{equation}\label{e-gue160416wIIe1}
\int_X \mathrm{Tr}\,a_{s,m}^+(t, x, x) dv_X(x)\sim
D_{1,m}S^+_{1, s}+
t^{\frac{e}{2}}
(\sum_{\scriptstyle [\hat\tau_\ell]\in \hat\nu_\ell\atop\scriptstyle 2\le\ell\le k}
D_{\ell,m,[\hat\tau_\ell]}Z_{[\hat\tau_\ell], s})+O(t^{\frac{e}{2}+1})
\end{equation}
(where $Z_{[\hat\tau_\ell], s}=\sum_{\scriptstyle \gamma_\ell,\,1\le \gamma_\ell\le s_\ell
\atop \scriptstyle  \mathrm{codim}\,X_{p_{\ell(\gamma_\ell)}}=e}S^+_{\ell(\gamma_\ell), s}$).

The similar results hold for $\int_X\mathrm{Tr}\,e^{-t\widetilde\Box^-_{b,m}}(x, x))dv_X(x)$ 
and $\int_X \mathrm{Tr}\,a_{s,m}^-(t, x, x)dv_X(x)$.  
\end{thm}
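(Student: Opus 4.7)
The plan is to mimic closely the argument for Theorem~\ref{t-gue160416d} in the last subsection, but with a partition of unity refined enough that each piece ``sees'' only one class of type in the sense of Definition~\ref{d-gue160416d2}, and then to reorganize the summation according to the equivalence classes $[\tau]\in \nu_\ell$ rather than according to the ordering $X_{p_1}\supsetneq X_{p_2}\supsetneq\cdots$ used in the connected simple-type case. Concretely, I would start by choosing a (finite) BRT covering $\{\hat D_j\}$ satisfying Lemma~\ref{l-gue160416rl} together with a partition of unity $\{g_\alpha\}_{\alpha}$ subordinate to open sets $\Omega_\alpha\Subset\hat D_{j(\alpha)}$ so that (i) each $g_\alpha$ is of small support in the sense of Definition~\ref{d-gue160416d2c}, (ii) $\Omega_\alpha$ is small enough that the local type $\tau(\Omega_\alpha)$ from Remark~\ref{r-gue160416bcr} is well-defined and constant on $\mathrm{Supp}\,g_\alpha$, and (iii) one has distinguished BRT charts at every point of $\Omega_\alpha$ as in Definition~\ref{d-gue160416}.

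Next I would apply Proposition~\ref{t-gue160416cc} to each $\int_X g_\alpha(x)\,\mathrm{Tr}\,e^{-t\widetilde\Box^+_{b,m}}(x,x)\,dv_X(x)$. The expansion \eqref{e-gue160416cce1} now uses the general-type numerical factors $d_{c,g_\alpha,m,\tau(\Omega_\alpha)}$ from \eqref{e-gue160416p2re2}. The key observation is that $d_{c,g_\alpha,m,\tau(\Omega_\alpha)}$ depends on $\alpha$ only through the class $[\tau(\Omega_\alpha)]$, which justifies defining $D_{\ell,m,[\tau]}=\sum_{c,\, i(c,[\tau])=\ell}d_{c,m,[\tau]}$ as in item v) of Definition~\ref{d-gue160416d2}. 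Summing over $\alpha$, the local integrals $S^{+}_{\ell(\gamma_\ell),s}(g_\alpha)$ (defined as in \eqref{e-gue160416cde8}) patch up, via the partition-of-unity identity $\sum_\alpha g_\alpha\equiv 1$ on each stratum, to the intrinsic global integrals $S^{+}_{\ell(\gamma_\ell),s}$; the summation of all terms whose local class is a fixed $[\kappa]\in\nu_\ell$ produces precisely $Z_{[\kappa],s}=\sum_{\gamma_\ell,\,[\tau(\ell(\gamma_\ell))]=[\kappa]}S^{+}_{\ell(\gamma_\ell),s}$ of viii) of Definition~\ref{d-gue160416d2}. The $\eta_s(g_\alpha)$ remainders (which by Proposition~\ref{t-gue160416c} iii) satisfy $|\eta_s(g_\alpha)|\le\delta\int g_\alpha\,dv_X$) sum to something bounded by $\delta\,\mathrm{vol}(X)$, and since $\delta>0$ and the truncation parameter $m$ in Proposition~\ref{t-gue160416cc} are arbitrary, they do not enter the asymptotic expansion in the sense of Definition~\ref{d-gue150608}. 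This yields the master formula \eqref{e-gue160416r2e2}.

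To extract i), I would collect the leading exponent in $t$ of each $\sqrt{t}^{e_{[\kappa]}}$-block in \eqref{e-gue160416r2e2}. The $D_{1,m}$-block reproduces the principal-stratum contribution; using $\mathrm{Tr}\,b^+_{j,n}(z,z)\equiv(2\pi)^{-n}$ (from the last sentence of Proposition~\ref{t-gue160416b} iii)) at $s=n$ gives $S^{+}_{1,n}=(2\pi)^{-(n+1)}\mathrm{vol}(X)$. Among the correction blocks, the smallest power of $\sqrt{t}$ appearing is $\sqrt{t}^{e}$ with $e=\min_{\ell\ge2,\,[\kappa]\in\nu_\ell}e_{[\kappa]}$. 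Since the minimum codimension is attained only on those components whose type has length two (any longer type forces a strictly larger codimension at the final entry), the extremal classes are exactly $[\hat\tau_\ell]\in\hat\nu_\ell$, and the coefficient of $t^{-n+e/2}$ in $\int_X\mathrm{Tr}\,e^{-t\widetilde\Box^+_{b,m}}(x,x)\,dv_X(x)$ is $\sum_{\ell,\,[\hat\tau_\ell]\in\hat\nu_\ell}D_{\ell,m,[\hat\tau_\ell]}Z_{[\hat\tau_\ell],n}$. Using $\mathrm{Tr}\,b^+_{j,n}\equiv(2\pi)^{-n}$ once more converts $Z_{[\hat\tau_\ell],n}$ into $(2\pi)^{-(n+1)}\sum_{\gamma_\ell:\,\mathrm{codim}\,X_{p_{\ell(\gamma_\ell)}}=e}\mathrm{vol}(X_{p_{\ell(\gamma_\ell)}})$, which is $>0$ when $p_\ell\mid m$ by positivity of $D_{\ell,m,[\hat\tau_\ell]}$ in that case (a direct check from \eqref{e-gue160416p2re2}).

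For ii), the vanishing of all half-integral-power coefficients, the argument from Theorem~\ref{t-gue160416d} ii) applies verbatim: after the $\hat y\to\sqrt{t}\,\hat y$ rescaling around each fixed-point submanifold $\overline X_{p_{\ell(\gamma_\ell)}}$, the Taylor expansion of $\hat b^+_{j,s}((\sqrt{t}\hat y,Y),e^{-iu}\circ(\sqrt{t}\hat y,Y))$ is in integral powers of $\sqrt{t}\hat y_i$, any half-integral $t$-power carries an odd monomial in some $\hat y_i$, and the Gaussian integral in the even-dimensional normal fiber (even by i) of Remark~\ref{r-gue160416r2}, which does not use connectedness) kills those. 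The hard part I anticipate is purely combinatorial bookkeeping: verifying that the double sum over $\alpha$ and over $c$ in Proposition~\ref{t-gue160416cc} reorganizes cleanly into the sum over equivalence classes $[\kappa]\in\nu_\ell$ weighted by $D_{\ell,m,[\kappa]}$ and $Z_{[\kappa],s}$, especially when two distinct components $X_{p_{\ell(\gamma_\ell)}}$ and $X_{p_{\ell(\gamma'_\ell)}}$ share class (so their contributions add) but sit inside different ambient chains of strata; the class equivalence of Definition~\ref{d-gue160416t1} is designed precisely to make this pooling legitimate, but the indexing needs to be handled carefully. Finally iii) follows by subtracting the pure $t^{-n}$-scaled leading coefficients of \eqref{e-gue160417wII1} from \eqref{e-gue160416r2e2}, exactly as \eqref{e-gue160416wIIe1} was deduced in Theorem~\ref{t-gue160416d}; the parallel statements for $\widetilde\Box^-_{b,m}$ require no new input.
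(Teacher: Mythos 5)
Your proposal follows the same route the paper takes: refine the partition of unity so that each $g_\alpha$ sees a single class $[\tau(\Omega_\alpha)]$, apply Proposition~\ref{t-gue160416cc}, pool contributions by equivalence class via the $Z_{[\kappa],s}$, and extract i)--iii) from the master expansion \eqref{e-gue160416r2e2} exactly as in Theorem~\ref{t-gue160416d}. The paper itself is terse at this point (``the modification is essentially not difficult''), so you have actually filled in more of the bookkeeping than the text does; your observation that the minimum $e$ is attained only by length-two types (by item vi) of Definition~\ref{d-gue160416d2}, since any longer chain contains a shallower stratum of strictly smaller codimension) is the same point the paper invokes in item vii), and the remainder of the argument (the $\eta_s$ estimate, the computation of $S^+_{1,n}$ and $Z_{[\hat\tau_\ell],n}$ via $\mathrm{Tr}\,b^+_{j,n}\equiv(2\pi)^{-n}$, the parity argument for ii)) is correct and matches.
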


\begin{rem}
\label{r-gue160416II}  The quantities involved above are computable 
in the sense that they are basically reduced to those involved in the (ordinary) Kodaira heat kernel 
by ii) of Definition~\ref{d-gue160416d2}, cf. Remark~\ref{r-gue160413a}.
\end{rem}

\begin{rem}
\label{r-gue160416}
It is not obvious how one can compute the {\it supertrace} integral, hence our index
theorem~\ref{t-gue150701}, Corollary~\ref{t-gue160226a} solely by techniques
similar to those derived in \eqref{e-gue160417wII1}, partly because here we are not using 
the {\it off-diagonal estimate} of Theorem~\ref{t-gue150627g} which 
is partly based on a cancellation result in Berenzin integral (see the proof 
of that theorem).    These results (of estimate and cancellation) appear to lie beyond what the geometry 
of the stratifications can reveal as done in this Section~\ref{s-gue160416}.  
 
\end{rem}

 \bigskip

{\emph{\ \textbf{Acknowledgements.} The first named author would like to
thank the Ministry of Science and Technology of Taiwan for the support of
the project: 104-2115-M-001-011-MY2. The second named author was partially
supported by the Ministry of Science and Technology of Taiwan for the
project: 104-2628-M-001-003-MY2 and the Golden-Jade fellowship of Kenda
Foundation. We would also like to thank Professor Paul Yang for his interest
and discussion in this work. The second named author would like to express
his gratitude to Professor Rung-Tzung Huang for useful discussion in this
work. }}

\bigskip


\begin{thebibliography}{99}
\bibitem{A74} M.-F. Atiyah, \textit{Elliptic operators and compact groups},
Lecture Notes in Mathematics \textbf{401}, Berlin: Springer-Verlag 1974.

\bibitem{BB} B. Booss and D.D. Bleecker, \emph{Topology and analysis:
the Atiyah-Singer index formula and gauge theoretic physics},
translated by D.D. Bleecker and A. Mader, 1985 Springer-Verlag New York Inc..

\bibitem{BGS} R. Beals, P.C. Greiner, and N.K. Stanton, \emph{The heat
equation on a CR manifold}, J. Diff. Geom. \textbf{20} (1984), 343-387.

\bibitem{BRT85} M.-S. Baouendi, L.-P. Rothschild and F. Treves, \emph{C{R}
structures with group action and extendability of {C}{R} functions}, Invent.
Math. \textbf{83} (1985), 359-396.

\bibitem{BFG}  M. Beals, C. Fefferman and R. Grossman, \emph{Strictly pseudoconvex 
domains in $\mathbb{C}^n$}, Bull. Amer. Math. Soc. \textbf{8} (1983), 125-322. 

\bibitem{BGV92} N. Berline, E. Getzler and M. Vergne, \emph{Heat Kernels
and Dirac Operators}, volume 298 of Grundlehren der mathematischen
Wissenschaften, Springer-Verlag, Berlin, Heidelberg, New York 1992.

\bibitem{Bl76} D.-E. Blair, \textit{Contact Manifolds in Riemannian Geometry}%
, Lecture Notes in Mathematics, Springer-Verlag \textbf{509} (1976).

\bibitem{BdM1:74b} L. Boutet de Monvel, \emph{Int\'egration des
\'equations de {C}auchy-{R}iemann induites formelles}, S\'eminaire
Goulaouic-Lions-Schwartz 1974--1975; \'Equations aux deriv\'ees partielles
lin\'eaires et non lin\'eaires, Centre Math., \'Ecole Polytech., Paris,
1975, Exp. no. 9, pp. 13.

\bibitem{BG08} C. Boyer and K. Galicki, \textit{Sasakian Geometry}, Oxford
University Press 2008.


\bibitem{BV68} E. Brieskorn and A. Van de Ven, \emph{Some complex structures
on products of homotopy spheres}, Topology \textbf{7} (1968), 389-393.

\bibitem{BH84} J. Br\"{u}ning and E. Heintze, \emph{The asymptotic expansion of 
Minakshisundaram-Pleijel in the equivariant case}, Duke Math. J. \textbf{51} (1984), 959-980.

\bibitem{BKR} J.~Br\"{u}ning, F-W.~Kamber and K.~Richardson, \emph{The eta
invariant and equivariant index of transversally elliptic operators},
preprint available at arXiv:1005.3845.

\bibitem{BKR08} J.~Br\"{u}ning, F-W.~Kamber and K.~Richardson, \emph{The equivariant 
index theorem for transversally elliptic operators and the basic index theorem for Riemannian foliations},
preprint available at arXiv:1008.2016.

\bibitem{Cha} I. Chavel, \textit{Eigenvalues in Riemannian Geometry},
1984 Academic Press, Inc..

\bibitem{CS01} S.-C. Chen and M.-C. Shaw, \textit{Partial differential
equations in several complex variables},AMS/IP Studies in Advanced
Mathematics, 19, American Mathematical Society, Providence, RI;
International Press, Boston, MA 2001.

\bibitem{CH} S.-S. Chern and R.-S. Hamilton, \emph{On Riemannian metrics
adapted to three-dimensional contact manifolds}, Lecture Notes in Math.
\textbf{1111}, 279-305.

\bibitem{CM} S.-S. Chern and J.K. Moser, \textit{Real hypersurfaces in
complex manifolds}, Acta Math. \textbf{133} (1974), 219-271.

\bibitem{CT00} J.-H. Cheng and I-Hsun Tsai, \emph{Deformation of spherical 
CR structures and the universal Picard variety}, Communications in Analysis and 
Geometry \textbf{8} (2000), 301-346.  

\bibitem{Dav95} E.-B. Davies, \emph{{Spectral} {Theory} and {Differential} {%
Operators}}, Cambridge Stud. Adv. Math. \textbf{42} (1995).

\bibitem{DGGW08} E. B. Dryden, C. S. Gordon, S. J Greenwald and D. L Webb,
\emph{Asymptotic Expansion of the Heat Kernel for Orbifolds}, Michigan Math.
J. \textbf{56} (2008), 205-238.

\bibitem{Do76} H. Donnelly, \emph{Spectrum and the fixed point sets of isometries. I}, 
Math. Ann. \textbf{224} (1976), 161-170.

\bibitem{Do79} H. Donnelly, \emph{Asymptotic expansions for the compact quotients
of properly discontinous group actions}, Illinois J. Math. \textbf{23} (1979), 485-496.  

\bibitem{Du11} J.-J. Duistermaat, \emph{The heat kernel Lefschetz fixed
point formula for the spin-c Dirac operator}, Reprint of the 1996 edition,
Modern Birkh\"auser Classics, Birkhuser/Springer, New York, 2011. xii+247 pp.

\bibitem{Du82} J.-J. Duistermaat and G.-J. Heckman, \emph{On the Variation
in the Cohomology of the Sympleetic Form of the Reduced Phase Space},
Invent. Math. \textbf{69} (1982), 259-268.

\bibitem{Ep92} C.-L. Epstein, \emph{CR-structures on three dimensional
circle bundles}, Invent. Math. \textbf{109} (1992), 351-403.

\bibitem{Ep98I} C.-L. Epstein, \emph{A relative index on the space of
embeddable CR-structures. I}, Ann. of Math. (2) \textbf{147} (1998), 1-59.

\bibitem{Ep98II} C.-L. Epstein, \emph{A relative index on the space of
embeddable CR-structures. II}, Ann. of Math. (2) \textbf{147} (1998), 61-91.

\bibitem{Eps06} C.-L. Epstein, \emph{Subelliptic boundary conditions for Spin%
$^c$-Dirac operators, gluing, relative indices, and tame Fredholm pairs},
Proc. Natl. Acad. Sci. USA \textbf{103} (2006), 15364-15369.

\bibitem{Ep08} C.-L. Epstein, \emph{Subelliptic $\mathrm{Spin\,}_{\mathbb{C}%
} $ Dirac operators. III. The Atiyah-Weinstein conjecture}, Ann. of Math.
(2) \textbf{168} (2008), 299-365.

\bibitem{Eps12} C.-L. Epstein, \emph{Addendum to:\,Subelliptic Spin$_\mathbb{%
C}$ Dirac operators, III}, Ann. of Math. \textbf{176} (2012), 1373-1380.

\bibitem{Erp1} E.-V. Erp, \emph{The Atiyah-Singer index formula for
subelliptic operators on contact manifolds. Part I}, Ann. of Math. \textbf{%
171} (2010), 1647-1681.

\bibitem{Erp2} E.-V. Erp, \emph{The Atiyah-Singer index formula for
subelliptic operators on contact manifolds. Part II}, Ann. of Math.
\textbf{171} (2010), 1683-1706.

\bibitem{F} D.-S. Fitzpatrick, \emph{Almost CR quantization via the index of
transversally elliptic Dirac operators}, Ph.D. thesis, University of Toronto 2009.

\bibitem{FK72} G. B. Folland and J. J. Kohn, \emph{The Neumann problem for the Cauchy-Riemann complex},
Ann. of math. studies  no. {\bf 75}, Princeton N.J., Princeton University Press 1972.

\bibitem{Hol60} H. Holmann, \emph{Quotientenr\"aume komplexer
Mannigfaltigkeiten nach komplexen Lieschen Automorphismengruppen}, Math.
Ann. \textbf{139} (1960), 383-402.

\bibitem{G8} Alexander Grigoryan and Laurent Saloff-Coste,
\emph{Dirichlet heat kernel in the exterior of a compact set}, Comm. Pure
Appl. Math. \textbf{55} (2002), 93-133.

\bibitem{Gi95} P. B. Gilkey, \emph{Invariance theory, the Heat Equation and the Atiyah-Singer Index
Theorem}, Second Edition, Studies in adv. math., CRC Press 1995.

\bibitem{GH} P. Griffiths and J. Harris, \emph{Principles of algebraic geometry},
A Wiley-Interscience Publication 1978.

\bibitem{Hel} S. Helgason, \emph{Differential geometry, Lie groups and symmetric spaces},
Academic Press, Inc. 1978.

\bibitem{HHL15} H. Herrmann, C.-Y. Hsiao and X. Li, \emph{Szeg\"{o} kernel
expansion and embedding of Sasakian manifolds}, arXiv:1512.03952[math.CV].

\bibitem{Hsiao14} C.-Y. Hsiao, \emph{Szeg\"{o} kernel asymptotics for high
power of CR line bundles and Kodaira embedding theorems on CR manifolds},
116 pages, preprint available at arXiv:1401.6647, to appear in Mem. of
the Amer. Math. Soc..

\bibitem{HsiaoLi15a} C.-Y. Hsiao and X. Li, \emph{Szeg\"o kernel
asymptotics and Morse inequalities on CR manifolds with $S^1$ action},
preprint available at arXiv:1502.02365.

\bibitem{HsiaoLi15} C.-Y. Hsiao and X. Li, \emph{Morse inequalities for
Fourier components of Kohn-Rossi cohomology of CR manifolds with $S^1$-action%
}, Math. Zeit., DOI: 10.1007/s00209-016-1661-6.

\bibitem{KT91} Y. Kamishima and T. Tsuboi, \emph{CR-structure on Seifert
manifolds}, Invent. Math. \textbf{104} (1991), 149-164.

\bibitem{Ka78} T. Kawasaki, \emph{The signature theorem for V-manifolds},
Topology \textbf{17} (1978), 75-83.

\bibitem{Ka79} T. Kawasaki, \emph{The Riemann-Roch theorem for complex
V-manifolds}, Osaka J. Math. \textbf{16} (1979), 151-159.

\bibitem{Lee} J.-M. Lee, \emph{Pseudo-Einstein structures on CR manifolds},
Amer. J. Math. \textbf{110} (1988), 157-178.

\bibitem{Lem92} L. Lempert, \emph{On three dimensional Cauchy-Riemann
manifolds}, J. Amer. Math. Soc. \textbf{5} (1992), 1-50.

\bibitem{MM07} X. Ma and G. Marinescu, \emph{Holomorphic {Morse}
inequalities and {Bergman} kernels}, Progress in Math. 254, Birkh{\"a}%
user, Basel 2007, 422 pp.

\bibitem{Ma96} G. Marinescu, \emph{Asymptotic Morse Inequalities for
Pseudoconcave Manifolds}, Ann. Scuola Norm. Sup. Pisa Cl. Sci. \textbf{23}
(1996), 27-55.

\bibitem{Me03} E. Meinrenken, \emph{Group actions on manifolds}, Lecture
Notes, University of Toronto, Spring 2003.

\bibitem{OV06} L. Ornea and M. Verbitsky, \textit{Embeddings of compact
Sasakian manifolds}, Mathematical Research Letters \textbf{14} (2007),
703-710.

\bibitem{OV07} L. Ornea and M. Verbitsky, \textit{Sasakian structures on
CR-manifolds}, Geometriae Dedicata \textbf{125} (2007), 159-173.

\bibitem{PR96} E. Park and K. Richardson, \emph{The basic Laplacian of a Riemannian foliation},
Amer. J. Math. \textbf{118} (1996), 1249-1275.

\bibitem{PV} P-E. Paradan and M. Vergne, \emph{Index of transversally
elliptic operators}, Ast\'erisque No. 328 (2009), 297-338.

\bibitem{Ri98} K. Richardson, \emph{The asymptotics of heat kernels on Riemannian foliations}, 
GAFA, Geom. Funct. Anal. \textbf{8} (1998), 356-401.  

\bibitem{Ri10} K. Richardson, \emph{Traces of heat operators on Riemannian foliations}, 
Trans. Amer. Math. Soc. \textbf{362} (2010), 2301-2337. 

\bibitem{Sa56} I. Satake, \emph{On a generalization of the notion of manifold%
}, Proc. Nat. Acad. Sci. U.S.A. \textbf{42} (1956), 359-363.

\bibitem{Ta} N. Tanaka, \textit{A differential geometric study on strongly
pseudo-convex manifolds}, Kinokuniya Book Store Co., Ltd., Kyoto 1975.

\bibitem{Var67} S. R. S. Varadhan, \emph{On the behavior of the fundamental
solution of the heat equation with
variable coefficients}, Communications on pure and applied math. \textbf{20} (1967), 431-455.

\bibitem{We} S.M. Webster, \textit{Pseudohermitian structures on a real
hypersurface}, J. Diff. Geom. \textbf{13} (1978), 25-41.

\end{thebibliography}
\end{document}